\newcommand{\ifomaprintti}[1]{} %
\newcommand{\ifeiomaprintti}[1]{#1} %
\newcommand{\ifeiIEOT}[1]{#1} 
\newcommand{\ifIEOT}[1]{} 
\newcommand{\ifeiproofjo}[1]{#1} 
\def\timeHHMM{{\number\theHour:\number\theMinuteTens\number\theMinute}}
\let\oldlabel=\label    %
\renewcommand{\label}[1]{\smash{\fboxrule.01pt\raise11pt\rlap{\fbox{\small $#1$}}}\oldlabel{#1}} %
\let\oldref=\ref %
\renewcommand{\ref}[1]{\smash{\fboxrule.01pt\raise11pt\rlap{{\tiny $\scriptsize #1$}}}\oldref{#1}}
\let\oldpageref=\pageref %
\renewcommand{\pageref}[1]{\smash{\fboxrule.01pt\raise11pt\rlap{{\tiny $\scriptsize #1$}}}\oldpageref{#1}}
      \long\def\sectionmark#1{%
      }
\renewcommand{\sectionmark}[1]{\markright{\thesection\ #1}}
\renewcommand{\sectionmark}[1]{\markright{\thesection\ \  #1\ 
 \ \ \tiednimi \ \ \   km@iki.fi\ \ \ \ \ 
 \today\ \timeHHMM}}
\newenvironment{proof}[1][Proof:]{#1}{} %
\renewenvironment{proof}[1][Proof:]{\par\noindent\begingroup{\bf #1\ }%
 }%
{\nopagebreak[4]\null\hspace{2em}\hfill\lower.3ex\hbox{$\square$}\linebreak
\vspace{-8.0ex}\null\hfill\linebreak\endgroup}
\newtheorem{lemma}[subsection]{Lemma}
\newtheorem{theorem}[subsection]{Theorem}
\newtheorem{proposition}[subsection]{Proposition}
\newtheorem{cor}[subsection]{Corollary}
\newtheorem{defin}[subsection]{Definition}
\newtheorem{remark}[subsection]{Remark} 
\newtheorem{exampleTh}[subsection]{Example} %
\newtheorem{remarksTh}[subsection]{Remarks} %
\newtheorem{discussionTh}[subsection]{Discussion} %
\newtheorem{terminologyTh}[subsection]{Terminology} %
\newtheorem{algorithmTh}[subsection]{Algorithm} %
\newenvironment{example}[1][\dummyargumentti]{\ifx#1\dummyargumentti\begin{exampleTh}\rm\else\begin{exampleTh}[#1]\rm\fi}{\hfill$\triangleleft$\end{exampleTh}} %
\newcommand\dummyargumentti{dummyargumentti}
\newenvironment{remark-rm}[1][\dummyargumentti]{\ifx#1\dummyargumentti\begin{remark}\rm\else\begin{remark}[#1]\rm\fi}{\hfill$\triangleleft$\end{remark}} %
\newtheorem{applicationTh}[subsection]{Application} %
\newtheorem{standinghypothesis}[subsection]{Standing Hypothesis}{} %
{} %
{}  %
{} %
{} %
{} %
{} %
\newcommand{\Dom}{\mathop{\rm Dom}\nolimits}     %
\newcommand{\im}{\mathop{\rm Im}\nolimits}       %
\newcommand{\Ran}{\mathop{\rm Ran}\nolimits}     %
\newcommand{\re}{\mathop{\rm Re}\nolimits}       %
\newcommand{\wlim}{\mathop{\rm w\mbox{\rm -}lim}} %
\newcommand{\slim}{\mathop{\rm s\mbox{\rm -}lim}} %
\newcommand{\W}{\hbox{$\Vert\hspace{-.08em}\vert$}} %
\newcommand{\subc}{\linebreak[0]\hbox{ $\sub\lower.45ex\hbox{$\below{\,\hspace{.05em}c}$}$ } \linebreak[3] }
\newcommand{\subcd}{\linebreak[0]\hbox{ $\sub\lower.45ex\hbox{$\below{\hspace{-.00em}
	c\hspace{-.09em},\hspace{-.10em}d}$}\!$ } \linebreak[3] }
\newcommand{\subd}{\linebreak[0]\hbox{ $\sub\lower.45ex\hbox{$\below{\,\hspace{.05em}d}$}$ } \linebreak[3] }
\newcommand{\subsd}{\linebreak[0]\hbox{ $\sub\lower.45ex\hbox{$\below{\hspace{-.00em}sd}$}$ } \linebreak[3] }      
\newcommand{\subcsd}{\linebreak[0]\hbox{ $\sub\lower.45ex\hbox{$\below{\hspace{-.10em}
	c\hspace{-.10em},\hspace{-.14em}s\hspace{-.09em}d}$}\!\!$ } \linebreak[3] }
\newcommand{\led}{\,\raise.39ex\hbox{\,$\le$\,}\lower.30ex\hbox{$ _{_{\hspace{-0.75em} d }}\ $}\,\hspace{.02em}}
\newcommand{\lesd}{\,\raise.39ex\hbox{\,$\le$\,}\lower.30ex\hbox{$ _{_{\hspace{-0.94em} sd }}\ $}\,\hspace{.02em}}
\newcommand{\levect}{\,\raise.39ex\hbox{\,$\le$\,}\lower.36ex\hbox{$ _{_{\hspace{-1.15em}{\rm vect}}}$}\hspace{.02em}}
\newcommand{\eqvect}{=_{_{\hspace{-1.11em} {\rm \lower.35ex\hbox{\tiny vect\hspace{-.44em}}}}}}%
\newcommand{\lealg}{\,\raise.39ex\hbox{\,$\le$\,}\lower.36ex\hbox{$ _{_{\hspace{-.99em}{\rm alg}}}$}\hspace{.26em}}
\newcommand{\leTVS}{\,\raise.42ex\hbox{\,$\le$\,}\lower.39ex\hbox{$ _{_{\hspace{-1.15em}{\rm TVS}}}$}\hspace{.02em}}
\newcommand{\eqTVS}{=_{_{\hspace{-1.11em} {\rm \lower.35ex\hbox{\tiny TVS\hspace{-.44em}}}}}}%
\newcommand{\mkb}{_{_{\hspace{-1em} {\rm \lower.05ex\hbox{\tiny m.k.\hspace{-.6em}}}}}}%
\newcommand{\aeb}{_{_{\hspace{-1em} {\rm \lower.05ex\hbox{\tiny a.e.\hspace{-.6em}}}}}}%
\newcommand{\mkle}{\le _{_{\hspace{-1.0em} {\rm \lower.85ex\hbox{\tiny m.k.}}}}}%
\newcommand{\aele}{\le _{_{\hspace{-1.0em} {\rm \lower.85ex\hbox{\tiny a.e.}}}}}%
\newcommand{\mkeq}{=_{_{\hspace{-1em} {\rm \lower.05ex\hbox{\tiny m.k.\hspace{-.6em}}}}}}%
\newcommand{\aeeq}{=_{_{\hspace{-1em} {\rm \lower.05ex\hbox{\tiny m.k.\hspace{-.6em}}}}}}%
\newcommand{\loc}{{\rm loc}}
\null\hspace{2em}\hfill\lower.3ex\hbox{$\square$}\linebreak
\newcommand{\talteen}[1]{} %
\newcommand{\dummy}[1]{} %
\newcommand{\extra}[1]{} %
\newcommand{\wrong}[1]{} %
\newcommand{\obsolete}[1]{} %
\newcommand{\toistoa}[1]{} %
\newcommand{\unused}[1]{} %
\newcommand{\p}[1]{{\ \!\hspace{-.20em}\langle\hspace{-.03em}%
#1\hspace{-.06em}\rangle\ \!\hspace{-.20em}}}
\newcommand{\raj}[1]{\lower.2ex\hbox{$|$}\lower.35ex\hbox{}_{#1}} %
\newcommand{\below}[1]{ _{_{\hspace{-1.0em} #1 }}\ } %
\newenvironment{itemlist}%
{%
\topsep\itemlisttopsep
\begin{description}%
\itemsep\itemlistitemsep
\advance\leftskip\itemlistleftskip
\itemindent\itemlistfirstlineskip %
}%
{\end{description}} 
\newcommand{\itemlisttopsep}{1.3ex plus 1pt minus 2pt}
\newcommand{\itemlistitemsep}{-.2ex} %
\newcommand{\itemlistleftskip}{-.2em}
\newcommand{\itemlistfirstlineskip}{-1.8em} %
{\vspace{-2.5ex}\begin{description} \itemsep-.7ex}%
{\end{description}\vspace{-5ex}\hfill\linebreak} %
{\vspace{-2.5ex}\begin{description} \itemsep-.7ex}%
{\end{description}\vspace{-7ex}\hfill\linebreak} %
{\hfill\linebreak %
\vspace{-5.4ex}\begin{description} \itemsep-.7ex} %
{\end{description}\vspace{-5ex}\hfill\linebreak} %
{\hfill\linebreak %
\vspace{-5.4ex}\begin{description} \itemsep-.7ex} %
{\end{description}\vspace{-7ex}\hfill\linebreak} %
\newcommand{\all}{\forall}
\newcommand{\ex}{\exists}
\newcommand{\notex}{\raise.23ex\hbox{$\not$}\,\hspace{-.06em}\ex}
\newcommand{\eps}{\epsilon}
\newcommand{\del}{\delta}
\newcommand{\khi}{{\raise.45ex\hbox{$\chi$}}} %
\newcommand{\Hoo}{\mathop{\H^\infty}} %
\let\ccal=\oldmathcal %
\newcommand{\cA}{{\ccal A}}
\newcommand{\cB}{{\ccal B}} %
\newcommand{\cC}{{\ccal C}} %
\newcommand{\cG}{{\ccal G}} %
\newcommand{\cJ}{{\ccal J}}
\newcommand{\cLL}{ {{\ccal L}^{\!\!\!\!\!\! \smile }} } %
\newcommand{\cP}{{\ccal P}} %
\newcommand{\cR}{{\ccal R}} %
\newcommand{\cU}{{\ccal U}}
\newcommand{\cY}{{\ccal Y}}
\newcommand{\I}{\,\big\vert\,} %
\newcommand{\sub}{\subset}
\newcommand{\pois}{\setminus}
\newcommand{\tyhja}{\emptyset}
\newcommand{\lland}{\ \&\ }
\newcommand{\THEN}{\Rightarrow}
\newcommand{\TTHEN}{\,\Longrightarrow\,} %
\newcommand{\TTTHEN}{ \ \Longrightarrow \ }
\newcommand{\IF}{\Leftarrow}
\newcommand{\IFF}{\Leftrightarrow}
\newcommand{\IIIFF}{\ \Longleftrightarrow\ }
\newcommand{\C}{{\mathbb C}}
\newcommand{\N}{{\mathbb N}}
\newcommand{\R}{{\mathbb R}}
\newcommand{\Z}{{\mathbb Z}}
\renewcommand{\raj}[1]{\lower.6ex\hbox{$|$}\lower.75ex\hbox{}_{#1}} 
\newcommand{\strong}{{\rm strong}} %
\newcommand{\sint}{\mathop{\raise.24ex\hbox{\rm\smalls}\hspace{-.05em}\!\!\!\!\!\int}} %
\newcommand{\wint}{\mathop{\raise.12ex\hbox{\rm\smallw}\hspace{-.20em}\!\!\!\!\!\int}} %
\newcommand{\diag}{\mathop{\rm diag}}    %
\renewcommand{\cLL}{{{\ccal L}^{\!\!\!\!\!\!\!\!\!\hspace{.10ex}\raise-.03ex\hbox{$\scriptscriptstyle\smile$}}}} %
\newcommand{\Lap}{\cLL} %
\renewcommand{\khi}{{\raise.1ex\hbox{$\chi$}}} %
\newcommand{\tv}{\tilde{v}}
\newcommand{\tx}{\tilde{x}}
\newcommand{\teta}{\tilde{\eta}}
\newcommand{\hh}{\widehat{h}}
\newcommand{\hF}{\widehat{F}}
\newcommand{\hG}{\widehat{G}}
\newcommand{\cCo}{\cC_{0}} %
\renewcommand{\L}{{\rm L}} %
\renewcommand{\W}{{\rm W}} %
\newcommand{\Lc}{\L_{\rm c}} %
\newcommand{\La}{\Lambda} %
\newcommand{\efn}{{\rm e}} %
\newcommand{\Lstrong}{\L_\strong} %
\newcommand{\Loostrong}{\L^\infty_\strong}  %
\newcommand{\Lstrongloc}{\L_{\strong,\loc}} %
\renewcommand{\H}{{\rm H}} %
\newcommand{\Hoooo}{\H^\infty_\infty} %
\newcommand{\Hstrong}{\H_\strong} %
\newcommand{\Lloc}{\L_\loc} %
\newcommand{\noproof}{\nopagebreak[4]\nolinebreak[4]\null{}\phantom{}\hfill\lower.7ex\hbox{$\square$}} %
\newcommand{\noproofup}{\nopagebreak[4]\nolinebreak[4]\null{}\phantom{}\hfill\raise0.0ex\hbox{$\square$}} %
\newcommand{\itemlistnoproof}{ \ \nopagebreak[4]\nolinebreak[4]\null{}\phantom{}\vskip-7ex\hfill{$\square$}} %
\newcommand{\indterm}[1]{{\em #1}\index{#1}} %
\newcommand{\indtermem}[1]{{\em #1}\index{#1|emph}} %
\let\smalla=a
\let\smallb=b
\let\smallc=c
\let\smalld=d %
\let\smalle=e
\let\smallf=f 
\let\smallg=g 
\let\smallh=h 
\let\smalli=i 
\let\smallj=j 
\let\smallk=k
\let\smalll=l 
\let\smallm=m 
\let\smalln=n
\let\smallo=o 
\let\smallp=p 
\let\smallq=q
\let\smallr=r 
\let\smalls=s 
\let\smallt=t 
\let\smallu=u
\let\smallv=v
\let\smallw=w
\let\smallx=x
\let\smally=y 
\let\smallz=z
\newcommand{\twoone}[2]{\stackrel{\scriptstyle #1}{\scriptstyle #2}} %
\newcommand{\threeone}[3]{\stackrel{\twoone{#1}{#2}}{\scriptstyle #3}}
\newcommand{\twotwo}[4]{\twoone{#1}{#3}\,\twoone{#2}{#4}}
\newcommand{\threetwo}[6]{\threeone{#1}{#3}{#5}\,\threeone{#2}{#4}{#6}}
\newcommand{\threethree}[9]{\threeone{#1}{#4}{#7}\,\,\threeone{#2}{#5}{#8}\,\,\threeone{#3}{#6}{#9}}
\newcommand{\ctwoone}[2]{\lower.5ex\hbox{\ensuremath{\twoone{#1}{#2}}}}
\newcommand{\ctwotwo}[4]{\lower.5ex\hbox{\ensuremath{\twotwo{#1}{#2}{#3}{#4}}}}
\newcommand{\ctwothree}[6]{\lower.5ex\hbox{\ensuremath{\twotwo{#1}{#2}{#3}{#4}{#5}{#6}}}}
\newcommand{\cthreeone}[3]{\lower1ex\hbox{\ensuremath{\threeone{#1}{#2}{#3}}}}
\newcommand{\cthreetwo}[6]{\lower1ex\hbox{\ensuremath{\threetwo{#1}{#2}{#3}{#4}{#5}{#6}}}}
\newcommand{\cthreethree}[9]{\lower1ex\hbox{\ensuremath{\threethree{#1}{#2}{#3}{#4}{#5}{#6}{#7}{#8}{#9}}}}
\newcommand{\btwotwo}[4]{\left[\ctwotwo{#1}{#2}{#3}{#4}\right]}
\newcommand{\rmd}{{\rm d}} %
\newcommand{\MCSS}[1]{} %
\newcommand{\NotesFor}[1]{{\large\bf Notes for #1\par}} %
\newcommand{\oK}{{\qK}} %
\newcommand{\rN}{{\qN}} %
\newcommand{\rM}{{\qM}} %
\newcommand{\lN}{{\tqN}} %
\newcommand{\lM}{{\tqM}} %
\newcommand{\tALS}{{\widetilde{\ALS}}} %
\newcommand{\Ric}{\cP} %
\newcommand{\tRic}{\stilde{\cP}} %
\newcommand{\ALSExtGRP}{{\ALS_{\rm Joint}}} %
\newcommand{\ALStotal}{\ALSExtGRP} 
\newcommand{\qDo}{\qD^o}
\newcommand{\qBd}{\qB_{\rm d}}  %
\newcommand{\dU}{\cU}  %
\newcommand{\dY}{\cY}  %
\newcommand{\dUout}{\dU_{\rm out}} %
\newcommand{\dUexp}{\dU_{\rm exp}}       
\newcommand{\dUstr}{\dU_{\rm str}}       
\newcommand{\gUg}{\dU_*^*}        %
\newcommand{\qRR}{\qR}  %
\newcommand{\qQR}{\qQ}  %
\newcommand{\qRRClL}{\qRClL}  %
\newcommand{\w}{{\rm w}} %
\newcommand{\rms}{{\rm s}} %
\newcommand{\rmu}{{\rm u}} 
\newcommand{\rmc}{{\rm c}} %
\def\bsysm#1{\begin{sysmatrix}#1\end{sysmatrix}}
\def\bsysbm#1{\left[\sysm{#1}\right]}
\def\bsyspm#1{\left(\sysm{#1}\right)}
\def\msysm#1{\raise.3ex\hbox{$\begin{medsysmatrix}#1\end{medsysmatrix}$}}
\def\msysbm#1{[\msysm{#1}]} 
\def\etv{& \hskip-.3em\vrule\hskip-.3em &} 
\def\smalletv{&\vrule&} 
\def\crh{\vrule height0pt depth5pt width0pt \cr
 \noalign{\hrule}
 \vrule height12pt depth0pt width0pt} 
\def\smallcrh{\vrule height0pt depth2\ex@ width0pt
        \cr\noalign{\hrule}
        \vrule height6.5\ex@ depth0pt width0pt}
\newcommand\crharg[1]{\vrule height0pt depth2\ex@ width0pt
        \cr\noalign{\hrule}
        \vrule height#1\ex@ depth0pt width0pt} 
\newbox\smallstrutbox
\def\smallstrut{\relax\ifmmode\copy\smallstrutbox\else\unhcopy\smallstrutbox\fi}
\def\tinyetv{&\vrule&} 
\let\oldcr=\cr
\newcommand\tinycrh{\vrule height0pt depth1\ex@ width0pt 
        \oldcr\noalign{\hrule}
        \vrule height5.5\ex@ depth0pt width0pt} 
\let\tinycr=\cr 
\def\semitinycrh{\vrule height0pt depth1\ex@ width0pt 
        \cr\noalign{\hrule}
        \vrule height6.5\ex@ depth0pt width0pt} 
\newcommand\supertinycrh{\vrule height0pt depth1\ex@ width0pt 
        \oldcr\noalign{\hrule}
        \vrule height5\ex@ depth0pt width0pt} 
\newbox\tinystrutbox
\def\tinystrut{\relax\ifmmode\copy\tinystrutbox\else\unhcopy\tinystrutbox\fi}
\newenvironment{sysmatrix}{
 \let\|=\etv
 \hskip \arraycolsep
 \begin{matrix}}
 {\end{matrix}
 \hskip \arraycolsep
}       
\def\sysm#1{\begin{sysmatrix}#1\end{sysmatrix}}
\newenvironment{smallsysmatrix}{\null\,\vcenter\bgroup
  \let\|=\smalletv
  \let\crh=\smallcrh
  \def\\{\smallstrut\math@cr}
  \restore@math@cr\default@tag
  \baselineskip\z@skip \lineskip\z@skip \lineskiplimit\lineskip
  \ialign\bgroup\hfil$\m@th\scriptstyle##$\hfil&&\thickspace\hfil
  $\m@th\scriptstyle##$\hfil\crcr
  \crcr\noalign{\vskip -.3\ex@}%
}{\crcr\noalign{\vskip -.2\ex@}%
  \crcr\egroup\egroup\,%
} 
\newbox\medstrutbox
\def\medstrut{\relax\ifmmode\copy\medstrutbox\else\unhcopy\medstrutbox\fi}
\newenvironment{medsysmatrix}{\null\,\vcenter\bgroup 
  \let\|=\etv
  \def\\{\medstrut\math@cr}
  \restore@math@cr\default@tag
  \baselineskip\z@skip \lineskip\z@skip \lineskiplimit\lineskip
  \ialign\bgroup\hfil$\m@th\,##\,$\hfil&&\thickspace\hfil
  $\m@th\,##\,$\hfil\crcr
  \crcr\noalign{\vskip -.0\ex@}%
}{\crcr\noalign{\vskip -.0\ex@}%
  \crcr\egroup\egroup\,%
} 
\newenvironment{tinysysmatrix}{\null\,\vcenter\bgroup
  \let\|=\tinyetv
  \let\crh=\tinycrh 
  \let\cr=\tinycr 
  \def\\{\tinystrut\math@cr} 
  \restore@math@cr\default@tag
  \baselineskip\z@skip 
\lineskip\z@skip
 \lineskiplimit\lineskip
  \ialign\bgroup\hfil$\m@th\scriptscriptstyle##$\hfil&&\thickspace\hfil
  $\m@th\scriptscriptstyle##$\hfil\crcr
  \crcr%
\noalign{\vskip -.3\ex@}
}{\crcr\noalign{\vskip -.2\ex@}
  \crcr\egroup\egroup\,%
} 
\newenvironment{semitinysysmatrix}{\null\,\vcenter\bgroup
  \let\|=\tinyetv
  \let\crh=\semitinycrh
  \def\\{\tinystrut\math@cr} 
  \restore@math@cr\default@tag
  \baselineskip\z@skip \lineskip\z@skip \lineskiplimit\lineskip
  \ialign\bgroup\hfil$\m@th\scriptscriptstyle##$\hfil&&\thickspace\hfil
  $\m@th\scriptscriptstyle##$\hfil\crcr
  \crcr\noalign{\vskip -.3\ex@}
}{\crcr\noalign{\vskip -.2\ex@}
  \crcr\egroup\egroup\,%
} %
\newenvironment{tinymatrix}{\null\,\vcenter\bgroup
  \def\\{\tinystrut\math@cr} 
  \let\oldcr=\cr
  \def\cr{\vspace{.1ex}\oldcr} 
  \restore@math@cr\default@tag
  \baselineskip\z@skip \lineskip\z@skip \lineskiplimit\lineskip
  \ialign\bgroup\hfil$\m@th\scriptscriptstyle##$\hfil&&\thickspace\hfil
  $\m@th\scriptscriptstyle##$\hfil\crcr 
  \crcr\noalign{\vskip -.3\ex@}
}{%
{\let\oldcr=\cr}
\crcr\noalign{\vskip -.2\ex@}
  \crcr\egroup\egroup\,
} 
\def\ssysm#1{\begin{smallsysmatrix}#1\end{smallsysmatrix}}
\def\ssysbm#1{\left[\ssysm{#1}\right]}
\def\ssyspm#1{\left(\ssysm{#1}\right)} 
\def\tsysm#1{\begin{tinysysmatrix}#1\end{tinysysmatrix}}
\def\tsysbm#1{\left[\tsysm{#1}\right]}
\def\tsyspm#1{\left(\tsysm{#1}\right)}
\let\ClL=\circlearrowleft %
\newcommand{\opt}{{\rm opt}}
\newcommand{\ext}{{\rm ext}} 
\newcommand{\shat}[1]{{\widehat{#1}}} %
\newcommand{\stilde}[1]{{\widetilde{#1}}} %
\newcommand{\Refl}{\cR}     %
\newcommand{\suba}{\linebreak[0]\hbox{ $\sub\lower.45ex\hbox{$\below{\,\hspace{.20em}a}$}$ } \linebreak[3] }
\newcommand{\tmathop}[1]{{\ensuremath{\mathop{\rm #1}\nolimits}} }    %
\newcommand{\tmathopL}[1]{{\ensuremath{\mathop{{\rm #1}^{\L^1}}\nolimits}} }    %
\newcommand{\tmathopLCL}[1]{{\ensuremath{\mathop{{\rm #1}}\nolimits^{\L^1,\BL\cC}}} }    %
\newcommand{\tmathopLsub}[2]{{\ensuremath{\mathop{{\rm #1}^{\L^1}_{#2}}\nolimits}} }    %
\newcommand{\WPLS}{\tmathop{WPLS}}   %
\newcommand{\SOS}{\tmathop{SOS}}     %
\newcommand{\TIC}{\tmathop{TIC}}     %
\newcommand\TICexp{\TIC_{\rm exp}}    %
\newcommand{\WR}{\tmathop{WR}}     %
\newcommand\ULR{\tmathop{ULR}}    %
\newcommand{\WTIC}{\tmathopL{MTIC}} %
\newcommand{\WTICLC}{\tmathopLCL{MTIC}} %
\newcommand{\WTICsub}[1]{\tmathopLsub{MTIC}{#1}} %
\newcommand{\WTICinfty}{\WTICsub{\infty}}
\newcommand{\WTIComega}{\WTICsub{\omega}}
\newcommand{\MTIC}{\tmathop{\rm MTIC}} %
\newcommand{\CRHP}{\overline{\C^+}\cup\{\infty\}} %
\newcommand{\cRHP}{\overline{\C^+}} %
\newcommand{\uopt}{u_{\rm opt}} %
\newcommand{\xopt}{x_{\rm opt}} %
\newcommand{\yopt}{y_{\rm opt}}
\newcommand{\uc}{u_\ClL}
\newcommand{\vc}{v_\ClL} %
\newcommand{\yc}{y_\ClL} %
\newcommand{\tyc}{\tilde y_\ClL}
\newcommand{\xc}{x_\ClL} 
\newcommand{\tu}{\tilde{u}}
\newcommand{\ty}{\tilde{y}}
\newcommand{\hu}{{\shat{u}}} %
\newcommand{\huClL}{{\shat{u_\ClL}}} %
\newcommand{\huc}{{\shat{\uc}}} 
\newcommand{\hv}{{\shat{v}}} %
\newcommand{\hx}{{\shat{x}}} %
\newcommand{\hy}{{\shat{y}}} 
\newcommand{\hf}{{\shat{f}}} 
\newcommand{\hg}{{\shat{g}}} 
\newcommand{\piplus}{{\pi_{+}}}
\newcommand{\piminus}{{\pi_{-}}}
\newcommand{\piOt}{{\pi_{[0,t)}}} %
\newcommand{\pitO}{{\pi_{[-t,0)}}} %
\newcommand{\piTO}{{\pi_{[-T,0)}}} 
\newcommand{\piti}{{\pi_{[t,\infty)}}} %
\newcommand{\piit}{{\pi_{(-\infty,t)}}} %
\newcommand{\pitt}{{\pi_{[-t,t)}}} %
\newcommand{\piOT}{{\pi_{[0,T)}}} %
\newcommand{\piOI}{{\pi_{[0,1)}}} %
\newcommand{\Par}[1]{\left( #1 \right)}
\newcommand{\rplus}{{\R_+}}
\newcommand{\Sem}{\qA}
\newcommand{\In}{\qB} %
\newcommand{\Out}{\qC} %
\newcommand{\Feed}{\qK}
\newcommand{\IO}{\qD}
\newcommand{\FeedIO}{\qF}
\newcommand{\ALS}{\Sigma}
\newcommand{\BL}{\cB} %
\newcommand{\tA}{{\tilde{A}}} %
\newcommand{\tC}{{\tilde{C}}} %
\newcommand{\tB}{\tilde{B}}
\newcommand{\tJ}{{\tilde{J}}}
\newcommand{\tK}{{\tilde{K}}} 
\newcommand{\tL}{{\tilde{L}}} %
\newcommand{\tS}{{\tilde{S}}} %
\newcommand{\tU}{{\tilde{U}}}
\newcommand{\tY}{{\tilde{Y}}} 
\newcommand{\systemfont}{\mathscr} %
\renewcommand{\C}{{\fieldfont C}}
\renewcommand{\N}{{\fieldfont N}}
\renewcommand{\R}{{\fieldfont R}}
\renewcommand{\Z}{{\fieldfont Z}}
\newcommand{\qA}{{\systemfont A}}
\newcommand{\qB}{{\systemfont B}}
\newcommand{\qC}{{\systemfont C}}
\newcommand{\qD}{{\systemfont D}}
\newcommand{\qE}{{\systemfont E}}
\newcommand{\qF}{{\systemfont F}}
\newcommand{\qG}{{\systemfont G}}
\newcommand{\qH}{{\systemfont H}}
\newcommand{\qK}{{\systemfont K}}
\newcommand{\qM}{{\systemfont M}} 
\newcommand{\qN}{{\systemfont N}} 
\newcommand{\qO}{{\systemfont O}} 
\newcommand{\qQ}{{\systemfont Q}} 
\newcommand{\qR}{{\systemfont R}}
\newcommand{\qS}{{\systemfont S}}
\newcommand{\qT}{{\systemfont T}}
\newcommand{\qX}{{\systemfont X}}
\newcommand{\qY}{{\systemfont Y}}
\newcommand{\tqA}{{\tilde{\systemfont A}}} 
\newcommand{\tqB}{{\tilde{\systemfont B}}} 
\newcommand{\tqC}{{\tilde{\systemfont C}}} 
\newcommand{\tqD}{{\tilde{\systemfont D}}} 
\newcommand{\tqF}{{\tilde{\systemfont F}}}
\newcommand{\tqK}{{\tilde{\systemfont K}}} 
\newcommand{\tqM}{{\tilde{\systemfont M}}} 
\newcommand{\tqN}{{\tilde{\systemfont N}}}
\newcommand{\tqS}{{\tilde{\systemfont S}}}
\newcommand{\tqX}{{\tilde{\systemfont X}}} 
\newcommand{\tqY}{{\tilde{\systemfont Y}}}
\newcommand{\hqA}{{\hat{\systemfont A}}} %
\newcommand{\hqB}{{\hat{\systemfont B}}}
\newcommand{\hqC}{{\hat{\systemfont C}}} 
\newcommand{\hqD}{{\hat{\systemfont D}}} 
\newcommand{\hqE}{{\hat{\systemfont E}}} 
\newcommand{\hqF}{{\hat{\systemfont F}}}
\newcommand{\hqK}{{\hat{\systemfont K}}} 
\newcommand{\hqM}{{\hat{\systemfont M}}} 
\newcommand{\hqN}{{\hat{\systemfont N}}} 
\newcommand{\hqO}{{\hat{\systemfont O}}}
\newcommand{\hqS}{{\hat{\systemfont S}}} 
\newcommand{\hqT}{{\hat{\systemfont T}}}
\newcommand{\hqX}{{\hat{\systemfont X}}} 
\newcommand{\hqY}{{\hat{\systemfont Y}}}
\newcommand{\hqKClL}{{\shat{\systemfont K_\ClL}}} %
\newcommand{\hqAClL}{{\shat{\qAClL}}} %
\newcommand{\hqBClL}{{\shat{\qBClL}}} %
\newcommand{\hqCClL}{{\shat{\qCClL}}} %
\newcommand{\hqDClL}{{\shat{\qDClL}}}
\newcommand{\htqD}{{\hat{\tilde{\systemfont D}}}} %
\newcommand{\ALStau}{\ALS^\tau} %
\newcommand{\ALSexttau}{\ALSext^\tau} %
\newcommand{\ALSClLtau}{\ALS_\ClL^\tau} %
\newcommand{\ALSext}{\ALS_\ext}
\newcommand{\ALSd}{\ALS^\rmd}
\newcommand{\ALSExt}{\ALS_\ext}
\newcommand{\ALSClL}{\ALS_\ClL}
\newcommand{\bbbF}{{\mathbb F}} %
\newcommand{\bbbT}{{\mathbb T}} 
\newcommand{\Hgen}{H} %
\newcommand{\Xgen}{X} %
\newcommand{\qAopt}{\qA_\opt}  %
\newcommand{\qCopt}{\qC_\opt}  
\newcommand{\qKopt}{\qK_\opt}
\newcommand{\ALSopt}{\ALS_\opt} 
\newcommand{\hALSopt}{\shat{\ALSopt}}
\newcommand{\hqKopt}{\shat{\qK_\opt}}  
\newcommand{\Aopt}{A_\opt} %
\newcommand{\Copt}{C_\opt} 
\newcommand{\Kopt}{K_\opt}
\newcommand{\Koptw}{(K_\opt)_\w}
\newcommand{\qKoptt}{{\qK_\opt^t}}
\newcommand{\qAClL}{\qA_\ClL}
\newcommand{\qBClL}{\qB_\ClL}
\newcommand{\qCClL}{\qC_\ClL}
\newcommand{\qDClL}{\qD_\ClL}
\newcommand{\qKClL}{\qK_\ClL}
\newcommand{\qFClL}{\qF_\ClL}
\newcommand{\qGClL}{\qG_\ClL}
\newcommand{\qRClL}{\qR_\ClL} 
\newcommand{\tqAClL}{\tqA_\ClL}  %
\newcommand{\tqCClL}{\tqC_\ClL} 
\newcommand{\tqDClL}{\tqD_\ClL} 
\newcommand{\tqKClL}{\tqK_\ClL} 
\newcommand{\tqFClL}{\tqF_\ClL}
\newcommand{\qAt}{{\qA^t}} %
\newcommand{\qBt}{{\qB^t}} %
\newcommand{\qCt}{{\qC^t}} %
\newcommand{\qDt}{{\qD^t}} %
\newcommand{\qKt}{{\qK^t}}
\newcommand{\qFt}{{\qF^t}} 
\newcommand{\qNt}{{\qN^t}} 
\newcommand{\qMt}{{\qM^t}} 
\newcommand{\qXt}{{\qX^t}}
\newcommand{\qSt}{{\qS^t}}
\newcommand{\qAClLt}{{\qA_\ClL^t}}
\newcommand{\qBClLt}{{\qB_\ClL^t}}
\newcommand{\qCClLt}{{\qC_\ClL^t}}
\newcommand{\qDClLt}{{\qD_\ClL^t}}
\newcommand{\tqAClLt}{{{\tqA}_\ClL^t}}
\newcommand{\tqCClLt}{{{\tqC}_\ClL^t}}
\newcommand{\tqKClLt}{{{\tqK}_\ClL^t}}
\newcommand{\AClL}{A_\ClL}
\newcommand{\BClL}{B_\ClL}
\newcommand{\CClL}{C_\ClL}
\newcommand{\KClL}{K_\ClL}
\newcommand{\uZ}{Z^\rmu}
\newcommand{\othersub}{0} %
\newcommand{\oqB}{\qB_\othersub} %
\newcommand{\oqD}{\qD_\othersub}
\newcommand{\oqA}{\qA_\othersub}
\newcommand{\oqC}{\qC_\othersub}
\newcommand{\oqK}{\qK_\othersub}
\newcommand{\oqACK}{\tsysbm{\oqA\crh\oqC\cr \oqK}}
\newcommand{\oqAK}{\tsysbm{\oqA\cr \oqK}}
\newcommand{\oALS}{\ALS_\othersub}
\newcommand{\qKClLo}{\qK_{\ClL\othersub}}
\newcommand{\oA}{A_\othersub} %
\newcommand{\oC}{C_\othersub}
\renewcommand{\oK}{K_\othersub}
\newcommand{\oqAt}{{\qA_\othersub^t}}
\newcommand{\oqBt}{{\qB_\othersub^t}}
\newcommand{\oqCt}{{\qC_\othersub^t}}
\newcommand{\oqDt}{{\qD_\othersub^t}}
\newcommand{\oqKt}{{\qK_\othersub^t}}
\newcommand{\hoqC}{\shat{\qC_\othersub}}
\newcommand{\hoqD}{\shat{\qD_\othersub}}
\newcommand{\choqD}{{\check\qD_\othersub}}
\newcommand{\hoqK}{\shat{\qK_\othersub}}
\newcommand{\qCo}{\qC_\othersub}
\newcommand{\qKo}{\qK_\othersub}
\newcommand{\Ao}{A_\othersub} %
\newcommand{\Co}{C_\othersub}
\newcommand{\hqCo}{\shat{\qC_\othersub}}
\newcommand{\gUstar}{{\dU_*}}
\newcommand{\PTO}{\qS_{\rm PT}} %
\newcommand{\ALSp}{{\ALS_+}} %
\newcommand{\ALSpClL}{(\ALS_{+})_\ClL}
\newcommand{\qAp}{{\qA_+}}
\newcommand{\qBp}{{\qB_+}}
\newcommand{\qCp}{{\qC_+}}
\newcommand{\qDp}{{\qD_+}}
\newcommand{\qKp}{{\qK_+}}
\newcommand{\qFp}{{\qF_+}}
\newcommand{\qKFp}{\msysbm{\qKp\|\qFp}} %
\newcommand{\qXp}{{\qX_+}}
\newcommand{\qMp}{{\qM_+}}
\newcommand{\qNp}{{\qN_+}}
\newcommand{\hqDp}{\shat{\qD_+}}
\newcommand{\hqXp}{\shat{\qX_+}}
\newcommand{\dUoutp}{\dUout^{\ALSp}}        %
\newcommand{\dUexpp}{\dUexp^{\ALSp}}        
\renewcommand{\tALS}{\tilde{\ALS}}
\newcommand{\tALSClL}{\tilde{\ALS}_\ClL}
\newcommand{\lqD}{\underline{\qD}}
\newcommand{\Cc}{C_\rmc} %
\newcommand{\Dc}{D_\rmc}
\newcommand{\Kc}{K_\rmc}
\newcommand{\Fc}{F_\rmc}
\newcommand{\Xc}{\Xgen_\rmc} %
\newcommand{\Cs}{C_\rms}
\newcommand{\Bw}{B_\w} %
\newcommand{\Cw}{C_\w}
\newcommand{\Kw}{K_\w} %
\newcommand{\bigmatrix}[1]{\begin{matrix}#1\end{matrix}}
\newcommand{\sm}[1]{\begin{smallmatrix}#1\end{smallmatrix}}
\let\normalcr=\cr %
\newcommand\anycrh[1]{\vrule height0pt depth1\ex@ width0pt %
        \oldcr\noalign{\hrule}
        \vrule height#1\ex@ depth0pt width0pt} %
\let\tmcr=\tinymatrixcr %
\newcommand{\tm}[1]{%
\bgroup{ %
\let\cr=\tmcr%
\begin{tinymatrix}%
#1\end{tinymatrix}}\egroup} %
\newcommand{\tbm}[1]{\left[\tm{#1}\right]} 
\let\sbm=\smallbmatrix
\let\bbm=\bigbmatrix
\newcommand{\SmallbSystem}{\ssysbm{\qA\|\qB\crh \qC\|\qD}}
\newcommand{\SmallbExtSystem}{\ssysbm{\Sem\|\In\crh \Out\|\IO\cr \Feed\| \FeedIO}}
\newcommand{\SmallbClLExtSystem}{\ssysbm{\qAClL\|\qBClL\crh \qCClL\|\qDClL\cr
               \qKClL\| \qFClL}}
\newcommand{\bExtSystem}{\bsysbm{\Sem\|\In\crh \Out\|\IO\cr \Feed\|\FeedIO}}
\newcommand{\ExttauSystem}{\bsysm{\Sem\|\In\tau\crh \Out\|\IO\phantom{\tau}\cr \Feed\|\FeedIO\phantom{\tau}}}
\newcommand{\qABCD}{\tsysbm{\qA\|\qB\crh \qC\|\qD}}
\newcommand{\qABtauCD}{\tsysbm{\qA\|\qB\tau\crh \qC\|\qD}}
\newcommand{\qAB}{\msysbm{\qA\|\qB}}
\newcommand{\qCD}{\msysbm{\qC\|\qD}} 
\newcommand{\qKF}{\msysbm{\qK\|\qF}} %
\newcommand{\qAK}{\tsysbm{\qA\crh\qK}} 
\newcommand{\qABKF}{\tsysbm{\qA\|\qB\crh \qK\|\qF}}
\newcommand{\qABQR}{\tsysbm{\qA\|\qB\crh \qQR\|\qRR}}  %
\newcommand{\qABCDClL}{\tsysbm{\qAClL\|\qBClL\crh \qCClL\|\qDClL}}
\newcommand{\tqKF}{\msysbm{\tqK\|\tqF}} %
\newcommand{\sZ}{Z^\rms}  %
\newcommand{\ABCD}{\tsysbm{A\|B\crh C\| D}} %
\newcommand{\ABC}{\tsysbm{A\|B\crh C\| }} 
\newcommand\CDreservedtrue{1}
\newcommand{\hIRE}{$\shat{\rm IRE}$} %
\newcommand{\optIRE}{$\ALSopt$-IRE} %
\newcommand{\hoptIRE}{$\hALSopt$-IRE} %
\newcommand{\bfoptIRE}{$\pmb{\ALSopt}$-IRE} %
\newcommand{\SIRE}{$\qS^t$-IRE} %
\newcommand{\hSIRE}{$\hqS$-IRE} %
\newcommand{\boldedSIRE}{$\pmb{\qS^t}$-IRE} %
\newcommand{\boldedhSIRE}{$\pmb{\hqS}$-IRE} %
\newcommand{\BwARE}{$B^*_\w$-ARE}  %
\newcommand{\rconn}{\mathop{\rm rconn}\nolimits} %
\newcommand{\barz}{{\overline{z}}}
\newcommand{\inv}{{\rm inv}} %
\newcommand{\ALSinv}{\ALS^\inv}
\newcommand{\ALSoptp}{(\ALSopt)_+} 
\newcommand{\chqD}{\check\qD}
\newcommand{\chqF}{\check\qF}
\newcommand{\chqK}{\check\qK}
\newcommand{\chqX}{\check\qX}
\newcommand{\chqM}{\check\qM}
\newcommand{\chqFClL}{\check\qF_\ClL}
\newcommand{\hqDc}{\hqD_\ALS}
\newcommand{\hqXc}{\hqX_{\ALSExt}}
\renewcommand{\N}{{\mathbb N}} %
\renewcommand{\Z}{{\mathbb Z}}
\renewcommand{\R}{{\mathbb R}}
\renewcommand{\C}{{\mathbb C}}
\renewcommand{\Refl}{\cR} %
\def\atitem[#1]{%
  \if@noparitem
    \@donoparitem
  \else
    \if@inlabel
      \indent \par
    \fi
    \ifhmode
      \unskip\unskip \par
    \fi
    \if@newlist
      \if@nobreak
        \@nbitem
      \else
        \addpenalty\@beginparpenalty
        \addvspace\@topsep
        \addvspace{-\parskip}%
      \fi
    \else
      \addpenalty\@itempenalty
      \addvspace\itemsep
    \fi
    \global\@inlabeltrue
  \fi
  \everypar{%
    \@minipagefalse
    \global\@newlistfalse
    \if@inlabel
      \global\@inlabelfalse
      {\setbox\z@\lastbox
       \ifvoid\z@
         \kern-\itemindent
       \fi}%
      \box\@labels
      \penalty\z@
    \fi
    \if@nobreak
      \@nobreakfalse
      \clubpenalty \@M
    \else
      \clubpenalty \@clubpenalty
      \everypar{}%
    \fi}%
  \if@noitemarg
    \@noitemargfalse
    \if@nmbrlist
      \refstepcounter\@listctr
    \fi
  \fi
  \sbox\@tempboxa{{#1}}%
  \global\setbox\@labels\hbox{%
    \unhbox\@labels
    \hskip \itemindent
    \hskip -\labelwidth
    \hskip -\labelsep
    \ifdim \wd\@tempboxa >\labelwidth
      \box\@tempboxa
    \else
      \hbox to\labelwidth {\unhbox\@tempboxa}%
    \fi
    \hskip \labelsep}%
  \ignorespaces}
\let\olditem=\atitem
\newenvironment{itemlistbibitem}%
{%
\topsep\itemlisttopsep
\parskip-.0ex %
\begin{description}%
\itemsep\itemlistitemsep
\def\KMBibitemlistleftskip{0.7em} %
\def\itemlistfirstlineskip{-3.8em} %
\advance\leftskip\KMBibitemlistleftskip 
\itemindent\itemlistfirstlineskip}%
{\end{description}}
\newcommand{\KMBibitem}[2]{\olditem[\hbox to 3.7em{#1\hfil}]#2}
\renewcommand{\TTHEN}{\ \THEN\ } %
\renewcommand{\d}{\partial}
\renewcommand{\NotesFor}[1]{{\bf Notes for #1: }}
\newcommand{\pmbold}[1]{#1} %
\newcommand{\ItemName}[1]{{\rm (#1)}} %
\renewcommand{\raj}[1]{_{|#1}}
\begin{document} 
\author{\ifomaprintti{ \today\ \timeHHMM\ }Kalle M. Mikkola}

\ifeiIEOT{

\newbox\addressbox
\newbox\emailbox
\newbox\keywordsbox
\newbox\subjclassbox
\newcommand{\address}[1]{\setbox\addressbox=\vtop{\noindent#1}} 
\newcommand{\email}[1]{\setbox\emailbox=\hbox{#1}} 
\newcommand{\keywords}[1]{\setbox\keywordsbox=\vtop{\noindent{\bf Keywords:} #1.}}  %
\newcommand{\subjclass}[1]{\setbox\subjclassbox=\hbox{{{\bf AMS class:} #1.}}}

\newbox\abstractbox
\renewenvironment{abstract}{\global\setbox\abstractbox=\vtop\bgroup%
\noindent{\bf Abstract:} }%
{\egroup} %

}%

\address{%
        Helsinki University of Technology %
        Institute of Mathematics\\
        P.O. Box 1100; %
        FIN-02015 HUT, Finland\\
        GSM: +358-40-7545660,\ fax: +358-9-451 3016\\
        http://www.math.hut.fi/\~{}kmikkola/}
\email{%
        Kalle.Mikkola@iki.fi}

\newcommand{\qHG}{\ssysbm{\qH\crh\qG}} %
\newcommand{\qHGd}{\bsysbm{\qH^\rmd \|\qG^\rmd}} %
\newcommand{\qABCDKF}{\SmallbExtSystem} %
\newcommand{\qABCDp}{\bsysbm{\qAp\|\qBp\crh \qCp\|\qDp}}

\title{\ifomaprintti{ \today\ \timeHHMM\ }Riccati equations 
 \ifeiomaprintti{and optimal control
 of well-posed linear systems}\ifomaprintti{and ...}}

\begin{abstract}
We generalize the classical theory on algebraic Riccati equations
 and optimization to infinite-dimensional well-posed linear systems,
 thus completing the work of George Weiss, Olof Staffans and others.
We show that the optimal control is given by the 
 stabilizing solution of an integral Riccati equation. %
If the input operator is not maximally unbounded, 
 then this integral Riccati equation is equivalent to the algebraic 
 Riccati equation.

Using the integral Riccati equation,
 we show that for (nonsingular) minimization problems
 the optimal state-feedback loop is always well-posed.
In particular, the optimal state-feedback operator is admissible
 also for the original semigroup, not only for the closed-loop semigroup
 (as has been known in some cases);
 moreover, both settings are well-posed with respect to an external input.
This leads to the positive solution of several %
 central, previously open questions 
 on %
 exponential, output and dynamic (aka.\ ``internal'') stabilization
 and on coprime factorization of transfer functions.

Our theory covers all quadratic (possibly indefinite) cost functions,
 but the optimal state feedback need not be well-posed (admissible)
 unless the cost function is uniformly positive
 or the system is sufficiently regular.
\end{abstract}

\subjclass{49N10, 93D15, 93B52}

\keywords{%
Regular linear system, %
 integral Riccati equation, algebraic Riccati equation,
 stabilizing solution,
 optimal state feedback, %
 exponential stabilization, 
 dynamic stabilization, internal stabilization, internal loop,
 optimizability, finite cost condition,
 quasi-right coprime factorization,
 doubly coprime factorization,
 Popov function}
\date{March 14, 2004}
\ifIEOT{%
{\parindent0em%
\noindent%
\maketitle}%
}\vspace{-4.5ex}

\ifeiIEOT{

\maketitle

\box\addressbox
\box\emailbox
\vspace{.5ex}
\box\abstractbox 
\vskip1.5ex%
\box\subjclassbox
\vspace{.5ex}
\box\keywordsbox

}%

\ifomaprintti{ %
\newdimen\kallelinespacing
\newdimen\kallebaselineskip
\kallelinespacing=\lineskip
\kallebaselineskip=\baselineskip
\lineskip=-8pt %
\baselineskip=-11pt 
\tableofcontents %
\lineskip=\kallelinespacing
\baselineskip=\kallebaselineskip
}%

\section{Introduction: systems with bounded generators}\label{sIntro} %
In this section we present a (mostly known) very special case of our results.
At the end of this section and in ``Conclusions''
 (Section~\ref{sConclusions})
 we explain, how we have generalized these results
 to more general systems, cost functions and 
 stability goals, in the other sections.\footnote{This is the March 14, 2004 draft (= the latest version before its split) as such except for this publication footnote (February 27, 2016). I was asked to publish it now in arXiv to allow referencing to results not published elsewhere. 
 In an earlier form it was circulated a few of months earlier. Later, parts of it were published, usually with several newer results: ``State-Feedback Stabilization of Well-Posed Linear Systems''  Integral Equations and Operator Theory 55 (2), pp. 249-271, 2006 (early/middle parts). ``Coprime factorization and dynamic stabilization of transfer functions'', SIAM Journal on Control and Optimization, 45 (6), pp. 1988-2010, 2007 (not systems, just transfer functions, unlike in the ones mentioned below). ``Weakly coprime factorization and state-feedback stabilization of discrete-time systems''
  Mathematics of Control, Signals, and Systems, 20 (4), pp. 321-350, 2008, ``Weakly coprime factorization and continuous-time systems''
 IMA Journal of Mathematical Control and Information, 25 (4): pp. 515-546, 2008. doi:10.1093/imamci/dnn011 
 Many of the results were in some form already in [M02].
 Most remaining main results, such as Theorem \ref{DCFiff} and output and measurement feedback
  stabilization results for WPLSs were published in 
  ``Coprime factorizations and stabilization of infinite-dimensional linear systems''
  Proceedings of CDC-ECC2005. Of those results I had two corresponding drafts fairly ready late 2007
   but then had to stop finishing them due to other responsibilities.
   I will probably publish also them in arXiv as such, if I do not find time to 
    update their references and shorten the presentation.} %
In the most simple case, a
 linear time-invariant
 control system is governed by the equations\label{pageWPLSmotivation}
\begin{equation}
  \label{exy=ABCDintro}
  \begin{aligned}
   \dot x(t)&=Ax(t)+Bu(t),\\
   y(t)&=Cx(t)+Du(t),\\
   x(0)&=x_0 %
  \end{aligned}
\end{equation}
 (for $t\ge0$), 
 where the
 {\em generators}\label{pagegenerators1}
 $\sbm{A&B\cr C&D}\in\BL(H\times U,H\times Y)$
 are matrices, or more generally, linear operators on
 Hilbert spaces ($U,H,Y$) of arbitrary dimensions.
There $u$ is the {\em input} (or {\em control}),
 $x$ the {\em state} and $y$ the {\em output} of the system.
Obviously, $x_0$ and $u$ determine $x$ and $y$ uniquely.
In this section, we shall allow $A$ to be unbounded
 as long as it generates a
 strongly continuous semigroup,
 which we denote by $\efn^{At}$;
 in later sections also $B$ and $C$ may be unbounded.

By $\BL(H,U)$ we denote the space of bounded linear operators $H\to U$,
 by $\R_+$ the set $[0,\infty)$
 and by $\L^2(\R_+;U)$\label{pageL2+} 
 the Banach space of (equivalence classes of Bochner) 
 measurable functions $u:\R_+\to U$ for which
 $\|u\|_2^2:=\int_0^\infty \|u(t)\|_U^2\,dt<\infty$.

We first take a look at the following (LQR) minimization problem.
Given any {\em initial state} $x_0\in H$,
 we want to minimize a cost function, such as
\begin{equation}
  \label{ecJx2u2}
  \cJ(x_0,u)=\int_0^\infty \left( \|x(t)\|_H^2+\|u(t)\|_U^2\right)\,dt.
\end{equation}
Observe that the output $y$ (and hence $C$ and $D$ too)
 is irrelevant to this problem.

A necessary condition for the existence of a minimum is the
 {\em state-FCC}\label{pagestate-FCC} (Finite Cost Condition):
\begin{equation} %
  \label{eFCCUexp}
 \text{For each } x_0\in H, \text{there exists some control } u\in\L^2(\R_+;U)
 \text{ such that } x\in \L^2(\R_+;H)
\end{equation}
 (i.e.\ $\inf_u \cJ(x_0,u)<\infty$ for all $x_0$, %
  so that we do not have to optimize over the empty set).
Thus, some {\em stable}\label{pageStableIntro} input ($u\in\L^2$) must make the state stable
 ($x\in\L^2$).
It is known that the state-FCC is also sufficient:
\begin{theorem}[$\int_0^\infty\|x\|^2+\|u\|^2$]\label{IntroTh-Uexp} %
The following are equivalent:
\begin{itemlist}
  \item[(i)] For each initial state $x_0\in H$,
 there exists a unique
 control that minimizes (\ref{ecJx2u2}).
  \item[(ii)] The algebraic Riccati equation (ARE)
\begin{equation}
  \label{eAREminBsimpleI}
  \Ric BB^*\Ric=A^*\Ric+\Ric A+I\ \ \ \text{on}\ \ \Dom(A)
\end{equation}
  has a solution $\Ric=\Ric^*\in\BL(H)$ that is {\em exponentially stabilizing},\label{pageExpStab0}
 i.e., such that the $C_0$-semigroup $\efn^{t(A+BK)}$ %
 is {\em exponentially stable}\label{pageExpStab},%
\footnote{$\|\efn^{t(A+BK)}\|\le M\efn^{-\eps t}$ for some $M,\eps>0$ and all $t>0$
 (cf.\  Lemma~\ref{lExpStable}).}
 where $K:=-B^*\Ric$.

  \item[(iii)] The state-FCC (\ref{eFCCUexp}) holds.
\end{itemlist}

Assume that (ii) has a solution. Then this solution is unique,
 and the (state-feedback) control $u(t)=Kx(t)$
 strictly minimizes the cost (\ref{ecJx2u2})
 for any initial state $x_0\in H$.
Moreover, the minimal cost equals $\p{x_0,\Ric x_0}_H$.\noproof
\end{theorem}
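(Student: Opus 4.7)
The implication (i) $\THEN$ (iii) is immediate, since the existence of a minimizer makes $\inf_u \cJ(x_0,u)$ finite for every $x_0$, which is exactly the state-FCC.

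For (ii) $\THEN$ (i) I would use the standard completion-of-squares trick. Assume $\Ric=\Ric^*\in\BL(H)$ solves (\ref{eAREminBsimpleI}) with $K=-B^*\Ric$ exponentially stabilizing. Pick any $x_0\in H$ and any $u\in\L^2(\R_+;U)$ with corresponding $x$. On $\Dom(A)$ (with a density/approximation argument to cover general $x_0\in H$) one computes
\begin{equation*}
  \DD{t}\p{x(t),\Ric x(t)}_H
   =\p{x,(A^*\Ric+\Ric A)x}+2\re\p{u,B^*\Ric x}
   =\|u-Kx\|^2-\|u\|^2-\|x\|^2,
\end{equation*}
after substituting the ARE. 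Integrating on $[0,T]$ and letting $T\to\infty$ (the boundary term $\p{x(T),\Ric x(T)}$ vanishes because $x\in\L^2$ and $t\mapsto\|x(t)\|$ is uniformly continuous, as $\dot x\in\L^2_\loc$) yields
\begin{equation*}
  \cJ(x_0,u)=\p{x_0,\Ric x_0}_H+\int_0^\infty\|u(t)-Kx(t)\|_U^2\,\rmd t.
\end{equation*}
Hence the infimum equals $\p{x_0,\Ric x_0}$ and is strictly attained only by the state feedback $u(t)=Kx(t)$, which is admissible since $\efn^{t(A+BK)}$ is exponentially stable. This simultaneously gives (i), the stated formula $\p{x_0,\Ric x_0}$ for the minimal cost, and uniqueness of the minimizer.

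For (iii) $\THEN$ (ii) I would use the value-function approach. Define $V(x_0):=\inf_{u\in\L^2}\cJ(x_0,u)$, which is finite by FCC. Strict convexity and coercivity of $u\mapsto\cJ(x_0,u)$ on the Hilbert space $\L^2(\R_+;U)$, together with weak lower semi-continuity, give existence and uniqueness of an optimal $\uopt(x_0)\in\L^2$; the map $x_0\mapsto\uopt(x_0)$ is linear and, by the closed graph theorem applied to $x_0\mapsto(\uopt,\xopt)\in\L^2\times\L^2$, bounded. By polarization, $V$ is a bounded, non-negative, self-adjoint quadratic form, hence $V(x_0)=\p{x_0,\Ric x_0}$ for a unique $\Ric=\Ric^*\ge0$ in $\BL(H)$. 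Bellman's principle,
\begin{equation*}
  V(x_0)=\int_0^s\left(\|\xopt\|^2+\|\uopt\|^2\right)\rmd t+V(\xopt(s))\quad(s\ge0),
\end{equation*}
differentiated at $s=0$ along orbits with $x_0\in\Dom(A)$ and $u$ chosen freely, gives the variational inequality whose first-order condition forces $\uopt(t)=-B^*\Ric\,\xopt(t)=K\xopt(t)$ and, once this is inserted, the ARE (\ref{eAREminBsimpleI}). Exponential stability of $\efn^{t(A+BK)}$ then follows from Datko's theorem, because $\xopt=\efn^{t(A+BK)}x_0\in\L^2$ with $\|\xopt\|_{\L^2}^2\le\|\Ric\|\cdot\|x_0\|^2$ for every $x_0\in H$. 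Uniqueness of the exponentially stabilizing $\Ric$ is a byproduct of (ii)$\THEN$(i): any such solution must realize $V(x_0)$, so it equals the $\Ric$ above.

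The main obstacle is the passage from the Bellman identity to the ARE when $A$ is unbounded: one must work only on $\Dom(A)$, approximate general $x_0\in H$ by $\Dom(A)$-orbits, and justify the interchange of differentiation and integration that yields the pointwise equation. Once that is handled (by the bounded-$B$ hypothesis of this section, which makes $\ddt\p{x,\Ric x}$ well behaved whenever $x_0\in\Dom(A)$), the rest is routine Hilbert-space convex analysis and the completion-of-squares identity above.
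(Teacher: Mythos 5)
Your plan is essentially correct, but it is \emph{not} the route the paper takes. The paper gives no direct proof of Theorem~\ref{IntroTh-Uexp} at all: it marks the statement \emph{no proof} and obtains it as a special case of Corollary~\ref{cBARE}(a), which in turn rests on the general WPLS machinery --- Theorem~\ref{PosJcKF} (FCC plus uniform positivity of the Popov operator yields a well-posed optimal state-feedback pair, proved via positivity of the shifted Popov function, Lemma~\ref{lqScoercive}, and the spectral factorization Theorem~\ref{SpF1} fed into Theorem~\ref{GenSpF}(iv)), Lemma~\ref{lBnotmax}, and the IRE--ARE equivalence of Lemma~\ref{lAREIRE}. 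That detour buys generality: the same argument covers unbounded $B$ and $C$, irregular systems, general quadratic (even indefinite) costs, and other optimization domains such as $\dUout$. Your completion-of-squares argument for (ii)$\THEN$(i) and the value-function/Bellman derivation of (iii)$\THEN$(ii) are the classical direct proofs, which are shorter and more transparent precisely because $B$ is bounded here; they correspond roughly to what the paper does in far greater generality in Theorems~\ref{ALSopt}, \ref{OptIRE} and \ref{GenARE}. Two technical points you should tighten: the boundary term $\p{x(T),\Ric x(T)}\to0$ is not a consequence of uniform continuity of $\|x(\cdot)\|$ (for a mild solution $\dot x$ need not lie in $\L^2$); the clean fix is that $x\in\L^2(\R_+;H)$ gives a sequence $T_n\to\infty$ with $x(T_n)\to0$, and monotone convergence of $\int_0^T(\|x\|^2+\|u\|^2)$ then upgrades the subsequential limit to the full limit. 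Likewise, the minimization in (iii)$\THEN$(ii) must be carried out on the Hilbert space $\dUexp(0)$ equipped with the graph norm $\|u\|_2^2+\|x\|_2^2$ (the paper's $\|\cdot\|_{\gUstar}$), not on $\L^2$ itself, since $\cJ(x_0,\cdot)$ is only finite there; coercivity is then automatic because the cost \emph{is} that norm squared. With those repairs your plan is a complete and valid alternative proof for the bounded-$B$ case.
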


This is a special case of Corollary~\ref{cBARE}(a). %
In fact, a solution of (\ref{eAREminBsimpleI})
 is exponentially stabilizing iff it is nonnegative.

By the {\em transfer function}\label{pageTrFctIntro} of the system  (\ref{exy=ABCDintro})
 we mean the map $s\mapsto \hqD(s)\in\BL(U,Y)$, where
 \begin{equation}
   \label{eIntroTrFct}
   \hqD(s):=D+C(s-A)^{-1}B.
 \end{equation}
When $x_0=0$, we have $\hy (s)= \hqD(s)\hu(s)$
 for each $s$ on some right half-plane;
 here $\hu(s):=\int_0^\infty \efn^{-st}u(t)\,dt$
 is the {\em Laplace transform} of $u$.
This fact follows from the identity
 \begin{equation}
   \label{eIntroABhat}
   (s-A)\hx(s)=x_0+B\hu(s),
 \end{equation}
 which is a direct consequence of  (\ref{exy=ABCDintro})
 (and Lemma~\ref{lhf'}).

If we allow for an {\em external input}\label{pageextinputIntro} $\uc\in\L^2(\R_+;U)$
 to the state-feedback loop of Theorem~\ref{IntroTh-Uexp},
 i.e.\ $u(t)=Kx(t)+\uc(t)\ \all t\ge0$.
For $x_0=0$ this leads to
 $(s-A)\hx(s)=B(K\hx(s)+\huClL(s))$, i.e., to
 \begin{equation}
  \label{ehxhuhyClL}
   \hx(s)=(s-A-BK)^{-1}B\uc(s),\ \ \hu=\hqM\huc,\ \ \hy=\hqN\huc,
 \end{equation}
 on some right half-plane,
 where $\hqM(s):=I+K(s-A-BK)^{-1}B$,
 $\hqN(s)=D+(C+DK)(s-A-BK)^{-1}B$.

We call a state-feedback operator $K:\Dom(A)\to U$
 {\em admissible} 
 for the system (\ref{exy=ABCDintro})
 if the map $\uc\to u$ and its inverse are
  locally bounded in $\L^2$.
An equivalent requirement is that $\hqM$ and $\hqM^{-1}$
 are bounded on some right half-plane. %
A sufficient condition is that $K$ is {\em bounded} ($K\in\BL(H,U)$),
 but in a more general setting with an unbounded $B$ ($B\not\in\BL(U,H)$)
 one sometimes needs an unbounded $K$
 to make $\efn^{\cdot(A+BK)}$ stable.

Theorem~\ref{IntroTh-Uexp} implies the following:
\begin{cor}\label{cIntroOptim}%
The system satisfies the state-FCC (\ref{eFCCUexp})
 iff it is exponentially stabilizable.
\end{cor}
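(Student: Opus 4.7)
The plan is to prove both implications by directly invoking Theorem~\ref{IntroTh-Uexp} for one direction and giving a one-line feedback argument for the other.

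First I would handle the $(\Leftarrow)$ direction, since it is immediate. Assume there is an admissible $K$ such that $\efn^{t(A+BK)}$ is exponentially stable. Given any $x_0\in H$, set $\uc\equiv 0$ in the closed-loop equations, so that the resulting control is $u(t)=Kx(t)$ and the state is $x(t)=\efn^{t(A+BK)}x_0$. By exponential stability, $x\in\L^2(\R_+;H)$, and since $K\in\BL(H,U)$ (here $K=-B^*\Ric$ with both $B$ and $\Ric$ bounded), also $u=Kx\in\L^2(\R_+;U)$. This is exactly the state-FCC \eqref{eFCCUexp}.

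For the $(\Rightarrow)$ direction, I would invoke the equivalence $(\text{iii})\Leftrightarrow(\text{ii})$ of Theorem~\ref{IntroTh-Uexp}. The state-FCC is hypothesis (iii); hence the ARE \eqref{eAREminBsimpleI} admits a selfadjoint solution $\Ric\in\BL(H)$ such that $K:=-B^*\Ric$ exponentially stabilizes the semigroup $\efn^{t(A+BK)}$. Since $B\in\BL(U,H)$ and $\Ric\in\BL(H)$, the feedback operator $K$ lies in $\BL(H,U)$ and is therefore admissible. Thus the system is exponentially stabilizable.

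I do not anticipate any serious obstacle here: the content of the corollary is essentially the logical combination $(\text{iii})\Leftrightarrow(\text{ii})$ of Theorem~\ref{IntroTh-Uexp}, supplemented by the trivial observation that boundedness of $K$ automatically gives admissibility and that exponential decay of the state propagates to the feedback control via $u=Kx$. The only conceptual point worth highlighting is that, in this section's bounded-generator setting, ``exponential stabilizability'' collapses to the existence of a bounded $K$ with $\efn^{t(A+BK)}$ exponentially stable, so no subtleties concerning unbounded $K$ or well-posedness of the feedback loop arise; these are precisely the issues that will need care in the more general sections later in the paper.
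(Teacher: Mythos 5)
Your $(\Rightarrow)$ direction is exactly the paper's intended argument: the state-FCC gives, via (iii)$\Rightarrow$(ii) of Theorem~\ref{IntroTh-Uexp}, the exponentially stabilizing solution $\Ric$ with $K=-B^*\Ric\in\BL(H,U)$, and boundedness of $K$ makes it admissible. No issue there.

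The $(\Leftarrow)$ direction contains a slip. The hypothesis is only that \emph{some} admissible $K$ exponentially stabilizes the semigroup, and the paper's notion of admissibility explicitly allows $K:\Dom(A)\to U$ to be unbounded (boundedness of $K$ is stated only as a \emph{sufficient} condition for admissibility); in particular the parenthetical ``here $K=-B^*\Ric$ with both $B$ and $\Ric$ bounded'' imports the operator produced in the \emph{other} direction and is not available as a hypothesis here. Consequently the step ``$x\in\L^2\Rightarrow u=Kx\in\L^2$ since $K\in\BL(H,U)$'' is not justified for a general admissible $K$, and your closing remark that exponential stabilizability ``collapses to the existence of a bounded $K$'' is likewise unwarranted (even with bounded $B$, an admissible feedback operator need not be bounded). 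The correct route is to use admissibility itself: it makes the closed-loop system a well-posed system, and exponential stability of the closed-loop semigroup then forces the entire closed-loop system to be exponentially stable (Datko's theorem, cf.\ Lemma~\ref{lExpStable}), so in particular the feedback control $u$ lies in $\L^2(\R_+;U)$ together with $x$, which is the state-FCC~(\ref{eFCCUexp}). With that replacement the proof is complete and coincides with the paper's.
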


{\em Exponentially stabilizable} means that there exists
  an admissible $K$  s.t.\ the semigroup %
  generated by $A+BK$
  is exponentially stable.
Our generalization of Corollary~\ref{cIntroOptim}
 (Corollary~\ref{cOptExpstab1})
 solves positively the ``optimizability = exponential stabilizability'' problem
 studied in, e.g., [WR00].

Similar results also hold for the alternative (LQR) cost function
\begin{equation}
  \label{ecJy2u2}
  \cJ(x_0,u)=\int_0^\infty \left( \|y(t)\|_Y^2+\|u(t)\|_U^2\right)\,dt:
\end{equation}
\begin{theorem}[$\int_0^\infty\|y\|^2+\|u\|^2$]\label{IntroTh-Uout} %
Assume that $D=0$. Then the following are equivalent:
\begin{itemlist}
  \item[(i)] %
 For each initial state $x_0\in H$,
 there exists a unique
 control that minimizes (\ref{ecJy2u2}). 

  \item[(ii)] \ItemName{ARE} The algebraic Riccati equation
\begin{equation}
  \label{eAREminBsimpleCC}
  \Ric BB^*\Ric=A^*\Ric+\Ric A+C^*C
\end{equation}
  has a nonnegative solution $\Ric\in\BL(H)$.

 \item[(iii)] \ItemName{output-FCC}\label{pageoutput-FCC} For each $x_0\in H$,
 there is $u\in\L^2(\R_+;U)$ s.t.\ $y\in\L^2(\R_+;Y)$.
\end{itemlist}

Assume that (ii) has a solution. 
Then there is a smallest nonnegative solution $\Ric\in\BL(H)$
 of (\ref{eAREminBsimpleCC}),
 and the (state-feedback) control $u(t)=Kx(t)$
 strictly minimizes the cost (\ref{ecJy2u2})
 for any initial state $x_0\in H$, 
 where $K:=-B^*\Ric$.
Moreover, the minimal cost equals $\p{x_0,\Ric x_0}_H$.\noproof
\end{theorem}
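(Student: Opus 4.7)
The structure parallels Theorem~\ref{IntroTh-Uexp}; the essential new feature is that $\|y\|^2=\|Cx\|^2$ is a weaker penalty than $\|x\|^2$, so the closed loop need not be exponentially stable and one must work with a \emph{smallest} (rather than unique) nonnegative solution. I would prove (i)$\Rightarrow$(iii)$\Rightarrow$(ii)$\Rightarrow$(i). The implication (i)$\Rightarrow$(iii) is immediate, since the optimal control realizes finite cost. For (iii)$\Rightarrow$(ii), define the value function $V(x_0):=\inf_{u\in\L^2(\R_+;U)}\cJ(x_0,u)$, which is everywhere finite by the output-FCC. Strict convexity of $u\mapsto\cJ(x_0,u)$ (already from the $\|u\|^2$ term) together with linearity of the input-to-state/output map gives a unique minimizer $u_\opt$ and shows that $V$ is a nonnegative quadratic form, $V(x_0)=\p{x_0,\Ric x_0}_H$ for some self-adjoint $\Ric\ge 0$ on $H$. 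Finiteness of $V$ on all of $H$ plus a closed graph / uniform boundedness argument upgrades $\Ric$ to lie in $\BL(H)$. Bellman's principle along the optimal trajectory,
\begin{equation*}
\p{x_0,\Ric x_0}=\int_0^t(\|y_\opt\|^2+\|u_\opt\|^2)\,ds+\p{x(t),\Ric x(t)},
\end{equation*}
differentiated at $t=0^+$ for $x_0\in\Dom(A)$, and combined with the first-order condition $u_\opt(0)=-B^*\Ric x_0$ (obtained by perturbing $u$ at $t=0$), produces the ARE~(\ref{eAREminBsimpleCC}).

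For (ii)$\Rightarrow$(i), given a nonnegative $\Ric$ solving the ARE, set $K:=-B^*\Ric\in\BL(H,U)$ and compute $\tfrac{d}{dt}\p{x,\Ric x}=\p{x,(A^*\Ric+\Ric A)x}+2\mathrm{Re}\,\p{u,B^*\Ric x}=-\|Cx\|^2-\|Kx\|^2-2\mathrm{Re}\,\p{u,Kx}$ by direct substitution of (\ref{eAREminBsimpleCC}). Rearranging yields the completion-of-squares identity
\begin{equation*}
\int_0^T(\|y\|^2+\|u\|^2)\,dt=\p{x_0,\Ric x_0}-\p{x(T),\Ric x(T)}+\int_0^T\|u-Kx\|^2\,dt
\end{equation*}
for $x_0\in\Dom(A)$ and any $u\in\L^2$ with $y\in\L^2$. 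Taking $u=Kx$ kills the last integral and, by $\Ric\ge 0$, gives $\cJ(x_0,Kx)\le\p{x_0,\Ric x_0}$; in particular the closed-loop cost is finite.

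The identification of the right $\Ric$ as the smallest nonnegative solution (and as the value-function operator $\Ric_{\min}$ from step~2) is then forced by comparison: for any nonnegative solution $\tilde\Ric$, its closed-loop control $u=-B^*\tilde\Ric x$ satisfies $\cJ(x_0,u)\le\p{x_0,\tilde\Ric x_0}$ by the analogous identity, so $V(x_0)\le\p{x_0,\tilde\Ric x_0}$ and thus $\Ric_{\min}\le\tilde\Ric$. The main technical point I expect to have to work out carefully is the limit $\p{x(T),\Ric_{\min}x(T)}\to 0$ along the optimal closed-loop trajectory, needed to upgrade the inequality to the equality $\cJ(x_0,K_{\min}x)=\p{x_0,\Ric_{\min}x_0}$ and to exclude minimization by any competitor; this should follow from $\p{x(T),\Ric_{\min}x(T)}=V(x(T))$ being dominated by the tail cost $\int_T^\infty(\|y\|^2+\|u\|^2)\,ds$, which vanishes as $T\to\infty$ whenever the total cost is finite. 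The absence of a direct coercivity bound $C^*C\ge\eps I$ (contrast with Theorem~\ref{IntroTh-Uexp}) is precisely what prevents the stronger exponential-stability conclusion and forces the use of $\Ric_{\min}$ rather than uniqueness.
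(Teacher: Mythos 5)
Your argument is correct, but it is not the route the paper takes. The paper gives no direct proof of Theorem~\ref{IntroTh-Uout}: it declares it a special case of Corollary~\ref{cBARE}(b), which is in turn extracted from the general well-posed-linear-systems machinery of Sections~5--10 — positive $J$-coercivity of the Popov Toeplitz operator, Theorem~\ref{PosJcKF} (existence of a well-posed optimal state-feedback pair, proved via spectral factorization of the shifted Popov function), the integral Riccati equation, and Corollary~\ref{cPosIRE}(c) for the identification of the optimal feedback with the \emph{smallest} nonnegative solution. You instead give the classical bottom-up argument that is available precisely because $B$ and $C$ are bounded here: the value function is a bounded nonnegative quadratic form $\p{x_0,\Ric x_0}$, dynamic programming plus the costate/first-order condition yields the ARE, and completion of squares plus the comparison $V(x_0)\le\p{x_0,\tilde\Ric x_0}$ handles the converse and minimality. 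You also correctly isolated the one genuinely delicate step — forcing $\p{x(T),\Ric x(T)}\to0$ along optimal trajectories by dominating it with the tail cost; this is exactly the paper's ``residual cost condition'' (\ref{eRCC2}), and your resolution of it is the standard one. What each route buys: your proof is self-contained and elementary, but it is essentially the [FLT88]/[Z96]-style argument the paper cites as known, and it does not survive unbounded $B$ or $C$ (the pointwise feedback $u_\opt(0)=-B^*\Ric x_0$ and the boundedness of $K=-B^*\Ric$ are exactly what fail there); the paper's detour through spectral factorization is what yields the general WPLS statement and, as a byproduct, the well-posedness of the feedback loop and the normalized quasi--right-coprime factorization of Theorem~\ref{QRCF-Uout}, which is the article's actual new content. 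Two sketch-level points you should still write out if you formalize this: the passage from the variational condition to the pointwise costate identity $u_\opt(s)=-B^*p(s)$ with $p(s)=\Ric x_\opt(s)$ (needed before you may differentiate the Bellman identity at $t=0$), and the density/approximation step justifying the completion-of-squares identity for arbitrary $u\in\L^2$ rather than smooth $u$.
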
 

This is a special case of Corollary~\ref{cBARE}(b).

\begin{cor}\label{cQRCF-intro}%
The system satisfies the output-FCC \ref{IntroTh-Uout}(iii)
 iff it is output-stabilizable.
\end{cor}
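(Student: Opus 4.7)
The plan is to mirror the derivation of Corollary~\ref{cIntroOptim} from Theorem~\ref{IntroTh-Uexp}, using Theorem~\ref{IntroTh-Uout} in place of Theorem~\ref{IntroTh-Uexp}. Output-stabilizability is understood here as the existence of an admissible state-feedback operator $K$ for which, taking $u(t)=Kx(t)+\uc(t)$ in (\ref{exy=ABCDintro}), the maps $(x_0,\uc)\mapsto u$ and $(x_0,\uc)\mapsto y$ are bounded from $H\times\L^2(\R_+;U)$ to $\L^2(\R_+;U)$ and $\L^2(\R_+;Y)$, respectively. Stripped to $\uc=0$, this says that for every $x_0\in H$ the closed-loop control $u=Kx$ lies in $\L^2(\R_+;U)$ and the corresponding output $y$ lies in $\L^2(\R_+;Y)$.

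For the implication \emph{output-stabilizable} $\Rightarrow$ \emph{output-FCC}, the $\uc=0$ closed-loop trajectory immediately supplies a witness $u\in\L^2(\R_+;U)$ for condition \ref{IntroTh-Uout}(iii).

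For the converse, I would invoke Theorem~\ref{IntroTh-Uout}: the output-FCC yields a nonnegative solution $\Ric\in\BL(H)$ of (\ref{eAREminBsimpleCC}), and the state feedback $K:=-B^*\Ric$ produces the cost-minimizing control $u(t)=Kx(t)$, which satisfies $\int_0^\infty(\|y(t)\|^2+\|u(t)\|^2)\,dt=\p{x_0,\Ric x_0}_H<\infty$ for every $x_0\in H$. In the bounded-$B$ setting of this section, $B\in\BL(U,H)$ and $\Ric\in\BL(H)$ force $K\in\BL(H,U)$, so $A+BK$ generates a $C_0$-semigroup (as a bounded perturbation of $A$) and $K$ is admissible; in particular, well-posedness with respect to an external input $\uc$ follows from (\ref{ehxhuhyClL}), since $\hqM(s)=I+K(s-A-BK)^{-1}B$ and its inverse are uniformly bounded on some right half-plane. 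Hence $K$ is output-stabilizing.

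The only genuine subtlety is to ensure that the feedback supplied by the Riccati-based construction really is admissible in the sense used to define output-stabilizability; here boundedness of $K$ makes this automatic, and no further work is needed. The deeper generalization (unbounded $B$, regularity of $K$, well-posedness of the external-input loop) is the content of the later sections, and I expect that step --- showing admissibility of the optimal $K$ without a priori boundedness --- to be the main technical obstacle in those generalizations, but it is not an issue here.
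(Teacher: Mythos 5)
Your argument is correct and is exactly the route the paper intends: Corollary~\ref{cQRCF-intro} is read off from Theorem~\ref{IntroTh-Uout} in the same way Corollary~\ref{cIntroOptim} is read off from Theorem~\ref{IntroTh-Uexp} --- the feedback $K=-B^*\Ric$ is automatically bounded (hence admissible) because $B$ is bounded in this section, and the finite minimal cost $\p{x_0,\Ric x_0}$ forces $u,y\in\L^2$ along the closed loop. One caution: the definition of output-stabilizability you announce at the outset (boundedness of $(x_0,\uc)\mapsto(u,y)$ for all external inputs $\uc\in\L^2$) is strictly stronger than the paper's ($u,y\in\L^2$ under $u=Kx$ with $\uc=0$, plus admissibility of the loop), and your proof establishes only the latter; the full external-input $\L^2$-stability is the ``even more is true'' claim that the paper defers to the general WPLS machinery (Theorem~\ref{QRCF-Uout}), so you should work with the paper's weaker definition, under which your proof is complete.
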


{\em Output-stabilizable}\label{pageoutput-stabilizable} means that there exists
  an admissible (state-feedback operator)
 $K$ s.t.\ %
 $u,y\in\L^2$ for each initial state $x_0\in H$ under 
  $u(t)=Kx(t)$. %
Even more is true: $u,y\in\L^2$ for any $x_0\in H$
 and $\uc\in\L^2(\R_+;U)$, and the maps $\uc\to\sbm{y\cr y}$
 are coprime in a sense which we will describe below
 if we choose $K$ as in Theorem~\ref{IntroTh-Uout}.

The above claim ``$\uc\in\L^2 \TTHEN u,y\in\L^2$'' %
 implies that the transfer functions
 $\smash{\sbm{\hqN\cr \hqM}}:\huClL\to\smash{\sbm{\hy\cr \hu}}$ 
 (have holomorphic extensions that)
 are bounded on the right half-plane $\C^+:=\{s\in\C\I \re s>0\}$.
We also show that %
 the maps $\hqN$ and $\hqM$
 are {\em q.r.c.} (quasi--right coprime),
 which means that 
 $\smash{\sbm{\hqN\cr \hqM}}\hf\in\shat{\L^2}\TTHEN f\in{\L^2}\ \all f$
 (see Definition~\ref{drcf}(a));
 this implies that $\hqN$ and $\hqM$ do not have common zeros on $\C^+$
 and is as good as the ``standard right coprimeness''
 in typical applications
 (and equivalent to it
  at least if $\dim U<\infty$ and $\hqN,\hqM$ are continuous on $\CRHP$,
  by the proof of Lemma~\ref{lNMrcUfinite}). %

Using our generalization of Corollary \ref{cQRCF-intro},
 we show that any holomorphic map having a ``stable (right) factorization'' has
 a ``q.r.c.\ factorization'':
\begin{theorem}[Right-coprime factorization]\label{IntroQRC}
Given any holomorphic, bounded maps
 $\hqN:\C^+\to\BL(U,Y),\ \hqM:\C^+\to\BL(U)$
 such that $\hqM^{-1}$ exists and is bounded on some right half-plane,
 there are $\hqN_2,\hqM_2$ that satisfy the same conditions,
 $\hqN\hqM^{-1}=\hqN_2\hqM_2^{-1}$,
 and, in addition, $\hqN_2$ and $\hqM_2$ are q.r.c.
\end{theorem}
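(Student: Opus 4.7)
The plan is to reduce this theorem to (the WPLS-generalization of) Corollary \ref{cQRCF-intro} by realizing the quotient $\hqP:=\hqN\hqM^{-1}$ as the transfer function of a well-posed linear system that is output-stabilizable, and then reading off the q.r.c.\ factorization from the optimal state-feedback loop supplied by the Riccati theory of later sections.

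First I would construct an appropriate WPLS. Since $\hqN,\hqM\in\H^\infty(\C^+;\cdot)$ and $\hqM^{-1}$ is bounded on some right half-plane $\{\re s>\omega\}$, the block $\sbm{\hqN\\ \hqM}$ is itself a stable transfer function, so it can be realized as the input/output map of a (WPLS-theoretic) exponentially stable system $\ALS_0$ from an auxiliary input $\uc$ to the joint output $\sbm{y\\ u}$, i.e.\ $y=\hqN\uc$ and $u=\hqM\uc$. The hypothesis that $\hqM^{-1}$ is bounded on $\{\re s>\omega\}$ guarantees that the map $u\mapsto\uc=\hqM^{-1}u$ is also admissible (locally $\L^2$-bounded after exponential weighting), so $\ALS_0$ can be reinterpreted as a WPLS $\ALS$ with true input $u$, output $y$, and transfer function $\hqP=\hqN\hqM^{-1}$, on top of which $\uc$ acts as an admissible external signal in a closed-loop feedback configuration. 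By construction, for every $\uc\in\L^2(\R_+;U)$ one has $u\in\L^2(\R_+;U)$ and $y\in\L^2(\R_+;Y)$, and a standard density/exponential-weight argument extends this to arbitrary initial states $x_0\in H$. This verifies the output-FCC \ref{IntroTh-Uout}(iii) for $\ALS$.

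Second, I would invoke the WPLS-version of Theorem \ref{IntroTh-Uout} and Corollary \ref{cQRCF-intro} (stated later as Corollary \ref{cBARE}(b)) to produce an admissible state-feedback operator $K$ for $\ALS$ such that, under $u=Kx+\uc$, both $u$ and $y$ lie in $\L^2(\R_+)$ for every $\uc\in\L^2(\R_+;U)$ and every $x_0\in H$. Setting
\begin{equation*}
  \hqM_2(s):=I+K(s-A-BK)^{-1}B,\qquad \hqN_2(s):=\hqP(s)\hqM_2(s),
\end{equation*}
as in (\ref{ehxhuhyClL}), these are holomorphic and bounded on some right half-plane, satisfy $\hqN_2\hqM_2^{-1}=\hqP=\hqN\hqM^{-1}$, and the pair $(\hqN_2,\hqM_2)$ is q.r.c.\ in the sense of Definition \ref{drcf}(a) by the same argument that yields q.r.c.\ in Corollary \ref{cQRCF-intro}: admissibility of $K$ together with boundedness of $\hqM_2^{-1}$ on a right half-plane forces $\sbm{\hqN_2\\ \hqM_2}f\in\shat{\L^2}\Rightarrow f\in\L^2$.

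The principal obstacle will be the first step, namely constructing a WPLS realization of $\hqP$ from the abstract transfer-function data $(\hqN,\hqM)$ in such a way that the feedback interpretation of the pair is admissible and the output-FCC is genuinely witnessed by the given stable factorization. Once this realization has been carried out — which is where the full WPLS machinery of the paper, including the permissible unboundedness of $B$ and $C$, becomes essential — the remainder of the proof is a direct application of the main Riccati/coprime-factorization results of the later sections.
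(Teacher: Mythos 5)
Your overall architecture --- realize $\qD:=\qN\qM^{-1}$ by a WPLS witnessing the output-FCC, then invoke the optimal-control machinery --- is the paper's own route, namely Corollary~\ref{cQRCFiff} (i)$\THEN$(v)$\THEN$(iii)$\THEN$(ii). The realization step differs in detail: the paper verifies the output-FCC directly for the exactly reachable shift realization $\ssysbm{\tau\pi_-\|\pi_-\crh \pi_+\qD\pi_-\|\qD}$ on $\L^2_\omega(\R_-;U)$ by setting $\tv:=\pi_-\qM^{-1}v$, whereas you realize $\sbm{\qN\cr \qM}$ stably and ``open the loop'' (essentially Lemma 6.6.29 of [M02], which the paper uses only for (ii)$\THEN$(iv)). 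Both work, but note (a) a $\TIC$ map need not admit an \emph{exponentially} stable realization (poles of $\hqM^{-1}$ may accumulate at $i\R$); an output-stable one suffices and is what actually handles arbitrary $x_0\in H$ --- not a density argument; and (b) the formulas $\hqM_2(s)=I+K(s-A-BK)^{-1}B$ presuppose a regular state-feedback \emph{operator} $K$, which the general WPLS theory does not deliver; one must work with a state-feedback pair $\qKF$ and set $\qM_2:=(I-\qF)^{-1}$, $\qN_2:=\qD\qM_2$.

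The genuine gap is your justification of quasi--right coprimeness. You assert that admissibility of $K$ together with boundedness of $\hqM_2^{-1}$ on a right half-plane forces $\sbm{\hqN_2\cr \hqM_2}\hf\in\shat{\L^2}\THEN f\in\L^2$. This is false: Example~\ref{exaUoutUexpstab} gives an admissible, exponentially stabilizing feedback ($K=-2$ for $\ssyspm{1\|1\crh 0\|1}$) whose factors $\hqN=\hqM=(s-1)/(s+1)$ share a zero at $s=1\in\C^+$ and are therefore not q.r.c., even though $\hqM^{-1}$ is bounded on $\C_2^+$. Quasi--right coprimeness is a property of the \emph{minimizing} ($\|u\|_2^2+\|y\|_2^2$-optimal) pair specifically, and in the paper it is extracted from the uniform positivity of the signature operator via Lemma~\ref{lSgg0}: from $S\gg0$ and the IRE one gets $\cJ(0,u)=\p{\qX u,S\qX u}$, hence $\sbm{\qN_2\cr \qM_2}v\in\L^2\THEN \qM_2 v\in\dUout(0)\THEN v=\qX\qM_2 v\in\L^2$. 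Your proof must either reproduce that argument or cite Theorem~\ref{QRCF-Uout}(iii) as a black box; the mechanism you state does not suffice.
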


Thus, we can ``cancel any common zeros of $\hqN$ and $\hqM$ on $\C^+$''.
This and further equivalent conditions on the map $\hqN\hqM^{-1}$
 are given in Corollary~\ref{cQRCFiff}.

By applying Theorem~\ref{IntroTh-Uexp} to 
 the {\em dual system}\label{pagedualsystem}
 $\sbm{A^*&C^*\cr B^*&D^*}$ 
 in place of $\sbm{A&B\cr C&D}$,
 we see that the ``dual'' of the state-FCC (\ref{eFCCUexp}) holds
 iff there exists $\Hgen\in\BL(Y,H)$ s.t.\ $A+\Hgen C$ generates
 an exponentially stable semigroup.
This and (\ref{eFCCUexp}) lead to so called
 {\em doubly coprime factorization (d.c.f.)}
 of the transfer function $\hqD$ and to 
 {\em dynamic (output-feedback) stabilization} 
 of the system.
Conversely, dynamic stabilization leads to a d.c.f.,
 by Lemma~\ref{lDFCdcf} below;
   this is an infinite-dimensional version of 
   the result [S89] by Malcolm Smith.
See Corollary \ref{cJointlySD} and Theorem~\ref{JointlyUout}
  and the references below them for details.
Note that dynamic (I/O-)stabilizaion
 is the same as ``internal stabilization'' 
 in, e.g., [Q03], except that we require the I/O map
 and the controller to be well-posed, i.e.,
 both transfer functions must be bounded on some right half-plane.

The above results are well-known for bounded $B$
 (the same applies to most results mentioned in the remainder
  of this section if we ignore the IREs),
 except for the claims on coprimeness, which have been known for
 finite-dimensional $U,H,Y$ only.
In this article, we shall generalize the above results
 to WPLSs (see below)
 and to general quadratic cost functions in place of $\cJ$
 (including those that are indefinite with respect to $u$).
Also some other results are presented.
However, if $B$ and $C$ are extremely unbounded, %
 then one must use integral Riccati equations instead
 of the algebraic ones above
 and, in the case of indefinite $\cJ$, the optimal
 state-feedback need no longer be admissible.

In Section~\ref{sWPLS},
 we shall define
 {\em WPLSs} (well-posed linear systems, or the Salamon--Weiss class),
 which form a generalization of (\ref{exy=ABCDintro})
 allowing for rather unbounded $B$ and $C$
 (the ``feedthrough'' operator $D=\lim_{s\to+\infty}\hqD(s)$ need not exist;
  if it does, then the WPLS is called {\em regular}). %

In Section~\ref{sFeedback}, 
 we recall what {\em state feedback}
 (the above equation $u=Kx$)
 is in the WPLS context.

In Section~\ref{sOpt}, we shall define a general domain of optimization
 (``the set of admissible inputs $u$ for a given initial state $x_0$'') 
 to replace its special cases 
 (the set of $u$'s in (\ref{eFCCUexp}) or those
  in Theorem~\ref{IntroTh-Uout}(iii)).
Then we define a general cost function $\cJ$ and give sufficient conditions
 for the existence of an optimal control,
 i.e., a control that makes the derivative of the
 cost function vanish.
If the cost function is nonnegative, then such a control is
 cost-minimizing; in the general case it corresponds to a saddle point
 (``maximin'') control,
 which is used to solve, e.g., $\H^\infty$ control problems
 (``the best control for the worst disturbance''; see [M02]).

In Section~\ref{sStabOpt},
 we show that for uniformly positive quadratic cost functions
 (such as (\ref{ecJx2u2}) and (\ref{ecJy2u2})),
 under the (generalized) FCC,
 there is always a unique cost-minimizing state feedback.
The existence of a unique optimal control has been well known
 (see, e.g., [FLT88] or [Z96] for the cost function (\ref{ecJy2u2})),
 but it has not been known that it is given by (well-posed) state feedback.
The corollaries of this result,
 also given in Section~\ref{sStabOpt}, are perhaps the
 main results of this article --- 
 most of results \ref{IntroTh-Uexp}--\ref{IntroQRC}
 are special cases of some of them.

In Section~\ref{sARE} we shall generalize Theorems \ref{IntroTh-Uexp}
 and~\ref{IntroTh-Uout}:
 we shall show that for any regular WPLS
 and any quadratic (possibly indefinite) cost function
 (and any typical domain of optimization),
 there is a (regular) optimal state-feedback operator ($K$)
 iff the ARE has a stabilizing solution.
Here {\em stabilizing}
 means that the resulting controlled system
 is stable in the sense corresponding to the domain of optimization
 (cf.\ Theorem~\ref{IntroTh-Uexp}(iii)).
The necessity of the ARE was originally discovered independently
 by Olof Staffans [S97] and Martin Weiss and George Weiss [WW97],
 for stable regular WPLSs.
The author
 established the converse in [M97] and extended
 the equivalence to the unstable case in [M02].
This equivalence (Theorem~\ref{ARE})
 can be simplified in certain special cases, as we show in 
 Sections \ref{sARE} and~\ref{sMTIC} and in [M02].

In Section~\ref{sIRE}, we generalize Theorem~\ref{ARE}
 to general WPLSs.
Since the ARE cannot be defined for irregular systems, %
 we use the integral Riccati equation (IRE) instead:
 the IRE has a stabilizing solution iff there is some (well-posed) optimal
 state-feedback for the WPLS (Theorem~\ref{GenSpF}).
(The ARE can be used only when $\hqD(+\infty)$ and $\hqM(+\infty)$ exist.)

In Corollary~\ref{cPosIRE}
 we explain the results of Section~\ref{sStabOpt}
 in terms of IREs and AREs
 and show that the word ``stabilizing'' can be replaced by ``nonnegative''
 for the cost functions (\ref{ecJx2u2}) and (\ref{ecJy2u2}).

However, even if there exists a unique optimal control for
 each initial state, 
 the optimal control need not be given by any (well-posed) state feedback
 (except for uniformly positive cost functions,
   as shown in Section~\ref{sStabOpt}).
To treat this most general case,
 we show that a unique optimal control is always given
 by a ``generalized state feedback''
 (in the uniformly positive case this was already known [Z96]),
 and that a third equivalent condition is that a variant of the IRE
 has a stabilizing solution (Theorem~\ref{SIRE}).

Fortunately, if the original system is sufficiently regular
 (we give various alternative assumptions),
 then the ``generalized state feedback'' is nevertheless
 given by a well-posed, even regular state-feedback operator,
 thus making also the AREs and IREs equivalent to the three conditions 
 mentioned above; this is explained in Sections \ref{sMTIC} and~\ref{sARE}.
Section~\ref{sMTIC} focuses on systems for which
 $\efn^{\cdot A}B$ and $C\efn^{\cdot A}B$ are locally integrable.

Most of Sections \ref{soptIRE}--\ref{sStabOptproofs}
 consist of the proofs of the results mentioned above.
Only the simplest proofs have been included in the previous sections.

Theorem~\ref{IntroTh-Uout} was ``generalized'' to WPLSs
 having a bounded output operator ($C$) 
 by Franco Flandoli,
 Irena Lasiecka and Roberto Triggiani in [FLT88],
 using an ``ARE'' given on $\Dom(A+BK)$
 (although the well-posedness of $K$ was not known before this article).
We extend their result to regular WPLSs and to general cost functions
 and domains of optimization in Theorem~\ref{GenARE};
 see Theorem~\ref{OptIRE} (and~\ref{ALSopt}) for the irregular case. 
Also the other variants of the IRE are treated in
 Sections \ref{soptIRE} and~\ref{sIREmore}.

In Section~\ref{sJcoerc} we study the coercivity of the cost function,
 which is a sufficient
 (and in many cases also necessary) condition
 for the existence of a unique optimal control.

In Section~\ref{sConclusions} (``Conclusions''),
 we summarize the Riccati equation and optimization
 theory of this article.
The appendices contain some auxiliary results used in the proofs.

Thus, we generalize and extend most of the theory in [FLT88], %
 [Z96], [S97]--[S98b], [WW97], [M97] %
 and much of that in [M02].
Further notes are given at the end of each of the remaining sections.
Additional notes are given in [M02],
 which also provides numerous further results, details,
  explanations, examples, applications and references
  for much of the theory presented here,
 as well as the corresponding discrete-time results.
\NotesFor{Section~\ref{sIntro}} %
Corollaries \ref{cIntroOptim} and~\ref{cQRCF-intro}
 and Theorem~\ref{IntroQRC} will be extended to general WPLSs
 in Corollary~\ref{cOptExpstab1}, 
 Theorem~\ref{QRCF-Uout} and Corollary~\ref{cQRCFiff}, respectively.
Theorems \ref{IntroTh-Uexp} and~\ref{IntroTh-Uout} 
 will be extended to general WPLSs in 
  Corollary~\ref{cOptExpstab1} and Theorem~\ref{QRCF-Uout},
 respectively, combined with Corollary~\ref{cPosIRE}(a)\&(c);
 see (b) and the remarks below the corollary for cases where
 an ARE can be used instead of the IRE
 (by Theorem~\ref{ARE}, in those cases
   a state-feedback operator can be used instead of a state-feedback pair).
Further discussion on different extensions
 (and on what kind of extensions are not true) 
 is given in Section~\ref{sConclusions}.

\vspace{2ex} %
\noindent{\bf Notation:}%
\addcontentsline{toc}{section}{Notation}
\newbox\BoxA
\newcommand{\Symb}[2]{\vskip 1pt\noindent
\setbox\BoxA=\hbox{$#1$:\quad}%
\hangindent=2 cm%
\ifdim\wd\BoxA > 2.3cm \unhbox\BoxA #2.\else\hbox to
2cm{$#1$:\hfil}#2.\fi} %

\Symb{\ex,\all}{$\ex$ = ``there exists'', $\all$ = ``for all''}
\Symb{*}{unknown/omitted element (e.g., ``$X=\sbm{I&0\cr *&*}$'')}
\Symb{A^{-*}}{$A^*=$ the (Hilbert space) adjoint of $A$;
  $A^{-*}:=(A^{-1})^*=(A^*)^{-1}$}
\Symb{\p{\cdot,\cdot}}{Inner product 
 (usually in $\L^2$ over $\R$)}
\Symb{U,W,H,Y}{Hilbert spaces of arbitrary dimensions; cf.\ p.~\pageref{dWPLS0}}
\Symb{\BL(U,Y)}{Bounded linear maps $U\to Y$. $\BL(U):=\BL(U,U)$}
\Symb{A\gg0}{$A\ge \eps I$ for some $\eps>0$}
\Symb{\Dom(A)}{The domain of the semigroup generator $A$
 with the graph norm $(\|x\|_H^2+\|Ax\|_H^2)^{1/2}$. See Lemma~\ref{lABC}
 for details and for $\Dom(A^*)^*=H_{-1}\supset H$ and $\Dom(A)^*\supset H$}
\Symb{\cG}{The subset (often group) of
 {\em invertible}
 elements
 (e.g., $T\in\cG\BL(X,Y)$ if $ST=I_X$ and $TS=I_Y$ for some $S\in\BL(Y,X)$)}
\Symb{I}{The identity operator,}
\Symb{\R_\pm,\N,\R,\C}{$\R_\pm:=\pm[0,\infty)$, %
 $\N:=\{0,1,2,\ldots\},\ \R:=$real, $\C:=$complex numbers}
\Symb{\C_\omega^+}{The right half-plane $\{z\in\C\I \re z>\omega\}$;
  $\C^+:=\C_0^+$}
\Symb{\wlim}{The weak limit: $\wlim_{n\to\infty}D_n=D\ \IFF\ \p{D_n x,y}\to\p{Dx,y}\ \all x,y$}
\Symb{\Box}{``End of proof''; or at the end of a theorem/result: ``no formal proof follows, see the following text for a proof/reference''}

``Iff''$:=$''if and only if'', ``s.t.''$:=$''such that'', 
 ``w.l.o.g.'':=''without loss of generality'',
  ``w.r.t.'':=''with respect to'', ``one-to-one''$:=$''injective''
 (i.e., $f(x)=f(y)\THEN x=y$). 

We try to explain the rest of the notation as it appears,
 hence the reader may skip the rest of this section at this stage
 and use it later to find forgotten symbols or terms.

See the following pages (or formulas) for the following symbols:
 $\ABCD$ \pageref{lABC}\&\pageref{dReg}, (\ref{exy=ABCDintro}),
 $\ssyspm{A\|B\crh C\|D}$ \pageref{pageABCD},
 $\qABCD$ \pageref{dWPLS0}, (\ref{exy=qABCDintro});
 $(A,B)$,$\bsyspm{A\|B}$,$\qAB$ \pageref{pageAB};
 $H_1,H_{-1}$ \pageref{pageH1},
 $\Hstrong^2$ \pageref{pageH2strong}, %
 $\H^\infty$ \pageref{pageHoo},
 $\H^\infty_\infty$ \pageref{pageHooHoo},
 $H_B$ \pageref{pageHB},
 $J,\cJ$ (\ref{eJ}),
 $\qKF$, $K$, $F$ \pageref{eALSExt}\&\pageref{elALSClLgen}\&\pageref{dIRE},
 $\L_\omega^2,\L^1_\omega$ \pageref{pageL2+}\&\pageref{pageL2omega},
 $\Lc^2:=\{u\in\L^2\I u$ has a compact support$\}$;
 $\qM,\qN$ \pageref{pageqM}\&\pageref{dIRE},
 $\Ric$ \pageref{pageRiccatiOp}\&\pageref{SIRE}\&\pageref{dARE}\&\pageref{OptIRE}\&\pageref{dIRE};
 $\Refl f)(t):=f(-t)$;
 $\PTO$ (Popov Toeplitz operator) \pageref{pageqS},
 $\dUexp$ (\ref{eUexp}), %
 $\dUout$ (\ref{eUout}), %
 $\dUstr$ \pageref{pageUstr},
 $\gUstar$ \pageref{egUstar}, %
 $\|\cdot\|_{\gUstar}$ \pageref{lOptCost0}\&\pageref{pageUexpnorm},
 $u,x,y,x_0$ \pageref{dStateOutput},
 $\W^{1,2}_\omega$ \pageref{pageW12},
 $\sZ,\uZ,\tY,\qQR,\qRR$ \pageref{pagesZ},
 $\pi_\pm$ \pageref{pagepiplus},
 $\pi_E$ \pageref{pagepiplus},
 $\rho(A):=\sigma(A)^c$, $\rho_\infty(A)$ \pageref{pagerhoinfty},
 $\tau$ \pageref{pagepiplus},
 $\khi_E$ \pageref{pagepiplus},
 $\omega_A$ \pageref{pageomegaA}.

By $A,B,C,D,K,F,M,N,X$ we denote the generators
 (pp.\ \pageref{pagegenerators1}\&\pageref{pagegenerators2}\&\pageref{pagegenerators3}) of 
 $\qA,\qB,\qC,\qD,\qK,\qF,\qM,\qN,\qX$ respectively; %
 similarly for other pairs of capital and script letters
 (and sub- and superscripts).

Subscripts: 
 $\hqD_\ALS$ \pageref{pageCharFct},
 $\hqXc$ \pageref{pagehqXALSext};
 $\ALSp$ \pageref{eALSp},
 $\ALSClL$ \pageref{eALSb}\&\pageref{dIRE},
 $\Cc,\Dc$ \pageref{lComp},
  $\ALSExt$ \pageref{eALSExt},
 $\ALS_L$ \pageref{eALS_L},
 $\ALSopt$ \pageref{eALSopt};
  $\Bw^*$ \pageref{pageBw}\&\pageref{eCw},
 $\Cw$, $\Kw$ \pageref{eCw}.

Superscripts:
 $\hat{\cdot}$ \pageref{eLaplace}\&\pageref{lCharFct},
 $\check{\cdot}$ \pageref{lCharFct}, %
 $\hqA;\hqB;\hqC,\hqK$ \pageref{lCharFct},
 $\hqD,\hqX,\hqF,\hqM$ \pageref{TransferFct1}\&\pageref{lCharFct},
 $B^*;C^*,K^*$ \pageref{pageB*}; %
 $\ALS^\rmd$: see ``dual system'' below;
 $\ALStau$ \pageref{pageALStau};
 $\qA^t;\ \qBt:=\qB\tau^t\pi_+;\ \qCt:=\piOt\qC,\ \qKt;\
  \qDt:=\piOt\qD\piOt,\ \qFt,\ \qXt,\ \qMt,\ \qNt$ %
 (\ref{eWPLSdiscr}).
Non-generic symbols having superscripts:
 $P^t:=\piOt + \tau^{-t}\oqK\qBt$ \pageref{pagePt}, %
 $\qSt$ (\ref{eS2XSX=}) \& \pageref{pageqSt2},
 $\hqS$ (\ref{eS2hXSX=}).

Acronyms:
 ARE$:=$Algebraic RE \pageref{dARE}, 
 \BwARE\ \pageref{pageBwARE},
 FCC means that $\gUstar(x_0)\ne\tyhja\ \all x_0\in H$
 (cf.\ pp. \pageref{pageFCC}\&\pageref{eFCCUexp}\&\pageref{pageoutput-FCC});
 \optIRE\ \pageref{pageoptIRE},
 \hoptIRE\ \pageref{pagehoptIRE},
 IRE$:=$integral RE \pageref{pageIRE};
 q.r.c., r.c., d.c. \pageref{drcf}\&\pageref{pageqrc-}\&\pageref{pageexprc};
 RE$:=$Riccati equation, 
 $\MTIC$ \pageref{pageMTIC},
 $\WTIC$ \pageref{pageWTIC},
 RCC \pageref{pageRCC},
 $\rconn$ \pageref{pagerconn}, 
 \SIRE,\hSIRE\ \pageref{pageSIRE},
 $\TIC:=\TIC_0$,
 $\TIC_\omega$ \pageref{pageTIC},
 WPLS \pageref{dWPLS0},
 WR, SR, UR, ULR \pageref{dReg}.

Terms:
 {\em adjoint} see dual;
 {\em admissible} \pageref{pageadmissible1}\&\pageref{pageadmissible3}\&\pageref{pageadmissible2}\&\pageref{pageadmissibleL},
 {\em bounded $B,C$} \pageref{pageboundedBC}, %
 {\em $B$ not maximally unbounded} \pageref{pageBnotmax},
 {\em characteristic function} \pageref{pageCharFct},
 {\em closed-loop system} \pageref{pageClL-L}\&\pageref{pageClL-KF},
 {\em control in WPLS form} \pageref{pagecontrolWPLSform}, %
 {\em coprime} \pageref{drcf},
 {\em cost function $\cJ$} \pageref{pagecostfunction},
 {\em detectable} \pageref{pageExpDet}\&\pageref{pageExpDet2},
 {\em discrete subset} \pageref{pagediscrete0}, %
 {\em dual system}
 ($\ALS^\rmd=\smash{\big(\ssysm{A^*\|B^*\crh C^*\|}\big)}$,
  $\smash{\shat{\qD^\rmd}}(s)=\hqD(\bar s)^*$):
 p.~\pageref{pagedualsystem} and [M02], %
 {\em dynamic feedback controller} \pageref{pageDFC},
 {\em estimatable} \pageref{pageEst},
 {\em exponentially stable} \pageref{pageExpStab}\&\pageref{pageExpStable2},
 {\em exponentially stabilizing} \pageref{pageExpStab0}\&\pageref{pagestabilizing},
 {\em external input} $\uc$, $u_L$ 
 \pageref{pageextinput1}\&\pageref{pageextinputIntro}\&\pageref{pageextinputuL},
 {\em factorization} \pageref{pagercf}\&\pageref{pageSpF},
 {\em feedthrough} \pageref{pagefeedthrough}, %
 {\em generators} \pageref{pagegenerators1}\&\pageref{pagegenerators2}\&\pageref{pagegenerators3},%
\\{\em $J$-coercive} = $\PTO\in\cG\BL$ = invertible {\em Popov} Toeplitz operator
 (= ``no invariant zeros'' = $\|\qD u\|_{\gUstar}\ge \|u\|\ (u\in\gUstar(0))$
   if $J\gg0$) \pageref{J-coercive}\&\pageref{Assumptions}\&\pageref{pageqS},\\
 {\em $J$-optimal} = ``optimal'' (= ``minimizing'' if $J\ge0$) %
 \pageref{dJopt0}\&\pageref{pageJ-optimalK}, %
 {\em jointly stabilizable and detectable} \pageref{pageJointly}, %
 {\em meromorphic} \pageref{pagemeromorphic1}\&\pageref{pagemeromorphic}, %
 {\em nondiscrete} \pageref{pagediscrete0}, %
 {\em optimizable} \pageref{pageoptimizable},
 {\em output-FCC} \pageref{pageoutput-FCC},
 {\em output-stabilizing} \pageref{pageoutput-stable}\&\pageref{pageoutput-stabilizable},
 {\em Popov} \pageref{pagePTO}, %
 {\em Pritchard--Salamon systems} \pageref{pagePS},
 {\em realization} \pageref{pageRealization},
 {\em regular} \pageref{dReg},
 {\em Riccati operator $\Ric$} ($J$-optimal cost operator) \pageref{pageRiccatioperator}, %
 {\em signature} \pageref{pagesignature}, 
 {\em SOS-stable} \pageref{pageSOS-stabilizing}\&\pageref{pagestabilizing}\&\pageref{pageSOS-stable},
 {\em stabilizing} \pageref{pagestabilizing}, see also ``$\gUstar$-stabilizing'';
 {\em stabilizable} \pageref{pagestabilizable},
 {\em stable} \pageref{pageStable}\&\pageref{pagestable}\&\pageref{pageStableIntro}\&\pageref{pagestableTIC}, %
 {\em state feedback} \pageref{pagestate-feedback},
 {\em state-FCC} \pageref{pagestate-FCC},
 {\em transfer function} 
 \pageref{TransferFct1}\&\pageref{pageTrFctIntro}\&\pageref{rTrFct},
 {\em $\gUstar$-stabilizing} \pageref{pagegUstarstabilizing3}\&\pageref{pagegUstarstabilizing2}\&\pageref{pagegUstarstabilizing4},
 {\em Yosida extension} \pageref{pageYosidaExt}.

Most of the notation and terminology
 and some proofs and further results are presented 
 in greater detail in [M02]
 (under the replacements $\gUstar\mapsto \gUg$,
 opt$\mapsto$crit, $J$-optimal$\mapsto J$-critical,
 ARE$\mapsto$[e]CARE, IRE$\mapsto$IARE,
 $\qA,\qB,...\mapsto{\mathbb A,\mathbb B,...}$).
\section{Well-posed linear systems (WPLSs)}\label{sWPLS} %

If the generators of the system (\ref{exy=ABCDintro}) are {\em bounded},
 i.e., $\ABCD\in\BL(H\times U,H\times Y)$,
 then the unique solution of (\ref{exy=ABCDintro})
 is obviously given by the system
\begin{eqnarray}\label{exy=qABCDintro}
\left\{\bigmatrix{
  x(t)&=&\qA^t x_0 + \qB\tau^t u\\
  y&=&\qC x_0 + \qD u,
}\right.
\end{eqnarray}
 where\vspace{-2ex}
\begin{eqnarray}
   \label{eWPLSfromGensintro}
 \begin{aligned}
   \qA^t  &=\efn^{At},\ & \qB\tau^t u&=\int_0^t \qA^{t-s}Bu(s)\,ds,\\
   (\qC x_0)(t)&=C\qA^t x_0,\ & (\qD u)(t)&=C\qB\tau^t u+Du(t).
 \end{aligned}
 \end{eqnarray}
This is illustrated in Figure~\ref{DiagramFigintro}.
 
\begin{figure} %
\[
        \begin{picture}(40,40)(0,-10)

        \thicklines

        \put(1,-15){
        \framebox(50,33){ %
        $
        \bsysm{\qA \| \qB \tau \crh 
        \qC \| \qD\phantom{\tau}} %
        $}}

        \thinlines

        \put(13,41.5){\vector (0,-1){23}}
        \put(2,32){$x_0$}

        \put(40,41.5){\vector (0,-1){23}}
        \put(31,32){$u$}

        \put(2.5,10){\vector (-1,0){30}}
        \put(-13,13){$x$}
        \put(69,8){$x=\qA x_0+\qB\tau u$}

        \put(2.5,-6){\vector (-1,0){30}}
        \put(-13,-3){$y$}
        \put(69,-8){$y=\qC x_0+\qD u$}

        \end{picture}
        \]
        \caption{Input/state/output diagram of a WPLS {${\btwotwo{\qA}{\qB}{\qC}{\qD}}$}}
        \label{DiagramFigintro}
\end{figure}

The formulae (\ref{exy=ABCDintro}),
 (\ref{exy=qABCDintro}) and (\ref{eWPLSfromGensintro}) are actually valid
 for rather unbounded generators.
Therefore, the WPLSs %
 are defined %
 by requiring $\qA$ to be a strongly continuous semigroup,
 $\qD$ to be time-invariant and causal,
 and $\qB$ and $\qC$ to be compatible with $\qA$ and $\qD$;
 in addition, one requires that
 $\sbm{\qA^t&\qB\tau^t\cr \qC&\qD}$ is linear and
 continuous $H\times \Lloc^2(\R_+;U)\to H\times \Lloc^2(\R_+;Y)$
 for each $t\ge0$,
 equivalently, that %
 \begin{equation}
   \label{}
   \|x(t)\|_H^2+\int_0^t\|y(s)\|_Y^2\,ds\le K_t\big(\|x_0\|_H^2+\int_0^t\|u(s)\|_U^2\,ds\big)
 \end{equation}
 for some (equivalently, all) $t>0$, where $K_t$ depends on $t$ only.
An equivalent formulation (due to Olof Staffans)
 is given in Definition \ref{dWPLS0},
 where we use the unique natural extensions of $\qB$ and $\qD$
 that allow the inputs to be defined on the whole real line,
 thus simplifying several formulae.

We use the notation
 $\L^2_\omega=\efn^{\omega \cdot}\L^2   \label{pageL2omega}
  =\{f\I \efn^{-\omega\cdot}f\in\L^2\}$
 (similarly, $\L^1_\omega:=\efn^{\omega\cdot}\L^1$), %
 $(\tau^t u)(s):=u(t+s)$ and $\pi_\pm u:=\khi_{\R_\pm} u$,\label{pagepiplus}
 where $\khi_E(t):=
 {\scriptsize\begin{cases}
   1,& t\in E;\cr 
   0,& t\not\in E
 \end{cases}}$.
(Similarly, $\pi_E u:=\khi_E u$ when $E\sub\R$.)
We also consider $\pi_+$ as the projection $\L^2(\R;U)\to\L^2(\R_+;U)$
 or as its adjoint. %

Throughout this article, we assume that $\ALS=\qABCD$ is a WPLS on $(U,H,Y)$,
 i.e., that 1.--4.\ below hold for some $\omega\in\R$:
\begin{defin}[WPLS and stability]\label{dWPLS0}
Let $\omega \in \R$. 
An %
 {\em $\omega$-stable well-posed linear system
 on $(U,H,Y)$} %
\index{WPLS|emph}\index{stable!omega-@$\omega$-|emph}
 is a quadruple %
  $\ALS = \SmallbSystem $,
 where $\qA^t$, $\qB$, $\qC$, and $\qD$ are bounded linear operators of the
following type:

\begin{enumerate}

\item[1.] $\qA^\cdot \colon H \to H$ is a strongly continuous semigroup of
bounded linear operators on~$H$ satisfying
 $\sup_{t\ge0} \|\efn^{-\omega t} \qA^t\|_H < \infty$;

\item[2.] $\qB\colon  \L^2_\omega(\R;U) \to H$ satisfies 
        $
        \qA^t \qB u = \qB \tau^t  \piminus u
        $
        for all $u \in \L^2_\omega(\R;U)$ and $t \in \rplus$;

\item[3.] $\qC\colon  H \to \L^2_\omega(\R;Y)$ satisfies
        $
        \qC \qA^t x = \piplus \tau^t \qC x
        $
        for all $x \in H$ and $t \in \rplus$;

\item[4.] $\qD\colon \L^2_\omega(\R;U) \to \L^2_\omega(\R;Y)$ satisfies
        $\tau^t \qD u = \qD \tau^t u$, 
        $\piminus \qD \piplus u = 0$, and
        $\piplus \qD \piminus u = \qC\qB u$
        for all $u \in \L^2_\omega(\R;U)$ and $t \in \R$.

\end{enumerate}

The different components of $\ALS=\SmallbSystem$ are named as follows:
  $U$ is the \indtermem{input space},
 $H$ the \indtermem{state space},
 $Y$ the \indtermem{output space},
 $\qA$ the \indterm{semigroup},
 $\qB$ the \indtermem{reachability map},
 $\qC$ the \indtermem{observability map},
 and $\qD$ the \indtermem{I/O map} (input/output map) of $\ALS$.  

We say that $\qA$ (resp.\ $\qB$, $\qC$, $\qD$) is {\em $\alpha$-stable} %
 if 1. (resp.\ 2., 3., 4.) holds for $\omega=\alpha$.
{\em Stable}\label{pageStable} means $0$-stable;
 {\em exponentially stable}\label{pageExpStable2} means $\omega$-stable for some $\omega<0$.
The system is {\em output stable}\label{pageoutput-stable} (resp.\ {\em SOS-stable}\label{pageSOS-stable})
 if $\qC$ (resp.\ $\qC$ and $\qD$) is stable.
We set $\ALStau\label{pageALStau}:=\qABtauCD$ (cf.\ (\ref{StateOutputDef})). 
\end{defin}

(A {\em SOS} (Stable-Output System) satisfies $y\in\L^2$
 for all $x_0\in H,\ u\in\L^2$, where $y:=\qC x_0+\qD u$.)

Any sub- or superscripts of a system are inherited by its
 parts and generators (see Lemma~\ref{lABC} and Definition~\ref{dReg});
 e.g., $\qA_L,\qB_L,\qC_L,\qD_L$ denote the maps 
 and $A_L,\ B_L,\ C_L,\ D_L$ the generators of $\ALS_L$
 (in Lemma~\ref{lALSL0}).
Practically all conventions above and below follow [S04], [M02] etc.

Exponential stability of a system is equivalent to that of its
 semigroup, hence Datko's Theorem leads to the following:
\begin{lemma}\label{lExpStable} %
A WPLS is  $\omega$-stable for any $\omega>
 \omega_A\label{pageomegaA}:=\inf_{t>0}[t^{-1}\log\|\qA^t\|]$.
In particular, it is exponentially stable
 iff $\qA x_0\in \L^2(\R_+;H)$ for all $x_0\in H$.\noproof
\end{lemma}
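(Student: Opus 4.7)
There are two claims to establish, and the second is essentially a consequence of the first plus a classical theorem.

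For the first claim, fix $\omega > \omega_A$ and let $\omega_0$ be a value for which the WPLS is known (by hypothesis) to be stable. I would verify conditions 1--4 of Definition~\ref{dWPLS0} at level $\omega$ one by one. For condition 1, the standard fact that $t\mapsto \log\|\qA^t\|$ is subadditive gives $\omega_A = \lim_{t\to\infty} t^{-1}\log\|\qA^t\|$, hence for each $\omega > \omega_A$ there exists $M_\omega < \infty$ with $\|\qA^t\|\le M_\omega \efn^{\omega t}$ for all $t\ge0$, which is exactly $\sup_{t\ge0}\|\efn^{-\omega t}\qA^t\|<\infty$.

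For conditions 2--4, the point is that $\qB,\qC,\qD$ need to be bounded on the correspondingly weighted $\L^2$ spaces, and the intertwining identities must hold there. Since $\L^2_{\omega}$ and $\L^2_{\omega_0}$ agree locally and differ only at $\pm\infty$, I would split into the two subcases $\omega\ge\omega_0$ (extension) and $\omega_A<\omega<\omega_0$ (restriction). In the extension case, for $u\in \L^2_\omega(\R;U)$ approximate by truncations $u_T:=\pi_{[-T,\infty)}u\in\L^2_{\omega_0}(\R;U)$; the semigroup relation in condition~2 gives $\qB(\tau^{T}\pi_-\tau^{-T} u) = \qA^T\qB(\tau^{-T}u)$, so the contribution to $\qB u$ from $(-\infty,-T]$ is controlled by $\|\qA^T\|\le M_\omega\efn^{\omega T}$ times an $\L^2_{\omega}$-norm of the tail, which is summable as $T\to\infty$ because $\omega>\omega_A$. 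An analogous telescoping using $\tau^t\qD=\qD\tau^t$ bounds $\qD$ on $\L^2_\omega$. In the restriction case, the smaller space $\L^2_\omega$ (at $-\infty$) embeds into $\L^2_{\omega_0}$ for inputs supported in the past, so the required bounds follow directly from $\omega_0$-stability after rescaling by $\efn^{-\omega\cdot}$ versus $\efn^{-\omega_0\cdot}$. The intertwining identities (causality, time-invariance, $\piplus\qD\piminus=\qC\qB$) transfer to the new domain by density since they already hold on $\L^2_{\omega_0}$-dense subspaces such as $\Lc^2$.

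For the second claim, the ``only if'' direction is immediate: if the system is $\omega$-stable with $\omega<0$, then $\|\qA^t x_0\|\le M\efn^{\omega t}\|x_0\|$ gives $\qA x_0\in\L^2(\R_+;H)$. The converse is Datko's theorem applied to the $C_0$-semigroup $\qA$ on the Hilbert space $H$: the hypothesis $\qA x_0\in\L^2(\R_+;H)$ for every $x_0\in H$ forces $\omega_A<0$, whereupon the first claim of the lemma supplies $\omega$-stability for some $\omega<0$.

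The main obstacle is the technical extension/restriction of $\qB$ and $\qD$ to the new weighted spaces in the first claim, but this is a routine truncation argument once the uniform bound $\|\qA^t\|\le M_\omega \efn^{\omega t}$ from condition~1 is in hand; no deeper tool than a geometric series in $\efn^{\omega T}$ is needed.
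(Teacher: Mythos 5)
Your proposal is correct and follows the same route the paper indicates: the lemma is stated with no proof, only a citation to [M02] and the remark immediately preceding it that ``exponential stability of a system is equivalent to that of its semigroup, hence Datko's Theorem leads to the following'' --- which is exactly your decomposition (growth bound for $\qA$ from subadditivity, transfer of $\omega$-stability to $\qB,\qC,\qD$ by truncation and a geometric series in the gap $\omega-\omega_A$, then Datko plus the first claim for the second claim). One cosmetic slip: for $\qB$ only the past of $u$ matters and the weighted spaces on $\R_-$ include in the reverse direction, so the truncation/geometric-series extension argument is actually needed in the case $\omega<\omega_0$ rather than $\omega\ge\omega_0$; the argument itself is the right one, just attached to the wrong subcase.
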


(See Lemmas 6.1.10(a1) and A.4.5 of [M02].)
\begin{defin}[State and output]\label{dStateOutput}
With initial time zero,
 \indtermem{initial value} $x_0 \in H$, and \indtermem{control}
 (or \indtermem{input})
$u \in \L^2_\omega(\R_+;U)$, the controlled
 \indtermem{state} $x(t)\in H$ at time $t \in
\rplus$ and the \indtermem{output} $y\in \L^2_\omega(\rplus,Y)$ of $\ALS$
 are given by (cf.\ Figure \ref{DiagramFigintro})
        \begin{equation}
        \label{StateOutputDef}
        \bbm{ x(t) \cr y}
        = \bbm{\qA^t & \qB\tau^t \cr \qC & \qD} \ \ 
        \bbm{x_0 \cr  u}
        = \bbm{
        \qA^t x_0 + \qB\tau^t u \cr 
        \qC x_0 + \qD  u}.
        \end{equation}

Sometimes we use the
 equivalent notation %
\begin{equation}
  \label{eWPLSdiscr} %
 \!\!\!\!\!\!\!
  \bbm{\qA^t & \qB^t\cr 
       \qC^t & \qD^t} 
  := \bbm{\phantom{\piOt} \qA^t & \phantom{m} \qB\tau^t\piOt\cr 
                          \piOt\qC&\piOt\qD\piOt}:
    \bbm{x_0\cr u}\mapsto \bbm{x(t)\cr \piOt y}.
\end{equation}
\end{defin}

G. Weiss et al.\ use symbols\label{pageWeissWPLSdef}
 $\sbm{\bbbT_t & \Phi_t\cr \Psi_t & \bbbF_t}
  :=  \sbm{\rlap{$\scriptstyle \qA^t$}\phantom{\bbbT_t} & \qB^t\cr 
           \rlap{$\scriptstyle\qC^t$}\phantom{\Phi_t} & \qD^t}$
 and a different but equivalent definition of WPLSs.

By causality, the state and output (in particular, $\qD$ and $\qB\tau$)
 are well defined for
 any $u\in\L^2_\loc(\R_+;U)$  (with $y\in\L^2_\loc(\R_+;Y)$),
 or even $u\in\L^2_\omega(\R;U)+\L^2_\loc(\R_+;U)$.

The existence of a feedthrough operator (``$D$'') is 
 equivalent to regularity (Definition~\ref{dReg}), but
 a WPLS always has generators $\ABC$
 that satisfy the rest of (\ref{eWPLSfromGensintro}):
\begin{lemma}($A,B,C$)\label{lABC}
Let $A$ be the generator of $\qA$ and let $\alpha\in\rho(A)$.\footnote{%
The exact value of $\alpha$ is insignificant, since resulting norms
 on $H_1$ or $H_{-1}$ are equivalent, by the resolvent equation.}

We set $H_1:=\Dom(A)$\label{pageH1} with $\|x\|_{H_1}:=\|(\alpha-A)x\|_H$ 
 (this is equivalent to the graph norm),
 and define $H_{-1}$ to be the completion of $H$ under the norm $\|(\alpha-A)^{-1}\cdot\|_H$
 (thus $H_1\sub H\sub H_{-1}$).

The following hold:
\begin{itemlist}
 \item[(a)] 
$\qA$ can be isometrically extended to $H_{-1}$
 and restricted to $H_1$. 
We identify the three semigroups (``$\qA$'')
 and their generators (``$A$'');
 thus, the map $\alpha-A$ is an isometric isomorphism of $H_{n}$ onto $H_{n-1}$
 ($n=0,1$).

 \item[(b)]
There is a unique \indtermem{input operator} $B\in\BL(U,H_{-1})$ s.t.\
 $(u\in\L^2_\loc(\R_+;U), \ \ t\ge0)$
 \begin{equation}
   \label{eqB} %
 \qB\tau^t  u %
   =\int_{0}^t \qA^{t-s}Bu(s)\,ds\in H
 \end{equation}
 (the integration is carried out in $H_{-1}$
  but the integral belongs to $H$).
Moreover, $x:=\qA x_0+\qB\tau u$  %
 satisfies $x'=Ax+Bu$ in $H_{-1}$ a.e.\ on $\R_+$
 and $x(t)-x_0=\int_0^t (Ax+Bu)\,dm$
 for all $t\ge0$, $x_0\in H$, $u\in\Lloc^2(\R_+;U)$.

 \item[(c)]
There is a unique \indtermem{output operator} $C\in\BL(H_1,Y)$ s.t.\
\begin{equation}
  \label{eqC}
  (\qC x_0)(t)=C\qA^t x_0\ \ \ (\all x_0\in H_1,\ \ t\ge0).
\end{equation}
Moreover, $(\qC x_0)(t)=\Cw\qA^t x_0$ for a.e.\ $t>0$ and all $x_0\in H$
 (see (\ref{eCw}) for $\Cw$). %
\end{itemlist}

\noindent
We say that {\em $\ALS$ is generated by $\ssysbm{A\| B\crh C\| }$},\index{generated}
 and we call $\ssysbm{A\| B\crh C\| }$
 the \indtermem{generators}\label{pagegenerators2} of $\ALS$;
 they are independent of $\alpha$ (and $\omega$).
Also the following hold:
\begin{itemlist}
  \item[(d)] $\ssysbm{A\| B\crh C\| }$ determine $\ssysbm{\qA\| \qB\crh \qC\| }$ uniquely and $\qD$ modulo an additive constant from $\BL(U,Y)$.
\end{itemlist}\itemlistnoproof%
\end{lemma}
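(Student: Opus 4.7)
I would treat assertions (a)--(d) in sequence, combining standard $C_0$-semigroup theory with the algebraic axioms~1--4 of Definition~\ref{dWPLS0} to extract the generators $A$, $B$, $C$ one at a time and conclude with uniqueness.

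For (a) I would invoke the usual rigged-space construction. With $\alpha\in\rho(A)$, the operator $\alpha-A$ is by definition an isometric bijection $H_1\to H$, and its extension by continuity is an isometric bijection $H\to H_{-1}$ by the very definition of the $H_{-1}$-norm. Since $\qA^t$ commutes with $(\alpha-A)^{-1}$ on $H$, it restricts to a $C_0$-semigroup on $H_1$ (graph norm) and extends by density to a $C_0$-semigroup on $H_{-1}$; strong continuity transfers between the three realizations through the intertwining resolvent, and the three generators agree on $H_1$ under the identification, so denoting all of them by $A$ is consistent.

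For (b) the substantive step is to construct $B\in\BL(U,H_{-1})$. I would fix $v\in U$, set $u_v:=\khi_{[0,1)}\,v$, and use Axiom~2 together with the semigroup identity to show that $t\mapsto \qB\tau^t(\khi_{[0,t)}v)\in H$ is continuous and, when passed into $H_{-1}$, is differentiable at $t=0^+$; I would then define $Bv$ as this $H_{-1}$-valued derivative, with boundedness $U\to H_{-1}$ delivered by the uniform boundedness principle applied to the short-time boundedness estimates of $\qB$. The integral formula (\ref{eqB}) would then hold for step functions by the semigroup cocycle property and extend to $u\in\L^2_\loc(\R_+;U)$ by density; differentiating (\ref{eqB}) in $H_{-1}$ gives $x'=Ax+Bu$ a.e., and the integrated form follows by Bochner integration in $H_{-1}$. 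For (c) I would use that for $x_0\in H_1$ the orbit $t\mapsto\qA^t x_0$ is $C^1$ into $H$, so by Axiom~3 the element $\qC x_0$ has a representative continuous on $\R_+$ (via $\pi_+\tau^t\qC x_0=\qC\qA^t x_0$), letting me define $Cx_0:=(\qC x_0)(0^+)\in Y$; a closed-graph/uniform-boundedness argument using the $\L^2_\omega$-bound of $\qC$ over a unit interval gives $C\in\BL(H_1,Y)$, and (\ref{eqC}) is then tautological. The identity $(\qC x_0)(t)=\Cw\qA^t x_0$ a.e., for general $x_0\in H$, will follow by density from the definition of $\Cw$ in~(\ref{eCw}).

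For (d), uniqueness of $\qA$ from $A$ is classical; $\qB$ is pinned down by $(A,B)$ through (\ref{eqB}) on a dense set of inputs; $\qC$ by $(A,C)$ through (\ref{eqC}) and density of $H_1$ in $H$; and for $\qD$ the relation $\piplus\qD\piminus u=\qC\qB u$ from Axiom~4 together with time-invariance and causality determines the strictly past-to-future and intrinsically causal parts, leaving precisely the bounded ``feedthrough'' on the diagonal as an undetermined element of $\BL(U,Y)$. The main obstacle is step~(b): the delicate point is to justify $H_{-1}$-valued differentiation at $t=0^+$ and the uniform bound $U\to H_{-1}$ without circularly presupposing the input operator, which forces one to work directly from the WPLS axioms and the rigged spaces constructed in~(a); the remaining steps are essentially bookkeeping on top of standard $C_0$-semigroup theory.
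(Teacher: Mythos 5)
You should first note that the paper supplies no proof of this lemma at all: it is marked with the ``no formal proof'' symbol and the Notes for Section~\ref{sWPLS} defer entirely to [W94a], [W94b] and Sections 6.1--6.2 of [M02] (the results are Weiss's representation theorems). So there is nothing in the paper to match your argument against; the question is only whether your outline would actually close. Parts (a) and (d) are fine and standard: the rigged-space construction, the intertwining with the resolvent, and the observation that the difference of two admissible $\qD$'s is causal, anticausal and time-invariant, hence a static operator in $\BL(U,Y)$.

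The genuine gap is in (b), and it sits exactly where you flag ``the delicate point'' and then do not resolve it. A priori the WPLS axioms only give $\|\qB\tau^t(\khi_{[0,t)}v)\|_H\le K\sqrt{t}\,\|v\|_U$, so the difference quotients $t^{-1}\qB\tau^t(\khi_{[0,t)}v)$ can blow up like $t^{-1/2}$ in $H$; the whole content of the representation theorem is that they nevertheless converge in the weaker $H_{-1}$-norm, and neither the cocycle identity alone nor the uniform boundedness principle delivers this (UBP upgrades pointwise boundedness of the family to a uniform bound, but pointwise convergence --- indeed pointwise boundedness in $t$ --- is precisely what is at stake). The standard device you are missing is to feed in exponential inputs: for $\re\alpha>\omega$ put $e_\alpha v(s):=\efn^{\alpha s}v$ on $\R_-$ and use Axiom~2 to get $\qB\pi_{[-t,0)}e_\alpha v=\qB e_\alpha v-\efn^{-\alpha t}\qA^t\qB e_\alpha v$, whose right-hand side is manifestly differentiable at $t=0$ in $H_{-1}$ (since $H=\Dom(A_{-1})$), yielding $Bv:=(\alpha-A)\qB e_\alpha v\in H_{-1}$ with boundedness for free; one then verifies (\ref{eqB}) on step functions and extends by density. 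A smaller instance of the same issue occurs in (c): defining $Cx_0:=(\qC x_0)(0^+)$ presupposes that the $\L^2_\loc$-class $\qC x_0$ has a continuous representative for $x_0\in\Dom(A)$, which again requires an argument (e.g.\ writing $x_0=(\alpha-A)^{-1}y$ and exhibiting $\qC x_0$ as a convolution-type integral of $\qC y$) rather than following from Axiom~3 and $C^1$-regularity of the orbit alone.
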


We consider $H$ as the pivot space, so that $H_{-1}=\Dom(A^*)^*$,
 $B^*\in\BL(\Dom(A^*),U)$,\label{pageB*} and
 $C^*\in\BL(Y,\Dom(A)^*)$ (see Definition 6.1.17 of [M02] for details).

Let $\omega\in\R$.
We define $\TIC_\omega(U,Y)$\label{pageTIC}
 to be the (closed) subspace of operators 
 $\qD\in\BL(\L^2_\omega(\R;U);\L^2_\omega(\R;Y))$ that are
 {\em causal} (i.e., $\pi_-\qD\pi_+=0$) and 
 \indtermem{time-invariant}, i.e.\ \ $\tau^t \qD=\qD\tau^t $ for all $t\in\R$.
The I/O maps of WPLSs are exactly all such operators
 ($\TIC_\infty(U,Y):=\cup_{\omega\in\R}\TIC_\omega(U,Y)$,
 often called ``the well-posed I/O maps'').
In fact, they can be identified with proper transfer functions
 (i.e., functions bounded and holomorphic on some right half-plane,
 which we denote by
 $\H^\infty_\infty(U,Y)\label{pageHooHoo}
   :=\cup_{\omega\in\R}\H^\infty(\C_\omega^+;\BL(U,Y))$):
\begin{theorem}[Transfer functions $\pmbold{\hqD}$]\label{TransferFct1} %
For each $\qD\in\TIC_\omega(U,Y)$, there is a unique function
  $\widehat{\qD}\in \H^\infty(\C_\omega^+;\BL(U,Y))$,
 called the {\em transfer function} (or symbol) of $\qD$,
 s.t.\ $\widehat{\qD u}=\widehat{\qD}\hat u$ on $\C_\omega^+$
 for all $u\in \L^2_\omega(\R_+;U)$.
The mapping $\qD\mapsto\hqD$ is an isometric isomorphism
 of $\TIC_\omega(U,Y)$ onto $\H^\infty(\C_\omega^+;\BL(U,Y))$.\noproof
\end{theorem}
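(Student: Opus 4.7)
The plan is to reduce to the stable case $\omega=0$ via conjugation, then use Laplace-transform/Paley--Wiener theory on a Hilbert-space-valued Hardy space, and finally identify the image of $\TIC$ with the operator-valued $\H^\infty$-multipliers of that Hardy space.

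First I would normalize: the map $\qD\mapsto \qD_0:=M_{-\omega}\qD M_\omega$, where $M_\omega$ denotes multiplication by $\efn^{\omega \cdot}$, is an isometric isomorphism $\TIC_\omega(U,Y)\to\TIC_0(U,Y)$, and it intertwines the Laplace transform with the translation $s\mapsto s-\omega$. So it suffices to construct the bijection $\TIC_0(U,Y)\to\H^\infty(\C^+;\BL(U,Y))$ isometrically, and the general case follows by reading off $\widehat{\qD}(s):=\widehat{\qD_0}(s-\omega)$.

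Next, by causality, $\qD\in\TIC_0(U,Y)$ restricts to a bounded operator $\qD_+:\L^2(\R_+;U)\to\L^2(\R_+;Y)$ with $\|\qD_+\|=\|\qD\|$ (using time-invariance to transport mass from the negative axis). The vector-valued Paley--Wiener theorem provides an isometric isomorphism $\cL: \L^2(\R_+;U)\to \Hstrong^2(\C^+;U)$ (and similarly for $Y$) defined by the Laplace transform, so $\qD_+$ transports to a bounded operator $T:=\cL \qD_+\cL^{-1}\in\BL(\Hstrong^2(\C^+;U),\Hstrong^2(\C^+;Y))$ of the same norm. Time-invariance of $\qD$ means $\qD_+$ commutes with right-shifts $\tau^{-t}\pi_+$ for $t\ge 0$, which under $\cL$ become multiplication by the inner functions $e_t(s):=\efn^{-st}$. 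Hence $T$ commutes with $M_{e_t}$ for every $t\ge 0$.

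The heart of the proof is the operator-valued Lax--Halmos / multiplier theorem: any bounded $T$ between Hilbert-space-valued $\Hstrong^2$ spaces on the right half-plane that commutes with multiplication by the semigroup $\{e_t\}_{t\ge 0}$ is itself multiplication by a unique function $m\in\H^\infty(\C^+;\BL(U,Y))$ with $\|m\|_{\H^\infty}=\|T\|$. I would prove this by mapping to the disc via the Cayley transform, where it becomes the classical fact that the commutant of the unilateral shift $S\otimes I_U$ on $\H^2(\D;U)$ consists exactly of analytic Toeplitz operators with $\H^\infty(\BL(U,Y))$ symbols; the symbol $m(z)$ is recovered by applying $T$ to constant functions (viewed boundary-wise) and reading off the nontangential boundary values, with boundedness of $m$ following from the operator norm of $T$ and analyticity from invariance under $S$. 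Setting $\widehat{\qD}:=m$ (after the inverse Cayley change back to the half-plane) gives the required element of $\H^\infty(\C^+;\BL(U,Y))$, and by construction $\widehat{\qD u}=\widehat{\qD}\widehat{u}$ on $\C^+$ for all $u\in\L^2(\R_+;U)$; extension to $u\in\L^2(\R;U)$ via the shift-invariance of $\qD$ yields the same identity wherever both sides converge.

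Uniqueness of $\widehat{\qD}$ follows because the Laplace transform is injective on $\L^2(\R_+;U)$ and vectors of the form $\widehat{u}(s_0)$ with $u$ ranging over $\L^2(\R_+;U)$ densely fill $U$ at each $s_0\in\C^+$. For surjectivity, given $m\in\H^\infty(\C^+;\BL(U,Y))$, define $T$ as multiplication by $m$ on $\Hstrong^2(\C^+;U)$; this is bounded with norm $\|m\|_{\H^\infty}$, and $\qD_+:=\cL^{-1}T\cL$ extends uniquely (via time-invariance) to a causal, shift-invariant bounded operator in $\TIC_0(U,Y)$ with transfer function $m$. The main obstacle is the operator-valued multiplier theorem in the last step --- all other steps are essentially bookkeeping --- so I would cite the Sz.-Nagy--Foias version for $\BL(U,Y)$-valued Hardy spaces (or prove it directly via weak-$\ast$ limits of Fej\'er means on the circle) to supply the identification $T\leftrightarrow m$ cleanly.
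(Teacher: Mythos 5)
The paper itself supplies no proof of this theorem --- it is stated as well known, with the argument deferred to [W94a] and Sections 6.1--6.2 of [M02] --- and your proposal is precisely the standard argument those references use: conjugate by $\efn^{\pm\omega\cdot}$ to reduce to $\omega=0$, restrict by causality to $\L^2(\R_+;U)$, pass through the vector-valued Paley--Wiener theorem, and invoke the operator-valued commutant-of-the-shift (Lax--Halmos / Four\`es--Segal) theorem to identify the transported operator with an $\H^\infty(\C^+;\BL(U,Y))$ multiplier; all the steps, including the norm identities and the uniqueness/surjectivity arguments, are correct. The one point worth writing out explicitly is the upgrade from commutation with $M_{e_t}$ for all $t\ge0$ to commutation with multiplication by the Cayley coordinate (e.g.\ via the strongly convergent integral $\int_0^\infty \efn^{-t}M_{e_t}\,dt=M_{(s+1)^{-1}}$, using that $T$ commutes with bounded strong limits), but that is routine bookkeeping.
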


Here $\BL(U,Y)$ denotes the space of bounded linear operators $U\to Y$,
 $\Hoo(\C_\omega^+;\BL(U,Y))$\label{pageHoo}
 denotes the Banach space %
 of bounded holomorphic functions $\C_\omega^+\to\BL(U,Y)$,
 and $\hu$ denotes the Laplace transform of $u$:
 \begin{equation}
   \label{eLaplace}
   \hu(s):=\int_\R \efn^{-st}u(t)\,dt \ \ \ \ \
       (s\in\C_\omega^+:=\{s\in\C\I \re s>\omega\}).
 \end{equation}

If $f$ is holomorphic on $\C_\omega^+$, and $\Omega\sub\C_\omega^+$ is open,
 then we identify $f$ and $f\raj{\Omega}$. %
In fact, we do this whenever $f$ is holomorphic on $\C_\omega^+\pois E$,
 where $E$ does not have limit points on $\C_\omega^+$. %
Since any holomorphic extensions to right half-planes are unique,
 this does not cause problems
 (not even with $E$ if we remove removable singularities). %

A {\em realization}\label{pageRealization} of $\qD$ or $\hqD$
 means a WPLS whose I/O map is $\qD$.

If $\hqD$ has a limit at infinity
 (along the positive real axis), then the system is called regular:
\begin{defin}[$D$, Regularity]\label{dReg} %
We call $\qD\in\TIC_\omega(U,Y)$  (and $\hqD$ and $\SmallbSystem$)
 {\em weakly (resp.\ strongly, uniformly) regular (WR (resp.\ SR, UR))} 
 with {\em feedthrough\label{pagefeedthrough} operator} $\hqD(+\infty):=D\in\BL(U,Y)$
 if $\hqD(s)\to D$ weakly (resp.\ strongly)
 as $s\to+\infty$ on $(\omega,+\infty)$.

We call $\qD$ {\em ULR} (uniformly line-regular) %
 if $\|\hqD(s)-D\|\to0$ as $\re s\to+\infty$
 (uniformly with respect to $\im s$).
\end{defin}
If $\ALS$ is WR, then we say that $\ABCD$ are the
 {\em generators}\label{pagegenerators3}
 of $\ALS$, %
 since they determine the system uniquely,
 and we sometimes denote $\ALS$ by $\ssyspm{A\|B\crh C\|D}\label{pageABCD}$.
Any WPLS with {\em bounded}\label{pageboundedBC}
 $B$ or $C$ (i.e., $B\in\BL(U,H)$ or $C\in\BL(H,Y)$)
 is ULR, by Lemma 6.3.16 of [M02].
An equivalent condition for the weak regularity of $\ALS$ is that
 $(\alpha-A)^{-1}BU\sub \Dom(\Cw)$, where
\begin{equation}
  \label{eCw}
 \!\!\!\!\!\!\!\!\!\!
 \Dom(\Cw) :=\{x\in H \I
 \Cw x:=\wlim_{s\to+\infty}\,Cs(s-A)^{-1}x \ \text{exists}\!\!\!\}.
\end{equation}
(Here $\wlim$ is the weak limit (in $Y$).
The above condition %
   is independent of $\alpha\in\rho(A)$.)
The map $\Cw:\Dom(\Cw)\to Y$ is called the {\em weak Yosida\label{pageYosidaExt} extension} of $C$).
If $\ALS$ is WR and $\omega$-stable, then $\hqD(s)=D+\Cw(s-A)^{-1}B$
 when $\re s>\omega$,
 and $y=\Cw x +Du$ a.e.\ for all $x_0\in H$ and all $u\in\Lloc^2(\R_+;U)$.
Similar claims hold for $\Cs$, $\slim$ and ``SR''.

Using Lemma~\ref{lABC},
 one can show that any $\tsysbm{A\|B\crh C\|D}\in\BL(H\times U,H_{-1}\times Y)$
 are the generators of a WR WPLS iff  $\tsysbm{\qA^t\| \qB\tau^t\crh \qC\|\qD}$
 defined by (\ref{eWPLSfromGensintro})
 a.e.\ (with $\Cw$ in place of $C$)
 are bounded $H\times \L^2([0,t];U)\to H\times L^2([0,t];Y)$
 for some (hence all) $t>0$.
In (\ref{eWPLSfromGensintro}), ``$\qAt=\efn^{At}$'' must be interpreted 
 as the requirement
 that $A$ generates a $C_0$-semigroup $\qA^\cdot$.

The dual system $\tsyspm{A^*\|C^*\crh B^*\|D^*}$ can be defined for arbitrary WPLSs:
\begin{lemma}[Dual system \pmbold{$\ALS^\rmd$}]\label{lDualALS} %
If $\ALS$ is an $\omega$-stable WPLS, then so is 
 its {\em dual system}\index{dual system ($\ALS^\rmd$)|emph}
\begin{equation}
  \label{elDualALS}
 \ALS^\rmd:=\bsysbm{\qA^\rmd\|\qC^\rmd\crh \qB^\rmd \| \qD^\rmd}
        :=\bsysbm{\qA^*\|\qC^*\Refl\crh \Refl\qB^* \| \Refl\qD^*\Refl}
\end{equation}
 (over $(Y,H,U)$), where $(\Refl u)(t):=u(-t)$.
Moreover, $(\ALS^\rmd)^\rmd=\ALS$,
 and $\smash{\sbm{A^*&C^*\cr B^*&}}$
 ($\smash{\sbm{A^*&C^*\cr B^*&D^*}}$ if $\ALS$ is WR)
 are the generators of $\ALSd$,
 and $\smash{\shat{\qD^\rmd}}(s)
  =\hqD(\bar s)^*\ \all s\in\C_\omega^+$.\noproof
\end{lemma}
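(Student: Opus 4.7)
The plan is to verify the four axioms of Definition \ref{dWPLS0} for $\ALS^\rmd$ by a direct adjoint-and-reflection calculus, then to read off the remaining three statements ($(\ALS^\rmd)^\rmd=\ALS$, the generators, and the transfer function formula) as corollaries. The basic ingredients I will use throughout are the identities
\[
 \Refl^2=I,\ \Refl^*=\Refl,\ (\tau^t)^*=\tau^{-t},\ \pi_\pm^*=\pi_\pm,\ \Refl\pi_\pm=\pi_\mp\Refl,\ \Refl\tau^t=\tau^{-t}\Refl,
\]
all on $\L^2(\R;\cdot)$ with the unweighted $\L^2$ pairing, together with the observation that $\Refl$ is an isometry $\L^2_\omega\leftrightarrow\L^2_{-\omega}$. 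With the unweighted $\L^2$ duality this makes $\qB^*\colon H\to\L^2_{-\omega}(\R;U)$ and $\qC^*\colon\L^2_{-\omega}(\R;Y)\to H$ and $\qD^*\colon\L^2_{-\omega}(\R;Y)\to\L^2_{-\omega}(\R;U)$ bounded, so $\qC^\rmd=\Refl\qB^*$, $\qB^\rmd=\qC^*\Refl$ and $\qD^\rmd=\Refl\qD^*\Refl$ are bounded operators of the correct type on $\L^2_\omega$.

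I would next verify axiom 1 by the standard fact that on a Hilbert space the Hilbert-space adjoint semigroup $(\qA^t)^*$ is again strongly continuous, with generator $A^*$ and identical operator norms $\|(\qA^*)^t\|=\|\qA^t\|$, so the $\omega$-bound is inherited. Axioms 2 and 3 for $\ALS^\rmd$ are then obtained from those for $\ALS$ by taking the Hilbert-space adjoint and sandwiching with $\Refl$. For instance, the $\qC$-intertwining $\qC\qA^t=\pi_+\tau^t\qC$ adjoints to $(\qA^*)^t\qC^*=\qC^*\tau^{-t}\pi_+$, whence
\[
 (\qA^*)^t(\qC^*\Refl)u=\qC^*\tau^{-t}\pi_+\Refl u=\qC^*\tau^{-t}\Refl\pi_- u=\qC^*\Refl\tau^t\pi_- u=\qB^\rmd\tau^t\pi_- u,
\]
which is axiom 2 for $\ALS^\rmd$; axiom 3 is the symmetric computation starting from $\qA^t\qB=\qB\tau^t\pi_-$. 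Axiom 4 splits into three pieces: time-invariance $\tau^t\qD^\rmd=\qD^\rmd\tau^t$ from $\tau^t\qD=\qD\tau^t$; causality $\pi_-\qD^\rmd\pi_+=\Refl(\pi_-\qD\pi_+)^*\Refl=0$; and the connecting identity $\pi_+\qD^\rmd\pi_-=\Refl(\qC\qB)^*\Refl=(\Refl\qB^*)(\qC^*\Refl)=\qC^\rmd\qB^\rmd$.

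The involution claim $(\ALS^\rmd)^\rmd=\ALS$ is a one-line verification per component, e.g.\ the new observability map is $\Refl(\qB^\rmd)^*=\Refl(\qC^*\Refl)^*=\Refl^2\qC=\qC$. The assertion about generators follows by applying Lemma \ref{lABC} to $\ALS^\rmd$ together with the standard identifications $H_{-1}(A^*)=\Dom(A)^*$ and $H_{1}(A^*)=\Dom(A^*)$: $A^*$ generates $\qA^*$; $C^*\in\BL(Y,\Dom(A)^*)$ is the input operator of $\ALS^\rmd$ and $B^*\in\BL(\Dom(A^*),U)$ the output operator. Finally the transfer-function identity will follow from $\widehat{\Refl f}(s)=\hat f(-s)$ (direct change of variable) combined with the Plancherel/duality computation $\langle \qD^\rmd u,v\rangle_{\L^2}=\langle\Refl u,\qD\Refl v\rangle_{\L^2}$ for $u,v\in \Lc^2$: reading off symbols on $i\R$ and extending holomorphically gives $\shat{\qD^\rmd}(s)=\hqD(\bar s)^*$ on $\C_\omega^+$, and the WR case with feedthrough $D^*$ drops out as the limit $s\to+\infty$.

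The main obstacle is simply bookkeeping: keeping straight the weighted spaces $\L^2_\omega$ versus $\L^2_{-\omega}$ under $\Refl$ and pinning down that Hilbert adjoints are taken with respect to the \emph{unweighted} $\L^2$ pairing (so that $\qC^*\colon\L^2_{-\omega}\to H$, not $\L^2_\omega\to H$). Once this convention is fixed, every axiom becomes a three- or four-symbol manipulation.
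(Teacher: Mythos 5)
Your proposal is correct and is exactly the standard adjoint-and-reflection verification of the four WPLS axioms; the paper itself gives no proof of this lemma (it is marked with $\square$ and deferred to Lemmas 6.1.4, 6.2.2 and 6.2.9(b) of [M02]), and your argument is the one that reference carries out. In particular your bookkeeping convention (adjoints with respect to the unweighted $\L^2$ pairing, so that $\Refl$ carries $\L^2_{-\omega}$ isometrically onto $\L^2_{\omega}$) matches the paper's explicitly stated pivot-space convention below the lemma, and all the individual identities and computations check out.
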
 

(This is well-known, see Lemmas 6.1.4, 6.2.2 and 6.2.9(b) of [M02].)
We use $\L^2$ as the pivot space (p.~898 of [M02]);
 e.g., $\int_\R \p{\qC x_0,\ty}(t)\,dt = \p{x_0,\qC^*\ty}_H$.
Thus, $\ALS^\rmd$ is independent of $\omega$
 (and $\qC\in\BL(H,\L^2_\omega(\R;Y))
   \IFF \qC^*\in\BL(\L^2_{-\omega}(\R;Y),H)$).

\NotesFor{Section \ref{sWPLS}}\label{pageNotes-sWPLS} %
Everything in this section is well known; see, e.g., [W94a] and [W94b]
 (or Sections 6.1--6.2 of [M02]).
Much more on WPLSs can be found in [M02] too, 
 but [S04] is the most thorough book on the subject
 and also covers $\L^p$ signals for $p\ne2$ and for general Banach
 spaces in place of $U,H,Y$.

The {Lax--Phillips scattering theory} %
 and the operator-based model theory
 of Béla Sz.-Nagy and Ciprian Foia{\c s} %
 have been shown equivalent to WPLSs (see Chapter 11 of [S04]).
The former has been extensively developed in the (ex--) Soviet Union
 area by Damir Z. Arov and others (cf.\ [AN96]), independently of WPLSs.
See pp.\ 23 and~167 of [M02] for further details and references.

\section{State feedback}\label{sFeedback} %

In this section we first define (static) output feedback
 (Lemma~\ref{lALSL0}).
Then we extend state feedback
 (the formula $u(t)=Kx(t)$) to WPLSs,
 first in a ``generalized'' sense
 (Definition \ref{dWPLSform})
 and then in the standard sense (Definition \ref{dAdmKF0}).
For the former
 one can more easily generalize 
 Theorems \ref{IntroTh-Uexp} and~\ref{IntroTh-Uout},
 but the latter is more desirable in the applications.

Output feedback means feeding the output $y$ back to the input $u$
 through some feedback operator $L\in\BL(Y,U)$,
 i.e., $u=L y+u_L$, where $u_L$ is the external\label{pageextinputuL} input, as in
 Figure~\ref{fOutputFeedback}.
Obviously, the closed loop formulas
 $\ssysbm{\qA_L\|\qB_L\tau\crh\qC_L\|\qD_L}:\sbm{x_0\cr u_L}\to\sbm{x\cr y}$
 can be uniquely solved iff $I-L\qD$ is invertible
 (equivalently, $I-L\qD\in\cG\TIC_\infty$).
The solution is the following:
\begin{lemma}[\pmbold{$\ALS_L$}]\label{lALSL0} %
Let $L\in\BL(Y,U)$ be an {\em admissible}\label{pageadmissibleL} output %
 feedback operator for $\ALS$ %
 (i.e., $I-L\qD\in\cG\TIC_\infty(U)$).
Then also the {\em closed-loop system}\label{pageClL-L} 
 $\ALS_L$ is a WPLS over $(U,H,Y)$, where
\begin{align}\label{eALS_L}
        \ALS_L 
        :&=
        \bsysbm{\qA_L \| \qB_L \crh \qC_L \| \qD_L}
        := \bsysbm{
        \qA+ \qB \tau L\Par{I - \qD L}^{-1}\qC \|
        \qB \Par{I - L \qD}^{-1} \crh
        \Par{I - \qD L}^{-1}\qC \|
        \qD\Par{I - L \qD}^{-1}}.
\end{align}\noproof %
\end{lemma}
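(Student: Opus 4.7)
The plan is to derive the closed-loop formulas by solving the feedback equations, and then to verify the four WPLS axioms for $\ALS_L$ using the algebra of $\TIC_\infty$ operators.

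First I would set up the feedback loop: given an initial state $x_0 \in H$ and external input $u_L$, impose $u = Ly + u_L$ on the original WPLS equations $x(t) = \qA^t x_0 + \qB\tau^t u$ and $y = \qC x_0 + \qD u$. Substituting yields $(I - \qD L)y = \qC x_0 + \qD u_L$, which by assumption can be inverted in $\TIC_\infty$ since $I - L\qD \in \cG\TIC_\infty(U)$ implies $I - \qD L \in \cG\TIC_\infty(Y)$ (via the standard identity $(I - \qD L)^{-1} = I + \qD(I - L\qD)^{-1}L$). This gives $\qC_L = (I-\qD L)^{-1}\qC$ and $\qD_L = \qD(I-L\qD)^{-1}$. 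Then $u = Ly + u_L$ together with $I + L(I-\qD L)^{-1}\qD = (I - L\qD)^{-1}$ produces $\qB_L = \qB(I - L\qD)^{-1}$ and $\qA_L^t = \qA^t + \qB\tau^t L(I - \qD L)^{-1}\qC$, matching the stated formulas.

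Next I would verify the four WPLS axioms of Definition~\ref{dWPLS0}. Boundedness of $\qB_L$, $\qC_L$ and $\qD_L$ (with an appropriate common growth bound $\omega'$) is immediate from the membership of $(I - L\qD)^{-1}$ and $(I - \qD L)^{-1}$ in $\TIC_{\omega'}$ for some $\omega' \ge \omega$. Time-invariance and causality of $\qD_L$ follow because $\TIC_\infty$ is closed under composition and inversion within $\cG\TIC_\infty$. For the algebraic compatibility $\pi_+ \qD_L \pi_- = \qC_L \qB_L$, I would substitute the formulas and reduce to $\pi_+ \qD \pi_- = \qC \qB$ (true for $\ALS$) together with the identity $(I - \qD L)^{-1}\qD = \qD(I - L\qD)^{-1}$ and the time-invariance of $L$ and the operators involved. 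The intertwining relations $\qA_L^t \qB_L u = \qB_L \tau^t \pi_- u$ and $\qC_L \qA_L^t x = \pi_+ \tau^t \qC_L x$ are then obtained from the corresponding relations for $\ALS$ by expanding the definitions and using causality.

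The main obstacle will be verifying the semigroup property $\qA_L^{t+s} = \qA_L^t \qA_L^s$ and strong continuity, since $\qA_L^t$ is defined by a feedback loop rather than as the exponential of an explicit generator. The cleanest route is to interpret $\qA_L^t x_0$ as $x(t)$ where $(x,y,u)$ is the unique solution of the closed loop with external input $u_L = 0$; then the semigroup property follows from the uniqueness of this solution and the time-invariance of the feedback law, while strong continuity follows from the strong continuity of $\qA$ together with the continuity in $t$ of $\qB\tau^t L(I - \qD L)^{-1}\qC x_0$ at $t = 0^+$ (which vanishes because $\qB\tau^t$ sees only $\pi_-$ of its argument and the relevant signal is supported on $[0,\infty)$). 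Finally, $(\ALS_L)_{-L} = \ALS$ provides a sanity check that confirms the construction is involutive in $L$.
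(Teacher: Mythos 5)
Your proposal is correct and is essentially the standard argument: the paper gives no proof of this lemma itself but defers to Section~6 of [W94b], and your route — solving the feedback loop to obtain the closed-loop formulas, verifying the four WPLS axioms through the algebra of $\TIC_\infty$ operators, and deriving the semigroup property and strong continuity from the uniqueness and time-invariance of the closed-loop solution — is precisely that proof. The only step described a bit loosely is the Hankel identity $\pi_+\qD_L\pi_-=\qC_L\qB_L$, which requires splitting $\pi_+(I-\qD L)^{-1}\qD\pi_-$ across $\pi_++\pi_-$ and iterating the Neumann-type identity $(I-\qD L)^{-1}=I+\qD L(I-\qD L)^{-1}$ rather than a one-line reduction to $\pi_+\qD\pi_-=\qC\qB$, but it goes through exactly along the lines you indicate.
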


(See, e.g., Section~6 of [W94b] for the proof.)

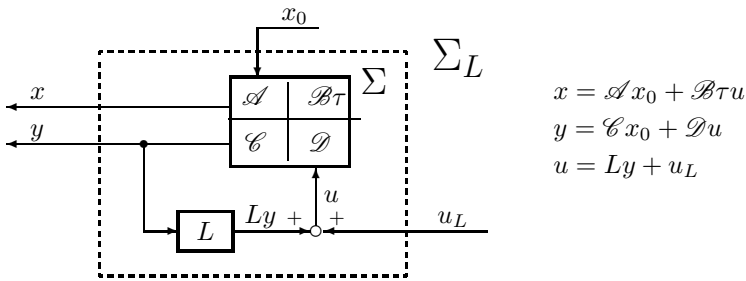
\begin{figure} %
        \[
        \begin{picture}(40,70)(40,-40)

        \thicklines

        \put(0,-15){
        \framebox(44,33){
        $
        \bsysm{\qA \| \qB\tau \crh 
        \qC \| \qD\phantom{\tau}}
        $}}

        \put(-20,-47){
        \framebox(20,14){$L$}}
        \put(52,12){\Large$\ALS$}

        \put(-50,-57){ \dashbox{2}(116,85){}}
        \put(79,21){\LARGE$\ALS_L$}

        \thinlines

        \put(36,37){\line (-1,0){23}}
        \put(13,37){\vector (0,-1){19}}
        \put(22,40){$x_0$}

        \put(3,7){\vector (-1,0){85}}
        \put(-73,10){$x$}

        \put(3,-7){\vector (-1,0){85}}
        \put(-73,-4){$y$}

        \put(125,10){$x=\qA x_0+\qB\tau u$}
        \put(125,-4){$y=\qC x_0+\qD u$}
        \put(125,-18){$u=Ly+u_L$}

        \put(100,-40){\vector (-1,0){62}} %
        \put(81,-37){$ u_L$}

        \put(-30,-7){\circle*{3}} %
        \put(-30,-7){\line (0,-1){33}}
        \put(-30,-40){\vector (1,0){13}}
        \put(4,-40){\vector (1,0){29}} %
        \put(8,-37){$Ly$}

        \put(35,-40){\circle{4}} %
        \put(24,-37){$\scriptstyle +$} 
        \put(40,-37){$\scriptstyle +$} 

        \put(35,-38){\vector (0,1){22}}
        \put(38,-29){$u$}

        \end{picture}
        \]
        \caption{Static output feedback}
        \label{fOutputFeedback}
\end{figure}

Next we define an important generalized form of state feedback.
Given a WPLS and a control law $\oqK:x_0\mapsto u$,
 the corresponding function $x_0\mapsto\sbm{x\cr y\cr u}$ is called
 a controlled WPLS form iff it is (the left column of) a WPLS
 (equivalently, iff $\oqAK:x_0\mapsto\sbm{x\cr u}$ is): %
\begin{defin}[$\oqK$, $\oALS$, WPLS form]\label{dWPLSform} %
We call the control $x_0\mapsto \oqK x_0$ %
 a {\em control for $\ALS$ in WPLS form}\index{WPLS form!control in|emph}\label{pagecontrolWPLSform} 
 (and $\oALS$ a {\em controlled WPLS form for $\ALS$})  %
 if $\oqK:H\to\Lloc^2(\R_+;U)$ is s.t.\
 $\oALS$ is a WPLS\footnote{%
Like here, we sometimes omit a zero input column (or output row) from a WPLS.%
}
 (on $(\{0\},H,Y\times U)$),
 where
\begin{equation}
  \label{eoACKdef}
 \oALS:=
  \bsysbm{\oqA\|\crh \oqC \| \cr \oqK\|}
  :=\bsysbm{\qA +\qB\tau\oqK\|\crh
              \qC +\qD\oqK\|\cr
              \oqK\|}. %
\end{equation}
\end{defin}

A control in WPLS form need not be of (well-posed) state-feedback form
 unless, e.g., $B$ is bounded
 (see p.~374 of [M02]).
However, it can be considered as being of non-well-posed state-feedback form,
 since $u(t)=(\oK)_\w x(t)$ a.e., by, e.g., (5.6) of [W94b]. %

Controls in WPLS form can be easily characterized in the frequency domain too:
\begin{lemma}[$\pmbold{\oALS}$]\label{lWPLSform} %
A triple $\oALS:=\oqACK$ is a controlled WPLS form for $\ALS$
 iff there exist $\omega\in\R$ and linear operators $\oA$ on $H$
 and $\oK:\Dom(\oA)\to U$ 
 s.t.\ $\oqK\in\BL(H,\L_\omega^2(\R_+;U))$,
 $\oqC=\qC+\qD\oqK,\ \oqA=\qA+\qB\tau\oqK$,
 $\sbm{\shat{\oqA x_0}\cr \shat{\oqK x_0}}(s)
 =\sbm{I\cr \oK}(s-\Ao)^{-1}x_0
 \ \all x_0\in H\ \all s\in\C_\omega^+$. 
\end{lemma}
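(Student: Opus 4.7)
My plan is to regard $\oALS = \oqACK$ as a candidate WPLS on $(\{0\}, H, Y\times U)$ with trivial input and observability map $\smash{\bbm{\oqC\\ \oqK}}$; the characterization in the lemma is then essentially the frequency-domain translation of the axioms of Definition \ref{dWPLS0} via Lemma \ref{lABC}. I will prove both directions, obtaining $(\oA, \oK)$ from the semigroup generator and observation operator of $\oALS$ in the necessity direction, and building a WPLS out of the given Laplace-domain data in the sufficiency direction.

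For necessity, I would first invoke Lemma \ref{lExpStable} to produce some $\omega \in \R$ with respect to which $\oALS$ is stable; this yields $\oqK \in \BL(H, \L^2_\omega(\R_+; U))$. Lemma \ref{lABC}(a) then supplies the generator $\oA$ of $\oqA$ on $H$, and Lemma \ref{lABC}(c), applied to the component $\oqK$ of the observability map of $\oALS$, produces $\oK \in \BL(\Dom(\oA), U)$ with $(\oqK x_0)(t) = \oK \oqA^t x_0$ for all $x_0 \in \Dom(\oA)$. Taking Laplace transforms on $\C_\omega^+$ (and passing $\oK$ through the Bochner integral) yields the claimed identities $\shat{\oqA x_0}(s) = (s-\oA)^{-1} x_0$ and $\shat{\oqK x_0}(s) = \oK(s-\oA)^{-1} x_0$ for $x_0 \in \Dom(\oA)$; a standard density argument, using the boundedness of $\oqK$ and of $(s-\oA)^{-1}$, extends both identities to all $x_0 \in H$.

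For sufficiency, I would begin by defining the candidate $\oqA^t x_0 := \qA^t x_0 + \qB \tau^t \oqK x_0 \in H$ (via Lemma \ref{lABC}(b), since $\oqK x_0 \in \L^2_\omega \subset \Lloc^2$) and $\oqC x_0 := \qC x_0 + \qD \oqK x_0$. A direct Laplace-transform computation using Lemma \ref{lABC}(b) yields $\shat{\oqA x_0}(s) = (s-A)^{-1} x_0 + (s-A)^{-1} B \shat{\oqK x_0}(s)$; combining with the assumed identities forces the perturbation resolvent relation $(s-\oA)^{-1} = (s-A)^{-1} + (s-A)^{-1} B \oK (s-\oA)^{-1}$ on $\C_\omega^+$. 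From here I would verify, via the resolvent identity and the Hille--Yosida growth estimate obtained from $\oqK \in \BL(H, \L^2_\omega)$, that $\oA$ generates a $C_0$-semigroup and that this semigroup coincides with the candidate $\oqA^t$. The intertwining axiom 3 of Definition \ref{dWPLS0} for $\smash{\bbm{\oqC\\ \oqK}}$ then follows --- for $\oqK$ from the Laplace-transform identity together with the semigroup property of $\oqA$, and for $\oqC$ by inheritance from the intertwining relations satisfied by $\qC$ and $\qD$ in $\ALS$.

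The main obstacle will be the Hille--Yosida step in the sufficiency direction: extracting from the single-variable Laplace-transform identity the full resolvent-power bound $\|(s-\oA)^{-n}\| \lesssim (s-\omega')^{-n}$ needed to conclude that $\oA$ generates a $C_0$-semigroup equal to the candidate $\oqA^t$. Once that is in hand, the remaining axioms of Definition \ref{dWPLS0} --- time-invariance, causality, local square-integrability of the observations --- transfer routinely from the corresponding properties of $\qA, \qB, \qC, \qD$ in $\ALS$ together with the hypothesis $\oqK \in \BL(H, \L^2_\omega)$.
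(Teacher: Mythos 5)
Your overall structure is right and the ``only if'' direction is essentially what the paper does (and dismisses as obvious): extract $\oA$ and $\oK$ from Lemma~\ref{lABC}, Laplace-transform, and extend by density. The real content is the ``if'' direction, and there your plan is workable but takes a detour exactly at the point you yourself flag as the main obstacle. You do not need Hille--Yosida resolvent-power bounds at all, and you do not need to separately identify a generated semigroup with the candidate family. The paper's route is: define the candidate $\oqA^t x_0 := \qA^t x_0 + \qB\tau^t\oqK x_0$, check directly that $\oqA x_0\in\cC(\R_+;H)$, $\oqA^0=I$ and $\|\oqA^t\|\le M\efn^{\omega t}$ (these follow from the $\omega$-stability of $\ALS$, taken w.l.o.g.\ after increasing $\omega$, together with $\oqK\in\BL(H,\L^2_\omega)$), and then invoke Lemma~\ref{lResSG}: since the hypothesis says precisely that $\shat{\oqA x_0}(s)=(s-\oA)^{-1}x_0$, that lemma immediately yields that the candidate family \emph{is} a $C_0$-semigroup with generator $\oA$. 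Your Hille--Yosida variant can be pushed through (the power bounds follow by differentiating the resolvent under the integral, and Laplace-transform uniqueness identifies the two semigroups), but it reproves Lemma~\ref{lResSG} by hand and additionally requires you to verify that $\oA$ is closed and densely defined, which you do not address.

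Two smaller points. First, the intertwining axiom for $\oqK$ (that $\ssysbm{\oqA\crh\oqK}$ is a WPLS) is not quite as routine as ``follows from the Laplace-transform identity together with the semigroup property'': one needs that the observation operator $\oK=\hoqK(s)(s-\oA)\in\BL(\Dom(\oA),U)$ is admissible for $\oqA$, which the paper gets from Lemma~6.3.15 of [M02]; at minimum you should note that $(\oqK x_0)(t)=\oK\oqA^t x_0$ for $x_0\in\Dom(\oA)$ by inverting the Laplace transform before running the density argument. Second, your final step for $\oqC$ is the right idea but deserves the explicit computation the paper gives, splitting $\pi_+\tau^t\qD\oqK$ as $\pi_+\qD(\pi_++\pi_-)\tau^t\oqK=\qD\oqK\oqA^t+\qC\qB\tau^t\oqK$ using axioms 3 and 4 of Definition~\ref{dWPLS0}; this is where the hypotheses $\oqC=\qC+\qD\oqK$ and $\oqA=\qA+\qB\tau\oqK$ actually combine to give $\pi_+\tau^t\oqC=\oqC\oqA^t$.
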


\begin{proof}
``Only if'' is quite obvious, so we prove ``if''.
Assume, w.l.o.g., that $\ALS$ is $\omega$-stable
 (increase $\omega$ if necessary).
One easily verifies that $\oqC\in\BL(H,\L_\omega^2(\R_+;Y))$,
 $\oqA x_0\in\cC(\R_+;H),\ \oqA^t\in\BL(H)\ (t\ge0),\ \qA^0=I,\
 \|\oqA^t\|\le M\efn^{\omega t}$ (use (2.2) of [M02]).
By Lemma~\ref{lResSG}, $\oqA$ is a semigroup.
By Lemma 6.3.15 of [M02], $\ssysbm{\oqA\crh \oqK}$ is a WPLS
 (note that $\oK = \hoqK (s)(s-\oA)\in\BL(\Dom(\oA),U)$),
 so we can complete the proof by computing that
 (use 3.\&4. of Definition~\ref{dWPLS0} for $\qC$ and $\oqK$)
 \begin{align}
   \pi_+\tau^t\oqC &=\pi_+\tau^t\qC+\pi_+\tau^t\qD\oqK
 &&=&& \qC\qA^t + \pi_+\qD(\pi_+ + \pi_-) \tau^t\oqK\\
 &= \qC\qA^t + \pi_+\qD\oqK\oqA^t    + \qC\qB \tau^t\oqK
 &&=&& \qC\oqA^t +\qD\oqK\oqA^t
  \ = \ \oqC\oqA^t.
 \end{align}
\end{proof}

Obviously, $\oqK$ is a control in WPLS form for $\ALS$ iff
 it is  a control in WPLS form for $\qAB$. 
The dual condition is given below:
\begin{lemma}[$\oALS^\rmd$]\label{lWPLSformDual} %
$\oqK$ is a control in WPLS form for $\ALS$ iff
 $-\qB^\rmd$ is a control in WPLS form for $\bsysbm{\oqA^\rmd\| \oqK^\rmd}$.
\end{lemma}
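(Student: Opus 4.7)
The plan is to reduce the equivalence to a single duality statement. By the remark following Lemma~\ref{lWPLSform}, ``$\oqK$ is a control in WPLS form for $\ALS$'' is the same as ``$\oqK$ is a control in WPLS form for $\bsysbm{\qA\|\qB}$'', which amounts to saying that $\bsysbm{\oqA\|\crh\oqK\|}$ is a (partial) WPLS over $(\{0\},H,U)$ with $\oqA=\qA+\qB\tau\oqK$. By Lemma~\ref{lDualALS} applied to this partial system, this is equivalent to $\bsysbm{\oqA^\rmd\|\oqK^\rmd}$ being a (partial) WPLS over $(U,H,\{0\})$.

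It therefore suffices to check that, under the standing hypothesis that $\bsysbm{\oqA^\rmd\|\oqK^\rmd}$ is a WPLS, $-\qB^\rmd$ is automatically a control in WPLS form for it; the converse implication (that this hypothesis is built into the RHS of the lemma) is immediate from Definition~\ref{dWPLSform}. By that definition, we must exhibit the closed-loop triple
\[
\bsysbm{\oqA^\rmd+\oqK^\rmd\tau(-\qB^\rmd)\|\crh -\qB^\rmd\|}
\]
as a WPLS. The crucial identity is
\[
(\oqA^\rmd)^t+\oqK^\rmd\tau^t(-\qB^\rmd)=(\qA^\rmd)^t\qquad(t\ge0),
\]
obtained by adjointing $\oqA^t=\qA^t+\qB\tau^t\oqK$ on $H$ to get $(\oqA^t)^*=(\qA^t)^*+\oqK^*\tau^{-t}\qB^*$, and then rewriting the second term via $\qB^*=\Refl\qB^\rmd$, $\oqK^*=\oqK^\rmd\Refl$, and $\Refl\tau^{-t}\Refl=\tau^t$. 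Consequently the closed-loop system collapses to $\bsysbm{\qA^\rmd\|\crh -\qB^\rmd\|}$, which is a (partial) WPLS because $\ALS^\rmd$ is one by Lemma~\ref{lDualALS} (the overall sign is harmless).

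The only delicate ingredient is the adjoint computation above; all other steps are direct unwindings of the definitions together with Lemma~\ref{lDualALS}. The bookkeeping one must get right is the interplay between the reflection $\Refl$, the shift $\tau^t$, and the fact that $\qB^\rmd$ and $\oqK^\rmd$ are built from adjoints twisted by $\Refl$ on opposite sides---this is the main (and only real) obstacle.
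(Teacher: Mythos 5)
Your proof is correct and takes essentially the same route as the paper: the heart of both arguments is the adjoint identity $(\oqA^\rmd)^t+\oqK^\rmd\tau^t(-\qB^\rmd)=(\qA^\rmd)^t$ (obtained by dualizing $\oqA^t=\qA^t+\qB\tau^t\oqK$ and tracking $\Refl$ and $\tau$), which collapses the closed-loop triple to $\bsysbm{\qA^\rmd\|\crh -\qB^\rmd\|}$, a WPLS by Lemma~\ref{lDualALS}. The only (harmless) difference is in packaging the converse: the paper reapplies the same computation to the dual system via $(\ALS^\rmd)^\rmd=\ALS$, whereas you get it for free by noting that the right-hand side already presupposes that $\bsysbm{\oqA^\rmd\|\oqK^\rmd}$ is a WPLS, which by duality of the partial system $\oqAK$ is exactly the left-hand side.
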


(The latter condition contains the requirement that 
 $\bsysbm{\oqA^\rmd\| \oqK^\rmd}$ is a WPLS,
 i.e., that $\oqAK$ is. See (\ref{eoACKdef}) for $\oqA$
 and (\ref{elDualALS} for $()^\rmd$.)

\begin{proof}
$1^\circ$ ``Only if'': $\tqA:=(\oqA+(-\qBd)\tau\oqK^\rmd)^\rmd
  :=(\oqA+(-\qBd)\tau\oqK^\rmd)^*
 =\oqA^*-\qB^*\Refl\tau\Refl\oqK^* %
 = (\qA+\qB\tau\oqK)^*-\qB^*\tau^*\oqK^* = \qA^*=:\qA^\rmd$,
 hence $\sbm{\tqA\cr -\qB^\rmd}$ is a WPLS (since $\ALSd$ is).

$2^\circ$ {\em ``If'':} By $1^\circ$ (applied to 
 $\bsysbm{\oqA^\rmd\| \oqK^\rmd}$),
 $-\oqK$ is a control in WPLS form for $\bsysbm{\qA\| -\qB}$.
\end{proof}

Often one uses state feedback of form $u(t)=Kx(t)$ (pure),
 or $u(t)=Kx(t)+Fu(t)$ (non-pure) to stabilize and/or optimize
 the system, as in  Figure~\ref{fALSClL} or Theorem~\ref{IntroTh-Uexp}. 
Thus, we add an extra output signal $\qK x_0+\qF u$
 (which can be written as $\Kw x+Fu$ if the feedback $\qF$ is WR)
 that is fed back to the input ($u$).
This leads to the equation $u=\qK x_0+\qF u + \uc$ (cf.\ (\ref{ehxhuhyClL})),
 where $\qK$ and $\qF$ are to be chosen so that
 the solution $u=(I-\qF)^{-1}\qK x_0$ is the optimal input
 given any initial state $x_0$
 (when the {\em external}\label{pageextinput1} perturbation (input) $\uc$ is zero).
\begin{figure} %
        \[
        \begin{picture}(90,75)(20,-45)

        \thicklines

        \put(0,-29){ %
        \framebox(66,53){ %
        $\ExttauSystem$}}

        \thinlines
        \put(77,14){\Large$\ALSext$}

        \put(-44,-62){ \dashbox{2}(146,92){}}
        \put(111,24){\Huge$\ALSClL$}

        \put(42,33){\line (-1,0){20}}
        \put(22,33){\vector (0,-1){9}}
        \put(28,36){$x_0$}

        \put(3,12){\vector (-1,0){63}}
        \put(-50,15){$x$}

        \put(3,-4){\vector (-1,0){63}}
        \put(-50,-1){$y$}

        \put(3,-17){\vector (-1,0){63}}
        \put(-53,-15){$\qK x_0+\qF u$}
        \put(-15,-17){\circle*{3}}
        \put(-15,-17){\vector (0,-1){31}} %

        \put(-15,-50){\circle{4}}
        \put(-22,-42){$\scriptstyle +$} 
        \put(-28,-47){$\scriptstyle +$} 

        \put(-60,-50){\vector (1,0){43}}
        \put(-55,-47){$ \uc$}

        \put(-13,-50){\vector (1,0){250}}
        \put(62,-47){$u=(I-\qF)^{-1}\uc+(I-\qF)^{-1}\qK x_0$}
        \put(56,-50){\circle*{3}} %
        \put(56,-50){\vector (0,1){21}} %

        \end{picture}
        \]
        \caption{State-feedback connection $u(t)=Kx(t)+Fu(t)$}
        \label{fALSClL} 
\end{figure}
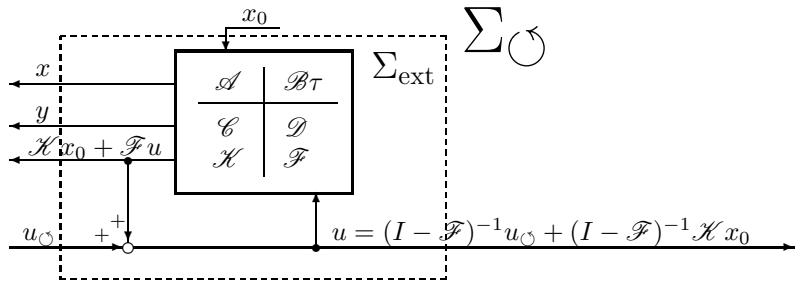

For the above solution to exist, $I-\qF$ must have a well-posed inverse,
 equivalently, 
 $I-\hqF$ must be boundedly invertible on some right half-plane;
 this makes 
 the map $\uc\to u$ from the external input $\uc$
 in Figure~\ref{fALSClL} is well-posed. 
Thus, a pair $\qKF$ is called admissible for $\qABCD$ iff $\qABKF$ is a WPLS
 and $I-\qF$ is invertible:
\begin{defin}[\pmbold{$\ALSClL,K,\qKF$}]\label{dAdmKF0} %
A pair $\bsysbm{\qK \| \qF}$ is called an
 {\em admissible\label{pageadmissible1} state-feedback\label{pagestate-feedback} pair} 
 for $\ALS$ %
 if the extended system
 \begin{equation}
   \label{eALSExt}
        \ALSExt := \bExtSystem
 \end{equation}
 is a $\WPLS$ %
 and $I-\qF\in\cG\TIC_\infty(U)$.

We set $\qX:=I-\qF,\ \qM\label{pageqM}:=\qX^{-1},\ \qN:=\qD\qM$
 and denote the corresponding closed-loop\label{pageClL-KF} system
 (see Figure~\ref{fALSClL}) %
\begin{align}
  \label{eALSb}
  \ALSClLtau 
  &=\bsysbm{\qAClL\|\qBClL\tau\crh \qCClL\|\qDClL\cr \qKClL\|\qFClL}
  =\bsysbm{\qA+\qB\tau\qM\qK\|\qB\qM\tau\crh \qC+\qD\qM\qK \| \qD\qM\cr
            \qM\qK\| \qM-I}\\
  \label{eALSbuLu}
  &=\ALSexttau \bbm{I&0\cr -\qK&I-\qF}^{-1}
  =\ALSexttau \bbm{I&0\cr \qM\qK&\qM} %
   :\bbm{x_0\cr \uc}\mapsto \bbm{x\cr y\cr u-\uc}.
\end{align}

If $\qF$ is weakly regular and $F=0$, then we call
 the generator $K$ (or $\Kw$) of $\qK$ a weakly regular
 {\em state-feedback operator} for $\ALS$.

We call $\qKF$ {\em stabilizing}\label{pagestabilizing} 
 if $\ALSClL$ is stable. We add ``[q.]r.c.-''\label{pageqrc-} if $\qN,\qM$ are [q.]r.c.
 (Definition~\ref{drcf} below).
If there exists a stabilizing state-feedback pair for $\ALS$,
 then $\ALS$ is called {\em stabilizable}\label{pagestabilizable}
 (similarly for exponentially, SOS- or output-stabilizing etc.).
\end{defin}

Obviously, $I-\qF\in\cG\TIC_\infty(U)$ iff $L=\bbm{I&0}$
 is an admissible output feedback operator for $\ALSext$.
By Lemma~\ref{lALSL0}, $\ALSClL$ is then indeed a WPLS (on $(U,H,Y\times U)$).
If $\qD$ and $\qF$ are strongly regular %
 with feedthrough operators $D$ and $F=0$, 
 then the generators of the two systems are as follows:
 \begin{align}\label{elALSClLgen}
   \ALSext=\bsyspm{A\|B\crh C\|D\cr K\|0}, \ \ \ 
   \ALSClL=\bsyspm{A+BK\|B\crh C+DK\|D\cr K\|0},
 \end{align}
 by Proposition 6.6.18(d4) of [M02] (or [W94b]). 
Observe that $\ALSClL\sbm{I\cr0}$ %
 is a controlled WPLS form.

We can reduce most output feedback results to dynamic feedback results:
\begin{remark}[``\pmbold{$\ALSClL=\ALS_L$}'']\label{routputfeedback=statefeedback} %
Any static output feedback can be written as (part of) %
 state feedback and vice versa.
\end{remark}

\begin{proof}
We observed above that
 the state feedback $\qKF$ for $\ALS$
 corresponds to the static output feedback
 $L=\bbm{0&I}$ for $\ALSext$,
 i.e., $\ALSClL=(\ALSExt)_I$. 
Conversely, static output feedback can be written as a special case of
 state feedback (set $\qKF=\bsysbm{L\qC\| L\qD}$ %
 and drop the bottom row of $\ALSClL$ to obtain $\ALS_L$). 

Moreover, given a WPLS $\ALS=\qABCD$,
 a pair $\qKF$ is admissible for $\ALS$ iff $\qKF$ is admissible for $\qAB$,
 and this is the case iff $\ssysbm{\qC\| 0&\qD\cr \qK\| 0&\qF}$
 is an admissible state-feedback pair for $\bsysbm{\qA\| 0&\qB}$
 (the closed-loop system equals $\ALSClL$ with a column of zeros
  inserted to the middle). %

(Similarly, a ``flow inverse'' (see, e.g., Section 6.2 of [S04])
 of $\ssysbm{\qA\|\qB\crh -\qK\|\qX}$
 means $\ssysbm{\qAClL\|\qBClL\crh \qKClL\|\qFClL+I}
 =\ssysbm{\qA+\qB\qKClL\|\qB\qM\crh \qM\qK\|\qM}$.)  %
\end{proof}

The state-feedback map $\qKClL$ determines the pair $\qKF$
 uniquely modulo $E\in\cG\BL(U)$:
\begin{lemma}[All \pmbold{$\qKF$}]\label{lAllqKF} %
Let $\qKF$ be an admissible state-feedback pair for $\ALS$.
Then all admissible state-feedback pairs $\tqKF$ leading
 to same control $\qKClL$ are given by
 \begin{equation}
   \label{eAllqKF}
   \tqKF = \bsysbm{E\qK \| I-E(I-\qF)}\ \ \ \ (E\in\cG\BL(U)).
 \end{equation}
\end{lemma}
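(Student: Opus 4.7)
The plan is to verify the equivalence in two directions. For the ``if'' direction (sufficiency), given $E\in\cG\BL(U)$, I would set $\tqK:=E\qK$ and $\tqF:=I-E(I-\qF)$, and check by direct computation that $\tqKF$ is admissible with the same $\qKClL$. Since $E\in\BL(U)$ is a constant (time-invariant pointwise) operator, $E\in\cG\TIC_\infty(U)$ trivially, so $\tqF=I-E+E\qF\in\TIC_\infty(U)$, and $I-\tqF=E(I-\qF)\in\cG\TIC_\infty(U)$ with inverse $\tqM=\qM E^{-1}$. The static nature of $E$ ensures that $\tqK=E\qK:H\to\L_\omega^2(\R_+;U)$ still satisfies the observability intertwining $\tqK\qA^t=E\qK\qA^t=E\pi_+\tau^t\qK=\pi_+\tau^t\tqK$, so $\bsysbm{\qA\|\qB\crh\tqK\|\tqF}$ is a WPLS (inheriting the well-posedness from $\ALSExt$). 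The equality is then $\tqM\tqK=\qM E^{-1}E\qK=\qM\qK=\qKClL$.

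For the ``only if'' direction (necessity), given any admissible $\tqKF$ with $\tqM\tqK=\qM\qK$, I would set
\[
  E:=\tqX\qM=(I-\tqF)(I-\qF)^{-1}\in\cG\TIC_\infty(U),
\]
which is a straightforward product of two invertible elements. Then $\tqK=\tqX\tqM\tqK=\tqX\qM\qK=E\qK$ and $\tqF=I-\tqX=I-E(I-\qF)$ hold as immediate algebraic consequences. The hard part will be upgrading $E$ from $\cG\TIC_\infty(U)$ to $\cG\BL(U)$, i.e., showing that $E$ must be static (feedthrough only, no dynamics).

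For this main obstacle, I would use the observability structure of $\qK$ and $\tqK$. Both satisfy the intertwining $\qK\qA^t=\pi_+\tau^t\qK$ (and similarly for $\tqK$), which by Lemma~\ref{lABC}(c) means that, for $x_0\in\Dom(A)$ with generators $K$ of $\qK$ and $\tK$ of $\tqK$, one has $(\qK x_0)(t)=K\qA^tx_0$ and $(\tqK x_0)(t)=\tK\qA^tx_0$. Passing to Laplace transforms, the identity $\tqK=E\qK$ yields $\tK(s-A)^{-1}=\hE(s)\,K(s-A)^{-1}$ on $H$, hence $\tK z=\hE(s)Kz$ for all $z\in\Dom(A)$ and all $s$ in some right half-plane. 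Since the left-hand side is independent of $s$, $\hE(s)$ acts as the same operator on $Kz$ for every $s$, so $\hE$ is constant on $\overline{\Ran K}\subset U$. Modifying $E$ on the orthogonal complement (which does not affect $E\qK$), we may take $\hE$ to be a single element $E_0\in\cG\BL(U)$; since $E\qK=E_0\qK$ and both compositions determine $E$ on $\overline{\Ran K}$ only, the parametrization reduces to $E\in\cG\BL(U)$ as claimed. The non-regular case requires a distributional/weak-Yosida variant of the same argument.
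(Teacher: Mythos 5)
The paper states this lemma without a proof (only the mnemonic $\tqX=E\qX$ follows it), so your argument has to stand on its own. The ``if'' direction and the algebraic reduction in the ``only if'' direction are fine: setting $E:=\tqX\qM\in\cG\TIC_\infty(U)$ immediately gives $\tqK=E\qK$ and $\tqF=I-E(I-\qF)$, and the entire content of the lemma is indeed that this $E$ must be \emph{static}. (In the ``if'' direction you should also verify condition 4.\ of Definition~\ref{dWPLS0} for the new extended system, i.e.\ $\pi_+\tqF\pi_-=\tqK\qB$; this holds precisely because a static $E$ commutes with $\pi_\pm$.)

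The staticity argument, however, has a genuine gap. The intertwining of $\qK$ only yields that $\shat{E}(s)-\shat{E}(z)$ vanishes on $\overline{K[\Dom(A)]}$, and that is not enough. First, it is vacuous when $\qK=0$, yet even then the lemma makes a nontrivial assertion: it forces $\tqF$ to be a static operator. Second, and fatally, your ``modify $E$ on the orthogonal complement'' step replaces $E$ by some $E_0$ agreeing with $E$ only on $\overline{\Ran K}$; this preserves $\tqK=E_0\qK$ but destroys $\tqF=I-E_0(I-\qF)$, because $\qX:=I-\qF$ is invertible in $\TIC_\infty(U)$ and its outputs are certainly not confined to $\overline{\Ran K}$. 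The missing ingredient is the Hankel condition of Definition~\ref{dWPLS0}(4.) applied to \emph{both} extended systems: $\pi_+\qX\pi_-=-\qK\qB$ and $\pi_+\tqX\pi_-=-\tqK\qB$. Substituting $\tqX=E\qX$ and $\tqK=E\qK$ and using causality of $E$ (so that $E\pi_+=\pi_+E\pi_+$) gives $\pi_+E\pi_-\,(\pi_-\qX\pi_-)=0$; since $\pi_-\qX\pi_-$ is invertible on $\L^2_\omega(\R_-;U)$ (its inverse is $\pi_-\qM\pi_-$, by causality), the Hankel operator $\pi_+E\pi_-$ vanishes identically. A causal time-invariant operator with zero Hankel operator commutes with every $\pi_{[a,b)}$ and is therefore a constant multiplication operator $E\in\BL(U)$, and invertibility in $\TIC_\infty(U)$ then forces $E\in\cG\BL(U)$. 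With this replacement your proof closes; no regularity of $\qF$ or $\qK$ is needed anywhere.
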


Mnemonic: $\tqK=E\qK$, $\tqX=E\qX$, where $\qX:=\qM^{-1}$.

The following follows from a straight-forward computation:
\begin{lemma}\label{luc} %
Let $\qKF$ be an admissible state-feedback pair for $\ALS$.
Then $\xc=x$ and $\yc=y$ for any $x_0\in H$ and $u\in\Lloc^2(\R_+;U)$
 if $\uc=-\qK x_0+\qX u$, equivalently, if $u=\qKClL x_0+\qM \uc$.

Moreover, $\qKClLo$ is a control in WPLS form for $\qABCDClL$ %
 iff $\oqK:=\qKClL+\qM\qKClLo$ is a control in WPLS form for $\ALS$.
We have $\qKClLo=-\qK+\qX\oqK$.\noproof
\end{lemma}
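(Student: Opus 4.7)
The plan is to handle the three assertions in turn, all by direct computation using the formulas in (\ref{eALSb}) and (\ref{eALSbuLu}) together with the identities $\qX = I-\qF$, $\qM = \qX^{-1}$, $\qKClL = \qM\qK$, $\qDClL = \qD\qM$, $\qCClL = \qC + \qD\qM\qK$, $\qAClL = \qA + \qB\tau\qM\qK$, $\qBClL = \qB\qM$.

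For the first assertion, I would compute
\[
  \xc = \qAClL x_0 + \qBClL\tau\uc = (\qA + \qB\tau\qM\qK)x_0 + \qB\qM\tau\uc = \qA x_0 + \qB\tau(\qM\qK x_0 + \qM\uc).
\]
Comparing with $x = \qA x_0 + \qB\tau u$ shows that $\xc = x$ is equivalent to $u = \qM\qK x_0 + \qM\uc = \qKClL x_0 + \qM\uc$; applying $\qX = \qM^{-1}$ gives the dual form $\uc = -\qK x_0 + \qX u$. The same substitution in $\yc = (\qC + \qD\qM\qK)x_0 + \qD\qM\uc$ yields $\yc = \qC x_0 + \qD(\qKClL x_0 + \qM\uc) = \qC x_0 + \qD u = y$, establishing the equivalence of the two conditions and the claimed equalities.

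For the ``moreover'' part, I would set $\oqK := \qKClL + \qM\qKClLo$ and use the first part with $u = \oqK x_0$, for which (by $\qX\qM = I$ and $\qX\qKClL = \qK$) the associated external input is $\uc = -\qK x_0 + \qX(\qKClL + \qM\qKClLo)x_0 = \qKClLo x_0$. Then $\xc = x$ and $\yc = y$ give
\[
  \oqA x_0 = (\qA + \qB\tau\oqK)x_0 = \qAClL x_0 + \qBClL\tau\qKClLo x_0, \qquad
  \oqC x_0 = (\qC + \qD\oqK) x_0 = \qCClL x_0 + \qDClL \qKClLo x_0,
\]
so the triple $\oqACK$ coincides with the closed-loop triple obtained from $\ALSClL$ by the feedback $\uc = \qKClLo x_0$. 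Since ``control in WPLS form'' is, by Definition~\ref{dWPLSform}, exactly the requirement that this triple is a WPLS, the two statements are equivalent. The relation $\qKClLo = -\qK + \qX\oqK$ is then obtained by solving $\oqK = \qKClL + \qM\qKClLo$ for $\qKClLo$: apply $\qX$ to both sides and use $\qX\qKClL = \qK$, $\qX\qM = I$.

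I do not expect any real obstacle; the whole lemma is essentially bookkeeping once one notices that substituting $\uc = -\qK x_0 + \qX u$ turns the closed-loop trajectory into the open-loop one. The only point requiring a little care is that the algebraic identities on I/O maps (such as $\qX\qKClL = \qK$) are applied as identities in $\TIC_\infty$, so the computations are legitimate for arbitrary $x_0 \in H$ and $u \in \Lloc^2(\R_+;U)$ by causality; no pointwise arguments in the state space are needed.
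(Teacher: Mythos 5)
Your treatment of the first assertion is correct and is exactly the intended ``straight-forward computation'': substituting $u=\qKClL x_0+\qM\uc$ into $x=\qA x_0+\qB\tau u$, $y=\qC x_0+\qD u$ and using $\qAClL=\qA+\qB\tau\qM\qK$, $\qCClL=\qC+\qD\qM\qK$, $\qBClL=\qB\qM$, $\qDClL=\qD\qM$ (plus time-invariance of $\qM$) gives $\xc=x$, $\yc=y$, and the two forms of the hypothesis are interchangeable because $\qX=\qM^{-1}$ and $\qX\qKClL=\qK$. The final algebra $\qKClLo=-\qK+\qX\oqK$ is also right.

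The gap is in the ``moreover'' step: the two controlled triples do \emph{not} coincide. By Definition~\ref{dWPLSform} the control itself is the last output row of $\oALS$, so the triple for $\ALS$ under $\oqK$ ends in $\oqK$, while the triple for $\qABCDClL$ under $\qKClLo$ ends in $\qKClLo$, and these differ. Your trajectory identification shows only that the semigroup rows and the $\qC$-rows agree --- but, as the paper remarks after Definition~\ref{dWPLSform}, the $\qC$-row is the irrelevant part ($\oqK$ is a control in WPLS form for $\ALS$ iff it is one for $\qAB$); the essential content is that $\oqAK$ is a WPLS, i.e.\ that the last row satisfies $\pi_+\tau^t\oqK=\oqK\oqA^t$ and the corresponding local boundedness. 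Transferring this between $\oqK$ and $\qKClLo$ is not automatic; it needs the Hankel identities of the WPLSs $\ALSClL$ and $\ALSext$. For instance, if $\qKClLo$ is a control in WPLS form for $\qABCDClL$, then using $\pi_+\qM\pi_-=\pi_+(I+\qFClL)\pi_-=\qKClL\qBClL$ one computes
\begin{align}
\pi_+\tau^t\oqK &= \pi_+\tau^t\qKClL+\qM\pi_+\tau^t\qKClLo+\pi_+\qM\pi_-\tau^t\qKClLo\\
&=\qKClL\qAClL^t+\qM\qKClLo\oqA^t+\qKClL\qBClL\tau^t\qKClLo
=\qKClL\oqA^t+\qM\qKClLo\oqA^t=\oqK\oqA^t,
\end{align}
and the converse direction is the symmetric computation with $\pi_+\qX\pi_-=-\qK\qB$. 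This is the same mechanism as the displayed computation in the proof of Lemma~\ref{lWPLSform}; without some such step the ``iff'' is asserted but not proved.
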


Here $\xc:=\qAClL x_0+\qB\tau\uc$ and $\yc:=\qCClL x_0+\qDClL\uc$
 are the state and output of $\ALSClL$
 with input $\uc\in\Lloc^2(\R_+;U)$ and initial state $x_0$.

\NotesFor{Section \ref{sFeedback}}\label{pageNotes-sFeedback} %
Definition \ref{dWPLSform} and Lemma~\ref{lWPLSform} are from [M02],
 and they are  necessary tools for a complete Riccati equation
 theory, as shown by Theorem~\ref{SIRE}
 (and Example 8.4.13 of [M02], cf.\ ``3c.'' on p.~\pageref{page3c.}). 
Similar structures have implicitly been used in, e.g., [FLT88] and [Z96].
The rest of this section is essentially well known,
 mostly due to [W94b] (and [S98a]);
 see [S04] or [M02] for further results.

\section{Optimal control and $J$-coercivity}\label{sOpt} %

We shall present our main results on stabilization and
 factorization in Section~\ref{sStabOpt},
 the ARE theory in Section~\ref{sARE},
 and the IRE theory in Section~\ref{sIRE},
 thus generalizing the results of Section~\ref{sIntro}.

In this section
 (from Chapter~8 of [M02])
 we present the optimization setting and certain tools
 on which those results are based.
First we need to generalize ``minimal'' or ``optimal'' (``$J$-optimal'')
 control so as to cover
 1. all WPLSs, 2. alternative optimization domains to $\dUexp$ (\ref{eUexp}),
  and also 3. indefinite problems ($\cJ(0,\cdot)\not\ge0$).
Then we need a general coercivity assumption on cost functions
 (``$J$-coercivity'')
 that covers much more (\ref{ecJx2u2}) and (\ref{ecJy2u2})
 (in fact, all nonsingular control problems) %
 and yet guarantees the existence of a unique optimal control
 (under the corresponding FCC).

A reader who wants to avoid technical details may %
 consider $B$, $C$ and $D$
 bounded (so that they constitute a WPLS with any 
 $C_0$-semigroup $\qA$ on $H$)
 and $\gUstar=\dUexp$ (and $J=I$), as in Section~\ref{sIntro}.
Hypothesis~\ref{shgUstar} is redundant in that setting.

In Theorem~\ref{IntroTh-Uexp} we optimized over the set 
\begin{equation}
  \label{eUexp}
  \dUexp(x_0):=\{u\in\L^2(\R_+;U)\I x\in\L^2(\R_+;H)\},
\end{equation}
 sometimes called the set of exponentially
 (or internally) stabilizing controls
 (see (\ref{StateOutputDef}) for $x=x_{x_0,u}$).
Recently it has become popular to study optimization over a larger
 set of controls than $\dUexp(x_0)$, namely over the set
\begin{equation}
  \label{eUout}
  \dUout(x_0):=\{u\in\L^2(\R_+;U)\I y\in\L^2(\R_+;H)\}
\end{equation}
 of (externally or) output-stabilizing controls,
 as in  Theorem~\ref{IntroTh-Uout}.
By discretization (Lemma 7.2 of [WR00]), %
 one can show that $\dUexp\sub\dUout$
 (this means that $\dUexp(x_0)\sub\dUout(x_0)$ for all $x_0\in H$),
 i.e., that $u,x\in\L^2\ \THEN\ y\in\L^2$.
Sometimes the set
 $\dUstr(x_0)\label{pageUstr}:=\{u\in\dUout(x_0)\I \|x(t)\|_H\to0$ as $t\to+\infty\}$ 
 of strongly stabilizing controls is used,
 and in certain proofs
 (see, e.g., Corollaries \ref{cExpStabB1} and~\ref{cUout-B1}, %
 [M02], [M03b])
 we need a very special domain of optimization. 
In general, we shall denote the chosen domain of optimization
 by $\gUstar(\cdot)$
 (which a reader who wants to avoid technical details
    may read as $\dUexp(\cdot)$).
\begin{standinghypothesis}[$\pmb{\ALS,J,\gUstar,x,y}$]\label{shgUstar} %
Throughout this article, we assume that
 $\ALS=\qABCD$ is a WPLS on Hilbert spaces $(U,H,Y)$, that $J=J^*\in\BL(Y)$,
 and that $\gUstar$ and its parameters $\vartheta,\qQR,\qRR,\sZ,\uZ$
 are of the form explained in the following paragraph.
For any $u,x_0$, we define $x,y$ by (\ref{StateOutputDef}).
\end{standinghypothesis}

We assume that $\vartheta\in\R$, $\qABQR$ is a WPLS on $(U,H,\tY)$
 for some Hilbert space $\tY$;
  $\sZ$\label{pagesZ}
     is a Banach space and $\uZ$ is a topological vector space
 (e.g., a normed space);
 $\sZ\sub\uZ$ continuously;
 $\qQR\in\BL(H,\uZ),\ \qRR\in\BL(\L^2(\R_+;U),\uZ)$;
 and $\pi_+\tau^t z\in\sZ\IFF z\in\sZ\ (z\in\uZ,\ t>0)$.
Moreover, we set
\begin{equation}
  \label{egUstar}
 \gUstar(x_0):=
 \gUstar^\ALS(x_0):=
  \{u\in\L^2_\vartheta(\R_+;U) \I 
      \sbm{\qC & \qD\cr \qQR &\qRR} \sbm{x_0\cr u}\in\L^2\times \sZ\}.  
\end{equation}

Thus, Standing Hypothesis~\ref{shgUstar} equals
 Hypothesis 9.0.1 of [M02] (plus (\ref{StateOutputDef})).
Note that we can make $\gUstar$ equal to $\dUexp$
 (resp.\ to $\dUout$)
  by setting
 $\qQR=\qA$, $\qRR=\qB\tau$
 (resp.\  $\qQR=\qC$, $\qRR=\qD$), %
 $\tY=H$, $\sZ=\L^2$,
 $\vartheta=0$.
The following is obvious:
 \begin{lemma}\label{lgUstar} %
$\gUstar(\alpha x_0+\beta x_1)=\alpha\gUstar(x_0)+\beta\gUstar(x_1)$
 whenever $\alpha,\beta\in\C\pois\{0\},\ \gUstar(x_0)\ne\tyhja$.\noproof
 \end{lemma}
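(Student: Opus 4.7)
The key observation is that both defining conditions of $\gUstar$ are linear in $(x_0,u)$: the space $\L^2_\vartheta(\R_+;U)$ is a linear subspace, and the map $\sbm{\qC & \qD\cr \qQR &\qRR}:H\times\L^2_\vartheta(\R_+;U)\to\L^2(\R_+;Y)\times\uZ$ is linear. Therefore, the set $\{(x_0,u)\I u\in\gUstar(x_0)\}$ is a linear subspace of $H\times\L^2_\vartheta(\R_+;U)$. The lemma is just the statement that this subspace projects ``fiberwise'' as one would expect, under the mild non-degeneracy conditions on $\alpha,\beta$ and the non-emptiness of $\gUstar(x_0)$.

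The forward inclusion $\alpha\gUstar(x_0)+\beta\gUstar(x_1)\subset\gUstar(\alpha x_0+\beta x_1)$ is immediate: given $u_0\in\gUstar(x_0)$ and $u_1\in\gUstar(x_1)$, the combination $\alpha u_0+\beta u_1$ lies in $\L^2_\vartheta(\R_+;U)$, and applying the linear map $\sbm{\qC & \qD\cr \qQR & \qRR}$ to $\sbm{\alpha x_0+\beta x_1\cr \alpha u_0+\beta u_1}$ yields $\alpha\sbm{\qC x_0+\qD u_0\cr \qQR x_0+\qRR u_0}+\beta\sbm{\qC x_1+\qD u_1\cr \qQR x_1+\qRR u_1}\in \L^2\times\sZ$.

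For the reverse inclusion, I use the hypothesis $\gUstar(x_0)\ne\tyhja$ to fix some witness $w\in\gUstar(x_0)$. Given any $v\in\gUstar(\alpha x_0+\beta x_1)$, I set $u_0:=w$ and $u_1:=\beta^{-1}(v-\alpha w)$ (here $\beta\ne0$ is used). Then $\alpha u_0+\beta u_1=v$ by construction, and $u_0\in\gUstar(x_0)$ by choice of $w$. To check $u_1\in\gUstar(x_1)$, I subtract: the pair $(\beta x_1,\,\beta u_1)=(\alpha x_0+\beta x_1,\,v)-\alpha(x_0,w)$ is a linear combination of elements in the ``graph'' of $\gUstar$, so $\sbm{\qC(\beta x_1)+\qD(\beta u_1)\cr\qQR(\beta x_1)+\qRR(\beta u_1)}\in\L^2\times\sZ$ and $\beta u_1\in\L^2_\vartheta(\R_+;U)$; dividing by $\beta\ne0$ (which preserves both $\L^2$ and $\sZ$) yields $u_1\in\gUstar(x_1)$.

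The main (very small) obstacle is simply ensuring that the roles of $\alpha$ and $\beta$ are handled symmetrically: the hypothesis is $\gUstar(x_0)\ne\tyhja$ but not $\gUstar(x_1)\ne\tyhja$, yet we need to produce an element of $\gUstar(x_1)$. The trick above resolves this by defining $u_1$ as a difference rather than requiring an a priori element of $\gUstar(x_1)$; linearity of the defining conditions then forces $u_1\in\gUstar(x_1)$ automatically. The non-zero assumption on $\alpha,\beta$ is used exactly at the division step, and the non-emptiness of $\gUstar(x_0)$ supplies the required witness $w$.
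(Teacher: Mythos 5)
Your argument is correct and is exactly the linearity argument the paper has in mind (the lemma is stated with no proof, being labelled "obvious"): the graph $\{(x_0,u)\I u\in\gUstar(x_0)\}$ is a linear subspace, the forward inclusion is immediate, and the reverse inclusion follows by writing $u_1:=\beta^{-1}(v-\alpha w)$ with a witness $w\in\gUstar(x_0)$ and using closure of the graph under linear combinations and division by the nonzero scalar $\beta$. Nothing is missing; your handling of the asymmetric hypothesis $\gUstar(x_0)\ne\tyhja$ is the right way to see why only one fiber needs to be assumed nonempty.
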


See Section 8.3 (and 9.0) of [M02] for further details and results.

Formula (\ref{StateOutputDef}) is equivalent to (\ref{exy=qABCDintro}) 
 as well as to $\sbm{x(t)\cr \piOt y}=\sbm{\qAt &\qBt\cr \qCt& \qDt}
 \sbm{x_0\cr u}$ (see (\ref{eWPLSdiscr})).

For any optimization domain $\gUstar$,
 the results are quite similar to those for $\dUexp$.
The main difference is that, instead of the exponential stability,
 we must require some other kind of stability
 for the closed-loop semigroup if $\gUstar\ne\dUexp$
 (cf.\ Theorem \ref{IntroTh-Uexp}(ii)). 
Further details will be given in Sections \ref{sARE}
 and~\ref{sIRE}. %

We want to have our theory applicable to any quadratic cost functions,
 hence we define the {\em cost function}\label{pagecostfunction}
 by
\begin{equation}
  \label{eJ}
  \cJ(x_0,u):=\p{y,Jy}_{\L^2(\R_+;Y)}\ \ \ (x_0\in H,\ u\in\dUexp(x_0)).
\end{equation}
If, e.g., we want to study the cost function (\ref{ecJy2u2})
 or $\cJ(x_0,u)=\|y\|_2^2+\|u\|_2^2$,
 given a system $\ALS$,
 we can achieve this by
 taking $J=\sbm{I&0\cr 0&I}\in\BL(Y\times U)$ %
 and replacing $\qC$ by $\sbm{\qC\cr 0}$ and $\qD$ by $\sbm{\qD\cr I}$
 (and $Y$ by $Y\times U$),
 as in the proof of Theorem~\ref{QRCF-Uout}.
See the proof of Corollary~\ref{cOptExpstab1}
 for the cost function (\ref{ecJx2u2}).

Optimization theory is needed, e.g., in minimization problems
 (as in Theorems \ref{IntroTh-Uexp} and~\ref{IntroTh-Uout}) and in 
 $\H^\infty$ control problems, %
 where $J$ is indefinite and a saddle point of the cost function is sought
 (``best control for the worst disturbance''),
 since it leads to a formula for the desired controller.
Since a minimum or a saddle point is necessarily 
 a (often unique)
 zero of the derivative of the cost function,
 in such problems the goal is to find a control that
 is optimal in the following sense:
\begin{defin}[$\pmbold{J}$-optimal]\label{dJopt0} %
Let $x_0\in H$. A control $u\in\gUstar(x_0)$ 
 is called {\em $J$-optimal} for $x_0$ %
 if the Fréchet derivative (on $\gUstar(x_0)$)
 of the cost function $u\mapsto \p{y,Jy}$ %
 is zero at $u$.

[Generalized] state-feedback $\qKF$ or $K$ [or $\oqK$ or $\oALS$] 
 (Definition \ref{dAdmKF0} [\ref{dWPLSform}])
 is called {\em $J$-optimal} if, for all $x_0\in H$,
 the control $\qKClL x_0$ [or $\oqK x_0$]
 is $J$-optimal for $x_0$.
\end{defin}

By Lemma \ref{lOptCost0}(b), $u$ is $J$-optimal iff
 $\p{\qD\eta,J y}_{\L^2}=0$ (i.e., ``$\p{\Delta y,Jy}_{\L^2}=0$'')
 for all $\eta\in\gUstar(0)$ (recall that $y:=\qC x_0+\qD u$).

The optimal cost is always unique, although an optimal control
 might be nonunique:
\begin{lemma}%
[Optimal cost $\pmbold{\cJ(x_0,\uopt)}$]%
\label{lOptCost0} %
\index{J-optimal cost@$J$-optimal cost} 
{\bf (a)} 
If $u$ and $\tu$ are $J$-optimal for $x_0\in H$,
 then the cost $\p{y,Jy}$ is the same for both $y=\qC x_0+\qD u$
 and for $\ty=\qC x_0+\qD\tu$.

{\bf (b)} For any $x_0\in H$ and $u\in\gUstar(x_0)$,
 the following are equivalent:
 \begin{itemlist}
\advance\leftskip4em
  \item[(i)] $u$ is $J$-optimal for $x_0$;
  \item[(ii)] $\p{\qD \eta,Jy}=0\ \ \all \eta\in\gUstar(0)$;
  \item[(iii)] $\cJ(x_0,u+\eta)
    =\p{y,Jy} + \p{\qD \eta,J\qD \eta}$
    \ \ \ $\all\eta\in\gUstar(0)$;
  \item[(iv)]  $\p{\qC \tx_0+\qD (\tu+\teta),J(\qC x_0+\qD (u+\eta))}
    =\p{\ty,Jy} + \p{\qD \teta,J\qD \eta}$
  whenever $\eta,\teta\in\gUstar(0)$ and $\tu$
 is $J$-optimal for $\tx_0\in H$, $\ty:=\qC \tx_0+\qD\tu$. %
 \end{itemlist} %

{\bf (c)} If there is at most one $J$-optimal control for $x_0=0$,
 then there is at most one $J$-optimal control for any $x_0\in H$.

{\bf (d)} If(f) $\cJ(0,\cdot)\ge0$, 
 then a control is minimizing iff it is $J$-optimal.\noproof %
\end{lemma}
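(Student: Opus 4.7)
The proof rests on exploiting the quadratic structure of $\cJ(x_0,\cdot)$ and the self-adjointness $J=J^*$. The plan is to establish (b) first, deriving the four equivalences in the natural order (i)$\Leftrightarrow$(ii)$\Leftrightarrow$(iii)$\Leftrightarrow$(iv), and then to read off (a), (c) and (d) as consequences. To set up, I would write $\cJ(x_0,u+\eta)=\p{y+\qD\eta,J(y+\qD\eta)}=\p{y,Jy}+\p{\qD\eta,Jy}+\p{y,J\qD\eta}+\p{\qD\eta,J\qD\eta}$ and use $J=J^*$ to note that the two cross-terms are complex conjugates. Since $\gUstar(0)$ is a complex vector space closed under scalar multiplication (cf.\ Lemma \ref{lgUstar}), vanishing of the Fr\'echet derivative at $u$ in every admissible direction is equivalent to $\p{\qD\eta,Jy}=0$ for all $\eta\in\gUstar(0)$, giving (i)$\Leftrightarrow$(ii). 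The expansion then yields (ii)$\Leftrightarrow$(iii) directly, as the cross-terms collapse precisely when (ii) holds.

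Next I would handle (iv). The direction (iv)$\Rightarrow$(iii) is immediate by setting $\tx_0=x_0$, $\tu=u$, $\teta=\eta$. For (iii)$\Rightarrow$(iv), expand $\p{\ty+\qD\teta,J(y+\qD\eta)}$ into four terms and use (ii) for both $u$ and $\tu$: the mixed terms $\p{\qD\teta,Jy}$ and $\p{\ty,J\qD\eta}=\overline{\p{\qD\eta,J\ty}}$ both vanish by applying (ii) with directions $\teta$ and $\eta$ respectively (self-adjointness of $J$ is what converts the second one into an instance of (ii) for $\tu$).

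With (iv) in hand, part (a) falls out by applying (iv) with $\tx_0=x_0$, $\teta=0$, and $\eta=\tu-u$, which lies in $\gUstar(0)$ by Lemma \ref{lgUstar}. Both sides of the inner product on the left collapse to $\ty$, yielding $\p{\ty,J\ty}=\p{\ty,Jy}$; by symmetry $\p{y,Jy}=\p{y,J\ty}=\overline{\p{\ty,Jy}}$. Since $\p{y,Jy}$ and $\p{\ty,J\ty}$ are real (as $J=J^*$), these identities force $\p{y,Jy}=\p{\ty,J\ty}$. For (c), if $u,\tu\in\gUstar(x_0)$ are both $J$-optimal, subtracting the two instances of (ii) shows that $u-\tu\in\gUstar(0)$ satisfies $\p{\qD\eta,J\qD(u-\tu)}=0$ for all $\eta\in\gUstar(0)$, i.e.\ $u-\tu$ is $J$-optimal for initial state $0$; since $0$ is trivially $J$-optimal for $0$, the assumed uniqueness forces $u=\tu$.

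Finally, for (d), the direction $\cJ(0,\cdot)\ge0\Rightarrow$ (minimizing $\Leftrightarrow$ $J$-optimal) is read directly from (iii): the condition $\p{\qD\eta,J\qD\eta}\ge0$ makes the identity in (iii) a quadratic lower bound, so $J$-optimal $\Rightarrow$ minimizing; conversely a minimum is a critical point of a Fr\'echet-differentiable function. For the converse implication, apply the equivalence at $x_0=0$: the zero control is always $J$-optimal there, hence minimizing by hypothesis, yielding $\cJ(0,\eta)\ge\cJ(0,0)=0$ for every $\eta\in\gUstar(0)$. There is no real conceptual obstacle here; the only delicate point is the bookkeeping with conjugates and self-adjointness in passing between (iii) and (iv), which is the lever that makes (a) work and which also ensures the derivative computation handles the complex scalar field correctly.
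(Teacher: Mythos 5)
Your proof is correct and follows essentially the same route as the paper's: the quadratic expansion of the cost, the use of closure of $\gUstar(0)$ under multiplication by $i$ to pass from the real-part condition to (ii), and the derivation of (a), (c), (d) from the equivalences in (b). The only cosmetic difference is that you obtain (iii)$\Rightarrow$(iv) by direct expansion using (ii) for both controls and then read (a) off from (iv), whereas the paper polarizes (iii) and proves (a) directly; both are the same computation rearranged.
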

 
\begin{proof} %
(a) 
We have $\p{\ty,J\ty}-\p{y,Jy}=\p{Jy,\ty-y}+\p{\ty-y,J\ty}=0$,
 because $\ty-y=\qD\eta$, 
 where $\eta:=\tu-u\in\gUstar(0)$.

(b) Obviously, ``(ii') $\re\p{\qD \eta,Jy}=0$ for all $\eta\in\gUstar(0)$''
 is equivalent to (ii) and to (iii) (use $i\eta$). %
But  $\frac{\rmd\cJ(x_0,\uopt+t\eta)}{\rmd t}(0)=2\re\p{y,JD\eta}$,
 hence (ii') is equivalent to (i).
Trivially, (iv) implies (iii); 
 conversely, by using (iii) three times to compute %
 $\cJ(x_0+\tx_0,u+\tu+\eta+\teta)$, we obtain $2\re$(iv), %
 hence then (iv) holds.

(c) If $u,u+\eta$ are $J$-optimal for $x_0$,
 then $\eta$ is $J$-optimal for $0$, by (iv)
 (set $\tx_0=0$, $\tu=0$).

(d) This follows from (iii), because $\cJ(0,\eta)=\p{\qD\eta,J\qD\eta}$.
\end{proof}

To define $J$-coercivity, we need a natural norm on $\gUstar(0)$:
\begin{lemma}[$\|\cdot\|_{\gUstar}$]\label{lgUnorm} %
The set $\gUstar(0)$ is a Banach space under the norm\linebreak
 $\|u\|_{\gUstar}:=\max\{\|u\|_{\L^2_\vartheta},\|\qD u\|_2,\|\qRR u\|_{\sZ}\}$.\noproof
\end{lemma}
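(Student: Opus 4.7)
The plan is to follow the standard two-step argument for Banach spaces defined by a compound norm: first verify that $\|\cdot\|_{\gUstar}$ is indeed a norm, then establish completeness by extracting limits in each of the three component topologies and identifying them through continuity of $\qD$ and $\qRR$.

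For the norm axioms, I first note that $\gUstar(0)$ is a vector space because $\qD$ and $\qRR$ are linear, so $u,v\in\gUstar(0)\THEN \alpha u+\beta v\in\gUstar(0)$. The expression defining $\|\cdot\|_{\gUstar}$ is the maximum of three seminorms, and any such maximum is a seminorm. It is a genuine norm because its first term, $\|u\|_{\L^2_\vartheta}$, is already a norm on the ambient space $\L^2_\vartheta(\R_+;U)\supseteq\gUstar(0)$, so $\|u\|_{\gUstar}=0$ forces $u=0$.

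For completeness, let $(u_n)\subset\gUstar(0)$ be Cauchy in $\|\cdot\|_{\gUstar}$. By the definition of the norm, the three sequences $(u_n)$, $(\qD u_n)$, and $(\qRR u_n)$ are Cauchy in $\L^2_\vartheta(\R_+;U)$, $\L^2(\R_+;Y)$, and $\sZ$ respectively; each target is complete (the last by the standing hypothesis that $\sZ$ is a Banach space), yielding limits $u\in\L^2_\vartheta$, $v\in\L^2$, $z\in\sZ$. The work then reduces to the identifications $\qD u=v$ and $\qRR u=z$, since once these hold we have $u\in\gUstar(0)$ and $u_n\to u$ in $\|\cdot\|_{\gUstar}$. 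The $\qD$-identification is straightforward: because $\qD\in\TIC_\vartheta(U,Y)$ is bounded on $\L^2_\vartheta(\R_+;U)$ (Theorem~\ref{TransferFct1}), $\qD u_n\to\qD u$ in $\L^2_\vartheta$, and extracting a common pointwise-a.e.\ convergent subsequence from the simultaneous $\L^2$-convergence $\qD u_n\to v$ yields $\qD u=v$ a.e., hence $\qD u=v\in\L^2$.

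The main obstacle is the $\qRR$-identification, because the standing hypothesis only asserts $\qRR\in\BL(\L^2(\R_+;U),\uZ)$ --- a priori defined on $\L^2$, not on all of $\L^2_\vartheta$. The resolution is to invoke the underlying WPLS structure $\qABQR$: viewed as the I/O map of this WPLS, $\qRR$ is automatically bounded as a map $\L^2_\vartheta(\R_+;U)\to\L^2_\vartheta(\R;\tY)$ (after enlarging $\vartheta$ if necessary so that $\qABQR$ is $\vartheta$-stable). Combining this with the continuous embedding $\sZ\hookrightarrow\uZ$ and a subsequence-a.e.\ argument in $\uZ$ of the same flavor as above, one concludes $\qRR u=z\in\sZ$. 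This completes the identification and hence the completeness proof.
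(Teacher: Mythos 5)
Your overall strategy is the one the paper intends (the paper offers no proof beyond the remark that $\L^2_\vartheta$, $\L^2$ and $\sZ$ are complete): check the norm axioms, extract componentwise limits of a Cauchy sequence, and identify them. The skeleton is fine, but the two justifications you give for the identification step are both wrong. First, the claim that $\qD\in\TIC_\vartheta(U,Y)$ is bounded on $\L^2_\vartheta(\R_+;U)$ is false in general: a WPLS only gives $\qD\in\TIC_\omega$ for $\omega>\omega_A$, and nothing in Standing Hypothesis~\ref{shgUstar} forces $\vartheta>\omega_A$ --- in the central applications $\gUstar=\dUexp$ or $\dUout$ one has $\vartheta=0$ while the system is typically unstable, and the very reason $\gUstar(0)$ carries a norm stronger than $\|\cdot\|_{\L^2_\vartheta}$ is that $\qD$ is \emph{not} bounded there. (Theorem~\ref{TransferFct1} does not give what you cite it for.) Second, ``enlarging $\vartheta$ if necessary'' is not available: $\vartheta$ enters both the set $\gUstar(0)$ and the norm $\|\cdot\|_{\gUstar}$, so changing it changes the statement being proved.

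Both defects are repaired by the same observation, which is the actual glue needed here: by causality and time-invariance, $\piOt\qD=\piOt\qD\piOt$ is bounded on $\L^2([0,t);\cdot)$ for every $t>0$, and likewise for the I/O map $\qRR$ of the WPLS $\qABQR$; hence $\qD$ and $\qRR$ are continuous from $\L^2_\vartheta(\R_+;U)$ into $\Lloc^2$ with no stability assumption whatsoever. Then $u_n\to u$ in $\L^2_\vartheta$ gives $\piOt\qD u_n\to\piOt\qD u$ for each $t$, while $\qD u_n\to v$ in $\L^2$ gives $\piOt\qD u_n\to\piOt v$, so $\qD u=v\in\L^2$; the analogous comparison for $\qRR$ (using that the $\uZ$-valued and the locally defined interpretations of $\qRR$ agree, which is implicit in the standing hypothesis) gives $\qRR u=z\in\sZ$. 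This puts $u$ in $\gUstar(0)$ with $u_n\to u$ in $\|\cdot\|_{\gUstar}$, and your a.e.-subsequence extraction becomes unnecessary. With these substitutions the proof is complete and coincides with the paper's intended argument.
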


(This is straight-forward, %
 because $\L^2_\vartheta,\L^2,\sZ$ are Banach spaces.)
 
Obviously, the norms 
 $\|u\|_{\dUexp}^2\label{pageUexpnorm} :=\|u\|_2^2+\|x\|_2^2$ and
  $\|u\|_{\dUout}^2:=\|u\|_2^2+\|y\|_2^2$ %
 are equivalent to
 those defined in (a) above.
Moreover, $\qD\in\BL(\gUstar(0),\L^2(\R_+;Y))$ %
 (and $\qB\tau\in\BL(\dUexp(0),\L^2(\R_+;H))$).

In many of our results, we shall require that
  the {\em Popov\label{pagePTO} Toeplitz operator} $\PTO:=\qD^*J\qD$
 is (boundedly) invertible $\gUstar(0)\to \gUstar(0)^*$.
In the case $J=I$, this is true %
 iff there exists $\eps>0$ s.t.\ 
 $\|\qD u\|_2\ge \eps\|u\|_{\gUstar(0)}$ for all $u\in\gUstar(0)$;
 for $\gUstar=\dUout$ we can equivalently write this as
 $\|\qD u\|_2\ge\eps'\|u\|_2\ \all u\in\dUout(0)$.

This generalizes all coercivity assumptions that we have met
 in the literature (except those for ``singular control''),
 including so called ``no transmission zeros'' and
 ``no invariant zeros'' conditions 
 (see Theorem~\ref{Assumptions}).

If $\ALS$ is exponentially stable and $\gUstar=\dUexp$
 (or $\ALS$ is SOS-stable and $\gUstar=\dUout$),
 then an equivalent condition
 is that $\qD^*J\qD$ is (boundedly) invertible on $\L^2(\R_+;U)$,
 equivalently, that 
 the {\em Popov function} $\hqD^*J\hqD$ is uniformly invertible
 on the imaginary axis $i\R$.

See the proofs of the corollaries in Section~\ref{sStabOpt}
 to observe how our condition is satisfied in various applications.
E.g., for the ``LQR'' cost function $\cJ(x_0,u)=\|x\|_2^2+\|u\|_2^2$
 (i.e., $C=\sbm{I\cr0}$, $\qD=\sbm{0\cr I}$, $J=I$),
 obviously, $\PTO\gg0$ %
 on $\dUexp(0)$,
 equivalently, $\ALS$ is positively $J$-coercive over $\dUexp$,
 which leads to the existence of a unique optimal control:
\begin{theorem}[$J$-coercive $\THEN \ex! J$-optimal control]\label{J-coercive}
Assume that $\ALS$ is {\em $J$-coercive} over $\gUstar$, i.e.\ 
 that $\PTO:=\qD^*J\qD\in\BL(\gUstar(0),\gUstar(0)^*)$ is (boundedly) invertible.
Then, for each $x_0$ s.t.\ $\gUstar(x_0)\ne\tyhja$,
 there exists a unique $J$-optimal control.

If(f), in addition, $\PTO\ge0$
 (or $\p{\qD \cdot,J\qD \cdot}\ge0$), i.e.\ 
 $\ALS$ is {\em positively $J$-coercive},
 then the unique $J$-optimal control is (strictly) minimizing.
\end{theorem}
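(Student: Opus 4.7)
The plan is to reduce the existence and uniqueness of a $J$-optimal control to solving a single linear equation in $\gUstar(0)^*$, using the characterization of $J$-optimality from Lemma \ref{lOptCost0}(b)(ii). Fix $x_0 \in H$ with $\gUstar(x_0) \ne \tyhja$ and pick any reference control $u_0 \in \gUstar(x_0)$; set $y_0 := \qC x_0 + \qD u_0 \in \L^2(\R_+;Y)$. By Lemma \ref{lgUstar}, every $u \in \gUstar(x_0)$ can be written as $u = u_0 + v$ with $v \in \gUstar(0)$, and the corresponding output is $y = y_0 + \qD v$. The $J$-optimality condition $\p{\qD\eta, J y} = 0$ for all $\eta \in \gUstar(0)$ then becomes the affine equation
\begin{equation*}
\PTO v = -\qD^* J y_0 \quad \text{in } \gUstar(0)^*.
\end{equation*}
Since $\qD \in \BL(\gUstar(0), \L^2(\R_+;Y))$ (by Lemma \ref{lgUnorm} and the remark following it) and $y_0 \in \L^2$, the right-hand side is a well-defined element of $\gUstar(0)^*$. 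The assumed invertibility of $\PTO$ then yields a unique $v \in \gUstar(0)$, giving the unique $J$-optimal control $u := u_0 + v$.

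For the minimization claim, I will invoke Lemma \ref{lOptCost0}(b)(iii), which gives
\begin{equation*}
\cJ(x_0, u + \eta) - \cJ(x_0, u) = \p{\qD\eta, J \qD\eta} = \p{\PTO \eta, \eta}_{\gUstar(0)^*, \gUstar(0)} .
\end{equation*}
When $\PTO$ is nonnegative and boundedly invertible on the Banach (in fact Hilbert-izable, by the embedding into $\L^2\times \sZ\times \L^2_\vartheta$) space $\gUstar(0)$, the standard fact that a self-adjoint nonnegative invertible operator is coercive delivers a constant $c > 0$ with $\p{\PTO \eta, \eta} \ge c\|\eta\|_{\gUstar}^2$, making $u$ a strict minimizer of $\cJ(x_0,\cdot)$. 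The ``iff'' direction of the last sentence follows from Lemma \ref{lOptCost0}(d): any minimizer is $J$-optimal, hence by the uniqueness just established coincides with $u$, and then $\cJ(x_0, u + \eta) \ge \cJ(x_0, u)$ forces $\p{\qD\eta, J \qD\eta} \ge 0$ for all $\eta \in \gUstar(0)$.

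The main technical obstacle I expect is a careful duality bookkeeping: one must verify that $\qD^* J y_0$ is genuinely an element of $\gUstar(0)^*$ rather than of some larger ambient dual, and that the pairing used to view $\PTO = \qD^* J \qD$ as a map $\gUstar(0) \to \gUstar(0)^*$ is compatible with the $\L^2$-inner product appearing in Lemma \ref{lOptCost0}(b)(ii). Once these identifications are unambiguous, the existence/uniqueness step is a one-line application of $\PTO \in \cG\BL(\gUstar(0), \gUstar(0)^*)$; no semigroup theory enters beyond the continuity $\qD: \gUstar(0) \to \L^2(\R_+;Y)$.
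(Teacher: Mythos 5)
Your proposal is correct and follows essentially the same route as the paper: the paper's proof also picks a reference control $\tu\in\gUstar(x_0)$, solves the same affine equation (written there as $v=-(D^*\tJ D)^{-1}D^*\tJ\ty$ using the projection machinery of Lemma~\ref{lJcoerc-tD}), derives uniqueness from invertibility of $\PTO$, and settles the minimization claim via Lemma~\ref{lOptCost0}(iii)\&(d). The only cosmetic difference is that you work directly with $\PTO:\gUstar(0)\to\gUstar(0)^*$ rather than through the compressed operator $D^*\tJ D$, and your aside on strictness via coercivity is a correct (if slightly misattributed --- Hilbertizability of $\gUstar(0)$ comes from $\qD$ being an isomorphism onto its closed range, not from the embedding into $\L^2\times\sZ\times\L^2_\vartheta$, since $\sZ$ need not be a Hilbert space) elaboration of what the paper leaves implicit.
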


(The proof is given on p.~\pageref{pageproof-J-coercive}.
Note that $\dUexp(0)$ and $\dUout(0)$ are Hilbert(izable) spaces,
 hence $\dUexp(0)^*=\dUexp(0)$ $\dUout(0)^*=\dUout(0)$. %
See Lemma~\ref{lJcoerc-tD} for more.)

Thus, under the standard coercivity condition and the {\em FCC}\label{pageFCC} 
 ($\gUstar(x_0)\ne\tyhja\ \all x_0\in H$),
 there exists a unique optimal control for each initial state $x_0$.
Also the converse is true if, e.g.,
 $\dim U<\infty$ and $\gUstar=\dUexp$ (see pp. \pageref{pageiii}
  and~\pageref{pagesignature}).

Even better, a unique optimal control can be given
 in WPLS form (Definition~\ref{dWPLSform}),
 i.e., as an output of a system:
\begin{theorem}[\pmbold{$\ex!J$}-optimal \pmbold{$\THEN\ \ex\ALSopt$}]\label{ALSopt} %
Assume that there is a unique $J$-optimal control $\uopt(x_0)$ 
 for each $x_0\in H$. Then $\qKopt:x_0\mapsto\uopt(x_0)$
 is a control in WPLS form, i.e., 
\begin{equation}
\label{eALSopt}
 \ALSopt:=
  \bsysbm{\qAopt\|\crh \qCopt \| \cr \qKopt\|}:x_0\mapsto
  \bsysbm{\qA x_0+\qB\tau \uopt(x_0)\|\crh
              \qC x_0+\qD\uopt(x_0)\|\cr
              \uopt(x_0)\|}
\end{equation}
 is a WPLS
 (on $(\{0\},H,Y\times U)$).
We call $\Ric\label{pageRiccatiOp} :=\qCopt^*J\qCopt$ %
 the {\em $J$-optimal cost operator} (or the Riccati operator)\label{pageRiccatioperator}.
It satisfies $\Ric=\Ric^*\in\BL(H)$,
  and the minimal cost equals $\cJ(x_0,\uopt(x_0))=\p{x_0,\Ric x_0}$
 for all initial states $x_0\in H$.

If $\gUstar\sub\dUexp$, then $\ALSopt$ is exponentially stable;
 if $\gUstar\sub\dUout$, then $\ALSopt$ is output stable.
\end{theorem}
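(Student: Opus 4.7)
The plan is to verify the axioms of Definition \ref{dWPLS0} for $\ALSopt$ directly, then read off the cost formula and stability claims.

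First I would prove linearity of $\qKopt$ (hence of $\qAopt^t$ and $\qCopt$). By Lemma \ref{lOptCost0}(b), $u\in\gUstar(x_0)$ is $J$-optimal iff $\p{\qD\eta, J(\qC x_0 + \qD u)}=0$ for all $\eta\in\gUstar(0)$, and this condition is linear in the pair $(x_0,u)$. Given $x_0,x_1\in H$ and scalars $\alpha,\beta$, Lemma \ref{lgUstar} shows $\alpha\qKopt x_0+\beta\qKopt x_1\in\gUstar(\alpha x_0+\beta x_1)$, and the optimality condition passes through linearly; uniqueness then forces $\qKopt(\alpha x_0+\beta x_1)=\alpha\qKopt x_0+\beta\qKopt x_1$.

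Next, the semigroup property of $\qAopt$ together with the intertwining identities in Definition \ref{dWPLS0} follow from a Bellman (dynamic programming) argument. Let $u=\qKopt(x_0)$ with state $x$, and set $v:=\tau^t u|_{[0,\infty)}$. For any $\eta\in\gUstar(0)$ the time-shifted perturbation $\tilde\eta:=\tau^{-t}\eta$ (extended by zero on $[0,t)$) again lies in $\gUstar(0)$ by the closure property built into Standing Hypothesis \ref{shgUstar}, and time-invariance of $\qD$ combined with causality yields
\[
\p{\qD\tilde\eta, J(\qC x_0 + \qD u)} = \p{\qD\eta, J(\qC x(t) + \qD v)}.
\]
So $v$ is $J$-optimal for $x(t)$, and uniqueness gives $\qKopt(x(t))=v$. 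Rewriting this as $\qKopt\qAopt^t=\pi_+\tau^t\qKopt$ (and similarly $\qCopt\qAopt^t=\pi_+\tau^t\qCopt$) and combining with the obvious additivity of trajectories under concatenation yields $\qAopt^{s+t}=\qAopt^s\qAopt^t$ and the axioms 3.\ of Definition \ref{dWPLS0}.

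The main technical hurdle will be the boundedness $\qKopt\in\BL(H,\L^2_\vartheta(\R_+;U))$; boundedness of $\qAopt^t\in\BL(H)$ and of $\qCopt$ then follow from boundedness of $\qA^t,\qB\tau^t,\qC,\qD$. Here I would invoke the closed graph theorem: suppose $x_n\to0$ in $H$ and $\qKopt x_n\to u$ in $\L^2_\vartheta$. Taking the limit in the optimality relation for $x_n$ and using boundedness of $\qC,\qD$ on $\L^2_\vartheta$ yields $\p{\qD\eta, J\qD u}=0$ for every $\eta\in\gUstar(0)$. The delicate point is to check $u\in\gUstar(0)$: one passes to a subsequence along which the data $\qC x_n+\qD\qKopt x_n$ and $\qQR x_n+\qRR\qKopt x_n$ converge weakly in $\L^2(\R_+;Y)$ and in $\sZ$ respectively (passing to a bidual if $\sZ$ is not reflexive), and identifies the weak limits with $\qD u$ and $\qRR u$ using the $\L^2_\vartheta$-convergence of $\qKopt x_n$. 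Uniqueness at $x_0=0$ then forces $u=0$, closing the graph. Strong continuity $t\mapsto\qAopt^t x_0$ at $t=0$ reduces to strong continuity of $\qA$ together with $\|\qB\tau^t\qKopt x_0\|_H\to0$ as $t\downarrow 0$ (since $\qKopt x_0\in\L^2_\vartheta$), and with the semigroup property this yields the $C_0$-property.

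Finally, the cost identity is immediate from the definition of $\qCopt$:
\[
\cJ(x_0,\uopt(x_0)) = \p{\qCopt x_0, J\qCopt x_0} = \p{x_0,\qCopt^*J\qCopt x_0} = \p{x_0,\Ric x_0},
\]
with $\Ric=\Ric^*\in\BL(H)$ following from $J=J^*$ and boundedness of $\qCopt$. For the stability claims: if $\gUstar\sub\dUexp$ then $\qAopt x_0=x\in\L^2(\R_+;H)$ for every $x_0\in H$, and Lemma \ref{lExpStable} forces $\qAopt$ to be exponentially stable; if $\gUstar\sub\dUout$ then $\qCopt x_0=y\in\L^2$ and $\qKopt x_0=u\in\L^2$ for every $x_0\in H$, which is by definition output stability of $\ALSopt$.
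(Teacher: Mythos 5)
Your overall route is the same as the paper's: the dynamic-programming identity $\pi_+\tau^t\qKopt x_0=\qKopt\qAopt^t x_0$, forced by time-invariance, causality and uniqueness, is exactly the paper's step $1^\circ$; the semigroup law, strong continuity, cost formula and stability claims are then read off as in its step $2^\circ$, with boundedness coming from the closed graph theorem.

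The one genuine gap is in your implementation of that closed-graph step. You take the codomain to be $\L^2_\vartheta(\R_+;U)$ and then try to recover $u\in\gUstar(0)$ by extracting weakly convergent subsequences of $\qC x_n+\qD\qKopt x_n$ in $\L^2$ and of $\qQR x_n+\qRR\qKopt x_n$ in $\sZ$. Weak compactness requires these sequences to be \emph{bounded} in $\L^2$ resp.\ $\sZ$, and no such a priori bound is available --- uniform control of exactly these quantities is what the closed graph theorem is supposed to deliver, so the argument is circular. (For $\vartheta>0$ there is a second problem even earlier: convergence of $\qKopt x_n$ in $\L^2_\vartheta$ alone does not let you pass to the limit in the $\L^2$ pairing $\p{\qD\eta,J(\qC x_n+\qD\qKopt x_n)}$, since $\qD\eta\in\L^2$ need not lie in $\L^2_{-\vartheta}$.) The standard repair is to apply the closed graph theorem to $x_0\mapsto(x_0,\qKopt x_0)$ with codomain the Banach space of admissible pairs equipped with the graph norm $\|x_0\|_H+\|u\|_{\L^2_\vartheta}+\|\qC x_0+\qD u\|_{\L^2}+\|\qQR x_0+\qRR u\|_{\sZ}$ (completeness as in Lemma~\ref{lgUnorm}): membership of the limit in $\gUstar(x_0)$ is then automatic, the optimality relation passes to the limit in the genuine $\L^2$ inner product, uniqueness closes the graph, and boundedness of $\qKopt$, $\qCopt$ and $\qAopt^t$ all follow at once. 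With that modification your proof is complete and coincides with the paper's.
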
 

(The proof is given on p.~\pageref{pageproof-ALSopt}.
We call $\Ric:=\qCopt^*J\qCopt$ the {\em $J$-optimal cost operator}
 (for $\ALS,J,\gUstar$)
 whenever $\qKopt$ is a $J$-optimal control in WPLS form,
 even if it were not unique.)

Obviously, the state and first output of $\ALSopt$ with initial state $x_0$
 are those of $\ALS$ with initial state $x_0$ and input $\uopt(x_0)$.
The $J$-optimal control $u$ also satisfies $u(t)=(\Kopt)_\w x(t)$ a.e.\ 
 for certain $\Kopt\in\BL(\Dom(\Aopt),U)$,
 by Lemma~\ref{lABC}(c). %
Moreover,  $\Aopt=A+B\Kopt$, %
 where $\Aopt,\Copt,\Kopt$ are the generators of $\ALSopt$.

Since $u(t)=\Koptw x(t)$ for a.e.\ $t\ge0$,
 the (Yosida extension $\Koptw$ of the) operator $\Kopt$
 is a ``generalized state-feedback operator'' for $\ALS$ in certain sense.
However, the feedback loop may be ill-posed
 (this is not the case if $\PTO\gg0$, %
  by Theorem~\ref{PosJcKF}, or if the system is sufficiently regular). %

If, e.g., $B$ is bounded, then $K=\Kopt$ can be computed from
 (\ref{eAREminB}),
 and $\ALSopt$ is the left column of the (well-posed) closed-loop
 system $\ALSClL$ (see (\ref{elALSClLgen}), p.~\pageref{elALSClLgen}).
In Sections \ref{sARE}--\ref{soptIRE} we explain in detail
 when $\Kopt$ is well-posed and how the optimal feedback
 is determined by different AREs and IREs,
 in principle as in Theorems \ref{IntroTh-Uexp} and~\ref{IntroTh-Uout}.
\NotesFor{Section~\ref{sOpt}}\label{pagesOptnotes} %
In Chapter~8 of [M02], everything above and much more is presented,
 the only exception being that  in Theorem~\ref{J-coercive}
 we no longer require $\sZ$ to be reflexive.

For the case $\gUstar=\dUout$, %
 Theorems \ref{J-coercive} and~\ref{ALSopt}
 are known for the cost function $\|y\|_2^2+\|u\|_2^2$ %
 [FLT88] [Z96]
 ([Z96] seems to be the only unstable optimization result on WPLSs
   before [M02])
 and for a general $J$-coercive cost function in the stable case [S98c].

(To be exact, in [S98b] the $\dUout$ minimization problem for
 jointly stabilizable and detectable WPLSs was reduced to the stable case.
It has previously been very difficult to verify the joint assumption,
 but now %
 Theorem~\ref{JointlyUout} can be used for effectively that purpose.
However, 
 thanks to Theorem~\ref{QRCF-Uout}(iii)
  (see Theorem 8.4.5(e)\&(g1) of [M02]),
 now any problems over $\dUout$ can be reduced to the stable case
 (use Corollaries \ref{cUout-B1}, \ref{cOptExpstab1} and~\ref{cExpStabB1}
  for partial control ($\H^\infty$) and/or for $\dUexp$).
On the other hand, our theory also provides a direct solution.)

In the stable case (with $\L^2(\R_+;U)$ in place of $\gUstar(x_0)$),
 Definition~\ref{dJopt0} is rather old,
 and for WPLSs it was first used in [S97].

$J$-coercivity was defined in [S98c] for stable WPLSs
 (in [M02] for general ones), but
 equivalent definitions have been very popular
 for finite-dimensional or other very special systems,
 as explained in Section~\ref{sJcoerc}.
See Chapter~11 of [M02] for applications of indefinite $\PTO$
 to $\H^\infty$ problems.

The sets $\dUout$, $\dUexp$ and $\dUstr$ have been used
 (at least implicitly) for decades, 
 but we have not seen a unified approach before [M02],
 nor (indefinite unstable versions of) any of the results
 of this section (not even for finite-dimensional systems).
See the notes in Chapter~8 of [M02] for further comments.

\section{Minimizing control, stabilization and coprime factorizations}\label{sStabOpt} %

In this section, we shall study uniformly positive cost functions
 ($\PTO\gg0$) only. 
By Theorems \ref{J-coercive} and~\ref{ALSopt}, 
 we already know that in this case 
 the FCC leads to the existence of a unique minimizing control
 in WPLS form. %
In Theorem~\ref{PosJcKF} we shall
 show that this control is actually given by a
 (minimizing, well-posed) state-feedback pair.
The remainder of the section consists of corollaries to that theorem:
 we derive numerous simple but important consequences on
 stabilization and coprime factorizations.
They are the main results of this article along with the Riccati equation
 theory of Sections \ref{sARE}--\ref{sIRE}.

As mentioned above, in the uniformly positive case 
 ($\p{\qD u,J\qD u}\ge\eps\|u\|_{\gUstar}^2\ (u\in\gUstar(0))$)
 with the FCC,
  the unique optimal control is always
 given by a (well-posed) state-feedback pair:
\begin{theorem}[\pmbold{$\PTO\gg0\ \THEN\ \ex\qKF$}]\label{PosJcKF} %
Assume that $\PTO\gg0$ and $\vartheta=0$.
Then the FCC is satisfied iff there is a $J$-optimal state-feedback pair.
\end{theorem}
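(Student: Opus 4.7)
The $(\Leftarrow)$ direction is essentially a definition chase: if $\qKF$ is an admissible $J$-optimal state-feedback pair, then for every $x_0\in H$ the closed-loop control $u := \qKClL x_0$ is, by definition of ``$J$-optimal pair,'' a $J$-optimal control for $x_0$, and so in particular $u\in\gUstar(x_0)$; hence $\gUstar(x_0)\neq\tyhja$ for all $x_0$, i.e.\ the FCC holds.

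For the nontrivial direction $(\Rightarrow)$, assume FCC and $\PTO\gg0$. Since $\PTO\gg0$ implies $\PTO\in\cG\BL(\gUstar(0),\gUstar(0)^*)$, Theorem~\ref{J-coercive} yields a unique $J$-optimal control $\uopt(x_0)$ for every $x_0\in H$, and Theorem~\ref{ALSopt} promotes the map $\qKopt:x_0\mapsto\uopt(x_0)$ to a control in WPLS form, providing the WPLS $\ALSopt=\ssysbm{\qAopt\|\crh\qCopt\|\cr\qKopt\|}$. The remaining task is to upgrade $\qKopt$ from WPLS form to genuine well-posed state feedback, i.e.\ to produce an admissible pair $\qKF$ (with $\qX:=I-\qF\in\cG\TIC_\infty(U)$) such that $\qKClL=\qM\qK=\qKopt$; by Lemma~\ref{lAllqKF} such a pair is then unique up to $E\in\cG\BL(U)$.

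The strategy I propose is a spectral-factorization of the Popov Toeplitz operator. Because $\vartheta=0$, we have $\gUstar(0)\hookrightarrow\L^2(\R_+;U)$ continuously (Lemma~\ref{lgUnorm}), so the hypothesis $\PTO\gg0$ translates into a uniform positivity of $\qD^*J\qD$ on a space large enough to mimic the $\L^2$-theory. I plan to exhibit a ``spectral factor'' $\qX\in\cG\TIC_\infty(U)$ of $\PTO$ in the Toeplitz/shift-invariant sense, then set $\qF:=I-\qX$ (automatically admissible since $I-\qF=\qX$ is invertible in $\TIC_\infty$) and $\qK:=\qX\qKopt$; the closed-loop identities in \eqref{eALSb} then force $\qKClL=\qX^{-1}\qK=\qKopt$, and the $J$-optimality of the pair $\qKF$ follows from that of $\qKopt$. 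To verify that $\qABKF$ is genuinely a WPLS I intend to use the shift-invariance of $\qKopt$ inherited from $\ALSopt$ (which is a WPLS by Theorem~\ref{ALSopt}) together with the time-invariance of $\qX$, and Lemma~\ref{lWPLSform} as a bookkeeping device for reading off admissibility in the frequency domain.

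The main obstacle is constructing the spectral factor $\qX$ in the possibly unstable, $\gUstar$-based setting: in the classical stable case with $\gUstar=\L^2$ such a factorization of a uniformly positive Popov function on $i\R$ is standard, but here $\PTO$ acts on $\gUstar(0)$, which need not coincide with $\L^2(\R_+;U)$ and need not be shift-invariant in the naive sense. To circumvent this I plan first to use the existence of the WPLS $\ALSopt$ to produce a preliminary ``stabilizing'' realization in which $\qD,\qKopt$ become stable I/O and observation maps (exploiting $\gUstar\subset\L^2(\R_+;U)$ when $\gUstar\subset\dUexp$, and more generally the output-stability statement in Theorem~\ref{ALSopt}), reducing the problem to a stable spectral factorization, and then transporting the resulting factor back to the original $\ALS$ via Lemma~\ref{luc}. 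Finally, $J$-optimality of the resulting pair is read off by substituting $u=\qKClL x_0$ into the optimality condition of Lemma~\ref{lOptCost0}(b)(ii), which now holds for all $\eta\in\gUstar(0)$ by uniqueness of the $J$-optimal control.
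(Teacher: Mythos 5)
Your reduction to Theorems \ref{J-coercive} and \ref{ALSopt} and your overall plan (a spectral factorization to upgrade $\qKopt$ from WPLS form to a well-posed pair) match the paper's strategy, and your $(\Leftarrow)$ direction is fine. But the core of $(\Rightarrow)$ has a genuine gap. First, the object you propose to factor is the wrong one: in the unstable case one cannot spectrally factor $\PTO=\qD^*J\qD$ directly --- it acts on $\gUstar(0)$, which is not a shift-invariant $\L^2$ space, and $\qD$ need not map $\L^2$ into $\L^2$ at all. The paper instead factors the \emph{Riccati-corrected, exponentially shifted} Popov function $\hqS(s,s)=\hqD(s)^*J\hqD(s)+2\re s\,B^*(s-A)^{-*}\Ric(s-A)^{-1}B$ on a line $\alpha+i\R$ with $\alpha$ large: the shifted augmented map $\qD_+=\sbm{\efn^{-\alpha\cdot}\qD\efn^{\alpha\cdot}\cr \efn^{-\alpha\cdot}\qB\tau\efn^{\alpha\cdot}}$ is stable for \emph{any} realization, and the price of the shift is the extra weight $2\alpha\Ric$ on the state output --- this is how the problem becomes a stable factorization (Theorem \ref{SpF1}(a)) without ever closing a loop. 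The nontrivial input, for which your proposal has no substitute, is that $\PTO\gg0$ together with the FCC forces $\hqS(s,s)\ge\eps I$ on a right half-plane (Lemma \ref{lqScoercive}(b), proved with exponential test inputs $u(t)=\efn^{st}u_0$ and the uniform bound $\qSt=(P^t)^*\PTO P^t\ge\eps I$); uniform positivity of the operator $\PTO$ on $\gUstar(0)$ does not by itself hand you a uniformly positive Popov \emph{function} to factor.

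Second, your route to the stable case is circular. You want a ``preliminary stabilizing realization in which $\qD$ becomes a stable I/O map'' and to ``transport the factor back via Lemma \ref{luc}'' --- but $\ALSopt$ has no input, hence no I/O map; producing a closed-loop I/O map requires precisely the admissible state-feedback pair whose existence is the conclusion of the theorem, and Lemma \ref{luc} explicitly assumes such a pair. Relatedly, even with a factor $\qX$ in hand, verifying that $\qABKF$ with $\qK=\qX\qKopt$, $\qF=I-\qX$ is a WPLS is not mere bookkeeping: one needs the identities $-\qK\qB=\pi_+\qX\pi_-$ and $\pi_+\tau^t\qK=\qK\qA^t$, which the paper extracts from the integral Riccati identities in Lemma \ref{lGenSpF}(b) and the implication (iv)$\Rightarrow$(i) of Theorem \ref{GenSpF}. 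Citing that theorem would close this last part, but the uniform positivity of $\hqS(s,s)$ remains the missing key step.
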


(The proof is given on p.~\pageref{pageproof-PosJcKF}.
Recall that $\vartheta=0$ when $\gUstar=\dUout$ or $\gUstar=\dUexp$.)

By Lemma \ref{lOptCost0}(d), 
 here a state-feedback pair is $J$-optimal iff it is minimizing.
We shall show in Corollary~\ref{cPosIRE} that
 also the existence of a $\gUstar$-stabilizing
 solution of the Riccati equation
 is equivalent to the FCC.

By setting $\gUstar=\dUexp$ and making the system coercive without
 affecting $\qA$ or $\qB$,
 we can obtain the perfect generalization of a classical
 finite-dimensional result (and Corollary~\ref{cIntroOptim}), 
 thus solving the well-known open problem:
\begin{cor}[Optimizable \pmbold{$\IFF$} Exp.\ stabilizable]\label{cOptExpstab1} %
A WPLS is optimizable iff it is exponentially stabilizable.  
\end{cor}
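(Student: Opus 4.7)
The ``if'' direction is immediate from Definition~\ref{dAdmKF0}: if $\qKF$ is exponentially stabilizing, taking $\uc = 0$ in (\ref{eALSbuLu}) produces $u = \qKClL x_0 \in \L^2(\R_+; U)$ and $x = \qAClL x_0 \in \L^2(\R_+; H)$ for every $x_0 \in H$ (by $\omega$-stability of $\ALSClL$ with some $\omega < 0$), so $\dUexp(x_0) \ne \tyhja$ and optimizability holds.

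For the ``only if'' direction, the plan is to cast the LQR cost $\cJ(x_0,u) = \|x\|_2^2 + \|u\|_2^2$ into the framework of Standing Hypothesis~\ref{shgUstar} and then invoke Theorem~\ref{PosJcKF}. Concretely, keeping $\qA$ and $\qB$ fixed, I replace the output data of $\ALS$ by $\tilde Y := H \times U$, $\tilde\qC := \sbm{\qA \cr 0}$, $\tilde\qD := \sbm{\qB\tau \cr I}$, and $J := I_{\tilde Y}$, so that $\tilde\qC x_0 + \tilde\qD u = \sbm{x \cr u}$ and $\p{y,Jy} = \|x\|_2^2 + \|u\|_2^2$; the tuple $\bsysbm{\qA \| \qB\tau \crh \tilde\qC \| \tilde\qD}$ is $\omega$-stable for every $\omega > \omega_A$ and satisfies conditions~1--4 of Definition~\ref{dWPLS0} because the underlying $\qA,\qB\tau$ already do. Following the recipe stated just after Lemma~\ref{lgUstar}, I take $\qQR := \qA$, $\qRR := \qB\tau$, $\sZ := \L^2(\R_+; H)$, $\vartheta := 0$, which forces $\gUstar = \dUexp$. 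A direct computation then gives
\[
 \p{\tilde\qD u,\, J\tilde\qD u} \;=\; \|\qB\tau u\|_2^2 + \|u\|_2^2 \;=\; \|u\|_{\gUstar}^2,
\]
so $\PTO \gg 0$ and $\vartheta = 0$, fulfilling the hypotheses of Theorem~\ref{PosJcKF}.

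Theorem~\ref{PosJcKF} now asserts that the FCC ``$\gUstar(x_0) \ne \tyhja$ for all $x_0 \in H$'' --- which is exactly optimizability --- is equivalent to the existence of a $J$-optimal state-feedback pair $\qKF$ for the augmented system. Given such a $\qKF$, the optimal control $\qKClL x_0$ belongs to $\gUstar(x_0) = \dUexp(x_0)$ for every $x_0 \in H$ (Theorem~\ref{ALSopt}), so $\qAClL x_0 = x \in \L^2(\R_+; H)$ for every $x_0$. Lemma~\ref{lExpStable} (Datko's criterion in WPLS form) then yields exponential stability of the semigroup $\qAClL$, hence of the entire WPLS $\ALSClL$, exhibiting $\qKF$ as an exponentially stabilizing pair for the original $\ALS$ (since $\qA,\qB$ were unchanged). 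The only real technical point is the coercivity identity above together with the identification of $\|\cdot\|_{\gUstar}$ with the prescribed norm; everything else is bookkeeping on top of Theorem~\ref{PosJcKF} and Lemma~\ref{lExpStable}.
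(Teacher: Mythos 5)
Your proposal is correct and follows essentially the same route as the paper: augment the output data to $\qC=\sbm{\qA\cr 0}$, $\qD=\sbm{\qB\tau\cr I}$ with $J=I$ and $\gUstar=\dUexp$, verify positive $J$-coercivity, apply Theorem~\ref{PosJcKF}, and conclude exponential stability of the closed loop via Theorem~\ref{ALSopt} and Lemma~\ref{lExpStable}. (The only cosmetic slip is writing $\p{\tilde\qD u,J\tilde\qD u}=\|u\|_{\gUstar}^2$ literally, whereas $\|\cdot\|_{\gUstar}$ is defined as a max in Lemma~\ref{lgUnorm}; the two norms are merely equivalent, which is all that is needed.)
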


The WPLS (or the pair $\qAB$) is called
 {\em optimizable}\label{pageoptimizable}
 iff $\dUexp(x_0)\ne\tyhja\ \all x_0\in H$ %
 (i.e., the state-FCC holds).
In addition to the corresponding IRE
 (see Corollary~\ref{cPosIRE}(c)),
 one equivalent condition is that 
 a certain (non-integral) Riccati equation
 has a nonnegative solution, as will be shown
 in [M03b]
 (if $0\in\rho(A)$, then the equation becomes
   $\Ric^2=(A_-^*+\Ric)(I+B_-B_-^*)^{-1}(A_-+\Ric)$, %
  where $A_-:=A^{-1},\ B_-:=A^{-1}B$ are bounded).

By duality, the corollary implies that a WPLS is
  estimatable iff it is exponentially detectable.
($\ALS$ is called {\em estimatable}\label{pageEst}
 (resp.\ {\em exponentially detectable}\label{pageExpDet})
 iff $\ALS^\rmd$ is optimizable
  (resp.\ {\em exponentially stabilizable})
 i.e., iff $\bsyspm{A^*\|C^*}$ is optimizable.)

As above, by $(A,B)$, $\bsyspm{A\|B}$\label{pageAB} or $\qAB$
 we refer to a system with zero output %
 (to $\ssysbm{\qA\|\qB\crh0\|0}$),
 although the concepts of Corollaries \ref{cOptExpstab1} and~\ref{cExpStabB1}
 are independent of the second row ($\qCD$) of the system.

\begin{proof}[Proof of Corollary~\ref{cOptExpstab1}:]
Set $\qC=\sbm{\qA\cr 0}$,\ $\qD=\sbm{\qB\tau\cr I}$, $J=I$, $\gUstar=\dUexp$.
Then, by Theorem~\ref{PosJcKF}, there is a $J$-optimal state-feedback
 pair iff $\qAB$ is optimizable.
By, e.g., Theorem~\ref{ALSopt}, the pair is exponentially stabilizing
 (equivalently, $\qAClL$ is exponentially stable;
  recall that $\ALSopt=\ALSClL\sbm{I\cr0}$).
\end{proof}

If $(A,\bbm{B_1&B_2})$ is optimizable through the first input,
 then it is exponentially stabilizable through the first input:
\begin{cor}[\pmbold{$(A,B_1)$} exp.stab.]\label{cExpStabB1} %
If $(A, \bbm{B_1&B_2})$ is a WPLS and $(A,B_1)$ 
 is optimizable, then
 $(A, \bbm{B_1&B_2})$ has an exponentially stabilizing state-feedback pair
 of form $\qKF=\ssysbm{\qK_1\|\qF_{11}&\qF_{12}\cr 0\|0&0}$.
\end{cor}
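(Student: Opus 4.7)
The plan is to reduce the two-input stabilization problem to the one-input case already handled by Corollary \ref{cOptExpstab1}, by inserting zeros into the second input channel while leaving the first untouched. Since the optimizability of $(A, B_1)$ yields an exponentially stabilizing state-feedback pair $\bsysbm{\qK_1\|\qF_{11}}$ via Corollary \ref{cOptExpstab1}, I would lift this to a state-feedback pair for $(A, \bbm{B_1 & B_2})$ of the prescribed block form, arranged so that the closed-loop semigroup is unchanged and therefore still exponentially stable.

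Concretely, I would take $\qK := \bbm{\qK_1 \cr 0}$ and $\qF := \bbm{\qF_{11} & \qF_{12} \cr 0 & 0}$ for a suitable $\qF_{12} \in \TIC_\infty(U_2, U_1)$ to be chosen below. Because $\qF$ is block lower-triangular, $I - \qF \in \cG\TIC_\infty(U_1 \times U_2)$ whenever $I - \qF_{11} \in \cG\TIC_\infty(U_1)$ (which is guaranteed by admissibility of the original pair), with inverse $\qM = \bbm{\qM_1 & \qM_1 \qF_{12} \cr 0 & I}$ where $\qM_1 := (I - \qF_{11})^{-1}$. A direct computation gives $\qM \qK = \bbm{\qM_1 \qK_1 \cr 0}$, so by (\ref{eALSb}) the resulting closed-loop semigroup is
$$\qAClL = \qA + \bbm{\qB_1 & \qB_2} \tau \bbm{\qM_1 \qK_1 \cr 0} = \qA + \qB_1 \tau \qM_1 \qK_1,$$
which is precisely the exponentially stable closed-loop semigroup for $(A, B_1)$ under $\bsysbm{\qK_1\|\qF_{11}}$ --- \emph{independently} of the choice of $\qF_{12}$.

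The one remaining obstacle is to ensure that $\qKF$ is \emph{admissible}, i.e., that the extended system
$$\ALSExt = \bsysbm{\qA\|\qB_1 & \qB_2\crh \qK_1\|\qF_{11} & \qF_{12}\cr 0\|0 & 0}$$
is a WPLS. The top block involving $\qF_{11}$ is a WPLS by admissibility of $\bsysbm{\qK_1\|\qF_{11}}$, the bottom row with zero output is trivially a WPLS row, and the only nontrivial constraint is the compatibility condition $\piplus \qF_{12} \piminus = \qK_1 \qB_2$ from item~4 of Definition \ref{dWPLS0}. Since $\qB_2$ is an admissible control operator for $\qA$ (as part of the input map of the given WPLS $(A, \bbm{B_1 & B_2})$) and $K_1$ is an admissible observation operator for $\qA$ (as the generator of $\qK_1$ from the WPLS $\bsysbm{\qA\|\qB_1\cr \qK_1\|\qF_{11}}$), Lemma \ref{lABC}(d) applied to the admissible generator triple $(A, B_2, K_1)$ furnishes such a $\qF_{12}$ --- for instance the one with zero feedthrough. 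With this choice $\ALSExt$ is a WPLS, the invertibility of $I - \qF$ follows from its triangular form, and the preceding calculation exhibits $\qKF$ as the required exponentially stabilizing state-feedback pair of the claimed zero pattern.
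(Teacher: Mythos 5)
There is a genuine gap at the decisive step. Your reduction of the problem to ``find a well-posed $\qF_{12}$ completing the triple $(A,B_2,K_1)$'' is the right way to see where the difficulty lies, but your resolution of it fails: Lemma \ref{lABC}(d) is a \emph{uniqueness} statement (the generators determine the maps, and $\qD$ up to an additive constant), not an existence statement, so it cannot ``furnish'' a $\qF_{12}$. More importantly, the existence you need is genuinely false for an arbitrary exponentially stabilizing pair $\bsysbm{\qK_1\|\qF_{11}}$: even when $B_2$ is an admissible control operator and $K_1$ an admissible observation operator for the same semigroup, the cross transfer function ``$K_1(\cdot-A)^{-1}B_2$'' need not be bounded on any right half-plane, so no $\qF_{12}\in\TIC_\infty(U_2,U_1)$ making $\ALSExt$ a WPLS need exist. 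This is exactly the incompatibility the paper warns about in the paragraph immediately following the corollary, with a counterexample (Example 6.6.23 of [M02]). Your block-triangular computation of $\qM$ and $\qAClL$ is fine, but it is conditional on an admissibility claim whose justification is wrong.

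The paper avoids this by never trying to extend a pair chosen for $(A,B_1)$ alone. Instead it runs the optimization directly on the two-input system, with the domain $\gUstar(x_0)=\{\sbm{u_1\cr 0}\I x\in\L^2\}$ forcing $u_2=0$ and the cost $\|x\|_2^2+\|u\|_2^2$; Theorem \ref{PosJcKF} then produces a well-posed $J$-optimal pair $\qKF$ for the \emph{full} system $(A,\bbm{\qB_1&\qB_2})$, so admissibility with $B_2$ comes for free. The required zero pattern is then deduced a posteriori: $\qKClL x_0\in\gUstar(x_0)$ forces $(\qK_2)_\ClL=0$, which makes the bottom row of $\qM$ constant, and normalizing $\hqF(\alpha)=0$ via Lemma \ref{lAllqKF} gives $\qM=\sbm{*&*\cr 0&I}$, hence $\qF=\sbm{*&*\cr 0&0}$ and $\qK=\sbm{*\cr 0}$. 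If you want to salvage your approach, you must use the \emph{specific} $(\|x\|_2^2+\|u_1\|_2^2)$-minimizing pair for $(A,B_1)$ and prove (rather than assume) that it is compatible with every admissible $B_2$ --- which is precisely the nontrivial content that the paper extracts from the optimization argument.
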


Note that an arbitrary exponentially stabilizing state-feedback
 pair $\bsysbm{\qK_1\|\qF_{11}}$ for $\bsysbm{\qA\| \qB_1}$
 need not be extendable for  $\bsysbm{\qA\| \qB_1& \qB_2}$
 (i.e., $K_1$ and $B_2$ might be ``incompatible'',
  that is, ``$K_1(\cdot-A)^{-1}B_2$'' need not be well-posed),
 by Example 6.6.23 of [M02]
 (there no pair of form $\bsysbm{\qK\|\qF &*}$
   is admissible for the WPLS $\ssysbm{\qA\|\qB&\qH\crh}$,
  if $\qKF$ and $\qHG$ are the exponentially stabilizing 
  pairs (``lower row'' and ``right column'')
   of the example).

Nevertheless, one can conclude from the proof below
 (as in the proof of Corollary~\ref{cUout-B1})
 that the %
 $(\|x\|_2^2+\|u_1\|_2^2)$-minimimizing pair $\bsysbm{\qK_1\|\qF_{11}}$
 for $\bsyspm{A\|B_1}$
 (which is unique modulo (\ref{eAllqKF})) %
 is necessarily admissible with any $B_2$ s.t.\ $\bsyspm{A\|B_2}$
 is a WPLS
 (even though $K_1$ depends on $B_1$),
 i.e., that it satisfies the requirements of the corollary
 with some $\qF_{12}\in\TIC_\infty(U_2,U_1)$.

Corollary~\ref{cExpStabB1}
 clarifies the assumptions for $\H^\infty$ control problems
 (see [M02], Chapters 11--12).

\begin{proof}[Proof of Corollary~\ref{cExpStabB1}:] %
Let $\bsysbm{\qA\|\bbm{\qB_1&\qB_2}}$ be a WPLS on $(U_1\times U_2,H,-)$
 (it might have a second row, but 
  it has no influence on the problem).
Define $\qC:=\sbm{\qA\cr0},\ \qD:=\sbm{\qB_1\tau&\qB_2\tau\cr I&0}$,
 $\qQR:=0,\ \qRR:=\bbm{0&I}$, $\uZ:=\L^2,\ \sZ:=\{0\},\ \vartheta=0$.
It follows that $y=\sbm{x\cr u}$
 and $\gUstar(x_0)=\{\sbm{u_1\cr0}\in\L^2(\R_+;U_1\times U_2)\I x\in\L^2\}$
 (note that Standing Hypothesis~\ref{shgUstar} is satisfied).

The norm $\|\qD u\|_2=\|\sbm{x\cr u}\|_2$ 
 is obviously equivalent to $\|u\|_{\gUstar}:=\max\{\|u\|_2,\|\qD u\|_2,\|0\|\}$,
 (which is complete, by Lemma \ref{lOptCost0}(a)),
 hence $\PTO\ge\eps I$ for some $\eps>0$ (when we set $J:=I$).
We have $\gUstar(x_0)\ne\emptyset\ \all x_0\in H$,
 by the optimizability of $\bsysbm{\qA\|\qB_1}$.
Thus, Theorem~\ref{PosJcKF} implies that there is a
 $J$-optimal state-feedback pair $\qKF$.
Set $\qX:=I-\qF\in\cG\TIC_\infty(U)$. 
Fix $\alpha$ big enough, so that $\hqX(\alpha)\in\cG\BL(U)$.
By Lemma~\ref{lAllqKF}, we can redefine $\qKF$
 so that $\hqF(\alpha)=0$ (without affecting $\qKClL$). %
But $\qKClL x_0\in \gUstar(x_0)\ \all x_0\in H$
 implies that $(\qK_{2})_\ClL=0$. %
Therefore, $\qM:=I+\qFClL=\sbm{*&*\cr M_{21}&M_{22}}$,
 where $M_{21},M_{22}$ are constants, by, e.g., (\ref{eCharFct}).
Since $\hqM(\alpha)=(I-\hqF(\alpha))^{-1}=\sbm{I&0\cr 0&I}$,
 we have $M_{21}=0,\ M_{22}=I$, hence $\hqF=\sbm{*&*\cr 0&0}$.
But $\qK=\qM^{-1}\qKClL=\sbm{*\cr 0}$,
 hence $\qKF$ is as required
 (since $\qKClL x_0\in \gUstar(x_0)\sub\dUexp(x_0)\ \all x_0\in H$,
  the pair $\qKF$ is exponentially stabilizing).
\end{proof}

Before going on, we need a few concepts on coprimeness and factorization.
(Recall from Definition~\ref{dWPLS0}4.\ 
 that the maps in $\TIC:=\TIC_0$ are called {\em stable}\label{pagestableTIC}.)
\begin{defin}%
[[q.{]}r.c., [q.{]}r.c.f., d.c.f.]\label{drcf} %
{\bf (a)} We call $\qN\in\TIC(U,Y)$, $\qM\in\TIC(U)$ {\em right coprime (r.c.)}
 if $\sbm{\tqX &-\tqY}\sbm{\qN\cr \qM}=I$ for some $\tqX,\tqY\in\TIC$;
 {\em quasi--right coprime (q.r.c.)}
 if $u\in\L^2 \IFF \sbm{\qN\cr \qM}u\in\L^2$ whenever %
 $u\in\L^2_\omega(\R_+;U),\ \omega\in\R$. %

{\bf (b1)} Let $\qD\in\TIC_\infty(U,Y)$.
We call $\qN\qM^{-1}$ a {\em right factorization} of $\qD$
 if $\qN,\qM\in\TIC$, $\qM\in\cG\TIC_\infty(U)$ and $\qD=\qN\qM^{-1}$.
It is called a {\em [quasi--]right-coprime factorization\label{pagercf} ([q.]r.c.f.)}
 if, in addition, $\qN,\qM$ are [q.]r.c.

{\bf (b2)} Let $\qD\in\TIC_\infty(U,Y)$.
We call $\qN\qM^{-1}=\tqM^{-1}\tqN$
 a {\em doubly coprime factorization (d.c.f.)} of $\qD$
 if $\sbm{\qM&\qY\cr \qN&\qX}^{-1}=\sbm{\tqX& -\tqY\cr -\tqN &\tqM}\in\cG\TIC(U\times Y)$
 for some $\qY,\qX,\tqY,\tqX\in\TIC$,
 $\qM\in\TIC_\infty(U)$ and $\qD=\qN\qM^{-1}$.

{\bf (c)} By the coprimeness of $\hqN,\hqM:\C^+\to\BL$
 we refer to the coprimeness of $\qN$ and $\qM$
 (see Theorem~\ref{TransferFct1}) etc.

{\bf (d)} Replace all maps by their adjoints (and $U$ by $Y$ and $Y$ by $U$) %
 to obtain the ``left'' definitions (e.g., ``l.c.'')
 corresponding to (a) and (b1).
\end{defin} %

(The minus signs are due to historical reasons.
Under (b2), we have $\tqM\in\cG\TIC_\infty(Y)$,
 and $\qD=\tqM^{-1}\tqN$ is a l.c.f.,
 by Lemma 6.5.9 of [M02].)

We recall some basic properties of coprimeness from [M02]:
\begin{lemma}\label{lrcf}
{\bf (a1)} If $\qN,\qM$ are r.c., then
 $\hqN^*\hqN+\hqM^*\hqM\ge\eps I$ on $\C^+$ for some $\eps>0$.

If $\dim U<\infty$, then also the converse is true,
 and any r.c.\ pair $\qN,\qM$ can be extended
 to an invertible element $\sbm{\qM&\qY\cr \qN&\qX}\in\cG\TIC$
 (which is a d.c.f.\ iff $\qM\in\cG\TIC_\infty$).

{\bf (a2)} If $\hqN^*\hqN+\hqM^*\hqM\ge\eps I$ on $\C^+$ for some $\eps>0$,
 then $\qN,\qM$ are q.r.c.

{\bf (b)} If $\qN,\qM$ are r.c., then they are q.r.c.

{\bf (c)} If $\qN,\qM$ are q.r.c., then
 $\|\hqN u_0\|_Y+\|\hqM u_0\|_U>0$ on $\C^+$ for all $u_0\in U$,
 and $\qN^*\qN+\qM^*\qM\gg0$.

{\bf (d)} Let $\qN_0\qM_0^{-1}$ be a q.r.c.f.\ of $\qD\in\TIC_\infty(U,Y)$.
Then all right factorizations %
 $\qD=\qN\qM^{-1}$ %
 are given by
 $\qN=\qN_0\qE$, $\qM=\qM_0\qE$ with
 $\qE\in\TIC(U)\cap \cG\TIC_\infty(U)$.
Moreover, $\qE\in\cG\TIC(U)$ iff $\qN\qM^{-1}$ is a q.r.c.f.

In particular, if $\qD$ has a r.c.f.,
 then any q.r.c.f.\ of $\qD$ is a r.c.f.

{\bf (e)} $\qD$ has a d.c.f.\ iff $\qD$ and $\qD^\rmd$ 
 have a r.c.f.
\end{lemma}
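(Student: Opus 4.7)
The plan is to work in the frequency domain via Theorem~\ref{TransferFct1}, identifying $\qN\in\TIC_\omega$ with $\hqN\in\H^\infty(\C_\omega^+;\BL)$ and $u\in\L^2_\omega(\R_+;U)$ with its Laplace transform $\hu$ holomorphic on $\C_\omega^+$.

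For (a1), the forward direction is pointwise: a Bezout identity $\tqX\qN-\tqY\qM=I$ gives $\hat{\tqX}(s)\hqN(s)-\hat{\tqY}(s)\hqM(s)=I$ on $\C^+$, and since $\|\hat{\tqX}(s)\|\le\|\tqX\|$ and $\|\hat{\tqY}(s)\|\le\|\tqY\|$ uniformly on $\C^+$ (Theorem~\ref{TransferFct1}), one obtains $\|u_0\|^2\le 2\max(\|\tqX\|^2,\|\tqY\|^2)(\|\hqN(s)u_0\|^2+\|\hqM(s)u_0\|^2)$, hence the asserted lower bound. The converse for $\dim U<\infty$ is the matrix Corona/Tolokonnikov theorem, and the completion of $(\qN,\qM)$ to an invertible $2\times 2$ block in $\cG\TIC(U\times Y)$ follows from standard Bezout-completion lemmas. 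For (a2), given $u\in\L^2_\omega$ with $\sbm{\qN\cr\qM}u\in\L^2$, $\hu$ is holomorphic on $\C_\omega^+\supset\C^+$ and $\hqN\hu,\hqM\hu$ lie in the Hardy space on $\C^+$; the pointwise bound yields $\int_\R\|\hu(iy)\|^2\,dy\le\eps^{-1}(\|\qN u\|_2^2+\|\qM u\|_2^2)<\infty$, which combined with holomorphy on $\C^+$ places $\hu$ in the Hardy space on $\C^+$ and so $u\in\L^2(\R_+;U)$.

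Part (b) is (a1) followed by (a2). For (c), if $\hqN(s_0)u_0=\hqM(s_0)u_0=0$ for some $s_0\in\C^+$ and $u_0\ne0$, take $\hu(s):=u_0/(s-s_0)$: then $u\in\L^2_\omega(\R_+;U)\setminus\L^2(\R_+;U)$ for any $\omega>\re s_0$, while $\hqN\hu$ and $\hqM\hu$ have removable singularities at $s_0$ and belong to the Hardy space on $\C^+$, contradicting q.r.c. The stronger uniform positivity $\qN^*\qN+\qM^*\qM\gg0$ I expect to follow from this pointwise nonvanishing combined with an open-mapping/closed-range argument applied to the injective bounded operator $\sbm{\qN\cr\qM}\colon\L^2(\R_+;U)\to\L^2(\R_+;Y)\times\L^2(\R_+;U)$; promoting pointwise nonvanishing to a uniform positive lower bound is the most delicate step.

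For (d), set $\qE:=\qM_0^{-1}\qM\in\TIC_\infty(U)$; then $\qN=\qN_0\qE$ from $\qN_0\qM_0^{-1}=\qN\qM^{-1}$. For any $u\in\L^2(\R_+;U)$, $\sbm{\qN_0\cr\qM_0}(\qE u)=\sbm{\qN\cr\qM}u\in\L^2$, so q.r.c.\ of $(\qN_0,\qM_0)$ forces $\qE u\in\L^2$, giving $\qE\in\TIC(U)$. If $(\qN,\qM)$ is also q.r.c., the symmetric argument with $\qE^{-1}=\qM^{-1}\qM_0\in\TIC_\infty$ yields $\qE^{-1}\in\TIC$, hence $\qE\in\cG\TIC$; the reverse implication is analogous. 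The ``in particular'' clause then follows, since multiplying a Bezout pair for a r.c.f.\ by $\qE^{-1}\in\cG\TIC$ produces a Bezout pair for any q.r.c.f. Finally, (e) is direct one way: a d.c.f.\ immediately supplies a r.c.f.\ of $\qD$ from the block $(1,1)$ structure, and, by adjointing/reflecting using Lemma~\ref{lDualALS}, a r.c.f.\ of $\qD^\rmd$. For the converse, combine a r.c.f.\ of $\qD$ with the l.c.f.\ of $\qD$ obtained (by duality) from a r.c.f.\ of $\qD^\rmd$; the two Bezout identities are then reconciled into a single invertible $2\times 2$ block in $\cG\TIC(U\times Y)$ by a standard correction (adjusting $\qY,\qX,\tqY,\tqX$ additively so as to kill the off-diagonal entries $\tqX\qY-\tqY\qX$ and $\tqM\qN-\tqN\qM$), as in Lemma~6.5.9 of [M02].
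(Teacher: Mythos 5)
Your treatment of (a1), (c) (the pointwise part), (d) and (e) follows essentially the same route as the paper: the Bezout identity evaluated pointwise with $\eps=1/\|\sbm{\tqX&\tqY}\|^2$, the test input $\hu(s)=(s-s_0)^{-1}u_0$ for (c), the common factor $\qE:=\qM_0^{-1}\qM$ for (d), and reconciliation of a r.c.f.\ and an l.c.f.\ for (e) (the paper simply cites [M02] and [S98a] for these last two).

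The genuine problem is (a2). The nontrivial case of quasi--right coprimeness is $u\in\L^2_\omega(\R_+;U)$ with $\omega>0$, and there $\C_\omega^+\subsetneq\C^+$, not ``$\C_\omega^+\supset\C^+$'' as you write: $\hu$ is a priori holomorphic only on $\C_\omega^+$, so the quantity $\int_\R\|\hu(iy)\|^2\,dy$ in your argument is not defined. The pointwise inequality $\|\hu(s)\|^2\le\eps^{-1}\bigl(\|\hqN(s)\hu(s)\|^2+\|\hqM(s)\hu(s)\|^2\bigr)$ is only available on $\C_\omega^+$, and the uniform vertical-line bounds it yields there merely reprove $u\in\L^2_\omega$, which was the hypothesis. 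The entire content of (a2) is the holomorphic continuation of $\hu$ across the strip $0<\re s\le\omega$; this does not come for free from the lower bound, because the pointwise left inverse $(\hqE^*\hqE)^{-1}\hqE^*$ of $\hqE:=\sbm{\hqN\cr\hqM}$ is not holomorphic. One has to build \emph{local} holomorphic left inverses, e.g.\ $s\mapsto(\hqE(s_0)^*\hqE(s))^{-1}\hqE(s_0)^*$ near each $s_0\in\C^+$ (invertibility near $s_0$ coming from $\hqE(s_0)^*\hqE(s_0)\ge\eps I$), apply them to $\widehat{\qE u}\in\H^2(\C^+)$, and glue the resulting extensions by the identity theorem starting from $\C_\omega^+$. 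None of this is in your proposal; the paper avoids it by citing Lemma 4.1.8(g) of [M02]. Two smaller points: (b) does not need (a2) at all --- $u=\tqX\qN u-\tqY\qM u\in\L^2$ directly since $\tqX,\tqY\in\TIC$ map $\L^2$ into $\L^2$, which is the paper's primary proof --- so you should not route (b) through your broken (a2). And for the claim $\qN^*\qN+\qM^*\qM\gg0$ in (c), your ``open-mapping/closed-range'' sketch is not yet an argument: an injective bounded operator need not have closed range, so you must either establish closed range of $\sbm{\qN\cr\qM}$ on $\L^2$ or argue differently (e.g.\ via (d) against a normalized q.r.c.f., when one is available); you rightly flag this as the delicate step, but it remains unproved.
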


Thus, q.r.c.\ transfer functions do not have common zeros in $\C^+$
 (in the sense of (c)),
 nor on the imaginary axis
 ($\|\hqN u_0\|+\|\hqM u_0\|\ge\eps\|u_0\|$ a.e.\ on $i\R$ for all $u_0\in U$, by (c));
 see also %
 the comments below Example~\ref{exaqrcfNotrc}. %
By Theorem~\ref{DCFiff}, ``and $\qD^\rmd$'' can be removed from (e).
See [M02], Sections 6.4--6.5 for further results and details.

In Corollary~\ref{cQRCFiff} we will show that
 any map having a right factorization has a q.r.c.f.
By Example~\ref{exaqrcfNotrc}, a q.r.c.f.\ need not be a r.c.f.\ (cf.\ (d)).
Nevertheless, any rational
 q.r.c.f.\ is a r.c.f.\ (by Lemma 6.5.3(b) of [M02]);
 a generalization of this result can be derived from
 Lemma~\ref{lNMrcUfinite}.
However, %
 not all well-posed maps have a right factorization:
\begin{example}\label{exanotrf}
There exists $\qD\in\TIC_\infty(\C)$ that does not have a right factorization. 
\end{example}

Indeed, set $\hqD(s):=(s-1)^{-1/2}$,
   so that $\qD\in\TIC_\omega(\C)\ \all \omega>1$.
Then $\hqD$ has an essential singularity at $s=1$,
 whereas maps in $\TIC_\infty(\C)$ having a right factorization
 have meromorphic extensions to $\C^+$,
 by Corollary~\ref{cUout-Meromorphic}.

By Corollary~\ref{cQRCFiff} and Theorem~\ref{DCFiff},
 no realization of the above $\qD$ is output-stabilizable nor 
 dynamically stabilizable. %
See those corollaries also for equivalent conditions for
 the existence of [quasi-]coprime factorizations.

Constructive formulas (from the solutions of AREs or IREs) %
 for [q.]r.c.f.'s and d.c.f.'s
 are given in and below Corollary~\ref{cPosIRE}.
For that purpose one has to use an output-stabilizable realization of $\qD$;
   to get a d.c.f.\ also the dual condition is required;
   cf.\ Theorem \ref{QRCF-Uout} (or \ref{cOptExpstab1} with state-FCC)
   and Theorem~\ref{JointlyUout} (or \ref{cJointlySD})).
Corresponding formulas also give corresponding stabilizing
 controllers or state-feedback pairs etc.

\begin{proof}[Proof of Lemma~\ref{lrcf}:]
(a1) Take $\eps:=1/\|\sbm{\tqX&\tqY}\|^2$.
The converse follows from the Corona Theorem and
 the extension from Tolokonnikov's Lemma,
 but neither holds when $\dim U=\infty$;
 see Theorem 4.1.6 and Lemma 6.5.3(b) of [M02] for details
 (and for similar results for $\WTIC$ or other sets in place of $\TIC$).

(a2) This follows from Lemma 4.1.8(g) of [M02].

(b) Now $u=\sbm{\tqX&\tqY}\sbm{\qN\cr \qM}u\in\L^2$
 when $\sbm{\qN\cr \qM}u\in\L^2$. (Alternative proof: (a1)\&(a2).)

(c) Assume that $\sbm{\hqN\cr \hqM}(s_0)u_0=0$
 for some $s_0\in\C^+,\ u_0\in U$.
Set $\omega:=\re s_0+1$, $u(t):=\efn^{s_0 t}u_0$
 (i.e., $\hu(s):=(s-s_0)^{-1}u_0$).
Then $u\in\L^2_\omega(\R_+;U)$ but
 $\sbm{\hqN\cr \hqM}\hu\in\H^2(\C^+;Y\times U)$
 (because it is holomorphic and bounded and $\le M/|\im s|$ for big $|\im s|$),
 i.e., $\sbm{\qN\cr \qM}u\in\L^2(\R_+;U\times Y)$.

(d) This is Lemma 6.4.5(b)\&(c) of [M02]
 (set $\qE:=\qM_0^{-1}\qM\in\cG\TIC_\infty(U)$
  and use [q.]r.c.).

(e) This is Lemma~4.3(iii) of [S98a].
\end{proof}

As noted below Corollary~\ref{cOptExpstab1},
 we know that %
 $\ALS$ and $\ALS^\rmd$
 satisfy the state-FCC
 iff
 $\ALS$ is exponentially stabilizable and exponentially detectable;
 in fact, then it is 
 exponentially {\em jointly} stabilizable and detectable
 (the terminology will be explained below the corollary):
\begin{cor}[$\dUexp\ne\tyhja\ne\dUexp^{\ALS^\rmd}\ {\IFF}$ jointly stab.\&det.]\label{cJointlySD} %
The following are equivalent: %
 \begin{itemlist}
  \item[(i)] $\ALS$ is exponentially jointly stabilizable and detectable.
  \item[(ii)] $\ALS$ and $\ALS^\rmd$ satisfy the state-FCC.
  \item[(iii)] $\ALS$ satisfies the output-FCC and $\ALS^\rmd$ 
 the state-FCC.
  \item[(iv)] $\ALS$ satisfies the state-FCC and $\ALS^\rmd$
 the output-FCC.
  \item[(v)] There is an exponentially stabilizing 
 {\em dynamic feedback controller}\label{pageDFC}
  for $\ALS$ with internal loop.
 \end{itemlist}

Moreover, any output-stabilizing state-feedback pair for
 an estimatable system is exponentially r.c.-stabilizing. 
Any exponentially jointly stabilizing pairs for $\ALS$
 define (through (\ref{eS4A:4.4}))
 an exponential doubly coprime factorization of
 the I/O map $\qD$ of $\ALS$.
\end{cor}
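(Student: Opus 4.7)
\bigskip

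\noindent\textbf{Proof plan for Corollary~\ref{cJointlySD}.}
My plan is to prove the circular chain (i)$\Rightarrow$(ii)$\Rightarrow$(iii)$\Rightarrow$(i), then derive (iv) by duality, then handle (v) and the d.c.f.\ claim at the end. The implications (ii)$\Rightarrow$(iii) and (ii)$\Rightarrow$(iv) are free because $\dUexp(x_0)\subset \dUout(x_0)$ for every $x_0$ (recall the discretization remark in Section~\ref{sOpt}), and (i)$\Rightarrow$(ii) is immediate: an exponentially stabilizing state-feedback pair for $\ALS$ produces, for each $x_0\in H$, an exponentially stable state trajectory driven by some $u\in\L^2(\R_+;U)$, which witnesses the state-FCC for $\ALS$; dually the same pair for $\ALS^\rmd$ supplies the state-FCC for $\ALS^\rmd$.

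The main content is (iii)$\Rightarrow$(i). First, the state-FCC for $\ALS^\rmd$ is equivalent to exponential detectability of $\ALS$ by Corollary~\ref{cOptExpstab1} applied to $\ALS^\rmd$ (and the duality $(\ALS^\rmd)^\rmd=\ALS$ from Lemma~\ref{lDualALS}); call $\qHG$ an exponentially detecting output-injection pair for $\ALS$. Next, the output-FCC for $\ALS$ yields an output-stabilizing state-feedback pair $\qKF$ by Theorem~\ref{QRCF-Uout} (the WPLS version of Corollary~\ref{cQRCF-intro}). I then want to prove the auxiliary claim stated in the second-last sentence of the corollary: combined with exponential detectability, any output-stabilizing $\qKF$ is already exponentially r.c.-stabilizing. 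The mechanism is the standard observer construction. Close the loop in $\ALSClL$ with the observer generated by $\qHG$; since the observer error is governed by $\qA+HC$ which is exponentially stable, and the output of $\ALSClL$ already lies in $\L^2$, the state $x$ of $\ALSClL$ must also lie in $\L^2$, so $\qAClL$ is exponentially stable by Lemma~\ref{lExpStable}. The right coprimeness of the pair $(\qN,\qM)=(\qD\qM,\qM)$ then drops out of the identity $\sbm{\tqX&-\tqY}\sbm{\qN\cr \qM}=I$ obtained from any Bezout completion in the stable situation (which is possible because after closing the loop we are in $\TIC$ and the pair is r.c.\ by Lemma~\ref{lrcf}(a1), using that $\hqM\in\cG\TIC_\infty$ gives $\hqN^*\hqN+\hqM^*\hqM\gg0$ on $\C^+$).

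For the doubly coprime factorization claim I would take an exponentially stabilizing $\qKF$ for $\ALS$ and an exponentially detecting $\qHG$ for $\ALS$, form the generalized system $\sbm{\qA\|\qH&\qB\crh \qC\|\qK&\qD\cr\qF\|\qE&\qG}$ in the style of (\ref{eWPLSfromGensintro}), and verify by direct computation that the $2\times 2$ block matrix of I/O-maps arising from the stabilizing inner/outer factors inverts into the exactly the Bezout matrix required by Definition~\ref{drcf}(b2). This is the content of the formula labelled (\ref{eS4A:4.4}): one reads off $\qM,\qN,\tqM,\tqN,\qX,\qY,\tqX,\tqY$ from the closed-loop transfer blocks, and the identity $\sbm{\tqX&-\tqY\cr -\tqN&\tqM}\sbm{\qM&\qY\cr \qN&\qX}=I$ is forced by the fact that both feedbacks jointly stabilize the extended system. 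Finally, (i)$\Leftrightarrow$(v) follows from Lemma~\ref{lDFCdcf} mentioned in Section~\ref{sIntro}: a d.c.f.\ of the I/O map of $\ALS$ together with the two stabilizing pairs defines a dynamic feedback controller with internal loop whose stabilization property is equivalent (by Smith-type arguments) to the joint existence of both pairs, and conversely any such dynamic controller yields a d.c.f.

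The hardest step will be the auxiliary claim that output-stabilizing plus estimatable implies exponentially r.c.-stabilizing. The delicate point is that in the WPLS setting one cannot naively write $u=Kx+\hat u$ and $\dot{\hat x}=A\hat x+Bu+H(y-C\hat x)$ as bounded-generator ODEs; one must carry out the observer construction at the level of the time-domain operators $\qK,\qH$ and check that the compositions are admissible, invoking Lemma~\ref{lALSL0} and the admissibility hypotheses carefully, and that the resulting error dynamics indeed decouple into the exponentially stable semigroup generated by $\qA+\qH\qC$. The coprimeness upgrade also requires care because quasi-coprimeness is automatic from Lemma~\ref{lrcf}, but true right coprimeness (with a $\TIC$ Bezout partner) needs the full exponential setting to build $\tqX,\tqY$.
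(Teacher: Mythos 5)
Your overall chain of implications is workable, and your (iii)$\Rightarrow$(ii)-type step (output-stabilizing plus estimatable forces exponential stability of the closed loop) is essentially the paper's own step $1^\circ$, which it delegates to Theorem~7.3 and Proposition~6.2 of [WR00] rather than to an explicit observer construction. But there is a genuine gap in your route to (i): condition (i) is \emph{joint} stabilizability and detectability, i.e.\ the existence of a single extended WPLS (\ref{eJointly1}) containing both pairs together with an interaction operator $\qE$, such that both closed loops are exponentially stable. You produce an exponentially stabilizing $\qKF$ and an exponentially detecting $\qHG$ separately and then propose to "form the generalized system" — but this is exactly the step that can fail: as the paper points out (citing Example~6.6.23 of [M02], below Corollary~\ref{cExpStabB1} and again below Corollary~\ref{cJointlySD}), arbitrary exponentially stabilizing and detecting pairs need not be jointly admissible, since no $\qE$ need make (\ref{eJointly1}) a WPLS. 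The paper's resolution is the key lemma your plan omits: first extend $\ALS$ by the output-injection column (dual of Corollary~\ref{cOptExpstab1}) to get $\ALS_2=\ssysbm{\qA\|\qB&\qH\crh \qC\|\qD&\qG}$, and then apply Corollary~\ref{cExpStabB1} to $\ALS_2$ — the $\|x\|_2^2+\|u\|_2^2$-minimizing state-feedback pair through the first input is automatically compatible with the added column, which is what produces $\qE$ and hence joint admissibility. Without an argument of this kind you have only proved separate stabilizability and detectability, which is a priori weaker than (i).

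A secondary error: your derivation of right coprimeness of $(\qN,\qM)$ is circular and relies on a false implication. Lemma~\ref{lrcf}(a1) states that r.c.\ \emph{implies} $\hqN^*\hqN+\hqM^*\hqM\ge\eps I$ on $\C^+$, and the converse (plus the Bezout completion via Tolokonnikov) holds only when $\dim U<\infty$; moreover $\qM\in\cG\TIC_\infty$ gives invertibility of $\hqM$ only on some right half-plane, not a uniform lower bound on all of $\C^+$. In the paper the Bezout identity is not deduced from coercivity but read off explicitly from the inverse formula (\ref{eS4A:4.4}), which is available precisely because the two pairs are \emph{jointly} stabilizing — another reason the joint-admissibility step cannot be skipped.
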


As before, output-FCC (FCC for $\dUout$)
 means that $\dUout(x_0)\ne\tyhja\ \all x_0\in H$.
Condition (i) means that $\ALS$ can be extended to a WPLS
 \begin{equation}
   \label{eJointly1}
  \ALStotal:=    \bsysbm{\qA\|\qH&\qB\crh \qC\|\qG&\qD\cr
                                             \qK\|\qE&\qF}
 \end{equation}
 (on $(Y\times U,H,Y\times U)$)
 s.t.\ $(\ALStotal)_L$ and $(\ALStotal)_{\tL}$
 are exponentially stable, where
 $L=\sbm{0&0\cr 0&I}$ and $\tL=\sbm{I&0\cr 0&0}$
 (see Lemma~\ref{lALSL0}).
This says that $\ALStotal$
  becomes exponentially stable when the added output is connected
 to the original input %
 or the original output is connected to the added input.
({\em Jointly\label{pageJointly} stabilizable and detectable} means the same except
 that $(\ALStotal)_L$ and $(\ALStotal)_{\tL}$ need be merely stable.)

By Lemma~\ref{lExpStable}, the two closed-loop systems are exponentially stable
  iff $\qA_L=\qA+\qB\tau(I-\qF)^{-1}\qK$
 and $\qA_\tL=\qA+\qH\tau(I-\qG)^{-1}\qC$ are
 exponentially stable (i.e., iff they map $H$ into $\L^2(\R_+;H)$).
In this case, we call $\qKF$ and $\qHG$
 {\em exponentially jointly stabilizing pairs} for $\ALS$.
It follows that $\qKF$ is (an) exponentially stabilizing 
 (state-feedback pair)
 and $\qHG$ is (an) {\em exponentially detecting}\label{pageExpDet2}  %
 ({\em output injection} pair) for $\ALS$
 (see Section 6.6 of [M02] for further explanations and results).

Under (i), the $\cG\TIC(U\times Y)$ maps %
 \begin{equation}
   \label{eS4A:4.4}
     \bbm{\qM&\qY_1\cr \qN&\qX_1} := \bbm{I+\qF_L &-\qE_L\cr \qD_L& I-\qG_L}
 \ \ \ \text{and} \ \ \  %
    \bbm{\tqX&-\tqY\cr -\tqN& \tqM}  :=  \bbm{I-\qF_\tL&\qE_\tL\cr -\qD_\tL&I+\qG_\tL}
 \end{equation}
 are the inverses of each other
 (by a direct computation,
   see Theorem 4.4 of [S98a] for the details;
 actually these maps are the inverses of each other even when
 they are unstable, it suffices that $L,\tL$ are admissible).
It follows that (\ref{eS4A:4.4}) defines a d.c.f.\ of $\qD$
 (actually, an {\em exponential d.c.f.}, %
  which means that (\ref{eS4A:4.4})$\in\cG\TIC_\omega(U\times Y)$
  for some $\omega<0$).

As noted below Corollary~\ref{cExpStabB1},
 arbitrary  exponentially stabilizing
 and detecting pairs $\smash{\qKF}$ and $\smash{\qHG}$ for $\ALS$ need not be
 jointly admissible for $\ALS$
 (i.e., no $\qE$ makes (\ref{eJointly1}) a WPLS).
However, the  $\|x\|_2^2+\|u\|_2^2$-minimizing pair $\smash{\qKF}$
 is jointly admissible with any admissible $\smash{\qHG}$,
 as noted in $2^\circ$ below.
By duality, any admissible $\smash{\qKF}$ is jointly admissible
 with certain exponentially stabilizing $\smash{\qHG}$
 (if any exists, i.e., if $\smash{\ALS^\rmd}$ satisfies the state-FCC)).

The ``moreover'' claim means that if $\ALS$ is estimatable
 and an admissible state-feedback pair $\qKF$ makes $\qCClL$ and $\qKClL$
 stable, then it actually makes $\ALSClL$ exponentially stable
 and $\qN,\qM$ {\em exponentially r.c.}\label{pageexprc}%
\footnote{The maps $\qN,\qM$ are called {\em exponentially r.c.}
 if there exist $\omega<0$,
   $\smash{\big[\tqY\ \tqX\big]}\in\TIC_\omega(Y\times U,U)$ s.t.\
 $\qN,\qM\in\TIC_\omega$ and $\smash{\big[-\tqY\ \tqX\big]}\sbm{\qN\cr \qM}=I$.
Recall that $\qM:=\smash{(I-\qF)^{-1}},\ \qN:=\qD\qM=\qDClL$,
 so that $\qD=\qN\qM^{-1}$.}

Condition (v) means roughly a system in place of $L$ 
 in Figure~\ref{fOutputFeedback} (p.~\pageref{fOutputFeedback})
 s.t.\ the connection stabilizes both systems exponentially.
It is further explained in Section 7.2 of [M02]. %
By Theorem~\ref{DCFiff}, ``with internal loop'' may be removed
 if $\dim U,\dim Y<\infty$
 (take any jointly exponentially stabilizable and detectable realization
   of any $\qT$ s.t.\ $\sbm{I&-\qT\cr -\qD&I}^{-1}$ is exponentially stable;
   cf.\ Theorem 7.2.3(d)\&(c1) of [M02]),
 but the general case is open.

\begin{proof}[Proof of Corollary~\ref{cJointlySD}:]
The last claim was shown above
 (actually, in Theorem 4.4 of [S98a]).
By the dual of $2^\circ$,
 any output-stabilizing state-feedback pair for $\ALS$
 is exponentially jointly (coprime) stabilizing 
 with some $\qHG$ and ({\em interaction operator}) $\qE$.
This proves the ``moreover'' claim, 
 hence only the equivalence remains to be proved.

$1^\circ$ {\em (i)$\THEN$(iii)$\THEN$(ii):}
The first implication is obvious. Assume then (iii),
 so that there exists $\qKF$ s.t.\ $\qCClL,\qKClL$ are stable
 and $\ALS$ is estimatable ($\dUexp^{\ALS^\rmd}(x_0)\ne\tyhja\ \all x_0\in H$),
 hence so is $\ALSext$,
 hence $\ALSClL$, hence $\ALSClL$ is exponentially stable,
 by Theorem 7.3 and Proposition 6.2 of [WR00].

$2^\circ$ {\em (ii)$\THEN$(i):}
By the dual of Corollary~\ref{cOptExpstab1},
 $\ALS$ can be extended to a WPLS $\ALS_2:=\ssysbm{\qA\|\qB&\qH\crh \qC\|\qD&\qG}$
 that becomes exponentially stable under the static output feedback
 through $L=\sbm{0\cr I}$ (i.e., under input $u=u_L+Ly$,
 where $u_L:\R_+\to U\times Y$
 is an external input and $y=\qC x_0 + \bbm{\qD&\qG} u$;
 see Definition 6.6.21 of [M02] or Lemma~\ref{lALSL0} for details).

By Corollary~\ref{cExpStabB1}, there is an 
 exponentially stabilizing state-feedback pair
 $\ssysbm{\qK\|\qF&\qE\cr 0\|0&0}$ for $\ALS_2$
 (the $\|x\|_2^2+\|u\|_2^2$-minimizing one,
  as noted below Corollary~\ref{cExpStabB1}).
Obviously, (\ref{eJointly1}) is a WPLS and $\qA_L$ and $\qA_\tL$
 are exponentially stable.

$3^\circ$ {\em (i)--(iv) are equivalent:}
By $1^\circ$--$2^\circ$, (i)--(iii) are equivalent.
But (iv) is exactly (iii) applied to $\ALS^\rmd$.

$4^\circ$ {\em (i)$\THEN$(v)$\THEN$(ii):} 
This was given in Theorem 7.2.4(b)\&(a) of [M02].
(See [M02] for the definition and further results and notes.)
\end{proof}

We also conclude the equivalence of the weak and strong forms
 of the standard assumption for the $\Hoo$ Four-Block Problem
 (``stabilizable through $u_1$ and detectable through $y_2$''):
\begin{remark}\label{rHooAB1AC2} %
Assume that $B=\sbm{B_1\cr B_2}$ and $C=\bbm{C_1&C_2}$.
Then there are exponentially jointly stabilizing and detecting
 pairs through $B_1$ and $C_2$ (as in (12.76)--(12.77) of [M02])
 iff $(A,B_1)$ is exponentially stabilizable and $(A,C_2)$
 is exponentially detectable
\end{remark}

By Corollary~\ref{cOptExpstab1}, 
 a third equivalent condition is that
 $(A,B_1)$ and $(A^*,C_2^*)$ satisfy the state-FCC.
From the proof of Lemma 12.5.4 of [M02] we observe that
 Hypothesis 12.5.1 is exponentially satisfied iff
 the above conditions hold and $\qD_{11}$ and $\qD_{22}^\rmd$
 are $I$-coercive (over $\dUexp=\dUout$).

To obtain similar results on non-exponentially stabilizing  %
 $\H^\infty$ controllers, one should use Corollary~\ref{cUout-B1}
 and work as in the proof of Theorem~\ref{JointlyUout}.

\begin{proof}[Proof of Remark~\ref{rHooAB1AC2}:]
Choose first $\qKF$ as in  Corollary~\ref{cExpStabB1}.
Then work as in  $2^\circ$ of the proof of Corollary~\ref{cJointlySD}
 but choose the (permuted dual $\ssysbm{\qH_2^\rmd\| \qG_{12}^\rmd&\qG_{22}^\rmd\cr 0\|0&0}$ of the) pair $\qHG$ as in
 Corollary~\ref{cExpStabB1},
 so that its first column is zero.
\end{proof}

Using Theorem~\ref{PosJcKF}, we can deduce that
 the output-FCC implies the existence of an output-stabilizing %
 state-feedback pair for $\ALS$
 (namely the $\|u\|_2^2+\|y\|_2^2$-minimizing one),
 thus generalizing Corollary~\ref{cQRCF-intro}.
Actually, we can show that this specific pair is 
 {\em SOS-stabilizing}\label{pageSOS-stabilizing} 
 (which means that $\qCClL,\qDClL,\qKClL,\qFClL$ are {\em stable}\label{pagestable},
 i.e., that they map $H$ or $\L^2$ into $\L^2$) 
 and leads to a (normalizable)
 quasi--right coprime factorization of $\qD$: 
\begin{theorem}[\pmbold{$\dUout$}: FCC \pmbold{$\IFF \ex\qKF$}]\label{QRCF-Uout} %
The following are equivalent:
 \begin{itemlist}
  \item[(i)] $\dUout(x_0)\ne\tyhja\ \all x_0\in H$.
  \item[(ii)] There is an output-stabilizing state-feedback pair $\qKF$ for $\ALS$.
  \item[(iii)] There is a SOS-stabilizing state-feedback pair $\qKF$ for $\ALS$
 s.t.\ $\qD=\qN\qM^{-1}$ is a q.r.c.f.\ and $\qN^*\qN+\qM^*\qM=I$.
 \end{itemlist}
\end{theorem}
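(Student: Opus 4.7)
The proof follows the cycle (iii)$\Rightarrow$(ii)$\Rightarrow$(i)$\Rightarrow$(iii), with only the last implication requiring real work. The first two implications are immediate from the definitions: SOS-stabilization refines output-stabilization, and if $\qKF$ is output-stabilizing then $u:=\qKClL x_0$ lies in $\dUout(x_0)$ since $u$ and $y=\qCClL x_0$ are both in $\L^2$.

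For (i)$\Rightarrow$(iii) I would augment the output space: set $\tilde Y:=Y\times U$, replace $\qC,\qD$ by $\tilde\qC:=\sbm{\qC\cr 0}$, $\tilde\qD:=\sbm{\qD\cr I}$, take $J:=I$ on $\tilde Y$, and choose the optimization parameters so that $\gUstar=\dUout$ and $\vartheta=0$. The cost becomes $\cJ(x_0,u)=\|y\|^2+\|u\|^2$, and $\p{u,\PTO u}=\|\qD u\|^2+\|u\|^2=\|u\|^2_{\dUout(0)}$ shows $\PTO\gg0$. Since hypothesis~(i) is precisely the FCC for this $\gUstar$, Theorem~\ref{PosJcKF} produces a $J$-optimal state-feedback pair $\qKF$, which is strictly minimizing by Lemma~\ref{lOptCost0}(d). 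With $\qM:=(I-\qF)^{-1}$ and $\qN:=\qD\qM$, Theorem~\ref{ALSopt} applied in the augmented setting makes $\sbm{\qCClL\cr \qKClL}$ stable.

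The central step is to show that the minimizing $\qM$ is (the inverse of) a canonical right spectral factor of the Popov operator $I+\qD^*\qD$, i.e.\ $\qM^*(I+\qD^*\qD)\qM=C$ for some constant positive operator $C\in\cG\BL(U)$. Granted this, taking $E:=C^{1/2}\in\cG\BL(U)$ in Lemma~\ref{lAllqKF} replaces the pair by $(E\qK,\,I-E(I-\qF))$; the new $\qM'=\qM E^{-1}$ and $\qN'=\qN E^{-1}$ satisfy $(\qN')^*\qN'+(\qM')^*\qM'=E^{-*}CE^{-1}=I$. This identity makes $\sbm{\qN'\cr \qM'}$ an isometry $\L^2(\R_+;U)\to\L^2(\R_+;Y\times U)$, which automatically yields both SOS-stability (the isometry is bounded, so $\qN',\qM'\in\TIC$, whence $\qDClL=\qN'\in\TIC$ and $\qFClL=\qM'-I\in\TIC$) and quasi-right-coprimeness: if $\sbm{\qN'\cr \qM'}u_L\in\L^2$ then $\|u_L\|_2=\|\sbm{\qN'\cr \qM'}u_L\|_2<\infty$.

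The main obstacle is verifying the spectral-factor identity $\qM^*(I+\qD^*\qD)\qM=C$. This is the operator-valued, unstable-WPLS counterpart of the classical LQR fact that the optimal closed-loop gain is an inverse canonical spectral factor of the Popov function. My approach is to combine the minimum principle at $x_0=0$ (Lemma~\ref{lOptCost0}(b)(iii) applied to perturbations $\eta=\qM u_L$, producing $\p{u_L,(\qN^*\qN+\qM^*\qM)u_L}=\|\tilde\qD\eta\|^2$) with the frequency-domain structure of admissible state-feedback pairs and the uniqueness of canonical spectral factors modulo a constant right factor — this constant being exactly the freedom absorbed by Lemma~\ref{lAllqKF}.
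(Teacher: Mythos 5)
Your proof follows the paper's own route almost exactly: the same augmentation $\tilde\qD=\sbm{\qD\cr I}$, $\tilde\qC=\sbm{\qC\cr 0}$ with $J=I$ over $\dUout$, the same appeal to Theorem~\ref{PosJcKF}, the same normalization $E=S^{1/2}$ via Lemma~\ref{lAllqKF}, and the same identification of $\qN^*\qN+\qM^*\qM$ with the static signature operator $S$ of the closed-loop Popov form. The paper discharges what you call the ``main obstacle'' via the truncated IRE identity (\ref{eXSX=}) together with Lemma~\ref{lOptIRE0}(c2) and Lemma~\ref{lNJN=SonL2c} (causal $+$ self-adjoint $+$ time-invariant $\Rightarrow$ static), and obtains SOS-stability from Lemma~\ref{lC2ClLstable}, which is the rigorous form of your boundedness-from-isometry remark.

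The one step that is genuinely too quick is the coprimeness. The identity $\|u_L\|_2=\|\sbm{\qN'\cr\qM'}u_L\|_2$ is an identity of bounded operators on $\L^2$ (equivalently on $\Lc^2$ by density), whereas q.r.c.\ requires the implication $\sbm{\qN'\cr\qM'}u_L\in\L^2\THEN u_L\in\L^2$ for $u_L\in\L^2_\omega(\R_+;U)$ with $\omega>0$, where the isometry cannot be invoked directly --- asserting $\|u_L\|_2=\|\sbm{\qN'\cr\qM'}u_L\|_2$ for such $u_L$ presupposes the conclusion. One must pass through the truncated identity $\p{v,S\piOt v}=\p{\qDt u,J\qDt u}+\p{\qBt u,\Ric\qBt u}$ and control the tail term involving $\Ric$, which is exactly what Lemma~\ref{lSgg0}(a)\&(b) does. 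The gap is fillable by that standard argument, but as written the q.r.c.\ claim does not follow ``automatically'' from the normalization.
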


(The proof is given on p.~\pageref{pageproof-QRCF-Uout}.)

Conversely, any map having  a q.r.c.f.\ 
 (equivalently, a right factorization)
 has a realization satisfying (i)--(iii),
 by Corollary \ref{cQRCFiff}(i).
Recall that the output-FCC (i)
 means that
 for all $x_0\in H$, there exists $u\in\L^2(\R_+;U)$ s.t.\ $y\in\L^2$.

A q.r.c.f.\ is unique modulo an element of $\cG\TIC(U)$.
If(f) $\qD$ has a %
 right-coprime factorization,
 then any q.r.c.\ factorization of $\qD$ is right-coprime.
See Lemma 6.4.5(c) of [M02] for proofs.

By Lemma~\ref{lSgg0}(c), we have $\sbm{\qD\cr I}\dUout(0)=\sbm{\qN\cr \qM}\L^2(\R_+;U)$.

The maps $\qN,\qM$ are actually r.c.\ in (iii)
 if, e.g., $\dim U<\infty$ and $\sigma(A)$ is nice
 (see Lemma~\ref{lNMrcUfinite} below).
Any q.r.c.f.\ %
 can be ``normalized''
 to satisfy $\qN^*\qN+\qM^*\qM=I$,
 by Lemma \ref{lrcf}(c)\&(d) and Theorem \ref{SpF1}(a).

If the input space is finite-dimensional, then the FCC implies
 that $\qD$ is {\em meromorphic}\label{pagemeromorphic1}
 (i.e., for any $s_0\in \C^+$, there is $n\in\N$ s.t.\
  $s\mapsto (s-s_0)^n\hqD(s)$ is holomorphic on a neighborhood of $s_0$):
\begin{cor}[\pmbold{$\hqD$} is meromorphic]\label{cUout-Meromorphic} %
Assume that $\dUout(x_0)\ne\tyhja\ \all x_0\in H$ and $\dim U<\infty$.
Then $\hqC,\hqD$ are meromorphic on $\C^+$ %
 (and so are $\hqK,\hqF,\hqM^{-1}$ for any output-stabilizing $\qKF$).

If $\dUexp(x_0)\ne\tyhja\ \all x_0\in H$,
 then  $\hqA,\shat{\qB\tau},\hqC,\hqD$
 are meromorphic on $\C_{-\del}^+$ for some $\del>0$
 (and so are $\hqK,\hqF,\hqM^{-1}$ for any exponentially stabilizing $\qKF$).
\end{cor}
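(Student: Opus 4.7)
The plan is to reduce meromorphicity of $\hqC,\hqD$ (and of $\hqK,\hqF$) on $\C^+$ to the finite-dimensionality trick of inverting a matrix-valued holomorphic function via its adjugate. First I invoke Theorem~\ref{QRCF-Uout}: the output-FCC (i) gives a SOS-stabilizing state-feedback pair $\qKF$ with $\qD=\qN\qM^{-1}$ a q.r.c.f. By SOS-stability the maps $\qN=\qDClL$, $\qFClL=\qM-I$, $\qCClL$, $\qKClL$ all lie in $\TIC$, so their transfer functions are bounded and holomorphic on $\C^+$; in particular $\hqM=I+\hqFClL$ is holomorphic on $\C^+$.

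Since $\dim U<\infty$, $\hqM(s)$ is a finite matrix, and the scalar function $\det\hqM$ is holomorphic on $\C^+$. Because $\qM\in\cG\TIC_\infty(U)$, $\hqM(s)$ is invertible on some right half-plane, so $\det\hqM$ is not identically zero; by the adjugate formula $\hqM^{-1}=\adj(\hqM)/\det(\hqM)$ is therefore meromorphic on $\C^+$. Meromorphicity of $\hqD$, $\hqF$, $\hqK$, $\hqC$ then follows from the closed-loop identities recorded in (\ref{eALSb}) and Definition~\ref{dAdmKF0}: $\hqD=\hqN\hqM^{-1}$, $\hqF=I-\hqM^{-1}$, and (using $\qK=(I-\qF)\qKClL=\qM^{-1}\qKClL$ and $\qC=\qCClL-\qN\qK$)
\[
\hqK=\hqM^{-1}\hqKClL,\qquad \hqC=\hqCClL-\hqN\hqK,
\]
each a product or sum of holomorphic and meromorphic maps on $\C^+$.

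For the second statement, I would instead invoke Corollary~\ref{cOptExpstab1}: the state-FCC upgrades to the existence of an exponentially stabilizing $\qKF$, i.e.\ $\qAClL,\qBClL,\qCClL,\qDClL,\qKClL,\qFClL$ are $(-\delta)$-stable for some $\delta>0$, so their transfer functions extend holomorphically to $\C_{-\delta}^+$. The previous argument then shows $\hqM^{-1}$ is meromorphic on $\C_{-\delta}^+$ (the determinant is still nonzero on some right half-plane), hence so are $\hqD,\hqF,\hqK,\hqC$. The new work is to recover $\hqA$ and $\widehat{\qB\tau}$: writing $\hat x(s)=(s-A)^{-1}x_0+(s-A)^{-1}B\hat u(s)$, setting $u_L=0$ gives $\hat u=\hqKClL x_0$ and $\hat x=\hqAClL x_0$, which yields
\[
\hqA(s)=\hqAClL(s)-\widehat{\qB\tau}(s)\,\hqKClL(s),
\]
while setting $x_0=0$ and using $\hat u=\hqM \hat u_L$ gives
\[
\widehat{\qB\tau}(s)=\widehat{\qBClL\tau}(s)\,\hqM(s)^{-1}.
\]
Meromorphicity of $\widehat{\qB\tau}$ on $\C_{-\delta}^+$ follows, and then of $\hqA$.

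The one place to be careful is the non-vanishing of $\det\hqM$: I need to know that $\qM$ is actually invertible on a right half-plane on which $\hqM$ is already holomorphic, which is precisely the membership $\qM\in\cG\TIC_\infty(U)$ built into Definition~\ref{dAdmKF0}. Everything else is algebraic manipulation of the identities in (\ref{eALSb}), so I expect no further obstacle.
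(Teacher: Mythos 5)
Your proposal is correct and follows essentially the same route as the paper's own proof: obtain the SOS- (resp.\ exponentially) stabilizing pair from Theorem~\ref{QRCF-Uout} (resp.\ Corollary~\ref{cOptExpstab1}), use $\dim U<\infty$ to see that $\det\hqM\not\equiv0$ makes $\hqM^{-1}$ meromorphic with isolated poles, and then propagate meromorphicity through the closed-loop identities $\hqD=\hqN\hqM^{-1}$, $\hqK=\hqM^{-1}\hqKClL$, $\hqC=\hqCClL-\hqD\hqKClL$, $\widehat{\qB\tau}=\widehat{\qBClL\tau}\,\hqM^{-1}$, $\hqA=\hqAClL-\widehat{\qB\tau}\,\hqKClL$. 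The only cosmetic difference is that you make the adjugate formula explicit where the paper cites an external reference.
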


In particular, $\hqD=\hqD_\ALS$ and $\hqC=C(\cdot-A)^{-1}$
 a.e.\ on $\C^+$ (or on $\C_{-\del}^+$),
 by Lemma \ref{lCharFct}(f)\&(a)\&(b1)
 (which defines the above symbols).

If we would define ``meromorphic'' as quotient of analytic maps,
 assumption $\dim U<\infty$ would be redundant (by the proof below)
 but now it is not the case, by Example~\ref{exadimUinftyRCF}.

\begin{proof}
$1^\circ$ We obtain $\hqD=\hqN\hqX$, 
 $\hqC=\hqCClL-\hqD\hqKClL$, $\hqK=\hqX\hqKClL,\ \hqF=I-\hqX$
 on some right half-plane %
 from Theorem~\ref{QRCF-Uout}.
Since $f(s):=\det\hqM\not\equiv 0$,
 the only singularities of $f^{-1}$
 (and of $\hqX=\hqM^{-1}$ and of $\hqD=\hqN\hqX$)
 on $\C^+$ are isolated poles %
 (cf.\ p.~112 of [M02]). %
It follows that also $\hqD$ and $\hqC$ have meromorphic
 extensions to $\C^+$ (cf.\ Remark~\ref{rTrFct}).

$2^\circ$ If $\qKF$ is exponentially stabilizing,
 i.e., $-\del:=\omega_{\AClL}<0$,
 then $\shat{\ALSClL}$ is holomorphic
 and $\hqX$ is meromorphic on $\C_{-\del}^+$,
 hence $\hqD,\hqC,\hqK$ have meromorphic
 extensions to $\C_{-\del}^+$,
 as above, and so do $\hqB=\hqBClL\hqX,\ \hqA=\hqAClL-\hqB\hqKClL$.
\end{proof}

As mentioned above, the poles of $\hqD$ need not be isolated
 when $\dim U=\infty$,
 not even when $\dUexp(x_0)\ne\tyhja\ \all x_0\in H$:
\begin{example}{(${\hqD}$ is not meromorphic).}\label{exadimUinftyRCF} %
Let $\dim U=\infty$.
Then there is an exponentially
 (r.c.-)stabilizable WPLS $\ALS=\qABCD$
 s.t.\ $\qD\in\TIC_\infty(U)$
 and $\qD=I\qM^{-1}$ is an exponential r.c.f.\ (hence q.r.c.f.)
 but all points of $\{z\in\C\I |z-5|<1\}$ are poles of $\hqD$.
\end{example}

Note also that $\qD$ also has a normalized exponential r.c.f.\
 $\qD=\tqN\tqM^{-1}$
 (by Lemma 6.4.7(a)\&(c), for some $\del>0$
  there is $\qX\in\cG\TIC_{-\del}(U)$ s.t.\ $\qX^*\qX=I^*I+\qM^*\qM$;
 set $\tqN:=\qX^{-1},\ \tqM:=\qM\qX^{-1}$).

\begin{proof}
Take $s_0=-1$ and choose an infinite compact $K\sub\C^+$ 
 (e.g., $K=\{z\in\C\I |z-5|<1\}$)
 to obtain, from Lemma 3.3.9 of [M02],
 a function $\hqM\in\H^\infty(\C^+;\BL(U))$
 (actually, $\hqM\in\H^\infty(\C_{-\del}^+;\BL(U))$ for any $\del<1$)
 s.t.\ $\qM\in\TIC\cap\cG\TIC_\infty(U)$
 but $\hqD:=\hqM^{-1}$ has an infinite number of poles
 (all points of $K$) on $\C^+$.

By Corollary~\ref{cQRCFiff}(iii), we already have a ``counter-example''
 to Corollary~\ref{cUout-Meromorphic}, 
 but to make it even more striking, we use a shifted
 version of Lemma 6.6.29 of [M02]
 (i.e., we take an exponentially stable realization
  ``$\ALSClL$'' of $\sbm{\qN\cr \qM-I}$ and apply static feedback 
 $L:=\bbm{0&I}$ to open it, thus obtaining a realization $\ALSext$
 of $\sbm{\qD\cr I-\qD}$; then we drop the bottom row
 (which is an exponentially r.c.-stabilizing state-feedback pair
  for $\ALS$))
 to obtain an exponentially (r.c.-)stabilizable realization
 of $\qD$.

(``R.c.-'' means that $\qN:=\qD\qM$ and $\qM$ are {\em r.c.}\label{pagerc} %
 in Definition~\ref{dAdmKF0},
 i.e., that $\qN,\qM\in\TIC$ are s.t.\ $\tqX\qM-\tqY\qN=I$
   for some $\tqX,\tqY\in\TIC$;
 ``exponential'' means that this holds with $\TIC_{-\del}$
in place of $\TIC$ for some $\del>0$. See Definition 6.6.10 of [M02] for more.)
\end{proof}

Any $\qKF$ making $\qN,\qM$ q.r.c.\ actually makes them r.c.\
 if $\sigma(A)$ is nice and $\dim U<\infty$:
\begin{lemma}[Nice $A$: q.r.c.$\IFF$r.c.]\label{lNMrcUfinite} %
Assume that $\qKF$, $A$ and $U$ are as in Lemma~\ref{lNMholomUfinite}
  and that $\hqM(s)$ converges as $s\in\C^+,\ |s|\to\infty$.
Then $\qN,\qM$ are q.r.c.\ iff they are r.c.
Moreover, there is a rational $g\in\H^\infty(\C^+;\C)$ %
 s.t.\ $gI_U$ and $g\hqD$ form a r.c.f.
\end{lemma}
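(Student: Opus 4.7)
The implication r.c.\ $\Rightarrow$ q.r.c.\ is immediate from Lemma~\ref{lrcf}(b), so only the converse needs work. The plan is to invoke Lemma~\ref{lrcf}(a1): since $\dim U<\infty$, it suffices to exhibit $\eps>0$ with $\hqN(s)^*\hqN(s)+\hqM(s)^*\hqM(s)\ge\eps I_U$ for every $s\in\C^+$, because the matrix-valued Corona / Tolokonnikov extension quoted there then furnishes $\tqX,\tqY\in\TIC$ with $\tqX\qN-\tqY\qM=I$. From Lemma~\ref{lrcf}(c) the q.r.c.\ hypothesis already gives the pointwise estimate $\|\hqN(s)u\|^2+\|\hqM(s)u\|^2>0$ for all $(s,u)\in\C^+\times(U\setminus\{0\})$, so the remaining task is to promote this pointwise positivity to a uniform lower bound.

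\textbf{Compactness argument.} I would carry the bound out on the one-point compactification of $\overline{\C^+}$. The hypotheses inherited from Lemma~\ref{lNMholomUfinite} (``nice $\sigma(A)$'' together with $\dim U<\infty$) extend $\hqN,\hqM$ meromorphically across $i\R$ with only finitely many controlled singularities, and the assumed convergence $\hqM(s)\to M_\infty$ as $|s|\to\infty$ controls behavior at infinity. At an interior pole $p_i\in\C^+$ of $\hqD=\hqN\hqM^{-1}$, $\hqM(p_i)$ is singular, but q.r.c.\ forces any sequence $u_n$ with $\hqM(s_n)u_n\to0$ to have $\|\hqN(s_n)u_n\|$ bounded away from zero (otherwise $\hqD$ would be holomorphic at $p_i$). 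At $\infty$ the finite-dimensional q.r.c.\ condition forces $M_\infty$ to be invertible (else an $u_0\in\ker M_\infty$ produces, via a test input concentrated at high frequency, a contradiction with the Laplace-side reformulation of q.r.c.). On $i\R$ outside the finitely many extension singularities, meromorphic extension plus the no-common-zero property from Lemma~\ref{lrcf}(c) gives strict positivity. Since the minimum eigenvalue of $\hqN^*\hqN+\hqM^*\hqM$ is a positive lower-semicontinuous function on the now-compact domain, it has a strictly positive infimum, which is the desired $\eps$.

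\textbf{The ``moreover'' part.} With the equivalence in hand, $\hqD=\hqN\hqM^{-1}$ is an r.c.f.\ with $\hqN,\hqM\in\H^\infty$, and the poles of $\hqD$ on $\C^+$, counted with multiplicity, form a finite set $\{p_i\}$ with orders $k_i$ (apply a Smith--McMillan-type reduction to the square factor $\hqM$, or work with $\det\hqM$, using $\dim U<\infty$). I would set $g(s):=\prod_i\bigl(\tfrac{s-p_i}{s+c}\bigr)^{k_i}$ with $c>0$ large; then $g$ is rational, $g\in\H^\infty(\C^+;\C)$, $g\hqD\in\H^\infty(\C^+;\BL(U))$ by construction of the multiplicities, and $gI_U\in\cG\TIC_\infty(U)$ since $g$ is bounded below on some right half-plane $\C_\omega^+$. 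For r.c.\ of the pair $(g\hqD,gI_U)$ I would apply Lemma~\ref{lrcf}(a1) once more to the Popov function $|g|^2(I+\hqD^*\hqD)$: at each pole $p_i$ the factor $|g|^2$ vanishes at exactly the rate that $\hqD^*\hqD$ blows up in the singular directions, keeping the pair bounded below there, while at all other points of $\C^+$ we have $|g|>0$ and $I+\hqD^*\hqD\ge I$; uniform positivity follows by the same compactification argument as above.

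\textbf{Main obstacle.} The delicate step is the uniform lower bound near $i\R$ and at $\infty$: plain q.r.c.\ pointwise positivity does \emph{not} extend to the boundary without further structure, and only the combination of the nice-$\sigma(A)$ hypothesis of Lemma~\ref{lNMholomUfinite} with $\dim U<\infty$ and the limit of $\hqM$ at $\infty$ makes the boundary a compact piece of the argument; dropping any of these ingredients breaks the conclusion, as Example~\ref{exadimUinftyRCF} illustrates for infinite-dimensional $U$.
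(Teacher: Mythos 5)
Your proof is correct and follows essentially the same route as the paper's: a uniform lower bound for $\hqN^*\hqN+\hqM^*\hqM$ on the compactified closed right half-plane (interior positivity from Lemma~\ref{lrcf}(c), boundary positivity via the extensions of Lemma~\ref{lNMholomUfinite}, invertibility of the limit of $\hqM$ at infinity), followed by the finite-dimensional Corona/Tolokonnikov extension of Lemma~\ref{lrcf}(a1), and then the same Blaschke-type rational $g$ cancelling the finitely many poles of $\hqD$ on $\cRHP$. The one spot where your justification is weaker than the paper's is the invertibility of $M_\infty=\lim_{|s|\to\infty}\hqM(s)$: you try to deduce it from q.r.c.\ via an unspecified high-frequency test input, whereas it follows immediately (and without any coprimeness) from the admissibility of $\qKF$, since $\hqM(s)^{-1}=\hqX(s)$ is uniformly bounded on some right half-plane.
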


Recall from Theorem 4.1.6(d) of [M02] that
 any r.c.f.\ can be extended to a d.c.f.\ when $\dim U<\infty$.

\begin{proof}
$1^\circ$ {\em R.c.f.:}
Now $M=\lim_{s\to+\infty}\hqM(s)\in\cG\BL(U)$
 (since $\hqM(s)^{-1}=\hqX(s)$ is uniformly bounded for big $s$).
Choose $\eps_K>0$ s.t.\ $M^*M >2\eps_K^2 I$
 and a compact  $K\sub\cRHP$ s.t.\
 $\hqM^*\hqM\ge\eps_K^2$ on $\cRHP\pois K$.

Set $\qE:=\sbm{\qN\cr \qM}$.
By Lemma \ref{lrcf}(c),
 $f(s):=\min_{\|u_0\|_U=1}\|\hqE(s)u_0\|>0\ \all s\in\C^+$
 and  there is $\eps\in(0,\eps_K)$ s.t.\ 
 $\hqE^*\hqE\ge 2\eps^2$ a.e.\ on $i\R$,
 hence everywhere on $i\R$
 (use the extensions of Lemma~\ref{lNMholomUfinite}).
We conclude that $\eps_1:=\inf_{s\in K}f(s)>0$.
Therefore, $f(s)\ge \eps_2:=\min\{\eps_1,\eps_K\}\ \all s\in\cRHP$.
By Lemma~\ref{lrcf}(a1),
 $\qN,\qM$ are r.c.

$2^\circ$ {\em $g$:} %
The poles of $\hqD$ on $\cRHP$ are on $K$, hence their number is finite;
 denote them by $s_1,...,s_n$. %
Set $g:=\prod_{k=1}^n (s-s_k)/(s+s_k+1)$ to have $\|g\|_\infty\le1$.
Then $g\hqD\in\H^\infty$ and $\det(g\hqD)$
 has no common zeros with $g$ on $\cRHP$,
 hence $g\hqD$ and $g I_U$ are r.c.\ (as in $1^\circ$).
\end{proof}

If an I/O map can be written as the quotient of two stable maps,
 then these stable maps can be chosen to be quasi--right coprime
 and normalized:
\begin{cor}[q.r.c.f.]\label{cQRCFiff} %
Let $\qD\in\TIC_\infty(U,Y)$.
Then following are equivalent:
 \begin{itemlist}
  \item[(i)] $\qD=\qN\qM^{-1}$, where $\qN,\qM\in\TIC$, $\qM\in\cG\TIC_\infty(U)$.
  \item[(ii)] $\qD=\qN\qM^{-1}$, where $\qN,\qM$ are q.r.c.,
 $\qM\in\cG\TIC_\infty(U)$, and $\qN^*\qN+\qM^*\qM=I$.

  \item[(iii)] There is a realization of $\qD$ s.t.\
 $\dUout(x_0)\ne\tyhja\ \all x_0\in H$.

  \item[(iv)] There is a stabilizable realization of $\qD$. %

  \item[(v)] For some $\omega\in\R$ %
 and any $v\in\L_\omega^2(\R_-;U)$, $\qD\in\TIC_\omega$ and
 there exists
 $u\in\L^2(\R_+;U)$ s.t.\ $\pi_+\qD (v+u)\in\L^2$.
 \end{itemlist}

Assume (ii). Then all solutions of (ii) are given by
 $\sbm{\tqN\cr \tqM}=\sbm{\qN\cr \qM}E\ (E\in\cG\BL(U),\ E^*E=I)$.
Assume that (ii) holds and $\dim U<\infty$.
Then also $\qD^\rmd$ satisfies (ii)
 (i.e., $\qD$ has both a q.r.c.f.\ and a q.l.c.f.).
If $\qN,\qM\in\WTIC$, %
 then $\qN,\qM$ are actually r.c.\ %
 and can hence be extended to a d.c.f.\ in $\WTIC$. 
\end{cor}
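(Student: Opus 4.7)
The plan is to prove (ii) $\THEN$ (i) $\THEN$ (iv) $\THEN$ (iii) $\THEN$ (ii), treat (iii) $\IFF$ (v) separately, and then dispatch the three supplementary assertions in turn. The implication (ii) $\THEN$ (i) is immediate since a q.r.c.f.\ is a fortiori a right factorization. For (i) $\THEN$ (iv), given $\qD = \qN\qM^{-1}$ with $\qN, \qM \in \TIC$ and $\qM \in \cG\TIC_\infty(U)$, I realize the stable map $\bsysbm{\qN \cr \qM - I} \in \TIC(U, Y \times U)$ as a stable WPLS $\ALSClL$; opening the static output feedback $L = \bbm{0 & I}$ (admissible because $\qM \in \cG\TIC_\infty(U)$) produces, as in Lemma~6.6.29 of [M02] and Example~\ref{exadimUinftyRCF}, an extended WPLS whose first I/O map is $\qD$ and whose bottom row supplies a stabilizing state-feedback pair for the resulting realization $\ALS$. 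Stability of the closed loop then yields (iv) $\THEN$ (iii) immediately: with $u_L = 0$ every $x_0 \in H$ gives $u = \qKClL x_0 \in \L^2$ and $y = \qCClL x_0 \in \L^2$, so $\dUout(x_0) \ne \tyhja$. Finally, (iii) $\THEN$ (ii) is exactly Theorem~\ref{QRCF-Uout}\,(i)$\,\THEN\,$(iii) applied to this realization, producing an SOS-stabilizing pair with the required normalization $\qN^*\qN + \qM^*\qM = I$.

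For (iii) $\IFF$ (v), fix any realization; Definitions~\ref{dWPLS0}.2 and~\ref{dWPLS0}.4 together with causality give $\pi_+\qD(v + u) = \qC\qB v + \qD u = \qC x_0 + \qD u = y$ with $x_0 := \qB v \in H$. Choosing $\omega$ so that the realization is $\omega$-stable, the direction (iii) $\THEN$ (v) is then immediate. Conversely, to derive (iii) from (v) I take an exactly reachable realization (a Fuhrmann-type quotient of $\L^2_\omega(\R_-;U)$) in which $\qB$ is surjective onto $H$; then every $x_0$ arises as $\qB v$ for some $v$, and (v) supplies $u \in \L^2$ with $y \in \L^2$, giving output-FCC.

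For the uniqueness of $\bsysbm{\qN \cr \qM}$ modulo $E$, Lemma~\ref{lrcf}(d) supplies $E \in \cG\TIC(U)$ with $\bsysbm{\tqN \cr \tqM} = \bsysbm{\qN \cr \qM}E$, and the two normalizations force $E^*E = I$ on $\L^2(\R;U)$, so $E^* = E^{-1} \in \TIC$. Since $E^{-1}$ is thus both causal (as an element of $\TIC$) and anti-causal (as the $\L^2(\R)$-adjoint of a causal operator), it is memoryless, i.e.\ a constant element of $\cG\BL(U)$ with $E^*E = I$. For $\dim U < \infty$, Lemma~\ref{lNMrcUfinite} upgrades the q.r.c.\ pair to an r.c.\ pair, and Theorem~4.1.6(d) of [M02] (Tolokonnikov) extends it to a d.c.f.; the left column of that d.c.f.\ is an l.c.f.\ of $\qD$, equivalently an r.c.f.\ of $\qD^\rmd$, so $\qD^\rmd$ satisfies (i) and hence (ii). The concluding $\WTIC$ claim follows by invoking the Wiener-algebra analogue of Tolokonnikov's extension from [M02] once q.r.c.\ in $\WTIC$ has been upgraded to r.c. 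The hardest step will be (v) $\THEN$ (iii), since (v) is purely I/O while (iii) demands a realization whose output-FCC holds uniformly in $x_0$; ensuring that one can take a realization in which every state is reachable from some $\L^2_\omega$ past input, rather than only a dense subset that would require a closure argument, is the technical crux.
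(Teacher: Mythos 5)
Your main equivalence cycle is correct and rests on the same ingredients as the paper's proof --- Lemma~6.6.29 of [M02] to turn a right factorization into a stabilizable realization, Theorem~\ref{QRCF-Uout} for (iii)$\THEN$(ii), and the exactly reachable realization on $\L^2_\omega(\R_-;U)$ for (v)$\THEN$(iii) (the paper takes $\ssysbm{\tau\pi_-\|\pi_-\crh\pi_+\qD\pi_-\|\qD}$, in which $\qB=\pi_-$ is literally onto, so the ``technical crux'' you flag is harmless). You merely reroute: the paper runs (i)$\THEN$(v)$\THEN$(iii) directly through the factorization and handles (ii)$\THEN$(iv)$\THEN$(iii) separately, whereas you put (iv) on the main cycle and attach (v) to (iii); both close the loop. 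Your derivation of the parametrization modulo $E$ with $E^*E=I$ (causal plus anti-causal plus time-invariant implies static) is a correct direct proof of what the paper only cites.

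The genuine gap is the claim that $\dim U<\infty$ yields a q.l.c.f. You propose to ``upgrade'' the q.r.c.\ pair to an r.c.\ pair via Lemma~\ref{lNMrcUfinite} and then apply Tolokonnikov. But that lemma carries the hypotheses of Lemma~\ref{lNMholomUfinite} --- a realization with $\sigma(A)\cap i\R$ consisting of isolated poles, $\inf\rho_\infty(A)\le0$, and $\hqM(s)$ convergent as $|s|\to\infty$ --- none of which are available here. Worse, the upgrade is false in general: Example~\ref{exaqrcfNotrc} exhibits scalar ($\dim U=\dim Y=1$) q.r.c.\ Blaschke products $\qN,\qM$ that are not r.c.\ and whose quotient admits no r.c.f.\ whatsoever, so no route through right coprimeness and Tolokonnikov can prove the q.l.c.f.\ claim. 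The paper instead clears denominators with the scalar function $f:=\det\hqM$ (as in the proof of Corollary~\ref{cUout-Meromorphic}): one has $\hqM^{-1}=f^{-1}F$ with $F\in\Hoo$ essentially the adjugate, $f\hqD=\hqN F^{-1}\in\Hoo$ and $f\in\cG\H^\infty_\omega$ for some $\omega>0$, so $\hqD=(fI_Y)^{-1}(f\hqD)$ is a left factorization --- this determinant trick is where $\dim U<\infty$ actually enters. A related soft spot is your last sentence: for $\qN,\qM\in\WTIC$ the r.c.\ upgrade is true, but it must be justified by the continuity of $\hqN,\hqM$ on the compactified half-plane $\CRHP$ (which $\WTIC$ membership supplies) combined with the lower bounds of Lemma~\ref{lrcf}(c) and the Corona criterion of Lemma~\ref{lrcf}(a1), i.e.\ the actual content of the proof of Lemma~\ref{lNMrcUfinite}; the ``Wiener-algebra analogue of Tolokonnikov'' only handles the extension to a d.c.f.\ after coprimeness has been established.
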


See Theorem~\ref{QRCF-Uout} for further equivalent conditions.
Condition (iii) (hence (i)--(v))
 holds iff $\qD$ is the I/O map of some system having
 output-stabilizing inputs --- in the negative case
 no reasonable control problems for $\qD$ have solutions.

Condition (v) says that the range of the Hankel operator $\pi_+\qD\pi_-$
 (restricted to some $\L^2_\omega$) %
 is contained in the sum of $\L^2$
  and the range of the Toeplitz operator $\pi_+\qD\pi_+$.
If (v) holds for some $\omega\in\R$, then it holds
 for any $\omega'>\omega$
 (because $\L^2_{\omega'}(\R_-;U)\sub\L^2_{\omega}(\R_-;U)\sub$).

The function $\hqD(s):=(s-1)^{-1/2}$ satisfies $\qD\in\TIC_\omega(\C)$
 for any $\omega>1$, by Theorem~\ref{TransferFct1},
 but does not satisfy any of (i)--(v),
 as noted below Example~\ref{exanotrf}.

The corollary can be applied to any quadratic minimization problems
 of  even more general systems than WPLSs as long
 as the stabilizability assumption (v) is satisfied.
\begin{proof}[Proof of Corollary~\ref{cQRCFiff}:]
$1^\circ$ {\em (ii)$\THEN$(i):}
By the definition of ``q.r.c.'', $\qN,\qM\in\TIC$.

$2^\circ$ {\em (i)$\THEN$(v):}
Let $\omega\ge0$ be s.t.\ $\qM\in\cG\TIC_\omega$.
Since $\tv:=\pi_-\qM^{-1} v\in\L^2_\omega(\R_-;U)\sub\L^2$,
 we have $u,y\in\L^2$, where $u:=\pi_+\qM \tv,\ y:=\pi_+\qN\tv$.
But $v=\pi_- \qM\qM^{-1}v=\pi_-\qM \tv$,
 hence $\pi_+\qD (v+u)=\pi_+\qD(\pi_-\qM\tv+\pi_+\qM\tv)
 =\pi_+\qD\qM\tv=y$, so (v) holds.

$3^\circ$ {\em (v)$\THEN$(iii):}
Condition (v) (without $\qD\in\TIC_\omega$)
 is exactly condition (iii) for the
 $\omega$-stable exactly reachable realization
  $\ssysbm{\tau\pi_-\|\pi_-\crh\pi_+\qD\pi_-\|\qD}$ %
 on $(U,\L^2_\omega(\R_-;U),Y)$
 (which is a WPLS when $\qD\in\TIC_\omega$). 

$4^\circ$ {\em (iii)$\THEN$(ii):}
This follows from Theorem~\ref{QRCF-Uout}(iii).

$5^\circ$ {\em (ii)$\THEN$(iv)$\THEN$(iii):}
The latter implication is trivial, and the 
 former is from Lemma 6.6.29 of [M02]
 (in fact, the realization is strongly q.r.c.-stabilizable).

$6^\circ$ {\em All solutions formula:} %
It is from from Lemma 6.4.5(e) of [M02].

$7^\circ$ {\em Case $\dim U<\infty$:} %
Choose $f$ as in the proof of Corollary~\ref{cUout-Meromorphic},
 so that $\hqX=f^{-1}F$ for some $F\in\Hoo$.
But $\det \hqX=(\det\hqM)^{-1}=f^{-1}$ a.e.,
 hence $\det F=1$ on $\C^+$, hence $F^{-1}\in\Hoo$. %
Thus, $fI_U=\hqM F^{-1}=F^{-1}\hqM$ and $G:=f\hqD=\hqN F^{-1}$ are q.r.c.,
 and $fI_Y$ and $G$ form a left factorization of $\hqD$,
 since $f\in\cG\H_\omega^\infty$ for some $\omega>0$.
Thus, (i) (hence (ii)--(v) too) holds for $\qD^\rmd$ too.
$8^\circ$ {\em R.c.:}
By the proof of Lemma~\ref{lNMrcUfinite},
 $\qN,\qM$ are r.c.\
 when $\hqN,\hqM$ are q.r.c.\ and continuous on $\CRHP$
 (the latter holds if $\qN,\qM\in\WTIC$, see p.~\pageref{pageWTIC2}),
 $\qM\in\cG\TIC_\infty(U)$ and $\dim U<\infty$.
By Theorem 4.1.6(d) of [M02], there is an extension (a d.c.f.)
 $\sbm{\qM&*\cr \qN&*}\in\cG\WTIC(U\times Y)$.
\end{proof}

However, a q.r.c.f.\ need not be a r.c.f., hence ``q.''
 is not redundant in Theorem~\ref{QRCF-Uout}
 nor in Corollary~\ref{cQRCFiff}:
\begin{example}\label{exaqrcfNotrc}(q.r.c.f.$\not\IFF$r.c.f.). %
Let $\hqM$ and $\hqN$ be the Blaschke products
 with zeros
 $\{n^{-2}\I n=2,3,...\}$ and $\{(n^2+1)^{-1}\I n=2,3,...\}$,
 respectively.
Then $\qN\qM^{-1}$ is a q.r.c.f.\
 and $\qN^*\qN+\qM^*\qM=2$ 
 (multiply $\qN$ and $\qM$ by $2^{-1/2}$ to normalize them
   as in Corollary~\ref{cQRCFiff}(ii)),
 but $\qN,\qM$ are not r.c.,
 hence $\qN\qM^{-1}$ does not have a r.c.f.,
 by Lemma 6.4.5(c) of [M02].
\end{example}

In particular, there is no d.c.f.\ although
 $\qN\qM^{-1}=\qM^{-1}\qN$ are a q.r.c.f.\ and a q.l.c.f.

\begin{proof}
$1^\circ$ {\em $\qM$ and $\qN$ are q.r.c.:}
If $\hqM f,\hqN f\in\H^2$,
 then $f$ cannot have singularities on $\C^+$
 (since $\hqM$ and $\hqN$ have no common zeros).
Thus, the zeros of $\hqM f$ equal those of $f$ combined
 with those of $\hqM$, %
 hence $B_{\hqM f}=B_{\hqM} B_f$
 (where $B_f$ is the Blaschke product formed with the zeros of $f$ etc.).
But $\hqM=B_\hqM$, %
 hence $\H^2\owns \hqM f/B_{\hqM f}=f/B_f$,
 by pp.\ 132--133 of [H62], %
 hence $\H^2\owns B_f \cdot f/B_f=f$.

$2^\circ$ One easily verifies that $\hqN(k^{-2})\to0$ as $k\to+\infty$,
 hence $\hqN,\hqM$ are not r.c., by Lemma~\ref{lrcf}(a1).
Moreover, $\hqM(s)=\prod_{n=2}^\infty |1-2/(1+s/n^{-2})|
 \ge\prod_{n=2}^\infty |1-2n^{-2}| <\infty$ when $\re s>1$,
 because $\sum_n 2n^{-2}<\infty$. %
Thus, $\|\hqM(s)^{-1}\|$ is bounded on $\C_1^+$.
\end{proof}

We note that a right factorization $\qN\qM^{-1}$ is a q.r.c.f.\ %
 iff any $\cG\TIC_\infty(U)$ %
   common right factor of $\qN$ and $\qM$ %
 is a unit
 (i.e., iff $\sbm{\qN\cr\qM}=\sbm{\qN_0\cr \qM_0}\qE,\ 
 \qN_0,\qM_0,\qE\in\TIC,\ \qE\in\cG\TIC_\infty(U) \TTHEN
 \qE\in\cG\TIC$).
(Proof: apply Lemma~\ref{lrcf}(d) to any q.r.c.f.\ of $\qN\qM^{-1}$.)

If the requirement $\cG\TIC_\infty$ were dropped, the above condition
 would be the definition of ``weakly coprime'' in [S89].
(Thus, any ``w.r.c.f.'' is a q.r.c.f.;
  let $\qE$ be the right shift on $U:=\ell^2(\N)$
  to observe that no maps are ``w.r.c.'' if $\dim U=\infty$.) %
If the system has more poles than its transfer function,
 no $\dUexp$-stabilizing state feedback can be right coprime,
 as illustrated in Example~\ref{exaUoutUexpstab} below.
However, if the system is estimatable,
 then this is not the case (and $\dUout=\dUexp$),
 by Corollary~\ref{cJointlySD}.

The ``$\dUexp$-variant'' of Theorem~\ref{QRCF-Uout} is contained
 in Corollary~\ref{cOptExpstab1}
 except that (iii) must be dropped, by Example~\ref{exaUoutUexpstab}.
Similarly,  Corollary~\ref{cQRCFiff} has an $\dUexp$-variant:
\begin{cor}[$\dUexp:\ \qN\qM^{-1}$]\label{cQRCF-Uexp} %
The following are equivalent for any $\qD\in\TIC_\infty(U,Y)$:
 \begin{itemlist}
  \item[(i)] $\qD=\qN\qM^{-1}$, where $\qN,\qM\in\TICexp$, $\qM\in\cG\TIC_\infty(U)$.

  \item[(iii)] $\qD$ has an optimizable realization
 (i.e., one with $\dUexp(x_0)\ne\tyhja\ \all x_0\in H$).

  \item[(iv)] For some $\omega\in\R,\ \del<0$,
 and any $v\in\L_\omega^2(\R_-;U)$, there exists
 $u\in\L^2_\del(\R_+;U)$ s.t.\ $\pi_+\qD (v+u)\in\L^2_{\del}$.
 \end{itemlist}
\end{cor}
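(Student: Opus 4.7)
The plan is to establish the chain (iii)$\Rightarrow$(i)$\Rightarrow$(iv)$\Rightarrow$(iii), in each step paralleling the corresponding portion of the proof of Corollary~\ref{cQRCFiff} but with exponential weights replacing mere $\L^2$-stability.

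First I would prove (iii)$\Rightarrow$(i). Given a realization $\ALS$ of $\qD$ with $\dUexp(x_0)\ne\tyhja$ for all $x_0\in H$, Corollary~\ref{cOptExpstab1} produces an exponentially stabilizing state-feedback pair $\qKF$. Set $\qM:=(I-\qF)^{-1}$ and $\qN:=\qDClL=\qD\qM$. Since $\ALSClL$ is exponentially stable, Lemma~\ref{lExpStable} yields $\qN,\qFClL=\qM-I\in\TICexp$; since $I$ is bounded on every $\L^2_\omega$, we have $I\in\TICexp$, hence $\qM=I+(\qM-I)\in\TICexp$. Finally $\qM^{-1}=I-\qF\in\TIC_\infty(U)$ provides $\qM\in\cG\TIC_\infty(U)$, so $\qD=\qN\qM^{-1}$ is a witness of (i).

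Second, (i)$\Rightarrow$(iv) is the exponentially weighted analogue of step $2^\circ$ of the proof of Corollary~\ref{cQRCFiff}. Pick $\delta_1>0$ with $\qN,\qM\in\TIC_{-\delta_1}$, and $\omega\ge 0$ with $\qM^{-1}\in\TIC_\omega$ (recall $\hqM^{-1}$ is bounded on some right half-plane). For any $v\in\L^2_\omega(\R_-;U)$ define $\tilde v:=\pi_-\qM^{-1}v\in\L^2_\omega(\R_-;U)$, so $v=\pi_-\qM\tilde v$. Setting $u:=\pi_+\qM\tilde v$ and $y:=\pi_+\qN\tilde v$, the factorization identity yields $\pi_+\qD(v+u)=\pi_+\qD\qM\tilde v=\pi_+\qN\tilde v=y$. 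The key estimate is that $\qN,\qM\in\TIC_{-\delta_1}$ have exponentially decaying convolution kernels: for $\tilde v$ supported on $\R_-$ and for $t>0$, a Cauchy--Schwarz bound in the form $|(\qM\tilde v)(t)|\le C e^{-\delta_1 t}\int_{-\infty}^0 e^{\delta_1 s}|\tilde v(s)|\,ds$ (finite provided $\omega+\delta_1>0$, which can be arranged by enlarging $\omega$) gives $u,y\in\L^2_{-\delta_1}(\R_+)$, so (iv) holds with $\delta:=-\delta_1$.

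The remaining direction (iv)$\Rightarrow$(iii) is the main obstacle. The natural candidate is the exactly reachable shift realization of $\qD$ on $\L^2_\omega(\R_-;U)$ from step $3^\circ$ of Corollary~\ref{cQRCFiff}, but its semigroup $\tau^t\pi_-$ has growth rate $e^{\omega t}$, so its state-FCC fails whenever $\omega\ge0$. To bypass this I would exploit the extra exponential margin in (iv). Choose $\alpha>\max(0,\omega)$ and define $\qD_\alpha:=M_{-\alpha}\qD M_\alpha\in\TIC$ via the exponential conjugations $(M_\beta u)(t):=e^{\beta t}u(t)$. Writing $v'=M_{-\alpha}v$ and $u'=M_{-\alpha}u$, the condition (iv) for $\qD$ translates into: for any $v'\in\L^2_{\omega-\alpha}(\R_-;U)$ there is $u'\in\L^2_{\delta-\alpha}(\R_+;U)\sub\L^2(\R_+;U)$ with $\pi_+\qD_\alpha(v'+u')\in\L^2_{\delta-\alpha}\sub\L^2$, which is condition (v) of Corollary~\ref{cQRCFiff} for $\qD_\alpha$ in its strengthened form. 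Corollary~\ref{cQRCFiff} then gives a factorization $\qD_\alpha=\qN_\alpha\qM_\alpha^{-1}$ with $\qN_\alpha,\qM_\alpha$ q.r.c.\ in $\TIC$, and the accompanying realization constructed via Lemma~6.6.29 of [M02] is output-stabilizable; the $\L^2_{\delta-\alpha}$ decay inherited from (iv) then forces that realization to be in fact optimizable, so Corollary~\ref{cOptExpstab1} supplies an exponentially stabilizable realization of $\qD_\alpha$. Shifting back by conjugation with $M_\alpha$ (applying it to the semigroup, reachability, observation, and I/O maps) produces a realization of $\qD$ itself; the hard technical point is to verify that the conjugation preserves the optimizability inherited from the exponential margin, which amounts to tracking how the $\L^2_{\delta-\alpha}$-stability of the shifted closed loop translates under $M_\alpha$ into $\L^2$-stability of the state of the un-shifted realization. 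Once this bookkeeping is done, (iii) follows.
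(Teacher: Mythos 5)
Your direction (iii)$\THEN$(i) coincides with the paper's (Corollary~\ref{cOptExpstab1} plus $\qM:=(I-\qF)^{-1}$), but the other two legs have problems. In (i)$\THEN$(iv) you estimate $|(\qM\tilde v)(t)|$ by a Cauchy--Schwarz bound on an exponentially decaying convolution kernel; a general element of $\TIC_{-\delta_1}$ is \emph{not} a convolution with a kernel, so that inequality has no meaning here. The step is salvageable without it: for functions supported on $\R_-$ one has $\L^2_\omega(\R_-;U)\sub\L^2_{-\delta_1}(\R_-;U)$ whenever $-\delta_1\le\omega$, so $\tilde v=\pi_-\qM^{-1}v\in\L^2_{-\delta_1}(\R_-;U)$ and the mere boundedness of $\qN,\qM$ on $\L^2_{-\delta_1}$ already gives $u,y\in\L^2_{-\delta_1}(\R_+)$. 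This is exactly what the paper obtains by applying step $2^\circ$ of Corollary~\ref{cQRCFiff} to the shifted map $\efn^{-\del\cdot}\qD\efn^{\del\cdot}$, for which condition (iv) here is literally condition (v) there.

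The genuine gap is (iv)$\THEN$(iii). The factorization that Corollary~\ref{cQRCFiff} returns for the shifted map $\qD_\alpha$ has factors only in $\TIC$; nothing in that corollary transfers the $\L^2_{\delta-\alpha}$ margin of (iv) to the factors $\qN_\alpha,\qM_\alpha$, so your claim that ``the $\L^2_{\delta-\alpha}$ decay inherited from (iv) forces that realization to be in fact optimizable'' is unsupported --- and it cannot be repaired by decay of the I/O map alone, since an output-stabilizable realization need not be optimizable even when the transfer function is entire (cf.\ Example~\ref{exaUoutUexpstab}: the poles of the \emph{semigroup} are the obstruction, and they are invisible in $\hqD$). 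The paper avoids this entirely by closing the cycle as (iv)$\THEN$(i)$\THEN$(iii): apply Corollary~\ref{cQRCFiff} to $\hqD(\cdot+\del)$ with the $\del<0$ from (iv) and un-shift, which puts the factors in $\TIC_\del\sub\TICexp$ and yields (i); then invoke Lemma 6.6.29 of [M02] (as in Example~\ref{exadimUinftyRCF}) to build, from an \emph{exponentially stable} realization of $\sbm{\qN\cr \qM-I}$, an exponentially stabilizable --- hence, by Corollary~\ref{cOptExpstab1}, optimizable --- realization of $\qD$. You should replace your (iv)$\THEN$(iii) argument by this route; as written, the cycle does not close.
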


Here $\TICexp:=\cup_{\omega<0}\TIC_\omega$.
Note that by shifting (Remark 6.1.9 of [M02])
 we obtain some kind of ``exponential''
 version of any of the $\dUout$ results of (e.g.) this section.

\begin{proof}
By Lemma 6.6.29 of [M02] (which was explained in 
 the proof of Example~\ref{exadimUinftyRCF}),
 (i) implies (iii).
The converse follows from Corollary~\ref{cOptExpstab1} %
 (set $\qM:=(I-\qF)^{-1}$).
We get ``(iv)$\IFF$(i)'' from
 Corollary~\ref{cQRCFiff} applied to $\hqD(\cdot+\del)$
 (or $\efn^{-\del\cdot}\qD\efn^{\del\cdot}$)
 (for each $\del<0$). %
\end{proof}

Next we present the %
 $\dUout$-variant of Corollary~\ref{cExpStabB1}.
The ``optimal'' output-stabilizing feedback for $\ALS$
 also output-stabilizes any extension of $\ALS$:
\begin{cor}[$\dUout$ through $B_1$]\label{cUout-B1} %
Assume the output-FCC and choose $\qKF$ as in Theorem \ref{QRCF-Uout}(iii).
If $\tALS:=\ssysbm{\qA\|\qB& \qH\crh \qC\|\qD & \qG}$ is a WPLS
 (say, on $(U\times W,H,Y)$, for some $\qH,\qG$),
 then there is $\qE\in\TIC_\infty(W,U)$ s.t.\
 $[\tqK|\tqF]
 :=\ssysbm{\qK\|\qF& \qE\cr 0\|0&0}$
 is a SOS-stabilizing state-feedback pair for $\tALS$.

Moreover, $\tqN,\tqM$ are q.r.c., and so are 
 $\qDClL=\tqDClL\sbm{I\cr 0}$ and $\qM=\tqM_{11}$.
\end{cor}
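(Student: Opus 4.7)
The plan is to mimic the proof of Corollary \ref{cExpStabB1}, transferring it from the $\dUexp$ to the $\dUout$ setting: I would run a coercive minimization directly on the enlarged system $\tALS$, using an optimization domain $\gUstar$ whose very definition forces the $W$-component of admissible inputs to vanish. The $J$-optimal state-feedback pair produced by Theorem \ref{PosJcKF} then automatically has its second block-row zero after a normalization, and the q.r.c.\ and SOS-stabilization properties will drop out from the framework of Theorem \ref{QRCF-Uout} applied to $\tALS$, after which $\qDClL=\tqDClL\sbm{I\cr0}$ and $\qM=\tqM_{11}$ inherit q.r.c.\ by a direct block-matrix calculation.

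\medskip

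First I would set up, on $\tALS=\ssysbm{\qA\|\qB&\qH\crh\qC\|\qD&\qG}$, the Standing Hypothesis \ref{shgUstar} with $J=I$, replacing the output and I/O maps by $\ssysbm{\qC\cr 0}$ and $\ssysbm{\qD&\qG\cr I&0}$ (so that the cost equals $\|y\|_2^2+\|u_1\|_2^2$), and taking $\qQR=0,\ \qRR=\bbm{0&I},\ \sZ=\{0\},\ \vartheta=0$. Then $\gUstar(x_0)=\{\ssysbm{u_1\cr 0}\in\L^2:\qC x_0+\qD u_1\in\L^2\}$, and the FCC for $\gUstar$ is precisely the output-FCC of $\ALS$ (which holds by assumption). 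On $\gUstar(0)$ one has $\|\tqD' u\|_2^2=\|\qD u_1\|_2^2+\|u_1\|_2^2\ge \tfrac12\|u\|_{\gUstar}^2$, so $\PTO\gg0$. Theorem \ref{PosJcKF} thus yields a $J$-optimal state-feedback pair $[\tqK|\tqF]$ for $\tALS$, which by Lemma \ref{lAllqKF} may be normalized so that $\hat{\tqF}(\alpha)=0$ for some $\alpha$ with $\hat{\tqX}(\alpha)\in\cG\BL$.

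\medskip

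Next, since $\tqKClL x_0\in\gUstar(x_0)$ for every $x_0$ and every element of $\gUstar(x_0)$ has zero second component, the second block-row of $\tqKClL$ vanishes. Repeating the characteristic-function argument in the proof of Corollary \ref{cExpStabB1} (using $\tqM-I=\tqFClL$ and $\hat{\tqM}(\alpha)=I$), the second block-row of $\tqF$ is therefore zero, and hence so is the second block-row of $\tqK=\tqM^{-1}\tqKClL$. This gives $[\tqK|\tqF]=\ssysbm{\qK_1\|\qF_{11}&\qE\cr 0\|0&0}$, whose first block $(\qK_1,\qF_{11})$ is the $\|y\|_2^2+\|u_1\|_2^2$-minimizing pair for $\ALS$ and thus, modulo Lemma \ref{lAllqKF}, coincides with the hypothesized $\qKF$. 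Since $\gUstar\sub\dUout$, Theorem \ref{ALSopt} and the optimization framework of Theorem \ref{QRCF-Uout}(iii) applied on $\tALS$ simultaneously yield SOS-stability of the full closed-loop and q.r.c.\ of $\tqN,\tqM$. The inheritance for $\qDClL$ and $\qM$ is immediate: if $\qN f_1,\qM f_1\in\L^2$ for some $f_1\in\L^2_\omega$, set $f:=\sbm{f_1\cr 0}$; then the block form of $\tqM$ gives $\tqN f=\qN f_1\in\L^2$ and $\tqM f=\sbm{\qM f_1\cr 0}\in\L^2$, so q.r.c.\ of $(\tqN,\tqM)$ forces $f\in\L^2$ and hence $f_1\in\L^2$.

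\medskip

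The main obstacle, and the step requiring the most care, is the matching of the first block of the $\tALS$-optimal pair with a pair satisfying Theorem \ref{QRCF-Uout}(iii) on $\ALS$: one must verify that restricting the optimization on $\tALS$ (with the forced $u_2=0$) reproduces, up to the $E\in\cG\BL(U)$-freedom of Lemma \ref{lAllqKF}, the $(\qKF)$ coming from the $\|y\|^2+\|u_1\|^2$-minimization on $\ALS$, and that the q.r.c.\ and SOS-stability parts of Theorem \ref{QRCF-Uout}(iii) truly go through on the extended system with the modified cost. Everything else is a routine transcription of the argument of Corollary \ref{cExpStabB1} to the output-stabilizing setting.
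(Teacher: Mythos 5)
Your construction of the pair follows essentially the paper's own route: the paper likewise forms an extended system (it adds a copy of $u$ to the output rather than permuting the rows, which is immaterial), restricts the optimization domain so that the $W$-component of admissible inputs is forced to vanish, applies Theorem~\ref{PosJcKF} with $\PTO\gg0$, normalizes via Lemma~\ref{lAllqKF}, and identifies the first block with the given $\qKF$ by uniqueness of the optimal control. Up to that point your argument is sound.

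The genuine gap is in the ``Moreover'' part. You justify q.r.c.\ of $(\tqN,\tqM)$ by appealing to ``the optimization framework of Theorem~\ref{QRCF-Uout}(iii) applied on $\tALS$'', but that theorem's coprimeness conclusion rests on Lemma~\ref{lSgg0}(b)\&(c), which explicitly requires $\gUstar=\dUout$; here your domain is the proper subset $\dUout(x_0)\times\{0\}\subsetneq\dUout^{\tALS}(x_0)$, so the lemma only controls inputs whose $W$-component vanishes and says nothing about a general input $\sbm{u\cr w}$. Consequently q.r.c.\ of $(\tqN,\tqM)$ is not established, and your final step --- deducing q.r.c.\ of $(\qN,\qM)$ from q.r.c.\ of $(\tqN,\tqM)$ --- runs in the wrong direction: q.r.c.\ of $(\qN,\qM)$ is already part of the hypothesis (it is built into the choice of $\qKF$ in Theorem~\ref{QRCF-Uout}(iii)), and it is $(\tqN,\tqM)$ that needs proof. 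The paper closes this by computing the block structure $\tqM=\sbm{\qM&\qM\qE\cr 0&I}$, $\tqN=\bbm{\qN&\qG_\ClL}$ with $\qG_\ClL=\qG+\qN\qE$, and arguing directly: if $\tqM\sbm{u\cr w},\tqN\sbm{u\cr w}\in\L^2$ then $w\in\L^2$, hence $\qM u\in\L^2$ (since $\qM\qE\in\TIC$) and $\qN u\in\L^2$ (since $\qG_\ClL\in\TIC$), so $u\in\L^2$ by q.r.c.\ of the given $(\qN,\qM)$. You need this (or an equivalent) explicit block argument. A smaller point: for SOS-stability of the full closed loop you should invoke Lemma~\ref{lC2ClLstable} rather than Theorem~\ref{ALSopt}, since the latter only controls the maps from the initial state, not the closed-loop I/O maps.
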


(The proof is given on p.~\pageref{pageproof-cUout-B1}.
Note that $\tqKClL=\tbm{\qKClL\cr 0}$,
 $\tqM=\tbm{\qM&\qM\qE\cr 0&I}$, $\tqFClL=\sbm{*&*\cr 0&0}$.)
If $\qKF$ is given by some state-feedback operator $K:\Dom(A)\to U$,
 then the above corollary surprisingly tells us that
 not only is $K$ ``compatible'' with $\Hgen$
 but it also makes $\qG_\ClL$ stable.

The above corollary leads to 
 the $\dUout$-variant %
 of Theorem~\ref{cJointlySD},
 showing that the output-FCC for $\ALS$ and $\ALSd$ is
 sufficient for the existence of a d.c.f.\ of $\qD$:
\begin{theorem}[$\dUout\&\dUout^{\ALSd} \TTHEN$ d.c.f.]\label{JointlyUout} %
Assume that $\ALS$ and $\ALSd$ satisfy the output-FCC. %
Let $\qKF$ and $\qHGd$ be the corresponding optimal state-feedback pairs.
Then they are jointly externally stabilizing
 and define a doubly coprime factorization of $\qD$,
 namely (\ref{eS4A:4.4}).

Moreover, then any SOS- (resp.\ I/O-)stabilizing state-feedback pair
 for $\ALS$ is externally (resp.\ I/O-)r.c.-stabilizing.

Finally, the equivalence of Corollary~\ref{cJointlySD} %
 also holds after replacements
 ``external''$\mapsto$``exponential'' and
 ``state-FCC''$\mapsto$``output-FCC''
 (again ``with internal loop'' is extraneous if $\dim U,\dim Y<\infty$).
\end{theorem}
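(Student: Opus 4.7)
The plan is to mimic the proof of Corollary~\ref{cJointlySD} with the ``output'' tools (Theorem~\ref{QRCF-Uout} and Corollary~\ref{cUout-B1}) replacing the ``state'' ones (Corollaries~\ref{cOptExpstab1} and~\ref{cExpStabB1}), and then read off the doubly coprime factorization from formula (\ref{eS4A:4.4}).

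First I would establish joint external admissibility of $\qKF$ and $\qHG$ (where $\qHG$ is the primal object corresponding to the optimal pair $\qHGd$ for $\ALSd$ from Theorem~\ref{QRCF-Uout}(iii)). The pair $\qKF$ extends $\ALS$ to the WPLS $\ALSext$ (an extra output $\qK x_0+\qF u$). Dualizing via Lemma~\ref{lDualALS}, this amounts to giving $\ALSd$ an extra input. Since $\ALSd$ satisfies the output-FCC and $\qHGd$ is its optimal pair, Corollary~\ref{cUout-B1} (applied in the dual picture, in the spirit of the proof of step~$2^\circ$ of Corollary~\ref{cJointlySD}) produces an interaction operator $\qE$ such that the extended dual pair $[\qHd\,\qGd\,|\,\qEd]$ is SOS-stabilizing. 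Dualizing back, $\qHG$ together with $\qE$ forms an output-injection pair that is jointly admissible with $\qKF$, i.e.~(\ref{eJointly1}) is a WPLS. Both maps $\qA_L$ and $\qA_{\tilde L}$ are stable because $\qKF$ was output-stabilizing for $\ALS$ and (dually) $\qHG$ was for $\ALS$, so $\ALS$ is externally jointly stabilizable and detectable.

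Once (\ref{eJointly1}) is a WPLS with both closed loops stable, the doubly coprime factorization of $\qD$ is obtained directly from formula (\ref{eS4A:4.4}), exactly as recalled in the text below Corollary~\ref{cJointlySD} (this is Theorem~4.4 of~[S98a], whose derivation is formal and uses only the WPLS feedback calculus of Lemma~\ref{lALSL0}). For the ``moreover'' claim, let $\qKF$ be any SOS-stabilizing state-feedback pair for $\ALS$; applying the above construction with this $\qKF$ in place of the optimal one (the dual part still uses Corollary~\ref{cUout-B1}, which is what provides the universal compatibility with any admissible extension) yields the same $\cG\TIC$ factorization (\ref{eS4A:4.4}), and reading off the top row of its inverse gives $\tilde\qX,\tilde\qY\in\TIC$ with $\tilde\qX\qM-\tilde\qY\qN=I$, i.e., $\qN,\qM$ are right coprime; the I/O case is identical.

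For the final equivalence, I would copy the proof of Corollary~\ref{cJointlySD} verbatim with the substitutions ``exponential''$\mapsto$``external'' and ``state-FCC''$\mapsto$``output-FCC'': (i)$\Rightarrow$(iii)$\Rightarrow$(ii) is immediate from the definitions; (ii)$\Rightarrow$(i) is precisely what steps one and two above have just proved; (i)$\Leftrightarrow$(v) transfers from Theorem~7.2.4 of~[M02] without change. The hardest step is the joint admissibility in step one: a generic SOS-stabilizing $\qKF$ and a generic output-stabilizing $\qHG$ need not be compatible (just as in Corollary~\ref{cExpStabB1}, where an arbitrary admissible $K_1$ need not tolerate an unrelated $B_2$), so the whole argument relies crucially on the ``universal compatibility'' of the Theorem~\ref{QRCF-Uout} optimal pair established in Corollary~\ref{cUout-B1}; once that is in hand, everything else is a routine rewriting of the known state-FCC/exponential version.
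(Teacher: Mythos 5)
Your overall architecture matches the paper's: choose the interaction operator $\qE$ via Corollary~\ref{cUout-B1} (in the dual picture) to make the joint system (\ref{eJointly1}) a WPLS, then read off the d.c.f.\ from (\ref{eS4A:4.4}). But there is a genuine gap at the step where you assert that ``both closed loops are stable because $\qKF$ was output-stabilizing for $\ALS$ and (dually) $\qHG$ was.'' Corollary~\ref{cUout-B1} only gives you SOS-stability of \emph{one} loop, $(\ALStotal)_L$ (it is contained in the closed-loop system of the extended plant to which the corollary was applied). It does \emph{not} tell you that the \emph{other} loop $(\ALStotal)_{\tL}$, closed with that same $\qE$, is stable: the entries $\qE_{\tL},\qF_{\tL}$ of the second matrix in (\ref{eS4A:4.4}) involve $\qE$ fed through the $\tL$-loop, and their stability is not a consequence of $\qHG$ being output-stabilizing for $\ALS^\rmd$ alone. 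The paper has to work for this: it first extracts left coprimeness of $\tqN,\tqM$ from the formal inverse identity, completes the r.c.f.\ and l.c.f.\ to a d.c.f.\ via Lemma~4.3(iii) of [S98a], and only then uses the \emph{uniqueness} of the inverse in $\TIC_\infty(U\times Y)$ to force $\qF_\tL,\qE_\tL\in\TIC$, i.e.\ to conclude that $(\ALStotal)_\tL^\rmd$ is SOS-stable. (Incidentally, your remark that $\qA_L$ and $\qA_{\tL}$ are stable is both unsupported and beside the point: in the $\dUout$ setting the semigroups need not be stable, which is exactly why the conclusion is ``externally'' rather than ``exponentially'' jointly stabilizing.)

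A second, related omission: even after both loops are SOS-stable you still owe the \emph{external} stability, i.e.\ that the cross maps $\qB_L,\qH_L,\qC_\tL,\qK_\tL$ from initial states and to states are stable. The paper proves this by explicit identities such as $\qB_L=\qB_\tL\qM-\qH_\tL\qN$ and $\qH_L\tqM=\qH_\tL+\qB_L\qE_\tL$, finishing with the left coprimeness of $\tqM,\tqN$; nothing in your plan produces these. Your treatment of the ``moreover'' claim and of the final equivalence is essentially right in outline (it mirrors the paper's $3^\circ$, $6^\circ$, $7^\circ$), but both rest on the two missing steps above, so as written the proposal does not yet close.
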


(The proof is given on p.~\pageref{pageproof-JointlyUout}.
A system is {\em externally stable} if its components are stable
 except possibly the semigroup.
Thus, the two pairs are {\em jointly externally stabilizing}
 if there exists $\qE\in\TIC_\infty(Y,U)$ s.t.\ (\ref{eJointly1}) is a WPLS, 
 and $(\ALStotal)_L$ and $(\ALStotal)_\tL^\rmd$ are 
 externally stable (i.e., their components, except possibly $\qA_L,\qA_\tL$, 
 are stable).)

In particular, 
 the $\TIC(U\times Y)$ %
 maps in (\ref{eS4A:4.4})
  are stable and the inverses of each other.

In the theorem, one can replace $\qKF$ by any other SOS-stabilizing pair
 (or by any other I/O-stabilizing pair if  $(\ALStotal)_L$
   is required to be merely I/O-stable,
   i.e., to have its I/O map in $\TIC$), %
 as one observes from the proof.

We finish this section %
 by giving ``generalizations'' of  Corollaries \ref{cOptExpstab1}
 and~\ref{cExpStabB1} and further observations.
The following result shows that the optimization over 
 a typical domain of optimization 
 can be completely reduced to the optimization of a SOS-stable
 system over $\L^2(\R_+;U)$
 (a similar claim on partial control is given in Proposition~\ref{pExKFstabB1}):
\begin{proposition}[$\gUstar:$ FCC$\IFF$stabilizable]\label{pExKFstab} %
Assume that $\vartheta=0$ and $\sZ=\L^2(\R_+;\tY)$. 
Then the FCC is satisfied iff there is a state-feedback pair
 $\qKF$ s.t.\ $\qKClL x_0\in\gUstar(x_0)\ \all x_0\in H$.
Assume the FCC and choose $\qKF$ as in the proof.

{\bf (a)}
Then $\qKClL x_0 +\qM \uc\in\gUstar(x_0) \IFF \uc\in\L^2(\R_+;U)\ \all x_0\in H\ \all \uc$;
 in particular, $\qKF$ is SOS-stabilizing,
 $\qX\in\cG\BL(\gUstar(0),\L^2(\R_+;U))$.

{\bf (b)}
If $\gUstar=\dUout$ (resp.\ $\gUstar=\dUexp$),
 then $\qN,\qM$ are q.r.c.\ (resp.\ $\ALSClL$ is exponentially stable),
 and also the pair of Theorem~\ref{QRCF-Uout} 
  (resp.\ Corollary~\ref{cOptExpstab1})
  satisfies (a)--(d). 

{\bf (c)}
The system
 $\ALS_2:=\qABCDClL$ is
 [positively] $J$-coercive over $\dUout^{\ALS_2}=\L^2(\R_+;U)$
 iff $\ALS$ is  [positively] $J$-coercive over $\gUstar$.

{\bf (d)}
A control $\uc$ is $J$-critical for $x_0$, $\ALS_2$ and $J$ over $\dUout^{\ALS_2}$
 iff $u:=\qKClL x_0+\qM \uc$ is $J$-critical for $x_0$, $\ALS$ and $J$
 over $\gUstar$.
Moreover, $\cJ(x_0,u)=\cJ_\ClL(x_0,\uc):=\p{\yc,J\yc},\ \yc:=\qCClL x_0+\qDClL \uc\ \all x_0,\uc$,
 hence $\Ric=\Ric_2$
 (if either, hence both exist). %
\end{proposition}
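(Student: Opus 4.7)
The strategy is to reduce the problem to the $\dUout$-problem for a suitably augmented WPLS, and then invoke Theorem~\ref{QRCF-Uout}.

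First I would set up the augmentation. Since $\vartheta=0$ and $\sZ=\L^2(\R_+;\tY)$, membership $u\in\gUstar(x_0)$ is equivalent to $u\in\L^2(\R_+;U)$ together with $\bsm{\qC\\\qQR}x_0+\bsm{\qD\\\qRR}u\in\L^2(\R_+;Y\times\tY)$. Thus $\gUstar(x_0)=\dUout^{\ALS'}(x_0)$, where $\ALS':=\bsysbm{\qA\|\qB\crh \sbm{\qC\\\qQR}\|\sbm{\qD\\\qRR}}$ is a WPLS on $(U,H,Y\times\tY)$; this last assertion follows directly from Standing Hypothesis~\ref{shgUstar} with the special form of $\sZ$ and the boundedness of $\qQR,\qRR$ into $\L^2(\R_+;\tY)$. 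In particular, the FCC for $\gUstar$ is the output-FCC for $\ALS'$.

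Applying Theorem~\ref{QRCF-Uout} to $\ALS'$ produces the desired equivalence with a SOS-stabilizing pair $\qKF$ for $\ALS'$ satisfying the q.r.c.f.\ and normalization properties. Since $\ALS'$ and $\ALS$ share $\qA,\qB$, such a $\qKF$ is automatically a state-feedback pair for $\ALS$ with $\qKClL x_0\in\gUstar(x_0)$ for every $x_0$; conversely, the control $u=\qKClL x_0$ witnesses the FCC if such a pair exists. Part~(b) is then routine: for $\gUstar=\dUout$ the system $\ALS'$ coincides with $\ALS$ (the duplicated output row is harmless), so the pair of Theorem~\ref{QRCF-Uout} itself qualifies, while for $\gUstar=\dUexp$ one invokes Corollary~\ref{cOptExpstab1} to obtain an exponentially stabilizing pair (which automatically makes $\qKClL x_0\in\dUexp(x_0)$).

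For parts (a), (c), (d) I would fix such a $\qKF$ and use Lemma~\ref{luc}, which tells us that $u=\qKClL x_0+\qM\uc$ yields the same state and output as $\uc$ applied to $\ALSClL$. Since $\qKClL x_0\in\gUstar(x_0)$ and $\qM$ maps $\L^2(\R_+;U)$ into $\gUstar(0)$ (because $\ALSClL$ with the augmented output of $\ALS'$ is SOS-stable), we get the forward inclusion in (a); conversely $\uc=-\qK x_0+\qX u$ from Lemma~\ref{luc} shows $\qX\in\BL(\gUstar(0),\L^2(\R_+;U))$ is a bounded inverse of $\qM$, because $\qX\in\cG\TIC_\infty(U)$. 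This establishes the bijection $\uc\mapsto u$ between $\L^2$ and $\gUstar(x_0)$, which is affine, and $y=\yc$ along it. Parts (c) and (d) then follow mechanically: the cost identity $\cJ(x_0,u)=\p{y,Jy}=\p{\yc,J\yc}=\cJ_\ClL(x_0,\uc)$ transforms $\PTO=\qD^*J\qD$ on $\gUstar(0)$ into $\qDClL^*J\qDClL$ on $\L^2(\R_+;U)$ by conjugation with the isomorphism $\qM$, yielding (c); criticality transfers by the chain rule along the affine isomorphism, yielding the equivalence in (d); and $\Ric=\Ric_2$ falls out of Theorem~\ref{ALSopt} applied on both sides.

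The main technical obstacle will be confirming that $\ALS'$ really is a WPLS (i.e., that $\sbm{\qQR\\\qRR}$ combined with $\sbm{\qC\\\qD}$ fits Definition~\ref{dWPLS0} under the hypothesis $\sZ=\L^2(\R_+;\tY)$) and checking carefully that the SOS-stabilizing pair extracted from Theorem~\ref{QRCF-Uout} for $\ALS'$ yields an isomorphism $\qX:\gUstar(0)\to\L^2(\R_+;U)$ with the norm on $\gUstar(0)$ from Lemma~\ref{lgUnorm}; everything else reduces to bookkeeping with Lemma~\ref{luc} and the cost identity.
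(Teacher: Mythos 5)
Your overall route is the same as the paper's: the paper likewise reduces the claim to Theorem~\ref{QRCF-Uout} applied to an augmented system $\tALS$ with observation/feedthrough rows $(\qC,\qD)$, $(\qQR,\qRR)$ and $(0,I)$, for which $\dUout^{\tALS}=\gUstar$, and then derives (a)--(d) from Lemma~\ref{luc}; your $\ALS'$ differs only in omitting the harmless third row, which the proof of Theorem~\ref{QRCF-Uout} re-inserts internally anyway. The forward direction of (a), and parts (b)--(d) via the affine bijection $\uc\mapsto u$ and the cost identity, are handled as in the paper.

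There is, however, one step whose justification as written does not close: the converse half of (a), i.e.\ $u\in\gUstar(x_0)\THEN\uc\in\L^2(\R_+;U)$. You assert that $\qX\in\BL(\gUstar(0),\L^2(\R_+;U))$ ``because $\qX\in\cG\TIC_\infty(U)$''. That is a non sequitur: membership in $\cG\TIC_\infty(U)$ only gives boundedness on $\L^2_\omega$ for some possibly large $\omega$, and says nothing about $\qX$ mapping the subspace $\gUstar(0)\sub\L^2$ back into $\L^2$. Indeed the implication is false for a general admissible pair: in Example~\ref{exaUoutUexpstab} the exponentially stabilizing pair has $\hqX(s)=(s+1)/(s-1)\in\cG\H^\infty_\infty$, yet $\qX$ does not map $\dUout(0)=\L^2$ into $\L^2$. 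The property is special to the pair produced by Theorem~\ref{QRCF-Uout}, and must be extracted from it: either invoke Lemma~\ref{lSgg0}(c) as the paper does, which yields $\qX\in\cG\BL(\gUstar(0),\L^2(\R_+;U))$ from the identity $\cJ(0,u)=\p{\qX u,S\qX u}$ with $S\gg0$, or use the quasi--right coprimeness supplied by Theorem~\ref{QRCF-Uout}(iii): for $u\in\gUstar(x_0)$ one has $\qM\uc=u-\qKClL x_0\in\L^2$ and (by SOS-stability of the closed-loop observation map of $\ALS'$) also $\qN'\uc\in\L^2$, whence $\uc\in\L^2$ by q.r.c.\ applied to $\uc\in\L^2_\omega(\R_+;U)$. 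You do flag this point as ``the main technical obstacle'', but the argument you actually sketch for it is the one that fails, so this is the piece that must be replaced.
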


This shows that the inputs $u\in\gUstar(x_0)$
 correspond 1-1 to the stable inputs to the stabilized system.
Sometimes this allows one to reduce the problem to
 Theorem~\ref{SpF1} or other results for the stable case.
This is particularly useful when one can show that the smoothness
 is preserved in the uniformly positive case (cf.\ Section~\ref{sMTIC}),
 even if the original problem were indefinite.
However, often one prefers to use the original data instead of $\ALS_2$.

\begin{proof}[Proof of Proposition~\ref{pExKFstab}:]\label{pageproof-pExKfstab}
$1^\circ$ {\em ``Iff'', SOS, $\qX$, q.r.c.:}
Define $\tALS$ by setting $\tqA:=\qA,\ \tqB:=\qB,\
 \tqC:=\tbm{\qC\cr \qQR\cr 0},\ 
 \tqD:=\tbm{\qD\cr \qRR\cr I}$.
Since, obviously, $\dUout^{\tALS}=\gUstar$,
 we obtain the equivalence from Theorem~\ref{QRCF-Uout}
 (whose proof shows that $\tRic\ge0$, $\tS=I$),
 by which $\tALSClL$ is also SOS-stable,
 hence so is $\ALSClL$ (being contained in $\tALSClL$).

By Lemma~\ref{lSgg0}(c), $\qX\in\cG\BL(\gUstar(0),\L^2(\R_+;U))$
 and $\tqDClL$ and $\qM$ are q.r.c., 
 hence so are $\tbm{\qN\cr \qRRClL}$ and $\qM$,
  because $\tqDClL=\tbm{\qN\cr \qRRClL\cr \qM}$
 (hence so are $\qN$ and $\qM$ if $\gUstar=\dUout$ so that $\qRRClL=\qN$).

$2^\circ$ {\em ``$\IFF$'':}
Given $\uc\in\Lloc^2(\R_+;U)$, define $u$ as in Lemma~\ref{luc}.
If $\uc\in\L^2$, then $u=\tyc\tbm{0\cr 0\cr I}\in\L^2$.
If $u\in\gUstar(x_0)$, then $u=\qKClL x_0+\qM \uc$,
 hence $\qM\uc\in\gUstar(0)$, by Lemma~\ref{lgUstar},
 hence $\uc=\qX\qM\uc\in\L^2$.

(b) The ``q.r.c.'' claim was proved in $1^\circ$.
If $\gUstar=\dUexp$,
 then $\tALSClL\sbm{I\cr0}$ is exponentially
 stable, by Theorem~\ref{ALSopt}, hence so is $\tALSClL$,
 by Lemma~\ref{lExpStable}.
For the pair of the theorem or the corollary, the proofs are
 similar than to that above (and mostly given in Theorem 8.4.5 of [M02]).

(c)\&(d) These follow easily from Lemma~\ref{luc} and
 the claims on $\qX$ and $\uc$ in (a). %
\end{proof}

If $\ALS$ is formally stabilizable through the first input, then
 it is (state-feedback) stabilizable through the first input:
\begin{proposition}[$\gUstar$: stabilizable through $B_1$]\label{pExKFstabB1} %
Assume that $\vartheta=0$ and $\sZ=\L^2(\R_+;\tY)$
 and that $\qKF$ is as in Proposition~\ref{pExKFstab}.
If $\tsysbm{\qA\|\qB&\qH\crh \qC\|\qD&\qG\cr \qQR\|\qRR&\qT}$
 is a WPLS (for some $\qH,\qG,\qT$),
 then $\qKF$ can be extended as in Corollary~\ref{cUout-B1},
\end{proposition}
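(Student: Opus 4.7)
The plan is to reduce to Corollary~\ref{cUout-B1} by the augmentation used in the proof of Proposition~\ref{pExKFstab}. Namely, form an augmented WPLS $\ALS'$ on $(U\times W, H, Y\times\tY\times U)$ by appending a third output row with observation $0$ and feedthrough $\bbm{I & 0}$ (feeding through only the $U$-input) to the given WPLS $\tsysbm{\qA\|\qB&\qH\crh \qC\|\qD&\qG\cr \qQR\|\qRR&\qT}$. The extra row is trivially admissible, so $\ALS'$ is a WPLS. Moreover, the input $\sbm{u_1\cr 0}$ produces an $\L^2$-output for $\ALS'$ iff $\qC x_0 + \qD u_1 \in \L^2$, $\qQR x_0 + \qRR u_1 \in \sZ$ and $u_1 \in \L^2$, i.e., iff $u_1\in\gUstar(x_0)$; hence the first input-block of $\ALS'$ satisfies the output-FCC by hypothesis.

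I next apply Corollary~\ref{cUout-B1} to $\ALS'$, viewing the first input-block as $U$ and the second as $W$. The Theorem~\ref{QRCF-Uout}(iii) pair for the $U$-input of $\ALS'$ minimizes a quadratic cost which agrees, up to an irrelevant constant factor on $\|u\|^2$, with the cost $\|y\|^2 + \|\qQR x_0 + \qRR u\|_\sZ^2 + 2\|u\|^2$ used in the proof of Proposition~\ref{pExKFstab} to construct $\qKF$. Hence the two pairs coincide modulo the $\cG\BL(U)$-freedom from Lemma~\ref{lAllqKF}; fixing the representative accordingly, Corollary~\ref{cUout-B1} delivers $\qE\in\TIC_\infty(W,U)$ such that $[\tqK|\tqF]:=\ssysbm{\qK\|\qF&\qE\cr 0\|0&0}$ is a SOS-stabilizing state-feedback pair for $\ALS'$.

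SOS-stability of the closed-loop $\ALS'$ amounts to stability of every component of its closed-loop observation, feedthrough and feedback maps; in particular the sub-blocks corresponding to $\qC, \qD, \qG, \qQR, \qRR, \qT$ and the full feedback $\tqK, \tqF$ are all stable. This is exactly the ``extended as in Corollary~\ref{cUout-B1}'' conclusion asserted by Proposition~\ref{pExKFstabB1}. The main delicate point is the identification of the two versions of $\qKF$: this is handled by Lemma~\ref{lAllqKF}, since left-multiplication of the pair by a unit $E\in\cG\BL(U)$ leaves $\qKClL$ (and therefore every closed-loop stability statement) invariant, so we may always select the Corollary~\ref{cUout-B1} representative to coincide with the pair supplied by Proposition~\ref{pExKFstab}.
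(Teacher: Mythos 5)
Your proposal follows essentially the same route as the paper: augment the system of Proposition~\ref{pExKFstab} (append the zero observation row with feedthrough $\bbm{I&0}$), note that the output-FCC for the first input block of the augmented system is exactly the FCC for $\gUstar$, and apply Corollary~\ref{cUout-B1}. The identification of the two pairs is actually even easier than you make it: both are the Theorem~\ref{QRCF-Uout}(iii) pair for the \emph{same} augmented system $\tALS$, so there is no cost discrepancy to reconcile; invoking Lemma~\ref{lAllqKF} is harmless but unnecessary.

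The one point you pass over is the part of Corollary~\ref{cUout-B1}'s conclusion that the paper's proof is actually devoted to: the quasi--right coprimeness of the $\tqN,\tqM$ belonging to the system \emph{in the proposition} (i.e., of $\tbm{\qD&\qG\cr \qRR&\qT}\tqM$ and $\tqM$), as opposed to those of your augmented $\ALS'$. Corollary~\ref{cUout-B1} gives you q.r.c.\ of $\tbm{\qD&\qG\cr \qRR&\qT\cr I&0}\tqM$ and $\tqM$; you still need to observe that deleting output rows of the numerator while keeping the denominator preserves quasi--right coprimeness. This is a one-line check (if $\tbm{\qD&\qG\cr \qRR&\qT}\tqM u\in\L^2$ and $\tqM u\in\L^2$, then also $\bbm{I&0}\tqM u\in\L^2$, so the full augmented numerator applied to $u$ is in $\L^2$ and q.r.c.\ of the augmented pair yields $u\in\L^2$), but it is the only nontrivial content of the paper's proof and should be stated.
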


Note that $\tqKClL x_0+\tqM \sbm{\uc\cr w}=\sbm{u_1\cr w}$,
 and $w=0 \TTHEN (u_1\in\gUstar(x_0)\IFF \uc\in\L^2(\R_+;U))$.
Obviously, $\sbm{u_1\cr w}\in\dUout^{\tALS}(x_0)
 \IFF \sbm{\uc\cr w}\in\L^2$ %
 (this is useful for $\H^\infty$ problems, 
  whether over $\dUout$, $\dUexp$ or something else).
To be brief, we shall postpone the obvious further equivalents of (a)--(d) of 
 Proposition~\ref{pExKFstabB1} to an $\H^\infty$ article. %
 
\begin{proof}[Proof of Proposition~\ref{pExKFstabB1}:]\label{pageproof-pExKFstabB1}
Apply Corollary~\ref{cUout-B1} to the $\tALS$ of the proof
 of Proposition~\ref{pExKFstab}.
Then $\tqN=\tbm{\qD&\qG\cr \qRR&\qT\cr I&0}\tqM$ and $\tqM$
 are q.r.c., hence so are
 $\tbm{\qD&\qG\cr \qRR&\qT}\tqM$ and $\tqM$
 (which are the ``$\tqN,\tqM$'' of Corollary~\ref{cUout-B1}
  applied to the system in Proposition~\ref{pExKFstabB1}).
\end{proof}

It is known that a matrix-valued transfer function has a stabilizing 
 dynamic feedback controller iff it has a d.c.f.
Using Corollary~\ref{cQRCFiff}, we can extend ``only if''
 to operator-valued proper transfer functions:
\begin{lemma}\label{lDFCdcf} %
If $\qD$ has a stabilizing dynamic feedback controller 
 (without an internal loop),
  i.e., %
    $\sbm{I& -\qT\cr -\qD&I}^{-1}\in\TIC$ for some $\qT\in\TIC_\infty(Y,U)$,
 then $\qD$ has a d.c.f.
\end{lemma}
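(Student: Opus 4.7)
The plan is to exploit the explicit formula for the $2\times 2$ block inverse and then reduce the problem to our d.c.f.\ existence result, Theorem~\ref{JointlyUout}. First I would write $\sbm{I&-\qT\\-\qD&I}^{-1} = \sbm{\qM & \qZ \\ \qN & \tqM}$, where from the block-inverse formula $\qM := (I-\qT\qD)^{-1}$, $\tqM := (I-\qD\qT)^{-1}$, $\qN := \qD\qM = \tqM\qD$, and $\qZ := \qM\qT = \qT\tqM$. By hypothesis all four maps lie in $\TIC$, and the Bezout-like identities $\qM - \qT\qN = I$ and $\tqM - \qN\qT = I$ hold. Since $\qM^{-1} = I - \qT\qD \in \TIC_\infty(U)$ and $\tqM^{-1} = I - \qD\qT \in \TIC_\infty(Y)$, this already produces a right factorization $\qD = \qN\qM^{-1}$ and a left factorization $\qD = \tqM^{-1}\qN$ of $\qD$ (with the common ``numerator'' $\qN$).

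Next I would transfer these factorizations to the realization level. By Corollary~\ref{cQRCFiff}, since $\qD$ has a right factorization, it admits a realization $\ALS$ satisfying the output-FCC. Taking the dual of the assumed block identity (transpose and reflect) shows that $\qT^\rmd$ is a stabilizing dynamic controller for $\qD^\rmd$ in exactly the same sense, so Corollary~\ref{cQRCFiff} applied to $\qD^\rmd$ gives that $\qD^\rmd$ also admits a realization satisfying the output-FCC; equivalently, $\qD$ admits an estimatable realization. Note that a direct Bezout identity of the form $\tqX\qM - \tqY\qN = I$ with $\tqX,\tqY \in \TIC$ is \emph{not} immediately available from the identities above, since the natural choice $\tqX = I$, $\tqY = -\qT$ involves the unstable controller $\qT \in \TIC_\infty \setminus \TIC$; this is why a mere right factorization (and hence a q.r.c.f.) is not enough, as Example~\ref{exaqrcfNotrc} shows.

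The main obstacle is that Theorem~\ref{JointlyUout} requires a \emph{single} realization $\ALS$ of $\qD$ for which both $\ALS$ and $\ALSd$ satisfy the output-FCC. To engineer such an $\ALS$, I would start from any stable WPLS realization $\ALS_0$ of the $\TIC$ matrix $\ssysbm{\qM & \qZ \cr \qN & \tqM}$, and then ``open'' its two feedback loops in the sense of Lemma~\ref{lALSL0} and Remark~\ref{routputfeedback=statefeedback}: applying the static output feedback whose I/O map inverts $\sbm{I&-\qT\\-\qD&I}$ converts $\ALS_0$ into an extended WPLS that realizes $\qD$, and within which $\qT$ plays the role of an output-stabilizing feedback while $\qT^\rmd$ plays the role of a detecting output-injection for the same underlying state space. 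This simultaneously yields the output-FCC for $\ALS$ and for $\ALSd$, after which Theorem~\ref{JointlyUout} delivers a doubly coprime factorization of $\qD$ through the explicit formula~(\ref{eS4A:4.4}). The delicate point --- and where the real work lies --- is verifying that this loop-opening procedure really does produce admissible state-feedback and output-injection pairs on a \emph{common} realization, i.e.\ that the two complementary stabilizing/detecting structures extracted from $\qT$ can be realized jointly as WPLSs rather than only separately.
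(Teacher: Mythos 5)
Your opening reduction is fine: the block inverse gives $\qM=(I-\qT\qD)^{-1}$, $\tqM=(I-\qD\qT)^{-1}$, $\qN=\qD\qM=\tqM\qD$ all in $\TIC$, hence right and left factorizations of $\qD$. But from that point on your route diverges from the paper's and, more importantly, it is not completed. The step you yourself flag as ``where the real work lies'' is a genuine gap, and it is exactly the hard part of the problem. Having one realization of $\qD$ with the output-FCC and another whose \emph{dual} has the output-FCC is strictly weaker than having a single realization with both: the paper points out (in the discussion around Example~\ref{exaqrcfNotrc} and Theorem~\ref{DCFiff}(iv)) that the map $\qN\qM^{-1}=\qM^{-1}\qN$ of that example has realizations of each kind separately, yet has no d.c.f.\ at all. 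So the existence of a q.r.c.f.\ of $\qD$ and of $\qD^\rmd$ does not suffice; you must actually use the controller $\qT$ to tie the two sides together, and your loop-opening sketch never verifies that the resulting object is a WPLS realization of $\qD$ on which both FCCs hold (nor that the dynamic controller can be traded for the state-feedback/output-injection pairs that Theorem~\ref{JointlyUout} manipulates). Without that verification the argument does not close.

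The paper closes the loop differently and entirely at the I/O level, avoiding realizations. It observes that the hypothesis gives right factorizations of \emph{both} $\qD$ and the controller: $\qD=\qN_0\qM_0^{-1}$ with $\qM_0=(I-\qT\qD)^{-1}$ and $\qT=\qY_0\qX_0^{-1}$ with $\qX_0=(I-\qD\qT)^{-1}$. Corollary~\ref{cQRCFiff} then upgrades each to a q.r.c.f., $\qD=\qN\qM^{-1}$ and $\qT=\qY\qX^{-1}$, and Lemma~7.1.5(b) of [M02] (whose proof goes through verbatim for q.r.c.f.'s in place of r.c.f.'s) shows that $\sbm{\qM&\qY\cr \qN&\qX}\in\cG\TIC$, which is precisely a (joint) d.c.f.\ of $\qD$ and $\qT$. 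The structural point you missed is that the second q.r.c.f.\ to take is that of $\qT$, not of $\qD^\rmd$: coprimizing the controller is what supplies the missing Bezout-type complement that your $\tqX=I$, $\tqY=-\qT$ attempt correctly identifies as unstable. If you want to salvage your realization-based route, you would essentially have to reprove the content of that joint-invertibility lemma anyway.
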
 

It follows that all stabilizing dynamic feedback controllers
 for $\qD$ are given
 by the standard
 Youla parametrization formula
 (p.~290 of [M02]).
Conversely, if $\qD$ has a d.c.f.\ 
 and $\dim U,\dim Y<\infty$,
 then $\qD$ has a {\em stable} stabilizing dynamic feedback controller,
 by Corollary~6.6 of [Q03].
The case for general $U,Y$ is still open (whether ``stable'' removed or not).
We have named the above result a lemma, since it is a special
 case of Theorem~\ref{DCFiff} (and needed for its proof).

\begin{proof} %
We have the right factorizations $\qD=\qN_0\qM_0^{-1},\ \qT=\qY_0\qX_0^{-1}$,
 where $\qM_0=(I-\qT\qD)^{-1}$, $\qX_0=(I-\qD\qT)^{-1}$, by (7.5) of [M02].
Therefore, there are q.r.c.f.'s $\qD=\qN\qM^{-1},\ \qT=\qY\qX^{-1}$,
 by Corollary~\ref{cQRCFiff}.
By Lemma 7.1.5(b) of [M02], $\sbm{\qM&\qY\cr \qN&\qX}\in\cG\TIC$,
 i.e., $\qD$ and $\qT$ have a (joint) d.c.f. 
(We used above the fact that the proof of Lemma 7.1.5 (and 6.6.6)
 obviously applies to q.r.c.f.'s in place of r.c.f.'s.)
\end{proof}

This leads to the following equivalence
 for any $\qD\in\TIC_\infty(U,Y)$ 
 (the terminology will be explained below):
\begin{theorem}[D.c.f.$\IFF$...]\label{DCFiff} %
The following are equivalent for any $\qD\in\TIC_\infty(U,Y)$:
 \begin{itemlist} %
   \item[(i)] $\qD$ has a d.c.f. 
   \item[(ii)] $\qD$ has a r.c.f. %
   \item[(iii)] $\qD$ has a l.c.f. %
   \item[(iv)] $\qD$ has a realization $\ALS$ s.t.\ %
 the output-FCC holds for $\ALS$ and $\ALS^\rmd$.
   \item[(v)] $\qD$ has a stabilizable and detectable realization. %
   \item[(vi)] $\qD$ has a jointly stabilizable and detectable realization.
   \item[(vii)] $\qD$ has a stabilizing controller with internal loop. 

   \item[(viii)] $\qD$ has a stabilizing canonical controller. %

   \item[(ix)] Some realization of $\qD$ has  %
 a stabilizing controller with internal loop.  %
   \item[(x)] $\sbm{\qD&0\cr 0&I}$ has a d.c.f.\
 (or r.c.f.\ or l.c.f.). %
 \end{itemlist} %

If $\dim U,\dim Y<\infty$,
 then we have three more equivalent conditions:
\begin{itemlist}
   \item[(xi)] $\qD$ has a stabilizing controller.
   \item[(xii)] Some realization of $\qD$ has a stabilizing controller.
   \item[(xiii)] %
 $\hqD=FG^{-1}$
 with $F,G\in\H^\infty$, $F^*F+G^*G\ge\eps I$ on $\C^+$ for some $\eps>0$,
 $\det G\not\equiv0$.
\end{itemlist}

Given a d.c.f., all stabilizing controllers with internal loop for $\qD$
 are obtained from the standard Youla parameterization %
 $(\qY+\qM\qE)(\qX+\qN\qE)^{-1}$, where
 $\qE\in\TIC(Y,U)$ is arbitrary
 (the controller is proper iff $\qX+\qN\qE\in\cG\TIC_\infty(U)$).

In particular, any stabilizing controller with internal loop
 (for any well-posed map)
 is equivalent to a canonical controller.
\end{theorem}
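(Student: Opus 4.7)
The plan is to establish the equivalences by a star pattern around (i), supplemented by a small cycle $(\mathrm{ii})\Rightarrow(\mathrm{i})\Rightarrow(\mathrm{iii})\Rightarrow(\mathrm{i})$ and direct ties to realization-based conditions. The trivial part is easy: (i)$\Rightarrow$(ii), (iii), (x) follow at once from the definition of a d.c.f.\ (just drop the second Bezout identity, or take the block extension by $I$ on a second input/output channel). For (x)$\Rightarrow$(i) I would observe that a d.c.f.\ (or r.c.f., l.c.f.) of $\mathop{\rm diag}(\qD,I)$ has a block-triangular structure whose $(1,1)$-blocks yield a d.c.f.\ of $\qD$.

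For the hard half of the star I would argue (ii)$\Rightarrow$(i) as follows: an r.c.f.\ in the strong Bezout sense supplies $\tqX,\tqY\in\TIC$ with $\tqX\qM-\tqY\qN=I$. Then $\qT:=\tqY\tqX^{-1}$ (when defined; otherwise a Cayley/perturbation adjustment, using Lemma \ref{lrcf}(a1) applied to $\tqX,\tqY$ to ensure $\tqX\in\cG\TIC_\infty$ after a small $\qE$-shift) yields a stabilizing controller for $\qD$ \emph{without} internal loop, whence Lemma \ref{lDFCdcf} produces a d.c.f. Condition (iii) is handled by the dual argument (apply the above to $\qD^{\rm d}$ and take adjoints, using Lemma \ref{lrcf}(e)). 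Together with the trivial arrows this settles (i)$\Leftrightarrow$(ii)$\Leftrightarrow$(iii), and in particular $\qD$ has a r.c.f.\ iff it has a l.c.f.\ iff it has a d.c.f.

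Next I would tie in the realization conditions. By Corollary \ref{cQRCFiff}, (ii) is equivalent to $\qD$ having some realization satisfying the output-FCC; dually, (iii) is equivalent to $\qD$ having some realization whose dual satisfies the output-FCC. Condition (iv) is \emph{a priori} stronger (single realization for both), but once d.c.f.\ is in hand one can take the realization canonically built from the d.c.f.\ (cf.\ Lemma 6.6.29 of [M02], used in Example \ref{exadimUinftyRCF}), which satisfies the output-FCC and whose dual does too. Thus (i)$\Rightarrow$(iv), and (iv)$\Rightarrow$(i) is exactly Theorem \ref{JointlyUout}. For (v)$\Leftrightarrow$(vi) use Corollary \ref{cJointlySD} (or rather its $\dUout$-variant asserted at the end of Theorem \ref{JointlyUout}); (vi)$\Rightarrow$(iv) is immediate and (iv)$\Rightarrow$(vi) comes again from Theorem \ref{JointlyUout}. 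For the controller conditions, (vii) and (viii) follow from (i) by the standard Youla parametrization $(\qY+\qM\qE)(\qX+\qN\qE)^{-1}$ applied to a d.c.f., and (ix) is implied by (vii); for (ix)$\Rightarrow$(i) one transfers the internal-loop controller to the transfer-function level and invokes an internal-loop version of Lemma \ref{lDFCdcf} (which is no harder than the no-internal-loop case, since after feedback one again obtains a factorization pair in $\TIC$). The Youla formula and the ``equivalence to a canonical controller'' at the end of the theorem are then a direct consequence.

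For the finite-dimensional addendum, (xi)$\Rightarrow$(xii) is trivial, (xii)$\Rightarrow$(ix) is trivial, and (i)$\Rightarrow$(xi) follows from Corollary~6.6 of [Q03] (quoted in the text): in finite dimensions a d.c.f.\ yields even a \emph{stable} stabilizing controller, a fortiori one without internal loop. For (xiii), Lemma \ref{lrcf}(a1) gives that in finite dimensions the Corona-type lower bound $F^*F+G^*G\geq\varepsilon I$ characterizes right coprimeness, so (xiii) is just (ii) restated via Theorem \ref{TransferFct1}. The main obstacle in the whole argument is the first step of the second paragraph, namely extracting a bona fide stabilizing controller (without internal loop) from a mere r.c.f.: one must ensure that after forming $\tqY\tqX^{-1}$ one actually lands in $\TIC_\infty$ with $\sbm{I&-\qT\\-\qD&I}^{-1}\in\TIC$. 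The remaining implications are then bookkeeping.
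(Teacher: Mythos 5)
There is a genuine gap at the load-bearing step of your plan, namely (ii)$\THEN$(i). You propose to read off a stabilizing controller \emph{without} internal loop from the Bezout identity as $\qT:=\tqY\tqX^{-1}$, patching up the case where $\tqX\notin\cG\TIC_\infty$ by a ``small $\qE$-shift''. That patch is precisely the Youla step $\qX\mapsto\qX+\qN\qE$, and whether some $\qE\in\TIC$ makes $\qX+\qN\qE\in\cG\TIC_\infty(U)$ is exactly the question the paper records as \emph{open} for general $U,Y$ (see the remark below Lemma~\ref{lDFCdcf}: a d.c.f.\ yields a proper stabilizing controller only when $\dim U,\dim Y<\infty$, via [Q03]). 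Lemma~\ref{lrcf}(a1) does not rescue you either, since its converse direction and the Tolokonnikov extension are only available for $\dim U<\infty$. The paper avoids this entirely: it goes (ii)$\THEN$(viii) by building a \emph{canonical controller with internal loop} $\sbm{0&\qY\cr I&I-\qX}$ directly from the r.c.\ pair (no invertibility of $\qX$ in $\TIC_\infty$ needed), then (vii)$\THEN$(x) by absorbing the internal loop into the augmented plant $\sbm{\qD&0\cr 0&I}$ (Lemma 7.2.6 of [M02]) so that Lemma~\ref{lDFCdcf} applies to the augmented map, and finally (x)$\THEN$(iv)$\THEN$(i) through realizations and Theorem~\ref{JointlyUout}. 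Your proposed shortcut (x)$\THEN$(i) by reading off the $(1,1)$-blocks of a d.c.f.\ of $\mathop{\rm diag}(\qD,I)$ is also unjustified, since an arbitrary d.c.f.\ of the augmented map has no reason to be block-triangular; the paper instead passes through a stabilizable and detectable realization and deletes the extra row and column.

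A secondary error: you assert that Corollary~\ref{cQRCFiff} makes (ii) equivalent to ``some realization satisfies the output-FCC''. That corollary characterizes \emph{quasi}--right coprime factorizations (equivalently, right factorizations), not r.c.f.'s; Example~\ref{exaqrcfNotrc} exhibits a $\qD$ with a q.r.c.f.\ (hence with output-FCC realizations) but no r.c.f.\ and no d.c.f., so the asserted equivalence is false. Your handling of (iv) partly insulates the argument from this, but the same conflation would make the dual statement about (iii) wrong as well. Finally, note that (xiii) is not literally ``(ii) restated'': it does not require $G^{-1}$ to be proper ($G\in\cG\TIC_\infty$), which is why the paper proves (xiii)$\THEN$(vii) by a Tolokonnikov extension to a controller \emph{with internal loop} rather than by identifying (xiii) with (ii).
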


(Any dimension of $I$ will do in (x).
In general (xi) and (xii) are sufficient
 and the Corona condition (xiii) necessary but not sufficient.)

We say that $\qO$ is a {\em stabilizing controller with internal loop}
 for $\qD$ if $\qO\in\TIC_\infty(Y\times \Xi,U\times\Xi)$
 for some Hilbert space $\Xi$ %
 and $(I-\qDo)^{-1}\in\TIC$,
 where $\qDo=\tbm{0&\qO_{11}&\qO_{12}\cr 
            \qD&0&0\cr 0&\qO_{21}&\qO_{22}}$.
Note from Figure~\ref{fILDF-stab}
  that $\qD^o_I:\tbm{u_L\cr y_L\cr \xi_L}  \mapsto \sbm{u\cr y\cr \xi}$,
  where $\qD^o_I:=(I-\qDo)^{-1}-I$; cf.\ Definition~\ref{lALSL0}
  with $L=I$.
Thus, $\qO$ is stabilizing iff the maps
 $\tbm{u_L\cr y_L\cr \xi_L}  \mapsto \sbm{u\cr y\cr \xi}$
 are well-posed and stable.

Condition (ix) is formally stronger: 
 it means (the existence of $\ALS$ and $\tALS$ for this fixed $\qD$ such)
 that all 25 maps from initial states and external inputs
 to states and outputs in Figure~\ref{fILDF-stab} %
 are stable
 (i.e., that $\ALS^o_I$ is stable, where $\ALS^o$ is given by (7.21) of [M02]).
See Section 7.2 of [M02] for further details.
\begin{figure} %
        \[
        \begin{picture}(80,108)(-40,-91)
        \thicklines
        \put(6,-15){ \framebox(48,31){
        $ \bsysm{\qA \| \qB\tau\crh 
        \qC \| \qD}
        $}}
        \put(65,15){\LARGE$\ALS$}

        \put(-103,-103){\LARGE$\tALS$}

        \put(-85,-88){ \framebox(82,55){
        $ \bsysm{\tqA \| \tqB_1\tau \!\!\!\! &\tqB_2\tau\crh 
        \tqC_1 \| \qO_{11}\phantom{} \!\!\!\! &  \qO_{12}\phantom{}\cr
        \tqC_2 \| \qO_{21}\phantom{} \!\!\!\! &  \qO_{22}\phantom{}}
        $}}

        \thinlines

        \put(-41,-7){\circle{4}} 
        \put(-52,-4){$\scriptstyle +$} 
        \put(-36,-4){$\scriptstyle +$} 

        \put(-41,-9){\vector (0,-1){23}}

        \put(8,-7){\vector (-1,0){47}}
        \put(-15,-4){$y$}

        \put(-112,-4){$y_L$}
        \put(-118,-7){\vector (1,0){75}}
        \put(0,-61){\vector (1,0){45}} %
        \put(10,-58){$u$}

        \put(47,-61){\circle{4}} 
        \put(36,-58){$\scriptstyle +$} 
        \put(52,-58){$\scriptstyle +$} 

        \put(76,-61){\vector (-1,0){27}} 
        \put(65,-58){$ u_L$}

        \put(47,-59){\vector (0,1){43}} %
        \put(22,-38){$x_0$}
        \put(19,-42){\vector (0,1){26}} 
        \put(19,-42){\line (1,0){8}} 

        \put(-15,12.5){$x$}
        \put(8,9.5){\vector (-1,0){130}}

        \put(0,-77){\vector (1,0){45}}
        \put(10,-74){$\xi$}

        \put(47,-77){\circle{4}} 
        \put(36,-74){$\scriptstyle +$} 
        \put(52,-74){$\scriptstyle +$} 

        \put(65,-74){$\xi_L$}
        \put(76,-77){\vector (-1,0){27}} 

        \put(47,-79){\line (0,-1){22}}
        \put(47,-101){\line (-1,0){60}}
        \put(-13,-101){\vector (0,1){12}}

        \put(-66,-101){$\tx_0$}
        \put(-69,-102){\vector (0,1){13}}

        \put(-112,-40){$\tx$}
        \put(-82.4,-44){\vector (-1,0){40}} %
        \end{picture}
        \]
        \caption{DF-controller $\tALS$ with internal loop for $\ALS\in\WPLS(U,H,Y)$}
        \label{fILDF-stab}
\end{figure}

If $\qY\in\TIC(Y,U)$ and $\qX\in\TIC(U)$ are r.c., then
 $\qO:=\sbm{0&\qY\cr I&I-\qX}$ 
 is called
 a {\em canonical controller}
 (see [CWW01] or [M02]);  %
 in [M02], the term {\em controller with a coprime internal loop}
 was used.
Sometimes we denote it by $\qY\qX^{-1}$,
 as in the Youla parameterization above.
(It is equivalent to $\tbm{0&I\cr \tqY&I-\tqX}$
 for certain l.c.\ $\tqY$ and $\tqX$.)
A (dynamic feedback) controller $\qO$ (resp.\ $\tALS$) with internal loop
 is {\em proper} or well-posed
 (i.e., ``with internal loop'' can be dropped)
 iff $I-\qO_{22}\in\cG\TIC_\infty$.
In that case we can redefine $\qO$ (resp.\ $\tALS$) so as to 
   have $\qO_{12},\qO_{21},\qO_{22}=0$
 (resp.\  $\qO_{12},\qO_{21},\qO_{22},\tqB_2,\tqC_2=0$),
  as in the classical definition of a controller.

The Youla parameterization covers all stabilizing controllers
 with internal loop in the sense that
 any other controller with internal loop defines
 the same closed-loop map $\sbm{u_L\cr y_L}\mapsto \sbm{u\cr y}$
 as exactly one of these
 (modulo $\sbm{\qY'\cr \qX'}=\sbm{\qY\cr \qX}\qE$ for some $\qE\in\cG\TIC$),
 although the maps from $\xi_L$ and to $\xi$ 
 (internal loops) may differ.
In particular, this parameterization contains all well-posed 
 stabilizing controllers.

Any map having a right factorization
 has a realization that satisfies the output-FCC,
 by Lemma 6.6.29 of [M02].
Thus, the map $\qN\qM^{-1}=\qM^{-1}\qN\in\TIC_1(\C)$ %
 of Example~\ref{exaqrcfNotrc}
 has realizations that satisfy the output-FCC
 and ones whose dual satisfies the output-FCC.
However, none of those realizations satisfies both,
 by (iv) and (ii) above.

\begin{proof}[Proof of Theorem~\ref{DCFiff}:] %
$1^\circ$ {\em We have (vi)$\IFF$(vi')$\IFF$(v)$\IFF$(iv)$\IFF$(i):}
By Theorem~\ref{JointlyUout}, we have (vi')$\IFF$(iv)$\THEN$(i),
 where we have added ``externally'' to (vi) to define (vi').
The equivalence of (i) and (vi) was established in Theorem 4.4 of [S98a].
The implications ``(vi')$\IF$(vi)$\THEN$(v)$\THEN$(iv) are obvious.

$2^\circ$ {\em (vii)$\THEN$(x):}
Assume (vii).
By Lemma 7.2.6 of [M02], %
 some $\qO\in\TIC_\infty$ (I/O-)stabilizes $\lqD:=\sbm{\qD&0\cr 0&I}$.
By Lemma~\ref{lDFCdcf}, (x) follows.

$3^\circ$ {\em (x)$\THEN$(iv):}
Apply ``(i)$\IFF$(v)'' to obtain a stabilizable and
 detectable realization of $\lqD$,
 and then remove the last row and column to satisfy (iv) for $\qD$.

$4^\circ$ {\em (i)$\THEN$(ii)$\THEN$(viii)$\THEN$(vii):}
Implications ``(i)$\THEN$(ii)'' and ``(viii)$\THEN$(vii)''
 are trivial,
 and ``(ii)$\THEN$(viii)'' is from Corollary 7.2.13(b) of [M02].

$5^\circ$ From the above we see that (i), (ii), (iv), (v), (vi), (vii),
 (viii) and (x) are equivalent.
By duality, we get ``(i)$\IFF$(iii)''.

$6^\circ$ {\em (vii)$\IFF$(ix):}
Implication ``(ix)$\THEN$(vii)'' is trivial.
Conversely, if (vii) holds, then $\qD$
 and $\qO$ (see $2^\circ$) have d.c.f.'s,
 hence (vi) holds to both of them.
Therefore, (ix) follows from Theorem 7.2.3(b)(3.) of [M02].

$7^\circ$ {\em (ii)$\THEN$(xiii)$\THEN$(vii):}
We have (ii)$\THEN$(xi) in general, 
 by Lemma \ref{lrcf}(a1).
Conversely, if $\dim U<\infty$ and (xi) holds
 (it tacitly requires that $G\in\H^\infty(\C^+;\BL(U))$),
 then $\sbm{G\cr F}$ can be extended to
 $R:=\tbm{G&\hqY\cr F&\hqX}\in\cG\H^\infty(\C^+;\BL(U\times Y))$,
 hence $\hqY\hqX^{-1}$ satisfies (vii) %
 (since $(I-\qD_I^o)^{-1}\in\TIC \IFF (\hqX-FG^{-1}\hqY)\in\TIC
 \IFF R\in\cG\H^\infty$). %
$8^\circ$ {\em (xi)\&(xii):}
Obviously, (xi) or (xii) implies (ix) or (vii).
Assume then (i). As noted below Lemma~\ref{lDFCdcf},
 (vii) holds even without ``with internal loop'';
 so does (ix) too, by $2^\circ$ (slightly modified).

$9^\circ$ {\em All stabilizing controllers:}
This follows from Theorem 7.2.14(ii) of [M02].

$10^\circ$ {\em Canonical controllers:}
The last claim follows now from Corollary 7.2.13(a1) of [M02].
\end{proof} %

The ``$\dUexp$-variant'' of Theorem~\ref{QRCF-Uout} is contained
 in Corollary~\ref{cOptExpstab1}
 except that (iii) must be dropped, by Example~\ref{exaUoutUexpstab}.
Similarly,  Theorem~\ref{DCFiff} has an $\dUexp$-variant:
\begin{cor}[Exponential d.c.f.]\label{cDCF-Uexp} %
The following are equivalent for any $\qD\in\TIC_\infty(U,Y)$:
 \begin{itemlist}
  \item[(i)] $\qD$ has an exponential d.c.f. %

  \item[(ii)] $\qD$ has an exponentially stabilizing 
 controller with internal loop. %

  \item[(iii)] $\qD$ satisfies the exponential version of
 any (hence all) of (i)--(x) of Theorem~\ref{DCFiff}. %

  \item[(iv)] $\qD$ has a realization that satisfies
  any (hence all) of (i)--(v) of Corollary~\ref{cJointlySD}. %
 \end{itemlist}\itemlistnoproof %
\end{cor}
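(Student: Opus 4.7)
The plan is to reduce all four conditions to the corresponding stable (non-exponential) statements by a uniform frequency shift, and then apply Theorem \ref{DCFiff} and Corollary \ref{cJointlySD}. Concretely, for any $\del > 0$ the map $u \mapsto \efn^{-\del\cdot} u$ induces an isometric isomorphism $\L^2_\omega \to \L^2_{\omega+\del}$ that intertwines $\TIC_\omega$ with $\TIC_{\omega+\del}$; at the transfer-function level this corresponds to $\hqD(s) \mapsto \hqD(s-\del)$. Applying this shift term-by-term carries a d.c.f.\ of $\qD$ in $\cG\TIC_{-\del}$ to an ordinary d.c.f.\ of the shifted I/O map $\qD_\del$ in $\cG\TIC$, and vice versa, as was already used for the analogous result about factorizations in Corollary \ref{cQRCF-Uexp}.

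Next I would verify that the remaining pieces of data all transform compatibly under this shift. An exponentially stabilizing controller with internal loop for $\qD$ becomes a stabilizing controller with internal loop for $\qD_\del$, because the well-posedness/stability condition $(I-\qDo)^{-1}\in \TIC_{-\del}$ turns into $(I-(\qD_\del)^o)^{-1}\in\TIC$. A realization $\ALS$ of $\qD$ likewise shifts to a realization $\ALS_\del$ of $\qD_\del$ with semigroup $\efn^{-\del t}\qA^t$, and the membership of $\qKF$ in $\TIC_{-\del}$-type spaces passes to $\TIC$-type spaces for $\ALS_\del$; in particular the exponential versions of the state-FCC/output-FCC for $\ALS$ translate precisely to the ordinary FCCs for $\ALS_\del$, because $u\in\L^2_{-\del}$ iff $\efn^{\del\cdot}u\in\L^2$ and analogously for $x$ and $y$. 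With this dictionary in place, the four equivalences follow by invoking Theorem \ref{DCFiff} and Corollary \ref{cJointlySD} on $\qD_\del$ and then shifting back.

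The one technical point is to be sure that a single $\del > 0$ can be chosen that witnesses the exponential version of the hypothesized condition; one then transfers through the equivalence for $\qD_\del$ and reads the conclusion back at exponential rate $\del$ (or smaller). Any sufficiently small $\del$ works because exponential stability, exponential stabilizability, exponential detectability and membership in $\TIC_{-\del}$ are all preserved under decreasing $\del$, so the same $\del$ can simultaneously certify every ingredient appearing on either side of the equivalence.

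The main obstacle is this last bookkeeping: checking that every structural object appearing in conditions (i)--(x) of Theorem \ref{DCFiff} and (i)--(v) of Corollary \ref{cJointlySD} --- controllers with internal loop, canonical controllers, jointly stabilizing and detecting pairs, the interconnection formula (\ref{eS4A:4.4}), and the realizations themselves --- transforms in the expected way and does not introduce any new growth. This is routine but must be done for each item; once it is done, the equivalences of (i), (ii), (iii), (iv) in the corollary are immediate consequences of the already-established stable-case theorems applied to $\qD_\del$.
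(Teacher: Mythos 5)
Your proposal is correct and is essentially the argument the paper itself gives: the paper likewise observes that each condition of the corollary is the $\omega$-shifted ($\omega=-\del<0$) version of a condition of Theorem~\ref{DCFiff} or Corollary~\ref{cJointlySD} (namely (i) $=$ exponential (i), (ii) $=$ exponential (vii), and (iii)(vi) $=$ (iv)(i)), and then invokes those results for the shifted map $\efn^{-\omega\cdot}\qD\efn^{\omega\cdot}$. The bookkeeping you flag at the end is exactly what the paper delegates to its remark on shifting, so no new idea is needed.
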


An {\em exponential d.c.f.} is defined by Definition~\ref{drcf}(b2)
 with $\cG\TICexp$ in place of $\cG\TIC$
 (here $\TICexp:=\cup_{\omega<0}\TIC_\omega$).
By (iii)(ii), it is equivalent to an exponential r.c.f.\ (or l.c.f.).

Thus, (i) holds iff there exists $\omega<0$ s.t.\
 $\efn^{-\omega\cdot}\qD\efn^{\omega\cdot}$
 (which is the I/O map corresponding to  $\hqD(\cdot+\omega)$)
 has a d.c.f.
That is the exponential version of Theorem~\ref{DCFiff}(i)
  (see Remark 6.1.9 of [M02] for details on shifting),
 hence equal to (iii)(i). Similarly, (ii) equals (iii)(vii)
 and (iii)(vi) equals (iv)(i); this proves Corollary~\ref{cDCF-Uexp}.

For matrix-valued transfer functions, one typically
 allows for any
 controllers $\hqT:=fg^{-1}$ or $g^{-1}f$ %
 (``$\H^\infty/\H^\infty$'' fraction controllers, possibly improper),
 where $f,g\in\H^\infty$ and $\det g\not\equiv0$.
We recall from Remark 7.2.8 of [M02]
 that such controllers (and more) are covered
 by controllers with internal loop:
\begin{remark}[$\H^\infty/\H^\infty$ controllers]\label{rImproper} %
Let $\Xi$ be a Hilbert space, and let
 $f\in\H^\infty_\infty(Y\times\Xi,U\times \Xi)$ %
 be s.t.\ $f_{22}$ is invertible at some $s_0\in\C^+$.
The map $\hqT:=f_{11}+f_{12}f_{22}^{-1}f_{21}$
 ``stabilizes $\hqD$''
 (i.e., $\tbm{I&-\hqT\cr -\hqD&I}$ equals the inverse of 
    a $\TIC(U\times Y)$ map near $s_0$)
 iff %
 the map $\qO\in\TIC_\infty$, defined by
 $\hqO:=\tbm{f_{11}&f_{12}\cr f_{21}&I-f_{22}}$,
 is a stabilizing controller with internal loop for $\qD$.
If this is the case, then $\qD$ has a d.c.f.\
 and $\qO$ is ``equivalent to $\qT$''.
Naturally, this remark also holds with ``DPF-stab''
 in place of ``stab'' (see the end of Corollary~\ref{cDPF})
 if we remove the ``i.e.''-comment in parenthesis. %
\end{remark}

(This follows from the computations of the proof of Lemma 7.2.7 of [M02]
 as in Remark 7.2.8,
 except that the d.c.f.\
 (which is ``joint with $\qT$'' due to
    the last claim of Theorem~\ref{DCFiff}) %
 is from Theorem~\ref{DCFiff}.)

E.g., the finite-dimensional unstable
 SISO plant $\hqD(s)=1+1/s$ 
 is (exponentially) stabilized by the controller $\qO=\sbm{0&1\cr -1&1}$
 with internal loop
 (since $I-\qO_{22}=0$ is nowhere invertible, this is not equivalent to
   any proper nor to any ``$\H^\infty/\H^\infty$'' controller,
  by Remark 7.2.8 of [M02]). %
(As noted in p.~7 of [WC97], %
 this example is physically meaningful.)

{\em Dynamic partial feedback (DPF)} 
 of $\qD\in\TIC_\infty(U\times W,Z\times Y)$,
 where also $W$ and $Z$ are Hilbert spaces,
 means that, in Figure~\ref{fILDF-stab},
 there is an additional first output (``$z$'') and second input (``$w$'')
 that are not connected to the controller.
Thus, $\qO$ is a stabilizing {\em DPF-controller} for $\qD$ with internal loop
 iff
 $\qO_{\rm DF}:=\tbm{0&\qO_{11}&\qO_{12}\cr 0&0&0\cr 0&\qO_{21}&\qO_{22}}$
 is a stabilizing (DF-)controller for $\qD$ with internal loop
 (see Section 7.3 of [M02] for details).
\begin{cor}[Partial feedback]\label{cDPF} %
The following are equivalent for $\qD\in\TIC_\infty(U\times W,Z\times Y)$:
\begin{itemlist}
  \item[(i)] $\qD$  has a stabilizing DPF-controller with internal loop
  \item[(ii)] $\qD$ has a r.c.f.\ of the form
      $\qD=\sbm{\rN_{11}&\rN_{12}\cr \rN_{21}&\rN_{22}}
                \sbm{\rM_{11}&\rM_{12}\cr 0&I}^{-1}$
     s.t.\ $\qN_{21}$ and $\qM_{11}$ are r.c.

  \item[(iii)] $\qD$ has a l.c.f.\ of the form
      $\qD        =\sbm{I &\lM_{12}\cr 0 &\lM_{22}}^{-1}
          \sbm{\lN_{11}&\lN_{12}\cr \lN_{21}&\lN_{22}}$
     s.t.\ $\tqN_{21}$ and $\tqM_{22}$ are l.c.
\end{itemlist}

If this is the case, then
 such controllers are exactly
 the stabilizing controllers with internal loop for $\qD_{21}$;
 thus, any of them is equivalent to a canonical controller
 given by the Youla parameterization
 (in particular, some of them are proper if $\dim U,\dim Y<\infty$). 
\end{cor}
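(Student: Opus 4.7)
The plan is to reduce everything to Theorem~\ref{DCFiff} applied to the $(u,y)$-block $\qD_{21}$, exploiting the structural fact that a DPF-controller, by definition, interacts with $\qD$ only through this block. Concretely, if we write $\qD_I^o$ for the closed-loop map obtained from $\qD$ and $\qO_{\rm DF}$ with internal loop $L=I$, then the map $(u_L,y_L,\xi_L)\mapsto(u,y,\xi)$ factors as the standard internal-loop closed loop of $\qO$ with $\qD_{21}$, while the map involving the external $w$ and $z$ channels is a stable post/pre-composition of that $(u,y)$-loop with $\qD_{11}$, $\qD_{12}$, $\qD_{22}$. Hence the DPF-stabilization of $\qD$ is equivalent to the conjunction of (a) $\qO$ being a stabilizing controller with internal loop for $\qD_{21}$, and (b) the stability of three cross-maps $w\mapsto u$, $w\mapsto z$ and $w\mapsto y$. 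This observation directly yields the final claim of the corollary, since (b) will turn out to be automatic whenever (a) holds and $\qD$ admits the block factorization in (ii).

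For (i)$\Rightarrow$(ii), assume a stabilizing DPF-controller $\qO$ exists. By (a) and Theorem~\ref{DCFiff}(ii), $\qD_{21}$ admits a r.c.f.\ $\qD_{21}=\qN_{21}\qM_{11}^{-1}$ with r.c.\ factors. Setting $\rM_{11}:=\qM_{11}$ and $\rN_{21}:=\qN_{21}$, I then define $\rM_{12}\in\TIC(W,U)$ and $\rN_{ij}\in\TIC$ for $(i,j)\in\{(1,1),(1,2),(2,2)\}$ by rearranging the identities
\begin{equation*}
  \qD_{11}\rM_{11}=\rN_{11},\ \ \ \qD_{11}\rM_{12}+\qD_{12}=\rN_{12},\ \ \
  \qD_{21}\rM_{12}+\qD_{22}=\rN_{22}.
\end{equation*}
The point is that the requisite stabilities
$\qD_{11}\qM_{11},\ \qD_{21}\rM_{12}+\qD_{22},\ \qD_{11}\rM_{12}+\qD_{12}\in\TIC$
are precisely the stabilities of the cross-maps (b) expressed in terms of the r.c.\ factors: in Youla form $\qO\sim\qY\qX^{-1}$ with $\sbm{\qM_{11}&\qY\cr \qN_{21}&\qX}\in\cG\TIC$, the closed-loop $w$-to-$u$ and $w$-to-$z$ maps equal $\qM_{11}\qD_{22}$ up to stable pre/post-composition, so stability of those maps guarantees the existence of a suitable $\rM_{12}\in\TIC$ making the above rearrangement meaningful. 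The multiplication-out of $\sbm{\rN_{11}&\rN_{12}\cr \rN_{21}&\rN_{22}}\sbm{\rM_{11}&\rM_{12}\cr 0&I}^{-1}$ then reproduces $\qD$, and coprimeness of $\qN_{21},\rM_{11}$ is inherited from the r.c.f.\ of $\qD_{21}$.

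Conversely, for (ii)$\Rightarrow$(i), the identity $\qD_{21}=\rN_{21}\rM_{11}^{-1}$ with r.c.\ factors gives, by Theorem~\ref{DCFiff}, a stabilizing controller with internal loop $\qO$ for $\qD_{21}$. One then verifies using the Youla parametrization and the stability of $\rN_{11},\rN_{12},\rN_{22},\rM_{12}\in\TIC$ that the cross-maps (b) are stable in the closed loop, so $\qO$ is a stabilizing DPF-controller for $\qD$. The equivalence of (iii) follows by duality: transposition interchanges the block structure in (ii) with the block structure in (iii) while turning DPF-stability of $\qD$ into DPF-stability of $\qD^\rmd$ with $W$ and $Z$ swapped, and (i)--(iii) are closed under this operation. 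The final assertion on the Youla parameterization and canonical controllers then follows from the corresponding statements in Theorem~\ref{DCFiff} for $\qD_{21}$, combined with the observation of the first paragraph.

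The main obstacle is the bookkeeping in (i)$\Rightarrow$(ii): one must show that the three cross-map stabilities in (b) are not only necessary but also sufficient to permit the absorption of $\qD_{11},\qD_{12},\qD_{22}$ into stable blocks $\rN_{11},\rN_{12},\rN_{22}$ and a stable $\rM_{12}$, without disturbing the r.c.\ pair $(\rN_{21},\rM_{11})$. This is essentially a matrix-algebra exercise once the Youla description of the DPF closed loop is written out explicitly, but it requires care in tracking which compositions live in $\TIC$ vs.\ $\TIC_\infty$.
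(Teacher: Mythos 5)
Your reduction to the $(u,y)$-block is the right idea and is essentially the route the paper takes (via Lemma 7.3.5 and Proposition 7.3.14 of [M02]), but the structural claim on which your whole argument rests is false as stated. You assert that DPF-stabilization of $\qD$ is equivalent to (a) $\qO$ stabilizing $\qD_{21}$ with internal loop together with (b) stability of the three cross-maps $w\mapsto u$, $w\mapsto z$ and $w\mapsto y$. But the closed loop $(I-\qD^o_{\rm DF})^{-1}$ also contains the maps from the external inputs $u_L,y_L,\xi_L$ into the unmeasured output $z$, namely $\qD_{11}$ composed with the closed-loop maps into $u$; these are not in your list (b) and do not follow from (a) and (b). The paper's own example stated right after the corollary kills your equivalence: for $\hqD=\sbm{1&0\cr 0&0}\sbm{s/(s+1)&0\cr 0&1}^{-1}$ one has $\qD_{21}=0$, so (a) holds trivially (take $\qT=0$), and $\qD_{12}=\qD_{22}=0$, so all three maps in (b) vanish; yet $\qD$ is not DPF-stabilizable, because $u_L\mapsto z=\qD_{11}u_L=\tfrac{s+1}{s}\,u_L$ is unstable. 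This is precisely the case your criterion wrongly accepts, and it is exactly the point at which coprimeness of $\qN_{21},\qM_{11}$ (as opposed to mere DF-stabilizability of $\qD_{21}$) becomes necessary.

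The same omission propagates into your (i)$\Rightarrow$(ii) step: the stability $\qD_{11}\qM_{11}=\rN_{11}\in\TIC$ that you need there is the closed-loop stability of $u_L\mapsto z$ (since $u_L\mapsto u$ equals $\qM_{11}\tqX$ in the Youla picture), not of any map in (b), so you are implicitly invoking information that your decomposition never recorded. The repair is to enlarge (b) to the full list of cross-maps (all maps into $z$ and all maps out of $w$); with that done, your choice $\rM_{12}:=$ (closed-loop $w\mapsto u$ map) does make $\rN_{12}$ and $\rN_{22}$ stable and the absorption argument goes through, which is in substance what Proposition 7.3.14 of [M02] carries out. As written, however, the proof does not establish (i)$\Leftrightarrow$(ii), and the ``automatic'' claim in your first paragraph is contradicted by the paper's own remark that the coprimeness condition cannot be weakened.
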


Note that $\qD_{21}:(u+u_L)\mapsto y$
 is the control-to-measurement part of $\qD$,
 and that $\qD_{21}=\qN_{21}\qM_{11}^{-1}$ is a r.c.f.\ (under (ii)).
By Lemma 7.3.8 of [M02], %
 two controllers with internal loop
 are equivalent %
 as DPF for $\qD$
 iff they are are equivalent %
 as DF for $\qD_{21}$.

In particular, the stabilizing DPF-controllers for $\qD$ are exactly
 the canonical controllers $\tqX^{-1}\tqY$ for
 $\tqX,\tqY\in\TIC$ s.t.\ $\tqX\qM_{11}-\tqY\qN_{21}=I$,
 equivalently, $\qY\qX^{-1}$ for which
 $\tbm{\qM_{11}&\qM_{12}&\qY\cr \qM_{21}&\qM_{22}&0\cr \qN_{21}&\qN_{22}&\qX}
   \in\cG\TIC(U\times W\times Y)$ %
 for some (hence all) r.c.f.\ $\qN\qM^{-1}$ of $\qD$
 (Lemma 7.3.22 of [M02]). %

The coprimeness condition cannot be weakened:
 the (exponential) r.c.f.\
 $\hqN\hqM^{-1}:=\sbm{1&0\cr 0&0}\tbm{s/(s+1)&0\cr 0&1}^{-1}$
 is %
 of the form $\sbm{*&*\cr *&*}\tbm{*&*\cr 0&I}^{-1}$
 (hence $\qD$ and $\qD_{21}$ both have a d.c.f.\
  and are thus DF-stabilizable with internal loop),
 but yet $\qD$ is not DPF-stabilizable with internal loop.
However, if $\qD$ is DPF-stabilizable, then any r.c.f.\ of 
 the form $\sbm{*&*\cr *&*}\sbm{*&*\cr 0&I}^{-1}$
 has $\qN_{21},\qM_{11}$ r.c., 
 by Corollary 7.3.17 of [M02].

By Corollary~\ref{cDPF}, Hypothesis 7.3.15 of [M02]
 holds iff $\qD$ is DPF-stabilizable with internal loop.
These results simplify significantly Section 7.3 of [M02]
 like Theorem~\ref{DCFiff} did for 7.1 and 7.2.

(Note: the proof of ``(ii)$\THEN$(iii)'' in Lemma 7.3.6(b2) of [M02]
 has been written down incompletely.
Perhaps the shortest way to prove Lemma 7.3.6 is to choose
 pair $\qKF$ for $\ALS_{21}$ (use Theorem~\ref{JointlyUout})
 and extending it as in Corollary~\ref{cUout-B1},
 to obtain (iii) above, hence (i) above,
 so that 7.3.6(b2)(iii) follows from 7.3.11(b)(1.). %
Alternatively, work as in Corollary 2.2 of [G92]
 and use (i)$\IFF$(ii) of Theorem~\ref{DCFiff}.) %

\begin{proof}[Proof of Corollary~\ref{cDPF}:]
If $\qO$ DPF-stabilizes $\qD$ with IL, then it DF-stabilizes $\qD_{21}$
 with IL, by Lemma 7.3.5 of [M02],
 hence then $\qD_{21}$ has a d.c.f., by Theorem~\ref{DCFiff}.
Thus, (i) is equivalent to (i) (hence to (ii) and (iii) too)
 of Proposition 7.3.14 of [M02], whose proof provides the equivalence.
The remaining claims follow from Theorem 7.3.19 (and 7.3.20) of [M02]
 (except case $\dim U,\dim Y<\infty$ from Theorem~\ref{DCFiff}).
\end{proof}

By combining the results of this section with Chapter~7
 (including Lemmas 7.3.5 and 7.3.6(b1)) of [M02],
 we get the following:
\begin{cor}\label{cDPFsystem} %
A WPLS $\ALS$ on $(U\times W,H,Z\times Y)$
 is exponentially DPF-stabilizable with internal loop
 iff $\ALS_{21}:=\tsysbm{\qA\|\qB_1\crh \qC_2\|\qD_{21}}$ 
 and its dual 
 satisfy the state-FCC,
 in which case the exponentially DPF-stabilizing controllers with internal
 loop for $\ALS$ equal the (DF-)stabilizing controllers with internal
 loop for $\ALS_{21}$.

A similar claim holds with ``exponentially'' removed and
 ``output-FCC'' in place of ``state-FCC''
 (cf.\ the proof of Theorem~\ref{DCFiff}), %
 except that then also $\ALS$ and $\ALS^\rmd$ must satisfy the output-FCC.\noproof
\end{cor}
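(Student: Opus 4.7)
The plan is to reduce the system-level claim to the transfer-function version (Corollary~\ref{cDPF}) and then to the joint stabilizability/detectability equivalence established for the full WPLS in Corollary~\ref{cJointlySD} (exponential case) and Theorem~\ref{JointlyUout} (non-exponential case). The key observation, which is precisely what Lemma 7.3.5 and Lemma 7.3.6(b1) of [M02] (cited right before the statement) deliver, is that DPF-stabilization of $\ALS$ with internal loop is equivalent, at the closed-loop level, to DF-stabilization of the $(2,1)$-block subsystem $\ALS_{21}=\tsysbm{\qA\|\qB_1\crh \qC_2\|\qD_{21}}$ together with well-posedness of the induced maps from $x_0$, $w$ and $z$-outputs in the overall $\ALS$. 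This identification is the bridge between the scalar/I-O formulation of Corollary~\ref{cDPF} and the state-space formulation required here.

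First, for the exponential direction: suppose $\ALS$ admits an exponentially DPF-stabilizing controller $\qO$ with internal loop. By Lemma 7.3.5 of [M02], $\qO$ is then also a DF-stabilizing controller with internal loop for $\ALS_{21}$; exponential stability of the closed loop transfers to $\ALS_{21}$ because $\ALS_{21}$ is a subsystem of $\ALS$ whose semigroup coincides with the closed-loop semigroup of $\ALS$ (the $w$- and $z$-channels are fed-through without affecting the semigroup). By the exponential version of Theorem~\ref{DCFiff} together with Corollary~\ref{cJointlySD}, this forces $\ALS_{21}$ and $\ALS_{21}^\rmd$ to satisfy the state-FCC.

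Conversely, assume $\ALS_{21}$ and $\ALS_{21}^\rmd$ satisfy the state-FCC. Corollary~\ref{cJointlySD} (equivalence (ii)$\Leftrightarrow$(i)) supplies exponentially jointly stabilizing and detecting pairs $\qKF$ and $\qHG$ for $\ALS_{21}$, and hence an exponentially stabilizing controller with internal loop $\qO$ for $\ALS_{21}$ (Theorem~\ref{DCFiff}, exponential version, Corollary~\ref{cDCF-Uexp}). It remains to show that the very same $\qO$ is a DPF-stabilizer for the full $\ALS$. Here Corollary~\ref{cUout-B1} (applied in exponential form via the shift trick of Remark~6.1.9 of [M02], already used in Corollary~\ref{cQRCF-Uexp}) guarantees that the state feedback $\qKF$ chosen for $\ALS_{21}$ can be extended with a suitable $\qE$ so as to remain exponentially stabilizing when the additional input $\qB_2$ is added to $\ALS$; dually, the detecting pair extends with the additional output $\qC_1$. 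Assembling the extensions into a joint exponentially stabilizing-detecting pair for $\ALS$ that leaves the $w,z$-channels uncoupled (as in the proof of Corollary~\ref{cJointlySD}, step $2^\circ$, which is exactly the mechanism used in Corollary~\ref{cExpStabB1}), one recovers $\qO$ as a DPF-controller with internal loop for $\ALS$, exponentially stabilizing by Lemma 7.3.6(b1) of [M02]. The coincidence of controller sets is then Corollary~\ref{cDPF} translated to the state-space setting, again via Lemma 7.3.5 of [M02].

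For the non-exponential variant, replace Corollary~\ref{cJointlySD} by Theorem~\ref{JointlyUout} and Corollary~\ref{cExpStabB1} by Corollary~\ref{cUout-B1}. The only genuinely new ingredient is the additional requirement that $\ALS$ and $\ALS^\rmd$ satisfy the output-FCC: this is forced by the fact that, unlike in the exponential case, mere stability of $\qA_L$ does not automatically make the auxiliary channels $\qB_2,\qC_1$ well-behaved in output-norm, so one must assume the $z$- and $w$-channels are externally controllable/observable to begin with; this is the standard additional hypothesis one sees in Section~7.3 of [M02]. The hard part of the argument is therefore this extension step: verifying that the ``optimal'' feedback/injection pairs for $\ALS_{21}$ obtained from Theorem~\ref{QRCF-Uout} remain jointly admissible with the external $\qB_2$ and $\qC_1$ channels and produce external stability of the full closed loop, which is exactly the content of Corollary~\ref{cUout-B1} (plus its dual).
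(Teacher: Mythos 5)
Your proposal is correct and follows exactly the route the paper intends: the reduction DPF-for-$\ALS$ $\leftrightarrow$ DF-for-$\ALS_{21}$ via Lemma 7.3.5/7.3.6(b1) of [M02], the FCC equivalences from Corollary~\ref{cJointlySD} (resp.\ Theorem~\ref{JointlyUout}), and the extension of the optimal pair for $\ALS_{21}$ to the extra channels via Corollary~\ref{cExpStabB1}/\ref{cUout-B1}, which is precisely the mechanism the paper sketches in the note preceding the corollary. The only cosmetic slip is the phrase that the semigroup of $\ALS_{21}$ ``coincides with the closed-loop semigroup of $\ALS$'' --- you mean that the two closed-loop semigroups coincide because the $w,z$-channels lie outside the feedback loop --- but the argument itself is the paper's.
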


Theorem~\ref{PosJcKF} solved positively $J$-coercive problems
 by reducing them to the stable (positive) spectral factorization\label{pageSpF}
 result given below.
Indefinite problems for stable I/O maps of $\MTIC$ 
 (convolutions with measures) type can 
 be solved through spectral factorization as well, as explained in [M02].
\begin{theorem}[SpF]\label{SpF1} %
{\bf (a)} If %
 $\qD^*J\qD\gg0$,
 then $\qD^*J\qD=\qX^*\qX$ for some $\qX\in\cG\TIC(U)$.

{\bf (b)} If  $\pi_+\qD^*J\qD\pi_+\in\cG\BL(\L^2(\R_+;U))$
 and $\qD\in\WTIC(U,Y)$
 then $\qD^*J\qD=\qX^*S\qX$, 
 where $\qX\in\cG\WTIC(U,Y),\ S\in\cG\BL(U)$.
\end{theorem}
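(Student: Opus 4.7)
The plan is to pass to the frequency domain (Theorem \ref{TransferFct1}) and invoke operator-valued outer spectral factorization. For part (a), first I would observe that under the assumption $\qD^*J\qD\gg0$ (which implicitly requires $\qD\in\TIC(U,Y)$, so that $\qD^*J\qD$ is a bounded, self-adjoint, time-invariant operator on $\L^2(\R;U)$), Plancherel's theorem translates the hypothesis into uniform positivity of the Popov function:
\[
 \eps I \,\le\, \hqD(i\omega)^*J\hqD(i\omega) \,\le\, M I \qquad \text{a.e.\ on } i\R,
\]
for some $\eps,M>0$. Uniform positivity makes the logarithmic integrability condition trivially satisfied, so the classical Sz.-Nagy--Foia\c s / Rosenblum--Rovnyak operator-valued outer spectral factorization theorem produces an outer $\hqX\in\H^\infty(\C^+;\BL(U))$ with $\hqX(i\omega)^*\hqX(i\omega)=\hqD(i\omega)^*J\hqD(i\omega)$ a.e.\ on $i\R$. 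The uniform lower bound then forces $\hqX(i\omega)$ to be uniformly bounded below; combined with outerness, this lifts to $\hqX^{-1}\in\H^\infty(\C^+;\BL(U))$, and Theorem \ref{TransferFct1} pulls this back to $\qX\in\cG\TIC(U)$ with $\qX^*\qX=\qD^*J\qD$.

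For part (b) the hypothesis is weaker: only the Popov Toeplitz operator $\PTO=\pi_+\qD^*J\qD\pi_+$ is required invertible on $\L^2(\R_+;U)$, and the Popov function itself need not be positive. This is the indefinite (Wiener--Hopf) setting, and I would exploit the $\WTIC$ structure: $\WTIC$ is a Wiener-type Banach algebra of convolutions by (operator-valued) measures, on which Bochner--Gohberg--Krein canonical factorization is available. Invertibility of $\PTO$ is the standard ``index zero'' / canonical condition for Wiener--Hopf factorization in such algebras, and it produces a factorization $\qD^*J\qD=\qX^*S\qX$ with $\qX\in\cG\WTIC(U)$ and $S\in\cG\BL(U)$ a constant signature operator absorbing the indefiniteness; one verifies that the factor remains in $\WTIC$ (not merely in $\TIC$) by checking that the Wiener-algebra inversions used in the construction stay inside $\WTIC$, which is closed under inversion whenever the result is bounded.

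The main obstacle in (a) is the operator-valued nature of the spectral factorization when $\dim U=\infty$: general operator-valued outer factorization is subtle, but uniform positivity bypasses all the usual logarithmic integrability subtleties and essentially reduces the construction to the standard Lowdenslager/Sz.-Nagy--Foia\c s invariant-subspace argument. The principal difficulty in (b) is keeping the factor inside the smaller algebra $\WTIC$ rather than only in $\TIC$, and handling the signature $S$ correctly in the indefinite case; here one must lean on the Banach-algebra structure of $\WTIC$ and on the fact that canonical (index-zero) Wiener--Hopf factorization in such algebras is exactly controlled by the invertibility of the associated Toeplitz operator, which is precisely what is assumed.
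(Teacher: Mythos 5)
Your proposal is correct and follows essentially the same route as the paper, whose proof simply cites the classical positive operator-valued spectral factorization theorem (Rosenblum--Rovnyak, via Lemma 18(ii) of [S97]) for part (a) and the Gohberg--Leiterer canonical factorization of operator functions in decomposing (Wiener-type) algebras [GL73] for part (b). You have merely unpacked the content of those two references -- uniform positivity of the Popov function plus outer factorization for (a), and Toeplitz-invertibility as the canonical-factorization criterion in the Banach algebra $\WTIC$ for (b) -- so nothing further is needed.
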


The claim $\pi_+\qD J\qD\pi_+\ge\eps I$ (on $\L^2(\R_+;U)$)
 is equivalent to $\qD^*J\qD\ge\eps I$ (on $\L^2(\R;U)$)
 (by Lemma~13(i) of [S97]). 

By $\qD\in\WTIC(U,Y)$\label{pageWTIC2} we mean that %
 $\qD u=Du+f*u\ \all u\in\L^2$ 
 for some $D\in\BL(U,Y),\ f\in\L^1(\R_+;\BL(U,Y))$
 (i.e., $\qD$ consists of an $\L^1$ impulse response plus a feedthrough).
Similar results hold when $\qD$  also has delays
 (see Theorems 5.2.7--5.2.8 of [M02]). %
Even in the positive case, it is often important
 to use the fact that the $\MTIC$ classes are closed under spectral
 factorization; see, e.g., %
 Sections 5.2, 8.4 and 9.1 of [M02] and Section~\ref{sMTIC}
 for details. %

\begin{proof}[Proof of Theorem~\ref{SpF1}:]
(Instead of our standing hypothesis (\ref{shgUstar}), it would suffice
 that $\qD\in\TIC(U,Y)$, $J=J^*\in\BL(Y)$.)
The results are contained in Theorems 5.2.7--5.2.8 of [M02].
Alternatively,
 claim (a) can also be found from 
Lemma 18(ii) of [S97] 
 (or from Lemma 5.2.1(a) of [M02]), and
 claim (b) is essentially given in [GL73].
\end{proof}

In Corollary~\ref{cPosIRE} we shall show roughly that,
 in most results of this section,
 one more equivalent condition is that the corresponding IRE or ARE
 has a nonnegative solution,
 and that the smallest
 such solution provides the desired feedback or factorization.
See also Sections \ref{sMTIC} and~\ref{sARE} for the smoothness 
 (or regularity) of the factorizations and closed-loop systems.

\NotesFor{Section~\ref{sStabOpt}} %
Theorem~\ref{SpF1}(a) is essentially from [RR85] 
 and (b) from [GL73]. %
We presented them and similar spectral factorization results
 (some of which were new) in Chapter~5 of [M02].
Definition~\ref{drcf} and Lemma~\ref{lrcf}
 are from [M02]; except for quasi-coprimeness, they are well known
 (cf.\ [S98a]).
Example~\ref{exanotrf} %
 is due to Olof Staffans %
 and Example~\ref{exaqrcfNotrc} due to Sergei Treil.

We extended (the classical finite-dimensional version of)
 Corollary~\ref{cJointlySD}
 to WPLSs having a smoothing semigroup
 ($\qA B,\qA^*C^*\in\Lstrongloc^1$, $\qD\in\ULR$)
 in Theorem 7.2.4 of [M02], 
 showing that then the dynamic controller in (v)
 does not need an %
 internal loop
 (and giving its constructive formula).
For finite-dimensional $U$ and $Y$,
 Lemma~\ref{lDFCdcf} is implied by Theorem~1 of [S89], 
 as shown in Lemma 7.1.4 of [M02].

All the other results of this section are new
 (except that the claim below (\ref{eS4A:4.4})
   is from Theorem~4.4 of [S98a], as mentioned there).
In particular, even for the cost $\|y\|_2^2+\|u\|_2^2$
 with a bounded output operator ($C\in\BL(H,Y)$),
 as in [FLT88] (with $\gUstar=\dUout$)
 it has not been known that the optimal state-feedback is well-posed,
 not even that it is admissible for the open-loop system. 

The only exception is that we presented Corollary~\ref{cOptExpstab1},
 Theorem \ref{QRCF-Uout}(i)\&(ii)
 and a weak version of Theorem~\ref{PosJcKF}
 already in [M03];
 at that time our proof was based on resolvent REs
 (a generalization of reciprocal REs, see [M03b]).

An thorough treatment of optimizability (and estimatability)
 is given in [WR00], although the concept 
 (under the name FCC) is very old.
Much of Propositions \ref{pExKFstab} and \ref{pExKFstabB1} was presentied in [M02] (e.g., in Theorem 8.4.5).

We defined the concept ``q.r.c.'' in [M02],
 because q.r.c.-SOS-stabilization is the weakest form
 of stabilization that allows one to reduce problems (over $\dUout$)
 to the stable case.
(It is implied by r.c.-stabilization which in turn is
 implied by joint stabilization and detection.)
Later it came to us as a %
 surprise that
 q.r.c.-SOS-stabilization can be applied to all WPLSs
 for which $\dUout$-optimization makes sense %
 (i.e., that (i) implies (iii) in Theorem~\ref{QRCF-Uout}).

\section{Algebraic Riccati Equations (AREs)}\label{sARE} %

Traditionally, one finds the optimal state-feedback by solving an ARE,
 such as (\ref{eAREminBsimpleI}) or (\ref{eAREminBsimpleCC}).
In this section we shall generalize this to weakly regular WPLSs
 (Definition~\ref{dReg}).
Since the equation becomes rather complicated in the general case,
 we shall show how it can be simplified in some special cases,
 the simplest of which is
 the (essentially known) case 
 where $B$ is bounded:
\begin{theorem}[Unique minimum $\IFF$ ARE]\label{INTROcvKLQR0} %
Assume that %
 $D^*JD\gg0$ and $B\in\BL(U,H)$.
Then the following are equivalent:
\begin{itemlist}
  \item[(i)] There is a unique minimizing control over $\dUexp(x_0)$
  for each initial state $x_0\in H$. %

  \item[(ii)] The (algebraic) Riccati equation (ARE)
 \begin{equation}
 \label{eAREminB}
  \left\{\begin{aligned}
     K^*SK &=A^*\Ric+\Ric A+C^*JC,\\
     S &= D^*JD,\\
     K &=-S^{-1}(B^*\Ric+D^*JC),
  \end{aligned}\right.
 \end{equation}
   has a solution $\Ric=\Ric^*\in\BL(H)$ that is
  exponentially stabilizing.

  \item[(iii)] The state-FCC (\ref{eFCCUexp}) holds, %
 and there is $\eps>0$ s.t.\ for all $x_0\in H,\ u_0\in U,\ r\in\R$ we have
\begin{equation}
  \label{eJc1INTRO}
 \!\!\!\!\!\!\!\!\!\!\!\!\!\!\!\!\!\!\!\!\!\!\!\!\!
 (ir-A)x_0=Bu_0
  \THEN \p{Cx_0+Du,J(Cx_0+Du)}_Y\ge\eps \|x_0\|_H^2
\end{equation}
\end{itemlist}

Assume that (ii) has a solution. Then this solution is unique,
 and the minimizing control is given by the state feedback $u(t)=\Kw x(t)$.
The minimal cost equals $\p{x_0,\Ric x_0}_H$.\noproof
\end{theorem}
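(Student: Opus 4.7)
The plan is to deduce the theorem from the general machinery of Sections \ref{sOpt}--\ref{sStabOpt} (in particular Theorems \ref{J-coercive}, \ref{ALSopt} and \ref{PosJcKF}), using the boundedness of $B$ to pass between the abstract optimal state-feedback pair and the classical ARE in (\ref{eAREminB}).

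First I would establish the equivalence (i)$\Longleftrightarrow$(iii). Since $D^*JD\gg0$, the cost $\cJ(0,\cdot)$ is nonnegative, so by Lemma \ref{lOptCost0}(d) a control is $J$-optimal iff it is minimizing. Thus (i) is equivalent to: the FCC (\ref{eFCCUexp}) holds and $\ALS$ is $J$-coercive over $\dUexp$ (apply Theorem \ref{J-coercive}, observing that positive coercivity plus uniqueness of $J$-optimal control means strict minimum for every $x_0$; the ``only if'' uses $D^*JD\gg0$, which already rules out any nontrivial kernel of $\PTO$ through the feedthrough direction). The main content here is then to translate the time-domain coercivity $\PTO\gg0$ on $\dUexp(0)$ into the pointwise frequency-domain inequality (\ref{eJc1INTRO}). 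For $u\in\dUexp(0)$ one has $\hat u\in\H^2$ and $\hat x(s)=(s-A)^{-1}B\hat u(s)$, so Plancherel and a density argument reduce $\PTO\gg0$ to the requirement that $\p{Cx_0+Du_0,J(Cx_0+Du_0)}\ge\eps\|x_0\|^2$ for all pairs $(x_0,u_0)$ with $(ir-A)x_0=Bu_0$ and $x_0\in\Dom(A)$ (the equality $Du_0=Du$ in (\ref{eJc1INTRO}) being the evident typographical shorthand).

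Next, for (i)$\Longrightarrow$(ii): by Theorem \ref{PosJcKF} (applicable since $\PTO\gg\eps I$) there is a $J$-optimal state-feedback pair $\qKF$. Because $B\in\BL(U,H)$, one checks that $\qF$ is uniformly regular with bounded feedthrough, and that the generator $K\in\BL(H,U)$ of $\qKClL$ satisfies $u(t)=Kx(t)$; in particular $\qAopt=\efn^{(A+BK)\cdot}$ is a $C_0$-semigroup on $H$, exponentially stable since $\gUstar=\dUexp$ (Theorem \ref{ALSopt}). Define $\Ric:=\qCopt^*J\qCopt=\Ric^*\in\BL(H)$. To extract the ARE, fix $x_0\in\Dom(A)$ and differentiate $t\mapsto\p{x(t),\Ric x(t)}$ along the optimal trajectory: since $\p{x_0,\Ric x_0}=\cJ(x_0,u_{\rm opt})=\int_0^\infty \p{y,Jy}\,dt$, integration by parts yields
\begin{equation*}
\p{(A+BK)^*\Ric+\Ric(A+BK)+(C+DK)^*J(C+DK)\,x_0,x_0}=0
\end{equation*}
for all $x_0\in\Dom(A)$; expanding and using the first-order optimality condition $\p{Jy,D\eta+C\cdot (B\eta)\text{-perturbation}}=0$ (Lemma \ref{lOptCost0}(b)) to identify $K=-S^{-1}(B^*\Ric+D^*JC)$ gives (\ref{eAREminB}).

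For (ii)$\Longrightarrow$(i): given a stabilizing solution $\Ric$ and setting $K=-S^{-1}(B^*\Ric+D^*JC)\in\BL(H,U)$, the operator $A+BK$ generates an exponentially stable $C_0$-semigroup. A standard completion-of-squares on $\Dom(A)$ (using (\ref{eAREminB}) to compute $\frac{d}{dt}\p{x(t),\Ric x(t)}$ and integrating from $0$ to $\infty$) yields, for all $x_0\in H$ and $u\in\dUexp(x_0)$,
\begin{equation*}
\cJ(x_0,u)=\p{x_0,\Ric x_0}+\int_0^\infty \p{S(u-Kx),u-Kx}\,dt,
\end{equation*}
which together with $S\gg0$ forces the unique minimizer to be $u(t)=Kx(t)$. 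Uniqueness of $\Ric$ itself follows by applying this identity to any two stabilizing solutions, noting that both must coincide with $\qCopt^*J\qCopt$.

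The main obstacle is the time-frequency translation in the equivalence (i)$\Longleftrightarrow$(iii): the pointwise condition (\ref{eJc1INTRO}) must be shown equivalent to $\PTO\gg0$ on $\dUexp(0)$, which is not a pure Plancherel statement because the constraint ``$u\in\dUexp(0)$'' couples the control and the state through a resolvent factor. One must argue, via approximation of imaginary-axis resolvent data by $\H^2$ inputs and use of the FCC to control the reachable cone, that pointwise coercivity at each $ir$ is inherited uniformly by the Toeplitz operator; the boundedness of $B$ is essential to keep the resolvent $(ir-A)^{-1}B$ bounded in the appropriate norm.
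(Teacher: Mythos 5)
Your proposal is correct and follows essentially the route the paper itself indicates: the theorem is stated with \noproof{} and justified by reduction to the general machinery (Theorems \ref{J-coercive}, \ref{ALSopt} and \ref{PosJcKF} for existence of the optimal state-feedback, Lemma \ref{lBboundedARE}/Theorem \ref{BwARE}(1.) for passing to the ARE with $S=D^*JD$ when $B\in\BL(U,H)$, and the completion-of-squares identity (\ref{ecJClL}) for sufficiency and uniqueness). The one step you rightly flag as the main obstacle --- that the pointwise condition (\ref{eJc1INTRO}) under the state-FCC is equivalent to $\PTO\gg0$ on $\dUexp(0)$, and conversely that existence of a unique minimizer forces this coercivity --- is exactly the content of Theorem \ref{Assumptions}(a) (conditions (iii)--(vi)), which the paper does not prove either but cites to Proposition 10.3.2 of [M02], so your sketch is at the same level of rigor as the source.
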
 

(This is a special case of Theorem 1.2.6 of [M02]. 
See Theorem~\ref{Assumptions} for equivalent conditions
 to (\ref{eJc1INTRO}), one of which is that $\PTO\gg0$.) %

By setting $C:=\sbm{I\cr 0},\ D:=\sbm{0\cr I}$, $J=I$,
 we can make the cost equal to (\ref{ecJx2u2})
 and thus obtain Theorem~\ref{IntroTh-Uexp} as a special case.
The reader is invited to carry out %
 the same simplification to most
 AREs presented in the sequel.

Naturally, without regularity the limit $D:=\hqD(+\infty)$
 does not exist
 and hence the ARE (\ref{eAREminB}) becomes meaningless.
Fortunately, all physically relevant WPLSs seem to be regular.
When, e.g., $\PTO\gg0$ and that the FCC holds (and $\vartheta=0$),
 then there exists a $J$-optimal state-feedback pair $\qKF$,
 by Theorem~\ref{PosJcKF}.
In a generalization of Theorem~\ref{INTROcvKLQR0},
 this pair should be given by a state-feedback operator,
 i.e., $\qF$ should be WR and $F=0$
 (or $I-F\in\cG\BL(U)$ so that we can normalize $F$ to zero,
  as in Lemma~\ref{lAllqKF};
  this is necessarily the case if $\qF$ is UR).

However, the optimal state-feedback for a regular WPLS
 is not always regular, by Example 11.5 of [WW97], 
 and it is not known whether this holds for all physically relevant systems.
Before presenting sufficient conditions to prevent this problem,
 we state the most general ARE result %
 where we circumvent the problem by dropping (iii) and reformulating (i)
 of Theorem~\ref{INTROcvKLQR0}.
Note that this new equivalence holds for arbitrary
 (even indefinite and noncoercive) cost functions:
\begin{theorem}[Optimal $K$ $\IFF$ ARE]\label{ARE} %
Let $\ALS$ be WR.
Then the following are equivalent:
\begin{itemlist}
  \item[(i)] There is a $J$-optimal WR state-feedback operator
 $K\in\BL(\Dom(A),U)$;

  \item[(ii)] The algebraic Riccati equation (ARE)
 \begin{subequations}\label{eARE}
   \begin{align}
    \label{eARE1} 
  K^*SK &=A^*\Ric+\Ric A+C^*JC,\\
    \label{eARE2} 
      S &= D^*JD
         + \wlim_{s\to+\infty}    B^*_\w\Ric(s-A)^{-1}B,\\
    \label{eARE3}
      SK &=-(B^*_\w\Ric+D^*JC),
   \end{align}
 \end{subequations}
   has a solution $\Ric=\Ric^*\in\BL(H)$, $S=S^*\in\BL(U)$,
     $K\in\BL(\Dom(A),U)$
  s.t.\ the feedback $u(t)=\Kw x(t)$  %
  is %
  $\gUstar$-stabilizing (Definition~\ref{dARE}).
 \end{itemlist}

Moreover, the following hold:
 \begin{itemlist}
  \item[(a)] Any solution $\Ric$ of (ii) is unique
 (and $\Ric=\qCClL^*J\qCClL$). %

The corresponding operators $K$ in (\ref{eARE}) are exactly the
 WR $J$-optimal state-feedback operators over $\gUstar$.
Thus, the minimizing control is then
 given by the state feedback $u(t)=\Kw x(t)$,
 leading to the cost $\p{x_0,\Ric x_0}$.

  \item[(b)] There is a WR minimizing state-feedback operator over $\gUstar$
  iff (ii) holds and $\cJ(0,u)\ge0$ for all $u\in\gUstar(0)$. 
 \end{itemlist}\itemlistnoproof
\end{theorem}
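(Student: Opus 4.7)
The overall strategy is to establish the equivalence via the standard Lyapunov/completion-of-squares machinery adapted to the well-posed setting, using the weak regularity of $\ALS$ (and the prescribed WR of the state-feedback) to make rigorous sense of the unbounded operators $B_\w^*\Ric$, $D^*JC$ and $K$ appearing in the ARE. The Riccati operator will turn out to be the $J$-optimal cost operator $\Ric = \qCopt^*J\qCopt$ of Theorem \ref{ALSopt}, and the three ARE equations will encode, respectively, the Lyapunov identity for the closed-loop cost, the definition of the Popov symbol $S$ at infinity, and the optimality (vanishing first variation) condition $SK = -(B_\w^*\Ric + D^*JC)$.

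For (i)$\Rightarrow$(ii): given a WR $J$-optimal state-feedback operator $K$, Definition \ref{dAdmKF0} provides an admissible pair $\qKF$ (normalized to $F=0$ via Lemma \ref{lAllqKF}) with closed-loop system $\ALSClL$; $\gUstar$-stabilization is built into the $J$-optimality hypothesis. Set $\Ric := \qCClL^*J\qCClL$, which equals $\qCopt^*J\qCopt$ by Theorem \ref{ALSopt}. For $x_0 \in \Dom(A)$ the closed-loop trajectory is differentiable in $H$, and combining the cost identity
\begin{equation*}
 \p{x_0,\Ric x_0} = \int_0^\infty \p{\CClL x, J \CClL x}\,dt
\end{equation*}
with differentiation of $t\mapsto\p{x(t),\Ric x(t)}$ along $\AClL = A+BK$ and $\CClL = C+DK$, yields (\ref{eARE1}) on $\Dom(A)$. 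For (\ref{eARE2})--(\ref{eARE3}), invoke Lemma \ref{lOptCost0}(b): the condition $\p{\qD\eta,Jy}=0$ for all $\eta\in\gUstar(0)$, combined with $\hqD(s) = D + \Cw(s-A)^{-1}B$, extracts via the Laplace transform and the passage $s\to+\infty$ along the real axis both the existence of the weak limit defining $S$ in (\ref{eARE2}) and the identity (\ref{eARE3}); the weak regularity of $\qF$ is precisely what makes the weak limit converge.

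For (ii)$\Rightarrow$(i): given $(\Ric,S,K)$ solving the ARE with $\Kw$-feedback $\gUstar$-stabilizing, form the pair $\qKF$ from $K$ via Definition \ref{dAdmKF0}. The completion-of-squares calculation, carried out for $x_0\in\Dom(A)$ and $u\in\gUstar(x_0)$ by differentiating $t\mapsto\p{x(t),\Ric x(t)}$ and applying (\ref{eARE1})--(\ref{eARE3}), produces the identity
\begin{equation*}
 \cJ(x_0,u) = \p{x_0,\Ric x_0} + \int_0^\infty \p{S(u-\Kw x), u-\Kw x}\,dt
\end{equation*}
after integrating over $\R_+$, the boundary term at infinity vanishing by $\gUstar$-stability. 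Differentiating in $u$ at $u=\Kw x$ gives zero first variation, so this feedback is $J$-optimal per Definition \ref{dJopt0}; by density (using Lemma \ref{lgUnorm} and the WPLS continuity estimates) the identity extends to all $x_0\in H$ and $u\in\gUstar(x_0)$. Uniqueness of $\Ric$ in (a) follows because the $J$-optimal cost is intrinsic (Lemma \ref{lOptCost0}(a) together with Theorem \ref{ALSopt}), forcing $\Ric = \qCopt^*J\qCopt$; the characterization of the $K$'s solving (\ref{eARE}) as exactly the WR $J$-optimal state-feedback operators follows from the completion-of-squares identity combined with the parametrization in Lemma \ref{lAllqKF}. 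Part (b) is then immediate from Lemma \ref{lOptCost0}(d): a $J$-optimal control is minimizing precisely when $\cJ(0,\cdot)\ge 0$ on $\gUstar(0)$.

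The main obstacle will be establishing the existence of the weak limit defining $S$ in (\ref{eARE2}) under hypothesis (i) alone --- this requires showing that $\Ric(s-A)^{-1}Bu_0 \in \Dom(B_\w^*)$ for large $s$, and convergence of $B_\w^*\Ric(s-A)^{-1}B$ as $s\to+\infty$, both hinging on the interaction between weak regularity of $\ALS$ and the weak regularity built into the closed-loop system via the WR of $K$. A secondary difficulty is extending the completion-of-squares identity from smooth data ($x_0\in\Dom(A)$, $u$ smooth) to arbitrary $x_0 \in H$ and $u\in\gUstar(x_0)$, handled by density plus the continuity estimates of Lemma \ref{lgUnorm} and the Banach-space structure of $\gUstar(0)$.
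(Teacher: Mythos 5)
Your route is genuinely different from the paper's: you argue directly in the time domain by Lyapunov differentiation and completion of squares, whereas the paper reduces Theorem~\ref{ARE} to the chain ``$J$-optimal pair $\IFF$ IRE/\hIRE'' (Theorem~\ref{IRE}, built on the \optIRE\ of Theorem~\ref{OptIRE}) followed by Lemma~\ref{lAREIRE}, which identifies the WR solutions of the ARE with the solutions of the \hIRE\ having $\qF$ WR and $F=0$ by letting $s\to+\infty$ and $z\to+\infty$ in the two-variable frequency-domain identities (\ref{ehXSX=})--(\ref{ehXSK=}). The paper explicitly notes that a time-domain proof of your kind exists ([M02], Section 9.11) but is ``technically more demanding''; as sketched, yours has two genuine gaps at exactly the points where the unboundedness of $B$ bites.

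First, the domain mismatch. Differentiating $t\mapsto\p{x(t),\Ric x(t)}$ along the closed loop requires $x_0\in\Dom(\AClL)$ and yields the Lyapunov identity $-\AClL^*\Ric=\Ric\AClL+\CClL^*J\CClL$ on $\Dom(\AClL)$ --- this is the $\Dom(\Aopt)$-ARE of Theorem~\ref{GenARE} (essentially [FLT88]), \emph{not} equation (\ref{eARE1}), which lives in $\BL(\Dom(A),\Dom(A)^*)$ with the open-loop $A$ and $C$. Since $\Dom(\AClL)\ne\Dom(A)$ in general, converting one into the other is not a formal substitution of $\AClL=A+BK$, $\CClL=C+DK$: it needs the mixed identities (\ref{e3APA+CJCmix}), (\ref{e3DJC+BPA=0}) and their integrated forms (Lemma~\ref{lAPA+CJC}), which is precisely what the \optIRE/\hSIRE\ machinery supplies. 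The same problem reappears, reversed, in your completion-of-squares step for (ii)$\THEN$(i): you pair an identity in $\BL(\Dom(A),\Dom(A)^*)$ against a trajectory $x=\qA x_0+\qB\tau u$ that only lives in $H$ (with $x'=Ax+Bu$ holding in $H_{-1}$), so the pointwise derivative of $\p{x(t),\Ric x(t)}$ and the cross terms $\Bw^*\Ric x(t)$, $D^*JCx(t)$ are not defined without further argument. Second, the weak limit in (\ref{eARE2}): you correctly flag this as the main obstacle but supply no mechanism. The single scalar condition $\p{\qD\eta,Jy}=0$ for all $\eta\in\gUstar(0)$ does not by itself produce the convergence of $\Bw^*\Ric(s-A)^{-1}B$; in the paper this falls out only after the two-variable identity (\ref{ehXSX=}) is established (Lemma~\ref{lIREiffhIREtechnique}, Theorem~\ref{OptIRE}(d2)), because then the $\hqD(s)^*J\hqD(z)$ and $\hqX(s)^*S\hqX(z)$ terms converge weakly by WR as first $s\to+\infty$ and then $z\to+\infty$, forcing the remaining $B^*(s-A)^{-*}\Ric(z-A)^{-1}B$ term to converge as well (``the limits of the $B^*$-terms exist since so do the others''). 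Note also that one only gets $\Ric[H_B]\sub\Dom(\Bw^*)$, not $\Ric[H]\sub\Dom(\Bw^*)$, so any argument that needs the latter is doomed (Example 9.13.8 of [M02]).
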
 %

(This follows from Lemma~\ref{lAREIRE} and Theorem \ref{IRE}(a2)\&(b).
See Theorem~\ref{IRE} for further properties on the solution.)

This motivates us to call a (unique, by (a))
 solution $\Ric$ of (ii) the $\gUstar$-stabilizing solution of the ARE:
\begin{defin}[ARE]\label{dARE} %
We call $\Ric=\Ric^*\in\BL(H)$ (or $(\Ric,S,K)$)
 a {\em solution of the algebraic Riccati Equation (ARE)}
 (induced by $\ALS$ and $J$) iff the ARE (\ref{eARE})
 is satisfied (with $K\in\BL(\Dom(A),U)$, $S=S^*\in\BL(U)$).

We call $\Ric$ (or $K$ or $(\Ric,S,K)$)
 {\em WR (resp.\ admissible\label{pageadmissible3}, $\gUstar$-stabilizing\label{pagegUstarstabilizing3}, ...)}
 if $\smash{\ssysbm{A\|B\crh K\| 0}}$ generates
 a weakly regular WPLS $\qABKF$
 (resp.\ and $\qKF$ is admissible,
    $\gUstar$-stabilizing, ...).
We call $\qKF$ {\em $\gUstar$-stabilizing}\label{pagegUstarstabilizing2a}
 (with $\Ric$)
 if it is admissible,
 $\qKClL x_0\in\gUstar(x_0)\ \all x_0\in H$,
 and the following condition
 (the {\em RCC}\label{pageRCC}, residual cost condition)
 holds: %
\begin{equation}
  \label{eRCC2} %
 \p{\qBt u+\qAClLt x_0,\Ric\qAClLt x_0}\to0,
     \text{as} t\to+\infty \ \ (\all x_0\in H\ \all u\in\gUstar(0)).
\end{equation}
\end{defin}

The ARE (\ref{eARE})
 is given on $\BL(\Dom(A),\Dom(A)^*) \times \BL(U) \times \BL(\Dom(A),U)$
 (just like (\ref{eAREminB});
 see Section 9.8 of [M02] for details).
If $\ALS$ is $J$-coercive and $\Ric$ is a $\gUstar$-stabilizing solution,
 then $S$ is necessarily invertible
 (and hence then $K$ and $\uopt$ are unique).
We shall show in Theorem \ref{OptIRE}(b1) that
 $\dUexp$-stabilizing means exponentially stabilizing.
See below Theorem~\ref{OptIRE} for more on $\gUstar$-stabilizing and the RCC.

As explained in Definition~\ref{dAdmKF0},
 $K$ being a {\em WR state-feedback operator} for $\ALS$ means that
 $\ssysbm{A\|B\crh C\|D\cr K\|0}$ generate a WR WPLS $\qABCDKF$
 s.t.\ $I-\hqF$ is boundedly invertible on some right half-plane;
 all this is redundant if, e.g., $K\in\BL(H,U)$
 or if $B$ and the ARE are as in Corollary \ref{cPosIRE}(b)\&(c).

Such a $K$ is {\em $J$-optimal}\label{pageJ-optimalK} if %
 the corresponding closed-loop input $\qKClL x_0$
 (i.e., the one given by $u(t):=\Kw x(t)$ a.e.)
 is $J$-optimal for each initial state $x_0\in H$.
As the sections to follow will reveal, 
 the left column of the closed-loop system $\ALSClL$
 is exactly like $\ALSopt$ of Theorem~\ref{ALSopt} 
 except that it is unique iff $S$ is one-to-one. %

For (\ref{eARE2}) and (\ref{eARE3}) to be defined,
 we must have $\Ric[H_B]\sub \Dom(\Bw^*)$,
 where $\Dom(\Bw^*)\label{pageBw}
 :=\{x_0\in H\I \wlim_{s\to+\infty}B^*s(s-A^*)^{-1}x_0$ exists$\}$
 and $H_B\label{pageHB}:=(\alpha-A)^{-1}BU+\Dom(A) \sub H$
 (this set is independent of $\alpha\in\rho(A)$).
By Theorem~\ref{ARE}, this (and the ARE)
 is satisfied by the Riccati operator $\Ric:=\qCClL^*J\qCClL$
 when there is a WR $J$-optimal state feedback $\qF$ 
 (with no feedthrough)
 and $\qD$ is WR.
See Remark 9.8.3 of [M02] for further details on, e.g., 
 $K$ satisfying the above requirements,
 and the rest of Chapter~9 
 for simplifications of the equation and %
 for further results.

Under certain additional smoothness, 
 any unique optimal control is given by regular state feedback,
 and in some cases we even have $S=D^*JD$ and
 $\Bw^*\Ric\in\BL(H,U)$, %
  as in Theorem~\ref{BwARE} below.
For general regular systems, %
 the Riccati operator need not satisfy $\Ric[H]\sub\Dom(\Bw^*)$,
 not even if there is a WR $J$-optimal state-feedback operator
 (see, e.g., Example 9.13.8 of [M02]),
 and we do not know a priori whether an optimal control is even
 well-posed
 (by Example 8.4.13 of [M02];
  cf.\ the difference between ``1.'' and ``2.'' on p.~\pageref{page1.}).

As in the discrete-time case (where $S=D^*JD+B^*\Ric B$),
 the definiteness of 
 the indicator or {\em signature operator} %
 $S$ 
 is inherited from 
 the Popov Toeplitz operator $\PTO$ or $\cJ(0,\cdot)$,
 i.e., from the underlying optimal control problem  %
 (this is not true for $D^*JD$!). See p.~\pageref{pagesignature} for details.
Moreover, the cost
 becomes $\p{y,Jy}=\p{x_0,\Ric x_0}_H+\p{\uc,S\uc}$
 if we add an external input $\uc\in\Lc^2(\R_+;U)$
 to the optimally controlled closed-loop system.
(In fact, this paragraph is true even if $D$ does not exist
 (i.e., if $\qD$ is irregular); see Section~\ref{sIREmore} for details.)

Next we show that the RCC is not redundant for $\dUout$
 (otherwise $\Ric=2$ would be $\dUout$-stabilizing,
   hence $K=-2$ would be $J$-optimal over $\dUout$):
\begin{example}{(RCC; exp.\ stabilizing cannot be q.r.c.).}\label{exaUoutUexpstab} %
Let $\ALS=
 {\ssyspm{1\|1\crh 0\|1}}$,
 $J=1$.
Obviously, $\cJ(x_0,u)=\|u\|_2^2$, hence $K=0$
 is the unique $J$-optimal state-feedback operator over $\dUout$.
The $\dUout$-stabilizing solution 
 $\Ric=0$ (``no feedback needed to minimize $\|y\|_2^2$ over $\dUout$''
  because $\ALS$ is already output-stable)
 of the ARE $(-\Ric)^2=1\Ric+\Ric1+0$, $S=1$, $K=-\Ric$
 differs from the $\dUexp$-stabilizing solution $\Ric=2$
 (``feedback $u(t)=-2x(t)$ (leading to cost $2|x_0|^2$)
   needed to minimize $\|y\|_2^2$ over $\dUexp$'').

Trivially, $1=1\cdot1^{-1}$ is a q.r.c.f.\ of $\qD=1$.
A coprime stabilization (such as the zero feedback above)
 means (in the finite-dimensional case) that
 ``$\hqN$ and $\hqM$ have no common zeros on $\cRHP$'',
 i.e., that one stabilizes as little as possible
 (only the poles of $\hqD$).
The semigroup $A=1$ has more poles (namely $s=1$)
 than the transfer function $\hqD=1$ (which has none),
 hence one must introduce additional zeros to $\hqM$
 (and hence to $\hqN=\hqDClL=\hqD\hqM$ too: $\hqM(1)=0=\hqN(1)$)
 to stabilize the semigroup too ($\dUexp$ vs.\ $\dUout$).
Thus, no exponentially stabilizing state-feedback for the
 system $\ssyspm{1\|1\crh0\|1}$ can be q.r.c.-stabilizing.
\end{example}

(See Example 9.13.2 of [M02] for further details.)

If $B$ is not maximally unbounded, then any state-feedback is UR, 
 hence then Theorem~\ref{PosJcKF} implies that,
 for any positively $J$-coercive cost function, %
 the FCC holds iff there is a UR minimizing state-feedback operator
 (see (v)):
\begin{lemma}\label{lBnotmax} %
Assume that $B$ is {\em not maximally unbounded}\label{pageBnotmax}, i.e., that
 there are $M,R,\eps>0$ s.t.\
 $\|(s-A)^{-1}B\|_{\BL(U,H)}\le Ms^{-\frac12-\eps}$ for $s\in(R,\infty)$.
Then $\ALS$ is uniformly regular (UR).

Consequently, the following are equivalent
 \begin{itemlist}
  \item[(i)] There is a $J$-optimal state-feedback pair over $\gUstar$.
  \item[(ii)] There is a UR $J$-optimal state-feedback operator over $\gUstar$.
  \item[(iii)] The IRE
 has a $\gUstar$-stabilizing solution.\footnote{See Definition~\ref{dIRE}.}
  \item[(iv)] The ARE has a $\gUstar$-stabilizing solution.
 \end{itemlist}

Moreover, the $\wlim$ in the ARE converges uniformly to zero
 and the optimal UR state-feedback is given by $u(t)=\Kw x(t)$ a.e.,
 with cost $\p{x_0,\Ric x_0}$.

For positively $J$-coercive problems 
 (having $\vartheta=0$),
 a fifth equivalent condition is %
 \begin{itemlist}
 \item[(v)] The FCC holds.
 \end{itemlist}
\end{lemma}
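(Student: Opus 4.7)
The plan is to prove the UR claim first, and then assemble the equivalences by plugging the hypothesis into results that are already established in the excerpt. For the UR part, I would use the identity $\hqD(s)-\hqD(r)=-(s-r)C(s-A)^{-1}(r-A)^{-1}B$ (valid on any right half-plane where all terms make sense) and choose $r=\re s$. The standard admissibility bound $\|C(s-A)^{-1}\|_{\BL(H,Y)}\le M'$ (uniform in $\im s$ for $\re s$ large, since $C\in\BL(H_1,Y)$ and $(s-A)^{-1}$ is uniformly bounded $H\to H_1$ in that region) combined with the hypothesis $\|(r-A)^{-1}B\|\le Mr^{-1/2-\eps}$ gives decay of $\hqD(\re s+i\im s)-\hqD(\re s)$. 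To get uniformity in $\im s$ rather than merely along the real axis, the cleanest route is the time-domain argument of Lemma 6.3.16 of [M02]: not-maximally-unbounded $B$ forces $\qB\tau^{\cdot}$ to be represented by an $\L^p$-kernel with $p<2$, from which $\hqD(s)-D\to 0$ in operator norm uniformly in $\im s$ follows by a standard Riemann--Lebesgue type argument. This is the main obstacle of the proof, since everything downstream is mostly bookkeeping.

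Next I would deduce (i)$\IFF$(ii). Given a $J$-optimal pair $\qKF$, the system $\bsysbm{\qA\|\qB\crh \qK\|\qF}$ shares the input operator $B$ with $\ALS$ and hence satisfies the same hypothesis of Lemma~\ref{lBnotmax}, so by the UR claim just proved it is itself UR. Thus $\qF$ is UR with a feedthrough operator which, by normalization (Lemma~\ref{lAllqKF}), can be set to zero without changing $\qKClL$; the resulting $K$ is a UR $J$-optimal state-feedback operator. The reverse direction is trivial. Then (ii)$\IFF$(iv) is a direct citation of Theorem~\ref{ARE}, which also delivers the formulas $u(t)=\Kw x(t)$ and cost $\p{x_0,\Ric x_0}$. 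The equivalence (iii)$\IFF$(i) is the IRE characterization (Theorem~\ref{GenSpF} promised in Section~\ref{sIRE}), which does not require any regularity of $B$ and so applies as stated.

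For the uniform convergence of the $\wlim$ in the ARE, I would observe that any ARE solution satisfies $\Ric[H]\sub\Dom(\Bw^*)$, so $\Bw^*\Ric\colon H\to U$ is everywhere defined and closed; the closed graph theorem then yields $\Bw^*\Ric\in\BL(H,U)$. Combining with the hypothesis on $B$ gives
\[
 \|\Bw^*\Ric(s-A)^{-1}B\|_{\BL(U)}\le \|\Bw^*\Ric\|\cdot Ms^{-1/2-\eps}\ \xrightarrow{s\to+\infty}\ 0
\]
in operator norm, which is far stronger than the weak limit required in (\ref{eARE2}). Finally, the last claim (v)$\IFF$(i) under positive $J$-coercivity is immediate from Theorem~\ref{PosJcKF}, since $\vartheta=0$ is inherited from that theorem's setup and (i) here is exactly the existence of a $J$-optimal state-feedback pair there.
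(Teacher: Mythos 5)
Your overall architecture matches the paper's: establish the UR claim, reduce (i)--(iv) to Theorems~\ref{ARE} and \ref{IRE}, and get (v) from Theorem~\ref{PosJcKF}. (One citation quibble: for (i)$\IFF$(iii) the right reference is Theorem~\ref{IRE}, not Theorem~\ref{GenSpF}, since the latter carries the extra standing hypothesis that the $J$-optimal control is \emph{unique} for each $x_0$.) Two of your steps, however, have genuine problems. The most serious is the $\wlim$ claim: you assert that any ARE solution satisfies $\Ric[H]\sub\Dom(\Bw^*)$ and then invoke the closed graph theorem to get $\Bw^*\Ric\in\BL(H,U)$. The ARE only forces $\Ric[H_B]\sub\Dom(\Bw^*)$, where $H_B=(\alpha-A)^{-1}BU+\Dom(A)$, and the paper explicitly warns (below Theorem~\ref{BwARE}, citing Example 9.13.8 of [M02]) that $\Ric[H]\sub\Dom(\Bw^*)$ can fail even when a WR $J$-optimal state-feedback operator exists; it is a strictly stronger condition than ``$\wlim=0$'', not a consequence of the ARE. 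The paper's actual argument never touches $\Bw^*\Ric$ as a global operator: it lets $s=z\to+\infty$ in (\ref{ehXSX=}) and bounds the correction term by $2s\,\|\Ric\|\,\|(s-A)^{-1}B\|^2\le 2M^2\|\Ric\|\,s^{-2\eps}\to0$, using the hypothesis on $B$ on \emph{both} sides of $\Ric$; this gives $S=D^*JD$ and hence the uniform vanishing of the $\wlim$ term.

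The UR step is also not sound as written. With $r=\re s$ the prefactor $|s-r|=|\im s|$ in your resolvent identity is unbounded, and your justification for $\|C(s-A)^{-1}\|\le M'$ uniformly in $\im s$ --- that $(s-A)^{-1}$ is uniformly bounded $H\to H_1$ --- is false, since $\|(\alpha-A)(s-A)^{-1}\|$ grows like $|\im s|$ at fixed real part. Your fallback, Lemma 6.3.16 of [M02], is the statement that bounded $B$ or $C$ gives ULR, which is not the hypothesis here, and the claim that a not-maximally-unbounded $B$ yields an $\L^p$-kernel with $p<2$ is neither proved nor available in the excerpt. The paper disposes of this step by citing Lemma 2.5 of [C03] (due to Weiss [WC99]). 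Finally, a smaller omission in (i)$\THEN$(ii): normalizing $F$ to $0$ via (\ref{eAllqKF}) requires $E=(I-F)^{-1}\in\cG\BL(U)$, i.e., invertibility of the feedthrough of the UR map $\qX=I-\qF$; the paper obtains this from $\qX\in\cG\TIC_\infty$ together with Lemma 6.3.1(b1) of [M02], whereas you pass over it.
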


(The proof is given on p.~\pageref{pageproof-lBnotmax}.)

(The inequality can always be established for $\eps=0$;
 for $\eps=1/2$ it holds iff $B$ is bounded
 (in which case (i)--(v) are equivalent for any
  $J$-coercive problems
  and the $\wlim$ condition becomes redundant, by Theorem~\ref{BwARE}).
A sufficient condition is that $A$ is analytic
 and $(s_0-A)^{-\beta}B$ is bounded for some $\beta<1/2$, $s_0\in\rho(A)$,
 by Lemma 9.4.2(k) of [M02].)

It follows that in  the results of Section~\ref{sStabOpt},
 when $B$ is not maximally unbounded,
 the stabilizability condition (or FCC) is equivalent 
 to the solvability of the corresponding ARE,
 whose solution provides 
 the desired (UR) stabilizing state-feedback operator,
 as explained in Corollary \ref{cPosIRE}(b)\&(c).

We now apply the above equivalence of (i)--(v) to a detectable LQR problem,
 so that ``$\gUstar$-stabilizing'' can be ignored
 (as long as $\Ric\ge0$):
\begin{cor}[LQR,\ $\pmbold{B}$]\label{cBARE} %
{\bf (a)}
Assume that $B$ is not maximally unbounded,
 and let $R,T\gg0$, $Q\ge0$. %
Then, for each initial state $x_0\in H$, there is a control
 $u\in\L^2(\R_+;U)$ s.t.\ the cost
\begin{equation}
  \cJ(x_0,u):=\int_0^\infty \left(\p{y,Qy}_Y+\p{x,Tx}_H+\p{u,Ru}_U\right)\,dm,
\end{equation}
 is finite iff the ARE
 \begin{subequations}\label{eBARE}
   \begin{align}
    \label{eBARE1} 
  K^*SK &=A^*\Ric+\Ric A+C^*QC+ T,\\
    \label{eBARE2} 
      S &= D^*QD + R,\\
    \label{eBARE3}
      SK &=-(B^*_\w\Ric+D^*QC),
   \end{align}
 \end{subequations}
 has a nonnegative solution $\Ric\in\BL(H)$
 satisfying $\lim_{s\to+\infty}    B^*_\w\Ric(s-A)^{-1}B=0$. %

Assume that $(\Ric,S,K)$ is such a solution.
Then $K$ is the unique uniformly regular $J$-optimal state-feedback operator,
 and it is exponentially stabilizing
 and leads to the minimal cost, which equals $\p{x_0,\Ric x_0}$.

{\bf (b)} %
Instead of $T\gg0$, assume that $T\ge0$ and $Q\gg0$.
Then everything in (a) still holds except that
 ``Then $K$ ...'' holds for the smallest nonnegative solution $\Ric$ only
 (which exists whenever there are any solutions,
  equivalently, whenever the FCC holds),
 $K$ is SOS-stabilizing
 and $\qN,\qM$ become q.r.c. %
\end{cor}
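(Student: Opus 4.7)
The plan is to derive Corollary \ref{cBARE} by assembling the three main tools already established: Lemma \ref{lBnotmax} (for ``$B$ not maximally unbounded'' the FCC is equivalent to the existence of a $\gUstar$-stabilizing ARE solution), Theorem \ref{ARE} (which identifies such a solution with the Riccati cost operator and gives the uniformly regular optimal feedback), and Theorem \ref{QRCF-Uout} (output-FCC yields a SOS-stabilizing pair making $\qN,\qM$ q.r.c.).

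\textbf{Part (a).}  I first cast the LQR problem in the framework of Standing Hypothesis \ref{shgUstar}.  Replace $C,D,J$ by the augmented
\[
  \widetilde{C}=\bbm{Q^{1/2}C\\ T^{1/2}\\ 0},\qquad
  \widetilde{D}=\bbm{Q^{1/2}D\\ 0\\ R^{1/2}},\qquad
  \widetilde{J}=I,
\]
so that $\cJ(x_0,u)=\|\widetilde{y}\|_2^2$.  Since $T\gg0$ and $R\gg0$, finiteness of $\cJ(x_0,u)$ is equivalent to $u\in\L^2$ and $x\in\L^2$, i.e. to $u\in\dUexp(x_0)$; hence the FCC for $\gUstar:=\dUexp$ is exactly the hypothesis of the corollary.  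Also $\widetilde{D}^*\widetilde{J}\widetilde{D}=D^*QD+R\ge R\gg0$, so the problem is positively $J$-coercive.  By Lemma \ref{lBnotmax} the FCC is then equivalent to the existence of a $\dUexp$-stabilizing solution of the ARE (\ref{eARE}).  Computing $\widetilde{C}^*\widetilde{J}\widetilde{C}=C^*QC+T$ and $\widetilde{D}^*\widetilde{J}\widetilde{C}=D^*QC$, and using the ``moreover'' part of Lemma \ref{lBnotmax} which says the $\wlim$ in (\ref{eARE2}) converges uniformly to zero, the ARE (\ref{eARE}) reduces precisely to (\ref{eBARE}).  Nonnegativity follows from $\Ric=\qCClL^*\widetilde{J}\qCClL\ge0$; uniqueness of $K$ from $S\ge R\gg0$ being invertible in (\ref{eBARE3}); the formulas $u(t)=K_\w x(t)$, the cost $\p{x_0,\Ric x_0}$, and the upgrade from ``$\dUexp$-stabilizing'' to ``exponentially stabilizing'' come from Theorem \ref{ARE}(a) together with Theorem \ref{OptIRE}(b1).

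\textbf{Part (b).}  With only $T\ge0$ but $Q\gg0$, I instead use
\[
  \gUstar(x_0)\;=\;\{\,u\in\L^2(\R_+;U)\mid y\in\L^2,\ T^{1/2}x\in\L^2\,\},
\]
which fits Standing Hypothesis \ref{shgUstar} with $\qQR=\sbm{\qC\\ T^{1/2}\qA}$, $\qRR=\sbm{\qD\\ T^{1/2}\qB\tau}$ and $\sZ=\L^2$.  Again $\widetilde{D}^*\widetilde{J}\widetilde{D}\ge R\gg0$, so the problem is positively $J$-coercive; Lemma \ref{lBnotmax} applies once more, giving the equivalence with (\ref{eBARE}).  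For the ``smallest nonnegative'' statement I invoke a completing-the-square computation: for any nonnegative solution $(\Ric,S,K)$ of (\ref{eBARE}), the uniformly regular feedback $u(t)=K_\w x(t)$ (well-defined and admissible because $B$ is not maximally unbounded and $S\gg0$) yields a feasible control whose cost is at most $\p{x_0,\Ric x_0}$, hence the Riccati operator $\Ric_{\opt}$ (itself a nonnegative solution by Theorem \ref{ARE}(a)) satisfies $\Ric_{\opt}\le\Ric$, so $\Ric_{\opt}$ is the smallest.  For that solution Theorem \ref{QRCF-Uout}(iii) applied to the output-FCC produces a SOS-stabilizing pair making $\qN,\qM$ q.r.c., and the uniqueness of the $J$-optimal pair (Theorem \ref{PosJcKF} combined with $S\gg0$ and Lemma \ref{lAllqKF}) forces it to coincide with the one obtained from $\Ric_{\opt}$.

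\textbf{Main obstacle.}  The delicate point is the completing-the-square step in (b): I must verify that every nonnegative solution of (\ref{eBARE}) actually gives rise to a feasible, cost-bounded feedback, which requires checking that the $K$ produced by (\ref{eBARE3}) is a uniformly regular state-feedback operator whose closed-loop trajectories satisfy the RCC (\ref{eRCC2}), so that Definition \ref{dARE} applies and the comparison with $\Ric_{\opt}$ is legitimate.  This is exactly where the hypothesis ``$B$ not maximally unbounded'' and the uniform convergence $B_\w^*\Ric(s-A)^{-1}B\to0$ from Lemma \ref{lBnotmax} are needed.
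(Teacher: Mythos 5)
Your proposal is correct in substance and follows essentially the same route as the paper: augment the system so that the LQR cost becomes a standard quadratic output cost, invoke Lemma~\ref{lBnotmax} to pass between the FCC and a $\gUstar$-stabilizing ARE solution, and then use the nonnegative-solution machinery plus Theorem~\ref{QRCF-Uout} for the SOS/q.r.c.\ claims. The differences are mostly cosmetic: the paper uses a single augmentation $\tqC:=\sbm{\qC\cr \qA\cr0}$, $\tqD:=\sbm{\qD\cr \qB\tau\cr I}$ with $J=\diag(Q,T,R)$ and works over $\dUout^{\tALS}$ in both cases (also noting that $\tALS$ is exponentially detectable, hence estimatable), whereas you split into two set-ups ($\dUexp$ for (a), a custom $\gUstar$ for (b)) with the weights absorbed into $\widetilde C,\widetilde D$ and $\widetilde J=I$; both are fine since $R\gg0$. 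For the ``smallest nonnegative solution'' claim your completing-the-square comparison is exactly the argument in the paper's proof of Corollary~\ref{cPosIRE}(c), so you are re-deriving rather than citing. The one point you flag as the main obstacle but do not close — that \emph{every} nonnegative solution of (\ref{eBARE}) with the uniform-limit condition yields an admissible, SOS-stabilizing feedback — is genuinely needed, and not only for (b): in (a) the stated equivalence is with the existence of a \emph{nonnegative} solution, so without this step you only obtain ``FCC iff a $\dUexp$-stabilizing solution exists.'' In the paper this is precisely the content of Lemma~\ref{lhIRESgg0} combined with Corollary~\ref{cPosIRE}(c) (nonnegative $\Ric$ plus $S\gg0$ plus ``$B$ not maximally unbounded'' forces $\hqX(s)^{-1}$ to be uniformly bounded, hence $\qKF$ admissible and SOS-stabilizing; for the state version $\qAClL x_0\in\L^2$ then upgrades this to exponential stabilization via Lemma~\ref{lExpStable}). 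You should cite these rather than leave the admissibility verification open; note also that the RCC is not actually needed for the comparison $\Ric_{\opt}\le\Ric$ — nonnegativity of $\Ric$ alone gives the one-sided cost bound.
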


(The proof is given on p.~\pageref{pageproof-cBARE}.
Note that Theorems \ref{IntroTh-Uexp} and~\ref{IntroTh-Uout}
 are special cases of this
 and that $\Ric$ is unique in (a), being the $J$-optimal cost operator.)

The above FCC ``$\all x_0\ \ex u\in\L^2$ s.t.\ $\cJ<\infty$''
 is obviously equivalent to the state-FCC (\ref{eFCCUexp}) in (a)
 (to the output-FCC in (b) if $T=0$).

When can one remove the above $\wlim$ condition?
If $H_B\sub Z\sub H$ continuously,
 and $Z$ is a Banach space with $(s-A)^{-1}B\to0$
 in $Z$ as $s\to+\infty$ (this is true for $Z=H$),
 then $\Ric [Z]\sub\Dom(\Bw^*)$ is a sufficient condition;
 this also applies to indefinite problems.
In Section 9.4 of [M02] we give sufficient conditions in the
 case of an analytic semigroups; below we study the case $Z=H$.

Under certain assumptions, the ARE becomes equivalent to
 the following conditions (the {\em \BwARE}):\label{pageBwARE}
 $\Ric=\Ric^*\in\BL(H),\ \Ric[H]\sub\Dom(B_\w^*)$,
 and
 \begin{equation}
   \label{eBwARE} 
 (B^*_\w\Ric+D^*JC)^*(D^*JD)^{-1}(B^*_\w\Ric+D^*JC) = A^*\Ric+\Ric A+C^*JC. 
 \end{equation}
Moreover, then a unique $J$-optimal control is necessarily 
 given by an ULR state-feedback operator:
\begin{theorem}[\BwARE\ \pmbold{$\IFF J$}-optimal]\label{BwARE} %
Assume that at least one of (1.)--(4.) below holds:
    \begin{itemlist} %
      \item[(1.)] $B$ is bounded (i.e., $B\in\BL(U,H)$);

      \item[(2.)] $\qA B\in\L^1([0,1];\BL(U,H))$ and $C\in\BL(H,Y)$;
      \item[(3.)] $\qA Bu_0\in\L^2([0,1];H)$
       and $\Cw\qA Bu_0\in\L^2([0,1];Y)$ for all $u_0\in U$;
      \item[(4.)] {\bf (Stable case)} 
  $C\in\BL(H,Y)$, $D^*JC=0$, %
 $\qD\in\BL(U,Y)+\BL(U,\L^1(\R_+;Y))*$, and
 $\gUstar=\dUexp$ and $\qB\tau$ is stable
 (or $\gUstar=\dUout$ and $\qC$ is stable).
    \end{itemlist}
Then $\qD$ is ULR.
If $D^*JD\in\cG\BL(U)$, then the following are equivalent:
\begin{itemlist}
  \item[(i)] There is a unique $J$-optimal control over $\gUstar(x_0)$
 for each $x_0\in H$.
  \item[(ii)] There is a $J$-optimal state-feedback pair over $\gUstar$.
  \item[(iii)] The IRE or the ARE or the \BwARE\
 has a $\gUstar$-stabilizing solution.
\end{itemlist}
If (iii) holds, then the IRE, ARE and \BwARE\ have the same
 $\gUstar$-stabilizing solution (with $S=D^*JD$),
 hence then Theorem \ref{IRE} applies;
 moreover, then $K:=-(D^*JD)^{-1}(\Bw^*\Ric+D^*JC)$
 is the unique ULR $J$-optimal state-feedback operator.\noproof
\end{theorem}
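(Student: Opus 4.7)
The plan is to proceed in four stages. First I would verify that $\qD$ is ULR in each of the four cases (1.)--(4.). Case (1.) is Lemma 6.3.16 of [M02] cited earlier. For (2.), after passing to an exponentially weighted system, the identity $\hqD(s) - D = \Cw(s-A)^{-1}B = \int_0^\infty e^{-st} C\qA^t B\,dt$ combined with $C\qA B \in \L^1$ and dominated convergence gives $\|\hqD(s)-D\|\to0$ uniformly as $\re s \to +\infty$. Case (3.) is similar but uses the Paley--Wiener theorem and the joint $\L^2$-integrability $\Cw\qA B u_0 \in \L^2$ to majorize $|\hqD(s)u_0 - Du_0|^2$ by a decaying integral. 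Case (4.) follows from the assumed structure $\qD \in \BL(U,Y) + \BL(U,\L^1(\R_+;Y))*$ (convolution), which is directly ULR.

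Second, I would address (ii)$\IFF$(iii). From Theorem~\ref{ARE} (with weak regularity of $\ALS$), (ii) is equivalent to the ARE~(\ref{eARE}) having a $\gUstar$-stabilizing solution. So it suffices to show that, under one of (1.)--(4.) plus $D^*JD \in \cG\BL(U)$, the ARE and the $\BwARE$ have the same solution set: this amounts to showing $\Ric[H] \subset \Dom(B^*_\w)$ and $\wlim_{s\to+\infty} B^*_\w \Ric(s-A)^{-1}B = 0$ for any candidate Riccati operator. Using $\Ric = \qCClL^* J \qCClL$, in case (1.) both claims are trivial ($B^*_\w = B^*$ is bounded, and $(s-A)^{-1}B\to0$ weakly). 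In cases (2.)--(4.) I would dualise: the smoothness assumptions (and the ULR of $\qD$ and $\qF=0$ of the optimal feedback) translate via Lemma~\ref{lDualALS} and the factorization $\Ric = \qCClL^* J \qCClL$ into the statement $\qCClL^* J\qCClL[H] \subset \Dom(B^*_\w)$, together with the vanishing of the dual ``reachability tail'' $B^*_\w\qA^* y \to 0$ as $s\to+\infty$. The IRE--ARE equivalence then follows from Lemma~\ref{lAREIRE}, giving all three equations of (iii) simultaneously a $\gUstar$-stabilizing solution.

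Third, for (i)$\IFF$(ii), the implication (ii)$\THEN$(i) is Theorem~\ref{ARE}(a) (applied together with the $J$-coercivity that $D^*JD \in \cG\BL(U)$ gives on $\Dom(A)$). For (i)$\THEN$(ii), I would use Theorem~\ref{ALSopt} to obtain $\ALSopt$; under ULR of $\qD$ and invertibility of $D^*JD$, the generator $\Kopt$ of $\qKopt$ is then forced to be ULR with no feedthrough. The argument is that if $K$ were irregular, two distinct solutions of the equation $Su = -D^*JCx_0 - B^*_\w\Ric x_0 + \cdots$ would yield two distinct optimal controls, contradicting (i); this is where ULR of the resulting $K$ really uses $D^*JD \in \cG\BL(U)$ to solve for $K$ explicitly from $SK = -(B^*_\w\Ric + D^*JC)$.

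The main obstacle I anticipate is the second stage: proving that the $\wlim$ term in (\ref{eARE2}) actually vanishes (so that $S = D^*JD$) and that $\Ric[H] \subset \Dom(B^*_\w)$, especially in case (3.) where neither $B$ nor $C$ is bounded. The delicate point is that one must interchange a weak limit in $s$ with the unbounded operator $B^*_\w\Ric$; I expect this to require showing that $(s-A)^{-1}B$ lies in an appropriate intermediate space $Z$ with $\Ric[Z] \subset \Dom(B^*_\w)$ and $(s-A)^{-1}B \to 0$ in $Z$, as foreshadowed in the remark after Corollary~\ref{cBARE}. Once that is done, the explicit formula $K = -(D^*JD)^{-1}(B^*_\w\Ric + D^*JC)$ and its uniqueness fall out immediately from the invertibility of $S = D^*JD$.
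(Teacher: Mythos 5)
The paper supplies no in-text proof of this theorem: it is stated with \verb|\noproof| and attributed to Theorems 9.2.9 and 9.2.3 of [M02]. Judged against the route the surrounding machinery is built to support, your Stage 1 (ULR in cases (1.)--(4.)), your Stage 2 (reducing the ARE to the \BwARE\ via an intermediate space $Z$ with $\Ric[Z]\sub\Dom(\Bw^*)$ and $(s-A)^{-1}B\to0$ in $Z$ --- exactly the device foreshadowed in the remark after Corollary~\ref{cBARE}), and your use of Theorem~\ref{ARE} and Lemma~\ref{lAREIRE} for (ii)$\IFF$(iii) are all plausible and consistent with the paper.

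The genuine gap is in (i)$\THEN$(ii). Your argument --- that if $K$ were irregular, ``two distinct solutions'' of $SK=-(\Bw^*\Ric+D^*JC)$ would produce two distinct optimal controls --- does not engage the actual difficulty. Since $S=D^*JD$ is invertible, that equation has exactly one solution $K$, and uniqueness of the optimal control is automatic once $S$ is one-to-one (Theorem~\ref{IRE}(d)); nothing can be derived by contradiction here. What must actually be proved is that the operator $\Kopt=-(D^*JD)^{-1}(\Bw^*\Ric+D^*J\Cw)$ supplied on $\Dom(\Aopt)$ by Theorem~\ref{GenARE}(a)--(b) generates a \emph{well-posed, admissible} state-feedback pair: that $\qABKF$ is a WPLS and $I-\hqF$ is boundedly invertible on some right half-plane. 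This implication is false for general weakly regular, $J$-coercive systems --- the paper repeatedly cites Example 8.4.13 of [M02] and Example 11.5 of [WW97] precisely because a unique optimal control in WPLS form need not come from any well-posed feedback --- so hypotheses (1.)--(4.) must be invoked at this step, not only in your Stage 2. Concretely, one uses the smoothness of $\qA B$ (or the boundedness of $B$ or $C$) to show that $\hqK(s)(s-A)^{-1}B=-(D^*JD)^{-1}(\Bw^*\Ric+D^*J\Cw)(s-A)^{-1}B\to0$ as $\re s\to+\infty$, whence $\hqX$ is bounded with bounded inverse near $+\infty$ and the loop closes; compare Lemma~\ref{lBboundedARE} for case (1.) and the mechanism of Lemma~\ref{lhIRESgg0}. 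A smaller inaccuracy in the same stage: $D^*JD\in\cG\BL(U)$ does not give $J$-coercivity of $\ALS$ (that is a condition on the Popov Toeplitz operator $\PTO$, not on the feedthrough); what it gives is that the signature operator $S$ is one-to-one, which is what (ii)$\THEN$(i) actually requires.
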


(This follows from Theorems 9.2.9 and 9.2.3 of [M02];
 in the same section also
  further alternatives for (1.)--(4.) 
 and numerous further results are given.
As an example, if
 $\qA B u_0\in\L^1([0,1];H)\ \all u_0\in U$,
 then the state-FCC holds iff %
 there is $\Ric\ge0$ s.t.\ $\Ric [H]\sub\Dom(\Bw^*)$
 and $ (B^*_\w\Ric)^* B^*_\w\Ric = A^*\Ric+\Ric A+I$.
Moreover, then $K:=-\Bw^*\Ric\in\BL(H,U)$ is ULR and exponentially
 stabilizing.)

In contrast to Lemma~\ref{lBnotmax}, 
 we note that here
 {\bf (a)} we do not need positive $J$-coercivity
 to guarantee the existence of $K$
 (although $J$-coercivity and the FCC is sufficient for (i),
  by Theorem~\ref{J-coercive}),
 in particular, also the indefinite case is covered;
 {\bf (b)} the condition  $\wlim \Bw^*\Ric(s-A)^{-1}B=0$
 is replaced by the stronger assumption that
 $\Ric[H]\sub\Dom(\Bw^*)$,
 or equivalently, $\wlim_{s\to+\infty}B^*s(s-A)^{-1}\Ric x_0\ \all x_0\in H$
 must exist for all $x_0\in H$. %

\NotesFor{Section~\ref{sARE}} %
The necessity of equations (\ref{eARE}) for 
 SR stable $J$-coercive problems over $\dUout$
 was shown by Olof Staffans [S98b] (see Remark~5.2 of [S98c]). 
At the same time, (\ref{eARE1}) and (\ref{eARE3})
 were discovered independently by Martin Weiss and George Weiss [WW97]. %
In the same setting, we proved the sufficiency in [M97].

The above (new) frequency-domain proof for Theorem~\ref{ARE}
 is significantly shorter and simpler
 than our original time-domain proof of [M02]
 (Section 9.11).
However, the latter, technically more demanding 
 but closer to
 finite-dimensional ones, %
 can more easily be generalized to 
 finite-horizon, time-variant and/or nonlinear settings.

A number of further results, special cases and notes
 are given in Chapters 9--10 of [M02]
 (see, e.g., Section 10.1 for LQR results),
 including Riccati inequalities and
 relations to spectral and coprime factorizations.
Corresponding results on discrete-time AREs are presented
 in Chapter~14 of [M02].
See Section 9.13 of [M02] for examples where, e.g., 
 $\qD$ and $\qF$ are regular but $\Ric[H]\not\sub\Dom(\Bw^*)$
 (although  $\Ric[H_B]\sub\Dom(\Bw^*)$)
 or where $\qD$ is very regular but $\qF$ not regular at all.

Under mild assumptions, a minimizing state-feedback operator
 also solves the  ``$\H^2$ problem''
 (see Section 10.4 of [M02] for definition and proofs).

The fact that $\ALS$ is UR when $B$ is not maximally unbounded
 is due to G. Weiss [WC99], who applied it to the stable LQR problem. %
For exponentially detectable %
 systems with analytic semigroups,
 the results in [LT00]
 allow for significantly more unbounded $B$'s than
 Corollary~\ref{cBARE} does
 (they have the corresponding indefinite result too,
 both for highly coercive cost functions). 
However, there do not seem to exist similar results for non-analytic 
 semigroups, and Lemma~\ref{lBnotmax} covers more general cost functions.
Further optimization and ARE results for as general cost functions
 can be found in [LR95] and [IOW99], for finite-dimensional systems.

For Pritchard--Salamon\label{pagePS} systems\footnote{P--S systems
 are exactly the WPLS with a bounded input operator ($B$)
 that can be written as WPLS with a bounded output operator ($C$)
 by changing the state space,
 as shown in [M02], Lemma 6.9.4.} that are smooth, %
 most of Theorem~\ref{INTROcvKLQR0} was proved in [vK93], Theorem 3.10.
Theorem~\ref{BwARE}(1.) extends those results.
See also Theorem~\ref{Assumptions}.

\section{Integral Riccati equations (IREs) and optimal control}\label{sIRE} %

By Theorem~\ref{ALSopt}, a unique optimal control can always
 be given in WPLS form (i.e., as a ``generalized state feedback'',
 see Definition \ref{dWPLSform}).
Traditionally, this control is determined by finding the stabilizing 
 solution of the 
 corresponding (infinitesimal) algebraic Riccati equation (ARE);
 this was illustrated in the previous section.

However, without significant regularity assumptions, such as those above,
 the feedthrough operator (often normalized to $F=0$, as above)
 of the optimal state-feedback loop
  (``$u(t)=\Kw x(t)+Fu(t)$ for a.e.\ $t\ge0$'')
 need not exist.
In fact, sometimes this loop is even ill-posed!
Nevertheless, we can use certain integral Riccati equations
 (IREs) to characterize the optimal control. 

In Theorem~\ref{SIRE} we shall show that a unique optimal control
 is the one given by the $\gUstar$-stabilizing solution of the
 \SIRE\ (or \hSIRE). 
The ``generalized state-feedback loop'' ($u(t)=\Kw x(t)$ a.e.\ $t\ge0$)
 of this control %
 is well posed iff the IRE %
 has a $\gUstar$-stabilizing solution
 (equivalently, any of (i)--(vi) of Theorem~\ref{GenSpF} holds).
We reduce this condition to a stable spectral factorization problem
 (Theorem~\ref{GenSpF}(iv)).

These results form a direct generalization of the classical
 (algebraic) RE theory to an extent that cannot be covered by the
 (standard) ARE.
In addition, they will be used to prove the stabilization and
 factorization results of Section~\ref{sStabOpt}.

We start by noting that a control $\qKopt$ in WPLS form is optimal
 and $\Ric$ is the optimal cost operator
 iff $\Ric,\qKopt$ satisfy the \SIRE:
\begin{theorem}[\boldedSIRE\ \&\ \boldedhSIRE]\label{SIRE} %
Let $\qKopt$ be a control in WPLS form for $\ALS$,
 and let $\Ric=\Ric^*\in\BL(H)$, $\omega\ge\max\{\omega_A,\omega_{\Aopt}\}$.

Then $\qKopt x_0$ is $J$-optimal
 and $\Ric$ is its Riccati operator $\qCopt^*J\qCopt$
 iff $\Ric,\qKopt$ is a $\gUstar$-stabilizing solution
 of the following equations ({\em the \SIRE})\label{pageSIRE}
 for all $t\ge0$:
 \begin{subequations}\label{eSIRE}
   \begin{align}
    \label{eS2KSK=} %
  \qKoptt^*\qSt\qKoptt&={\qA^t}^*\Ric\qA^t-\Ric+\qCt^*J\qCt,\\ %
    \label{eS2XSX=}
  \qSt&:= \qDt^*  J\qDt
     +\qBt^*\Ric\qBt,\\
    \label{eS2XSK=}
  \qSt\qKoptt&=-\left(\qDt^*J\qCt+\qBt^*\Ric\qAt\right)
   \end{align}
 \end{subequations}
  
Moreover, equations (\ref{eSIRE}) hold iff the following equations
 ({\em the \hSIRE}) hold for some
 (equivalently, all) $s,z\in\C_{\omega}^+$: %
 \begin{subequations}\label{ehSIRE}
   \begin{align}
    \label{eS2hKSK=} 
  \hqKopt(s)^*\hqS(s,z)\hqKopt(z)
 &= (s-A)^{-*}\left(A^*\Ric+\Ric A+C^*JC\right)(z-A)^{-1},\\
    \label{eS2hXSX=} 
   \hqS(s,z)
 &:= \hqD(s)^*J\hqD(z) + (z+\bar s)B^*(s-A)^{-*}\Ric (z-A)^{-1}B,\\
    \label{eS2hXSK=}
  \hqS(s,z)\hqKopt(z)
 &= - \hqD(s)^*JC(z-A)^{-1}
  - B^*(s-A)^{-*}\Ric(s^*+A)(z-A)^{-1}.
   \end{align}
 \end{subequations}\itemlistnoproof
\end{theorem}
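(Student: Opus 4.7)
The statement contains two equivalences: the characterization of $J$-optimality via the time-domain SIRE (\ref{eSIRE}), and the translation of that system to its resolvent form (\ref{ehSIRE}). I would treat them in that order.

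The first equivalence rests on the quadratic Bellman identity
\[
\langle x_0, \Ric x_0\rangle = \langle \piOt y, J\piOt y\rangle + \langle x(t),\Ric\, x(t)\rangle
\]
along the optimally controlled trajectory $u = \qKopt x_0$, $y = \qC x_0 + \qD u$. Expanding this with $x(t) = \qAt x_0 + \qBt\qKoptt x_0$ and $\piOt y = \qCt x_0 + \qDt \qKoptt x_0$ converts it into a quadratic form in $(x_0, \qKoptt x_0)$ whose three blocks correspond precisely to the three SIRE equations: the $u$-block is the defining relation (\ref{eS2XSX=}) for $\qSt$, the cross block is (\ref{eS2XSK=}), and the $x_0$-block is (\ref{eS2KSK=}). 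In the necessity direction I would assume that $\qKopt x_0$ is $J$-optimal with $\Ric = \qCopt^*J\qCopt$ and, using Lemma \ref{lOptCost0}(b)(iv), show that the variational derivative of the Bellman cost functional with respect to $\qKoptt x_0$ must vanish, giving (\ref{eS2XSK=}); substituting this back into the Bellman identity then yields (\ref{eS2KSK=}) after polarization in $x_0$. The RCC (\ref{eRCC2}) follows from letting $t \to \infty$ in the Bellman identity, combined with $\qKopt x_0 \in \gUstar(x_0)$ and the value $\cJ(x_0, \qKopt x_0) = \langle x_0, \Ric x_0\rangle$ provided by Theorem \ref{ALSopt}.

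For the sufficiency direction I would read the same computation backwards: given $\Ric, \qKopt$ satisfying the three SIRE equations and the $\gUstar$-stabilizing condition, expanding $\langle \qCt x_0 + \qDt\qKoptt x_0, J(\cdot)\rangle + \langle \qAt x_0 + \qBt\qKoptt x_0, \Ric(\cdot)\rangle$ and collecting terms via (\ref{eS2KSK=})--(\ref{eS2XSK=}) recovers the Bellman identity exactly. Passing to $t\to\infty$ and invoking (\ref{eRCC2}) gives $\cJ(x_0, \qKopt x_0) = \langle x_0, \Ric x_0\rangle$. The analogous expansion for $u = \qKopt x_0 + \eta$ with $\eta \in \gUstar(0)$ reduces, after the SIRE substitutions, to $\cJ(x_0, u) = \langle x_0, \Ric x_0\rangle + \langle \qD\eta, J\qD\eta\rangle$, and Lemma \ref{lOptCost0}(b)(iii) then identifies $\qKopt x_0$ as $J$-optimal with Riccati operator $\Ric$.

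The equivalence of (\ref{eSIRE}) and (\ref{ehSIRE}) is a Laplace-transform calculation: each SIRE equation is a one-parameter operator identity in $t \ge 0$, and applying the Laplace transform in $t$ through $\int_0^\infty e^{-st}\qAt\, dt = (s-A)^{-1}$ (for $\re s > \omega_A$) converts the time-local blocks $\qAt, \qBt, \qCt, \qDt, \qKoptt$ into their resolvent forms $(s-A)^{-1}$, $(z-A)^{-1}B$, $C(s-A)^{-1}$, $\hqD(z)$, $\hqKopt(z)$. The $(z + \bar s)$ factor in (\ref{eS2hXSX=}) appears naturally from the two-parameter transform of $\qBt^*\Ric\qBt$ via $\int_0^\infty e^{-(\bar s + z)t}\,dt = (\bar s + z)^{-1}$. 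Joint analyticity in $(s,z) \in \C_\omega^+ \times \C_\omega^+$ yields the ``some iff all'' clause, and uniqueness of Laplace transforms inverts the implication. The main obstacle will be the rigorous domain-handling under only WPLS and WPLS-form hypotheses: one must interpret the SIRE operators on appropriate dense subspaces; one must construct the variational perturbation so that the perturbed control remains in $\gUstar(x_0)$ (which is where the WPLS-form assumption on $\qKopt$ is indispensable, allowing one to splice an optimally controlled tail onto an arbitrary head on $[0,t)$); and the passage to the limit $t \to \infty$ in the Bellman identity hinges precisely on the RCC (\ref{eRCC2}). It is exactly this last point that explains why the RCC is encoded into the definition of $\gUstar$-stabilizing rather than being derivable from the other SIRE equations alone.
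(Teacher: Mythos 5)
Your overall strategy for the time-domain equivalence is essentially the paper's own: the paper proves Theorem \ref{OptIRE} by exactly the dynamic-programming computation you describe (the splicing construction is Lemma \ref{lGengUstar}, the orthogonality test is Lemma \ref{lOptCost0}(b), and the RCC is what kills the boundary term in the limit $t\to\infty$), and then Lemma \ref{lhSIRE} converts the resulting closed-loop identities into the three SIRE blocks by the substitutions $\qCoptt=\qCt+\qDt\qKoptt$, $\qAoptt=\qAt+\qBt\qKoptt$ --- a purely algebraic step that your direct expansion of the Bellman identity reproduces. The Laplace-transform passage to (\ref{ehSIRE}), including the origin of the factor $(z+\bar s)$, likewise matches the paper's Lemma \ref{lIREiffhIREtechnique}. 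One caution on the necessity of the RCC: its first (cross) term does not come from the Bellman identity alone but needs the orthogonality relation $\p{\qB\tau^t\eta,\Ric\qAoptt x_0}\to-\p{\qD\eta,J\qCopt x_0}=0$, i.e.\ the $J$-optimality itself; your sketch glosses over this but it is easily repaired.

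The one genuine gap is your justification of the ``some (equivalently, all) $s,z\in\C_{\omega}^+$'' clause. Joint analyticity lets you propagate an identity from a set with a limit point to the whole product of half-planes, but it gives nothing from a single pair $(s_0,z_0)$, which is what ``some'' means here. The paper's argument (Lemma \ref{lIREiffhIREtechnique}(c)\&(e), used in $2^\circ$ of the proof of Lemma \ref{lhSIRE}) is algebraic rather than function-theoretic: using the resolvent identity $(z-A)^{-1}-(s-A)^{-1}=(s-z)(z-A)^{-1}(s-A)^{-1}$ together with $\hqD(z)-\hqD(s)=C[(z-A)^{-1}-(s-A)^{-1}]B$, one shows that the difference of the two sides of (\ref{eS2hXSK=}) (and then of (\ref{eS2hXSX=}) and (\ref{eS2hKSK=})) is unchanged when $z$ is varied, and, by taking adjoints, when $s$ is varied; hence validity at one point forces validity everywhere. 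Without this step your argument only establishes ``all $s,z$'' $\Leftrightarrow$ SIRE, which is strictly weaker than the stated theorem.
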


(This follows from Lemma~\ref{lhSIRE} and Theorem~\ref{OptIRE}.)

By {\em $\gUstar$-stabilizing}\label{pagegUstarstab} 
 we mean that $\qKopt x_0\in\gUstar(x_0)\ \all x_0\in H$
 and the {\em RCC} %
 (\ref{eRCC2}) holds (with $\qAopt$ in place of $\qAClL$). %
By Theorem~\ref{OptIRE}(b1),
 {\em $\dUexp$-stabilizing} is equivalent to
 ``$\ALSopt$ is exponentially stable''
 (equivalently, to $\qAopt x_0\in\L^2(\R_+;H)\ \all x_0\in H$).

Note that we have $\qKopt x_0\in\L^2_\omega(\R_+;U)$
 for some $\omega\in\R$ (Definition~\ref{dWPLS0}),
 hence some (unique) holomorphic 
$\shat{\smash{\rlap{$\qKopt$}}\phantom{ddd}} %
 :\C_\omega^+\to\BL(H,U)$
 satisfies $\smash{\shat{\qKopt x_0}}=\smash{\hqKopt} x_0$ on $\C_\omega^+$
 for all $x_0\in H$.

For a fixed $t>0$, the \SIRE\ (\ref{eSIRE}) 
 coincides with the (discrete-time) algebraic Riccati equation
 for the discretized system $\smash{\sbm{\qAt&\qBt\cr \qCt&\qDt}}$; %
 this fact provides an alternative proof for the theorem
 (see Theorem 14.1.6 and Proposition 9.8.7 of [M02];
  it also follows that ``all $t\ge0$'' is equivalent to ``some $t>0$'').

Thus, given the FCC and $J$-coercivity ($\PTO\in\cG\BL$),
 there is a unique optimal control, it is given in the WPLS form
 (i.e., as generalized state feedback,
  by Theorems \ref{J-coercive} and~\ref{ALSopt}),
 and it satisfies the \SIRE\
 and the \hSIRE.
But is it given by (well-posed) state-feedback?

The answer is ``not always'' (unless $\PTO\gg0$ or the system is rather smooth),
 by Example 8.4.13 of [M02]. %
The answer is positive iff the spectral factorization problem (iv) below
 has a solution, 
 equivalently, 
 iff the (optimally truncated Popov Toeplitz)
 operator $\qSt$ can be factorized as $\qXt^*S\qXt$,
 again equivalently,
 iff $\hqS(s,s)$ can be factorized as $\hqX(s)^*S\hqX(s)$:
\begin{theorem}[$\pmbold{\hqS=\hqX^*S\hqX\ \IFF\ \ex\qKF}$]\label{GenSpF} %
Assume that there is a unique $J$-optimal control for each $x_0\in H$.
Define $\Ric$ and $\qSt$ as in Theorem~\ref{SIRE}.
Then the following are equivalent:
\begin{itemlist}
  \item[(i)] There is a $J$-optimal state-feedback pair $\qKF$. %

  \item[(ii)] There are $\hqX\in\cG\H^\infty_\infty(U)$, $S\in\BL(U)$ s.t.\ 
 \begin{equation}
   \label{eXGenSpF}
  \hqX(s)^*S\hqX(s)=\hqD(s)^*J\hqD(s)+2\re sB^*(s-A)^{-*}\Ric(s-A)^{-1}B 
 \end{equation} %
  on some right half-plane
 (equivalently, on a strip $\C_\alpha^+\pois\C_\beta^+$,
   where $\omega_A\le\alpha<\beta<\infty$).

  \item[(iii)] There are $\qX\in\cG\TIC_\infty(U),\ S\in\BL(U)$
 that satisfy $\qXt^*S\qXt=\qSt\ \all t>0$.
  \item[(iv)] There are $\qXp\in\cG\TIC(U)$, $S=S^*\in\BL(U)$  %
 s.t.\ $S$ is one-to-one and $\qXp^*S\qXp=\qDp^*J_+\qDp$
 for some %
 $\alpha>\max\{0,\omega_A\}$. %

Here $\qD_+:=\sbm{\efn^{-\alpha\cdot}\qD\efn^{\alpha\cdot}\cr
 \efn^{-\alpha\cdot}\qB\tau\efn^{\alpha\cdot}}\in\TIC_{-\del}$ for some $\del>0$
 and $J_+:=\sbm{J&0\cr 0&2\alpha\Ric}$.

  \item[(v)] The IRE (\ref{eIRE}) has a $\gUstar$-stabilizing solution.

  \item[(vi)] The \hIRE\ (\ref{ehIRE}) has a $\gUstar$-stabilizing solution.

  \item[(vii)] There is an admissible state-feedback pair $\qKF$
 s.t.\ $\qKopt=\qKClL$. %

\end{itemlist}

Moreover, the following hold:
 \begin{itemlist} 
  \item[(a)] The solutions of (i)--(vii) are equal 
 (with $\qF=I-\qX$, $\qK=\qX\qKopt$,
  $\qXp:=\efn^{-\alpha\cdot}\qX\efn^{\alpha\cdot}$).

  \item[(b)] Given one solution $(\qX,S)$,
 all solutions are given by $(E\qX,E^{-*}SE^{-1})\ (E\in\cG\BL(U))$,
 and the operator $S$ is one-to-one.
If $\PTO$ is invertible, then so is $S$.
Also the rest of Theorem~\ref{IRE} applies.
\end{itemlist}\itemlistnoproof
\end{theorem}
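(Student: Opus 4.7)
The plan is to set up a dictionary among three pictures: the (discrete) time-domain identity $\qSt=\qXt^*S\qXt$, the frequency-domain identity $\hqS(s,s)=\hqX(s)^*S\hqX(s)$, and the state-feedback picture $\qKopt=\qKClL$ with $\qX=I-\qF$. The anchor is Theorem~\ref{SIRE}, whose h\SIRE\ relation (\ref{eS2hXSX=}) evaluated at $z=s$ gives
\[
\hqS(s,s)=\hqD(s)^*J\hqD(s)+2\re s\cdot B^*(s-A)^{-*}\Ric(s-A)^{-1}B,
\]
which is exactly the right-hand side of (ii). Thus (ii) is the frequency-domain avatar of (iii); the equivalence (ii)$\Leftrightarrow$(iii) is then just Theorem~\ref{TransferFct1} applied on the diagonal, once one notes that membership of $\hqX$ in $\cG\H^\infty_\infty(U)$ corresponds to $\qX\in\cG\TIC_\infty(U)$.

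Next I would exchange (iii) with (vii) (and hence with (i), since (i)$\Leftrightarrow$(vii) is immediate: uniqueness of the $J$-optimal control forces $\qKClL=\qKopt$ whenever $\qKF$ is $J$-optimal, and conversely an admissible pair with $\qKClL=\qKopt$ gives a $J$-optimal pair). Given admissible $\qKF$ with $\qKClL=\qKopt$, set $\qX:=I-\qF$ and $S:=\qX^{-t}\qS^t\qX^{-t}$ for some $t>0$; the \SIRE\ identities (\ref{eS2XSX=})--(\ref{eS2XSK=}) together with the closed-loop factorizations $\qKopt=\qM\qK$ and $\qDClL=\qD\qM$ force $\qXt^*S\qXt=\qSt$ for all $t$ and show $S$ is $t$-independent. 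Conversely, given $(\qX,S)$ as in (iii), define $\qF:=I-\qX$, $\qK:=\qX\qKopt$; admissibility of $\qKF$ follows from $\qX\in\cG\TIC_\infty(U)$, and a direct computation using the \SIRE\ for $\qKopt$ yields $\qKClL=\qM\qK=\qKopt$, hence the pair is $J$-optimal. The shift argument (iii)$\Leftrightarrow$(iv) is cosmetic: multiply signals by $\efn^{-\alpha\cdot}$ to force $\qD_+,\qB\tau_+\in\TIC_{-\delta}$, and check that $\qDp^*J_+\qDp=\qSt_{\text{shifted}}$ by expanding $J_+=\sbm{J&0\cr 0&2\alpha\Ric}$; the positive weight $2\alpha\Ric$ on the shifted $\qB\tau$-block reproduces the $\qBt^*\Ric\qBt$ term of (\ref{eS2XSX=}) after the Laplace change of variable $s\mapsto s+\alpha$. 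Formulation (iv) then places the whole problem inside the stable spectral-factorization setting of Theorem~\ref{SpF1}.

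The equivalences with (v) and (vi) I would defer to Definition~\ref{dIRE} and Theorem~\ref{IRE}, which are tailored to express the pair $(\qX,S,\qK)$ and the Riccati operator $\Ric$ as a single integral equation (respectively its transfer-function image); once the translation $\qF=I-\qX$, $\qK=\qX\qKopt$ is in hand, the IRE collapses to the identities already verified, and conversely a $\gUstar$-stabilizing IRE solution reconstructs $\qX,S$. For part (a) the formulas $\qF=I-\qX$ and $\qK=\qX\qKopt$ (equivalently $\qXp=\efn^{-\alpha\cdot}\qX\efn^{\alpha\cdot}$ after the shift) fall out of the construction above. For (b), if $(\qX,S)$ and $(\tilde\qX,\tilde S)$ both solve (iii), then $E:=\tilde\qX\qX^{-1}\in\cG\TIC_\infty(U)$ satisfies $\qX^*E^*\tilde SE\qX=\qX^*S\qX$, hence $E^*\tilde SE=S$ as elements of $\TIC_\infty$; since the right side is a constant, $E$ is forced to be a constant in $\cG\BL(U)$ (this is the step that consumes the structural information of $\TIC_\infty$ most heavily, and is where I expect to lean on Lemma~\ref{lAllqKF} to identify the freedom). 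Injectivity of $S$ follows because any $u_0\in\ker S$ gives an input $\qX^{-1}u_0\in\gUstar(0)$ with $\langle\qD\cdot,J\qD\cdot\rangle=0$, which, combined with the RCC, yields zero control; invertibility from $\PTO$-invertibility is immediate once $S=\qX^{-*}\PTO\qX^{-1}$ is read off the stable limit of the factorization. The main technical obstacle is verifying that the frequency-domain factorization in (ii) actually produces a $\qX\in\cG\TIC_\infty(U)$ (rather than merely a holomorphic right-half-plane factorization), which is exactly where the shift trick (iv) and Theorem~\ref{SpF1}(a) do the heavy lifting.
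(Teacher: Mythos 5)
Your dictionary among the three pictures is the right organizing idea, and your treatment of (i)$\IFF$(vii) and of the easy directions (i)$\THEN$(ii)$\THEN$(iv) matches the paper. But two steps you dismiss as routine are where all the work lies, and as written they are gaps. The larger one: you call (iii)$\IFF$(iv) ``cosmetic.'' The direction (iii)$\THEN$(iv) is indeed a change of variables, but (iv)$\THEN$(i) is the hardest implication of the theorem and the one that makes it usable. Condition (iv) provides only a stable spectral factorization of the single time-invariant operator $\qDp^*J_+\qDp$, i.e.\ the identity $\hqX(s)^*S\hqX(s)=\hqS(s,s)$ on the one vertical line $\re s=\alpha$, whereas (iii) demands $\qXt^*S\qXt=\qSt$ for every $t>0$ and (ii) demands the identity on a whole half-plane; since $s\mapsto\hqX(s)^*S\hqX(s)$ is not holomorphic you cannot propagate the identity off the line by continuation, and nothing in your argument ties the factor $\qXp$ back to the optimal control. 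The paper closes this by forming the shifted systems $\ALSp$ and $\ALSopt^+$, invoking Theorem~\ref{OptIRE}(f) (the $r$-shifted form (\ref{e0yJyL2r})--(\ref{e0ya=-2rxa})) to show that $\ALSopt^+$ is $J_+$-optimal for $\ALSp$ over $\dUoutp$, proving uniqueness there, and only then extracting the feedback pair and shifting back. This mechanism is absent from your proposal, and it is precisely the direction that, combined with Theorem~\ref{SpF1}(a), later yields Theorem~\ref{PosJcKF}.

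Second, in (iii)$\THEN$(vii) you assert that admissibility of $\qKF$ ``follows from $\qX\in\cG\TIC_\infty(U)$.'' It does not: admissibility requires that $\qABKF$ be a WPLS, i.e.\ that $\qK:=\qX\qKopt$ satisfy $\pi_+\tau^t\qK=\qK\qA^t$ and $-\qK\qB=\pi_+\qX\pi_-$, and a control in WPLS form need not be given by any well-posed state feedback (Example 8.4.13 of [M02] is exactly such a counterexample, so something substantive must be verified). The paper derives these intertwining identities from the IRE in Lemma~\ref{lGenSpF}(b), using that $S$ is one-to-one; you need that computation. A smaller issue of the same kind: (ii)$\IFF$(iii) is not ``just Theorem~\ref{TransferFct1} on the diagonal,'' because the term $\qBt^*\Ric\qBt$ is not time-invariant; the paper passes between the two via the Laplace-transform polarization lemmas (Lemmas \ref{lhfhg=0fg=0} and \ref{lhFG=hfg}) together with a density argument in $\W^{1,2}_\omega$. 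Your sketch of (b) has a related soft spot: from $E^*\tilde SE=S$ with $E$ causal and $E^*$ anticausal you cannot immediately conclude $E$ is a constant without using injectivity of $S$ and the structure supplied by Theorem~\ref{IRE} and Lemma~\ref{lAllqKF}.
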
 %

(The proof is given by Lemma~\ref{lGenSpFproof}.
See (\ref{eqK+}) for $\qK$ (and $\qKClL$) in terms of
 $\qX$, $S$, $\Ric$, %
 $\ALS$ and $J$.)
If $\gUstar$ equals $\dUout$ or $\dUexp$, then
 one more equivalent condition is that $\qD$ has a
 ``$J$-optimal factorization'' %
 (a generalization of spectral factorization),
 by Theorem 9.14.3 of [M02].
Another equivalent condition is that
  $s\hqK(s)B_-,s\hqKClL(s) B_-\in\H^\infty_\infty$
 (as functions $s\to\BL(U)$), as will be shown in [M03b];
 here $B_-:=A^{-1}B\in\BL(U,H)$
 and $\hqK(s)=K(s-A)^{-1}$, where $K$ is determined by
 the so called reciprocal ARE.

The spectral factorization condition (iv) seems independent of $\gUstar$.
Of course, that cannot be the case: the information on $\gUstar$
 is carried by $\Ric$.

When $\PTO\gg0$ (and, e.g., $\gUstar=\dUexp$ or $\gUstar=\dUout$),
 we can show that condition (iv) can be satisfied whenever the FCC holds
 (p.~\pageref{pageproof-PosJcKF}).
This will establish Theorem~\ref{PosJcKF}
 and the other results presented Section~\ref{sStabOpt}.

The {\em signature operator}\label{pagesignature} %
 (indicator) $S$ has obviously
 the same definiteness as $\qSt$,
 which in turn inherits (a restriction of) that of
 the Popov Toeplitz operator $\PTO$,
 as noted in Lemma~\ref{lqSt=PTO.Pt}. %
In particular, $\PTO\ge0$ (resp.\ $>0$, $\gg0$, $\in\cG\BL$, is one-to-one) $\THEN$
 $S\ge0$ (resp.\ $>0$, $\gg0$, $\in\cG\BL$, is one-to-one). 
In fact, at least if $\gUstar=\dUexp$
 (or $\gUstar=\dUout$ and $\qN,\qM$ are q.r.c.), %
 then  also the converse implications hold
 and $\qX u\in\L^2 \lland 
   \cJ(0,u)=\p{\qX u,S\qX u}=\p{u,\qSt u}\ \all u\in\gUstar(0)$.
See Theorem 9.9.1(f2)\&(h)\&(k), Lemma 9.10.3, %
 Theorem 8.4.5(d) and pp.\ 482\&387 of [M02] for details.

By Theorem~\ref{GenSpF}, an admissible state-feedback pair $\qKF$ for $\ALS$
 is $J$-optimal iff it is a $\gUstar$-stabilizing solution 
 of the IRE (with some $\Ric,S$):
\begin{defin}[A $\gUstar$-stabilizing solution of the IRE (or \hIRE)
 (\pmbold{$(\Ric,S,\qKF),\qX,\qM,\qN,\ALSClL$})]\label{dIRE} %
We call $\Ric$ (or $(\Ric,S,\qKF)$)
 a {\em solution of the Integral Riccati Equation (IRE)}\label{pageIRE}
\index{IRE|emph}\index{IRE|emph}\index{solution!of the IRE}%
\index{regular!P@$\Ric$}\index{stabilizing!P@$\Ric$}%
\index{admissible!P@$\Ric$}\index{J-optimal@$J$-optimal!P@$\Ric$}\index{stable!P@$\Ric$}
 (induced by $\ALS$ and $J$)
 iff the IRE
 \begin{subequations}\label{eIRE}
   \begin{align}
    \label{eKSK=} %
  \qKt^*S\qKt&={\qA^t}^*\Ric\qA^t-\Ric+\qCt^*J\qCt,\\ %
    \label{eXSX=}
  \qXt^* S\qXt&= \qDt^*  J\qDt
     +\qBt^*\Ric\qBt,\\
    \label{eXSK=}
  \qXt^*S\qKt&=-\left(\qDt^*J\qCt+\qBt^*\Ric\qAt\right)
   \end{align}
 \end{subequations}
 (here $\qX:=I-\qF$)
  is satisfied for all $t>0$, %
 and $\Ric =\Ric^*\in\BL(H)$, $S=S^*\in\BL(U)$,
 $\qK\in\BL(H,\L^2_\loc(\R_+;U))$, and $\qF\in\TIC_\infty(U)$.

We call $\Ric$ {\em admissible}\label{pageadmissible2} or {\em $\gUstar$-stabilizing}\label{pagegUstarstabilizing2}
 if $\qKF$ is (see Definition~\ref{dARE}).

{\em Solutions of the \hIRE} are defined in the same way, except %
 that instead of (\ref{eIRE}) we require 
 that $\qABKF$ is a WPLS and that %
 the following \hIRE\ is
 satisfied for some $s=z\in\C_{\omega_A}^+$:
 \begin{subequations}\label{ehIRE}
   \begin{align}
    \label{ehKSK=} 
  K^*SK &=A^*\Ric+\Ric A+C^*JC,\\
    \label{ehXSX=} 
   \hqX(s)^*S\hqX(z)
 &= \hqD(s)^*J\hqD(z) + (z+\bar s)B^*(s-A)^{-*}\Ric (z-A)^{-1}B,\\
    \label{ehXSK=}
  \hqX(s)^*S K(z-A)^{-1}  
 &=- \hqD(s)^*JC(z-A)^{-1}
  - B^*(s-A)^{-*}\Ric(s^*+A)(z-A)^{-1}.
   \end{align}
 \end{subequations}
\end{defin}

(By Lemma~\ref{lhIRE}, this implies that (\ref{ehIRE})
 actually holds for all $s,z\in\rho(A)$.
Note from the definition that we only study the self-adjoint solutions.)

As in Definition~\ref{dAdmKF0},
 for admissible $\Ric$,  we denote
 the corresponding closed-loop system by $\ALSClL$ %
 and set $\qX:=I-\qF\in\TIC_\infty(U)$, $\qM:=\qX^{-1}\in\cG\TIC_\infty(U)$,
 $\qN:=\qDClL:=\qD\qM\in\TIC_\infty(U,Y)$.

It suffices to require (\ref{eIRE}) for some $t>0$
 in Theorem~\ref{GenSpF}(v),
 by the comments below Theorem~\ref{OptIRE}.

From Theorem~\ref{ARE} (or Definition~\ref{dARE})
 we observe that any admissible
 (resp.\ $\gUstar$-stabilizing) solution of the ARE
 is an admissible
 (resp.\ $\gUstar$-stabilizing) solution of the IRE
 (the converse holds iff $\qD,\qF$ are WR and $F=0$).

If $B$ is bounded, $C=\sbm{\tC\cr 0},\ D=\sbm{0\cr I},\ J=\sbm{I&0\cr 0&I}$
 (hence $\cJ(x_0,u)=\|u\|_2^2+\|\tC x\|_2^2$),
 then, by (\ref{eBARE}),
 we get $S=I,\ K=-B^*\Ric$,
 hence then the ARE reduces to $\Ric BB^*\Ric=A^*\Ric+\Ric A+\tC^*\tC$.
This ARE is equivalent to (\ref{eKSK=}), which becomes
\begin{equation}
  \label{eIRE-Gibson}
\Ric x_0 = \qAt^*\Ric \qAt x_0
   + \int_0^t {\qA^s}^*(\tC^*\tC-\Ric BB^*\Ric)\qA^s x_0\,ds
    \ \all x_0\in H,
\end{equation}
 familiar from classical results,
 such as equation (4.26) of [G79] %
 (take $s=0,\ D=\tC^*\tC,\ Q=I$).

If $B$ is bounded and $D^*JD$ is invertible,
 then all the above REs (and the ones
 presented in Theorem~\ref{OptIRE}) are equivalent,
 and it suffices to verify the equations
 (since every solution generates an admissible state-feedback pair $\qKF$):
\begin{lemma}[Bounded $B$: ARE $\IFF$ \SIRE]\label{lBboundedARE} %
Assume that $B\in\BL(U,H)$.

{\bf (a)}
If $(\Ric,S,K)$ is a WR solution of the ARE, then it is admissible
 and ULR and $(\Ric,\qKClL)$ solve the \SIRE, \hSIRE, \hIRE\ and IRE.
If $D^*JD\in\cG\BL(U)$, then
 any solution of the ARE is WR.

{\bf (b)}
Conversely, if $\qKopt$ is a control in WPLS form 
 and $(\Ric,\qKopt)$ solve the \SIRE\ or the \hSIRE,
 then $\qKopt=\qKClL$ for some $K$ which is as in (a).
\end{lemma}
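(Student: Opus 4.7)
The plan is to exploit two simplifications afforded by $B \in \BL(U,H)$: first, $(s-A)^{-1}B \to 0$ strongly as $s \to +\infty$, which forces $\wlim_{s\to+\infty} B^*\Ric(s-A)^{-1}B = 0$ and hence reduces (\ref{eARE2}) to $S = D^*JD$; second, any WR state-feedback operator $K\in\BL(\Dom(A),U)$ automatically yields an ULR closed-loop system, since with bounded $B$ the perturbation $A+BK$ generates a $C_0$-semigroup on $\Dom(A)=\Dom(\Aopt)$ and the feedthrough $F=0$ can be maintained.

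For part (a), given a WR solution $(\Ric,S,K)$ of the ARE, first verify admissibility: $\qABKF$ is a WPLS by hypothesis with $F=0$, and $B$ bounded together with Lemma~\ref{lABC} shows $I-\hqF\in\cG\Hoo_\infty(U)$ on a right half-plane, so $\qKF$ is admissible; ULR follows from boundedness of $B$. The main step is the passage from differential to integral form. For $x_0\in\Dom(A)$, define $\varphi(t):=\p{\qA^t x_0,\Ric\qA^t x_0}_H$ and compute $\varphi'(t)=\p{\qA^t x_0,(A^*\Ric+\Ric A)\qA^t x_0}_H$; substituting (\ref{eARE1}) and integrating from $0$ to $t$ yields (\ref{eKSK=}) on $\Dom(A)$, then extends by density. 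Analogous computations for the mixed term $\p{\qA^t x_0,\Ric\qB\tau^t u}$ and the pure-input term $\p{\qB\tau^t u,\Ric\qB\tau^t u}$, using (\ref{eARE3}) and (\ref{eARE2}) together with the fundamental theorem of calculus in $H_{-1}$ (Lemma~\ref{lABC}(b)), yield (\ref{eXSK=}) and (\ref{eXSX=}). The \hSIRE\ is the Laplace transform of the \SIRE; the \hIRE\ follows from the \hSIRE\ at $s=z$ by identifying $\hqKopt=\hqM\hqK$ and extending sesquianalytically to all $(s,z)$. If $D^*JD\in\cG\BL(U)$, then $S=D^*JD$ is invertible, (\ref{eARE3}) determines $K=-S^{-1}(B^*\Ric+D^*JC)$ uniquely, and $B^*\Ric\in\BL(H,U)$, $D^*JC\in\BL(\Dom(A),U)$ give WR-ness.

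For part (b), suppose $(\Ric,\qKopt)$ solves the \SIRE\ or the \hSIRE. By Theorem~\ref{SIRE}, $\qKopt$ is $J$-optimal with cost operator $\Ric$, and the WPLS $\ssysbm{\qAopt\crh\qKopt}$ exists by hypothesis. Applying Lemma~\ref{lABC}(c) to this WPLS produces a generator $K\in\BL(\Dom(\Aopt),U)$ with $\hqKopt(s)=K(s-\Aopt)^{-1}$, and since $B\in\BL(U,H)$ gives $\Dom(\Aopt)=\Dom(A+BK)=\Dom(A)$, we obtain $K\in\BL(\Dom(A),U)$. Set $\qKF:=\bsysbm{\qK\|0}$ where $\qK$ is generated by $K$; admissibility and ULR of $\qKF$ follow as in (a), and $\qKClL=\qKopt$ by construction. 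To recover the ARE, differentiate (\ref{eKSK=}) strongly at $t=0$ on $\Dom(A)$ (where the right-hand side is differentiable since $\qA^t$ preserves $\Dom(A)$ and $\Ric\in\BL(H)$), giving (\ref{eARE1}); similarly (\ref{eARE2})--(\ref{eARE3}) emerge from differentiating (\ref{eXSX=}) and (\ref{eXSK=}) at $t=0$ and using that the $\wlim$ term vanishes by boundedness of $B$.

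The hard part will be the admissibility verification and the reversibility of the differentiation step in (b): one must ensure that the integral identities (\ref{eIRE}) really have a pointwise derivative on a sufficiently rich domain, which is where boundedness of $B$ is essential --- it guarantees $\qB\tau^t u$ is differentiable in $t$ with values in $H$ (not merely $H_{-1}$), so the quadratic forms in (\ref{eXSX=})--(\ref{eXSK=}) can be differentiated at $t=0$ to reproduce (\ref{eARE2})--(\ref{eARE3}) without domain complications. Without boundedness this step breaks and only the integral (IRE) version survives.
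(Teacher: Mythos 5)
Your part (a) reaches the right conclusion but by a more computational route than the paper, which simply chains its own lemmas: bounded $B$ gives uniform regularity, so a WR solution has $\hqX(+\infty)=I\in\cG\BL(U)$ and is therefore admissible; then Lemma~\ref{lAREIRE} converts the ARE into the \hIRE\ and Lemma~\ref{lSIREqKF=IRE} converts that into the \SIRE. Your time-domain derivation via $\varphi(t)=\p{\qA^t x_0,\Ric\qA^t x_0}$ is exactly the mechanism of Lemma~\ref{lAPA+CJC} and is fine for (\ref{eKSK=}), but be aware that the mixed and pure-input identities (\ref{eXSK=})--(\ref{eXSX=}) require the full apparatus of Lemma~\ref{lIREiffhIREtechnique} (or its time-domain analogue), not just the fundamental theorem of calculus in $H_{-1}$; that is more than a routine detail, though it does go through.

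The genuine gap is in (b), at the step ``admissibility and ULR of $\qKF$ follow as in (a).'' In (a) the hypothesis that $(\Ric,S,K)$ is a \emph{WR} solution already presupposes, by Definition~\ref{dARE}, that $\qABKF$ is a WPLS; your admissibility argument there only adds the invertibility of $I-\hqF$ on a right half-plane. In (b) no such hypothesis is available. Lemma~\ref{lABC}(c) applied to the WPLS with semigroup $\qAopt$ and output map $\qKopt$ yields only $K\in\BL(\Dom(\Aopt),U)$ admissible for the \emph{closed-loop} semigroup. You must still prove (i) that $\Dom(\Aopt)=\Dom(A)$ (only the inclusion $\Dom(\Aopt)\sub\Dom(A)$ is immediate from $Ax=\Aopt x-BKx\in H$; the reverse needs an argument), (ii) that $K$ is an admissible observation operator for the \emph{open-loop} semigroup $\qA$, and (iii) that $\hqF(s)=K(s-A)^{-1}B$ is the transfer function of a well-posed $\qF$ with $I-\qF\in\cG\TIC_\infty(U)$. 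This is precisely where boundedness of $B$ does its real work, and it is the content of Lemma 8.3.18 of [M02], which the paper cites for exactly this purpose (``a control in WPLS form is necessarily given by a state-feedback pair''). Pointing back to (a) is circular here, and your closing remark that this is ``the hard part'' acknowledges the issue without closing it.

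A smaller slip in (b): you invoke Theorem~\ref{SIRE} to conclude that $\qKopt$ is $J$-optimal, but that theorem's equivalence requires the solution to be $\gUstar$-stabilizing, which part (b) does not assume. Since $J$-optimality is not actually needed for the conclusion of (b), this is harmless once deleted, but as written it is an unjustified appeal.
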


(The proof is given on p.~\pageref{pageproof-lBboundedARE}.)

Most results of Section~\ref{sStabOpt} provide equivalent
 conditions for the existence of a certain kind of
 stabilizing state-feedback pair $\qKF$.
By (c), one more equivalent condition is that the IRE
 has a nonnegative admissible solution:
\begin{cor}[$\Ric\ge0 \IFF $minimizing]\label{cPosIRE} %
{\bf (a)}
In any of the results mentioned in Corollary~\ref{cMTICinftyKF+}(a)[(b)]
 below,
 one more equivalent condition is that the corresponding IRE[s]
 (equivalently, \SIRE[s])
 has a $\gUstar$-stabilizing solution.

{\bf (b)}
If $B$ [and $C^*$] is not maximally unbounded, then a further equivalent
 condition is that the corresponding ARE[s]
 has a $\gUstar$-stabilizing solution.
Moreover, $S=D^*JD\gg0$,
 and we can have $\qKF$ generated by $\bsysbm{K\|0}$,
 where $K$ is from the ARE
 [and $\qHG$ by $\ssysbm{\Hgen\crh0}$, $\Hgen=\tK^*$, where $(\tRic,\tS,\tK)$
  is the solution of the dual ARE].

{\bf (c)} Exclude Theorem~\ref{PosJcKF} from the results mentioned above. %
Then the existence of an admissible
 nonnegative solution[s] of the IRE[s] 
 (or any nonnegative solution[s] of the ARE[s] in (b)) 
 is another equivalent condition.
Moreover, any nonnegative UR solution of the ARE is admissible;
 if $\dim U<\infty$ [$\dim Y<\infty$ for the dual IREs]
 or $B$ [and $C^*$] is not maximally unbounded,
 then any nonnegative solution of the ARE is admissible.
Any admissible solution is SOS-stabilizing.

(A nonnegative admissible solution of the state-IRE (or of the state-ARE)
 is $\dUexp$-stabilizing (hence unique);
 the $\dUout$-stabilizing solution of the output-IRE
 (or output-ARE), if any,
 is the smallest admissible nonnegative solution.)
\end{cor}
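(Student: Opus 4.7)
The plan is to deduce Corollary \ref{cPosIRE} by concatenating the master equivalences already proved, with the standing observation that every stabilization/factorization result that will be cited in Corollary \ref{cMTICinftyKF+}(a) uses a cost function chosen to make $\PTO\gg0$ (typically $\|u\|^2+\|x\|^2$ or $\|u\|^2+\|y\|^2$, so also $J\ge0$, $\vartheta=0$). Under this hypothesis, the FCC of each listed result becomes the existence of a $J$-optimal state-feedback pair via Theorem \ref{PosJcKF}; the corollary then packages the algebraic certificates of this pair.

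For part (a) I concatenate Theorem \ref{PosJcKF} with the chain (i)$\IFF$(v)$\IFF$(vi) of Theorem \ref{GenSpF}, which identifies the existence of a $J$-optimal $\qKF$ with a $\gUstar$-stabilizing IRE (resp.\ \hIRE) solution. The SIRE version is handled by Theorem \ref{SIRE}: an optimal $\qKClL$ solves the SIRE, and conversely Theorem \ref{GenSpF}(vii) together with uniqueness (Theorem \ref{J-coercive}) forces $\qKopt=\qKClL$, merging SIRE and IRE conditions. The joint/dual case (brackets) is the same argument applied to $\ALS^\rmd$, as in Theorem \ref{JointlyUout}.

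Part (b) is Lemma \ref{lBnotmax} specialized to the present positively $J$-coercive setting: when $B$ is not maximally unbounded the system is UR, the weak-limit in (\ref{eARE2}) vanishes uniformly, so $S=D^*JD$ and IRE and ARE solutions correspond via the pure feedback $\bsysbm{K\|0}$ with $K=-(D^*JD)^{-1}(\Bw^*\Ric+D^*JC)$; uniform positivity $S\gg0$ is inherited from $\PTO\gg0$ via the signature remarks below Theorem \ref{GenSpF}. For the bracketed "$C^*$" claim one runs the same argument on $\ALS^\rmd$ to get $\Hgen=\tK^*$ and couples the two pure pairs as in Corollary \ref{cJointlySD} and Theorem \ref{JointlyUout}.

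For part (c), the direction ``$\gUstar$-stabilizing $\Rightarrow$ admissible nonnegative'' is immediate since $J\ge0$ gives $\Ric=\qCClL^*J\qCClL\ge0$. The converse, which is the heart of the corollary, rests on integrating (\ref{eKSK=}) along closed-loop trajectories via Lemma \ref{luc} to obtain the telescoping identity
\begin{equation*}
\p{x_0,\Ric x_0}-\p{x(t),\Ric x(t)}=\int_0^t\bigl(\p{y,Jy}+\p{u-\qKClL x_0,S(u-\qKClL x_0)}\bigr)\,ds;
\end{equation*}
with $\Ric,J\ge0$ and $S\gg0$ this forces $y\in\L^2$, hence $u\in\L^2$ because the $\|u\|^2$ term sits inside $J$, placing $u=\qKClL x_0\in\gUstar(x_0)$, and the RCC (\ref{eRCC2}) is extracted from the same identity in the limit $t\to\infty$. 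Automatic admissibility of nonnegative UR ARE solutions is Lemma \ref{lBboundedARE}(a); under $\dim U<\infty$ or ``$B$ not maximally unbounded'' it follows because the candidate $K$ is UR (Lemma \ref{lBnotmax}) and any spurious singularity of $\hqM$ on $\CRHP$ is ruled out by Corollary \ref{cUout-Meromorphic}. The minimality/uniqueness parenthetical follows by cost comparison: any other admissible nonnegative $(\Ric',S',\qKFprime)$ gives $\p{x_0,\Ric x_0}\le\p{x_0,\Ric' x_0}$ by optimality of $\Ric$, whence $\Ric\le\Ric'$; in the state-IRE case both feedbacks make $\qAClL$ exponentially stable, so the $t\to\infty$ limit of the telescoping identity becomes a Lyapunov equation that pins $\Ric$ down uniquely. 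The main obstacle I anticipate is precisely the RCC verification in the $\dUout$ setting: extracting $\p{x(t),\Ric x(t)}\to0$ from the telescoping identity without a priori exponential stability will require a careful weak-compactness / Datko-type argument leveraging $S\gg0$ and the $\L^2$-bound on the closed-loop output.
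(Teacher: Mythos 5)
Your treatment of (a) and (b) matches the paper: (a) is exactly the concatenation of Theorem~\ref{PosJcKF} with Theorem~\ref{GenSpF}(i)$\IFF$(v), plus the observation that a $\gUstar$-stabilizing \SIRE\ solution already implies the FCC; and (b) is indeed just (a) combined with Lemma~\ref{lBnotmax}. The problem is in part (c), and it is the step you yourself flag as ``the main obstacle'': you try to prove that an arbitrary admissible nonnegative solution of the output-IRE satisfies the RCC (\ref{eRCC2}), i.e.\ is itself $\dUout$-stabilizing. That is not just hard --- it is false. Example~\ref{exaUoutUexpstab} exhibits a nonnegative, admissible, even exponentially stabilizing solution $\Ric=2$ of the output-ARE for which the RCC fails, so no weak-compactness or Datko-type refinement of your telescoping identity can close this gap. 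The corollary does not ask for it: the asserted equivalence is with the FCC, so it suffices to show that an admissible nonnegative solution forces the output-FCC, and then part (a) supplies the existence of a (generally different, namely the smallest) $\dUout$-stabilizing solution. The paper's route is: from the IRE with $\Ric\ge0$ and the block structure $\tqD=\sbm{\qD\cr I}$, $J_{22}\gg0$, extract the Lyapunov inequalities $\Ric\ge(\piOt\qCClL)^*J\,\piOt\qCClL$ and $\piOt S\ge\qNt^*J\qNt$ (Lemmas~\ref{lhIRESgg0} and~\ref{lC2ClLstable}) --- essentially your telescoping identity, but used only to get SOS-stability of $\ALSClL$, hence $\qKClL x_0\in\dUout(x_0)$, hence the FCC, and then stop. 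Only in the state-IRE case does one go further: there $\tqCClL=\sbm{\qAClL\cr\qKClL}$ stable gives exponential stability by Lemma~\ref{lExpStable}, and the RCC is automatic for $\gUstar\sub\dUexp$ by Theorem~\ref{OptIRE}(b1), which is why the parenthetical asserts $\dUexp$-stabilization (and uniqueness) only for the state equation while asserting mere minimality for the output equation.

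Two of your supporting citations for the admissibility claims are also off. Lemma~\ref{lBboundedARE}(a) covers bounded $B$ only, not general UR solutions; and Corollary~\ref{cUout-Meromorphic} presupposes the output-FCC, which is precisely what is being established, so it cannot be invoked to rule out singularities of $\hqM$ on $\cRHP$. The paper instead passes from a nonnegative ARE solution to the \hIRE\ via Corollary~\ref{cAREIRE}, and then uses the pointwise inequality $\hqX(s)^*S\hqX(s)\ge J_{22}\gg0$ on all of $\rho(A)$ together with the connectedness of $\rho_\infty(A)$ (Lemma~\ref{lhIRESgg0}(b)) to conclude $\hqX\in\cG\H^\infty_\infty(U)$, i.e.\ admissibility, under any of the hypotheses $\dim U<\infty$, $B$ not maximally unbounded, or $\qX$ UR. Your cost-comparison argument for the minimality of the $\dUout$-stabilizing solution is correct and is the paper's argument.
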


Naturally, ``$C^*$ not maximally unbounded''
 means that
 $\|(s-A^*)^{-1}C^*\|_{\BL(Y,H)}\le Ms^{-\frac12-\eps}$ for $s\in(R,\infty)$
 and some $R,M<\infty$.
See Corollary \ref{cMTICinftyKF+}(c) and Remark~\ref{rcA}
 for (b) and (c) under alternative assumptions on $\ALS$.
Similar claims also hold for the \BwARE,
 since any of its solutions is an admissible solution of the IRE,
 by Proposition 9.2.7 of [M02].
If $\qD$ is UR, then a solution of the ARE is UR
 iff the limit in the ARE converges uniformly
 (this is the case in most applications), by Lemma 9.11.5(e) of [M02].

Here $\gUstar$ and the IRE
 should be the same as in the proof of that result
 (hence $\gUstar=\dUout$ and $J=I$
   except possibly for Theorem~\ref{PosJcKF};
  moreover, $\dUout=\dUexp$ for, e.g., Corollary~\ref{cOptExpstab1}).
Thus, the IRE or ARE is determined by the system
 (sometimes ``$\tALS$'' instead of $\ALS$)
 and the $J$ %
 used in the proof.
Note that, e.g., in the proof of Corollary~\ref{cOptExpstab1}
 we have $\gUstar=\dUexp$ and $\qCD=\ssysbm{\qA\|\qB\tau\cr 0\|I}$
  (i.e., $\bsyspm{C\|D}=\ssyspm{I\| 0\cr 0\|I}$),
 hence the ARE in (b) becomes %
 the {\em state-FCC ARE}
 \begin{equation}
   \label{eUexpAREBnotmax}
   (\Bw^*\Ric)^*\Bw^*\Ric = A^*\Ric + \Ric A + I
 \end{equation}
 (and $\lim_{s\to+\infty}\Bw^*\Ric(s-A)^{-1}B=0$, see Lemma~\ref{lBnotmax})
 and $K=-\Bw^*\Ric$ (and $S=I$);
 thus, there is a nonnegative solution $\Ric\in\BL(H)$
 to this problem iff the state-FCC is satisfied.

Note that the results mentioned in Corollary~\ref{cMTICinftyKF+}(b)
 [the above text in brackets corresponds to those results;
 such text must all be included or all excluded]
 correspond to two IREs (or AREs) each;
 e.g., Corollary~\ref{cJointlySD}(i)
 to (\ref{eUexpAREBnotmax}) and to %
 the {\em dual state-FCC ARE} (or filter ARE)
 $(\Cw\tRic)^*\Cw\tRic = A\tRic + \tRic A + I$,
 whose unique nonnegative solution $\tRic\in\BL(H)$
 provides $\Hgen=\tK^*=(-I^{-1}\Cw\tRic)^*=-(\Cw\tRic)^*$.
Replace $B$ by $B_1$ for Corollary~\ref{cExpStabB1}
 or Remark~\ref{rHooAB1AC2}
 (in the latter, use also the dual with $C_2$ in place of $C$).

In Theorem~\ref{QRCF-Uout}(i) the ARE becomes
 the {\em output-FCC ARE}
 \begin{equation}
   \label{eoutput-FCC-ARE}
   K^*SK=A^*\Ric+\Ric A+C^*C
 \end{equation}
 with $K=-\Bw^*\Ric$, $S=D^*D$
 (this leads to (iii) with $\qN^*\qN+\qM^*\qM=S$;
   use $\ssysbm{A\| B\crh S^{1/2}K\| I-S^{1/2}}$
 to generate the $\qABKF$ satisfying (iii) completely (cf.\ (\ref{eAllqKF}))).
Thus, $\hqM(s)=I+\Kw(s-\AClL)^{-1}B$, %
 $\hqN(s)=D+(\CClL)_\w(s-\AClL)^{-1}B$, $\AClL=A+B\Kw,\ (\CClL)_\w=\Cw+D\Kw$
 (use $S^{-1/2}\qN,S^{-1/2}\qM$ for (iii)),
 by Proposition 6.6.17(d4) of [M02].

Note that $\hqM(s)=I+\Kw(s-\AClL)^{-1}B$, %
 $\hqN(s)=D+\CClL(s-\AClL)^{-1}B$, $\AClL=A+B\Kw,\ \CClL=\Cw+D\Kw$.
Naturally, in Theorem \ref{JointlyUout}
 also the dual of (\ref{eoutput-FCC-ARE}) is used.

Above we gave the AREs corresponding to (b)
 (not maximally unbounded $B$);
 to obtain the corresponding general AREs
 ($B^*$ [or $C^*$] possibly maximally unbounded),
 we have to add the $\wlim$ terms to $S$ [and $\tS$].

Naturally, the results of (b) and (c) apply also to general WPLSs,
 if we use the resolvent AREs (see [M03b])
 (or reciprocal AREs if $\rho(A)\cap i\R\ne\tyhja$)
 instead of the ordinary ones.
Those AREs have bounded coefficients (e.g., $(s-A)^{-1}$ in place of $A$)
 and are equivalent to corresponding \SIRE{}s;
 in particular, any of these equations give constructive formulas
 for the feedback and factors. 

\begin{proof}[Proof of Corollary~\ref{cPosIRE}:]\label{pageproof-cPosIREab}
(a) %
In each of the results (or proofs), [two] some kind of ``FCC condition[s]''
 is shown to be equivalent to the existence of certain
 $J$-optimal (possibly for modified $\ALS$ and $J$, see the proofs)
 state-feedback pair[s]. %
By Theorem~\ref{GenSpF}(i)\&(v), this holds iff the corresponding IRE[s]
 (i.e., that corresponding to the possibly modified $\ALS$ and $J$
   in the proofs)
 has a $\gUstar$-stabilizing solution.
But a $\gUstar$-stabilizing solution of the IRE is that of the \SIRE,
 which in turn implies the FCC ($\qKopt x_0\in\gUstar(x_0)\ \all x_0$).
Conversely, here the FCC is also sufficient, by Theorem~\ref{PosJcKF}.

(b) This follows from (a) and Lemma~\ref{lBnotmax}.   

(c)\&(d) These will be proved on p.~\pageref{pageproof-cPosIRE}.
\end{proof}

By (c) above, %
 the $J$-optimal state-feedback pair over $\dUout$
 often corresponds to the smallest nonnegative solution of the IRE.
Much more generally,
 the $J$-optimal state-feedback pair over $\dUexp$ (or $\dUstr$)
 corresponds to the greatest solution of the IRE:
\begin{theorem}%
[Maximal solution $\Ric_{\rm max}$]\label{GreatestSol} %
A strongly internally stabilizing solution of the IRE
 is unique and greater than any admissible solution having $S\ge0$.
\end{theorem}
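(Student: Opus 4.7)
The plan is to compact the three scalar IRE identities into one block-form energy balance, and then to compare two solutions by feeding the closed-loop input of the strongly internally stabilizing solution as the common test input. Writing $T_t:=\sbm{\qAt & \qBt\cr \qCt & \qDt\cr -\qKt & \qXt}$ with $\qXt:=(I-\qF)^t$, and recalling from Lemma~\ref{luc} that $u_L:=-\qKt x_0+\qXt u$ is the external-loop input that reproduces $u$ as the closed-loop input under $\qKF$, the three scalar equations (\ref{eKSK=})--(\ref{eXSK=}) are exactly the independent entries of the single identity $T_t^{*}\,\diag(\Ric,J,-S)\,T_t=\sbm{\Ric & 0\cr 0 & 0}$. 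Pairing this with $\sbm{x_0\cr u}$ on both sides produces the energy identity
\begin{equation}
\langle x_0,\Ric x_0\rangle = \langle x(t),\Ric x(t)\rangle + \int_0^t\langle y,Jy\rangle\,ds - \int_0^t\langle u_L,Su_L\rangle\,ds \qquad(t\ge0),
\tag{$\ast$}
\end{equation}
valid for every admissible IRE solution and every $(x_0,u)\in H\times\Lloc^2(\R_+;U)$.

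Given a strongly internally stabilizing $\Ric_1$ (with data $S_1,\qK_1,\qF_1$) and an admissible $\Ric_2$ with $S_2\ge0$, I fix $x_0\in H$ and feed the common input $u:=\qK_{1,\ClL}x_0$, so that the external input $u_{L,1}$ vanishes identically in $(\ast)$ for $\Ric_1$. Let $x_{c,1}$ denote the resulting state trajectory and set $u_{L,2}:=-\qK_2^{\,t} x_0+\qX_2^{\,t} u$, which lies in $\Lloc^2(\R_+;U)$ by admissibility of $\qK_2,\qF_2$ together with $u\in\gUstar(x_0)$. Subtracting the instance of $(\ast)$ for $\Ric_2$ from that for $\Ric_1$ cancels the output cost and yields
\begin{equation}
\langle x_0,(\Ric_1-\Ric_2)x_0\rangle = \langle x_{c,1}(t),(\Ric_1-\Ric_2)x_{c,1}(t)\rangle + \int_0^t\langle u_{L,2},S_2 u_{L,2}\rangle\,ds.
\end{equation}
Strong internal stabilization of $\Ric_1$ forces $\|x_{c,1}(t)\|_H\to 0$, so the bilinear remainder vanishes; since $S_2\ge 0$, the integral on the right is monotone non-decreasing in $t$, and the displayed equation pins its limit to the finite, $t$-independent quantity $\langle x_0,(\Ric_1-\Ric_2)x_0\rangle$, which is therefore $\ge 0$. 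Varying $x_0$ gives $\Ric_1\ge\Ric_2$.

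Uniqueness follows from this maximality once one knows that every strongly internally stabilizing solution has $S\ge 0$: two such solutions $\Ric_1,\Ric_1'$ are both admissible and both fall into the comparison class, so the symmetric application of the previous paragraph gives $\Ric_1\ge\Ric_1'\ge\Ric_1$. The principal obstacle is the passage to the limit in the comparison identity: the hypothesis delivers only strong (not exponential, not even $\L^2$) convergence $x_{c,1}(t)\to 0$, so the argument must lean on the sign of $S_2$ to convert $\int_0^t\langle u_{L,2},S_2u_{L,2}\rangle\,ds$ into a monotone quantity that converges for free; without $S_2\ge 0$, even existence of the limit is lost. The subordinate obstacle is securing the implication ``strongly internally stabilizing $\Rightarrow S\ge 0$''. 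If it is not at hand from the preceding theory, it can be read off from $(\ast)$ with $u_{L,1}\equiv 0$ and $x_{c,1}(t)\to 0$, which yields $\langle x_0,\Ric_1 x_0\rangle=\cJ(x_0,\qK_{1,\ClL}x_0)$ and identifies $\Ric_1$ as a $J$-optimal cost operator, allowing $S_1\ge 0$ to be transferred from the positivity of the Popov Toeplitz operator $\PTO$ on the space of stabilizing controls, exactly in the spirit of the remark on the signature operator recorded below Theorem~\ref{SIRE}.
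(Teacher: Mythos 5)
Your maximality argument is sound and is essentially the paper's own: the block identity you write is a correct repackaging of (\ref{eKSK=})--(\ref{eXSK=}), the energy balance $(\ast)$ follows by pairing with $\sbm{x_0\cr u}$, and feeding $u:=\qK_{1,\ClL}x_0$ into both balances cancels the common output cost, after which $x(t)=\qA_{1,\ClL}^tx_0\to0$ and $S_2\ge0$ give $\Ric_1\ge\Ric_2$. Your term $\p{u_{L,2},S_2u_{L,2}}$ is exactly the paper's $\p{\tu,\tqS^t\tu}$ with $\tu=(\qK_{1,\ClL}-\qK_{2,\ClL})^tx_0$, via (\ref{eXSX=}).

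The gap is in the uniqueness step. You reduce it to the implication ``strongly internally stabilizing $\Rightarrow S\ge0$'', but that implication is false at the level of generality of the theorem: $J$ may be indefinite (the paper insists on covering indefinite cost functions, e.g.\ for $\Hoo$ problems), and then $\PTO$ need not be nonnegative on the space of stabilizing controls, so the transfer ``$\Ric_1$ is the $J$-optimal cost operator, hence $S_1\ge0$'' that your sketch relies on is unavailable --- the sign of $S$ is inherited from that of $\PTO$, which can be genuinely indefinite even for an exponentially stabilizing solution (the $J$-spectral factorizations of $\Hoo$ theory are the standard example). Consequently two strongly internally stabilizing solutions need not both lie in your comparison class ``admissible with $S\ge0$'', and the symmetric application of maximality does not close. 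Note also that without a sign on $S_2$ your comparison identity still converges (the other two terms do), but its limit has no determined sign, so you cannot simply drop the hypothesis. The paper proves uniqueness by a sign-free device: computing $(\tqKClLt)^*\cdot$(\ref{e3DJC+BPA=0})$+$(\ref{e3APA+CJCmix}) gives the mixed Lyapunov identity $\Ric=(\tqAClLt)^*\Ric\,\qAClLt+(\tqCClLt)^*J\qCClLt$, and exchanging the roles of the two solutions and subtracting yields $\Ric-\tRic=(\tqAClLt)^*(\Ric-\tRic)\qAClLt$, in which one strongly stable closed-loop semigroup appears on each side; letting $t\to+\infty$ then kills the difference weakly with no hypothesis whatsoever on $S$ or $\tS$. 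You need this mixed identity (or some equivalent argument that never touches the sign of the indicators) to establish the uniqueness claim as stated.
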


(The proof is given on p.~\pageref{pageproof-GreatestSol}.
{\em Strongly internally stabilizing}
 means that ($\qKF$ is admissible and)
 $\qAClL^t x_0\to0$ as $t\to+\infty$;
 thus, any exponentially (or $\dUstr$-)stabilizing solution
 is strongly internally stabilizing. Similar results hold
 for the \SIRE.)

If, e.g., $J\ge0$ %
 then any nonnegative admissible solution has $S\ge0$,
 by (\ref{eXSX=}),
 hence then an exponentially stabilizing solution is
 the greatest admissible nonnegative solution.
In fact, it is then the greatest nonnegative solution of the IRE.
(By discretization ([M02], Section~13.4),
 one obtains similar results
 on all solutions of the IRE
 (including the WR solutions of the ARE), %
 regardless of admissibility, because in discrete-time all solutions
  have a bounded ``$K$'' and are hence admissible.)
\NotesFor{Section~\ref{sIRE}} %
For decades, the Riccati operator has been shown to satisfy
 numerous integral equations including the three
 appearing in the IRE; 
 see [S98b] for the case of jointly stabilizable and detectable WPLSs
 and p.~481 of [M02] for a list of earlier ones.
Our contribution in [M02] was
 1. to pick these three and to label them as the IRE,
 2. to prove the sufficiency
 (and define the $\gUstar$-stabilizing solutions),
 3. to generalize the necessity and sufficiency
  to arbitrary WPLSs and $\gUstar$'s,
 4. to observe that the IRE is exactly the discrete-time ARE
 (for the discretized system 
  $  \big[\sm{\qA^t & \qB^t\cr   \qC^t & \qD^t}\big]$)
  and to use the connection for several uniqueness-type results
 (the discrete-time ARE is technically significantly simpler
   than the continuous-time one due to bounded ``generators'').
This then allowed us to derive similar results on the ARE.

We presented the equivalence ``(i)$\IFF$(v)'' of Theorem~\ref{GenSpF}
 in Theorem~9.9.1 of [M02].
The other conditions in Theorem~\ref{GenSpF} seem to be new
 and so does Theorem \ref{SIRE}
 (although, with the \optIRE\ in place of the \SIRE,
 Theorem~\ref{SIRE} is essentially Theorem 9.7.1 of [M02];
 cf.\ the notes to Section~\ref{soptIRE}).
However, the literature on Riccati equations is so abundant,
 that probably some special cases of most of the equations have
 appeared before; 
 e.g., while we were writing these notes, 
 it was pointed out to us that
 recently in [MSW03] (equation (3.4)) and [WST01] (equation (38)) %
 it was shown that a \hSIRE-resembling equation
 ($0,R,0$ on the left-hand-sides)
 holds iff the WPLS is ``$(R,\Ric,J)$-energy preserving''.

Except for coprimeness,
 most of Corollary~\ref{cPosIRE} has been known for
 Pritchard--Salamon systems (see Theorems 3.3 and 3.4 of [PS87]).

For the ARE (\ref{eoutput-FCC-ARE}) with bounded $B,C$,
 (``$\tS\ge0$'' is redundant and)
 it was already known that an exponentially stabilizing solution is maximal
 (see the notes on p.~853 of [M02]).
Theorem~\ref{GreatestSol} generalizes this
 but its proof does not apply to Riccati inequalities unlike 
 that of Theorem 9.8.13 of [M02].
Example 8.4.13 of [M02] is due to Ilya Spitkovsky.

\section{Smooth WPLSs}\label{sMTIC} %

In this section we shall study systems for which
 $\qA B$ and $\Cw\qA B$ are in $\L^1_\omega$, or (slightly) more generally,
 for which
 $\qB\tau:u\mapsto x$ and $\qD:u\mapsto y$ are convolutions
 with $\L^1_\omega$ functions (plus the feedthrough $D$)
 for some $\omega\in\R$
 (``$\qB\tau,\qD\in\WTIComega$'').
This is typically the case if $\qA$ is smoothing (e.g., analytic).

For such systems, one more equivalent condition in most results
 of Section~\ref{sStabOpt} is that the ARE has a nonnegative solution.
Moreover, the solution determines desired factorizations and
 (optimal) stabilizing state-feedback operators.
The resulting closed-loop maps are also of the same form
 (hence ULR, by Theorem~\ref{MTICinftyKF}(c)).
For similar results under alternative assumptions,
 see Section~\ref{ARE} and Corollary~\ref{cPosIRE}.

If $\qD u=Du + f*u\ \all u\in\L^2$,
 where $D\in\BL(U,Y)$ and $f\in\L^1(\R_+;\BL(U,Y))$,
 then we say that $\qD\in\WTIC(U,Y)$\label{pageWTIC}.
When $\cA=\WTIC$, $\cA=\TIC$ or similar,
 we set $\cA_\infty:=\cup_{\omega\in\R}\cA_\omega$,
 where $\cA_\omega:=\{\efn^{\omega \cdot}\qD\efn^{-\omega\cdot}\I
 \qD\in\cA\}$,
 so that $\cA_{\omega'}\sub\cA_\omega\sub\TIC_\omega\
  \all \omega\in\R\cup\{\infty\},\ \omega'\le\omega$.
Thus, $\qD\in\WTICinfty(U,Y)$ if $\qD=D+f*$,
 where $\efn^{-\omega\cdot} f\in\L^1(\R_+;\BL(U,Y))$ for some $\omega\in\R$.
Naturally, $\qE\in\cA$ means that $\qE\in\cA(U,Y)$
 for some Hilbert spaces $U,Y$, and $\cA(U)$ stands for $\cA(U,U)$.

We start by noting that $\WTICinfty$ smoothness is inherited by the 
 optimal closed-loop system, hence the IRE becomes
 equivalent to the ARE:
\begin{theorem}\label{MTICinftyKF} %
Let $\cA=\WTIC$. %
Assume that $\qB\tau,\qD\in\cA_\infty$.
 \begin{itemlist}
  \item[(a)] If  there is a $J$-optimal state-feedback pair $\qKF$,
 and $S\in\cG\BL(U)$, then the following hold:

   \begin{itemlist}
  \item[(a1)] We have $\qF,\qFClL,\qX,\qM,\qN,\qDClL,\qBClL\tau\in\cA_\infty$,
 and $S=D^*JD$. %

   \item[(a2)] 
If $\qC^\rmd\tau\in\cA_\infty$,
 then $\qK^\rmd\tau,\qCClL^\rmd\tau,\qKClL^\rmd\tau\in\cA_\infty$,
 and 
  $\qCClL^\rmd\tau,\qKClL^\rmd\tau,
  \qBClL\tau,\qDClL,\qFClL,\qM,\qN\in\cA_\omega$
 for any $\omega>\omega_A$.
If $\gUstar\sub\dUexp$, then $\omega_A<0$.

   \end{itemlist}
  \item[(b)] 
Assume that $\PTO\in\cG\BL$.
Then the following condition is equivalent to 
 conditions (i)--(vi) of Theorem~\ref{GenSpF}:
 \begin{itemlist}
 \item[(vii)] The ARE (\ref{eARE}) has a $\gUstar$-stabilizing solution.
 \end{itemlist}
Moreover, if (vii) holds, then the $\wlim$ in the ARE converges in norm
 to zero, hence then $S=D^*JD\in\cG\BL(U)$.

  \item[(c)] Any map in $\cA_\infty$ is ULR.
\end{itemlist}
\end{theorem}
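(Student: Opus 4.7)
The plan is to establish (c) first, then derive (a1) via a spectral-factorization transfer argument, deduce (b) from (a1) combined with Theorem~\ref{ARE}, and settle (a2) by duality. Part (c) is immediate from the definition: if $\qD\in\WTIC_\infty$ then $\qD u = Du + f\ast u$ with $\efn^{-\omega\cdot}f\in\L^1(\R_+;\BL(U,Y))$ for some $\omega$, so $\hqD(s)-D=\hat{f}(s)$ satisfies $\|\hat{f}(s)\|\le\int_0^\infty \efn^{-(\re s-\omega)t}\|\efn^{-\omega t}f(t)\|\,dt\to 0$ uniformly in $\im s$ as $\re s\to+\infty$ by dominated convergence, which is exactly the ULR condition.

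For part (a1), the core idea is a \emph{spectral-factorization transfer}: compare the $\cG\TIC$ factor obtained from Theorem~\ref{GenSpF}(iv) with the $\cG\WTIC$ factor supplied by the stable factorization Theorem~\ref{SpF1}(b). Choose $\alpha>\omega_A$ large enough that $\qDp:=\sbm{\efn^{-\alpha\cdot}\qD\efn^{\alpha\cdot}\cr\efn^{-\alpha\cdot}\qB\tau\efn^{\alpha\cdot}}\in\WTIC$, using $\qB\tau,\qD\in\WTIC_\infty$. Theorem~\ref{GenSpF}(iv) applied to the given $J$-optimal pair produces $\qXp\in\cG\TIC(U)$ satisfying $\qDp^*J_+\qDp=\qXp^*S\qXp$; since $S\in\cG\BL(U)$ by hypothesis, $\pi_+\qDp^*J_+\qDp\pi_+$ is coercive on $\L^2(\R_+;U)$, so Theorem~\ref{SpF1}(b) supplies a second factorization with $\WTIC$-factor in $\cG\WTIC(U)$. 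The uniqueness modulo $\cG\BL(U)$ from Theorem~\ref{GenSpF}(b) then forces $\qXp$ itself into $\cG\WTIC(U)$ (inverse-closedness of $\WTIC$ in $\cG\TIC$ being the operator-valued Wiener--Bochner--Phillips theorem). Shifting back by $\alpha$ yields $\qX,\qM\in\cG\WTIC_\infty(U)$, and since $\WTIC_\infty$ is a Banach algebra under convolution containing $\qB\tau$ and $\qD$, the remaining closed-loop maps $\qF=I-\qX$, $\qFClL=\qM-I$, $\qN=\qD\qM$, $\qDClL=\qN$, and $\qBClL\tau=\qB\tau\,\qM$ inherit membership. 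To identify $S=D^*JD$, pass $s\to+\infty$ along $\R_+$ in (\ref{eXGenSpF}); part (c) gives uniform convergence of $\hqX(s)^*S\hqX(s)\to(I-F)^*S(I-F)$ and $\hqD(s)^*J\hqD(s)\to D^*JD$, while the $\L^1_\omega$-kernel structure of $\qB\tau$ forces $(s-A)^{-1}B\to 0$ in $\BL(U,H)$; combined with admissibility $\Ric[H_B]\sub\Dom(\Bw^*)$ and a Riemann--Lebesgue estimate, this delivers $\wlim_{s\to+\infty}\Bw^*\Ric(s-A)^{-1}B=0$. Since $I-F\in\cG\BL(U)$ follows from $\qX\in\cG\WTIC_\infty$, equation (\ref{eARE2}) then yields $S=D^*JD$.

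For part (b), the implication (vii)$\Rightarrow$(v) is immediate since every WR ARE solution is an IRE solution (noted below Definition~\ref{dARE}). Conversely, assume (i)--(vi); then $\PTO\in\cG\BL$ forces $S\in\cG\BL(U)$ via Theorem~\ref{GenSpF}(b), so (a1) applies: the optimal $\qF$ lies in $\WTIC_\infty$, is ULR with feedthrough $F$, and $I-F\in\cG\BL(U)$. Normalizing through Lemma~\ref{lAllqKF} with $E=(I-F)^{-1}$ yields a WR $J$-optimal state-feedback operator, and Theorem~\ref{ARE} then furnishes the $\gUstar$-stabilizing ARE solution (vii), with $S=D^*JD$ and the $\wlim$ tending to $0$ in norm as in (a1). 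Part (a2) follows by applying (a1) to the dual system $\ALSd$: the hypothesis $\qC^\rmd\tau\in\WTIC_\infty$ is exactly the dual of $\qB\tau\in\WTIC_\infty$, so by symmetry $\qK^\rmd\tau,\qCClL^\rmd\tau,\qKClL^\rmd\tau\in\WTIC_\infty$. Membership in $\cA_\omega$ for every $\omega>\omega_{\AClL}$ follows because the closed-loop convolution kernels inherit $\L^1_\omega$-decay whenever both $\qA B$ and $\qA^*C^*$ lie in $\L^1_\omega$ and are coupled through $\WTIC_\infty$ algebra. Finally, if $\gUstar\sub\dUexp$ then $\qKClL x_0\in\dUexp(x_0)$ for every $x_0$, so $\qAClL x_0\in\L^2(\R_+;H)$, and Lemma~\ref{lExpStable} gives $\omega_{\AClL}<0$.

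The hard part will be the identification $S=D^*JD$ inside (a1): although $(s-A)^{-1}B$ vanishes in norm as $s\to+\infty$, the operator $\Bw^*\Ric$ is in general only densely defined, so one cannot simply commute $\Bw^*$ with the limit. The delicate step will be matching the frequency-domain factorization identity (\ref{eXGenSpF}) at $s=+\infty$ against the ARE's $\wlim$-term, using admissibility $\Ric[H_B]\sub\Dom(\Bw^*)$ together with the $\L^1$-kernel structure furnished by $\qB\tau\in\WTIC_\infty$ to extract the desired vanishing; this is where the $\WTIC_\infty$ hypothesis on $\qB\tau$ (rather than a weaker smoothness condition) is decisive.
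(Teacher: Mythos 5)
Your overall architecture (prove (c) first, get the closed-loop maps into $\cA_\infty$ by pushing the spectral factorization of Theorem~\ref{GenSpF}(iv) through Theorem~\ref{SpF1}(b), then deduce (b) from (a1) plus Theorem~\ref{ARE}) matches the paper's proof, and parts (c), the first half of (a1), and (b) are essentially correct. There are, however, two genuine gaps.

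First, the identification $S=D^*JD$ in (a1). You let $s\to+\infty$ along $\R_+$ in (\ref{eXGenSpF}) and argue that $(s-A)^{-1}B\to0$ in norm kills the Riccati term. But that term carries the weight $2\re s$, which blows up along the real axis: for $\qA B\in\L^1$ one only gets $\|(s-A)^{-1}B\|=o(1)$ together with the generic admissibility bound $O((\re s)^{-1/2})$, and a kernel behaving like $t^{-1/2}$ near $t=0$ is integrable yet gives $\|(s-A)^{-1}B\|\sim(\re s)^{-1/2}$, so $2\re s\,B^*(s-A)^{-*}\Ric(s-A)^{-1}B$ stays bounded away from $0$. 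Your subsequent appeal to (\ref{eARE2}) is also circular at this stage, since the ARE is only established in part (b). The paper avoids this by taking the limit along a \emph{vertical} line: in the shifted identity $\hqXp^*S\hqXp=\hqDp^*J_+\hqDp$ on $i\R$ the weight is the constant $2\alpha$, and the Riemann--Lebesgue lemma (valid for $\WTIC$ symbols as $|\im s|\to\infty$, not just as $\re s\to+\infty$) sends $(\alpha+ir-A)^{-1}B\to0$, giving $S=D_+^*J_+D_+=D^*JD$ cleanly. Your real-axis limit does not close.

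Second, part (a2) by ``duality.'' Applying (a1) to $\ALS^\rmd$ would give smoothness of the \emph{dual system's} optimal state-feedback pair, i.e.\ of an output-injection pair $\qHG$ for $\ALS$ (the object solving the filter Riccati equation) --- it says nothing about $\qK^\rmd\tau$, since the adjoint of the feedback output map of $\ALS$ is not the optimal feedback of $\ALS^\rmd$. The paper instead derives $\qK^\rmd\tau\in\cA_\infty$ from the explicit closed-form (\ref{eqK+}), $S\qK_+=-\pi_+(\qN_+)^*J_+\qC_+$, writing $\qK_+^\rmd\tau$ as a composition $\qE_1\Refl\qE_2\Refl\pi_{(-\infty,t)}$ of $\WTIC$ maps and time reflections, with $\qE_1:=\qC_+^\rmd\tau$ (this is where the hypothesis $\qC^\rmd\tau\in\cA_\infty$ actually enters) and $\qE_2:=-J_+\qN_+S^{-1}$; the remaining maps then follow from $\qKClL=\qM\qK$ and $\qCClL=\qC+\qN\qKClL$. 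You need some such explicit representation of $\qK$; symmetry alone does not deliver it.
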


(The proof is given on p.~\pageref{pageproof-MTICinftyKF}.)

In (a), $S$ is the signature operator %
 of the problem (e.g., the one appearing in any of
  (ii)--(vi) of Theorem~\ref{GenSpF});
 recall from Theorem~\ref{GenSpF}(b)
 that $\PTO\in\cG\BL \TTHEN S\in\cG\BL$.

Note that always $\qB\tau\in\TIC_\infty(U,H)$.
We have $\qB\tau\in\WTICinfty(U,H)$ iff $\piOI\qA B\in\L^1([0,1];\BL(U,H))$,
 by (\ref{eqB}) (and Lemma 6.8.1(c) of [M02]).
However, $\qB\tau,\qD\in\WTICinfty$ does not imply
 that $\Cw \qA^t B$ is defined for any $t\ge0$. %
Nevertheless, $\Cw\qA^t B\in\L^1_\omega(\R_+;\BL(U,H))$ implies that
 $\qD\in\WTIComega$.

If $\Cw\qA,\qA B,\Cw\qA B$ are all locally $\L^1$,
 then the assumptions of the theorem and the corollary below
 are satisfied:
\begin{lemma}\label{lcAsystem} If $\omega>\omega_A$ and $\cA=\WTIC$,
 then the following are equivalent:
  \begin{itemlist}
    \item[(i)] $\qB\tau,\qC^\rmd\tau,\qD\in\cA_\infty$.
    \item[(ii)] $\qB\tau,\qC^\rmd\tau,\qD\in\cA_\omega$.
    \item[(iii)] $\qA B,\Cw\qA,\Cw\qA B$ are integrable over $[0,1]$.
  \end{itemlist}

\end{lemma}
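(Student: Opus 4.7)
The plan is to prove the three equivalences via the cycle (ii)$\THEN$(i)$\THEN$(iii)$\THEN$(ii). The first implication is trivial since $\WTIC_\omega\subset\WTIC_\infty$ by definition.

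For (i)$\THEN$(iii), I would use that every $\qE\in\WTIC_\infty(V,W)$ is by definition a convolution $\qE u=Eu+f*u$ with $\efn^{-\omega'\cdot}f\in\L^1(\R_+;\BL(V,W))$ for some $\omega'$, hence in particular $f\in\L^1([0,1])$. Applied to $\qB\tau$, formula (\ref{eqB}) forces the kernel to agree a.e.\ with $\qA B$, giving $\qA B\in\L^1([0,1];\BL(U,H))$. Applied to the dual reachability map $\qC^\rmd\tau$---whose underlying system $\ALS^\rmd$ has input operator $C^*$ by (\ref{elDualALS})---the analogous kernel identification gives local integrability of $\qA^* C^*=(\Cw\qA)^*$, equivalently of $\Cw\qA$. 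For $\qD$ itself, membership in $\WTIC_\infty$ implies that $\hqD(s)-D$ is the Laplace transform of an $\L^1_{\omega'}$ function and hence tends to $0$ as $\re s\to+\infty$ (by Riemann--Lebesgue, using $\|\hat f(s)\|\le\int_0^\infty\efn^{-(\re s-\omega')t}\|\efn^{-\omega't}f(t)\|\,dt\to0$); thus $\ALS$ is ULR with feedthrough $D$ and $\hqD(s)-D=\Cw(s-A)^{-1}B$, so uniqueness of Laplace transforms identifies the convolution kernel of $\qD$ with $\Cw\qA B$, which is therefore in $\L^1([0,1];\BL(U,Y))$.

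For (iii)$\THEN$(ii), I would bootstrap local integrability to global $\L^1_\omega$-integrability by combining the semigroup composition with the growth bound $\|\qA^t\|\le M\efn^{(\omega_A+\eps)t}$ (Lemma~\ref{lExpStable}), choosing $\eps>0$ with $\omega_A+\eps<\omega$. For $\qA B$, decomposing $[0,\infty)=\bigcup_n [n,n+1)$ and writing $\qA^{n+s}B=\qA^n\cdot\qA^s B$ yields
\begin{equation*}
 \int_0^\infty \efn^{-\omega t}\|\qA^t B\|_{\BL(U,H)}\,dt
   \le \Big(\int_0^1 \efn^{-\omega s}\|\qA^s B\|\,ds\Big)\sum_{n=0}^\infty M\efn^{(\omega_A+\eps-\omega)n}<\infty.
\end{equation*}
The analogous argument with $\Cw\qA^{n+s}=(\Cw\qA^s)\qA^n$ (valid since by (iii) one has $\Cw\qA^s\in\BL(H,Y)$ for a.e.\ $s\in[0,1]$) gives $\Cw\qA\in\L^1_\omega$. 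For $\Cw\qA B$, I would fix $s_1,s_2\in(0,1)$ with $\Cw\qA^{s_1}\in\BL(H,Y)$ and $\qA^{s_2}B\in\BL(U,H)$ bounded (possible a.e.\ by (iii)), and for $t>s_1+s_2$ factor $\Cw\qA^t B=(\Cw\qA^{s_1})\,\qA^{t-s_1-s_2}\,(\qA^{s_2}B)$; combined with the local integrability on $[0,s_1+s_2]$ from (iii) and the same geometric-series trick, this yields $\Cw\qA B\in\L^1_\omega(\R_+;\BL(U,Y))$. Then (\ref{eqB}) gives $\qB\tau\in\WTIC_\omega$, the dual argument gives $\qC^\rmd\tau\in\WTIC_\omega$, and $\hqD(s)-D=\Cw(s-A)^{-1}B=\widehat{\Cw\qA B}(s)$ on a right half-plane gives $\qD=D+(\Cw\qA B)*\,\cdot\in\WTIC_\omega$.

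The main obstacle is the rigorous pointwise identification of the abstractly-defined $\WTIC_\infty$ convolution kernel of $\qD$ with the formal expression $\Cw\qA B$, since $\Cw$ is only defined on $\Dom(\Cw)$: one must first establish a.e.\ existence of $\Cw\qA^t B\in\BL(U,Y)$ before matching Laplace transforms. The two-sided composition through intermediate times $s_1,s_2$ is essential because it produces elements of $\Dom(\Cw)$ (namely vectors of the form $\qA^{t-s_2}\qA^{s_2}Bu$) on which $\Cw$ genuinely acts as a bounded operator, and provides the quantitative estimate needed for the geometric summation.
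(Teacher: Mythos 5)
Your cycle (ii)$\THEN$(i)$\THEN$(iii)$\THEN$(ii) is exactly the one the paper uses; the paper's own ``proof'' consists only of citations to Lemmas 6.8.3 and 6.8.5(a) of [M02], so your proposal is in effect a reconstruction of the content being cited. The trivial inclusion $\cA_\omega\sub\cA_\infty$, the kernel identifications for $\qB\tau$ and $\qC^\rmd\tau$ via (\ref{eqB}) and Lemma~\ref{lDualALS}, and the bootstrap in (iii)$\THEN$(iii) via the factorization $\Cw\qA^tB=(\Cw\qA^{s_1})\,\qA^{t-s_1-s_2}\,(\qA^{s_2}B)$ together with the growth bound of Lemma~\ref{lExpStable} and the geometric series are all sound; the last device is indeed the standard way to get from local integrability to $\L^1_\omega$-integrability for $\omega>\omega_A$.

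The one step that is not actually closed --- and which you flag yourself --- is the identification, in (i)$\THEN$(iii), of the $\Lloc^1$ kernel $g$ of $\qD-D$ with $\Cw\qA B$. ``Uniqueness of Laplace transforms'' presupposes that $t\mapsto\Cw\qA^tB$ is an a.e.-defined, locally integrable $\BL(U,Y)$-valued function whose transform is $\Cw(s-A)^{-1}B$, and the local integrability is precisely the conclusion you are after, so the argument is circular as written. It can be repaired with tools you already have on the table: by the time you treat $\qD$ you have already proved the first two parts of (iii), so $\qA^{s_2}B\in\BL(U,H)$ and $\Cw\qA^{s_1}\in\BL(H,Y)$ for a.e.\ $s_1,s_2$, and the composition $\Cw\qA^tB=(\Cw\qA^{s_1})(\qA^{t-s_1}B)$ gives a.e.\ \emph{existence} of $\Cw\qA^tB\in\BL(U,Y)$ without any integrability assumption on it. Given existence, the cleanest identification avoids Laplace transforms altogether: for $u$ supported on $\R_-$ and $t>0$ one computes $((\Cw\qA B)*u)(t)=\Cw\qA^t\qB u=(\qC\qB u)(t)=(\pi_+\qD\pi_- u)(t)$ using Lemma~\ref{lABC}(c) and Definition~\ref{dWPLS0}.4, so $D+(\Cw\qA B)*$ and $\qD$ have the same generators $(A,B,C)$ and hence differ by a constant by Lemma~\ref{lABC}(d); therefore $g=\Cw\qA B$ a.e.\ and the local integrability of $\Cw\qA B$ is inherited from $g\in\Lloc^1$. (Alternatively one can pass the closed extended output operator through the Bochner integral in $y=\Cw x+Du$ via Hille's theorem.) This is exactly the content of [M02, Lemma 6.8.3] that the paper outsources; with it made explicit, your proof is complete.
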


(By $\qB\tau\in\cA_\infty$ we mean that $\qB\tau\in\cA_\infty(U,H)$
  (not $\cA_\infty(U,H_{-1})$);
 similarly for $\qC^\rmd\tau,\ \qD$, (ii), (iii) and (iii').) %

\begin{proof}
We have ``(iii)$\THEN$(ii)'', by Lemma 6.8.5(a) of [M02],
 ``(ii)$\THEN$(i)'' is trivial,
 and ``(i)$\THEN$(iii)'' is given in Lemma 6.8.3
 (with a slight modification in the proof of (c)). %
\end{proof}

As in Corollary \ref{cPosIRE}(b)\&(c),
 also this kind of systems are stabilizable
 iff the corresponding ARE has a nonnegative solution:
\begin{cor}\label{cMTICinftyKF+} %
Assume that $\qB\tau,\qD\in\cA_\infty:=\WTICinfty$.

{\bf (a)}
In Theorems \ref{PosJcKF} and~\ref{QRCF-Uout}(iii) and
 Corollaries \ref{cOptExpstab1}, \ref{cExpStabB1}
 and \ref{cUout-Meromorphic}, %
 the pair $\qKF$ (if any exists) can be chosen so that
 $\qF,\qFClL,\qX,\qM,\qN,\qDClL,\qBClL\tau\in\cA_\infty$.

For this pair,
 $\qC^\rmd\tau\in\cA_\infty \TTHEN
    \qK^\rmd\tau,\qCClL^\rmd\tau,\qKClL^\rmd\tau\in\cA_\infty$.

{\bf (b)}
If $\qC^\rmd\tau\in\cA_\infty$,
 then $\qKF$ and $\qHG$ (if such exist)
 in Corollary \ref{cJointlySD}(i), Theorem~\ref{JointlyUout}
  and Remark~\ref{rHooAB1AC2}
 can be chosen so that
 $\qF,\qX,\qM,\qN,\qD,\qB\tau,
 \qC^\rmd\tau,\qK^\rmd\tau,
 \qE,\qG,\qH\tau,\qX_1,\qY_1,\tqX,\tqY\in\cA_\infty$,
 and the same holds with the subindex $L$ or $\tL$ added
 (not defined for $\qX,\qM,\qN,\qX_1,\qY_1,\tqX,\tqY$).

{\bf (c)} 
In (a) and (b), one more equivalent condition in any of the 
 results mentioned above is that the corresponding ARE(s) have
 $\gUstar$-stabilizing solutions.
Except for Theorem~\ref{PosJcKF},  %
 another equivalent condition is that the corresponding ARE(s) have
 nonnegative solutions with the $\lim$ converging uniformly to zero
 (the last paragraph of Corollary~\ref{cPosIRE} applies).
Moreover, Corollary~\ref{cBARE} applies
 (even if $B$ is maximally unbounded).

{\bf (d)} %
Assume that $\qC^\rmd\tau\in\cA_\infty$.
In Corollaries \ref{cOptExpstab1} and \ref{cExpStabB1},
 the pair mentioned in (a) also satisfies
 $\qBClL\tau,\qDClL,\qFClL,\qN,\qM,\qCClL^\rmd\tau,\qKClL^\rmd\tau
   \in\cA_\omega$
 for some $\omega<0$.
In Corollary \ref{cJointlySD}(i) and Remark~\ref{rHooAB1AC2},
 the subindexed maps mentioned in (b) belong to $\cA_\omega$
 for some $\omega<0$.
\end{cor}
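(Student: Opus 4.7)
The plan is to reduce each part to Theorem \ref{MTICinftyKF} and Corollary \ref{cPosIRE}, by identifying, in each cited result, the specific uniformly positive $J$-coercive problem whose optimal state-feedback pair was invoked.

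For part (a), in each of Theorems \ref{PosJcKF} and \ref{QRCF-Uout}(iii) and Corollaries \ref{cOptExpstab1}, \ref{cExpStabB1} and \ref{cUout-Meromorphic}, the pair $\qKF$ is produced (directly, or after an auxiliary reformulation that leaves $\qA,\qB$ intact) via Theorem \ref{PosJcKF} applied to a problem with $\PTO\gg 0$. By Theorem \ref{GenSpF}(b) this forces $S\in\cG\BL(U)$, and Theorem \ref{MTICinftyKF}(a1) then delivers $\qF,\qFClL,\qX,\qM,\qN,\qDClL,\qBClL\tau\in\cA_\infty$. The second assertion of (a) is exactly the first half of Theorem \ref{MTICinftyKF}(a2).

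For (b), I would apply (a) twice: once to $\ALS$ to obtain the $\cA_\infty$ smoothness of $\qKF$ and of the primal closed-loop components, and once to $\ALS^\rmd$, where the roles of $\qB\tau$ and $\qC^\rmd\tau$ are swapped, so that the hypothesis of (b) makes the assumption of (a) valid in both directions, yielding the smoothness of $\qHG$ and of the dual closed-loop components. The interaction operator $\qE$ in (\ref{eJointly1}) arises in the proof of Corollary \ref{cJointlySD} via Corollary \ref{cExpStabB1} applied to an extended system and hence inherits $\cA_\infty$ by the same argument. The doubly coprime factors $\qX_1,\qY_1,\tqX,\tqY$ of (\ref{eS4A:4.4}) are sums and compositions of these smooth closed-loop components together with a $2\times 2$ block inversion; the main technical obstacle is to verify that $\cA_\infty=\WTICinfty$ is closed under this inversion. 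This reduces to the Wiener-algebra statement that any element of $\cG\TIC_\infty$ with a $\cA_\infty$ representative and invertible feedthrough has its inverse in $\cA_\infty$, provable by a Neumann series on the $\L^1$ convolution part, after which the Schur complement formula propagates the closure to $2\times 2$ blocks.

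For (c), I would chain Corollary \ref{cPosIRE}(a), which already supplies the IRE equivalence in every cited result, with Theorem \ref{MTICinftyKF}(b): under $\qB\tau,\qD\in\cA_\infty$ and $\PTO\in\cG\BL$, any $\gUstar$-stabilizing IRE solution converts to the corresponding $\gUstar$-stabilizing ARE solution with norm-convergent $\wlim$ and $S=D^*JD$. The nonnegativity equivalent and the uniqueness/minimality statements for the non-Theorem-\ref{PosJcKF} results then transfer verbatim from Corollary \ref{cPosIRE}(c); for the Corollary \ref{cBARE} claim, Theorem \ref{MTICinftyKF}(b) forces the same uniform convergence regardless of how unbounded $B$ is, so the smoothness hypothesis replaces ``$B$ not maximally unbounded''. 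For (d), I would apply Corollary \ref{cOptExpstab1} (and its counterpart underlying Corollary \ref{cExpStabB1}) to obtain $\omega_{\AClL}<0$, and then invoke Theorem \ref{MTICinftyKF}(a2) applied to the closed-loop system to obtain $\cA_\omega$ membership of the listed closed-loop components for some $\omega<0$; the same argument on $\ALS^\rmd$ handles the joint and dual statements in Corollary \ref{cJointlySD}(i) and Remark \ref{rHooAB1AC2}.
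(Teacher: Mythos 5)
Your proposal is correct and follows essentially the same route as the paper, which proves (a) by combining (the proof of) Corollary~\ref{cPosIRE}(a) with Theorem~\ref{MTICinftyKF}(a1), gets (b) from (a) together with the closure of $\cA_\infty$ under sums, products and inversion (your Neumann-series justification is valid for the union class $\cA_\infty$ since the exponentially weighted $\L^1$ norm of the convolution kernel can be made small by increasing the weight), gets (c) from Corollary~\ref{cPosIRE} plus the IRE--ARE equivalence of Theorem~\ref{MTICinftyKF}(b) with the uniform-regularity adaptation you describe, and gets (d) from the $\cA_\omega$ statement of Theorem~\ref{MTICinftyKF}(a2) together with exponential stability of the closed loop. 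The only difference is that you spell out details the paper delegates to the cited results.
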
 

(The proof is given on p.~\pageref{pageproof-cMTICinftyKF+}.
Note that $\cA_\omega\sub\cA$ for $\omega<0$.)

We observe from (d), Lemma~\ref{lcAsystem} and Corollary~\ref{cOptExpstab1}
 that if $\Cw \qA B,\qA B,\Cw\qA$ are $\L^1$ over $[0,1]$
 and the state-FCC is satisfied,
 then there is an %
  exponentially stabilizing state-feedback operator $K$
 for which the closed-loop system has an $\L^1(\R;\BL(U,Y))$
 impulse response ($\Cw\qAClL B$).
Theorem \ref{BwARE}(2.)
 shows that we can have $K\in\BL(H,U)$ etc.

The assumption $\qB\tau,\qD\in\WTICinfty$ is equivalent to
 $\qB\tau,\qD\in\MTIC_\infty$, where $\MTIC\label{pageMTIC}\sub\TIC$ is the bigger class
 allowing for delays too, as one can deduce from Section 6.8 of [M02]
 ($\qB\tau$ cannot contain delays, and if it is $\WTICinfty$,
  then neither can $\qD$).
What if $f\in\Lstrong^1$ (i.e., $fu_0\in\L^1\ \all u_0$) instead of $\L^1$,
 where $f=\qA B,\Cw\qA B$?
We do not know (unless, e.g., $C$ is bounded; see Hypothesis 9.2.2 of [M02]),
 but $f\in\Lstrong^2$ 
 is sufficient (see also Theorem~\ref{BwARE}(3.)): 
\begin{remark}[$\cA=\Hstrong^2$]\label{rcA} %
The class $\cA=\{\qD\I \hqD(\cdot-\eps)\in\Hstrong^2(\C^+;\BL)$
 for some $\eps>0\}=:\cA_{\H^2}$
 will also do in Theorem~\ref{MTICinftyKF}(a1)\&(b)\&(c),
 (and in corresponding parts of Corollary~\ref{cMTICinftyKF+}(a)\&(c)),
 and $\cA=\cA_{\H^2}\cap\cA_{\H^2}^\rmd=:\cA_2$ 
 will do in the whole theorem and corollary.
Since $\qB\tau,\qD\in\cA_{\H^2}$ iff Theorem \ref{BwARE}(3.) holds,
 for either of these two classes the ARE can be replaced by the \BwARE\ 
 (under $D^*JD\in\cG\BL$), see p.~\pageref{pageBwARE}.

Another valid choice in the theorem and corollary is
 $\cA=\WTICLC:=\{D+f*\in\WTIC\I f(t)$ is compact for all $t$,
 and $\qD\in\BL\}$
 (or with ``finite-dimensional'' in place of ``compact'').
\end{remark}

(The proof is given on p.~\pageref{pageproof-rcA}.)
Here $F\in\Hstrong^2(\C^+;\BL(U,Y))$ iff $F:\C^+\to\BL(U,Y)$
 is holomorphic and
 $\|F\|_{\Hstrong^2} \label{pageH2strong} %
 :=\sup_{u_0\in U,\ r>0} \|F(r+i\cdot)u_0\|_{\L^2(\R;Y)} <\infty$.

\NotesFor{Section~\ref{sMTIC}} %
For more on $\WTIC$, $\MTIC$ and the other classes, see [M02],
 e.g., Sections 2.6, 6.8 and 9.2, which also provide further results
 on optimization and closed-loop smoothness
 (but do not cover those presented here) and notes.
The results there cover also the case
 where $\ALS\in\SOS$ and $\qD\in\cA$ (no assumptions on $\qB\tau$).
Note that $\WTIC$ is often called the Wiener class
 and $\MTIC$ the Callier--Desoer class.
These classes seem to have been studied mainly in 
 the Pritchard--Salamon setting or in less general settings.

\section{Generalized IREs and the $\Dom(\Aopt)$-ARE}\label{soptIRE} %

This far we have mainly presented the setting and the main results;
 most proofs and accompanying minor results still remain.
In this section, we shall study equivalent conditions
 for the existence of a $J$-optimal control in WPLS form
 (see Theorems~\ref{ALSopt} and~\ref{J-coercive} for sufficient conditions).
In particular, we shall
 (1.) prove Theorem~\ref{SIRE},
 (2.) generalize the $\Dom(A+BK)$-ARE theory of [FLT88], and
 (3.) provide further results and tools for subsequent sections.

The key to (1.) is the  \optIRE\ 
 (Theorem~\ref{OptIRE}), %
 which we show to be equivalent to the
 \hoptIRE, $r$-shifted \optIRE\ (\ref{e0yJyL2r})--(\ref{e0ya=-2rxa}),
 \SIRE\ and \hSIRE.
Each of these five equivalent (systems of) equations
 leads to further results on the $J$-optimal control,
 such as the results in the previous sections
 (e.g., the ARE and the IRE) or as the resolvent RE of [M03b].

In Theorem~\ref{GenARE} we generalize the ARE theory of
 [FLT88] (by Flandoli, Lasiecka and Triggiani)
 by deriving (from the \hoptIRE) %
 the (infinitesimal) ARE on $\Dom(\Aopt)$,
 i.e., on the domain of the closed-loop semigroup generator
 (assuming only the regularity of the original system,
  not that of the optimal control).

We start by showing that a control in WPLS form
 (see Definition~\ref{dWPLSform})
 is optimal over $\gUstar$ iff it is $\gUstar$-stable
 and satisfies the RCC and the \optIRE\ 
 (\ref{e3DJC+BPA=0})--(\ref{e3APA+CJCmix}):
\begin{theorem}[\bfoptIRE]\label{OptIRE} %
Assume that $\oqK$ is a control in WPLS form,
 and $\Ric=\Ric^*\in\BL(H)$. 

 \begin{itemlist}
   \item[(a)] 
Then $\oqK x_0$ is $J$-optimal %
 and $\Ric=\oqC^*J\oqC$
 iff $\oqK x_0\in\gUstar(x_0)$ for all $x_0\in H$
 and the following hold (for all $t>0$):
 \end{itemlist}
 \begin{align}\label{eRCC} %
 \p{\qBt u+\oqAt x_0,\Ric\oqAt x_0}&\to0,
     \text{as} t\to+\infty \ \ (\all x_0\in H,\ u\in\gUstar(0)),\\
  \label{e3DJC+BPA=0}
   0&=(\qDt)^*J\oqCt +(\qBt)^*\Ric\oqAt \, \in\BL(H,\L^2([0,t];U)),\\
    \label{e3APA+CJCmix}
  \Ric&={\qAt}^*\Ric\oqAt+{\qCt}^*J\oqCt\in\BL(H).
 \end{align}

We can make the following enhancements in (a):
\begin{itemlist}
  \item[(a1)] We may replace (\ref{e3APA+CJCmix}) above by 
\begin{align}
    \label{e3APA+CJC}
  \Ric&={\oqAt}^*\Ric\oqAt+{\oqCt}^*J\oqCt \, \in\BL(H).
\end{align}

  \item[(a2)] Equations (\ref{e3APA+CJC}) and  %
  (\ref{e0yJyL2r}) are equivalent. %
  \item[(b1)] The RCC %
 (\ref{eRCC}) is redundant
   if $\gUstar\sub\dUexp$ or $\gUstar\sub\dUstr$.

  \item[(b2)] The limit in (\ref{eRCC}) exists whenever 
 $\qCo x_0\in\L^2$ and (\ref{e3DJC+BPA=0})--(\ref{e3APA+CJCmix}) hold
 (but it need not be zero).

  \item[(c)] If $\gUstar=\dUexp$ (resp.\ $\gUstar=\dUstr$), %
  then $\oALS$ is $J$-optimal iff
  $\oALS$ is exponentially (resp.\ strongly) stable
    and (\ref{e3DJC+BPA=0})--(\ref{e3APA+CJCmix}) hold.

  \item[(d1)] Equation (\ref{e3APA+CJCmix}) holds iff
    \begin{equation}
      \label{e33AP+PAmix}
  -\Ao^*\Ric =\Ric A+\Co^*JC
         \ \ \ \ \in\BL(\Dom(A),\Dom(\Ao)^*).
    \end{equation}

  \item[(d2)] Assume (\ref{e3APA+CJCmix})
 and let $\omega\ge\max\{\omega_A,\omega_{\Ao}\}$. %
  Then  (\ref{e3DJC+BPA=0}) holds iff
  \begin{equation}
    \label{ehcIRE-DJC+BP}
    0=\hqD(s)^*J\hqCo(z)+B^*(s-A)^{-*}\Ric(\bar s+\Ao)(z-\Ao)^{-1}
   =:T(s,z) 
  \end{equation}
 for some (equivalently, all) $s,z\in\C_\omega^+$.

  \item[(e)] If we would allow for any $\gUstar$ satisfying Definition 8.3.2
 of [M02] (instead of Standing Hypothesis~\ref{shgUstar}),
 then (a2), (b1), (b2), (d1) and (d2) %
 still hold
 (and (\ref{e3APA+CJC}) would be equivalent to 
   (\ref{e3APA+CJCmix}) under (\ref{e3DJC+BPA=0}).

  \item[(f)] $\oqK$ is $J$-optimal %
 and $\Ric=\oqC^*J\oqC$
 iff (RCC) holds, %
 $\oqK x_0\in\gUstar(x_0)\ \all x_0\in H$,
 and %
 for some (hence all) $r>0$ %
 we have
 \begin{align}\label{e0yJyL2r}
   \p{x_0,\Ric x_1}_H
   &= \p{\oqC x_0,J\oqC x_1}_{\L^2_r} + 2r\p{\oqA x_0,\Ric\oqA x_1}_{\L^2_r}
 \ \ \ \ \ \ \ \ (\all x_0,x_1\in H),\\\label{e0ya=-2rxa}
  \p{\oqC x_0,J\qD\eta}_{\L^2_r} &= -2r \p{\oqA x_0,\Ric\qB\tau\eta}_{\L^2_r}
 \ \ \ \ \ \ \ \ (\all x_0\in H,\ \eta\in\gUstar(0)).
 \end{align}
(Above we may replace $\gUstar(0)$ by $\L^2_r(\R_+;U)$
 if $r>\max\{0,\omega_A,\omega_{\oA}\}$
  (and $r\ge\vartheta$ unless we give up sufficiency).)
\end{itemlist}

We call (\ref{e3DJC+BPA=0})--(\ref{e3APA+CJCmix}) the
 {\em \optIRE\label{pageoptIRE}\ for $\ALS$ and $J$}. 
We call (\ref{e33AP+PAmix})--(\ref{ehcIRE-DJC+BP}) the
 {\em \hoptIRE\label{pagehoptIRE}\ for $\ALS$ and $J$}.
We call $\qKo$ (or $\Ric$) {\em $\gUstar$-stabilizing}\label{pagegUstarstabilizing4}
 if $\qKo x_0\in\gUstar(x_0)\ \all x_0\in H$ and the RCC holds.
\end{theorem}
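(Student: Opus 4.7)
The proof plan rests on Lemma \ref{lOptCost0}(b), which reduces $J$-optimality of $\oqK x_0$ (with cost operator $\Ric=\oqC^*J\oqC$) to the two identities
$\p{x_0,\Ric x_0}=\cJ(x_0,\oqK x_0)$ and $\p{\qD\eta,J\oqC x_0}=0$ for all $\eta\in\gUstar(0)$, together with $\oqK x_0\in\gUstar(x_0)$. The key idea is to split these two identities at an arbitrary intermediate time $t>0$ using the discrete-time form \eqref{eWPLSdiscr} and dynamic programming: starting at the new state $\oqA^t x_0$ and continuing to apply $\oqK$ yields optimal continuation (since $\oqK$ is a WPLS-form control), which is the mechanism that yields the discrete-time ARE structure in \eqref{e3DJC+BPA=0}--\eqref{e3APA+CJCmix}.

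\textbf{Proof of (a), (a1), (a2).} First I would decompose the cost as $\cJ(x_0,\oqK x_0)=\|\piOt\oqC x_0\|_J^2+\|(I-\piOt)\oqC x_0\|_J^2$. Using $(I-\piOt)\oqC x_0=\tau^{-t}\oqC(\oqA^t x_0)$ and $\oqC^*J\oqC=\Ric$, the second term equals $\p{\oqA^t x_0,\Ric\oqA^t x_0}$, while the first equals $\p{x_0,(\oqC^t)^*J\oqC^t x_0}$; comparing with $\p{x_0,\Ric x_0}$ yields \eqref{e3APA+CJC}. For \eqref{e3DJC+BPA=0}, I would perturb $\oqK x_0$ by $\eta\in\gUstar(0)$ with $\supp\eta\subset[0,t)$, apply the quadratic identity of Lemma \ref{lOptCost0}(b)(iii), and use the splitting plus $\oqC^*J\oqC=\Ric$ to reduce the cross term to $2\re\p{\eta,(\qD^t)^*J\oqC^t x_0+(\qB^t)^*\Ric\oqA^t x_0}=0$. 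To pass between \eqref{e3APA+CJC} and \eqref{e3APA+CJCmix} (giving (a2)), I would substitute $\oqA^t=\qA^t+\qB^t\oqK$ and $\oqC^t=\qC^t+\qD^t\oqK$ into (a1) and cancel the $\oqK^*[\cdot]$ cross terms using \eqref{e3DJC+BPA=0}. Conversely, assuming \eqref{e3DJC+BPA=0}--\eqref{e3APA+CJCmix}, iterating \eqref{e3APA+CJC} gives $\p{x_0,\Ric x_0}=\|\piOt \oqC x_0\|_J^2+\p{\oqA^t x_0,\Ric\oqA^t x_0}$; letting $t\to\infty$, the RCC \eqref{eRCC} (with $u=0$) drives the remainder to zero and gives $\p{x_0,\Ric x_0}=\cJ(x_0,\oqK x_0)$, while the cost-stationarity condition in Lemma \ref{lOptCost0}(b)(ii) follows from \eqref{e3DJC+BPA=0} together with the RCC applied to $u=\eta\in\gUstar(0)$.

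\textbf{Proof of (b1), (b2), (c).} If $\gUstar\subset\dUexp$ (respectively $\dUstr$), then $\oqA x_0\in\L^2(\R_+;H)$ (resp.\ $\oqA^t x_0\to0$), and Lemma \ref{lExpStable} combined with the uniform boundedness of $\qB^t$ kills the RCC limit. For (b2), \eqref{e3APA+CJC} (equivalent to \eqref{e3APA+CJCmix}) already shows that $t\mapsto\p{\oqA^t x_0,\Ric\oqA^t x_0}$ is nonincreasing in $t$ when $J\ge0$, but in general it simply shows convergence of this term; combined with boundedness of $(\qB^t)^*\Ric\oqA^t$ on $\gUstar(0)$ we get existence (but not necessarily vanishing) of the RCC limit. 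Part (c) is an immediate specialization.

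\textbf{Proof of (d1), (d2), (e), (f).} For (d1) I would differentiate \eqref{e3APA+CJCmix} at $t=0^+$ on $\Dom(A)\times\Dom(\Ao)$: the incremental quotient $\tfrac1t(\qA^t-I)x\to Ax$ in $H_{-1}$, and similarly $\tfrac1t(\oqA^t-I)x\to\Ao x$ in $\Dom(\Ao)^*$, while $\tfrac1t\p{\qC^t x_0,J\oqC^t x_1}\to\p{Cx_0,JC\cdot}$--type limits give \eqref{e33AP+PAmix}; the reverse direction follows by integrating the formal derivative identity. For (d2), \eqref{ehcIRE-DJC+BP} is obtained from \eqref{e3DJC+BPA=0} by taking Laplace transforms and using $\widehat{\qD^t}(s)=\hqD(s)(I-e^{-st}\cdots)$-type identities from Lemma \ref{lCharFct}; independence of $(s,z)\in\C_\omega^+\times\C_\omega^+$ follows from analytic continuation since the left side is operator-valued holomorphic on that domain. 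Part (e) is a remark requiring only that the arguments above never used the specific form of $\gUstar$ beyond the properties of Standing Hypothesis \ref{shgUstar}. Finally, for (f), the $r$-weighted identities \eqref{e0yJyL2r}--\eqref{e0ya=-2rxa} are obtained by multiplying \eqref{e3APA+CJC} and \eqref{e3DJC+BPA=0} by $2r\,e^{-2rt}$ and integrating over $t\in(0,\infty)$, using $2r\int_0^\infty e^{-2rt}f(t)\,dt=\p{f,1}_{\L^2_r}$-style Plancherel identities; conversely, $r$-independence and density let us recover the pointwise identities.

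\textbf{Main obstacle.} The hardest step is the necessity direction for \eqref{e3DJC+BPA=0} when $\gUstar$ is not closed under restriction to $[0,t)$: one needs to argue that the perturbation $\eta\in\gUstar(0)$ with $\supp\eta\subset[0,t)$ can be extended past $t$ in a way that remains in $\gUstar(x_0)$, which requires the closure of $\gUstar(\cdot)$ under concatenation with the optimal continuation $\oqK x(t)$. This hinges on the WPLS-form property $\oqK\in\BL(H,\L^2_\omega(\R_+;U))$ and the translation invariance built into Standing Hypothesis \ref{shgUstar} (namely $\pi_+\tau^t z\in\sZ\iff z\in\sZ$), both of which must be invoked carefully to pass between time-shifted and original domains of optimization. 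A second subtlety is the interpretation of \eqref{e33AP+PAmix} as an identity in $\BL(\Dom(A),\Dom(\Ao)^*)$, which requires tracking the rigging of spaces $\Dom(A)\subset H\subset\Dom(A)^*$ through the computation and verifying that $\Ric$ maps $\Dom(A)$ into $\Dom(\Ao)^*$ with the correct continuity.
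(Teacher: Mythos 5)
Your overall architecture --- reducing $J$-optimality to $\p{\qD\eta,J\oqC x_0}=0$ via Lemma~\ref{lOptCost0}(b), splitting at an intermediate time $t$, recovering (\ref{e3APA+CJC}) from $\Ric=\oqC^*J\oqC$, cancelling cross terms to pass to (\ref{e3APA+CJCmix}), and the treatments of (b1), (d1), (d2) and (f) --- is essentially the paper's. The genuine gap is in the key step, the derivation of (\ref{e3DJC+BPA=0}). You perturb by $\eta\in\gUstar(0)$ with $\supp\eta\sub[0,t)$ and claim the cross term reduces to $\p{\eta,(\qDt)^*J\oqCt x_0+(\qBt)^*\Ric\oqAt x_0}$. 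It does not. For such an $\eta$, causality and the Hankel identity $\pi_+\qD\pi_-=\qC\qB$ give $\piti\qD\eta=\tau^{-t}\qC\qBt\eta$, so the tail contribution to $\p{\qD\eta,J\oqC x_0}$ is $\p{\qC\qBt\eta,J\oqC\oqAt x_0}=\p{\qBt\eta,\qC^*J\oqC\,\oqAt x_0}$: the operator that appears is $\qC^*J\oqC$, not $\Ric=\oqC^*J\oqC$. (Moreover such an $\eta$ need not belong to $\gUstar(0)$ at all.) The paper's device (Lemma~\ref{lGengUstar}) is to take the test function $\eta=\piOt\teta+\tau^{-t}\oqK\qBt\teta$: arbitrary on $[0,t)$ but continued after $t$ by the \emph{optimal} control for the intermediate state $\qBt\teta$. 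Then $\qD\eta=\qDt\teta+\tau^{-t}\oqC\qBt\teta$, the tail cross term becomes $\p{\oqC\qBt\teta,J\oqC\oqAt x_0}=\p{\qBt\teta,\Ric\oqAt x_0}$ with the correct Riccati operator, and membership in $\gUstar(0)$ follows from Lemma~\ref{ltaugUstar}, i.e.\ from the translation-invariance requirement on $\sZ$ in Standing Hypothesis~\ref{shgUstar}. Your ``main obstacle'' paragraph correctly senses the admissibility issue but misses that the optimal continuation is also what makes $\Ric$, rather than $\qC^*J\oqC$, appear; without it the identity you derive is a different one and the argument does not close.

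A smaller point: what you label (a2) --- substituting $\oqAt=\qAt+\qBt\oqK$, $\oqCt=\qCt+\qDt\oqK$ and cancelling via (\ref{e3DJC+BPA=0}) --- is the content of (a1); the actual (a2), the equivalence of (\ref{e3APA+CJC}) with (\ref{e0yJyL2r}), is obtained in the paper by applying Lemma~\ref{lAPA+CJC} to an augmented system with output $\sbm{\oC\cr I}$, cost operator $\sbm{J&0\cr 0&2r\Ric}$ and generator $\oA-r$. Your integration of the time-domain identities against $\efn^{-2rt}$ under (f) is a workable substitute for that part, but you should state it as the proof of (a2) as well.
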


(The proof is given on p.~\pageref{pageOptIREproof}.)

Note that \optIRE\ is equivalent to \hoptIRE.
In the theorem we require the \optIRE\ to hold for all $t>0$,
 but if it holds for some $t>0$ and the RCC holds,
 then it holds for all $t\ge0$,
 as one observes from Proposition 9.8.7 and Lemma 14.2.1
 (and Theorem 13.4.4(f2) and Remark 13.4.6) %
 of [M02].

By (b1), ``$\dUexp$-stabilizing'' means the same as ''exponentially stabilizing''.
Similarly, ``$\dUstr$-stabilizing'' means that 
 $\ALSopt$ is strongly stable. %

However, ``$\dUout$-stabilizing'' means that $\ALSopt$
 is output-stable ($\qCopt x_0,\qKopt x_0\in\L^2\ \all x_0\in H$)
 {\em and} the RCC holds (see Example~\ref{exaUoutUexpstab}).
Intuitively, this ``extra'' condition is because
 now we have more candidate controls to be ruled out
 ($\dUexp\subsetneq\dUout$). %

For stable problems, the RCC takes the simple form
  $\p{\qAt x_0,\Ric\qAt x_0}\to0$,
  as shown in [M97] (and in Proposition 9.8.11 of [M02]).

In Theorem~\ref{GenARE}
 we shall use (d1)--(d2) to develop a nonintegral form of
 the Riccati equation (the ``$\Dom(\Aopt)$-ARE'').
In [M03b] we shall use (f) to create the resolvent RE theory;
 (f) is also needed for the proof of Lemma~\ref{lGenSpFproof},
 which is used to obtain Theorem~\ref{GenSpF}(iv) 
  and hence all the results of Section~\ref{sStabOpt}!

Next we list several lemmas that are needed in the proofs of
 Theorem~\ref{OptIRE} and many other results.

Algebraic (infinitesimal) Lyapunov-type equations
 can be equivalently written in 
 integral forms and vice versa, %
 as described below: %
\begin{lemma}\label{lAPA+CJC} %
Let $\ssysbm{\qA_k\crh \qC_k}\in\WPLS(\{0\},H,Y)$  $(k=1,2)$,
 $P\in\BL(H)$ and $\tJ\in\BL(Y)$.
Then
\begin{align}
\label{eAPA=CJCdiff}
  \p{A_1x_1,P x_2}_{H_1} +  \p{x_1,P A_2 x_2}_{H_1}
  + \p{C_1x_1,\tJ C_2x_2}_{Y_1}&\ge0 && (x_1\in\Dom(A_1),\ x_2\in\Dom(A_2))\\
  \label{eAPA=CJC}
 \IIIFF \ \ \ \ \ 
  (\qA_1^t)^*P\qA_2^t + (\qC_1^t)^*\tJ \qC_2^t&\ge P \ \ && (t\in[0,+\infty)).
\end{align}

Moreover, we can, equivalently, replace ``$(t\in[0,+\infty))$''
 by ``$(t\in(0,\eps))$'', for any $\eps>0$,
 or require (\ref{eAPA=CJCdiff}) only for $x_k\in %
 \cap_{n\in\N}\Dom(A_k^n)$.
All this also holds with ``$=$'' or ``$\le$'' in place of ``$\ge$''.
\end{lemma}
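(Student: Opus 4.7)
\textbf{Proof proposal for Lemma \ref{lAPA+CJC}.}
The plan is to prove the two inequalities are equivalent by a standard Lyapunov differentiation/integration argument, then bootstrap the various weakenings. First I will show the (easy) direction (\ref{eAPA=CJC})$\Rightarrow$(\ref{eAPA=CJCdiff}): for $x_k\in\Dom(A_k)$, divide the inequality at time $t>0$ by $t$, subtract $\p{x_1,Px_2}/t$ on both sides, and let $t\to0^+$. The semigroup part tends to $\p{A_1x_1,Px_2}+\p{x_1,PA_2x_2}$ by the identity $(\qA_k^tx_k-x_k)/t\to A_kx_k$ (valid on $\Dom(A_k)$), while the observation part tends to $\p{C_1x_1,\tJ C_2x_2}$ because $s\mapsto C_k\qA_k^sx_k$ is continuous at $0$ for $x_k\in\Dom(A_k)$ (Lemma~\ref{lABC}(c)). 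This shows both that (\ref{eAPA=CJC}) on any neighborhood $(0,\eps)$ suffices, and that checking only on $\cap_n\Dom(A_k^n)\subset\Dom(A_k)$ is meaningful.

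For the converse (\ref{eAPA=CJCdiff})$\Rightarrow$(\ref{eAPA=CJC}), fix $x_k\in\cap_n\Dom(A_k^n)$ (which is dense in $H$ and invariant under $\qA_k$) and define
\[
  \phi(t):=\p{x_1,((\qA_1^t)^*P\qA_2^t-P)x_2}+\int_0^t\p{\qC_1x_1,\tJ\qC_2x_2}(s)\,ds.
\]
Then $\phi(0)=0$. Writing $y_k(s):=\qA_k^sx_k$, which stays in $\cap_n\Dom(A_k^n)$, the first term is classically differentiable with derivative $\p{A_1y_1(t),Py_2(t)}+\p{y_1(t),PA_2y_2(t)}$, and the second term has derivative $\p{C_1y_1(t),\tJ C_2y_2(t)}$ by continuity of $s\mapsto C_ky_k(s)$. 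Hence $\phi'(t)$ equals the left-hand side of (\ref{eAPA=CJCdiff}) evaluated at $(y_1(t),y_2(t))$, which is $\ge 0$ by hypothesis. Thus $\phi(t)\ge 0$ for all $t\ge 0$, which is (\ref{eAPA=CJC}) tested on $(x_1,x_2)$.

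To extend to arbitrary $x_k\in H$, observe that both sides of (\ref{eAPA=CJC}), read as bilinear forms $(x_1,x_2)\mapsto\p{x_1,((\qA_1^t)^*P\qA_2^t+(\qC_1^t)^*\tJ\qC_2^t-P)x_2}$, are bounded on $H\times H$ since $\qA_k^t\in\BL(H)$ and $\qC_k^t\in\BL(H,\L^2([0,t];Y))$. The density of $\cap_n\Dom(A_k^n)$ in $H$ then yields the inequality on all of $H\times H$. The equality ($=$) version follows by applying both $\ge$ and $\le$; the $\le$ version is the $\ge$ version with $(P,\tJ)$ replaced by $(-P,-\tJ)$ (or obtained by reversing inequalities throughout the argument unchanged).

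The only mildly delicate point, and the one I would guard against most carefully, is the continuity of $s\mapsto C_k\qA_k^sx_k$ at $s=0$ required to differentiate the integral term and to pass to the limit in the easy direction. This is available because for $x_k\in\Dom(A_k)$ the trajectory $\qA_k^sx_k$ is continuous into $\Dom(A_k)\hookrightarrow H_1$ and $C_k\in\BL(H_1,Y)$ by Lemma~\ref{lABC}(c); for $x_k\in\cap_n\Dom(A_k^n)$ one even obtains smoothness in $s$, which legitimizes the differentiation of $\phi$. Everything else is just the density/boundedness argument and a sign flip, so no serious obstacle remains.
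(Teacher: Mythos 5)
Your proof is correct and follows essentially the same route as the paper's: the forward direction differentiates the (nonnegative, vanishing-at-zero) function $t\mapsto\p{x_1,((\qA_1^t)^*P\qA_2^t+(\qC_1^t)^*\tJ\qC_2^t-P)x_2}$ at $t=0$, and the converse shows this function is nondecreasing on the invariant dense core $\cap_n\Dom(A_k^n)$ and then extends by density and boundedness, with the $\le$ and $=$ cases obtained by the same sign flip. The "moreover" claims fall out of the two directions exactly as you note, so there is nothing to add.
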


Equation (\ref{eAPA=CJCdiff}) is equivalent to
\begin{equation}
  \label{eAP+PAgeCJC}
  A_1^*P+PA_2 +C_1^*\tJ C_2 \ge0 
        \ \ \ \text{(in $\BL(\Dom(A_2),\Dom(A_1)^*)$)},
\end{equation}
 where $\Dom(A_k)$ is equipped with the graph topology
 and $\Dom(A_k)^*$ is its dual w.r.t.\ the pivot space $H$.
(This is the standard convention;
 see p.~464 and Lemma A.3.24 of [M02] 
 or Lemma~\ref{lABC} for details.)

\begin{proof} 
(Further details and references are given on p.~464 of [M02].)

$1^\circ$ {\em ``$\IF$'':}
Let $x_k\in\Dom(A_k)$ ($k=1,2$). Then
\begin{equation}
  \label{}
  (\qA_k x_k)'=A_k\qA_k x_k
     =\qA_k A_k x_k\in\cC(\R_+;H_k) \ \ \ (k=1,2),
\end{equation}
 in particular, $\qA_k x_k\in\cC(\R_+;\Dom(A_k))$.
Consequently, $\qC_k x_k=C_k\qA_k^\cdot x_k\in\cC(\R_+;Y)$ ($k=1,2$).

Since $f:=\p{x_1,gx_2}$,
  where $g:=(\qA_1^t)^*P\qA_2^t + (\qC_1^t)^*\tJ \qC_2^t-P$,
 satisfies $f(0)=0$, $f\ge0$ and $f\in\cC^1(\R_+)$,
 we have $f'(0)\ge0$,
 which implies that (\ref{eAPA=CJCdiff}) holds.

$2^\circ$ {\em ``$\THEN$'':}
Assume that (\ref{eAPA=CJCdiff}) holds on $\Dom(A_1^\infty)\times \Dom(A_2^\infty)$.
Let $a_k\in \Dom(A_k^\infty):=\cap_{n\in\N}\Dom(A_k^n)$ and $t\ge0$.
Set $x_k:=\qA_k^t a_k\in \Dom(A_k^\infty)$, so that $\qC_k a_k=C_k x_k$ 
 and $(\qA_k a_k)'(t)=A_k x_k$ ($k=1,2$), as in $1^\circ$.
By substituting these into (\ref{eAPA=CJCdiff}), we obtain
\begin{eqnarray*}
 0&\!\!\!\!\le\!\!\!\!&\p{\qA_1'(t)a_1,P \qA_2(t)a_2}+\p{\qA_1(t)a_1,P \qA_2'(t)a_2}
        +\p{C_1\qA_1(t)a_1,\tJ C_1\qA_2(t)a_2}\\
 &\!\!\!\!\!=\!\!\!\!\!&\frac{d}{dt}\left[\p{\qA_1(t)a_1,P \qA_2(t)a_2}_H
        +\int_0^t\p{C_1\qA_1(t)a_1,\tJ C_2\qA_2(t)a_2}_Y\,dt\right] \\
 &\!\!\!\!\!=\!\!\!\!\!&\frac{d}{dt}\left[\p{a_1,\qA_1(t)^*P \qA_2(t)a_2}_H
        +\p{a_1,(\qC_1^t)^*\tJ \qC_2^t a_2}_H\right]. 
\end{eqnarray*}

Thus, the expression in brackets must be increasing, hence
 for any $t>0$, we have
\begin{eqnarray*}
\lefteqn{\p{a_1,\qA_1(t)^*P \qA_2(t)a_2}+\p{a_1,\qC_1^*\tJ \pi_{[0,t]}\qC_2 a_2}} \\
 & & \ge\p{a_1,\qA_1(0)^*P \qA_2(0)a_2}+\p{a_1,\qC_1^*\tJ \pi_{[0,0]}\qC_2 a_2}
        =\p{a_1,P a_2} - 0. 
\end{eqnarray*}
The same holds for $a_1,a_2\in H\times H$ too,
 because $\Dom(A_k^\infty)$ is dense in $H$. %

$3^\circ$ The ``moreover'' claim can be observed from the above proofs;
 the claim on ``$\le$'' follows by replacing $P$ by $-P$
 and $\tJ$ by $-\tJ$;
 the claim on ``$=$'' follows from ``$\le$'' and ``$\ge$''.
\end{proof}

When using the ``dynamic programming principle'', we often
 need the following:
\begin{lemma}\label{ltaugUstar} %
Let $x_0\in H$ and $u\in\Lloc^2(\R_+;U)$.
Then $u\in\gUstar(x_0)$ iff $\pi_+\tau^t u\in\gUstar(\qA^t x_0+\qB^t u)$
 for some (equivalently, all) $t\ge0$.
\end{lemma}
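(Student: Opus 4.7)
The plan is to use the flow/causality identities of WPLSs to show that shifting time by $t$ sends the full input/state/output trajectory to the corresponding trajectory with initial state $\qA^t x_0+\qB^t u$ and input $\pi_+\tau^t u$, and then to check that each of the three membership conditions defining $\gUstar$ is invariant under this shift.

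First I will establish the basic identity: for any WPLS $\ssysbm{\qA\|\qB\crh \qC\|\qD}$ and any $x_0\in H$, $u\in\Lloc^2(\R_+;U)$, if $x=\qA x_0+\qB\tau u$ and $y=\qC x_0+\qD u$, then
\begin{equation*}
 x(t+\cdot)=\qA (\qA^t x_0+\qB^t u)+\qB\tau(\pi_+\tau^t u),\qquad \pi_+\tau^t y=\qC(\qA^t x_0+\qB^t u)+\qD(\pi_+\tau^t u).
\end{equation*}
This follows from the semigroup property of $\qA$, the causality/time-invariance axioms of Definition~\ref{dWPLS0}, and the decomposition $u=\pi_{[0,t)}u+\tau^{-t}(\pi_+\tau^t u)$ (the first part being absorbed into the state at time $t$). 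The same identity, applied to the WPLS $\qABQR$, gives $\pi_+\tau^t(\qQR x_0+\qRR u)=\qQR(\qA^t x_0+\qB^t u)+\qRR(\pi_+\tau^t u)$.

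Next I will translate the three conditions defining $u\in\gUstar(x_0)$ one at a time. (i) $u\in\L^2_\vartheta(\R_+;U)$ iff $\pi_+\tau^t u\in\L^2_\vartheta(\R_+;U)$, because $\pi_{[0,t)}u\in\L^2([0,t);U)\sub\L^2_\vartheta$ automatically (as $u\in\Lloc^2$) and the weight $e^{-\vartheta s}$ is bounded above and below on $[t,\infty)$ versus $[0,\infty)$ up to a constant factor. (ii) $\qC x_0+\qD u\in\L^2(\R_+;Y)$ iff $\qC(\qA^t x_0+\qB^t u)+\qD(\pi_+\tau^t u)\in\L^2(\R_+;Y)$, because the truncation $\pi_{[0,t)}(\qC x_0+\qD u)$ is in $\L^2([0,t);Y)$ by well-posedness of $\ssysbm{\qA\|\qB\crh\qC\|\qD}$ and the identity above isolates the rest as the shifted output. (iii) $\qQR x_0+\qRR u\in\sZ$ iff $\qQR(\qA^t x_0+\qB^t u)+\qRR(\pi_+\tau^t u)\in\sZ$; here I will invoke the shift-invariance assumption on $\sZ$ from Standing Hypothesis~\ref{shgUstar} (namely $\pi_+\tau^t z\in\sZ\IFF z\in\sZ$), together with the fact that the ``initial segment'' over $[0,t)$ already lies in $\sZ$ by the very hypothesis that $\pi_+\tau^t$ preserves $\sZ$ in both directions.

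Combining the three equivalences yields the ``some''-direction for every $t\ge0$, and the ``all''-direction is the same statement read the other way, so all equivalences are simultaneous. The only place where some care is needed is step (iii): one must check that the map $z\mapsto \pi_+\tau^t z$ being a bijection of $\sZ$ onto itself (as provided by the standing hypothesis) really does let us go back and forth, not just one way. That is the only non-routine point, and it follows directly from the assumed equivalence $\pi_+\tau^t z\in\sZ\IFF z\in\sZ$ for all $z\in\uZ$.
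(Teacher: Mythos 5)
Your proof is correct and follows essentially the same route as the paper's own: decompose $u$ at time $t$, use the WPLS causality and time-invariance axioms to show that $\pi_+\tau^t(\qC x_0+\qD u)=\qC x_t+\qD(\pi_+\tau^t u)$ (and analogously for $\qQR,\qRR$), and then verify that each of the three conditions defining $\gUstar$ is invariant under this shift, with the $\sZ$-condition handled by the standing hypothesis $\pi_+\tau^t z\in\sZ\IFF z\in\sZ$. The only cosmetic remark is that in your step (iii) no separate ``initial segment'' argument is needed; the assumed equivalence applied directly to $z=\qQR x_0+\qRR u\in\uZ$ already does all the work, exactly as in the paper.
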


This says that $u$ is admissible for some initial state $x(0)=x_0$
 iff at some (hence any) moment $t$ the remaining part of $u$ is admissible
 for the current state $x(t)$.
The proof of this lemma is where we explicitly use
 the hypothesis that $\qABQR$ is a WPLS.

\begin{proof}
Given $t\ge0$, set $u':=\piOt u$, $u'':=\pi_+\tau^t u$,
 so that $u=u'+ \tau^{-t}\pi_+ u''$ 
 and $x_t:=\qAt x_0+\qBt u=\qAt x_0+\qBt u'$.
Obviously, $u\in\L^2_\vartheta \IFF u''\in\L^2_\vartheta$.
We have (recall that $\tau^t u''=\pi_- \tau^t u$)
\begin{equation}
  \label{}
  (\qC x_t) + \qD u'' = (\pi_+\tau^t\qC x_0 + \pi_+\qD \tau^t u'')
   + \pi_+ \qD \pi_+ \tau^t u
  = \pi_+\tau^t(\qC x_0 + \qD u)
\end{equation}
 hence $\qC x_t + \qD u'' \in\L^2$ iff 
 $\qC x_0+\qD u\in\L^2$.

Analogously, we can show that 
 $\qQR x_t + \qRR u'' \in\sZ$ iff 
 $\pi_+\tau^t(\qQR x_0+\qRR u)\in\sZ$,
 i.e., iff $\qQR x_0+\qRR u\in\sZ$ 
 (by Standing Hypothesis \ref{shgUstar}). 
Thus, we have shown that $u\in\gUstar(x_0) \IFF
 u''\in\gUstar(x_t)$.
Since $t\ge0$ was arbitrary, this establishes the claim.
\end{proof}

For any ``test function''
 $\teta\in\L^2([0,t);U)$, there is $\eta\in\gUstar(0)$ s.t.\ $\piOt\eta=\teta$
 and the rest of $\eta$ is optimal (if $\oqK$ is):
\begin{lemma}\label{lGengUstar} %
Assume that
 $\oqK$ is a control in WPLS form s.t.\
 $\oqK x_0\in\gUstar(x_0)$ for all $x_0\in H$.

Then, for any $t\ge0$ and $\teta\in\L^2([0,t);U)$,
 we have 
 \label{pagePt}$P^t\teta:=\eta:=\piOt\teta+\tau^{-t}\oqK\qBt\teta\in\gUstar(0)$,
 $\piOt\eta=\teta$ and $\qD\eta=\qDt\teta+\tau^{-t}\oqC\qBt\teta$.\noproof
\end{lemma}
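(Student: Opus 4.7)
My plan is to verify the three assertions in order, leaning on the WPLS axioms (Definition \ref{dWPLS0}) and on Lemma \ref{ltaugUstar} for the admissibility claim. Throughout, I will use that $\oqK x_0$ is a function supported on $\R_+$ for any $x_0 \in H$, that $\tau^{-t}$ pushes the support rightward by $t$, and that $\piOt \tau^{-t} w = 0$ for any $w \in \L^2_\omega(\R_+;U)$.

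First, $\piOt \eta = \teta$ is immediate: the second summand $\tau^{-t}\oqK\qBt\teta$ is supported on $[t,\infty)$, so $\piOt$ annihilates it. For the admissibility claim $\eta \in \gUstar(0)$, I apply Lemma \ref{ltaugUstar} with initial state $0$ at time $t$. I need to compute both $\qBt \eta$ (the state at time $t$ starting from $0$) and $\pi_+\tau^t\eta$ (the tail of the input). For the state: writing $\qBt = \qB\tau^t\piOt$, I get $\qBt(\tau^{-t}\oqK\qBt\teta) = \qB\tau^t\,\piOt\tau^{-t}\,\oqK\qBt\teta = 0$, so $\qBt \eta = \qBt\teta$. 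For the tail: $\tau^t\piOt\teta$ is supported on $[-t,0)$, which $\pi_+$ kills; while $\pi_+\tau^t\tau^{-t}\oqK\qBt\teta = \pi_+\oqK\qBt\teta = \oqK\qBt\teta$. Hence $\pi_+\tau^t\eta = \oqK\qBt\teta$, which lies in $\gUstar(\qBt\teta)$ by hypothesis on $\oqK$. Lemma \ref{ltaugUstar} then gives $\eta \in \gUstar(0)$.

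The third identity requires a bit more care. Split
\[
 \qD\eta = \qD\piOt\teta + \qD\tau^{-t}\oqK\qBt\teta.
\]
For the second term, time-invariance of $\qD$ (item 4 of Definition \ref{dWPLS0}) gives $\qD\tau^{-t} = \tau^{-t}\qD$, so it equals $\tau^{-t}\qD\oqK\qBt\teta$. For the first term, I decompose $\qD\piOt = \piOt\qD\piOt + \pi_{[t,\infty)}\qD\piOt = \qDt + \pi_{[t,\infty)}\qD\piOt$. The Hankel-like piece is rewritten using time-invariance as $\tau^{-t}\pi_+\qD\tau^t\piOt\teta$, and since $\tau^t\piOt\teta$ is supported on $[-t,0)\subset\R_-$, the fundamental axiom $\pi_+\qD\pi_- = \qC\qB$ yields $\pi_+\qD\tau^t\piOt\teta = \qC\qB\tau^t\piOt\teta = \qC\qBt\teta$. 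Combining,
\[
 \qD\eta = \qDt\teta + \tau^{-t}\qC\qBt\teta + \tau^{-t}\qD\oqK\qBt\teta = \qDt\teta + \tau^{-t}(\qC+\qD\oqK)\qBt\teta,
\]
which is the desired expression since $\oqC = \qC + \qD\oqK$ by (\ref{eoACKdef}).

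The main subtlety is the third identity, where one must carefully unravel the interaction of $\qD$ with $\piOt$ and $\tau^{-t}$; in particular, invoking $\pi_+\qD\pi_- = \qC\qB$ on the correctly shifted input is the key trick that converts the Hankel-like output of $\teta$ into a free response from the state $\qBt\teta$. The other two claims are essentially bookkeeping once Lemma \ref{ltaugUstar} is available.
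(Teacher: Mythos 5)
Your proof is correct and follows exactly the route the paper indicates: the membership claim via Lemma \ref{ltaugUstar} with $x_0=0$ (using $\pi_+\tau^t\eta=\oqK\qBt\teta\in\gUstar(\qBt\teta)$ and $\qBt\eta=\qBt\teta$), and the $\qD\eta$ identity by causality, time-invariance and the axiom $\pi_+\qD\pi_-=\qC\qB$, which is precisely the "straight-forward" computation the paper omits. No gaps.
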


(The claim ``$\in\gUstar(0)$''
 follows from Lemma \ref{ltaugUstar} by setting $x_0=0$,
 since $\oqK\qBt\teta\in\gUstar(\qBt\teta)$.
The claim on $\qD\eta$ is straight-forward.)
The ``dynamic programming principle'' of this lemma will allow us
 to establish the Riccati equation through (\ref{eDynPrg}).
The principle was based (through Lemma~\ref{ltaugUstar})
 on the requirement on $\sZ$ in
 Hypothesis \ref{shgUstar}. %

Note that $P^t u$ keeps $\piOt u$ but replaces $\piti u$ by the optimal input
 (if $\oqK$ is optimal),
 hence $(P^t)^2=P^t$, so that $P^t$ is a projection
 $\Lloc^2(\R;U)\to \Ran(P^t)\sub\gUstar(0)$.

We shall also need the following fact on how integral operator equation
 systems can be written in the frequency domain and vice versa:
\begin{lemma}[$\pmbold{\qDt^*J\oqDt\IFF\hqD^*J\hoqD}$]\label{lIREiffhIREtechnique} %
Assume that $\tU$ is a Hilbert space,
 $\ssysbm{\oqA\|\oqB\crh\oqC\|\oqD}$ is a WPLS on $(\tU,H,Y)$,
 and $\Ric\in\BL(H)$, $J\in\BL(Y)$. %
Let $\alpha\ge\max\{\omega_A,\omega_{\oA}\}$.
\begin{itemlist}
  \item[(a)] We have (\ref{eD-IRE}) %
 iff (\ref{eD-hIRE}) holds. %

  \item[(b)] We have (\ref{eCJC=}) and (\ref{eDJC=}) %
 iff (\ref{ehCJC=}) and (\ref{ehDJC=}) hold. %

\end{itemlist}
Above we referred to the following equations:\vspace{-1ex}
 \begin{subequations}\label{eD-IRE}
   \begin{align}
    \label{eCJC=} 
  \qCt^*J\oqCt&={\qA^t}^*\Ric\oqA^t-\Ric\ \ \ \ \ &&\ \all t\ge0,\\ %
    \label{eDJD=} 
  \qDt^* J\oqDt&= \qBt^*\Ric\oqBt\ \ \ \ \ &&\ \all t\ge0,\\ 
    \label{eDJC=}
  \qDt^*J\oqCt&= \qBt^*\Ric\oqAt\ \ \ \ \ &&\ \all t\ge0.
   \end{align}
 \end{subequations}\vspace{-4ex}%
\begin{subequations}\label{eD-hIRE}
   \begin{align}
    \label{ehCJC=} 
  C^*J\oC &= A^*\Ric+\Ric \oA,&& \phantom{test}\\
    \label{ehDJD=} 
   \hqD(s)^*J\hoqD(z)
 &= (z+\bar s)B^*(s-A)^{-*}\Ric (z-A_0)^{-1}B_0
 \ \ &&\all\ s,z\in\C_\alpha^+\\
    \label{ehDJC=}
  \hqD(s)^*J C_0(z-A_0)^{-1}  
 &=  B^*(s-A)^{-*}\Ric(\bar s+A_0)(z-A_0)^{-1}
  \ \ \ \ \ &&\all\ s,z\in\C_\alpha^+.
   \end{align}
 \end{subequations}

{\bf (c)} We can have ``for some $s,z\in\C_\alpha^+$'' %
 in place of `''$\all s,z\in\C_\alpha^+$'' in (b).
The same applies to (a) if $J=J^*,\ \Ric=\Ric^*$,
 $A_0=A,\ C_0=C,\ B_0=\pm B$ and $\hqD_0=\pm \hqD$.

{\bf (d)} 
In addition to (c),
 $s,z\in\C_\alpha^+$ can be replaced by $s\in\rho(A), z\in\rho(A_0)$
 if we use characteristic functions in place of the transfer functions
 $\hqD,\hqD_0$.

{\bf (e)} Drop the standing hypotheses on $\ALS$ for the moment.
Assume, instead, that $A:H\supset\Dom(A)\to H$
 and $A_0:H\supset\Dom(A_0)\to H$ are linear operators on $H$,
 $s\in\rho(A),\ z\in\rho(\oA)$,
  $B^*\in\BL(\Dom(A^*),U),\ C\in\BL(\Dom(A),Y)$,
  $B_0^*\in\BL(\Dom(A_0^*),\tU),\ C_0\in\BL(\Dom(A_0),Y)$,
 $\hqD(s)\in\BL(U,Y),\ \hoqD(z)\in\BL(\tU,Y)$.
Extend $\hqD,\hoqD$ by setting
 $\hqD(\zeta):=\hqD(s)+(\zeta-s)C(s-A)^{-1}(\zeta-A)^{-1}B\ \all \zeta\in\rho(A)$,
 $\hoqD(\zeta):=\hoqD(z)+(\zeta-z)C_0(z-A_0)^{-1}(\zeta-A_0)^{-1}B_0\ \all \zeta\in\rho(\oA)$.

Then the equations in (\ref{ehCJC=}) and (\ref{ehDJC=}) hold
 for these $s,z$ iff they hold for all $s\in\rho(A),\ z\in\rho(\oA)$.
If $J=J^*,\ \Ric=\Ric^*$,
 $A_0=A,\ C_0=C,\ B_0=\pm B$ and $\hqD_0=\pm \hqD$,
 then the equations in (\ref{eD-hIRE}) hold
 for these $s,z$
 iff they hold for all $s\in\rho(A),\ z\in\rho(\oA)$.
\end{lemma}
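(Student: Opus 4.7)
The plan is to reduce all claims to a combination of: (i) Lemma \ref{lAPA+CJC} (with equality) for pure semigroup/output Lyapunov identities, (ii) Laplace/Plancherel-type transforms for the terms involving the input and I/O maps, and (iii) holomorphic extension to handle the ``some $(s,z)$'' versus ``all $(s,z)$'' issue.

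For part (b), I would first handle $(\ref{eCJC=}) \iff (\ref{ehCJC=})$: this is the pure Lyapunov identity for the two semigroups $\qA$, $\oqA$ and output maps $\qC$, $\oqC$, and Lemma \ref{lAPA+CJC} with equality (and $\tJ = -J$) gives the equivalence with $C^*J\oC = A^*\Ric + \Ric\oA$ on $\Dom(A) \times \Dom(\oA)$. For $(\ref{eDJC=}) \iff (\ref{ehDJC=})$, I would write out $\qDt\eta$ and $\oqCt x_0$ via Lemma \ref{lABC} and Definition \ref{dWPLS0}, take Laplace transforms in $\eta$ and $x_0$ (using the $\omega$-stability $\alpha$), and compute. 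The factor $(\bar s + \oA)(z - \oA)^{-1}$ arises because the Laplace-transformed boundary term at $t = 0$ combines with $\bar s$ from integration by parts; using $(\bar s + \oA)(z-\oA)^{-1} = (\bar s + z)(z - \oA)^{-1} - I$ one can see this factor equivalently as $-I$ plus the diagonal-type factor that appears in (\ref{ehDJD=}). The converse direction (from frequency to time) follows by expanding $(s-A)^{-1}$ and $(z-\oA)^{-1}$ in Neumann series near $+\infty$ and reading off the coefficients, or equivalently by inverse Laplace transform.

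For part (a), the remaining equivalence $(\ref{eDJD=}) \iff (\ref{ehDJD=})$ goes through the same Laplace/Plancherel machinery: the symmetric factor $(z + \bar s)$ on the right of (\ref{ehDJD=}) is exactly the sum of boundary terms at $0$ and $\infty$ when one applies integration by parts to $\int_0^t \qB\tau^{t-\sigma}(\cdot)$ paired with $\int_0^t \oqB\tau^{t-\sigma}(\cdot)$ and Laplace transforms in the two time variables. This is the main technical computation, since it must be carried out at the level of quadratic forms $\langle \qDt\eta_1, J\oqDt\eta_2\rangle$ and then identified with $\langle \qBt\eta_1, \Ric\oqBt\eta_2\rangle$ through Plancherel on $\R_+$ (or a limiting argument letting $t \to \infty$ through $\alpha$-stability and using compactly supported test functions). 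For (c), I observe that both sides of (\ref{ehCJC=}) are independent of $s,z$; both sides of (\ref{ehDJC=}) are, after rearrangement as above, jointly holomorphic in $\bar s$ and $z$ over $\C_\alpha^+ \times \C_\alpha^+$ and, modulo the already-established (\ref{ehCJC=}), depend on $s,z$ only through the resolvents $(s-A)^{-1}$ and $(z-\oA)^{-1}$; so the resolvent identity $(z - \oA)^{-1} - (z_0 - \oA)^{-1} = (z_0 - z)(z-\oA)^{-1}(z_0-\oA)^{-1}$ (and analogously in $\bar s$) propagates one base pair $(s_0,z_0)$ to all pairs. For (a) in the symmetric case $\hoqD = \pm\hqD$, $A_0 = A$, $B_0 = \pm B$, the symmetric equation (\ref{ehDJD=}) also can be extended from the diagonal pair $s = z$ to all $(s,z)$ using the same resolvent identity together with the symmetry $\hqD(s)^*J\hqD(z) = \overline{\hqD(z)^*J\hqD(s)}$.

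For (d), once the equivalence holds on $\C_\alpha^+$, the extension formula $\hqD(\zeta) = \hqD(s) + (\zeta - s)C(s-A)^{-1}(\zeta-A)^{-1}B$ defines the characteristic function on all of $\rho(A)$; substituting it into either side and using the resolvent identity, both sides become manifestly holomorphic in $\zeta \in \rho(A)$ and agree on $\C_\alpha^+$, hence agree on each connected component of $\rho(A)$. Part (e) is then immediate: without a WPLS, one still has the resolvent identity and the extension is \emph{defined} precisely so that the frequency-domain equation is preserved; the proof is a direct algebraic verification. The main obstacle I expect is bookkeeping the unbounded operators in the cross-term computation (steps (b) and (a)): $B \in \BL(U, H_{-1})$, $C \in \BL(H_1, Y)$, and the factor $(\bar s + \oA)$ multiplies an unbounded operator onto a bounded resolvent, so one must argue carefully (via the duality $\Dom(\oA^*)^* \supset H$ and the fact that $(\bar s + \oA)(z - \oA)^{-1} = (\bar s + z)(z-\oA)^{-1} - I$) that the displayed expressions are well-defined bounded maps $H \to U$ and that the Laplace transform manipulations converge in the appropriate topologies.
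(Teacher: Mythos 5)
Your outline matches the paper's proof in all essentials: the state--state identity (\ref{eCJC=})$\IFF$(\ref{ehCJC=}) is dispatched by Lemma~\ref{lAPA+CJC}, the cross and input--input identities are converted to the frequency domain by a two-variable Laplace-transform correspondence, and (c)--(e) are handled purely algebraically via the resolvent equation (through (\ref{eCharFctDiff})), exactly as you propose --- including your observation that $(\bar s+\oA)(z-\oA)^{-1}=(\bar s+z)(z-\oA)^{-1}-I$ is the key rearrangement.

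The one step your sketch underestimates is where the real work of (a) and (b) lives. The paper channels the time$\leftrightarrow$frequency conversion through Lemma~\ref{lhFG=hfg}, whose time-domain side is the family of \emph{shifted} identities $\p{\piOt\tau^r f,\piOt g}_{\L^2}=\p{F(r+t),G(t)}_H-\p{F(r),G(0)}_H$ required for \emph{all} $r\in\R$, not just $r\ge0$; this is how a family of operator identities indexed by the single truncation parameter $t$ acquires two independent frequency variables $s,z$. Deriving that family from (\ref{eDJC=}) needs two separate uses of causality (axiom 4 of Definition~\ref{dWPLS0}): for $r<0$ one uses $\piOt\tau^r\qD\pi_+=\piOt\qD\piOt\tau^r\pi_+$ directly, while for $r\ge0$ the Hankel part $\qC\qB\tau^r$ appears and must be absorbed using the already-established (\ref{eCJC=}). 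Your ``Plancherel plus integration by parts, converse by Neumann series or inverse Laplace'' description points at the same mechanism but would have to be fleshed out to this level --- in particular the negative-shift case --- before it constitutes a proof; as written it is a correct plan with the hardest bookkeeping still to be done.
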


When applying (b), one may want to set $\oqB=0$, $\oqD=0$.
Note that the formulas in (e) also hold
 in (a)--(d) except that we should have $\chqD,\choqD$
 (the characteristic functions) in place of $\hqD,\hoqD$.

\begin{proof} %
(Actually (\ref{eDJD=}) and (\ref{ehDJD=}) are equivalent,
  which can be shown as in the proof as in 
 Proposition 9.11.3 of [M02].)

(b)
$1^\circ$ {\em Equations (\ref{eCJC=}) and (\ref{ehCJC=})
 are equivalent}, by Lemma~\ref{lAPA+CJC}.

$2^\circ$ {\em ``If'':}
Let $\omega>\alpha$, $x_0\in H$ and
 $u\in\L^2_\omega(\R_+;U)$ be arbitrary.
Set $F(t):=\qBt u$, $G(t):=\Ric\oqAt x_0$,
 $f(t):=\qDt u,\ g(t):=J\oqCt x_0$.
Write (\ref{ehDJC=}) as
\begin{equation}
  \label{ehDJC2=}
  \hqD(s)^*J\hoqC(z)
  = \hqB(s)^*\Ric\left(z+\bar s -(z-A_0)\right)(z-A_0)^{-1}
  = (z+\bar s)\hqB(s)^*\Ric(z-A_0)^{-1}-\hqB(s)^*\Ric I
\end{equation}
 to observe that
 Lemma~\ref{lhFG=hfg}(i) is satisfied
 (apply (\ref{ehDJC2=}) to $\p{\hu(s),\hqD(s)^*J\hoqC(z) x_0}_Y$),
 hence so is (v); set $r=0$ to obtain (\ref{eDJC=})
 (since $u,x_0$ were arbitrary).
Combine this with $1^\circ$ to obtain ``if''.

$3^\circ$ {\em ``Only if'':}
Let $\omega,u,x_0,F,G,f,g$ be as above,
 so that (\ref{ehDJC=}) follows from
 Lemma~\ref{lhFG=hfg}(i) (since $\omega,u,x_0$ were arbitrary)
 once we establish (v).

$3.1^\circ$ {\em Case $r<0$:}
Since $\piOt\tau^r\qD\pi_+ =\piOt\qD\tau^r=\piOt\qD\piOt\tau^r\pi_+$ 
 for $r<0$,
 we obtain from (\ref{eDJC=}), that
 $\p{\piOt \tau^r\qD u,\piOt J\oqC x_0}
  = \p{ \tau^r u,\qDt^*J\oqCt x_0}
  = \p{ \tau^r u,\qBt^*\Ric\oqAt x_0}
  = \p{ \qB^{t+r} u,\Ric\oqAt x_0}-0
  = \p{ \qB^{t+r} u,\Ric\oqAt x_0}-\p{\qB^r u,\Ric x_0}$,
 i.e., (v) holds for $r<0$.

$3.2^\circ$ Let $r\ge0$. 
Now $\piOt\tau^r\qD u=\piOt\qD\pi_+\tau^r u+\piOt\qD\pi_-\tau^r \pi_+ u
 =\qDt\tau^r u+\piOt\qC\qB\tau^r\pi_+ u$, by 4. of Definition~\ref{dWPLS0}.
Therefore, (\ref{eDJC=}) implies that
 hence $\p{\piOt\qD\tau^r u,J\oqCt x_0}
 = \p{\qDt\tau^r u+\qCt\qB^r u,J\oqCt x_0}
 = \p{\qBt\tau^r u+\qAt\qB^r u,\Ric\oqAt x_0}-\p{\qB^r u,\Ric x_0}
 = \p{\qB^{t+r}u,\Ric\oqAt v}-\p{\qB^r u,\Ric x_0}$, by $1^\circ$,
 hence (v) holds for $r\ge0$ too (see (\ref{ehDJC2=}));
 thus, (\ref{ehDJC=}) holds
 (for all $s,z\in\C_\omega^+$; but also $\omega>\alpha$ was arbitrary,
 hence for all $s,z\in\C_{\alpha}^+$).

(a) The proof is analogous to $2^\circ$--$3^\circ$ of the proof of (b):

$1^\circ$ {\em ``If'':}
Assume (\ref{eD-hIRE}).
From (b) we obtain (\ref{eCJC=}) and (\ref{eDJC=}).
Let $\omega>\alpha$ and
 $u,v\in\L^2_\omega(\R_+;U)$ be arbitrary.
Set $F(t):=\qBt u$, $G(t):=\Ric\oqBt v$,
 $f(t):=\qDt u,\ g(t):=J\oqDt v$, so that (\ref{eDJD=}) follows from
 Lemma~\ref{lhFG=hfg}(v) (since $u,v$ were arbitrary),
 because (i) follows from (\ref{ehDJD=}) 
 (note that $G(0)=0$).

$2^\circ$ {\em ``Only if'':}
Assume (\ref{eD-IRE}).
From (b) we obtain (\ref{ehCJC=}) and (\ref{ehDJC=}).
With $\omega,u,v,F,G,f,g$ as in $1^\circ$, we obtain (\ref{ehDJD=})
 as in $3^\circ$ of the proof of (b).

(c)\&(d)\&(e)
Observe first that,
 by (\ref{eCharFct}) and the resolvent equation,
 \begin{equation}
   \label{eCharFctDiff}
 \hqD_\ALS(z)-\hqD_\ALS(s) =C[(z-A)^{-1}-(s-A)^{-1}]B.   
 \end{equation}

$1^\circ$ {\em (b):}
Assume (\ref{ehCJC=}) and (\ref{ehDJC=}) for some fixed $s\in\C_\alpha^+$,
 so that 
  $f(s)=g(s)$,
  where $f(s):=\hqD(s)^*J\oC$, $g(s):=B^*(s-A)^{-*}\Ric(\bar s+\oA)$.
We have $(z-A)^{-*}\bar z-(s-A)^{-*}\bar s=[(z-A)^{-*}-(s-A)^{-*}]A^*$,
 hence $g(z)-g(s)=B^*[(z-A)^{-*}-(s-A)^{-*}](A^*\Ric + \Ric A_0)\ \all z$.
By (\ref{ehCJC=}), this equals
 $B^*[(z-A)^{-*}-(s-A)^{-*}]C^*JC$,
 which equals $f(z)-f(s)$,
 by (\ref{eCharFctDiff}).
Thus, $f(z)=g(z)$ for all $z\in\C_\alpha^+$,
 hence (c) holds for (b)
 (including the claim on characteristic functions,
  just replace $\C_\alpha^+$ by $\rho(A)$ above).

$2^\circ$ {\em (a):}
Assume that (\ref{eD-hIRE}) holds for some fixed
  $s_0$ in place of $s$ and $z$.
By $1^\circ$,
 equations (\ref{ehCJC=}) and (\ref{ehDJC=}) 
 hold for any $s,z\in\C_\alpha^+$.
But, by (\ref{eCharFctDiff}) and (\ref{ehDJC=}), we get
 (here $T_s:=(s-A)^{-1}$)
 $\hqD(s)^*J[\hqD_0(z)-\hqD_0(s)]
 =\hqD(s)^*J C_0 [T_z -T_s] B_0
 =B^*(z-A)^{-*}\Ric(\bar s+A_0)[T_z-T_s]B_0
 =B^*(z-A)^{-*}\Ric[(\bar s+z)T_z-(\bar s+s)T_s]B_0$. %
(We used the fact that $T_z-T_s=(s-z)T_zT_s=0$ maps $\Ran(B_0)$
 to $T_z[H]=\Dom(A_0)$.) %

Thus, (\ref{ehDJD=}) is equivalent under the change of $z$
 (i.e., for a fixed $s$, it holds for all $z$ or for no $z$).
Take the adjoint of (\ref{ehDJD=}) to observe that it is equivalent
 under the change of $s$ too.
Thus, if it holds for some pair $s,z$, then it holds for all $s,z$.
\end{proof}

\begin{proof}[Proof of Theorem~\ref{OptIRE}:]\label{pageOptIREproof}
Trivially, condition $\oqK x_0\in\gUstar(x_0)$ ($x_0\in H$) is necessary.
For the rest of the proof, we assume that this condition holds.
Consequently, $\oqC$ is stable %
 and Theorem 8.3.9(a2) of [M02] holds.

$1^\circ$ {\em ``Only if'':}
Given $\teta\in\L^2([0,t);U)$, we have
 for $\eta:=\teta+\tau^{-t}\oqK\qBt\teta$
 and any $x_0\in H$ that
 (note that $\eta\in\gUstar(0)$, by Lemma \ref{lGengUstar})
\begin{align} \label{eDynPrg}
  0=\p{\qC x_0+\qD\oqK x_0,J\qD\eta}
  &=\p{(\piOt+\tau^{-t}\tau^t\piti) \oqC x_0,J\qD\eta}\\
  &= \p{\piOt \oqC x_0,J\qD\eta} + \p{\pi_+\tau^t\oqC x_0,J\qD\tau^t\eta}\\
  &= \p{\piOt \oqC x_0,J\qD\eta} + \p{\oqC\oqA^t x_0,J\oqC\qB^t\teta}\\
  &= \p{\oqC^t x_0,J\qDt\teta} + \p{\oqA^t x_0,\Ric\qB^t\teta}.
\end{align}
Thus, (\ref{e3DJC+BPA=0}) holds.

By  Definition \ref{dWPLS0}(3.), 
 we have $\tau^t\oqC=\pi_+\tau^t\oqC+\pi_-\tau^t\oqC
 =\oqC\oqA+\tau^t\oqC^t$,
 hence $\Ric=\oqC^*J\oqC
 = \oqA^*\oqC^*J\oqC\oqA
   + (\oqC^t)^*J\oqC^t$,
 i.e., (\ref{e3APA+CJC}) holds;
 by (a1), it implies (\ref{e3APA+CJCmix}).
Moreover, since $\Ric=\oqC^*J\oqC$, we obtain from Definition \ref{dWPLS0}(3.)
 that
\begin{equation}
  \label{ePoqAto0tod}
  \p{\oqA^t x_0,\Ric\oqA^t x_0}=\p{\oqC\oqA^t x_0, J\oqC\oqA^t x_0}
  =\p{\pi_+\tau^t\oqC x_0,J\pi_+\tau^t\oqC x_0}
\end{equation}
  $=\p{\oqC x_0,\piti J\oqC x_0}\to0$, as $t\to+\infty$, which shows that
 the second term in (\ref{eRCC}) converges to zero.

Let now $x_0\in H$ and $\eta\in\gUstar(0)$ be arbitrary.
Because $\p{\piOt\qD \eta,J\oqC x_0}\to\p{\qD \eta,J\oqC x_0}$,
 as $t\to\infty$, %
 equation (\ref{e3DJC+BPA=0}) implies that 
 \begin{equation}
  \label{eBPatoDJCgen1}
   \p{\qB\tau^t \eta,\Ric\oqA^t x_0}\to -\p{\qD \eta,J\oqC x_0}, \ \ \ 
 \text{as} \ t\to+\infty.
 \end{equation}
Because $\eta\in\gUstar(0)$ was arbitrary,
 $J$-optimality implies that (\ref{eRCC}) holds.

$2^\circ$ {\em ``If'':}
Assume that $\oqK x_0\in\gUstar(x_0)\ \all x_0\in H$
 and that (\ref{eRCC})--(\ref{e3APA+CJC}) hold.

The identity $\Ric=\oqC^*J\oqC$
 follows from (\ref{e3APA+CJC}) by letting $t\to+\infty$
 and using (\ref{eRCC}).
From (\ref{eBPatoDJCgen1}) and (\ref{eRCC})
 we obtain that $\p{\qD\eta,J\oqC x_0}=0$.

{\em Remark:} As in $1^\circ$, we actually obtain that
 $\oqA^t\Ric\oqA^t x_0,\Ric\oqA^t x_0\to0$, as $t\to\infty$.

(a1) 
Equation (\ref{e3APA+CJC}) is equivalent to (\ref{e3APA+CJCmix}),
 because (\ref{e3APA+CJCmix})$^*-$(\ref{e3APA+CJC})$=\oqK^*((\qBt)^*\Ric\oqA^t+(\qDt)^*J\oqC^t)=0$ when (\ref{e3DJC+BPA=0}) holds. 

(a2) %
By Lemma~\ref{lAPA+CJC}, equation (\ref{e3APA+CJC}) is equivalent
 to $(\dag)$  $\oA^*\Ric+\Ric\oA + \oC^*J\oC=0$
 (in $\BL(\Dom(\oA),\Dom(\oA)^*)$),
 and
 (\ref{e0yJyL2r}) is equivalent to
 $0=\tA^*\Ric+\Ric\tA +\tC^*\tJ\tC=\oA^*\Ric+\Ric\oA-2r\Ric
  +\oC^*J\oC+I^*(2r\Ric)I=\oA^*\Ric+\Ric\oA + \oC^*J\oC$
 (set $\tJ:=\sbm{J&0\cr 0&2r\Ric},\ \tC:=\sbm{\oC\cr I},\ \tA:=\oA-r$
  and note that ``$\tqC^*\tJ\tqC = \oqC^*\efn^{-2r\cdot}J\oqC
  +\oqA^*\efn^{-2r\cdot}2r\Ric\oqA$'').

(b1)
Let $x_0\in H$ and $\eta\in\gUstar(0)$.
If $\gUstar\sub\dUstr$,
 then $\oqA x_0,\qB\tau \eta\in\cCo(\R_+;H)$, by Theorem 8.3.9(a2) of [M02],
 hence then (\ref{eRCC}) obviously holds.

If $\gUstar\sub\dUexp$, then $\oqA x_0,\qB\tau \eta\in\L^2(\R_+;H)$,
   hence then the limit in (\ref{eBPatoDJCgen1}) cannot be nonzero in any case,
   so it must be zero (since it exists, by (\ref{eBPatoDJCgen1})).

(b2) Let $t\to+\infty$ in (\ref{e3DJC+BPA=0})--(\ref{e3APA+CJC})
 (see the proof of (a1) for (\ref{e3APA+CJC})).

(c) ``Only if'' follows from $\oqK x_0\in\gUstar(x_0)$ 
 and the Closed Graph Theorem 
 (as shown in Theorem 8.3.9(a2) of [M02])
 and ``if'' from (a)\&(b1).

(d1) This follows from Lemma~\ref{lAPA+CJC}. %

(d2) Apply ((d1) and) Lemma \ref{lIREiffhIREtechnique}(b)\&(c)
 with $-J$ in place of $J$.

(e) %
Hypothesis~\ref{shgUstar} %
 was only used to prove
 the two ``$J$-optimal'' equivalences, hence (e) holds.
(f) 
We shall use below the facts that $\p{\oqA x_0,\Ric \qB\tau u}_H$
 is bounded for $u\in\gUstar$, by the RCC,
 and that $\L^2(\R_+;Y)\sub\L^2_r(\R_+;Y)$.

By (a), (a1) and (a2), we only have to establish 
  (the ``hence all'' and ``replace'' claims and)
 the equivalence
 between (\ref{e3DJC+BPA=0}) and (\ref{e0ya=-2rxa}),
 and we may assume (\ref{e0yJyL2r}), (\ref{e3APA+CJC}) and the RCC,
 hence also that $\Ric=\oqC^*J\oqC$ (use (\ref{e3APA+CJC}) and the RCC).

Let $\eta\in\Lloc^2(\R_+;U),\ t\ge0,\ x_0\in H$ be arbitrary.
By Lemma~\ref{lGengUstar}, we have
 $u:=\piOt\eta+\tau^{-t}\oqK \qBt\eta\in\gUstar(0)$,
 hence $\p{\oqAt x_0,\Ric\qBt u}$ is bounded, by the RCC,
 and $\qD u\in\L^2\sub\L^2_r$.
We also note that $\oqA x_0,\oqC x_0\in\L^2_r$.

$1^\circ$ {\em A useful identity:} %
Since $\p{f,\piti g}_{\L^2_r}=\efn^{-2rt}\p{\tau^t f,\pi_+\tau^t g}_{\L^2_r}$
 $f,g\in\L^2_r$,
 we have (use Definition~\ref{dWPLS0} and (\ref{e0yJyL2r})
 \begin{align} %
   \efn^{2rt}\p{J\oqC x_0,\piti\qD u}_{\L^2_r}
  &= \p{J\pi_+\tau^t\oqC x_0,\pi_+\qD(\pi_++\pi_-)\tau^t u}_{\L^2_r}\\
  &= \p{J\oqC\oqA^t x_0,\pi_+\qD\oqK\qBt u +\qC\qB\tau^t u}_{\L^2_r}\\
  &= \p{J\oqC\oqA^t x_0,\oqC\qBt u}_{\L^2_r}\\
  &= \p{\oqA^t x_0,\Ric\qBt u}_{H}
     -2r \p{\oqA \oqA^t x_0,\Ric\oqA\qBt u}_{\L^2_r}\\\label{OptIRE-f5}
  &= \p{\oqA^t x_0,\Ric\qBt u}_{H}
     -2r \efn^{2rt}\p{\oqA x_0,\piti \Ric\qB\tau u}_{\L^2_r},
 \end{align}
 because  $\qB\tau(\pi_-+\pi_+)\tau^t u = \qA\qBt u + \qB\tau\oqK\qBt u
  = \oqA \qBt u$ and
\begin{align}
  \p{\oqA x_0,\piti\Ric\qB\tau u}_{\L^2_r}
  &= \int_{s=t}^\infty \efn^{-2rs}\p{\oqA^s x_0,\Ric\qB\tau^s u}_H\,ds\\
  &= \int_{v=0}^\infty \efn^{-2rt}\efn^{-2rv}
         \p{\oqA^v\oqA^t x_0, \Ric\qB\tau^v\tau^t u}_H\,dv\\
  &= \efn^{-2rt}\p{\oqA\oqA^t x_0,\Ric\qB\tau \tau^t u}_{\L^2_r}.
\end{align}

$2^\circ$ {\em ``If'':} %
From (\ref{OptIRE-f5}) and (\ref{e0ya=-2rxa}) (with $u$ in place of $\eta$)
 we obtain that
\begin{align} \label{eOptIRE-f7}
  \p{\oqC x_0,J\piOt\qD u}_{\L^2_r}
  &= \p{\oqC x_0,J\qD u}_{\L^2_r} - \p{\oqC x_0,J\piti\qD u}_{\L^2_r}\\
  &= -2r\p{\oqA x_0,\Ric\qB\tau u}_{\L^2_r}
     -\efn^{-2rt}  \p{\oqA^t x_0,\Ric\qBt u}_{H}
     +2r \p{\oqA x_0,\piti \Ric\qB\tau u}_{\L^2_r}\\\label{eOptIRE-f9}
  &=  -2r\p{\oqA x_0,\piOt \Ric\qB\tau u}_{\L^2_r}
     -\efn^{-2rt}  \p{\oqA^t x_0,\Ric\qBt u}_{H}.
\end{align}

Since two terms above are differentiable a.e.,
 so must the third be too.
Differentiate  (\ref{eOptIRE-f9})
 w.r.t.\ $t$ and multiply by $\efn^{2rt}$
 to obtain that a.e.\ 
\begin{align}
 \p{\oqC x_0,J\qD u}_Y(t)
  =  -2r\p{\oqAt x_0,\Ric\qBt u}_H
    +2r\p{\oqAt x_0,\Ric\qBt u}_H
     -  \p{\oqA x_0,\Ric\qB\tau u}_H'(t).
\end{align}

Integrate both sides ($\int_0^t$) to obtain (\ref{e3DJC+BPA=0}).

$3^\circ$ {\em ``Only if'':} %
This follows by going $2^\circ$ backwards.

$4^\circ$ {\em ``Hence all'':}
This follows from   $3^\circ$,
 because (\ref{e3DJC+BPA=0}) is independent of $r>0$.

$5^\circ$ {\em ``Replace'':}
$5.1^\circ$ {\em ``Only if'':} this follows as above
 (our additional assumptions on $r$ imply that
   $\qD u,\qB\tau u,\oqA x_0\in\L^2_r$,
   so that, e.g., (\ref{eOptIRE-f7}) is justified).
(Note that here any $r\ge0$, for which $\oqA,\qB,\qD$ are $r$-stable, will do.)
$5.2^\circ$ {\em ``If'':} With the additional assumption $r\ge\vartheta$,
 we obviously have $\gUstar(0)\sub\L^2_r$,
 hence sufficiency remains.
\end{proof}

Next we shall prove the equivalence of the \SIRE\ and the \hSIRE\
 to the \optIRE:
\begin{lemma}[\boldedSIRE\ \&\ \boldedhSIRE]\label{lhSIRE} %
Make the assumptions of Theorem~\ref{OptIRE}. %
Then the \optIRE\ (\ref{e3DJC+BPA=0})--(\ref{e3APA+CJCmix})
 holds iff $\Ric,\oqK$ solve the \SIRE\ (\ref{eSIRE}).
The \SIRE\ holds for all $t>0$
 iff the \hSIRE\ holds for some $s,z\in\C_{\omega}^+$ %
 (equivalently, for all $s,z\in\C_{\omega}^+$).
We can above replace $\C_{\omega}^+$ by $\rho(A)\cap\rho(\Aopt)$
 if replace $\hqKopt$ by $\Kopt(\cdot-\Aopt)^{-1}$
 and $\hqD$ by $\hqD_\ALS$.
\end{lemma}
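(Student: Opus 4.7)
The plan is to split the lemma into two independent equivalences: first, the purely algebraic time-domain equivalence between the \optIRE\ and the \SIRE, and second, the time-to-frequency equivalence between the \SIRE\ and the \hSIRE, which will be reduced to Lemma~\ref{lIREiffhIREtechnique} equation by equation. The claim about $\rho(A)\cap\rho(\oA)$ and characteristic functions will then drop out of part (e) of that lemma.

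For the first equivalence, I would simply expand $\oqAt=\qAt+\qBt\qKoptt$ and $\oqCt=\qCt+\qDt\qKoptt$ throughout the \optIRE. Substituting into (\ref{e3DJC+BPA=0}) gives
\[
 \qDt^*J\oqCt+\qBt^*\Ric\oqAt
 =\bigl(\qDt^*J\qCt+\qBt^*\Ric\qAt\bigr)
  +\bigl(\qDt^*J\qDt+\qBt^*\Ric\qBt\bigr)\qKoptt,
\]
so, after invoking the definition (\ref{eS2XSX=}) of $\qSt$, equation (\ref{e3DJC+BPA=0}) is literally (\ref{eS2XSK=}). Expanding $\oqAt^*\Ric\oqAt+\oqCt^*J\oqCt$ and collecting cross-terms using this identity gives $\qAt^*\Ric\qAt+\qCt^*J\qCt-\qKoptt^*\qSt\qKoptt$, so equation (\ref{e3APA+CJC}) coincides with (\ref{eS2KSK=}); by Theorem \ref{OptIRE}(a1), (\ref{e3APA+CJC}) is equivalent to (\ref{e3APA+CJCmix}) once (\ref{e3DJC+BPA=0}) holds. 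All three substitutions are reversible, so this step is bookkeeping rather than analysis.

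For the second equivalence, I read each of the three \SIRE\ identities as one of the three equations (\ref{eCJC=})--(\ref{eDJC=}) of Lemma~\ref{lIREiffhIREtechnique}, applied with a cleverly chosen auxiliary WPLS. The cleanest route is to first pass through the \optIRE: by part (1) the \SIRE\ is equivalent to (\ref{e3DJC+BPA=0})--(\ref{e3APA+CJC}), and the latter is the pair (\ref{eCJC=})+(\ref{eDJC=}) applied to the WPLS $\bsysbm{\qA\|\qB\crh\qC\|\qD}$ paired with $\oALS=\bsysbm{\oqA\|\crh\oqC\|}$ (using $\qB_0=0$, $\qD_0=0$). Lemma \ref{lIREiffhIREtechnique}(a)\&(c) then converts these to their frequency-domain counterparts
\[
 C^*J\oC=A^*\Ric+\Ric\oA,\qquad
 \hqD(s)^*J\hoqC(z)=B^*(s-A)^{-*}\Ric(\bar s+\oA)(z-\oA)^{-1},
\]
holding for some (equivalently all) $s,z\in\C_{\omega}^+$. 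A direct computation, using $\oA=A+B\Kopt$ and the identity $\hoqC(z)=C(z-A)^{-1}+\hqD(z)\hqKopt(z)$ (which follows from $\oqC=\qC+\qD\qKopt$ and the WPLS Laplace transform relations), rewrites these two equations into the three \hSIRE\ identities (\ref{eS2hKSK=})--(\ref{eS2hXSK=}); the resolvent identity $(\bar s+\oA)(z-\oA)^{-1}=(\bar s+z)(z-\oA)^{-1}-I$ produces the $(z+\bar s)$ factor and the $-B^*(s-A)^{-*}\Ric$ term in (\ref{eS2hXSK=}), and the algebra is again reversible.

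For the last claim, I invoke Lemma~\ref{lIREiffhIREtechnique}(d)\&(e): the transfer functions in (\ref{eD-hIRE}) may be replaced by characteristic functions and the parameter range extended to $s\in\rho(A),\ z\in\rho(\oA)$, provided the transfer functions are extended by the $\hqD_\ALS(\zeta)=\hqD_\ALS(s)+(\zeta-s)C(s-A)^{-1}(\zeta-A)^{-1}B$ formula. The analogous extension for $\hqKopt$ is precisely $\Kopt(\cdot-\oA)^{-1}$, as stated in the lemma. The main technical obstacle, and the only nonroutine point, is keeping the three identities mutually consistent under these extensions---in particular verifying that the rearrangement from $\oqC,\oqA$ back to $\qKopt,\qD,\qC,A$ goes through on all of $\rho(A)\cap\rho(\oA)$, not only where $\hqD$ was originally defined; but this is immediate from Lemma~\ref{lIREiffhIREtechnique}(e).
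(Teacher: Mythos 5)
Your proposal is correct and follows essentially the same route as the paper: the time-domain equivalence by expanding $\oqAt=\qAt+\qBt\qKoptt$, $\oqCt=\qCt+\qDt\qKoptt$ and collecting the cross terms into $\qSt$, then passing to the frequency domain through the \optIRE/\hoptIRE\ link of Lemma~\ref{lIREiffhIREtechnique} (the paper cites this via Theorem~\ref{OptIRE}(d1)\&(d2)) and converting \hoptIRE\ to \hSIRE\ by the resolvent identity $(\bar s+\oA)(z-\oA)^{-1}=(\bar s+z)(z-\oA)^{-1}-I$, with the $\rho(A)\cap\rho(\Aopt)$ extension coming from Lemma~\ref{lIREiffhIREtechnique}(d)\&(e). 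The only nit is that the two-equation frequency-domain conversion you need is part (b)\&(c) of Lemma~\ref{lIREiffhIREtechnique} rather than (a)\&(c), since with $\qB_0=0$, $\qD_0=0$ there is no analogue of (\ref{eDJD=}).
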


By Theorem~\ref{oALSgen}, we have
 $\overline{\C^+}\cap\rho(A)\sub \rho(A)\cap\rho(\Aopt)$
 if $\oqK$ is stable.

\begin{proof}[Proof of Lemma~\ref{lhSIRE}:]
(We write ``$0$'' in place of ``opt'' to shorten the formulas.
Note from $1^\circ$ that ``\optIRE$\IFF$\SIRE''
 actually holds for any single, fixed $t>0$
 (but in $2^\circ$ we only show that \SIRE\ holds for all $t>0$
   iff the \hSIRE\ holds for all $s,z\in\C_{\omega_A}^+$.)

$1^\circ$ {\em \optIRE$\IFF$\SIRE:}
Equation (\ref{eS2XSK=}) is equivalent to (\ref{e3DJC+BPA=0}),
 as one notices by substituting the identities
 $\oqCt=\qCt+\qDt\oqKt$ and 
 $\oqAt=\qAt+\qBt\oqKt$ into (\ref{e3DJC+BPA=0}). %
Similar substitution  into (\ref{e3APA+CJCmix})
 and use of (\ref{eS2XSK=}) shows 
 that (\ref{e3APA+CJCmix}) is equivalent to (\ref{eS2KSK=})
 (under (\ref{eS2XSK=})). %

$2^\circ$ {\em \hoptIRE$\IFF$\hSIRE:}
(Recall from Theorem \ref{OptIRE}(d1)\&(d2)
 that \optIRE\ is equivalent to \hoptIRE\
 holding for some $s,z$, equivalently, for all $s,z$,
 so it suffices to prove ``\hoptIRE$\THEN$\hSIRE''
 for arbitrary, fixed $s,z\in\C_\omega^+$.)

$2.1^\circ$ {\em (\ref{ehcIRE-DJC+BP})$\IFF$(\ref{eS2hXSK=})
 (under (\ref{eS2hXSX=})):}
Substitute $\hoqC=\hqC+\hqD\hoqK$, $(z-\oA)^{-1}=(z-A)^{-1}[I+B\hoqK(z)]$
 into (\ref{ehcIRE-DJC+BP})
 to obtain that (here $\hqB^*:=B^*(s-A)^{-*},\ \hqB:=(z-A)^{-1}B$ etc.)
 \begin{align}
  \hqD^*J\hqC + \hqD^*J\hqD\hoqK
 & = - \hqB^*\Ric [(\bar s+z)(z-A_0)^{-1}-I]\\ %
 & = - \hqB^*\Ric [(\bar s+z)((z-A)^{-1}+\hqB\hoqK)-I],
 \end{align}
 equivalently, (use (\ref{eS2hXSX=}))
 \begin{equation}
   \label{eS2hXSK=b}
     \hqS\hoqK
 = - \hqD^*J\hqC   - \hqB^*\Ric(s^*+z)(z-A)^{-1}-\hqB^*\Ric,
 \end{equation}
 which is a reformulation of (\ref{eS2hXSK=}),
 because 
 \begin{equation}
   \label{e(s+z)(z-A)}
   (\bar s+z)(z-A)^{-1}-I = (\bar s+A)(z-A)^{-1}.
 \end{equation}

$2.2^\circ$ {\em \hoptIRE$\THEN$\hSIRE:}
By Lemma~\ref{lAPA+CJC}, equation (\ref{e3APA+CJC}) is equivalent to
\begin{equation}
  0=A_0^*\Ric+\Ric A_0+C_0^*JC_0,
\end{equation}
 hence so is (\ref{e33AP+PAmix}).
Assume \hoptIRE.
Then (note that $A_0(z-A_0)^{-1}=z(z-A_0)^{-1}-I$)
\begin{align}
  0&=(s-A_0)^{-*}(A_0^*\Ric+\Ric A_0+C_0^*JC_0)(z-A_0)^{-1}\\
 &= \hoqC^*J\hoqC+(\bar s+z)(s-A_0)^{-*}\Ric (z-A_0)^{-1}
  -\Ric(z-A_0)^{-1}-(s-A_0)^{-*}\Ric\\
 &= (\hqC+\hqD\hoqK)^*J(\hqC+\hqD\hoqK)
     + (\bar s+z)(s-A_0)^{-*}\Ric (z-A_0)^{-1}\\
  &\ \ \ \     -\Ric(z-A_0)^{-1}-(s-A_0)^{-*}\Ric\\
 &= \hqC^*J\hqC + \hoqK^*\hqS\hoqK
   + \hoqK^*\left(\hqD J\hqC+(\bar s+z)\hqB\Ric(z-A)^{-1}\right)
   + \Big(\Big)^*\hoqK\\
  &\ \ \ \   +(\bar s+z)(s-A)^{-*}\Ric (z-A)^{-1}
   -\Ric(z-A)^{-1} - \Ric\hqB\hoqK-(s-A)^{-*}\Ric-\hoqK^*\hqB^*\Ric\\
  &= (s-A)^{-*}(C^*JC+A^*\Ric+\Ric A)(z-A)^{-1}
   + \hoqK^*\hqS\hoqK  -2 \hoqK^*\hqS\hoqK,
\end{align}
 by (\ref{e(s+z)(z-A)}) and (\ref{eS2hXSK=b}), %
 hence (\ref{eS2hKSK=}) holds.

$2.3^\circ$ {\em \hoptIRE$\IF$\hSIRE:} 
Use first $2.1^\circ$ and then go $2.2^\circ$ backwards.

$3^\circ$ {\em $\rho(A)\cap\rho(\Aopt)$:}
The above proof still applies %
 (see Theorem~\ref{oALSgen}), mutatis mutandis %
 (note from the proof that in Theorem \ref{OptIRE}(d2)
   we could have ``$s\in\rho(A),\ z\in\rho(\Aopt)$''
 in place of ``$s,z\in\C_\omega^+$'').  
\end{proof}

The following is straight-forward
 (cf.\ Lemma~\ref{luc}):
\begin{lemma}[\hSIRE$\IFF$\hSIRE$_\ClL$]\label{lhSIREClL} %
Make the assumptions of Theorem~\ref{OptIRE}. %
Let $\qKF$ be an admissible state-feedback pair for $\ALS$
 with closed-loop system $\ALSClL$

Then $\qKClLo=-\qK+\qX\oqK$
 satisfies the \hSIRE\ for $\qABCDClL$ and $J$
 iff $\oqK$ satisfies the \hSIRE\ for $\ALS$ and $J$.
\end{lemma}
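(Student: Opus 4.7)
The plan is to reduce both \hSIRE{} statements to a common form in which their equivalence becomes manifest.

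\textbf{Step 1.} First I will verify that the controlled WPLS form induced by $\oqK$ on $\ALS$ coincides with that induced by $\qKClLo$ on $\ALSClL$. Using $\qKClL=\qM\qK$, $\qBClL=\qB\qM$, $\qDClL=\qD\qM$, the time-invariance $\qM\tau=\tau\qM$, and the relation $\oqK=\qKClL+\qM\qKClLo$ from Lemma~\ref{luc}, a direct substitution gives
\begin{align*}
  \qA+\qB\tau\oqK &= \qA+\qB\tau\qM\qK+\qB\qM\tau\qKClLo = \qAClL+\qBClL\tau\qKClLo, \\
  \qC+\qD\oqK &= \qC+\qD\qM\qK+\qD\qM\qKClLo = \qCClL+\qDClL\qKClLo.
\end{align*}
Consequently $\oqA,\oqC$ (and their generators $\oA,\oC$) agree in both perspectives.

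\textbf{Step 2.} Next I will invoke Lemma~\ref{lhSIRE} to pass from each \hSIRE{} to the equivalent \optIRE{} (\ref{e3DJC+BPA=0})--(\ref{e3APA+CJCmix}), and then use Theorem~\ref{OptIRE}(a1) to replace (\ref{e3APA+CJCmix}) by the equivalent form (\ref{e3APA+CJC}). For $(\ALS,\Ric,\oqK)$ this yields
\begin{align*}
  \Ric &= (\oqA^t)^*\Ric\,\oqA^t + (\oqC^t)^*J\,\oqC^t, \\
  0 &= (\qD^t)^*J\,\oqC^t + (\qB^t)^*\Ric\,\oqA^t,
\end{align*}
whereas for $(\ALSClL,\Ric,\qKClLo)$ the first equation is identical (since it is intrinsic to $\oqA^t,\oqC^t$) and the second becomes
\begin{equation*}
  0 = (\qDClL^t)^*J\,\oqC^t + (\qBClL^t)^*\Ric\,\oqA^t.
\end{equation*}

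\textbf{Step 3.} Finally I will show these two latter equations are equivalent. Causality of $\qM$ and $\qD$ gives $\piOt\qM\piti=0$ and $\piOt\qD\piti=0$, while $\qB\tau^t\piti=0$ follows from Definition~\ref{dWPLS0}(2.). Together with $\qBClL=\qB\qM$ and $\qDClL=\qD\qM$ these yield the factorizations $\qBClL^t=\qB^t\qM^t$ and $\qDClL^t=\qD^t\qM^t$, so the second $\ALSClL$ equation reads $(\qM^t)^*\bigl[(\qD^t)^*J\,\oqC^t + (\qB^t)^*\Ric\,\oqA^t\bigr]=0$. The identity $\qM\qX=\qX\qM=I$ combined with the analogous causality cancellation $\piOt\qM\piti=\piOt\qX\piti=0$ gives $\qM^t\qX^t=\qX^t\qM^t=\piOt$, so $\qM^t$ is boundedly invertible on $\L^2([0,t);U)$ with inverse $\qX^t$; hence the factor $(\qM^t)^*$ can be cancelled. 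The two second equations coincide, completing the argument.

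\textbf{Main obstacle.} The only mildly technical points are the causality factorizations $\qBClL^t=\qB^t\qM^t$ and $\qDClL^t=\qD^t\qM^t$ together with the truncated invertibility of $\qM^t$. Both ultimately rest on causality together with $\qM\qX=I$, so the proof is indeed straight-forward, as the lemma advertises.
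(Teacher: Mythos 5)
Your proof is correct. The paper offers no proof of this lemma (it is dismissed as ``straight-forward (cf.\ Lemma~\ref{luc})''), and your argument is exactly the natural fleshing-out of that hint: Step~1 is Lemma~\ref{luc}'s identification of the two controlled WPLS forms, and Steps~2--3 reduce both \hSIRE{}s to the common \optIRE\ via Lemma~\ref{lhSIRE} and Theorem~\ref{OptIRE}(a1), with the causal factorizations $\qDClLt=\qDt\qMt$, $\qBClLt=\qBt\qMt$ and the invertibility of $\qMt$ (inverse $\qXt$) finishing the equivalence of the cross terms.
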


The relation $\qSt=\PTO P^t$ 
 connects $\qSt$ to the uniqueness of optimal control:
\begin{lemma}[$\qSt=\PTO P^t$]]\label{lqSt=PTO.Pt} %
Let $\qKopt$ be a $J$-optimal control in WPLS form.
Define $\qSt$ by the \SIRE.
\begin{itemlist}
  \item[(a)] Then $\qSt=\PTO P^t=P^t \PTO=P^t\PTO P^t\ \all t\ge0$.

  \item[(b)] %
The $J$-optimal control is unique some (hence all) $x_0\in H$ %
 iff $\qSt$ is one-to-one for some (hence all) $t>0$.

  \item[(c)] If $\qKopt$ is given by a state-feedback pair,
 then $\qSt=\qXt^*S\qXt$, 
 hence then $\qSt$ is one-to-one iff $S$ is. 

\end{itemlist}
\end{lemma}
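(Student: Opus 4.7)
The plan is to establish (a) by explicitly computing $\qD P^t u$ and exploiting $J$-optimality, deduce (b) from the bilinear form identity in (a), and note that (c) is immediate from Theorem~\ref{GenSpF}(iii). For (a), the key computation is that for any $u \in \Lloc^2(\R_+;U)$, dynamic-programming bookkeeping yields $\qD P^t u = \qDt \piOt u + \tau^{-t}\oqC \qBt \piOt u$: on $[0,t]$ the input $P^t u$ equals $\piOt u$, producing output $\qDt \piOt u$; on $[t,\infty)$ the state $\qBt\piOt u$ is driven by the optimal continuation $\oqK(\qBt u)$, producing (after a shift by $t$) $\tau^{-t}\oqC\qBt u$. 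Combined with $\Ric = \oqC^*J\oqC$ (Theorem~\ref{ALSopt}), and noting that $\qDt \piOt\,\cdot\,$ and $\tau^{-t}\oqC \qBt\,\cdot\,$ have disjoint supports so their cross pairings vanish, this gives the bilinear identity $\p{u,\qSt v}_{\L^2([0,t];U)} = \p{\qDt u, J\qDt v} + \p{\oqC\qBt u, J\oqC\qBt v} = \p{P^t u,\PTO P^t v}_{\L^2(\R_+;U)}$ for $u,v \in \L^2([0,t];U)$, i.e.\ $\qSt = (P^t)^*\PTO P^t$.

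The next ingredient is that $P^t$ is a $\PTO$-orthogonal projection: for $v \in \L^2([0,t];U)$ and any $k \in \piti\L^2 \cap \gUstar(0) = \ker(P^t)\cap\gUstar(0)$, write $k = \tau^{-t}\tilde k$ with $\tilde k := \pi_+\tau^t k \in \gUstar(0)$ (Lemma~\ref{ltaugUstar} applied with $x_0 = 0$, noting $\qBt k = 0$); then the output formula yields $\p{\qD P^t v,\,J\qD k} = \p{\oqC\qBt v,\,J\qD\tilde k}$ (cross pairings again vanish by disjoint supports), and this is zero by the $J$-optimality condition $\p{\qD\eta,\,J\oqC x_1} = 0$ for $\eta\in\gUstar(0)$, $x_1\in H$ (Lemma~\ref{lOptCost0}(b)(ii) with $\eta = \tilde k$, $x_1 = \qBt v$). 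This $\PTO$-orthogonality, together with the bilinear identity and the self-adjointness $\qSt = \qSt^*$ and $\PTO = \PTO^*$, delivers the operator chain $\qSt = \PTO P^t = P^t\PTO = P^t\PTO P^t$ under the natural identification $\L^2([0,t];U)\hookrightarrow\L^2(\R_+;U)$; the point is that $\PTO P^t u = \qSt\piOt u \in \L^2([0,t];U)$ lies in the range of $P^t$, and the decomposition $u = P^t u + (I - P^t)u$ for $u\in\gUstar(0)$ puts the second summand into $\ker(P^t)\cap\gUstar(0)$, forcing all three middle expressions to coincide with $\qSt$ as bilinear forms.

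For (b), Lemma~\ref{lOptCost0}(c) reduces uniqueness for some $x_0$ to uniqueness for $x_0 = 0$, and the latter, by Lemma~\ref{lOptCost0}(b), is equivalent to $\PTO|_{\gUstar(0)}$ being injective (no nonzero $\eta \in \gUstar(0)$ satisfies $\p{\qD\eta',J\qD\eta} = 0$ for all $\eta'\in\gUstar(0)$). Now $P^t : \L^2([0,t];U)\to\gUstar(0)$ is injective (the two summands of $P^t\teta$ have disjoint supports, so $P^t\teta = 0$ forces $\piOt\teta = \teta = 0$), so injectivity of $\PTO|_{\gUstar(0)}$ gives injectivity of $\qSt = (P^t)^*\PTO P^t$ via (a). Conversely, if $\qSt$ is injective and some $u \in \gUstar(0)\setminus\{0\}$ lies in $\ker\PTO|_{\gUstar(0)}$, one uses dynamic programming (Lemma~\ref{ltaugUstar}) together with the time-invariance identity $\pi_+\tau^t\qD u = \qC\qBt u + \qD\pi_+\tau^t u$ to show that $\pi_+\tau^t u$ is $J$-optimal for the state $\qBt u$; comparing with $\oqK(\qBt u)$, one concludes $u = P^t\piOt u$, whence $\qSt\piOt u = \PTO P^t\piOt u = \PTO u = 0$ forces $\piOt u = 0$, and invoking the ``some $\Rightarrow$ all'' equivalence across $t > 0$ yields $u = 0$.

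Finally, (c) is a direct specialization of Theorem~\ref{GenSpF}(iii): when $\qKopt = \qKClL$ arises from an admissible state-feedback pair $\qKF$, the spectral factor $\qX := I - \qF \in \cG\TIC_\infty(U)$ satisfies $\qSt = \qXt^*S\qXt$. Causality of both $\qX$ and $\qM := \qX^{-1}$ makes the truncation $\qXt$ invertible on $\L^2([0,t];U)$ with inverse $\qMt$ (the off-diagonal $\piOt\qX\piti\qM\piOt$ vanishes by causality), so $\qXt \in \cG\BL(\L^2([0,t];U))$, and $\qSt$ is injective iff $S$ is. The principal technical obstacle throughout will be interpreting the operator chain in (a) consistently, since $\qSt$ naturally lives on $\L^2([0,t];U)$ while $\PTO$ and $P^t$ act on $\L^2(\R_+;U)$; the key maneuver is to handle everything at the level of $\PTO$-pairings and exploit the $\PTO$-orthogonality of the projection $P^t$ restricted to $\gUstar(0)$.
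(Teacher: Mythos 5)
Your treatment of (a) and (c) is essentially the paper's own argument: the identity $\p{v,\qSt u}=\p{P^t v,\PTO P^t u}$ via the decomposition $\qD P^t u=\qDt u+\tau^{-t}\oqC\qBt u$ from Lemma~\ref{lGengUstar}, the $\PTO$-orthogonality between the range of $P^t$ and $\ker P^t\cap\gUstar(0)$ obtained from the $J$-optimality condition $\p{\qD\eta,J\oqC x_1}=0$, and, for (c), the factorization $\qSt=\qXt^*S\qXt$ with $\qXt\in\cG\BL(\L^2([0,t);U))$ by causality. (For (c) the paper reads the identity $\qXt^*S\qXt=\qSt$ directly off the IRE equation (\ref{eXSX=}), which holds whenever $\qKopt$ comes from an admissible state-feedback pair; citing Theorem~\ref{GenSpF}(iii) instead is awkward because that theorem assumes uniqueness of the optimal control, which part (c) does not.)

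The one step that does not work as written is in the converse direction of (b): from ``$\pi_+\tau^t u$ is $J$-optimal for $\qBt u$'' you infer ``$\pi_+\tau^t u=\oqK\qBt u$, hence $u=P^t\piOt u$.'' That comparison presupposes uniqueness of the $J$-optimal control for the state $\qBt u$ --- precisely what is in doubt when $\PTO$ has nontrivial kernel --- and the equality $u=P^t\piOt u$ is simply false in the non-unique case. You do not need it: once (a) gives $\qSt=(P^t)^*\PTO$ on $\gUstar(0)$, the implication $\PTO u=0\THEN\qSt\piOt u=0$ is immediate, and this is exactly how the paper argues. (Alternatively, $u-P^t\piOt u=\tau^{-t}\bigl(\pi_+\tau^t u-\oqK\qBt u\bigr)$ lies in $\ker P^t\cap\gUstar(0)$ by Lemmas \ref{ltaugUstar} and \ref{lgUstar}, so your own orthogonality relation kills $\p{P^t v,\PTO(u-P^t\piOt u)}$, which is all that is required.) You should also justify the ``some $t$ implies all $t$'' claim --- if $\PTO u=0$ with $u\ne0$ but $\piOt u=0$, one must shift $u$ back via Lemma~\ref{ltaugUstar} to produce a kernel element that is nonzero on $[0,t)$ --- though the paper is equally terse on that point.
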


Whenever there is a $J$-optimal control for each $x_0\in H$,
 we get similar results. %
In [M03b] %
 we shall show that one more equivalent condition in (b) is that
 $\hqS(s,s)$ 
 is one-to-one for some (hence all) $s\in\C_{\max\{0,\omega_A,\vartheta\}}^+$.

\begin{proof}
(a) (This follows from Lemma~\ref{lOptCost0}(iv),
 but we give here a more direct proof.)
Let $\tu,v\in\gUstar(0)$.
Set $u:=P^t\tu,\ \eta:=P^t \qBt v - \pi_+\tau^t v$.
Then $\eta\in\gUstar(\qBt v)-\gUstar(\qBt v)=\gUstar(0)$,
 by Lemmata \ref{ltaugUstar} and~\ref{lgUstar},
 and $\pi_+\qD\tau u=\qCopt\qBt u$, by Lemma~\ref{lGengUstar},
 hence 
\begin{equation}
  \p{\tau^{-t}\eta,\PTO u} = \p{J\qD \eta,\qD \tau^t u}
  = \p{J\qD \eta,\oqC \qBt u}=0,
\end{equation}
 by $J$-optimality.
Thus, $\p{v+\tau^{-t}\eta,\PTO u}=\p{v,\PTO u}$.
But $v+\tau^{-t}\eta=P^t v$. Since $v,\tu\in\gUstar(0)$
 were arbitrary, we have $(P^t)^*\PTO P^t=\PTO P^t$,
 hence $(P^t)^*\PTO P^t=[(P^t)^*\PTO P^t]^*=(P^t)^*\PTO$.

Finally, $\qD P^t u = \qDt u + \tau^{-t}\oqC\qBt u$
 for all $u\in\Lloc^2$, by Lemma~\ref{lGengUstar}, 
 hence 
 \begin{equation}
   \p{P^t v,\PTO P^t u}=\p{\qDt v,J\qDt u}+\p{\qBt v,(\oqC^*J\oqC)\qBt u}
  = \p{v,\qSt u}.
 \end{equation}

(b)
If $u$ is $J$-optimal for $x_0=0$ (i.e., $\PTO u=0$),
 then $\qSt u=(P^t)^*\PTO u\equiv 0\ \all t\ge0$.
Conversely, if $\qSt u=0$ for some $u\in\Lloc^2$ and $t>0$,
 then $P^t u$ is $J$-optimal for $x_0=0$ (since $\PTO P^t u=\qSt u=0$).
Thus, $\PTO$ is one-to-one iff $\qSt$ is.
Now (b) follows from Lemma~\ref{lOptCost0}(d)\&(c).

(c) This holds because $\qXt\qMt=I$ (see the IRE).   
\end{proof}

If $\qD$ is regular and there is a unique optimal control
 (Theorem~\ref{ALSopt}),
 then we can generalize the classical results (and those in [FLT88])
 by showing that the ARE $A^*\Ric+\Ric A+C^*\Cw=\Ric B\Bw^*\Ric$
 is satisfied on  $\Dom(\Aopt)$,
 and that the optimal control is $u(t)=-\Bw^*\Ric x(t)$ a.e.\
 (in the standard LQR problem where $J=I,\ D^*D=I,\ D^*C=0$), by (c)\&(b) below:
\begin{theorem}[\pmbold{$\Dom(\Aopt)$}-ARE]\label{GenARE} %
\index{ARE!generalized (on $\Dom(\Aopt)$)}
\index{generalized ARE}
Let $\qKopt$ %
be a $J$-optimal control for $\ALS$ in WPLS form.
Then
\begin{align}\label{e3AP+PAopt}
   -\Aopt^*\Ric &=\Ric \Aopt+ \Copt^*J\Copt
         &&\in\BL(\Dom(\Aopt),\Dom(\Aopt)^*),\\\label{e3AP+PAmix}
 -\Aopt^*\Ric &=\Ric A+\Copt^*JC
         &&\in\BL(\Dom(A),\Dom(\Aopt)^*),\\\label{e3AP+PAmix2}  %
 -A^*\Ric &=\Ric \Aopt+C^*J\Copt
         &&\in\BL(\Dom(\Aopt),\Dom(A)^*). %
\end{align}
Recall that $\Copt=\Cc+\Dc\Kopt$ and $\Aopt=A+B\Kopt$
 on $\Dom(\Aopt)$. %

Assume, in addition, that $\qD$ is WR. Then
\begin{itemlist}
  \item[(a)] {\bf (``\pmb{$\Kopt=-B^*\Ric$}'')}
          $\Ric\in\BL(\Dom(\Aopt),\Dom(\Bw^*))$,
  $\Bw^*\Ric =-D^*J\Copt$ on $\Dom(\Aopt)$, and %
\begin{equation}\label{eGenAREK}
  (D^*JD)\Kopt = -\Bw^*\Ric  -  D^*J\Cw\in\BL(\Dom(\Aopt),U).
\end{equation}

  \item[(b)] {\bf (``\pmb{$\uopt=-B^*\Ric\xopt$}'')} %
 $\Koptw x(t)=-(D^*JD)^{-1}(\Bw^*\Ric+D^*J\Cw) x(t) =(\qKopt x_0)(t)$ 
   for a.e.\ $t\ge 0$ and all $x_0\in H$,
 if $D^*JD\in\cG\BL(U)$
 (here $x:=\xopt(x_0):=\qAopt x_0$).
In particular, $\Ric x(t)\in\Dom(\Bw^*)$ a.e.

  \item[(c)] {\bf (ARE on Dom(A$_{\bf opt}$))} \  %
 If $\qD$ and $\qD^\rmd$ are SR and $D^*JD\in\cG\BL(U)$,
 then 
\begin{equation}
  \label{eGenAREAopt}
  A^*\Ric+\Ric A + C^*J\Cw = (\Ric B+C^*JD)(D^*JD)^{-1}(D^*J\Cw+\Bw^*\Ric)
\end{equation}
 in $\BL(\Dom(\Aopt),\Dom(\Aopt)^*)$.

  \item[(d)] {\bf (ARE \pmb{$\IFF J$}-optimal)} %
Assume, instead, that $\qKopt$ %
 is a control in WPLS form,
 $\qD\in\WR$,
 and $\Ric=\Ric^*\in\BL(H)$.

Then $\qKopt$ is $J$-optimal and $\Ric=\qCopt^*J\qCopt$ iff
 (\ref{e3AP+PAmix2}) and (\ref{eGenAREK}) hold
 and $\Kopt$ is ``$\gUstar$-stabilizing''
 (i.e., $\qKClL x_0\in\gUstar(x_0)$ for all $x_0\in H$ and the RCC (\ref{eRCC}) holds).
 \end{itemlist}
\end{theorem}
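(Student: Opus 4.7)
\textbf{Proof proposal for Theorem \ref{GenARE}.}

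My starting point is the hypothesis that $\qKopt$ is $J$-optimal with Riccati operator $\Ric = \qCopt^*J\qCopt$. By Theorem \ref{OptIRE}(a)\&(a1), the integral equations (\ref{e3DJC+BPA=0}), (\ref{e3APA+CJCmix}) and (\ref{e3APA+CJC}) all hold, and by Theorem \ref{OptIRE}(d1) these are equivalent to the infinitesimal Lyapunov equation (\ref{e33AP+PAmix}) and its companions. I would apply Lemma \ref{lAPA+CJC} three times: once with $(\qA_1,\qC_1)=(\qA_2,\qC_2)=(\qAopt,\qCopt)$ to (\ref{e3APA+CJC}) to obtain (\ref{e3AP+PAopt}); once with $(\qA_1,\qC_1)=(\qA,\qC)$ and $(\qA_2,\qC_2)=(\qAopt,\qCopt)$ to obtain (\ref{e3AP+PAmix}); and take the adjoint of the last to get (\ref{e3AP+PAmix2}). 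This handles the three preliminary identities without any regularity assumption on $\qD$.

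For (a), I would exploit the \hoptIRE (\ref{ehcIRE-DJC+BP}): for $y\in\Dom(\Aopt)$ and any $z\in\rho(\Aopt)$, substituting $x_0=(z-\Aopt)y$ and using $\hqCopt(z)(z-\Aopt)=\Copt$ on $\Dom(\Aopt)$ yields
\[
  \hqD(s)^*J\Copt y \;=\; -B^*(s-A)^{-*}\bigl(\bar s\,\Ric y+\Ric\Aopt y\bigr)
\]
for $s\in\rho(A)$. Let $s\to+\infty$ along the positive reals: weak regularity of $\qD$ gives $\hqD(s)^*\to D^*$ weakly, while the uniform boundedness of $B^*(s-A^*)^{-1}$ together with the strong convergence of $(s-A)^{-*}$ forces $B^*(s-A)^{-*}\Ric\Aopt y\to 0$. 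The remaining term $B^*s(s-A)^{-*}\Ric y$ therefore has a weak limit in $U$, proving $\Ric y\in\Dom(\Bw^*)$ with $\Bw^*\Ric y=-D^*J\Copt y$. Writing $\Copt=\Cw+D\Kopt$ on $\Dom(\Aopt)$ and rearranging gives (\ref{eGenAREK}); inverting $D^*JD$ and invoking Lemma \ref{lABC}(c) to identify $(\qKopt x_0)(t)$ with $\Koptw x(t)$ a.e.\ yields (b).

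For (c), I would combine (a) with its ``dual half'' under the additional SR hypothesis on $\qD^\rmd$. Starting from (\ref{e3AP+PAmix2}), written as $-A^*\Ric y=\Ric\Aopt y+C^*J\Copt y$ on $\Dom(\Aopt)$, I expand $\Aopt=A+B\Kopt$ formally and reinterpret $\Ric B\Kopt y$ via the already-proven relation $\Bw^*\Ric = -D^*J\Copt$ (whose adjoint, on the appropriate test vectors supplied by the SR of $\qD^\rmd$, gives $\Ric B=-\Copt^*JD$ paired against $\Dom(A^*)$). Substituting $\Kopt=-(D^*JD)^{-1}(\Bw^*\Ric+D^*J\Cw)$ from (b), and using SR of $\qD$ to replace $C$ by $\Cw$ where needed, converts (\ref{e3AP+PAmix2}) into (\ref{eGenAREAopt}); the main bookkeeping is verifying that each appearance of $B\Kopt$, $C^*JD$ etc.\ lies in the correct dual space, since $\Dom(\Aopt)$ and $\Dom(A)$ are typically incomparable.

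Finally, (d) splits into ``only if'', which is immediate from the first part plus (a)\&(b), and ``if'', which is the real obstacle. For sufficiency I would go backwards: given (\ref{e3AP+PAmix2}) together with the feedback formula (\ref{eGenAREK}) and the $\gUstar$-stabilizing plus RCC conditions, I would reconstruct the \hoptIRE (\ref{ehcIRE-DJC+BP})--(\ref{e33AP+PAmix}) by reversing the passage $s\to+\infty$ used in (a), i.e.\ by showing both sides of (\ref{ehcIRE-DJC+BP}) solve the same first-order identity in $s$ with matching asymptotic behavior at $+\infty$; then Lemma \ref{lIREiffhIREtechnique}(e) promotes the equation to all of $\rho(A)\times\rho(\Aopt)$, and Theorem \ref{OptIRE}(d1)\&(d2) together with the RCC yield $J$-optimality with $\Ric=\qCopt^*J\qCopt$. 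The technical heart of the argument is this reconstruction, since we must recover the unbounded (frequency-dependent) information in $\hqD(s)$ from the single feedthrough value $D$, using the characteristic-function extension in Lemma \ref{lIREiffhIREtechnique}(e) to bridge the gap.
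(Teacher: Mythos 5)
Your proposal follows essentially the same route as the paper: Lemma \ref{lAPA+CJC} applied to (\ref{e3APA+CJC}), (\ref{e3APA+CJCmix}) and its adjoint for the three Lyapunov identities, then the \hoptIRE\ (\ref{ehcIRE-DJC+BP}) evaluated on $\Dom(\Aopt)$ (your exact substitution $x_0=(z-\Aopt)y$ versus the paper's multiplication by $zx_0$ and limit $z\to+\infty$ is an immaterial variation) followed by $s\to+\infty$ for (a), and formal manipulation of (\ref{eGenAREK}) with the mixed Lyapunov equation for (c) and a reversal of the argument for (d). The paper itself only sketches (b)--(d), deferring the detailed proofs to Theorem 9.7.3 of [M02], so your level of detail there is comparable to the original.
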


(See Section 9.7 of [M02] for further details, results and notes.)
Since $\Dom(\Aopt)$ is not known a priori,
 we are not satisfied by the above
 but go on to derive the IRE to finally arrive at the ARE
 presented in Section~\ref{sARE}.
However, both (infinitesimal) algebraic REs have their applications; for 
 the above see, e.g., [LT00]. %

\begin{proof}
Apply Lemma \ref{lAPA+CJC} to (\ref{e3APA+CJC}), 
 (\ref{e3APA+CJCmix}) and  (\ref{e3APA+CJCmix})$^*$ to obtain
 (\ref{e3AP+PAopt}), (\ref{e3AP+PAmix}) and (\ref{e3AP+PAmix2}).
The formulae for $\Aopt$ and $\Copt$ are from
 Theorem~\ref{oALSgen}. 

(a) Multiply (\ref{ehcIRE-DJC+BP}) by $z x_0$, where $x_0\in\Dom(\Aopt)$,
 and let $z\to+\infty$ to obtain that
 \begin{equation}
   -\hqD(s)^*J\Copt x_0
 = B^*\bar s(s-A)^{-*}\Ric x_0
 + B^* (s-A)^{-*}\Ric \Aopt x_0
 \end{equation}
Let $\R\owns s\to +\infty$ to obtain that $\Ric x_0\in\Dom(\Bw^*)$
 and $-D^*J\Copt x_0 = \Bw^*\Ric x_0 + 0$.
Since $\Copt=\Cw+D\Kopt$, we obtain (\ref{eGenAREK}).

(b)--(d) Theorem 9.7.3 of [M02] contains a slightly stronger form
 of this theorem. %
Therefore, 
 we refer
 the long proofs of (b)--(d), 
 and only remark that formally (b) and (c) follow from
 (\ref{eGenAREK}) and (\ref{e3AP+PAopt}), %
 and that (d) follows by going the backwards the above proofs.
\end{proof}

\NotesFor{Section~\ref{soptIRE}} %
Theorem \ref{OptIRE}(d2)\&(f)
 and Lemmas \ref{lIREiffhIREtechnique} and~\ref{lhSIRE} 
 (in particular, the \hoptIRE, the \SIRE\  and the \hSIRE)
 seem to be new (see the notes to Section~\ref{sIRE}).
We established
 most of the rest of this section in Sections 8.3 and 9.7 of [M02].

However, the necessity of (\ref{e3DJC+BPA=0})--(\ref{e33AP+PAmix})
 (and essentially Lemma~\ref{lAPA+CJC})
 was already known for some cases;
 see, e.g., [S98b] for jointly stabilizable and detectable 
 $J$-coercive (over $\dUout$) WPLSs.
Similarly, for the case of bounded $C$ and the cost $\|y\|_2^2+\|u\|_2^2$,
 most of Theorem~\ref{GenARE} is contained in [FLT88]
 (with the additional (implicit) assumption that a suitable extension
  of $B^*$ exists; we have shown here that assumption is redundant
 (using $\Bw^*$)). %
See the notes on p.~465 of [M02] for further details.

\section{IRE: details}\label{sIREmore} %

In this section we shall prove Theorem~\ref{GenSpF}
 and further results on the IRE.
We start by a generalization of the theorem
 (dropping the uniqueness requirement): 
 the $J$-optimal state-feedback pairs
 are exactly the ones determined by
 the $\gUstar$-stabilizing solutions of the IRE:
\begin{theorem}[\pmbold{IRE \pmbold{$\IFF$} $J$}-optimal $\pmbold{\qKF}$]\label{IRE} %
The following are equivalent:
 \begin{itemlist}
  \item[(i)] There is a $J$-optimal state-feedback pair over $\gUstar$.
  \item[(ii)] The IRE has a $\gUstar$-stabilizing solution.
  \item[(iii)] The \hIRE\ has a $\gUstar$-stabilizing solution.
 \end{itemlist}

Moreover, the following hold:
 \begin{itemlist}
  \item[(a1)] Problems (ii) and (iii) have same solutions %
 (and (b) if it has any solutions).

  \item[(a2)] A solution $\Ric$ of (ii) is unique
 (and $\Ric=\qCClL^*J\qCClL$),
 and corresponding pairs $\qKF$ are exactly the
 $J$-optimal state-feedback pairs over $\gUstar$.

  \item[(b)] There is a minimizing state-feedback pair over $\gUstar$
  iff (ii) holds and $\cJ(0,u)\ge0$ for all $u\in\gUstar(0)$. 

  \item[(c)] Solutions of (ii) are exactly those $\gUstar$-stabilizing solutions
 of the \optIRE\ that correspond to a state-feedback pair
 (with $\oALS=\ALSClL\sbm{I\cr 0}$).

  \item[(d)] The operator $S$ is one-to-one iff the $J$-optimal control is unique.
If $S$ is one-to-one, then all $J$-optimal pairs are given by (\ref{eAllqKF}).

  \item[(e)] If $\PTO\in\cG\BL$, then $S\in\cG\BL(U)$; %
 moreover, if $\PTO\gg0$, then $S\gg0$. %
 \end{itemlist}
\end{theorem}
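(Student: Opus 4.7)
The plan is to reduce everything to Theorem~\ref{OptIRE} via Lemma~\ref{lhSIRE} together with the identifications $\qKopt := \qKClL = \qM\qK$, $\qAopt := \qAClL$, $\qCopt := \qCClL$, and the factorization $\qSt = \qXt^*S\qXt$ supplied by Lemma~\ref{lqSt=PTO.Pt}(c). First I would establish (i)$\THEN$(ii) and (c): given a $J$-optimal state-feedback pair $\qKF$, Definition~\ref{dAdmKF0} gives a controlled WPLS form $\oALS = \ALSClL\sbm{I\cr 0}$, and the identity $\qKopt = \qM\qK$ makes $\qKopt$ a control in WPLS form. Setting $\Ric := \qCClL^*J\qCClL$ and invoking Theorem~\ref{OptIRE}(a) yields the \optIRE, hence (Lemma~\ref{lhSIRE}) the \SIRE\ for $(\Ric,\qKopt,\qSt)$. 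Lemma~\ref{lqSt=PTO.Pt}(c) provides the factorization $\qSt = \qXt^*S\qXt$. Multiplying the \SIRE\ identities (\ref{eS2KSK=})--(\ref{eS2XSK=}) on the left and right by the invertible $\qMt$ (and its adjoint) and using $\qKt = \qXt\qKoptt$, $\qXt\qMt = I$, one obtains the IRE (\ref{eKSK=})--(\ref{eXSK=}) verbatim. The RCC and the containment $\qKClL x_0\in\gUstar(x_0)$ are inherited from $\qKopt$, so $(\Ric,S,\qKF)$ is a $\gUstar$-stabilizing solution of the IRE.

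For (ii)$\THEN$(i), I reverse the algebra: given a $\gUstar$-stabilizing IRE solution $(\Ric,S,\qKF)$, define $\qKopt := \qM\qK$ and $\qSt := \qXt^*S\qXt$; then pre/post-multiplying the IRE by $\qXt$-type factors recovers the \SIRE, hence the \optIRE\ by Lemma~\ref{lhSIRE}. The $\gUstar$-stabilizing hypothesis for $\qKF$ carries over directly to the RCC and admissibility requirements for $\qKopt$ in Theorem~\ref{OptIRE}, which then yields that $\qKClL$ is $J$-optimal in WPLS form, i.e., $\qKF$ is a $J$-optimal state-feedback pair. The equivalence (ii)$\IFF$(iii) together with (a1) is immediate from Lemma~\ref{lhSIRE}.

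Claim (a2) (uniqueness of $\Ric$) follows because the above construction forces $\Ric = \qCClL^*J\qCClL$, and by Lemma~\ref{lOptCost0}(a) the optimal output $\qCClL x_0$ is independent of the choice of $J$-optimal pair. Claim (b) is immediate from Lemma~\ref{lOptCost0}(d). Claim (c) is the content of the first two paragraphs. For (d), Lemma~\ref{lqSt=PTO.Pt}(b)\&(c) give the chain ``$S$ one-to-one $\IFF$ $\qSt$ one-to-one (for any/all $t>0$) $\IFF$ the $J$-optimal control is unique'', and under this one-to-oneness Lemma~\ref{lAllqKF} parameterizes all admissible pairs sharing the same $\qKClL$. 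For (e), combine the factorization $\qSt = \qXt^*S\qXt$ with Lemma~\ref{lqSt=PTO.Pt}(a) ($\qSt = P^t\PTO P^t$): since $\qXt\in\cG\BL(\L^2([0,t];U))$, $\PTO\gg0$ (resp.\ $\in\cG\BL$) on $\gUstar(0)$ transfers through $P^t$ to $\qSt$ and then through the invertible pointwise action of $\qXt$ to a pointwise lower bound on $S$, giving $S\gg0$ (resp.\ $S\in\cG\BL$).

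The main obstacle I anticipate is the careful bookkeeping in the bidirectional algebraic passage between the IRE (written with $\qKF$ and the constant operator $S$) and the \SIRE\ (written with the WPLS-form control $\qKopt$ and the Toeplitz-type operator $\qSt$): one must verify that $\qXt,\qMt,\qKt,\qFt$ respect the truncations $\piOt$ and shifts $\tau^t$ compatibly (so that, e.g., $(\qKopt)^t = \qMt\qKt$ holds exactly in $\BL(H,\L^2([0,t];U))$, not merely modulo causality), and that the factorization $\qSt = \qXt^*S\qXt$ is consistent with the representation $\qSt = P^t\PTO P^t$ on the relevant subspaces of $\L^2_\loc$.
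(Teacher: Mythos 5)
Your overall architecture is the same as the paper's: both directions are routed through Theorem~\ref{OptIRE} and the equivalence of the \optIRE\ with the \SIRE\ (Lemma~\ref{lhSIRE}), and (a2), (b), (d), (e) are handled with essentially the same ingredients. However, there is one genuine gap, and it sits exactly at the crux of (i)$\THEN$(ii). The substantive content of that implication is the \emph{existence} of a single static operator $S=S^*\in\BL(U)$ such that $\qSt=\qXt^*S\qXt$ for all $t$: a priori the \SIRE\ only gives you the family of truncated operators $\qSt$, and what must be shown is that the causal, time-invariant, self-adjoint operator $\qM^*(\qD^*J\qD+(\qB\tau)^*\Ric\qB\tau)\qM$ (which is only densely/locally defined, since $\qD$ and $\qM$ need not be stable) collapses to a multiplication operator. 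You obtain $S$ by citing Lemma~\ref{lqSt=PTO.Pt}(c), but that clause is itself a \emph{consequence} of the IRE holding --- its one-line justification in the paper reads ``because $\qXt\qMt=I$ (see the IRE)'', i.e.\ it presupposes equation (\ref{eXSX=}) with an already-existing $S$. Used inside the proof of (i)$\THEN$(ii) this is circular. The paper closes this gap via Lemma~\ref{lOptIRE0}(a1)\&(c4), whose engine is Lemma~\ref{lNJN=SonL2c}: one first verifies $\p{\qN\pi_+v,J\qN\pi_-u}=0$ on $\Lc^2$ (a consequence of (\ref{e0NJC+BPA=0})), and then the ``causal self-adjoint time-invariant $\THEN$ static'' lemma manufactures $S$. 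Your proof needs this step (or an equivalent one) spelled out; without it the signature operator never comes into existence.

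Two smaller points. For (ii)$\IFF$(iii) you cite Lemma~\ref{lhSIRE}, but that lemma relates the \SIRE/\hSIRE\ to the \optIRE; the equivalence of the IRE with the \hIRE\ is Lemma~\ref{lhIRE}. And in (e), for the merely invertible case $\PTO\in\cG\BL$ the passage from $\qSt=P^t\PTO P^t$ to $\qSt\in\cG\BL(\L^2([0,t);U))$ is not just a matter of ``transferring through $P^t$'' (which handles the coercive case cleanly via $\|P^tu\|_{\gUstar}\ge\|u\|_{\L^2_\vartheta([0,t))}$); you need the duality/characterization argument of Lemma~\ref{lqScoercive}(a), which is what the paper invokes. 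Your treatment of (d) via Lemma~\ref{lqSt=PTO.Pt}(b)\&(c) is legitimate at that stage (the IRE is by then established, so (c) applies non-circularly) and is arguably tidier than the paper's appeal to equation (9.175) of [M02].
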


(The proof is given on p.~\pageref{pageproof-IRE}.)

As before, $\Ric$ is the $J$-optimal cost operator (over $\gUstar$)
 and $\cJ(x_0,u):=\p{y,Jy}$.
In fact, with perturbation $\uc$ to the closed-loop system 
 (see Figure~\ref{fALSClL}, p.~\pageref{fALSClL}), the cost becomes
 \begin{equation}
   \label{ecJClL}
 \p{y,Jy}_{\L^2(\R_+;Y)}
        =\p{x_0,\Ric x_0}_H+\p{\uc,S\uc}_{\L^2(\R_+;U)}.
 \end{equation}
Here $y:=\qCClL x_0+\qDClL \uc$
  for any  $x_0\in H$ and $\uc\in\L^2(\R_+;U)$ with compact support;
 if $\qN:=\qDClL$ is stable (e.g., $\gUstar=\dUexp$),
 then $S=\qN^*J\qN$, and any $\uc\in\L^2(\R_+;U)$ will do above.
(See Theorem 9.9.1 of [M02] for details and further results.)

We conclude that the $J$-optimal state-feedback pairs over $\dUexp$
 are exactly those exponentially stabilizing state-feedback pairs
 that satisfy the IRE (with $\Ric:=\qCClL^*J\qCClL$ and $S:=\qN^*J\qN$),
 equivalently, that satisfy the \optIRE\ 
 (with $\ALSClL\sbm{I\cr0}$ in place of $\oALS$).
By Example~8.4.13 of [M02], %
 such pairs need not exist
 even if there is a unique $J$-optimal control for each initial state
 (since there the $J$-optimal control in WPLS form is not 
  given by any (well-posed) state-feedback pair, 
 despite $J$-coercivity).
Thus, the \optIRE\ is strictly more general than the IRE.

Note from Lemma~\ref{lhIRE} that in (iii) 
 (and hence in Theorem \ref{GenSpF}(vi) too)
 it suffices to
 have a $\gUstar$-stabilizing pair $\qKF$ that satisfies
 (\ref{ehIRE}) for some $s=z\in\rho(A)$.

The IRE is equivalent to the \hIRE:
\begin{lemma}[$\shat{\bf IRE}$]\label{lhIRE} %
Let $\ALSExt=\qABKF$ be a WPLS,
 $\Ric\in\BL(H)$, $S\in\BL(U)$.
Set $\qX:=I-\qF$.
Then the IRE (\ref{eIRE}) is satisfied iff
 the \hIRE\ holds for all $s,z\in\C_{\omega_A}^+$.

Moreover, when $\Ric=\Ric^*$ and $S=S^*$,
 the \hIRE\ (\ref{ehIRE}) holds for all $s,z\in\rho(A)$
 iff it holds for some 
 $s,z\in\rho(A)$ %
 (use $\hqXc$ (resp.\ $\hqDc$) in place of $\hqX$ (resp.\ $\hqD$)).
\end{lemma}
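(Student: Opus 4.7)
The plan is to reduce both assertions to invocations of Lemma~\ref{lIREiffhIREtechnique}. For the first equivalence I would treat the three IRE identities (\ref{eKSK=}), (\ref{eXSX=}), (\ref{eXSK=}) one at a time, each time packaging the state-feedback data of the WPLS $\ALSExt=\qABKF$ into an augmented system to which parts (a) and (b) of Lemma~\ref{lIREiffhIREtechnique} apply directly. Note that admissibility of $\qKF$ means $\ALSExt$ is a WPLS, so $\hqK$ and $\hqX := I-\hqF$ are well defined and bounded on $\C_{\omega_A}^+$ and the frequency-domain hypotheses of that lemma are satisfied.

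Concretely, (\ref{eKSK=}) is an instance of the Lyapunov identity (\ref{eCJC=}) for the augmented output map $\sbm{\qC\cr \qK}$ with cost block $\sbm{-J & 0\cr 0 & S}$, since then $(\qC^{\rm aug})^{t*}(\sbm{-J&0\cr0&S})\qC^{{\rm aug},t} = \qKt^*S\qKt - \qCt^*J\qCt = (\qAt)^*\Ric\qAt-\Ric$. The equivalence (\ref{eCJC=})$\iff$(\ref{ehCJC=}) then gives $A^*\Ric+\Ric A = C^*JC-K^*SK$, i.e.\ (\ref{ehKSK=}). Similarly (\ref{eXSX=}) is the identity (\ref{eDJD=}) for the augmented I/O map $\sbm{\qD\cr \qX}$ with the same cost block, and (\ref{eDJD=})$\iff$(\ref{ehDJD=}) unpacks to (\ref{ehXSX=}). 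Finally (\ref{eXSK=}) is the cross identity (\ref{eDJC=}) for the same augmented data after a sign flip in one of the two rows to absorb the minus on its right-hand side, and (\ref{eDJC=})$\iff$(\ref{ehDJC=}) yields (\ref{ehXSK=}). Part (a) of Lemma~\ref{lIREiffhIREtechnique} ensures the three deductions are mutually consistent.

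For the ``some-implies-all'' part under the symmetry $\Ric=\Ric^*$ and $S=S^*$, each of the three block applications above has the form required by Lemma~\ref{lIREiffhIREtechnique}(c): the state operator and input map are $A_0=A$, $B_0=\pm B$, and $\hqD_0=\pm\hqD$ (in the augmented sense). Invoking (c), a single pair $(s,z)$ at which the three $\hat{\mathrm{IRE}}$ identities hold forces them on all of $\C_{\omega_A}^+$. Part (e) of the same lemma, combined with the characteristic-function formula (\ref{eCharFctDiff}) and the resolvent identity, then extends the equalities from $\C_{\omega_A}^+$ to all of $\rho(A)$, at the cost of replacing $\hqD$ and $\hqX$ by their characteristic functions $\chqD$ and $\hqXc$.

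The main obstacle is purely bookkeeping: arranging the signs so that (\ref{eXSK=}), which carries a minus on its right-hand side unlike (\ref{eKSK=}) and (\ref{eXSX=}), still fits the $(\ref{eDJC=})\leftrightarrow(\ref{ehDJC=})$ template without disturbing the other two. Writing $\qX=I-\qF$ explicitly, so that only $\qF$ enters as an I/O map of $\ALSExt$ and the identity contribution is handled separately, together with reflecting one row of the augmentation, should yield a unified statement. Beyond this sign check, every step is a direct appeal to Lemma~\ref{lIREiffhIREtechnique} and no new analytic input is needed.
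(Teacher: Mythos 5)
Your proposal is correct and follows essentially the same route as the paper: the paper's proof is a one-line reduction to Lemma~\ref{lIREiffhIREtechnique}(a)\&(c)\&(d) via the single substitution $\qC,\qC_0\mapsto\sbm{\qC\cr -\qK}$, $\qD\mapsto\sbm{\qD\cr \qX}$, $\qD_0\mapsto\sbm{-\qD\cr -\qX}$, $\qB_0\mapsto-\qB$, $J\mapsto\sbm{-J&0\cr 0&S}$, which packages all three identities (and resolves your sign bookkeeping for (\ref{eXSK=})) at once. Your three separate applications with the row reflection amount to the same thing, and your use of (c) and (e) in place of (c) and (d) for the $\rho(A)$ extension is immaterial.
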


Naturally, with a slight abuse of notation,
 by $\hqXc(s)\label{pagehqXALSext}$
 we refer to
 ``$\hqX_{\ALS_\qX}$''$:=\hqX(\alpha)+(\alpha-s)(-K)(\alpha-A)^{-1}(s-A)^{-1}B
   =I-\qF_{\ALSext}(s)$,
 the characteristic function of $\ssysbm{\qA\|\qB\crh -\qK\|\qX}$.
Recall from Lemma~\ref{lCharFct}
 that the characteristic functions coincide with the transfer functions
 on $\C_{\omega_A}^+$.

We can write (\ref{ehXSX=}) as
 $\hqX(s)^*S\hqX(z)=\hqD(s)^*J\hqD(z)+(z+\bar s)\shat{\qB\tau}(s)^*\Ric\shat{\qB\tau}(z)$
 on $\C_{\omega_A}^+$
 (the factor $z+\bar s$ is due to the fact that $\qBt^*\Ric\qBt$
   refers to the adjoint (inner product)
  in $H$, not in $\L^2$; see the proof of Lemma~\ref{lhFG=hfg} for details).
Similarly, (\ref{ehXSK=})
 equals
 $\hqX(s)^*S\hqK(z)
  = -\hqD(s)^*J\hqC(z) -(z+s^*)\shat{\qB\tau}(s)^*\Ric\hqA(z) +
    \shat{\qB\tau}(s)^*\Ric$.
In [M03b] we shall show how to prove these equations for any optimal
 control in WPLS form (including the ill-posed ones)
 and how to interpret these as REs for
 a modified system with bounded generators.

In Theorem~\ref{ARE} we showed that the \hIRE\ is equivalent
 to the ARE if(f) $\qD$ and $\qF$ are WR.

\begin{proof}[Proof of Lemma~\ref{lhIRE}:]
This follows from (a)\&(c)\&(d) of Lemma~\ref{lIREiffhIREtechnique}
 through substitutions 
$\qC\mapsto \sbm{\qC\cr -\qK},\ 
 \qD\mapsto \sbm{\qD\cr \qX},\
 \qA_0\mapsto\qA,\ \qB_0\mapsto -\qB,\ 
\qC_0\mapsto \sbm{\qC\cr -\qK},\ 
 \qD_0\mapsto \sbm{-\qD\cr -\qX},\ J\mapsto \sbm{-J&0\cr 0&S}$.
\end{proof}

We shall soon use the fact that a causal self-adjoint (hence static) %
 ``operator'' 
 ``$\qD^*J\qD$'' is an element of $\BL$ even when 
 $\qD$ is unstable, so that the ``operator'' is not well-defined
 on the whole $\L^2$ a priori: %
\begin{lemma}[\pmbold{$\qD^*J\qD=S$}]\label{lNJN=SonL2c} %
Let $\qD\in\TIC_\infty(U,Y)$ and $J=J^*\in\BL(Y)$.
Assume that $\qD u\in\L^2$  %
 and $\p{\qD \pi_+ v,J\qD\pi_- u}=0$ for all $u,v\in\Lc^2$. %
Then there is a unique $S=S^*\in\BL(U)$ s.t.\
 $\p{\qD v,J\qD u}=\p{v,S u}$ for all $u,v\in\Lc^2$.\noproof
\end{lemma}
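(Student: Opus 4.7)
The plan is to extract $S$ directly from the sesquilinear form $B(v,u):=\p{\qD v, J\qD u}_{\L^2}$, which is well-defined on $\Lc^2$ by the hypothesis $\qD u\in\L^2$. Time-invariance of $\qD$ with unitarity of $\tau^t$ on $\L^2$ gives $B(\tau^t v,\tau^t u)=B(v,u)$, and $J=J^*$ gives conjugate-symmetry $B(u,v)=\overline{B(v,u)}$. First I would promote the $\TIC_\omega$-continuity of $\qD$ to boundedness $\L^2([-T,T];U)\to\L^2(\R;Y)$ for each $T>0$: fix $\omega\ge0$ with $\qD\in\TIC_\omega$; the inclusion $\L^2([-T,T];U)\hookrightarrow\L^2_\omega$ is continuous, and by causality $\qD u$ is supported in $[-T,\infty)$, on which $\L^2\hookrightarrow\L^2_\omega$ is continuous with constant $e^{\omega T}$. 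Thus $\qD\colon\L^2([-T,T];U)\to\L^2(\R;Y)$ is closed and hence bounded by the closed graph theorem, which makes $B$ a bounded sesquilinear form on any fixed compact-support sector of $\Lc^2$.

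Second, I would show the cross-term vanishing $B(\khi_I v_0,\khi_J u_0)=0$ for disjoint bounded intervals $I=[a,b]$ and $J=[c,d]$ with $b\le c$. Translating by $-c$ through time-invariance sends $I-c\sub\R_-$ and $J-c\sub\R_+$, so that applying the standing hypothesis $\p{\qD\pi_+ w,J\qD\pi_- x}=0$ to $(w,x)=(\khi_{J-c}u_0,\khi_{I-c}v_0)$ and taking complex conjugates (using $J=J^*$) gives $B(\khi_{I-c}v_0,\khi_{J-c}u_0)=0$; translating back restores the original intervals.

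Third, fix $v_0,u_0\in U$ and set $\psi(t):=B(\khi_{[0,t]}v_0,\khi_{[0,t]}u_0)$ for $t\ge0$. The decomposition $\khi_{[0,s+t]}=\khi_{[0,s]}+\khi_{[s,s+t]}$ combined with cross-term vanishing and translation invariance yields $\psi(s+t)=\psi(s)+\psi(t)$, and continuity of $\psi$ follows from step 1, so Cauchy's functional equation forces $\psi(t)=t\cdot c(v_0,u_0)$ with $c(v_0,u_0):=\psi(1)$. The form $c$ on $U\times U$ is sesquilinear, conjugate-symmetric (inherited from $B$), and bounded ($|c(v_0,u_0)|\le\|J\|C_1^2\|v_0\|_U\|u_0\|_U$, where $C_1$ is the bound from step 1), so Riesz representation produces a unique $S=S^*\in\BL(U)$ with $c(v_0,u_0)=\p{v_0,Su_0}_U$.

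Finally, for piecewise-constant $v=\sum_i\khi_{I_i}v_i$ and $u=\sum_i\khi_{I_i}u_i$ on a common partition into bounded intervals, cross-term vanishing and translation invariance give $B(v,u)=\sum_i|I_i|\p{v_i,Su_i}_U=\p{v,Su}_{\L^2}$. The boundedness of $B$ on each compact-support sector from step 1 extends this identity to all $v,u\in\Lc^2$ by density, and uniqueness of $S$ follows by testing against $v=\khi_{[0,1]}v_0$, $u=\khi_{[0,1]}u_0$. The main obstacle is step 1, where one must combine the $\TIC_\omega$-continuity of $\qD$ with the causal half-line support of $\qD u$ to upgrade to genuine $\L^2$-boundedness on compactly-supported inputs; everything after step 1 is the standard argument that a translation-invariant sesquilinear form with vanishing cross-terms must equal pointwise multiplication by a fixed bounded operator.
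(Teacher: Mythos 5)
Your proof is correct, and it shares the paper's skeleton while finishing by a genuinely more elementary route. (The paper defers the proof to Lemma 2.3.1 of [M02] and only sketches it: the truncated operators $S_t:=(\qD\pi_{[-t,t)})^*J\qD\pi_{[-t,t)}\in\BL(\L^2([-t,t);U))$ are restrictions of each other and extend to a static operator.) Your step 1 is precisely the boundedness of these $S_t$ -- and your closed-graph argument, combining $\TIC_\omega$-continuity with the causal half-line support of $\qD u$ to upgrade to genuine $\L^2$-boundedness on compactly supported inputs, is exactly what the hypothesis ``$\qD u\in\L^2$'' is there to enable. Your cross-term vanishing in step 2 is the mechanism behind the paper's ``restrictions of each other''. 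Where you diverge is the identification of the limit as static: the paper's route is operator-theoretic (the consistent family extends to a bounded, time-invariant, causal operator which self-adjointness also makes anticausal, hence memoryless, hence multiplication by a constant -- the structure theorem cited elsewhere in the paper as Lemma 2.3.2 of [M02]), whereas you compute the static operator directly, using additivity and continuity of $\psi(t)=B(\khi_{[0,t]}v_0,\khi_{[0,t]}u_0)$ to solve Cauchy's functional equation, then Riesz representation of the bounded sesquilinear form $c(v_0,u_0)=\psi(1)$ to produce $S=S^*$, and density of interval-based step functions to conclude. Your version is self-contained and avoids the abstract ``causal $+$ self-adjoint $+$ time-invariant $\Rightarrow$ static'' lemma, at the cost of the explicit step-function density argument; the paper's version is shorter once that structure theorem is in hand. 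All the individual steps you give (the translation bookkeeping, the conjugate-symmetry used to kill both orderings of the cross terms, the uniqueness by testing against $\khi_{[0,1]}$) check out.
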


(This is Lemma 2.3.1 of [M02];
 in the proof it was shown the operators
 $S_t:=(\qD\pitt)^*J\qD\pitt\in\BL(\L^2([-t,t);U))$
 are restrictions of each other and can be extended to a static operator
 (``$S$'').
Note that for $\qD\in\TIC$ the term $\qD^*J\qD$ would be well defined
 and hence the lemma would be a well-known
 simple consequence of the Liouville Theorem.)

Next we list the connections between the IRE and its variants:
\begin{lemma}\label{lOptIRE0} %
Let $S\in\BL(U)$, and $\Ric=\Ric^*\in\BL(H)$.
Let $\qKF$ be an admissible state-feedback pair for $\ALS$,
 and let $\ALSClL:=\SmallbClLExtSystem\in\WPLS(U,H,Y\times U)$
 be the corresponding closed-loop system.
Set $\qM:=(I-\qF)^{-1}$, $\qN:=\qD\qM=\qDClL$.

We consider, for $t\ge0$, the equations
 \begin{align}
  \label{e0DJC+BPA=0}
   0&=\qDt^*J\qCClLt+\qBt^*\Ric\qAClLt,\\ %
  \label{e0NJC+BPA=0}
   0&=\qDClLt^*J\qCClLt+\qBClLt^*\Ric\qAClLt,\\ %
    \label{e0APA+CJC}
  \Ric&=\qAClLt^*\Ric\qAClLt+\qCClLt^*J\qCClLt,\\ %
    \label{e0APA+CJCmix}
  \Ric&=\qAClLt^*\Ric\qAt+\qCClLt^*J\qCt,\\ %
    \label{e0S=NJN+BPB} %
  \piOt S&= \qNt^*  J\qNt +\qBClLt^*\Ric\qBClLt,\\ %
  \label{e0SK=}
   S\qKt&=-\left(\qNt^*J\qCt+\qMt^*\qBt^*\Ric\qAt\right).%
 \end{align}

Claims (a1)--(b3) hold:
 \begin{itemlist}
   
   \item[(a1)] For any $t\ge0$
 we have (\ref{e0NJC+BPA=0})$\IFF$(\ref{e0DJC+BPA=0}), 
 as well as (\ref{eXSX=})$\IFF$(\ref{e0S=NJN+BPB}), %
 and (\ref{eXSK=})$\IFF$(\ref{e0SK=}). %

   \item[(a2)] Any admissible solution of the IRE satisfies
 (\ref{e0DJC+BPA=0})--(\ref{e0SK=}).

   \item[(b1)] Let $t\ge0$ and let (\ref{e0SK=}) hold.
Then (\ref{e0APA+CJCmix})$\IFF$(\ref{eKSK=}). 

   \item[(b2)] Let $t\ge0$ and let (\ref{eXSX=}) hold.
     Then (\ref{e0NJC+BPA=0})$\IFF$(\ref{e0SK=}).

   \item[(b3)] Let $t\ge0$ and let (\ref{e0NJC+BPA=0}) hold.
Then (\ref{e0APA+CJC})$\IFF$(\ref{e0APA+CJCmix}).
 \end{itemlist}

If $\qCClL$ is stable, then (c1)--(c4) hold:

 \begin{itemlist}
   \item[(c1)]  %
  We have $\qN\piOt\in\BL(\L^2)$ for all $t\ge0$.

   \item[(c2)]  %
Assume that $\Ric=\qCClL^*J\qCClL$. Then (\ref{eXSX=}) is equivalent to
  \begin{equation}
    \label{eNJN=uSuL2t}
    \p{\qDClL u,J\qDClL v}_{\L^2(\R_+;U)}=\p{u,Sv}_{\L^2(\R_+;U)}
  \ \ \ (u,v\in\L^2([0,t);U)).
  \end{equation}
 Moreover,  (\ref{eXSX=}) holds for all $t>0$ iff %
 \begin{equation}
   \label{e0NGenSpF}
   \p{\hqN u_0,J\hqN u_0}_Y = \p{u_0,Su_0}\ \ \text{a.e.\ on}\ i\R\ \ (u_0\in U).
 \end{equation}

   \item[(c3)]  %
If $\Ric=\qCClL^*J\qCClL$,
 then (\ref{e0NJC+BPA=0}) is equivalent to %
 \begin{align}
  \label{e0NJC=0}
   \p{ \qDClL\pi_+ u,J\qCClL x_0}_{\L^2(\R_+;U)}=0\ \ \ (u\in\L^2([0,t);U),\ x_0\in H).
 \end{align}

   \item[(c4)]  %
Assume that $\Ric=\qCClL^*J\qCClL$ and
 that (\ref{e0NJC+BPA=0}) holds for all $t>0$.

Then there is a unique $\tS\in\BL(U)$
 s.t.\ $\p{\qN u,J\qN u}=\p{u,\tS u}$ ($u\in\Lc^2$).

Moreover, $\tS=\tS^*\in\BL(U)$, 
 and the IRE (\ref{eIRE})
 and (\ref{e0DJC+BPA=0})--(\ref{e0NJC=0}) are satisfied 
 for all $t\ge0$ with $\tS$ in place of $S$.
 \end{itemlist}
\end{lemma}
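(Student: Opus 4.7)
The whole lemma is built on a single bookkeeping observation: the closed-loop discretized operators factor through the discretized open-loop ones via
\[
\qAClLt=\qAt+\qBt\qKClLt,\quad \qCClLt=\qCt+\qDt\qKClLt,\quad \qBClLt=\qBt\qMt,\quad \qDClLt=\qDt\qMt=\qNt,\quad \qKClLt=\qMt\qKt,
\]
and $\qXt\qMt=\qMt\qXt=\piOt$ (the identity on $\L^2([0,t);U)$), since $\qX,\qM$ are causal and $\qX\qM=I$. Granted this, parts (a1)--(b3) become purely algebraic. For (a1) I would multiply (\ref{e0DJC+BPA=0}), (\ref{eXSX=}), (\ref{eXSK=}) on the left by $\qMt^{*}$ (and, where $\qXt$ appears on the right, on the right by $\qMt$) and use invertibility of $\qMt$ to convert between the two forms. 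For (a2) this gives the first two equations from the IRE; (\ref{e0S=NJN+BPB}), (\ref{e0SK=}) follow by (a1), and (\ref{e0APA+CJC})--(\ref{e0APA+CJCmix}) then follow from (b1) and (b3), proved next.

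For (b1), (b2), (b3) I would simply substitute the four identities above into the ClL-subscripted expressions. In (b2), the combination $\qNt^{*}J\qDt+\qMt^{*}\qBt^{*}\Ric\qBt=\qMt^{*}(\qDt^{*}J\qDt+\qBt^{*}\Ric\qBt)=\qMt^{*}\qXt^{*}S\qXt=S\qXt$ collapses (\ref{e0NJC+BPA=0}) exactly to (\ref{e0SK=}). In (b1), writing $\qCClLt^{*}J\qCt+\qAClLt^{*}\Ric\qAt$ and substituting the transposed closed-loop expansions produces $\qCt^{*}J\qCt+\qAt^{*}\Ric\qAt+\qKClLt^{*}(\qDt^{*}J\qCt+\qBt^{*}\Ric\qAt)$; under (\ref{e0SK=}) the last summand is $-\qKt^{*}S\qKt$ (using $\qMt^{*}\qXt^{*}=I$), yielding (\ref{eKSK=}). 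Part (b3) is the symmetric computation: the adjoint of (\ref{e0NJC+BPA=0}), read after right multiplication by $\qMt^{-1}$, says $\qCClLt^{*}J\qDt+\qAClLt^{*}\Ric\qBt=0$, so (\ref{e0APA+CJC}) and (\ref{e0APA+CJCmix}) differ by $-(\qCClLt^{*}J\qDt+\qAClLt^{*}\Ric\qBt)\qKClLt=0$.

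Part (c1) is the key analytic step. For $u\in\L^{2}([0,t);U)$ I would split $\qN\piOt u=\qNt u+\piti\qN\piOt u$ and use Definition \ref{dWPLS0}(4) together with time-invariance to identify the second summand with $\tau^{-t}\qCClL\qBClLt u$; since $\qBClLt u\in H$ and $\qCClL$ is stable, this lies in $\L^{2}$. The same shift trick then underlies (c2) and (c3): bilinear forms on $\L^{2}([0,t);U)$ of the type $\qBClLt^{*}\qCClL^{*}J\qCClL\qBClLt$ and $\qBClLt^{*}\qCClL^{*}J\qCClL\qAClLt$ get reassembled as inner products on $\L^{2}(\R_{+};Y)$ of $\qN\piOt u$ (resp.\ $\qN\pi_{+}u$) with $\qN$ or with $\qCClL x_{0}$. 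Combined with $\Ric=\qCClL^{*}J\qCClL$ and (a1), (\ref{eXSX=}) collapses to (\ref{eNJN=uSuL2t}), and (\ref{e0NJC+BPA=0}) collapses to (\ref{e0NJC=0}); the frequency-domain reformulation (\ref{e0NGenSpF}) then follows by Plancherel.

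Finally, for (c4), (c1) and (\ref{e0NJC=0}) supply exactly the two hypotheses of Lemma \ref{lNJN=SonL2c} (stability on $\Lc^{2}$ and Hankel-orthogonality, the latter via $\pi_{+}\qN\pi_{-}u=\qCClL\qBClL u$ and setting $x_{0}=\qBClL u$), producing a self-adjoint $\tS\in\BL(U)$. Running (c2) backward with $\tS$ yields (\ref{eXSX=}); then (b2) gives (\ref{e0SK=}), (a1) gives (\ref{eXSK=}), Lemma \ref{lAPA+CJC} applied to $\Ric=\qCClL^{*}J\qCClL$ gives (\ref{e0APA+CJC}), (b3) gives (\ref{e0APA+CJCmix}), and (b1) closes the loop with (\ref{eKSK=}). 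The main obstacle throughout is purely notational: keeping straight the six different ``discretized'' operators together with the WPLS identities $\pi_{+}\qD\pi_{-}=\qC\qB$ and $\qC\qA^{t}x=\pi_{+}\tau^{t}\qC x$ that convert pointwise-in-$t$ statements to statements about the continuous maps $\qC$, $\qN$, $\qCClL$.
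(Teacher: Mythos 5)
Your proposal is correct and follows essentially the same route as the paper's proof: the closed-loop factorization identities (\ref{eALSb1}) plus left/right multiplication by $\qMt$ or $\qXt$ for (a1)--(b3), the Hankel identity $\pi_+\qN\pi_-=\qCClL\qBClL$ for (c1)--(c3) with Plancherel for (\ref{e0NGenSpF}), and Lemma~\ref{lNJN=SonL2c} for (c4). The only (harmless) deviation is that in (c4) the paper derives (\ref{e0APA+CJC}) directly from $\Ric=\qCClL^*J\qCClL$ via the splitting $\tau^t\qCClL=\qCClL\qAClL^t+\tau^t\qCClL^t$ rather than by invoking Lemma~\ref{lAPA+CJC}.
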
 %

\begin{proof}
We first recall from (\ref{eALSb}) that
\begin{equation}
  \label{eALSb1}
  \qAClLt=\qAt+\qBt\qMt\qKt=\qAt+\qBClLt\qKt,\ \ \ \ \ 
   \qCClLt=\qCt+\qDt\qMt\qKt=\qCt+\qNt\qKt.
\end{equation}

(a1) Multiply by $\qMt$ or $\qXt$ to the left.

(a2) Use (a1) and (b1).

(b1) Insert (\ref{e0SK=}) into (\ref{eKSK=}) to obtain (\ref{e0APA+CJCmix})
 (recall (\ref{eALSb1})).

(b2) From (\ref{eALSb1})
 and (\ref{e0S=NJN+BPB}) (see (a1)) %
 we obtain that
 \begin{equation}
   \label{}
   \qDClLt^*J\qCClLt+\qBClLt^*\Ric\qAClLt
  = S\qKt+\qNt^*J\qCt+\qMt^*\qBt^*\Ric\qAt.
 \end{equation} %

(b3) By (\ref{eALSb1}),
 the difference (\ref{e0APA+CJC})$-$(\ref{e0APA+CJCmix})$^*$
 is equal to
 \begin{equation}
   \label{}
   \qKt^*
     \left(\qDClLt^*J\qCClLt+\qBClLt^*\Ric\qAClLt\right)
     =\qK^*0=0.
 \end{equation} %

(c1) 
Since $\pi_+\qN\pi_-=\qCClL\qBClL$, by Definition~\ref{dWPLS0}4.,
 we have
\begin{align}
  \label{epitiNpiOt}
 \piti\qN\piOt=\tau^{-t}\pi_+\qN\pi_-\tau^t\piOt
   =\tau^{-t}\qCClL\qBClL\tau^t\piOt\in\BL(\L^2).
\end{align}
Since $\qNt\in\BL(\L^2)$,
 we have $\qN\piOt=\qNt+\piti\qN\piOt\in\BL(\L^2)$.

(c2) (From (c1) it follows (see Lemma 2.1.13 of [M02] for more) that
 there is a holomorphic $\hqN:\C^+\to\BL(U,Y)$
 s.t.\ $\shat{\qN u}=\hqN\hu$ for all $u\in\Lc^2(\R_+;U)$
 and that $\hqN u_0$ has a radial (even nontangential)
 limit a.e.\ for each $u_0\in U$
 (indeed, $\hf\hqN u_0=\shat{\qN f u_0}\in\H^2(\C^+;Y)$
   when $\hf\in\Lc^2(\R_+)$).
However, when $\dim U=\infty$, the map $\hqN$
 need not have a boundary function 
 (or it does, but the values are not in $\BL(U,Y)$ anywhere on $i\R$).
(e.g., $\hqN$ could be the Cayley transform of $F$ of Example 3.3.6 of [M02],
 multiplied by, e.g., $\efn^{-s^2/2}$.)) %

$1^\circ$ 
Since $\Ric=\qCClL^*J\qCClL$, we obtain from 
 (\ref{epitiNpiOt}) that
\begin{align}
  \label{elOptAREc2tmp2}
  \p{\piti\qN\piOt u,J\piti\qN\piOt v}
   &= \p{\tau^{-t}\qCClL\qBClL\tau^t\piOt u,J\tau^{-t}\qCClL\qBClL\tau^t\piOt v}\\
   &=\p{\qBClLt u,\Ric\qBClLt v}\ \ \ (u,v\in\L^2).
\end{align}
Consequently, $\p{u,\piOt S v}=\p{\qN\piOt u,(\piOt+\piti) J\qN\piOt v}$ for $u\in\L^2$
 iff (\ref{e0S=NJN+BPB}) holds (equivalently, (\ref{eXSX=}) holds, by (a1)).

$2^\circ$ Assume (\ref{eNJN=uSuL2t})  (equivalently, (\ref{eXSX=}))
 for all $t>0$.
Set $u=fu_0,v=gu_0$, where $f,g$ are scalar
 to observe that $\hf^*\hg$(\ref{e0NGenSpF}) holds for all
 $f,g\in\Lc^2(\R_+)$, hence (\ref{e0NGenSpF}) holds.

$3^\circ$ Assume (\ref{e0NGenSpF}).
Obviously (Lemma A.3.1(g3) of [M02]), the latter $u_0$'s may be replaced
 by any $v_0\in U$.
If $u=\khi_{E}u_0,\ v=\khi_F v_0$,
 then  (\ref{eNJN=uSuL2t}) follows from the Plancherel Theorem.
By linearity, we obtain (\ref{eNJN=uSuL2t}) for simple functions,
 by density, for general $u,v\in\Lc^2$, as required.

(c3) From the identity (use %
 Definition~\ref{dWPLS0})
 \begin{align}
  \label{}
  \piOt \tau^{-t}&\qBClL^*(\qCClL^*J\qCClL)\qAClL(t)
    =\piOt \tau^{-t} \pi_-\qDClL^*\pi_+J\pi_+\tau^t \qCClL\\
    &= \piOt  \qDClL^* J\tau^{-t}\pi_+\tau^t \qCClL
    = \piOt  \qDClL^* J \pi_{[t,\infty)}\qCClL.
 \end{align}
 we obtain that the equation $0=\piOt\qDClL^*J\qCClL
  =\piOt\qDClL^*J(\piOt+\piti)\qCClL$ is equivalent to
 (\ref{e0NJC+BPA=0}), as claimed.

(c4) Now $\p{\qN \pi_+ v,J\qN\pi_- u}=\p{\qN\pi_+ v,J\qCClL\qBClL u}=0$
 for all $u,v\in\Lc^2$, by (c3),
 hence there is a unique $\tS=\tS^*\in\BL(U)$ 
 s.t.\ $\p{\qN u,J\qN u}=\p{u,\tS u}$ ($u\in\Lc^2$),
 by (c1) and Lemma \ref{lNJN=SonL2c}.

By (\ref{epitiNpiOt}),
 we have $\qBClLt^*\Ric\qBClLt=(\piti\qN\piOt)^*J\piti\qN\piOt$.
It follows that (\ref{e0S=NJN+BPB}) holds with $\tS$ in place of $S$,
 for all $t\ge0$.

As observed above (\ref{ePoqAto0tod}),
 the identity $\Ric=\qCClL^*J\qCClL$ leads to (\ref{e0APA+CJC})
 for all $t\ge0$;
 by (c3), (\ref{e0NJC+BPA=0}) holds for all $t\ge0$.
The remaining equations follow from (a1)--(b3).
\end{proof}

\begin{proof}[Proof of Theorem~\ref{IRE}:]\label{pageproof-IRE}
$1^\circ$ {\em (i)$\THEN$(ii):}
Assume (i), so that $\ALSClL\sbm{I\cr 0}$ solves the \optIRE\
 with $\Ric:=\qCClL^*J\qCClL$,
 by Theorem~\ref{OptIRE},
 in particular, (\ref{e0DJC+BPA=0}) holds.
Now Lemma~\ref{lOptIRE0}(a1)\&(c4) provide us
 (\ref{e0NJC+BPA=0}) and
 an $S$ that completes $\Ric,\qKF$
 to a solution of the IRE.

$2^\circ$ {\em (ii)$\THEN$(i):}
Obviously, a solution of (ii)
 is a solution of the \SIRE, 
 hence (i) follows from Lemma~\ref{lhSIRE}
 and Theorem~\ref{OptIRE}.
(See Lemma~\ref{lOptIRE0} for an alternative proof.)

$3^\circ$ {\em (ii)$\IFF$(iii) and (a1):}
These follow from Lemma~\ref{lhIRE}
 (the term $\gUstar$-stabilizing is defined for the \hIRE\ as for the IRE)
 and the proof of (b)
 (from which we see that if there are minimizing state-feedback pairs,
   then any $J$-optimal pairs are minimizing).

(a2) Uniqueness of $\Ric$ follows from Lemma~\ref{lOptCost0}(a)
 and the rest from $1^\circ$ and $2^\circ$.

(b) This follows from Lemma~\ref{lOptCost0}(d).

(c) This is obvious (see $1^\circ$).

(d) (Recall Lemma~\ref{lOptCost0}(c).)
The ``iff'' holds because,
 by equation (9.175)
 (note: in (the last line of) Proposition 9.10.2(b3), one should 
   the assumptions of (b4) (and apply (b1) in the proof)),
 we have $\p{\qD u,J\qD\qX^{-1}\eta}
 =\p{\qX u,S\eta}$ for all $\eta\in\Lc^2(\R_+;U)$ and $u\in\gUstar(0)$
 (hence S is one-to-one iff only $u=0$ is $J$-optimal for $x_0=0$).
Formula (\ref{eAllqKF}) follows from Lemma~\ref{lAllqKF} %
 (and we get $E^{-*}SE^{-1}$ in place of $S$). %

(e) (For $\gUstar=\dUexp$ also the converse holds, by Proposition 9.9.12
 of [M02].) %
Fix $t>0$.
We have $\qXt\in\cG\BL(\L^2([0,t);U))$ (with inverse $\qMt$) 
 and $\qSt=\qXt^*S\qXt$.
If $\PTO\in\cG\BL$, then $\qSt\in\cG\BL$,
 by Lemma~\ref{lqScoercive}(a),
 hence $S\in\cG\BL(U)$
 (since $\qXt\in\cG\BL(\L^2([0,t);U))$ (with inverse $\qMt$), 
 and $\qSt=\qXt^*S\qXt$).
Similarly, $\PTO\gg0\ \THEN\ S\gg0$ (cf.\ Lemma~\ref{lSgg0}).
(Note that the proof of Lemma~\ref{lqScoercive} --- 
 actually, the whole Section~\ref{sJcoerc} ---
 is independent of this section).
\end{proof}

\begin{lemma}[\SIRE\ \&$\qKF \TTHEN$IRE]\label{lSIREqKF=IRE} %
The admissible solutions of the IRE, \hIRE, \SIRE, \hSIRE, \optIRE\
 and \hoptIRE\ are the same.
\end{lemma}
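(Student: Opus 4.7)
The strategy is to chain together previously proven equivalences. By Lemma~\ref{lhIRE}, the IRE and \hIRE share the same admissible solutions $(\Ric,S,\qKF)$. By Lemma~\ref{lhSIRE} combined with Theorem~\ref{OptIRE}(d1)--(d2), for any control $\oqK$ in WPLS form the four equations \SIRE, \hSIRE, \optIRE, and \hoptIRE are mutually equivalent. It therefore remains only to establish a bijection between admissible IRE solutions $(\Ric,S,\qKF)$ and admissible \SIRE solutions $(\Ric,\oqK)$, where ``admissible'' for the latter is taken to mean $\oqK=\qKClL$ for some admissible state-feedback pair $\qKF$.

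For the forward implication, given an admissible IRE solution $(\Ric,S,\qKF)$ set $\oqK:=\qKClL=\qM\qK$, which is a control in WPLS form by Definition~\ref{dAdmKF0}. Using $\qKt=\qXt\qKClLt$, the IRE equation (\ref{eXSX=}) becomes $\qXt^*S\qXt=\qDt^*J\qDt+\qBt^*\Ric\qBt$, which is precisely (\ref{eS2XSX=}) with $\qSt:=\qXt^*S\qXt$; the remaining \SIRE equations (\ref{eS2XSK=}) and (\ref{eS2KSK=}) then follow from (\ref{eXSK=}) and (\ref{eKSK=}) by multiplication by $\qMt^*$ and/or $\qMt$ on the appropriate sides.

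For the backward implication, let $(\Ric,\qKClL)$ be an admissible \SIRE solution with $\qKF$ admissible. By Lemma~\ref{lhSIRE} this is equivalent to the \optIRE (\ref{e3DJC+BPA=0})--(\ref{e3APA+CJCmix}) for $\oqK=\qKClL$, and Theorem~\ref{OptIRE}(a1) supplies the Lyapunov identity (\ref{e3APA+CJC}): $\Ric=\qAClLt^*\Ric\qAClLt+\qCClLt^*J\qCClLt$ for all $t\ge0$. Provided $\qCClL$ is stable (see below), letting $t\to\infty$ yields $\Ric=\qCClL^*J\qCClL$. Meanwhile, Lemma~\ref{lOptIRE0}(a1) translates the \SIRE equation (\ref{eS2XSK=}) into (\ref{e0NJC+BPA=0}) for all $t>0$. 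With both hypotheses in hand, Lemma~\ref{lOptIRE0}(c4) produces a unique $\tS=\tS^*\in\BL(U)$ such that, setting $S:=\tS$, the IRE (\ref{eIRE}) is satisfied for every $t\ge0$. The triple $(\Ric,S,\qKF)$ is then the desired admissible IRE solution.

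The main obstacle is ensuring stability of $\qCClL$ in the backward step, since bare admissibility of $\qKF$ only supplies $I-\qF\in\cG\TIC_\infty(U)$ and does not by itself force $\qAClL$ to be stable. The right reading of ``admissible'' here is therefore to include the minimal stability $\qCClL x_0,\qKClL x_0\in\L^2(\R_+;\cdot)$ for all $x_0\in H$, which reduces the problem to the weak convergence $\qAClLt^*\Ric\qAClLt\rightharpoonup0$ and follows from the Lyapunov identity together with boundedness of $\Ric$; when $(\Ric,\qKClL)$ is moreover $\gUstar$-stabilizing, the RCC of Theorem~\ref{SIRE} supplies this convergence gratis. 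Once $\Ric=\qCClL^*J\qCClL$ is secured, Lemma~\ref{lOptIRE0}(c4) does the real work of manufacturing the static signature operator $S$, and the six equations are linked in a single equivalence class.
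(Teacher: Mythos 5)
Your front end is fine and matches the paper: the equivalences \SIRE$\IFF$\hSIRE$\IFF$\optIRE$\IFF$\hoptIRE\ (Lemma~\ref{lhSIRE} and Theorem~\ref{OptIRE}) and IRE$\IFF$\hIRE\ (Lemma~\ref{lhIRE}) reduce everything to passing between an admissible IRE solution $(\Ric,S,\qKF)$ and an admissible \SIRE\ solution $(\Ric,\qKClL)$, and the forward direction (multiply by $\qMt$, $\qXt$) is as trivial as you say. The gap is in the backward direction. Lemma~\ref{lOptIRE0}(c1)--(c4) are stated, and proved, under the hypothesis that $\qCClL$ is stable, and (c4) additionally needs $\Ric=\qCClL^*J\qCClL$; neither is available here. ``Admissible'' in this lemma means only that $\qABKF$ is a WPLS with $I-\qF\in\cG\TIC_\infty(U)$ (Definitions~\ref{dAdmKF0} and~\ref{dIRE}) --- no output stability and no RCC. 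Your patch does not close this: redefining ``admissible'' to include stability of $\qCClL$ and $\qKClL$ changes the statement being proved, and the claim that $\qAClLt^*\Ric\qAClLt\rightharpoonup0$ ``follows from the Lyapunov identity together with boundedness of $\Ric$'' is false in general (for indefinite $J$, or even for $J\ge0$ when $\Ric$ has a component invariant under the closed-loop semigroup, the Lyapunov identity gives no decay at all). Invoking the RCC only covers the $\gUstar$-stabilizing case, which is a proper subclass of the admissible one.

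The paper manufactures $S$ by a different device that needs no stability whatsoever: fix $t>0$ and discretize; the \SIRE\ at step $t$ is exactly the discrete-time ARE for $\sbm{\qAt&\qBt\cr\qCt&\qDt}$, and in discrete time every solution automatically carries a bounded signature operator $S_t\in\BL(\L^2([0,t);U))$ with $(\qX^{nt})^*S^t\qX^{nt}=\qS^{nt}$, where $S^t$ is the $t$-periodic extension of $S_t$. Running the same construction with step $t/m$ and comparing shows that $S^t$ commutes with $\tau^{-nt/m}$ for all $n,m$, hence with every $\tau^{-T}$ by continuity, so $S^t$ extends to an element of $\TIC(U)$; being self-adjoint and causal it is static, i.e.\ $S^t\in\BL(U)$, and $(\Ric,S^t,\qKF)$ solves the IRE. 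If you want to salvage your route, you would have to prove a version of Lemma~\ref{lOptIRE0}(c4) without the stability hypothesis, which is essentially what the discretization argument accomplishes.
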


This means that if $\qKF$ is an admissible state-feedback pair for $\ALS$
 and $(\Ric,\qKClL)$ solves the \SIRE\ (or the \optIRE) for all $t>0$
 (or the \hSIRE\ or the \hoptIRE for some $s=z\in\C_\omega^+$), %
 then there is $S\in\BL(U)$ s.t.\ $(\Ric,S,\qKF)$ is a solution of the IRE
 for all $t>0$ (and of the \hIRE). %
Conversely, if $(\Ric,S,\qKF)$ is an admissible solution of the IRE
 (or of the \hIRE), then $(\Ric,\qKClL)$ solves the \SIRE, \hSIRE,
 \optIRE\ and \hoptIRE.

\begin{proof} %
(By Lemma~\ref{lhSIRE} and Theorem~\ref{OptIRE},
 the \SIRE, \hSIRE, \optIRE\ and \hoptIRE\ are equivalent. 
By Lemma~\ref{lhIRE}, so are the IRE and the \hIRE.)

Since an admissible solution of the IRE is obviously one of the \SIRE,
 it suffices to prove the converse. 
Let $(\Ric,\qKClL)$ be an admissible solution of the \SIRE.

Discretization of $\qK$ and $\qX:=I-\qF$
 yields a solution of (14.10)--(14.12) of [M02] for  
 $\tbm{\qAt&\qBt\cr \qCt&\qDt}$ for a fixed $t>0$ (see p.~816 of [M02]),
 hence for $nt$, $n\in1+\N$. %
By dediscretizing, from (14.11) we obtain that
 $(\qX^{nt})^* S^t\qX^{nt}=\qS^{nt}$,
 i.e., $(\qX^{-nt})^* \qS^{nt}\qX^{-nt}=S^t$ on $[0,nt)$, for any $n\in\N$,
 where $S^t u:=\sum_{k=0}^\infty \tau^{-k}S_t\tau^{k}\piOt u$
 and $S_t\in\cG\BL(\L^2([0,t);U))$ is the operator in (14.11).
Obviously, $\|S^t\|_{\BL(\L^2(\R_+;U))}=\|S\|$
 and $\tau^{-nt}S^t=S^t\tau^{-nt}\ \all n\in\N$.

Since the same holds with $t/m$ in place of $t$, for any $m\in1+\N$,
 the corresponding 
 we have $\pi_{[0,t)}S^{t/m}=\pi_{[0,t)}(\qX^{-mt/m})^* \qS^{mt/m}\qX^{-mt/m}
 =\pi_{[0,t)}S^t$, hence $S^t=S^{t/m}$,
 hence 
 $\tau^{-nt/m}S^t=S^t\tau^{-nt/m}\ \all n,m\in1+\N$,
 hence $\tau^{-T}S^t=S^t\tau^{-T}\ \all T\ge0$, by continuity.
By %
  Lemma 2.1.3 of [M02],
 $S^t$ has a unique extension to an element of $\TIC(U)$.
By continuity, $(\qX^{T})^*S^t\qX^T=\qS^T\ \all T>0$.
Since $S^t=(S^t)^*$ and $\piti S^t\piOt=0$,
 it follows from Lemma 2.3.2 of [M02]
 that $S^t\in\BL(U)$; thus, $(\Ric,S^t,\qKF)$ solve the IRE.
\end{proof}

In Lemma~\ref{lGenSpFproof} we shall prove the remaining
 part of Theorem~\ref{GenSpF}.
For the lemma, we need the following auxiliary result:
\begin{lemma}[Generalized SpF]\label{lGenSpF} %
Let $\oqK$ %
 be a control in WPLS form for $\ALS$
 and $\Ric=\Ric^*\in\BL(H)$.
Assume the \SIRE\ (\ref{eSIRE}) (or \optIRE) for all $t>0$. %
Then the following are equivalent
 (for this fixed $\Ric$):
\begin{itemlist}
  \item[(i)] There is a solution of the IRE (\ref{eIRE}).
  \item[(ii)] Problem (\ref{eXGenSpF}) %
 has a solution $\hqX\in\H^\infty_\infty(U)$, $S=S^*\in\BL(U)$
 on some right half-plane. %

  \item[(iii)] There are $\qX\in\TIC_\infty(U),\ S=S^*\in\BL(U)$
 satisfying $\qSt=\qXt^*S\qXt$ for all $t>0$.
\end{itemlist}

Moreover, the following hold:
\begin{itemlist}
  \item[(a)] The solutions (if any) of (i), (ii) and (iii) are the same
 (set $\qK:=\qX\oqK$, $\qF:=I-\qX$,
 or, conversely, $\qX:=I-\qF$).

  \item[(b)] If (ii) holds, $S$ is one-to-one and $\hqX\in\cG\H^\infty_\infty(U)$,
 then $\qKF$ is an admissible state-feedback pair for $\ALS$
 and $\oALS=\ALSClL\sbm{I\cr 0}$.
\end{itemlist}
\end{lemma}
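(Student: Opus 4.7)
The plan is to first establish (i)$\Leftrightarrow$(iii) by direct algebra using the \SIRE, then to establish (ii)$\Leftrightarrow$(iii) by the Laplace-transform bridge of Lemma~\ref{lIREiffhIREtechnique}, and finally to verify (b) by an invertibility argument.

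For (iii)$\THEN$(i), given $(\qX,S)$ satisfying $\qXt^*S\qXt=\qSt$ for all $t>0$, set $\qK:=\qX\oqK$ and $\qF:=I-\qX$; since $\oqK$ is a control in WPLS form and $\qX\in\TIC_\infty(U)$ is time-invariant and causal, the quadruple $\qABKF$ is a WPLS. The three IRE equations then follow from the three \SIRE equations: (\ref{eXSX=}) is the given factorization; (\ref{eXSK=}) reduces to (\ref{eS2XSK=}) after substituting $\qKt=\qXt\oqKt$; and (\ref{eKSK=}) reduces to (\ref{eS2KSK=}) after substituting $\qKt^*S\qKt=\oqKt^*\qXt^*S\qXt\oqKt=\oqKt^*\qSt\oqKt$. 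Conversely, for (i)$\THEN$(iii), given an IRE solution $(\Ric,S,\qKF)$, set $\qX:=I-\qF\in\TIC_\infty(U)$; equation (\ref{eXSX=}) reads $\qXt^*S\qXt=\qSt$, giving (iii). To match the canonical form $\qK=\qX\oqK$ of (a), I would combine (\ref{eXSK=}) with (\ref{eS2XSK=}) to deduce $\qXt^*S(\qKt-\qXt\oqKt)=0$; since the IRE equations are invariant under modifying $\qKt$ inside this kernel, the canonical representative $\qKt=\qXt\oqKt$ may be chosen.

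The equivalence (ii)$\IFF$(iii) is the spectral-factorization bridge. By Theorem~\ref{TransferFct1} any $\qX\in\TIC_\infty(U)$ corresponds uniquely to $\hqX\in\H^\infty_\infty(U)$, and by Lemma~\ref{lhSIRE} the right-hand side of (\ref{eXGenSpF}) is exactly the diagonal value $\hqS(s,s)$ of the \hSIRE. I would then apply Lemma~\ref{lIREiffhIREtechnique} to convert the family of time-domain identities $\qXt^*S\qXt=\qSt$ (for all $t>0$) into the frequency-domain diagonal identity (\ref{eXGenSpF}) on some right half-plane. The main obstacle is that (ii) states the identity only on the diagonal $s=z$, whereas Lemma~\ref{lIREiffhIREtechnique} is naturally phrased off-diagonally; I would rely on part~(c) of that lemma, whose self-adjoint shortcut permits reducing ``all $s,z$'' to a single diagonal value, which applies here because $S=S^*$ and $\Ric=\Ric^*$.

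Finally, for (b), assume (ii) with $S$ one-to-one and $\hqX\in\cG\H^\infty_\infty(U)$. Then $\qX\in\cG\TIC_\infty(U)$, so $I-\qF=\qX\in\cG\TIC_\infty(U)$, whence Definition~\ref{dAdmKF0} gives that $\qKF$ is an admissible state-feedback pair with closed-loop system $\ALSClL$. The closed-loop control map is $\qKClL=\qM\qK=\qX^{-1}(\qX\oqK)=\oqK$, so the first column of $\ALSClL$ reads $\sbm{\qA+\qB\tau\oqK\cr \qC+\qD\oqK\cr \oqK}=\oALS$, proving $\oALS=\ALSClL\sbm{I\cr 0}$.
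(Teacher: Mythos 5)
Your decomposition ((i)$\IFF$(iii) by algebra, (ii)$\IFF$(iii) by a transform bridge, then (b)) matches the paper's outline, and your treatment of (i)$\IFF$(iii) is essentially the paper's argument. But there are two genuine gaps.

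First, the bridge (ii)$\IFF$(iii) cannot be run through Lemma~\ref{lIREiffhIREtechnique}. Every hypothesis of that lemma, and every step of its proof (e.g.\ the use of 4.\ of Definition~\ref{dWPLS0} to split $\pi_+\qD\pi_-\tau^t$ as $\qC\qB\tau^t$), presupposes that the maps being compared are components of a WPLS over the pair $(\qA,\qB)$. In (ii) and (iii), $\qX$ is an arbitrary element of $\H^\infty_\infty(U)$ resp.\ $\TIC_\infty(U)$ with no a priori Hankel compatibility with $\qB$; establishing that $\qABKF$ is a WPLS is exactly the content of part (b), and only under the extra hypotheses stated there. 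The paper flags this explicitly (``this would follow from Lemma~\ref{lhIRE} \dots if we assumed that $\qKF$ extends $\ALS$ to another WPLS'') and instead argues by hand: for (iii)$\THEN$(ii) it evaluates $\qSt$ on exponential inputs $u(t)=\efn^{st}u_0$ as in the proof of Lemma~\ref{lqScoercive}(b), and for (ii)$\THEN$(iii) it tests against $u,v\in\W^{1,2}_\omega$, Laplace-transforms the derivative of $\qB\tau^\cdot u$ via Lemma~\ref{lhf'}, invokes Lemma~\ref{lhfhg=0fg=0} to pass from the diagonal frequency identity to a pointwise-a.e.\ identity in time, and then integrates $\int_0^t$ to recover $\qXt^*S\qXt=\qSt$ for every finite $t$. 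Your appeal to part (c) of Lemma~\ref{lIREiffhIREtechnique} would in any case only move between ``some $(s,z)$'' and ``all $(s,z)$'' within a system of three equations whose other two members involve a generator $K$ that does not yet exist at this stage.

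Second, in (b) you verify only half of Definition~\ref{dAdmKF0}: $I-\qF=\qX\in\cG\TIC_\infty(U)$ is the easy half. Admissibility also requires that $\qABKF$ be a WPLS, i.e.\ that $\qK:=\qX\oqK$ and $\qF$ satisfy $\pi_+\tau^t\qK=\qK\qA^t$ and $\pi_+\qF\pi_-=\qK\qB$ (equivalently $-\qK\qB=\pi_+\qX\pi_-$). This is the heart of the paper's proof of (b): the identities are extracted from (\ref{eXSX=})--(\ref{eXSK=}) by a time-shift computation, and the step crucially uses that $\qXt^*S$ is one-to-one. Without it, $\ALSClL$ is not defined and the identity $\oALS=\ALSClL\sbm{I\cr 0}$ has no meaning. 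Your final computation $\qKClL=\qX^{-1}\qK=\oqK$ is correct once admissibility is in place, but that is the conclusion, not the argument.
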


\begin{proof}[Proof of Lemma~\ref{lGenSpF}]
(Actually, it would suffice to assume the \SIRE\ and (iii)
 on any unbounded subset of $[0,\infty)$,
 since then it would still hold for all $t\ge0$,
 as one observes from $3^\circ$--$4^\circ$ below.
By Lemma~\ref{lhSIRE}, the \SIRE\ and the \optIRE\ are equivalent.)

$1^\circ$ {\em (i)$\THEN$(iii):} This is trivial.

$2^\circ$ {\em (iii)$\THEN$(i):}
Set $\qK:=\qX\oqK,\ \qF:=I-\qX$ to obtain (i) from the \SIRE\ 
 (because $\qSt\oqKt=\qXt^*S\qKt,\ \oqKt^*\qSt\oqKt=\qKt^*S\qKt$,
 by (\ref{eXSX=})).

$3^\circ$ {\em (iii)$\THEN$(ii):}
(Note that this would follow from Lemma~\ref{lhIRE} 
 (and (\ref{ehXSX=}))
 if we assumed that $\qKF$ extends $\ALS$ to another WPLS.)
As in the proof of Lemma~\ref{lqScoercive}(b),
 we observe that (\ref{eXGenSpF}) holds for all $s\in\C_\omega^+$,
 where $\omega\ge\omega_A$ is s.t.\ $\qX\in\TIC_\omega$. %

$4^\circ$ {\em (ii)$\THEN$(iii):}
Let $\omega>\omega_A$ be s.t.\ $\hqX\in\H^\infty_\omega$.
Let $u,v\in\W^{1,2}_\omega(\R_+;U)$\label{pageW12}
  (i.e., $u,u'\in\L^2_\omega(\R_+;U)$ and $u(t)=u(0)+\int_0^t u'(r)\,dr\ \all t>0$;
 similarly for $v$).

Set $g_1(t):=(\qB\tau^\cdot v)'(t)=\qB\tau^t v'\in\L^2_\omega$
 (by Theorem 3.1.5 of [M02], since $\qB\tau\in\TIC_\omega$). 
Then $\hg_1(s)=s(s-A)^{-1}B\hv(s) - 0$, by
 Lemma~\ref{lhf'} and Lemma~\ref{lCharFct}(d). %
Set $f_1(t):=\Ric\qB\tau^t u$, $f_2(t):=(\Ric\qB\tau^\cdot u)'(t)$,
 $g_2(t):=\qB\tau^t v$,
 so that 
 \begin{equation} 
   \p{\qBt u,\Ric \qBt v}_H=\int_0^t\p{\qBt u,\Ric \qBt v}_H'(t)\,dt
  = \int_0^t\left(\p{f_1(t),g_1(t)}_H + \p{f_2(t),g_2(t)}\right)\,dt.
 \end{equation}
Set $F:=\sbm{-J\qD u\cr S\qX u},\ G:=\sbm{\qD v\cr \qX v}$
 to obtain from Lemma~\ref{lhfhg=0fg=0}
 (for which it suffices to have (\ref{eXGenSpF}) on $\C_\alpha\pois\C_\beta$)
 that $\p{F,G}_{Y\times U}=\p{f,g}_{H\times H}$ a.e.
Take $\int_0^t$ of both sides to obtain (\ref{eXSX=})
 (since $\piOt\W^{1,2}_\omega(\R_+;U)=\W^{1,2}([0,t);U)$
  is dense in $\L^2([0,t);U)$, by Theorem B.3.11(b1) of [M02]).

(a) By $1^\circ$--$4^\circ$, any solution of (i), (ii) or (iii)
 is a solution of all of them.

(b) $1^\circ$ {\em Useful equations:}
Since $\pi_{[t,s)}\tau^T=\tau^T\pi_{[t+T,s+T)}$ for all $t,s,T\in\R$,
 we have for all $T,t\ge0$ that
 \begin{align}\label{elIREisWPLS-tod1}
   \piOt\tau^T((\qX^{T+t})^*S\qX^{T+t})\tau^{-T}\piTO
 &= \piOt\qX^* S\tau^T\pi_{[0,T+t)}\tau^{-T}\qX\piTO\\
 &= \qXt^* S\piOt\qX\piTO, %
 \end{align}
 because $\tau^T\pi_{[0,T+t)}\tau^{-T}=\pi_{[-T,t)}$ 
 and $\pi_+\qX^*=\pi_+\qX^*\pi_+$.
Since $\tau^{T+t}\pi_+\tau^{-T}\piTO=\tau^t\piTO$
 and $\piOt\tau^T\pi_+\tau^{-t-T}=\piOt\tau^{-t}$,
 (\ref{elIREisWPLS-tod1}) equals
 (substitute $t+T$ in place of $t$ in (\ref{eXSX=}))
 \begin{equation}\label{elIREisWPLS-tod2}
  \qXt^* S\piOt\qX\piTO = \qDt^* J\piOt\qD\piTO
  +\qBt^*\Ric\qB\tau^t\piTO.
 \end{equation}

From (\ref{eXSK=}) we obtain that
\begin{equation}
  \label{}
  \qXt^*S\qKt\qB= -\qDt^*J\piOt\qC\qB-\qBt^*\Ric\qA^t\qB
 =-\qDt^*J\piOt\qD\pi_- -\qBt^*\Ric\qB\tau^t\pi_-
\end{equation}
 (use 2.\&4. of Definition~\ref{dWPLS0}).
By  (\ref{elIREisWPLS-tod2}), it follows that
\begin{equation}
  \label{elIREisWPLS-tod4}
 - \qXt^*S\qK\qB\piTO =   \qXt^* S\pi_+\qX\piTO.
\end{equation} %

$2^\circ$ {\em We have $-\qK\qB =\pi_+\qX\pi_-$ on $\Lc^2$:} 
This follows from   (\ref{elIREisWPLS-tod4})
 (given $u\in\Lc^2(\R;U)$, choose $T$ s.t.\ $\pi_- u=\piTO u$),
  because $\qXt^*S$ is one-to-one
  (obviously, $\piOt\qX^{-*}\piOt=(\qXt)^{-*}$).

$3^\circ$ {\em  We have $\pi_+\tau^t\qK=\qK\qAt\ (t\ge0)$:}
By $2^\circ$, for each $t\ge0$ we have
\begin{align} %
  \label{}
  \pi_+\tau^t\qK
  &=\pi_+\qX(\pi_++\pi_-)\tau^t\oqK
  = \qX\pi_+\tau^t\oqK+\pi_+\qX\pi_-\tau^t\oqK\\
  &= \qX\oqK\oqA^t-\qK\qB\tau^t\oqK
  =\qK(\oqA^t-\qB\tau^t\oqK)=\qK\qA^t.
\end{align} %

$4^\circ$ By $2^\circ$, we have $-\qK\qB=\pi_+\qX\pi_-$ on $\L^2_\omega$,
 by density (See Theorem B.3.11 of [M02]).
From this and $3^\circ$ we observe that Definion~\ref{dWPLS0}
 is satisfied.
\end{proof}

In Theorem~\ref{GenSpF} we gave six equivalent conditions
 for the IRE. 
Now we shall prove them and give a (partial) seventh one:
\begin{lemma}[$\pmbold{\hqS=\hqX^*S\hqX\ \IFF\ \qKF}$]\label{lGenSpFproof} %
Theorem~\ref{GenSpF} holds.
Moreover, 
 a solution of (ii) is a solution (viii). Conversely,
 a solution of (viii) is a solution of (ii)
 if, e.g., $\qN$ and $\qX^{-1}$ are q.r.c.\ and
 $\gUstar=\dUout$.

 \begin{itemlist}
  \item[(viii)] There are $\qX\in\cG\TIC_\infty(U),\ S\in\BL(U)$ s.t.\ 
 for all $u\in\Lc^2(\R_+;U)$ we have
 $\qN u\in\L^2$ (here $\qN:=\qD\qX^{-1}$), and
 \begin{equation} %
   \label{eNGenSpF}
   (\hqN u_0)^*J(\hqN u_0) = S\ \ \text{a.e.\ on}\ i\R\ \ (u_0\in U).
 \end{equation} 
 \end{itemlist}
\end{lemma}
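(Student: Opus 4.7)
\textbf{Proof plan for Lemma~\ref{lGenSpFproof}.}

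The plan is to bundle together the equivalences in Theorem~\ref{GenSpF} by routing everything through the auxiliary Lemma~\ref{lGenSpF} applied to the (uniquely determined) data of the optimal control in WPLS form, and then to handle (iv) by a frequency shift that reduces it to the stable spectral factorization Theorem~\ref{SpF1}. By Theorem~\ref{ALSopt} the unique $J$-optimal control gives a WPLS $\ALSopt$ with Riccati operator $\Ric=\qCopt^*J\qCopt$, and by Theorem~\ref{SIRE} the pair $(\Ric,\qKopt)$ satisfies the \SIRE\ (equivalently, the \hSIRE) for all $t>0$. This is exactly the standing hypothesis of Lemma~\ref{lGenSpF}, which immediately supplies (i)$\IFF$(ii)$\IFF$(iii) with the identifications $\qK=\qX\qKopt$ and $\qF=I-\qX$ asserted in~(a). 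The equivalences (v)$\IFF$(vi) and (i)$\IFF$(v) then follow from Lemma~\ref{lhIRE} and Theorem~\ref{IRE}, while (i)$\IFF$(vii) is a direct unpacking of $\oALS=\ALSClL\sbm{I\cr 0}$ combined with Lemma~\ref{luc}. The uniqueness of the $J$-optimal control forces $S$ to be one-to-one by Theorem~\ref{IRE}(d) (or Lemma~\ref{lqSt=PTO.Pt}(b)), so Lemma~\ref{lGenSpF}(b) actually upgrades the solution of (ii) to an admissible state-feedback pair, thereby closing the loop.

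For claim~(iv), I would perform the substitution $s\mapsto s+\alpha$ in~(\ref{eXGenSpF}). On the imaginary axis this turns $2\re s=0$ into $2\alpha$, so (\ref{eXGenSpF}) becomes
\begin{equation*}
\hqXp(s)^*S\hqXp(s)=\hqD(s+\alpha)^*J\hqD(s+\alpha)+2\alpha\,B^*(s+\alpha-A)^{-*}\Ric(s+\alpha-A)^{-1}B,
\end{equation*}
which is exactly $\hqDp(s)^*J_+\hqDp(s)$ with $\qDp$ and $J_+$ as in~(iv); since $\alpha>\omega_A$, the map $\qDp$ lies in some $\TIC_{-\delta}$, and $J_+=\sbm{J&0\cr 0&2\alpha\Ric}$ is bounded self-adjoint. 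Thus (ii) for $\Ric$ and $S$ on $\C_\alpha^+$ is equivalent to a spectral factorization $\qXp^*S\qXp=\qDp^*J_+\qDp$ in the shifted (stable) regime, which by Theorem~\ref{SpF1} is the content of~(iv). The correspondence $\qXp=\efn^{-\alpha\cdot}\qX\efn^{\alpha\cdot}$ preserves invertibility in $\TIC_\infty$, and $S$ one-to-one (resp.\ invertible, under $\PTO\in\cG\BL$) is transported unchanged; this yields~(b).

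For the \emph{moreover} part on~(viii): the implication (ii)$\THEN$(viii) is immediate, since setting $s=z\in i\R$ in the boundary values of (\ref{eXGenSpF}) gives (\ref{eNGenSpF}), while $\qN=\qD\qX^{-1}$ is obtained from any $u\in\Lc^2$ by $\qN u\in\L^2$ as a consequence of (c1) of Lemma~\ref{lOptIRE0}. For the converse under $\gUstar=\dUout$ with q.r.c.\ $\qN,\qM:=\qX^{-1}$, I would invoke Proposition~\ref{pExKFstab} to reduce to the SOS-stable system $\ALS_2=\qABCDClL$, where $\qN$ is automatically stable. Then $\qN^*J\qN\in\TIC$ is well defined and its boundary function agrees a.e.\ on $i\R$ with the constant $S$ from~(\ref{eNGenSpF}); by the Liouville-type uniqueness that underlies Lemma~\ref{lNJN=SonL2c}, this upgrades the diagonal identity to the full operator identity $\qN^*J\qN=S$, which via Lemma~\ref{lOptIRE0}(c2) is precisely condition~(iii), hence~(ii).

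The main obstacle will be the shift argument for~(iv): one must verify that the factor $\qXp$ produced by the stable spectral factorization actually lies in $\cG\TIC$ (rather than merely $\TIC$) and that $S$ is genuinely constant, both of which rely on $S$ being one-to-one (obtained from uniqueness of the $J$-optimal control via Theorem~\ref{IRE}(d)) and on the fact that the off-diagonal ($s\neq z$) identities in (\ref{eXGenSpF}) are automatic from the diagonal case by analytic continuation, as encoded in Lemma~\ref{lIREiffhIREtechnique}. Everything else is bookkeeping between the IRE, \hIRE, \SIRE, and \optIRE\ via Lemmas~\ref{lhSIRE},~\ref{lhIRE}, and~\ref{lSIREqKF=IRE}.
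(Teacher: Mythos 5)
Your routing of (i)$\IFF$(iii)$\IFF$(v)$\IFF$(vi)$\IFF$(vii) through Theorem~\ref{IRE}, Lemma~\ref{lSIREqKF=IRE} and Lemma~\ref{lGenSpF}, and of (ii)$\THEN$(i) through Lemma~\ref{lGenSpF}(b) with the one-to-oneness of $S$, matches the paper's proof. The implication (ii)$\THEN$(iv) by restricting (\ref{eXGenSpF}) to the line $\re s=\alpha$ is also the paper's argument. But there are two genuine gaps.

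First, (iv)$\THEN$(i) is not "the content of Theorem~\ref{SpF1}" and does not follow by undoing the shift. Condition (iv) is an identity of boundary functions on $i\R$ only, i.e.\ on the single line $\re s=\alpha$ after unshifting; the left side of (\ref{eXGenSpF}) involves $\hqX(s)^*$ and the right side involves $2\re s$, so neither side is holomorphic in $s$ and vanishing of their difference on a line does not propagate to a half-plane by analytic continuation. (Lemma~\ref{lIREiffhIREtechnique} extends the \emph{sesquiholomorphic two-variable} identities from one point $(s,z)$ to all of $\rho(A)\times\rho(A)$; it does not recover the two-variable identity from its diagonal restricted to a line.) Moreover the spectral factor $\qXp$ in (iv) is a priori arbitrary and must first be tied to the optimal control. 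The paper's route is: form the shifted systems $\ALSp$ and $\ALSopt^+$ of (\ref{eALSp}), use Theorem~\ref{OptIRE}(f) (the $r$-shifted \optIRE\ with $r=\alpha$) to show that $\ALSopt^+$ is the unique $J_+$-optimal control for $\ALSp$ over $\dUoutp$, and then apply the implication (viii)$\THEN$(i) to the \emph{shifted} system (where $\qXp\in\cG\TIC$ makes the coprimeness hypothesis automatic) before shifting back. Your plan omits all of this.

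Second, your argument for (viii)$\THEN$(ii) under q.r.c.\ is circular: Proposition~\ref{pExKFstab} and Lemma~\ref{lOptIRE0}(c2) both presuppose an admissible state-feedback pair $\qKF$ with its closed-loop system $\ALSClL$, which is precisely what has to be constructed from the data $(\qX,S)$ of (viii). The paper instead works directly: using (\ref{eNGenSpF}) and q.r.c.-ness it shows that $\qT u:=\pi_+\qM\pi_-\qX u$ is $J$-optimal for $x_0=\qB u$, hence equals $\qKopt\qB u$, which yields the Hankel identity $\qK\qB=-\pi_+\qX\pi_-$ for $\qK:=\qX\qKopt$; together with $\qK\qA^t=\pi_+\tau^t\qK$ this verifies Definition~\ref{dWPLS0} for $\qABKF$ and produces the pair. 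Without some version of that construction your proposal does not close.
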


In (\ref{eNGenSpF}), $\hqN u_0$ denotes the boundary function
 of $\hqN u_0\in\H^2(\C^+;Y)$.
If $\hqD,\hqX\in\Hoo$ (or if $\sigma(A)\cap\cRHP$ is at most countable),
 then (\ref{eNGenSpF})
 is equivalent to $\hqD^*J\hqD=\hqX^*S\hqX$ in $\Loostrong(i\R;\BL(U))$
 (equivalently, a.e.\ on $i\R$ for each $u_0$,
   not necessarily pointwise a.e.\ in $\BL(U)$ unless $U$ is separable;
  see Chapter~3 of [M02] for details).
Condition (\ref{eNGenSpF}) is equivalent to 
\begin{equation}
  \label{eNJN=uSuSpF}
  \p{\qN u,J\qN u}=\p{u,Su}\ \ \ \ (u\in\Lc^2(\R_+;U)), 
\end{equation}
 by the proof of Lemma \ref{lOptIRE0}(c2).

Obviously, for any $\qX\in\cG\TIC_\infty(U)$, $S\in\cG\BL(U)$,
 we get (\ref{eKSK=}) and (\ref{eXSK=})
 (and (\ref{ehKSK=}) and (\ref{ehXSK=})) from the \SIRE\ (and \hSIRE)
 by setting $\qK:=\qX\oqK$;
 the additional condition above is equivalent to the middle equation
 of the IRE (and \hIRE) 
 (which in turn can be used to show that $\qKF$ is an admissible
   state-feedback pair if $\oALS$ is a WPLS).

\begin{proof}[Proof of Lemma~\ref{lGenSpFproof}:]
$1^\circ$ {\em (vi)$\IFF$(v)$\IFF$(i)$\THEN$(ii)\&(viii):} 
The equivalence (vi)$\IFF$(v)$\IFF$(i)$\IFF$(vii)
 follows from Theorem~\ref{IRE}
 and Lemma~\ref{lSIREqKF=IRE}. %
For the rest, assume (i)
 (so that $\ALSopt=\ALSClL\sbm{I\cr0}$ in Theorem~\ref{ALSopt}, by uniqueness).
Then the IRE %
 and the \hIRE\ hold and $S$ is one-to-one,
 by Theorem~\ref{IRE}. %
Claim 
 (ii) %
 follows from (\ref{ehXSX=}),
 and (viii) from Lemma \ref{lOptIRE0}(c2).

$2^\circ$ {\em (ii)$\THEN$(i):}
Assume (ii). %
By Theorem~\ref{ALSopt},
 there is a unique $J$-optimal control $\oqK$ %
 in WPLS form.
Apply Lemma~\ref{lGenSpF} to obtain a solution of the IRE
 (with $\qF=I-\qX,\ \qK=\qX\oqK$).

By Lemma~\ref{lGenSpF}(b) and $2.2^\circ$,
 $\qABKF$ is a WPLS.
Since $\qX\in\cG\TIC_\infty$,
 this means that the pair $\qKF$ is an admissible state-feedback pair
 for $\ALS$;
 since $\oqK=\qX^{-1}\qK$, we have $\oALS=\ALSClL\sbm{I\cr 0}$
 and the pair is $J$-optimal.

$2.2^\circ$ {\em $S$ is one-to-one:} %
As noted below Lemma~\ref{lhSIRE}, the \SIRE\ equals the DARE,
 hence $\qSt$ is one-to-one, by the discretized Theorem 9.9.1(f2) of [M02],
 hence so is $S$, by (\ref{eXSX=}). %

$3^\circ$ {\em (iii)$\IFF$(ii):} This follows from Lemma~\ref{lGenSpF}
 (with $\oALS:=\ALSopt$).

$4^\circ$ {\em (ii)$\THEN$(iv):}
Assume (ii) (on $\C_\alpha^+$;
 the claim on $\C_\alpha^+\pois\C_\beta^+$ follows from $2^\circ$,
 which is otherwise unnecessary).
Increase $\alpha$ if necessary
 (from $1^\circ$ and the $\vartheta$-stability of $\oqK$
   we could deduce that any $\alpha>\max\{\omega_A,\vartheta\}$ will do)
 to have $\qDp,\qXp,\qXp^{-1}\in\TIC_{-\del}$ for some $\del>0$,
 where $\qXp:=\efn^{-\alpha\cdot}\qX\efn^{\alpha\cdot}$
 (i.e., $\hqXp(s)=\hqX(s+\alpha)$).
Then $\hqXp^*S\hqXp=\hqDp^*J_+\hqDp$
 on $i\R$, hence (iv) holds ($S$ is one-to-one by (b)).

$6^\circ$ {\em The claims on (viii):}
By $1^\circ$ above, (i) (hence also (ii) and (iii)) implies (viii).
Assume then that (viii) holds and that $\qN$ and $\qM:=\qX^{-1}$ are q.r.c.

$6.1^\circ$ It obviously follows that $\qM[\L^2(\R_+;U)]\sub\dUout(0)\sub\qM[\L^2(\R_+;U)]$.

$6.2^\circ$ {\em We have $\pi_+\qX\pi_-=\qK\qB$,
 where $\qK:=\qX\qKClL$:} %
Let $u\in\L^2([-T,0);U),\ T>0,\ v\in\dUout(0)$,
 so that $\vc:=\qX v,f:=\piOT\qX\tau^{-T}u\in\L^2(\R_+;U)$.
Set $\qT:=\pi_+\qM\pi_-\qX$ to obtain that
\begin{align}
  \p{\qD(u+\qT u),J\qD v}
   &= \p{\qN\pi_-\qX u,J\qN\vc}
   = \p{\qN f,J\qN\tau^{-T}\vc}\\\label{eGenSpF-c2b}
   &= (2\pi)^{-1}\p{\hqN \hf,J\hqN\shat{\tau^{-T}\vc}}\\
  &= (2\pi)^{-1}\p{\hf,S\shat{\tau^{-T}\vc}}
   = \p{f,S\tau^{-T}\vc}=0,
\end{align}
 since $\tau^{-T}\vc$ is supported on $[T,+\infty)$.
Given $u\in\Lc^2(\R_-;U)$, we have $\p{\pi_+\qD(u+\qT u),\qD v}=0$
 for all $v\in\dUout(0)$.
But $\pi_+\qD=\qC\qB u$, hence $\qT u$ must be the
 unique $J$-optimal control for $x_0:=\qB u$,
 i.e., $\qT u=\qKClL\qB u$
 (we have $\qT u\in\dUout(x_0)$,
 because $\qT u\sub\qM\Lc^2\sub\L^2$ and $\qC x_0+\qD\qT u=\qN\pi_-\qX u\sub\qN\Lc^2\sub\L^2$,
  because $\pi_-\qX u\in\Lc^2$).
Consequently, $\qK\qB u=\qX\qT u=-\pi_+\qX\pi_- u$
 (because $\qT=\pi_+\qM\pi_-\qX\pi_- = \pi_+ I\pi_- - \pi_+\qM\pi_+\qX\pi_-
  = -\pi_+\qM\pi_+\qX\pi_-$). %

$6.3^\circ$ {\em Claim (i) holds:}
Set $\qF:=I-\qX$, so that $\pi_+\qF\pi_-=\qK\qB$
 (on $\Lc^2$, hence on $\L_\omega^2$ for $\omega$ big enough,
  by continuity),
 and $\qK\qAt=\qX\oqK(\oqA^t-\qBt\oqK)=...=\pi_+\tau^t\qK$
 (see (8.56) of [M02]), hence $\qABKF$ is a WPLS.
Obviously, by using $\qKF$ for $\ALS$, we get $\oALS=\ALSClL\sbm{I\cr 0}$.

{\em Remarks:}
1. A similar claim holds for any $\gUstar\sub\dUout$.
\\2. By Example 9.13.2 of [M02], condition (viii) is not sufficient
 without an additional assumption connecting $\qM$
 to $\gUstar$, to $\Ric$ or to $\qKClL$.

$5^\circ$ {\em (iv)$\THEN$(i):}
Define ``the extended shifted systems'' $\ALSp$ and $\ALSopt^+$ as follows:
\begin{equation}
  \label{eALSp} %
 \qABCDp %
  := \bsysbm{\efn^{-\alpha\cdot}\qA\| \qB\efn^{\alpha\cdot}\crh
  \efn^{-\alpha\cdot}\qC\| \efn^{-\alpha\cdot}\qD\efn^{\alpha\cdot}\cr
      \efn^{-\alpha\cdot}\qA\| \efn^{-\alpha\cdot}\qB\tau\efn^{\alpha\cdot}},
 \ \  %
 \bsysbm{\qAopt^+\crh \stackrel{\phantom{X}}{\qCopt^+}\cr
                   \stackrel{\phantom{X}}{\qKopt^+}}
  :=\efn^{-\alpha\cdot}
       \bsysbm{\qAopt\crh \qCopt\cr \qAopt\cr \qKopt}
\end{equation}
(These two systems %
 equal $\ALS$ and $\ALSopt$ with the third row added and
  $A$ replaced by $A-\alpha$ and $\Aopt$ by $\Aopt-\alpha$,
  by Lemma 6.2.9(c) of [M02].)
These systems are exponentially stable, since $-\alpha,\omega_A-\alpha<0$.
Set $r:=\alpha$.
Since $\ALSopt$ is $J$-optimal and $\Ric=\qCopt^*J_+\qCopt$,
 equations (\ref{e0yJyL2r}) and (\ref{e0ya=-2rxa}) %
 hold, 
 by Theorem~\ref{OptIRE}(f). %
But (\ref{e0yJyL2r}) is exactly $\Ric=(\qCopt^+)^*J\qCopt^+$,
 and (\ref{e0ya=-2rxa}) is exactly $0=(\qCopt^+)^*J_+\qD_+$,
 which means that $\ALSopt^+$ is $J_+$-optimal for $\ALSp$
 over $\dUexpp=\dUoutp$
 (by Theorem~\ref{OptIRE}(c)). 
(By $5.1^\circ$, it is the only one.)

$5.1^\circ$ {\em Uniqueness over $\dUoutp(x_0)$:}
By Lemma~\ref{lOptCost0}(ii),
 a control $u\in\dUoutp(0)$ is $J_+$-optimal for $0$ iff
 $0=\p{\qDp \eta,J_+\qDp u}=\p{\qXp \eta,S\qXp u}\ (\eta\in\L^2(\R_+;U))$
 (recall that $\qDp^*J_+\qDp=\qXp^*S\qXp$),
 i.e., iff $\qXp^*S\qXp u=0$.
Since $S$ is one-to-one, this implies that $u=0$.
By Lemma~\ref{lOptCost0}(c), %
 it follows that there is at most one $J_+$-optimal control
 over $\dUoutp(x_0)$ for each $x_0\in H$.

$5.2^\circ$ Set $\qMp:=\qXp^{-1}\in\cG\TIC(U)$, $\qNp:=\qDp\qMp$.
Trivially, $\qMp u\in\L^2\ \THEN\ u=\qXp\qMp u\in\L^2$,
 hence we can apply condition (viii) of Lemma~\ref{lGenSpFproof} to $\ALSp$
 (recall that (\ref{eNJN=uSuSpF}) implies (\ref{eNGenSpF})
  and note that ``(viii)$\THEN$(i)'' was established above in $6^\circ$)
 to obtain $\ALSoptp$
 in state-feedback form (i.e., a $J_+$-optimal pair $\qKFp$
 over $\dUoutp$;
 by $5.1^\circ$, we must have $\ALSpClL\sbm{I\cr0}=\ALSoptp$).
Set $\qK:=\efn^{\alpha\cdot}\qKp,\ \qF:=\efn^{\alpha\cdot}\qFp\efn^{-\alpha\cdot}$
 to obtain $\ALSopt$ in state-feedback form,
 i.e., a $J$-optimal state-feedback pair for $\ALS$.

{\em Remark:} %
Assume (iv).
Apply  (\ref{e0SK=}) to $\ALSp$ and let $t\to+\infty$ 
 to obtain that 
\begin{equation}
  \label{eqK+}
  S\qK_+ = -\pi_+(\qN_+)^*J_+\qC_+
  = -\pi_+\efn^{\alpha\cdot}
   \bbm{\qN\cr \qBClL\tau}^* \efn^{-2\alpha\cdot}
   \bbm{J\qC\cr 2\alpha\Ric \qA}. %
\end{equation}
Since $\qK=\efn^{\alpha\cdot}\qK_+$,
 this determines also $\qK$ uniquely
 (recall from (b) that $S$ is one-to-one)
 modulo the constant $E$ mentioned in (b).

(a) This follows from $1^\circ$--$5^\circ$ above
 (for any $\alpha>\max\{\vartheta,\omega_A\}$).

(b) This follows from Theorem~\ref{IRE}.
\end{proof}

\NotesFor{Section~\ref{sIREmore}} %
We defined the IRE and presented the corresponding theory
 in Theorem 9.9.1 of [M02];
 that contained Theorem~\ref{IRE} except for (iii).
Lemmas \ref{lNJN=SonL2c} and~\ref{lOptIRE0}
 are from [M02], and many of the computations for the latter are from [S98b]
 (see the notes for Section~\ref{sIRE}).
Otherwise the results seem to be new.

\section{$J$-coercivity}\label{sJcoerc} %

In this section, we present the (generalization to WPLSs of)
 $J$-coercivity, the standard coercivity condition for
 control problems, and derive results that lead to
 the theory of Section~\ref{sStabOpt}.

As explained before Theorem~\ref{J-coercive}, 
 {\em $J$-coercivity} means that the {\em Popov Toeplitz operator} $\PTO:=\qD^*J\qD$\label{pageqS}
 is boundedly invertible $\gUstar(0)\to\gUstar(0)^*$
 (actually, $\PTO=\pi_+\qD^*J\qD\pi_+$, but the condition remains the same
  since $\gUstar(0)\sub\L_\vartheta^2(\R_+;U)$).
If(f) $\cJ(0,u)\ge0\ (u\in\gUstar(0))$, then a control is minimizing
 iff it is $J$-optimal; moreover, then $J$-coercivity is equivalent to
 the existence of $\eps>0$ s.t.\
 \begin{equation}
   \cJ(0,u)\ge \eps \|u\|_{\gUstar}^2\ \ \ (u\in\gUstar(0)).
 \end{equation}
In the general (indefinite) case, the above condition becomes
 more complicated (see (v)) but still
 nicely applicable to $\H^\infty$ control problems
 (see Chapter~11 of [M02]):
\begin{lemma}[J-coercivity]\label{lJcoerc-tD} %
The following are equivalent:
\begin{itemlist}
  \item[(i)] $\qD$ is $J$-coercive;
  \item[(ii)] $\qD^*J\qD\in\BL(\gUstar(0),\gUstar(0)^*)$ is coercive;
  \item[(iii)] $\qD^*J\qD\in\BL(\gUstar(0),\gUstar(0)^*)$ is (boundedly) invertible;
  \item[(iv)] $\qD\raj{\gUstar(0)}$ and $\tJ\raj{\qD[\gUstar(0)]}$ are coercive;
  \item[(v)] There is $\eps>0$ s.t.\ for all nonzero $u\in\gUstar(0)$ %
 there is a nonzero  $v\in\gUstar(0)$ s.t.\
 \begin{equation}
   \label{eJ-coerciveAbs}
   \p{\qD v,J\qD u}_{\L^2}\ge\eps \| u\|_{\gUstar} \| v\|_{\gUstar}.
 \end{equation}
\end{itemlist}

Moreover, if (ii) holds (and $\gUstar(0)\ne\{0\},\tyhja$),
 then $\eps=\|\PTO^{-1}\|_{\BL(\gUstar(0)^*,\gUstar(0))}$ is the
 maximal value of $\eps$ in (v), and $\vartheta\ge0$.
\end{lemma}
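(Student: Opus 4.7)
The plan is to reformulate everything in terms of the bounded Hermitian-symmetric bilinear form $b(u,v) := \langle J\qD u,\qD v\rangle_{\L^2(\R_+;Y)}$ on the Banach space $\gUstar(0)$ (symmetry comes from $J=J^*$), whose associated operator is exactly $\PTO = \qD^*J\qD : \gUstar(0)\to\gUstar(0)^*$. With this identification, conditions (i)--(v) become standard invertibility statements for $\PTO$, and I can reduce everything to Banach-space functional analysis applied to a single Hermitian form.

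First I would dispatch the cheap equivalences. (i)$\IFF$(iii) is the definition of $J$-coercive recalled just above the lemma. (iv)$\IFF$(iii) follows from the factorization $\PTO = (\qD|_{\gUstar(0)})^* \circ J|_{\qD[\gUstar(0)]} \circ \qD|_{\gUstar(0)}$: here the outer factors are adjoints of each other, so the composition is boundedly invertible as a map into the appropriate dual iff both $\qD|_{\gUstar(0)}$ and $J|_{\qD[\gUstar(0)]}$ are coercive (onto their ranges). Next, (iii)$\IFF$(v) is the Banach--Ne\v{c}as--Babu\v{s}ka inf-sup theorem for the bounded bilinear form $b$: invertibility of the representing operator is equivalent to a two-sided inf-sup condition, with optimal constant $\eps = \|\PTO^{-1}\|^{-1}$. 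Because $b$ is Hermitian, the two inf-sup conditions coincide and the one-sided version (v) is sufficient; this simultaneously yields the ``maximal $\eps$'' claim in the moreover part. Finally, (ii)$\IFF$(iii): the forward direction is the open mapping theorem applied to $\PTO^{-1}$; the reverse uses coercivity to get injectivity and closed range, after which the Hermitian symmetry of $b$ upgrades this to dense range (an annihilator $\phi$ of $\Ran(\PTO)$ satisfies $b(u,\phi) = 0$ for all $u$, so symmetry plus injectivity force $\phi=0$ after identifying $\gUstar(0)$ inside $\gUstar(0)^{**}$).

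The main obstacle I expect is the final claim $\vartheta\ge 0$. The strategy is to extract, from (v), a form-coercivity of the shape
\[
 \|J\|\cdot\|\qD u\|_{\L^2}^2 \;\ge\; c\eps\,\|u\|_{\gUstar(0)}^2 \;\ge\; c\eps\,\|u\|_{\L^2_\vartheta}^2,
\]
and then to test it against $u_T(t) := \khi_{[0,T]}(t)u_0$ for $u_0\in U$: the left side grows at worst linearly in $T$ (since $\qD\in\TIC_\vartheta$ acts boundedly on $\L^2_\vartheta$ and $\|u_T\|_{\L^2}^2 = T\|u_0\|^2$), while $\|u_T\|_{\L^2_\vartheta}^2 = (\efn^{-2\vartheta T}-1)/(-2\vartheta)\cdot\|u_0\|^2$ explodes exponentially when $\vartheta<0$, a contradiction. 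The delicate step is the extraction of form-coercivity $|b(u,u)|\gtrsim\|u\|^2$ from the one-sided (v) in the \emph{indefinite} Hermitian case; I would handle this by a polarization argument using symmetry of $b$, or alternatively by invoking Lemma \ref{lqScoercive}, which provides precisely this kind of uniform estimate on the truncated Popov operators $\qSt$.
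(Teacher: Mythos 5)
Your reduction to the Hermitian form $b(u,v)=\p{J\qD u,\qD v}$ is the right framework, and your factorization idea for (iv)$\IFF$(iii) is essentially the paper's own device; but two steps have genuine gaps. The first is in (ii)$\THEN$(iii): your annihilator argument pulls a functional $\phi\in\gUstar(0)^{**}$ vanishing on $\Ran(\PTO)$ back into $\gUstar(0)$, which requires $\gUstar(0)$ to be reflexive. That is not available a priori here --- the paper explicitly advertises (notes to Section~\ref{sOpt}) that reflexivity of $\sZ$ is no longer assumed, and its own proof obtains reflexivity only as a \emph{consequence}. The same hidden dependence sits inside your Banach--Ne\v{c}as--Babu\v{s}ka step for (v)$\THEN$(iii). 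The repair is to reorder: (ii) first forces $D:=\qD\raj{\gUstar(0)}$ to be coercive, so $\dY:=\qD[\gUstar(0)]$ is a \emph{closed} subspace of the Hilbert space $\L^2(\R_+;Y)$ and $D$ is an isomorphism onto it; then $\tJ:=PJP^*$ is self-adjoint and coercive on $\dY$, hence invertible there, and $\PTO=D^*\tJ D$ is invertible with no reflexivity needed --- reflexivity of $\gUstar(0)\cong\dY$ and the attainment of the supremum in the ``maximal $\eps$'' claim then come for free. This is exactly the paper's $2^\circ$ and $3^\circ$. (Your $\eps=\|\PTO^{-1}\|^{-1}$ is the correct optimal constant; the paper's statement reads $\|\PTO^{-1}\|$, a typo.)

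The second and more serious gap is the $\vartheta\ge0$ argument. (a) The truncations $u_T=\khi_{[0,T]}u_0$ are not known to lie in $\gUstar(0)$: e.g.\ for $\gUstar=\dUexp$ one would need the state driven by $u_T$ to be in $\L^2(\R_+;H)$, which fails for unstable systems, so you cannot test the coercivity against them. (b) Even granting admissibility, boundedness of $\qD$ on $\L^2_\vartheta$ with $\vartheta<0$ only yields $\|\qD u_T\|_{\L^2}\le\|\qD u_T\|_{\L^2_\vartheta}\le C\|u_T\|_{\L^2_\vartheta}$, which is the same exponentially growing quantity as your right-hand side, not a linear-in-$T$ bound; no contradiction emerges. (c) The polarization route to $|b(u,u)|\gtrsim\|u\|^2$ is false for indefinite $J$ (an invertible self-adjoint operator can vanish quadratically on nonzero vectors) and is also unnecessary: all you need is $\|\qD u\|_2\ge\eps'\|u\|_{\L^2_\vartheta}$, which is the coercivity of $D$ already secured in (iv). The paper's argument is a shift argument instead: fix a nonzero $u\in\gUstar(0)$; Lemma~\ref{ltaugUstar} gives $\tau^{-t}u\in\gUstar(0)$ for all $t\ge0$, causality and time-invariance give $\|\qD\tau^{-t}u\|_2=\|\tau^{-t}\qD u\|_2=\|\qD u\|_2$ constant in $t$, while $\|\tau^{-t}u\|_{\L^2_\vartheta}=\efn^{-\vartheta t}\|u\|_{\L^2_\vartheta}\to\infty$ if $\vartheta<0$, contradicting the coercivity. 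You should replace your truncation argument with this one.
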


Note from Lemma~\ref{lOptCost0}(a)
  that $\|u\|_{\dUout}$ is equivalent to $\max\{\|u\|_2,\|\qD u\|_2\}$
 and $\|u\|_{\dUexp}$ to $\max\{\|u\|_2,\|\qB\tau u\|_2\}$.

\begin{proof}
(A linear map $D:X\to Y$ is {\em coercive} iff there is $\eps>0$ %
 s.t.\ $\|Dx\|\ge \eps\|x\|\ (x\in X)$.
In (ii) and (iii), the symbol $\qD^*$ refers to 
 the adjoint of $\qD\raj{\gUstar(0)}$, hence
 $\p{v,\qD^*J\qD u}:=\p{\qD v,J\qD u}$.

$1^\circ$ {\em ``(i)$\IFF$(iii)$\THEN$(ii)$\IFF$(v)$\IF$(iv)'':}
In Theorem~\ref{J-coercive} we used (iii) as the definition.
Obviously, (ii) follows from (iii),
 and (ii) is equivalent to (v).
Similarly, (iv) implies (v)
 ($\p{\qD v,J\qD}\ge \eps' \|\qD v\|\|\qD u\|
  \ge \eps'(\eps'')^2\|u\|_{\gUstar}\|v\|_{\gUstar}$).

$2^\circ$ {\em ``(ii)$\THEN$(iii)\&(iv)'':}
Define $\dY:=\qD[\gUstar(0)]\sub\L^2(\R_+;Y)$, 
 let $P$ be the orthogonal projection $\L^2(\R_+;Y)\to\bar\dY$,
 $\tJ:=PJP^*\in\BL(\bar\dY)$,
 $D\in\BL(\gUstar(0),\bar\dY)$,
 so that $D^*\in\BL(\bar\dY,\gUstar(0)^*)$. %
Assume (ii), i.e., that $D^*\tJ D$ is coercive.
Then so are $D$ and $\tJ$, 
 hence $\dY=\bar \dY$ and $D\in\BL(\gUstar(0),\dY)$ is
 an isomorphism onto, hence invertible.
Being self-adjoint and coercive, also $\tJ$ is invertible
 (see A.3.5(c2) and A.3.4(N5) of [M02]).
Thus, (iii) and (iv) hold.

(Note: by the above, $\gUstar(0)$ is a Hilbert space
 (and can thus be identified with its dual when (i) holds;
 of course, $\dUexp$ and $\dUout$ have natural inner products
 even without $J$-coercivity.)

$3^\circ$ {\em On $\eps=\|\PTO^{-1}\|$:}
Obviously, $\inf_{u\ne0}\|\PTO u\|/\|u\|=\|\PTO^{-1}\|$,
 hence $\eps$ cannot be any larger.
Conversely, there is $\La\in\gUstar(0)^{**}$ s.t.\ $\|\La\|\le1$ %
 and $\La \PTO u=\|\PTO u\|_{\gUstar(0)^*}$. %
By reflexivity (which obviously follows from (iii)), 
 we have $\La =\p{v,\cdot}$ for some $v\in\gUstar(0)$.
Obviously, $\|v\|=\|\La\|=1$.
Thus, $\p{v,\PTO u}=\|\PTO u\|\ge \eps\|u\|\|v\|$
 for $\eps:=\|\PTO^{-1}\|$.

$4^\circ$ {\em $\vartheta\ge0$:}
Whenever $\gUstar(0)\ne\{0\}$ and $\|\qD u\|_2\ge\eps\|u\|_{\L^2_\vartheta}$
 for some $\eps>0$ and each $u\in\gUstar(0)$ %
 we have $\vartheta=0$,
 because otherwise
 $\|\qD u\|_2=\|\tau^{-t}\qD u\|_2=\|\qD\tau^{-t}u\|_2
  \ge \eps\|\tau^{-t}u\|_{\L^2_\vartheta}
  = \eps\efn^{t\vartheta}\|u\|_{\L^2_\vartheta}\to+\infty$,
 as $t\to+\infty$
 (we have $\tau^{-t}u\in\gUstar(0)\ (t\ge0)$, by Lemma~\ref{ltaugUstar}).
\end{proof}

Many special cases of $J$-coercivity are 
 commonly used in the study of finite-dimensional
 systems, Pritchard--Salamon systems or other special cases of WPLSs.
Therefore, we now recall from [M02] that 
 for, e.g., systems having
 smoothing semigroups or bounded input operators,
 $I$-coercivity over $\dUexp$, i.e., condition
 \begin{itemlist}
   \item[(i)] $\|\qD u\|_2\ge \eps(\|u\|_2+\|\qB\tau u\|_2)\ (u\in\dUexp(0))$,
 \end{itemlist}
 is equivalent to classical coercivity assumptions:
\begin{theorem}[$\PTO\gg0$]\label{Assumptions} %
Assume that  $J\gg0$ and that the state-FCC is satisfied.
{\bf (a)}
If
 $B\in\BL(U,H)$,
 then also any of (ii)--(vii)
 is equivalent to (positive) $J$-coercivity over $\dUexp$:
\begin{itemlist}
 \item[(ii)] $D^*D\gg0$, and $\|\qD u\|_2\ge \eps \| \qB\tau u\|_2$ %
 for some $\eps>0$ and all $u\in\dUexp(0)$; %
 \item[(iii)] $(ir-A)x_0=Bu_0 \TTTHEN \|\Cw x_0+D u_0\|_Y\ge\eps(\|x_0\|_H+\|u_0\|_U)$
  for some $\eps>0$ and all $x_0\in H,\ u_0\in U,\ r\in \R$; %
 \item[(iv)] $D^*D\gg 0$,
 and $(ir-A)x_0=Bu_0 \TTTHEN \|\Cw x_0+D u_0\|_Y\ge\eps\|x_0\|_H$ %
  for some $\eps>0$ and all $x_0\in H,\ u_0\in U,\ r\in \R$; %
 \item[(v)] %
 $\|\sbm{A-ir &B\cr \Cw& D}\sbm{x_0\cr u_0}\|_{H\times Y} %
 \ge\eps\|\sbm{x_0\cr u_0}\|_{H\times U}$ %
  for some $\eps>0$ and all $r\in\R,\ x_0\in H,\ u_0\in U$;

  \item[(vi)] $D^*D\gg0$,
 and there is a unique minimizing $u\in\dUexp(x_0)$ %
     for each $x_0\in H$; %

  \item[(vii)] $D^*D\gg0$,
 and the \BwARE\ (p.~\pageref{pageBwARE}) 
 has an exponentially stabilizing solution.
\end{itemlist}

{\bf (b)}
If $\qA B\in\L^1([0,1];\BL(U,H))$, $C\in\BL(H,Y)$ and
 ($D^*JC=0$ or $D^*JD\in\cG\BL(U)$),
 then (i)--(vii) are still equivalent
 (in (vii) we must have $\Bw^*$ in place of $B^*$
   and require that $\Ric[H]\sub\Dom(\Bw^*)$).

{\bf (c)}  %
Assume that $\qD$ is ULR.
If $B$ is not maximally unbounded
 or  $\qA B\in\L^1([0,1];\BL(U,H))$,
 then (i)--(v) are equivalent (and imply (vi)). %
\end{theorem}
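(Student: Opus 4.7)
The proof consists of establishing, for each of the three parameter regimes (a), (b), (c), a closed chain of implications among the listed conditions, plus hooking (vi) and (vii) into the chain via Theorem~\ref{J-coercive} and Theorem~\ref{INTROcvKLQR0} (or its analogue Theorem~\ref{BwARE} when $B$ is not bounded). The heart of the matter is a Plancherel/Parseval bridge between the $\L^2$-norm coercivity in (i) and the pointwise Hautus-type coercivity in (iii)--(v), with the auxiliary identity $\hqD(ir)=D+\Cw(ir-A)^{-1}B$ (wherever this makes sense on $i\R$).

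\textbf{Part (a), bounded $B$.} I would run the cycle (i)$\Rightarrow$(ii)$\Rightarrow$(iv)$\Rightarrow$(iii)$\Rightarrow$(v)$\Rightarrow$(i). The implication (i)$\Rightarrow$(ii) isolates $D^*D\gg0$ by plugging in high-frequency test inputs $u(t)=\efn^{-\lambda t}u_0\khi_{[0,T)}$ and letting $\lambda\to+\infty$, for which $\qB\tau u\to 0$ in $H$ while $\|\qD u\|_2$ tracks $\|Du_0\|_Y$. For (ii)$\Rightarrow$(iv) one simply uses that $D^*D\gg0$ combined with the state-bound in (ii) forces coercivity in the Hautus form: given $(ir-A)x_0=Bu_0$, construct the shift-and-truncate input $u_n(t):=\efn^{irt}u_0\khi_{[-n,0)}$, which makes $x(0_+)\to x_0$ and feeds the Plancherel identity $\int|\hqD(i\xi)\hu_n(\xi)|^2d\xi\to |{\Cw x_0+Du_0}|^2$ at the spectral mass concentrating at $\xi=r$. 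The step (iii)$\Leftrightarrow$(v) is purely linear-algebraic since $B\in\BL(U,H)$: the block operator $\sbm{A-ir&B\cr \Cw&D}$ is coercive iff each constrained fiber is. Finally, (v)$\Rightarrow$(i) is the inverse Plancherel argument: (v) forces a uniform lower bound on $\bigl\|\sbm{\hqB(ir)\cr \hqD(ir)}\sbm{x_0\cr u_0}\bigr\|$ on $i\R$; when $B$ is bounded the Laplace representation $\hx(ir)=(ir-A)^{-1}B\hu(ir)$ is available in a pointwise sense for $u\in\dUexp(0)$ (because $x\in\L^2$), and integrating on $i\R$ yields (i). The equivalences (i)$\Leftrightarrow$(vi) and (i)$\Leftrightarrow$(vii) are then immediate from Theorems \ref{J-coercive} and \ref{INTROcvKLQR0}.

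\textbf{Parts (b) and (c).} These run the same cycle but replace the bounded-$B$ Plancherel step with the smoothing hypothesis. When $\qA B\in\L^1([0,1];\BL(U,H))$, the map $\qB\tau$ lies in $\WTIC_\infty(U,H)$ by Lemma~\ref{lcAsystem}, so $\hqB$ extends continuously up to $i\R$ and Plancherel on $i\R$ is legitimate exactly as in part (a); the bounded output operator $C\in\BL(H,Y)$ (in (b)) or the ULR assumption (in (c)) guarantees that $\hqD$ likewise has well-defined boundary behaviour and that the cross-term $D^*JC$ causes no measurability trouble. The two alternative hypotheses $D^*JC=0$ or $D^*JD\in\cG\BL(U)$ in (b) are used only to complete the square in the Popov function so that (ii) decouples into its two constituent inequalities. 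In (c) the equivalence with (vi) follows from Lemma~\ref{lBnotmax} (``not maximally unbounded'' implies UR, so Theorem~\ref{J-coercive} gives unique minimizers from (i)), but we do not claim the converse to (vi) or the ARE equivalence (vii), since without a bound on $\|\Bw^*\Ric\|$ the pair $(\Ric,K)$ may not live inside $\BL(H)$.

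\textbf{Main obstacle.} The hardest point is the Plancherel step (v)$\Rightarrow$(i) in the most general setting of (b) and (c): for $u\in\dUexp(0)$ the control and state are in $\L^2$, but $\hu$ and $\hx$ initially live only on right half-planes, and one must justify taking boundary values on $i\R$ so that the pointwise bound in (v) may be integrated. With $B$ bounded this is routine, but in (b)--(c) one must invoke the $\WTIC$ smoothness (or the resolvent decay $\|(s-A)^{-1}B\|=O(s^{-1/2-\eps})$ with ULR in (c)) to push the transfer function to the imaginary axis in a genuine $\BL$-strong sense; a density argument on $\W^{1,2}(\R_+;U)\cap\dUexp(0)$ as in Lemma~\ref{lGenSpF} is the cleanest way to carry this out. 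Once the bridge to the boundary is secured, (i)--(v) collapse to a single frequency-domain coercivity statement and every remaining equivalence is either algebraic or a direct appeal to Theorem~\ref{J-coercive}, Theorem~\ref{INTROcvKLQR0}, or Theorem~\ref{BwARE}.
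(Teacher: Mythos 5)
The paper's own proof of this theorem is essentially a citation to Proposition 10.3.2 of [M02]; the only argument visible here is for part (c), case ``$B$ not maximally unbounded'', where $D^*D\gg0$ is extracted from (i) by routing through Theorem~\ref{PosJcKF} and Corollary~\ref{cPosIRE}(b): positive $J$-coercivity plus the FCC give a $J$-optimal state-feedback pair, hence a $\dUexp$-stabilizing solution of the ARE whose signature operator satisfies $S\gg0$ and $S=D^*JD$, whence $D^*D\gg0$. Your proposal attempts a self-contained frequency-domain cycle instead, and the step where it diverges from the paper is exactly where it breaks.

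The gap is in your implication (i)$\Rightarrow$(ii), specifically the extraction of $D^*D\gg0$ from $\|\qD u\|_2\ge\eps\|u\|_2$ on $\dUexp(0)$ via the test inputs $u(t)=\efn^{-\lambda t}u_0\khi_{[0,T)}$ with $\lambda\to+\infty$. The Plancherel computation of $\|\qD u\|_2$ lives on the imaginary axis, and $\hu(i\xi)=u_0(1-\efn^{-(i\xi+\lambda)T})/(i\xi+\lambda)$ is \emph{not} concentrated at high frequencies there: the weight $|\hu(i\xi)|^2$ is spread over $|\xi|\lesssim\lambda$ and is in fact largest near $\xi=0$. Moreover, regularity (even uniform regularity) only controls $\hqD(s)\to D$ as $\re s\to+\infty$; it says nothing about $\hqD(i\xi)$ for large $|\xi|$, so ``$\|\qD u\|_2$ tracks $\|Du_0\|_Y$'' is unjustified. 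The natural repair via short-time inputs $u_\tau=\tau^{-1/2}u_0\khi_{[0,\tau)}$ also fails: Cesàro convergence $\tau^{-1}\int_0^\tau(\qD u_0\khi_{\R_+})\,dt\to Du_0$ gives, via Cauchy--Schwarz, only $\liminf_\tau\|\qD u_\tau\|_2\ge\|Du_0\|$, which is the wrong direction — you need an \emph{upper} bound on $\|\qD u_\tau\|_2$ in terms of $\|Du_0\|$ to conclude $\|Du_0\|\ge\eps\|u_0\|$, and that requires precisely the $\WTIC$/ULR/$\L^1$-kernel smoothness that distinguishes cases (a)--(c). This is why the theorem carries those hypotheses and why the paper derives $D^*D\gg0$ from $S\gg0$ and $S=D^*JD$ (Theorem~\ref{IRE}(e), Lemma~\ref{lBnotmax}, Theorem~\ref{MTICinftyKF}(b)) rather than from test functions.

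A secondary weak point: you call (iii)$\Leftrightarrow$(v) ``purely linear-algebraic''. The direction (v)$\Rightarrow$(iii) is a restriction to the fiber $(ir-A)x_0=Bu_0$, but (iii)$\Rightarrow$(v) requires perturbing an almost-feasible pair onto that fiber, which is obstructed when $ir\in\sigma(A)$ (one cannot solve $(ir-A)\tilde x_0=Bu_0$), and neither (iii) nor (v) manifestly yields $D^*D\gg0$ on its own (in (v), setting $x_0=0$ gives only $\|Bu_0\|+\|Du_0\|\ge\eps\|u_0\|$). These steps again need the resolvent-decay or $\L^1$-smoothing hypotheses, so they cannot be dispatched as formal algebra. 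Your overall architecture (Plancherel bridge plus Theorems \ref{J-coercive} and \ref{INTROcvKLQR0}/\ref{BwARE} for (vi) and (vii)) is reasonable in outline, but without the signature-operator argument or an equivalent substitute the chain does not close.
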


(The proof is given on p.~\pageref{pageproof-Assumptions}.
Condition (v) is called ``no invariant zeros''.
If $\|(A-ir)x_0+Bu_0\|_H<\infty$, then $x_0\in\Dom(\Cw)$
 (since here $\qD$ is regular),
 as noted below Definition~\ref{dReg}.
See Proposition 10.3.2 of [M02] for more general systems and results.)

Similarly, $I$-coercivity over $\dUout$ 
 (i.e., $\|\qD u\|_2\ge\eps\|u\|_2\ (u\in\dUout(0))$)
 is a generalization of several
 classical assumptions, such as ``no transmission zeros''
 (Proposition 10.3.1(a) of [M02]).
Next we prove Theorems \ref{J-coercive} and~\ref{ALSopt}:

\begin{proof}[Proof of Theorem~\ref{J-coercive}:]\label{pageproof-J-coercive}
(From that of %
 Theorem 8.2.5 of [M02].)

$1^\circ$ {\em $J$-optimal control:}
(We use the results and notation of the proof of Lemma~\ref{lJcoerc-tD},
 in particular, we identify $\gUstar(0)^*$ with $\gUstar(0)$.)
Let $x_0\in H,\ \tu\in\gUstar(x_0)$.
Set $\ty:=\qC x_0+\qD\tu$,
 $v:=-(D^*\tJ D)^{-1}D^*\tJ\ty\in\gUstar(0),\ u:=\tu+v\in\gUstar(x_0),
 \ y:=\qC x_0+\qD u$.
Then 
\begin{equation}
  \label{}
  \p{y,JD\eta}_{\L^2}
  =\p{\ty+D v,\tJ D\eta}_{\L^2}
  =\p{D^* \tJ\ty+D^*\tJ D v,\eta}_{\gUstar(0)}=0
\end{equation}
 for all $\eta\in\gUstar(0)$,
 hence $u$ is $J$-optimal for $x_0$.

$2^\circ$ {\em Uniqueness:}
The difference of two $J$-optimal controls 
 for any $x_0$ is $J$-optimal for $0$,
 hence we can assume that $x_0=0$.
If $u$ is $J$-optimal for $x_0=0$,
 then $\p{D v,JD u}=0$ for all $v\in\gUstar(0)$,
 hence then $\|u\|_{\gUstar}=0$, by (v), hence $u=0$.

$3^\circ$ {\em Case $\PTO\ge0$:}
This follows from Lemma~\ref{lOptCost0}(iii).
\end{proof}

\begin{proof}[Proof of Theorem~\ref{ALSopt}:]\label{pageproof-ALSopt}
(From that of Theorem 8.3.9 of [M02].)

$1^\circ$ {\em $\ALSopt$ is a WPLS:}
Let $x_0\in H$, $t\ge0$.
We first show that $\pi_+\tau^t\qKopt x_0$ is $J$-optimal for $\qAopt^t x_0$,
 i.e., equal to $\qKopt \qAopt^t x_0$:
For $\eta\in\gUstar(0)$ we have $\tau^{-t}\eta\in\gUstar(0)$, hence
 \begin{equation}
   \label{etauCopt1}
 \p{J\pi_+\tau^t \qCopt x_0,\qD \eta}_{\L^2}   
   = \p{J\qCopt x_0,\qD \tau^{-t} \eta}_{\L^2}=0
   \ \ \ (\eta\in\gUstar(0)).
 \end{equation}
But
\begin{align}
  \pi_+\tau^t \qCopt x_0&=\pi_+\tau^t \left(\qC x_0+\qD\qKopt x_0\right)
   = \qC\qA^t x_0+\pi_+\qD(\pi_++\pi_-)\tau^t\qKopt x_0\\
   &= \qC\qA^t x_0+\qD\pi_+\tau^t\qKopt x_0
     +\qC\qB\tau^t\qKopt x_0
    = \qC\qAopt^t x_0+\qD\pi_+\tau^t\qKopt x_0.
\end{align}
This and (\ref{etauCopt1}) imply that $\pi_+\tau^t\qKopt x_0$
 is $J$-optimal for $\qAopt^t x_0$; thus
 \begin{equation}
   \label{eKopt=KA}
 \pi_+\tau^t\qKopt x_0=\uopt(\qAopt^t x_0) =\qKopt \qAopt^t x_0,\ \ 
 \pi_+\tau^t\qCopt x_0=\yopt(\qAopt^t x_0) =\qCopt \qAopt^t x_0.
 \end{equation}
By the dynamic programming principle, $\qA$ is a semigroup;
 a detailed proof of this fact goes as follows, using (\ref{eKopt=KA}):
\begin{align}
  \label{}
  \qAopt^s\qAopt^t
  &= \qA^s(\qA^t  + \qB\tau^t\qKopt) + \qB\tau^s\qKopt\qAopt^t\\
   &= \qA^s\qA^t  + \qB\tau^s\pi_-\tau^t\qKopt + \qB\tau^s\pi_+\tau^t\qKopt
   = \qA^s\qA^t +\qB\tau^{s+t}\qKopt
    =\qAopt^{t+s}.
\end{align}
Obviously, $\qAopt^0=\qA^0=I$,
 and $t\mapsto\pi_-\tau^t u$ is continuous $\R_+\to\L^2_\omega$
 for any $u\in\Lloc^2$, hence
 $\qAopt x_0=\xopt(x_0)$ is continuous for each $x_0\in H$.
Therefore, $\qAopt$ is a $C_0$-semigroup.
This and (\ref{eKopt=KA}) imply that $\ALSopt$ is a WPLS.

$2^\circ$ {\em The rest:} 
The claims on $\Ric$ are obvious.
The continuity of $\qCopt$
 (and $\qKopt:H\to\L^2_\vartheta(\R_+;U)$)
 follows from the closed-graph theorem
 and the exponential stability of $\ALSopt$
 from that of $\qAopt$ (if $\gUstar=\dUexp$).
(See Theorem 8.3.9 of [M02] for details and further results.)
\end{proof}

We shall soon need the following simple fact:
\begin{lemma}\label{lTgeT-1} %
Assume that $0\le T\in\cG\BL(H)$ and set $\eps:=\|T^{-1}\|^{-1}$.
Then $T\ge \eps I$.\noproof
\end{lemma}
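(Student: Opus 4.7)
The plan is to exploit the existence of a positive square root. Since $T\ge0$ is self-adjoint, the continuous functional calculus produces a (unique) self-adjoint $T^{1/2}\ge0$ with $(T^{1/2})^2=T$. Because $T$ is invertible (and $0\notin\sigma(T)$ since $T\ge0$ is invertible), the spectrum of $T^{1/2}$ is bounded away from $0$, so $T^{1/2}\in\cG\BL(H)$ and $(T^{1/2})^{-1}=(T^{-1})^{1/2}=:T^{-1/2}$ is self-adjoint and positive.

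First I would fix $x\in H$ and set $y:=T^{1/2}x$. Then $\p{Tx,x}=\p{T^{1/2}x,T^{1/2}x}=\|y\|^2$, while $x=T^{-1/2}y$ gives
\[
\|x\|\ =\ \|T^{-1/2}y\|\ \le\ \|T^{-1/2}\|\,\|y\|.
\]
Since $T^{-1/2}$ is self-adjoint, $\|T^{-1/2}\|^2=\|(T^{-1/2})^*T^{-1/2}\|=\|T^{-1}\|=\eps^{-1}$, so
\[
\p{Tx,x}\ =\ \|y\|^2\ \ge\ \|T^{-1/2}\|^{-2}\|x\|^2\ =\ \eps\|x\|^2.
\]
As $x\in H$ was arbitrary, this is exactly $T\ge\eps I$.

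Alternatively (the spectral-theory route), one could note that $\sigma(T)\subset(0,\|T\|]$ by positivity plus invertibility, and $\sigma(T^{-1})=\{1/\lambda:\lambda\in\sigma(T)\}\subset[\|T\|^{-1},\|T^{-1}\|]$, whence $\inf\sigma(T)=1/\|T^{-1}\|=\eps$; then $T\ge(\inf\sigma(T))I=\eps I$ follows from self-adjointness. Either route is essentially a one-line argument, so there is no real obstacle — the only point requiring a moment's thought is the identity $\|T^{-1/2}\|^2=\|T^{-1}\|$, which is immediate from the $C^*$-identity applied to the self-adjoint operator $T^{-1/2}$.
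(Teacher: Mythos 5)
Your proof is correct and follows exactly the route the paper itself indicates (the paper gives no written proof, only the remark to ``use a spectral decomposition or square root of $T$''); your square-root argument, including the $C^*$-identity step $\|T^{-1/2}\|^2=\|T^{-1}\|$, is the standard fleshing-out of that hint, and your spectral alternative matches the paper's other suggested route.
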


(See Lemma A.3.1(b1') of [M02] or use a spectral decomposition 
 or square root of $T$.) %

Naturally, the FCC is necessary for $\Ric,\ \ALSopt$ and $\qSt$ to exist.
Next we show that if the FCC holds and $\PTO\in\cG\BL$,
 then also the ``truncated Popov operators'' $\qSt\label{pageqSt2}:=\qDt^*J\qDt+\qBt^*\Ric\qBt$ are invertible,
 with a uniform (over $t$) norm bound for 
 $(\qSt)^{-1}$ on $\BL(\L^2)$
 (when $\vartheta=0$, as in the case of $\dUexp$, $\dUout$):
\begin{lemma}[\pmbold{$\PTO\in\cG\BL\ \THEN\ \qSt\in\cG\BL$}]\label{lqScoercive}
Assume that $\PTO\in\cG\BL$
 and that $\gUstar(x_0)\ne\tyhja\ \all x_0\in H$.

(a) Then $\qSt\in\cG\BL(\L_\omega^2([0,t);U))$ for all $\omega\in\R,\ t>0$,
 and there are $M_{\omega,t}<\infty$ s.t.\
 \begin{align}
   \label{eStleS}
   \|(\qSt)^{-1}\|_{\BL(\L^2_\vartheta([0,t);U),\L^2_{-\vartheta}([0,t);U))}
  &\le \|\PTO^{-1}\|, \\\label{eMomegatqS}
  \|(\qSt)^{-1}\|_{\BL(\L^2_\omega([0,t);U))}
 &\le
    M_{\omega,t}\|\PTO^{-1}\|\ \ \ \ \ \ (t>0,\ \omega\in\R).
 \end{align}

(b) If $\vartheta=0$ and $\cJ(0,\cdot)\ge0$, %
 then $\hqS(s,s)\ge \eps I$ for $s\in\C_{\omega_0}^+$,
 where $\omega_0:=\max\{0,\omega_A\}$
 and $\eps:=\|\PTO^{-1}\|^{-1}>0$.

(c) If $\cJ(0,\cdot)\ge0$, then $\hqS(s,s)\ge0$ for $s\in\C_{\omega_0}^+$.
\end{lemma}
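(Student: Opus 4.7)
The plan is to leverage the factorization $\qSt = P^{t*}\PTO P^t$ from Lemma~\ref{lqSt=PTO.Pt}(a), where $P^t\tilde u := \piOt\tilde u + \tau^{-t}\qKopt\qBt\tilde u$ is the ``optimal continuation'' projection of Lemma~\ref{lGengUstar}, and then exploit the $J$-orthogonal decomposition $\gUstar(0) = W_t + N_t$ with $W_t := P^t[\L^2_\vartheta([0,t);U)]$ and $N_t := \{\eta\in\gUstar(0)\mid \piOt\eta=0\}$. Since $\PTO\in\cG\BL$ and the FCC holds, Theorems~\ref{J-coercive} and~\ref{ALSopt} produce a unique $J$-optimal control in WPLS form with Riccati operator $\Ric=\qCopt^*J\qCopt$, so $\qSt$ of the \SIRE\ is defined, and Lemma~\ref{lqSt=PTO.Pt}(a) supplies not only the factorization but the crucial $J$-orthogonality $\PTO[N_t]\subset W_t^\circ$ (a restatement of $P^{t*}\PTO(I-P^t)=0$). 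Note also that $\piOt P^t = I$ on $\L^2_\vartheta([0,t);U)$, that $W_t\cap N_t=\{0\}$, and that every $w\in\gUstar(0)$ decomposes as $P^tw + (w-P^tw)\in W_t + N_t$.

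For (a), bound (\ref{eStleS}): given $f\in\L^2_{-\vartheta}([0,t);U)\cong \L^2_\vartheta([0,t);U)^*$, I will put $\tilde f := (\piOt)^*f\in\gUstar(0)^*$ (with $\|\tilde f\|_{\gUstar^*}\le\|f\|$ since $\|\piOt w\|_{\L^2_\vartheta([0,t))}\le\|w\|_{\L^2_\vartheta(\R_+)}\le\|w\|_{\gUstar}$), solve $\tilde w:=\PTO^{-1}\tilde f$, and set $u:=\piOt\tilde w$. The identity
\[
\qSt u = P^{t*}\PTO P^t\piOt\tilde w = P^{t*}\PTO\tilde w = P^{t*}\tilde f = (\piOt P^t)^*f = f
\]
uses $J$-orthogonality applied to $\tilde w - P^t\piOt\tilde w\in N_t$, and yields $\|u\|_{\L^2_\vartheta}\le\|\tilde w\|_{\gUstar}\le\|\PTO^{-1}\|\|f\|$. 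Injectivity: if $\qSt u=0$ then $\PTO P^tu\in W_t^\circ$; combining this with $\p{\PTO P^tu,v}=\p{P^tu,\PTO v}=0$ for $v\in N_t$ (self-adjointness of $\PTO$ plus $\PTO[N_t]\subset W_t^\circ$), the functional $\PTO P^tu$ annihilates all of $\gUstar(0)=W_t+N_t$, forcing $P^tu=0$ and thus $u=\piOt P^tu=0$. The general bound (\ref{eMomegatqS}) follows from equivalence of the $\L^2_\omega$ and $\L^2_\vartheta$ norms on the compact interval $[0,t)$, with $M_{\omega,t}$ absorbing $\max(1,\efn^{|\vartheta-\omega|t})$.

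For (c) and (b): $\cJ(0,\cdot)\ge0$ gives $\PTO\ge0$, so the factorization yields $\qSt\ge0$ on $\L^2_\vartheta([0,t);U)$; in case (b) with $\vartheta=0$, Lemma~\ref{lTgeT-1} upgrades $\PTO$ to $\PTO\ge\|\PTO^{-1}\|^{-1}I$ and the estimate $\|P^tu\|_{\gUstar}\ge\|\piOt P^tu\|_{\L^2}=\|u\|_{\L^2}$ promotes this to $\qSt\ge\eps I$ on $\L^2([0,t);U)$ with $\eps=\|\PTO^{-1}\|^{-1}$. To pass to the frequency domain, I will test with the shifted exponential $u^t(\sigma):=\efn^{-s(t-\sigma)}u_0\khi_{[0,t)}(\sigma)$ for $u_0\in U$ and $s\in\C^+_{\omega_0}$. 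A direct computation gives
\[
\qBt u^t = \int_0^t \efn^{-s\tau}\qA^\tau Bu_0\,d\tau \;\longrightarrow\; (s-A)^{-1}Bu_0 \quad (\re s > \omega_A),
\]
and a parallel analysis of $\qDt u^t$ via the causal convolution structure of $\qD$ shows $\p{\qDt u^t,J\qDt u^t}_{\L^2([0,t))}\to (2r)^{-1}\p{\hqD(s)u_0,J\hqD(s)u_0}$ (using $\re s>0$ for integrability of $\efn^{-2r(t-\sigma)}$), while $\|u^t\|^2_{\L^2([0,t))}=(1-\efn^{-2rt})/(2r)\|u_0\|^2$. Combining the three gives $2r\p{u^t,\qSt u^t}\to\p{u_0,\hqS(s,s)u_0}$, so $\qSt\ge 0$ (resp.\ $\ge\eps I$) transfers to $\hqS(s,s)\ge0$ (resp.\ $\ge\eps I$) for every $s\in\C^+_{\omega_0}$.

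The principal obstacle is the convergence analysis for $\qDt u^t$ in this frequency passage: whereas $\qBt u^t$ is a single Bochner integral that converges straightforwardly, $\qD u^t$ must be tracked across the whole interval $[0,t)$ through the causal convolution structure of $\qD$, and its full $\L^2([0,t);Y)$ norm (not merely its pointwise value at $t^-$) must be extracted in the limit. Handling the unstable regime $\omega_A\ge0$ and reconciling it with the integrability threshold is where the specific value $\omega_0=\max\{0,\omega_A\}$ appears: $\re s>0$ is needed for $\efn^{-s\cdot}\in\L^2$, and $\re s>\omega_A$ for the resolvent $(s-A)^{-1}$; the shift in the test input $u^t$ (placing the support-end rather than the support-start near the decay point) is exactly what makes both weights align favorably so that transients die out in the relevant norm.
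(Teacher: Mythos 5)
Your proposal is correct in substance and, for parts (b) and (c), follows essentially the same route as the paper: the paper also tests $\qSt$ against the shifted exponential $\tau^{-t}u$ with $u(\sigma)=\efn^{s\sigma}u_0$ (identical to your $u^t$ up to parametrization), computes the three limits $\qBt u^t\to(s-A)^{-1}Bu_0$, $\p{\qDt u^t,J\qDt u^t}\to\p{\hqD(s)u_0,J\hqD(s)u_0}/2\re s$ (via the identity $(\qD u)(\sigma)=\efn^{s\sigma}\hqD(s)u_0$ for the two-sided exponential plus time-invariance, which is the rigorous form of your ``causal convolution'' step), and $\|u^t\|_2^2\to\|u_0\|^2/2\re s$, and concludes exactly as you do. For part (a) your argument is a genuinely different presentation of the same duality idea: the paper does not construct $(\qSt)^{-1}$ explicitly but verifies the inf--sup condition of Lemma~\ref{lJcoerc-tD}(v) for $\qSt$, choosing for a given $u\in\L^2_\vartheta([0,t);U)$ the test element $v$ supplied by the $J$-coercivity of $\PTO$ at $P^tu$ and estimating $\p{v,\qSt u}=\p{v,\PTO P^tu}\ge\eps\|P^tu\|_{\gUstar}\|v\|_{\gUstar}\ge\eps\|u\|_{\L^2_\vartheta([0,t))}\|v\|_{\L^2_\vartheta([0,t))}$, using $\|P^tu\|_{\gUstar}\ge\|\piOt P^tu\|_{\L^2_\vartheta}=\|u\|_{\L^2_\vartheta([0,t))}$. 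Your explicit construction $u=\piOt\PTO^{-1}(\piOt)^*f$, together with the decomposition $\gUstar(0)=W_t+N_t$ and the identities $P^t\eta=0$ for $\eta\in N_t$ and $(P^t)^*\PTO=(P^t)^*\PTO P^t$, is a valid and somewhat more transparent alternative; both proofs rest entirely on Lemma~\ref{lqSt=PTO.Pt}(a).

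One small repair is needed in your passage from $\PTO$ to $\qSt$ in (b). Lemma~\ref{lTgeT-1} is a Hilbert-space statement, whereas $\gUstar(0)$ carries only the Banach norm $\max\{\|u\|_{\L^2_\vartheta},\|\qD u\|_2,\|\qRR u\|_{\sZ}\}$; for a self-adjoint nonnegative invertible $\PTO\colon X\to X^*$ on a Banach space $X$ the generic lower bound obtainable from Cauchy--Schwarz for the form is $\p{u,\PTO u}\ge\|u\|^2/(\|\PTO^{-1}\|^2\|\PTO\|)$, not $\|\PTO^{-1}\|^{-1}\|u\|^2$, so the constant $\eps=\|\PTO^{-1}\|^{-1}$ is not justified as you wrote it. The fix stays entirely inside your own proof: apply Lemma~\ref{lTgeT-1} instead to $\qSt$ itself on the genuine Hilbert space $\L^2([0,t);U)$ --- it is self-adjoint, nonnegative (since $\qSt=(P^t)^*\PTO P^t\ge0$, as in your part (c)) and invertible with $\|(\qSt)^{-1}\|\le\|\PTO^{-1}\|$ by your part (a) with $\vartheta=0$ --- which yields $\qSt\ge\|\PTO^{-1}\|^{-1}I$ directly; this is precisely what the paper does.
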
 

All results in Section~\ref{sStabOpt} are based on 
 Theorem~\ref{PosJcKF}, which is a corollary of (b)
 (i.e., of  ``$\PTO\gg0 \ \THEN\ \hqS(s,s)\ge \eps I$'')
 and of Theorem~\ref{GenSpF}.

\begin{proof}
W.l.o.g., we assume that $U\ne\{0\}$.
Let $\oqK$ be the (unique) $J$-optimal control in WPLS form.
Let $t>0$. 

(a) %
Assume that $u\in\L^2([0,t);U)\pois\{0\}$. %
Choose $v$ for $\eps:=\|\PTO^{-1}\|$ and $P^t u$
 as in (\ref{eJ-coerciveAbs}).
Since $\|P^t u\|_{\gUstar}\ge\|P^t u\|_{\L^2_\vartheta}\ge \|u\|_{\L^2_\vartheta([0,t);U)}$,
 we obtain from
 Lemma \ref{lqSt=PTO.Pt}(a) %
 that
\begin{equation}
  \p{v,\qSt u}_{\L^2}=\p{v,\PTO P^t u}_{\gUstar(0),\gUstar(0)^*}
  \ge\eps\|P^t u\|\|v\|_{\gUstar}
 \ge \eps\|u\|_{\L^2_\vartheta([0,t);U)}\|v\|_{\L^2_\vartheta([0,t);U)}.
\end{equation}
Since $t,u,v$ were arbitrary,
 we get (\ref{eStleS}),
 which obviously implies (\ref{eMomegatqS}).
(b) {\em We have $\hqS(s,s)\ge\eps I$:}
Let $s\in\C_{\omega_0}^+$ ,
 and set $u(t):=\efn^{st}u_0$, 
 so that $\pi_- u\in\L^2\cap\L^2_\omega$, and
 \begin{equation}
   \label{eqDu=hqD(s)u0}
 (\qD u)(t)=\efn^{st}\hqD(s) u_0,\ \ \ \ 
 \qB\tau^t u=\efn^{st}(s-A)^{-1}B u_0, \ \ \ \ \ \ \ (t\in\R),
 \end{equation}
 by Lemma 6.10 of [S98c]. 
By time-invariance %
 (a similar computation was used for losslessness in Lemma 6.11 of [S98c]),
\begin{align}
  \p{\qD^t\tau^{-t}u,J\qDt\tau^{-t}u}
  &=\p{\tau^{-t}\qD\pitO u,J\tau^{-t}\qD\pitO u}
  =\int_{-\infty}^0\p{\qD\pitO u,J\qD\pitO u}_Y\,dr\\
  &\to \int_{-\infty}^0\p{\qD u,J\qD u}_Y(r)\,dr
  = \int_{-\infty}^0 \efn^{r(s+\bar s)}\p{\hqD(s) u_0,J\hqD(s) u_0}_Y\,dr\\
  &= \p{\hqD(s) u_0,J\hqD(s) u_0}_Y /2\re s,
\end{align}
 as $t\to+\infty$, because $\pi_-\qD\piit u\to0$ in $\L^2_\omega$
 (because $\qD\in\TIC_\omega$ and $\pi_- u\in\L^2_\omega$),
 hence in $\L^2$ too (because $\pi_-\L^2_\omega\sub\L^2$ continuously), 
 for any $\omega\in(\omega_0,\re s)$.
Therefore, 
\begin{equation} %
  \label{ehqSgeeps5}
  \p{\tau^{-t} u,\qSt \tau^{-t}u}
 \to \p{\hqD(s) u_0,J\hqD(s) u_0}_Y /2\re s 
    + \p{(s-A)^{-1}B u_0,\Ric(s-A)^{-1}B u_0}, %
\end{equation}
 as $t\to+\infty$.
But, by Lemma~\ref{lTgeT-1}, $\qSt\ge\eps I$ on $\L^2([0,t);U)$,
 hence
\begin{equation} %
  \label{ehqSgeeps6}
  \p{\tau^{-t}u,\qSt\tau^{-t}u}
 \ge\eps \int_0^t\|u_0\|^2 \efn^{2(r-t)\re s}\,dr
  \to \eps \|u_0\|^2 /2\re s.
\end{equation}
Since $u_0\in U$ was arbitrary,
 we obtain from (\ref{ehqSgeeps5}) and (\ref{ehqSgeeps6})
 that $\hqS(s,s)\ge\eps$. 

(c) {\em $\hqS(s,s)\ge0$:}
The proof of (b) applies mutatis mutandis.
\end{proof}

\NotesFor{Section~\ref{sJcoerc}} %
The important Lemma~\ref{lqScoercive} seems to be completely new.
Most of the rest we presented in [M02].
See p.~\pageref{pagesOptnotes} for further notes
 and Sections 8.4 and 10.3 of [M02] for further results.

\section{Remaining proofs}\label{sStabOptproofs} %

In this section we give the remaining proofs,
 i.e., %
 those on AREs and those for the theorems of Section~\ref{sStabOpt}.
We start with two auxiliary lemmas.

If part of $J$ is uniformly positive, the corresponding
 part of $\qCClL,\qDClL$ becomes stable:
\begin{lemma}[\pmbold{$(\qC_2)_{\ClL},(\qD_2)_{\ClL}$} are stable]\label{lC2ClLstable} %
Assume that $\ALS=\ssysbm{\qA\|\qB\crh \qC_1\|\qD_1\cr \qC_2\|\qD_2}$,
 and $J=\sbm{J_{11}&0\cr 0&J_{22}}\ge0$, $J_{22}\gg0$.
If the IRE has an admissible solution $(\Ric,S,\qKF)$ 
 with $\Ric\ge0$,
 then $S\ge0$, and $(\qC_2)_\ClL$ and $(\qD_2)_\ClL$ are stable.
\end{lemma}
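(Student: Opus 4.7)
The plan is to exploit the two ``closed-loop'' reformulations of the IRE provided by Lemma~\ref{lOptIRE0}(a1)--(a2), together with the positivity of $\Ric$ and of the $(2,2)$-block of $J$.

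First I would establish $S \ge 0$ directly from the middle IRE equation (\ref{eXSX=}), namely $\qXt^* S \qXt = \qDt^* J \qDt + \qBt^* \Ric \qBt$. Since $J, \Ric \ge 0$, the right-hand side is nonnegative on $\L^2([0,t);U)$. Because $\qX \in \cG\TIC_\infty(U)$, the truncation $\qXt \in \BL(\L^2([0,t);U))$ is invertible (with inverse $\qMt$, by causality: $\qMt\qXt = \piOt \qM\qX\piOt = \piOt$), so $\qXt[\L^2([0,t);U)] = \L^2([0,t);U)$ and hence $S \ge 0$ on $\L^2([0,t);U)$. Since $S \in \BL(U)$ acts as a time-invariant multiplication operator, this forces $S \ge 0$ in $\BL(U)$.

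Next, for the stability of $(\qC_2)_\ClL$, I would invoke Lemma~\ref{lOptIRE0}(a2), which ensures that any admissible solution of the IRE satisfies the ``closed-loop Lyapunov'' identity (\ref{e0APA+CJC}):
\begin{equation*}
 \Ric = \qAClLt^*\Ric\qAClLt + \qCClLt^*J\qCClLt \ \ (t\ge0).
\end{equation*}
Partitioning $\qCClL = \sbm{(\qC_1)_\ClL \cr (\qC_2)_\ClL}$ and $J = \sbm{J_{11}&0\cr 0&J_{22}}$, and using $\qAClLt^*\Ric\qAClLt \ge 0$ (here is where $\Ric\ge0$ is essential) together with $J_{11}\ge0$, I obtain $\eps\, (\qC_2)_\ClL^{*t}(\qC_2)_\ClL^t \le \Ric$ for any $\eps>0$ with $J_{22}\ge \eps I$. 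Pairing with $x_0\in H$ yields $\eps \|(\qC_2)_\ClL^t x_0\|_{\L^2([0,t);Y_2)}^2 \le \p{x_0,\Ric x_0} \le \|\Ric\|\,\|x_0\|^2$, and monotone convergence as $t\to+\infty$ gives $(\qC_2)_\ClL \in \BL(H,\L^2(\R_+;Y_2))$, i.e.\ $(\qC_2)_\ClL$ is stable.

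For the stability of $(\qD_2)_\ClL$, the analogous argument uses (\ref{e0S=NJN+BPB}), obtained from Lemma~\ref{lOptIRE0}(a1): $\piOt S = \qNt^*J\qNt + \qBClLt^*\Ric\qBClLt$. Dropping the $(\qC_1)_\ClL$-block and the nonnegative term $\qBClLt^*\Ric\qBClLt$ yields $\piOt S \ge \eps (\qD_2)_\ClL^{*t}(\qD_2)_\ClL^t$, so for $u\in\L^2([0,t);U)$,
\begin{equation*}
 \eps\|(\qD_2)_\ClL^t u\|_{\L^2}^2 \le \p{u, Su} \le \|S\|\,\|u\|_{\L^2}^2.
\end{equation*}
For $u \in \Lc^2(\R_+;U)$ with $\supp u \sub [0,t_0)$ and $t>t_0$, one has $(\qD_2)_\ClL^t u = \piOt (\qD_2)_\ClL u$; letting $t\to+\infty$ forces $(\qD_2)_\ClL u \in \L^2$ with $\|(\qD_2)_\ClL u\|_2 \le \sqrt{\|S\|/\eps}\,\|u\|_2$, and then density of $\Lc^2$ in $\L^2$ extends this estimate, proving $(\qD_2)_\ClL$ stable. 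The only substantive point is justifying the monotone/density passage to the limit $t\to+\infty$, which is routine given that $(\qD_2)_\ClL\in\TIC_\infty$ guarantees the pointwise-in-$t$ truncations converge to the full map on the dense subspace $\Lc^2$.
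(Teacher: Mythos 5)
Your proposal is correct and follows essentially the same route as the paper: both derive $S\ge0$ from the positivity of the right-hand side of the IRE, then read off the stability of $(\qC_2)_\ClL$ from the Lyapunov identity (\ref{e0APA+CJC}) and of $(\qD_2)_\ClL$ from (\ref{e0S=NJN+BPB}), letting $t\to+\infty$. The only cosmetic difference is that you obtain $S\ge0$ from (\ref{eXSX=}) via invertibility of $\qXt$, whereas the paper reads it directly off $\piOt S=\qNt^*J\qNt+\qBClLt^*\Ric\qBClLt\ge0$; these are equivalent by Lemma~\ref{lOptIRE0}(a1).
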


\begin{proof}
(This is a variant of Proposition 10.7.3 of [M02]. %
Note that we have assumed that $\qCD$ and $J$ have been split
 according to some split $Y=Y_1\times Y_2$.)

Set $\qN:=\qDClL$. 
By (\ref{e0S=NJN+BPB}), %
 for any $t\ge0$ we have 
 $\piOt S= (\qN_1^t)^*J_{11}\qN_1^t + (\qN_2^t)^*J_{22}\qN_2^t+\qBClLt^*\Ric\qBClLt\ge0$,
 hence $S\ge0$ and $\|S\|\|u\|_2^2\ge\|\piOt J_{22}^{1/2}\qN_2 u\|_2^2$
 for all $u\in\L^2(\R_+;U),\ t\ge0$.
Let $t\to\infty$ to obtain that $J_{22}^{1/2}\qN_2$ is bounded
 $\L^2\to\L^2$, hence so is $\qN_2:=(\qD_2)_\ClL$.

Similarly, from (\ref{e0APA+CJC}) %
 we observe that
 $\Ric\ge [\piOt(\qC_2)_\ClL]^*J_{22} \piOt(\qC_2)_\ClL$,
 hence $(\qC_2)_\ClL$ is stable.
\end{proof}

By Theorem~\ref{IRE}(e), the uniform positivity of the Popov
 operator implies that of the signature operator
 ($\PTO\gg0 \ \THEN\ S\gg0$).
We stated that the converse holds for $\gUstar=\dUexp$;
 in fact, it also holds for $\gUstar=\dUout$
  provided that the solution is ($\gUstar$- and) I/O-stabilizing:
\begin{lemma}[\pmbold{$\PTO\gg0 \IFF S\gg0 \TTHEN$ q.r.c.}]\label{lSgg0} %
Assume that the IRE has a $\gUstar$-stabilizing solution $(\Ric,S,\qKF)$
 with $S\gg0$.
Then {\bf (a)} $\qX\in\BL(\gUstar(0),\L^2(\R_+;U))$
 and $\cJ(0,u)=\p{\qX u,S\qX u}\ \all u\in\gUstar(0)$.
Assume also that $\gUstar=\dUout$.
Then {\bf (b)} $\sbm{\qN\cr \qM}v\in\L^2\ \THEN v\in\L^2$
 (for all $v\in\Lloc^2(\R_+;U)$).
Finally, if also $\qN,\qM\in\TIC$,
 then {\bf (c)} $\qN,\qM$ are q.r.c.,
 $\PTO\gg0$ and $\qX\in\cG\BL(\dUout(0),\L^2(\R_+;U))$.
\end{lemma}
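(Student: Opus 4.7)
The strategy is to prove (a) as the main step and then deduce (b) and (c) as short consequences.  By Theorem~\ref{IRE}(a2), the hypothesis already forces $\Ric=\qCClL^*J\qCClL$ and makes $\qKClL x_0$ the unique $J$-optimal control from every $x_0\in H$; I shall use these facts freely.  For (a), the starting point is the IRE identity~(\ref{eXSX=}) applied to $u\in\gUstar(0)$; substituting $\Ric=\qCClL^*J\qCClL$ it reads
\begin{equation*}
\p{\qXt u,S\qXt u}=\p{\qDt u,J\qDt u}+\p{\qCClL\qBt u,J\qCClL\qBt u}.
\end{equation*}
Since $S\ge\eps I$ with $\eps:=\|S^{-1}\|^{-1}$ (Lemma~\ref{lTgeT-1}), the left side dominates $\eps\|\piOt(\qX u)\|^2$, which is non-decreasing in $t$.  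The first right-hand term is $\le\|J\|\,\|\qD u\|_{\L^2}^2\le\|J\|\,\|u\|_{\gUstar}^2$, uniformly in $t$.  The delicate task is to bound the second term uniformly: my plan is to use the closed-loop restart identity $\qCClL\qBt u=\pi_+\tau^t(\qD u)-\qN\pi_+\tau^t(\qX u)$ (coming from Lemma~\ref{luc} and $\qN\qX=\qD$) together with the bound on $\qN\piOt\in\BL(\L^2)$ from Lemma~\ref{lOptIRE0}(c1) and the coercivity provided by $S\gg 0$ via Lemma~\ref{lOptIRE0}(c2).  Passing $t\uparrow\infty$ then yields $\qX u\in\L^2$ with $\|\qX u\|\le C\|u\|_{\gUstar}$, so $\qX\in\BL(\gUstar(0),\L^2)$.

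For the cost formula in (a) I plan to show $\p{\qCClL\qBt u,J\qCClL\qBt u}\to 0$ by a dynamic-programming cancellation.  Writing $x(t):=\qBt u$ and $\tilde v:=\pi_+\tau^t u-\qKClL x(t)\in\gUstar(0)$ (by Lemma~\ref{ltaugUstar} and the $J$-optimality of $\qKClL x(t)$ from state $x(t)$), an application of Lemma~\ref{lOptCost0}(iii) at initial state $x(t)$ with optimal control $\qKClL x(t)$ and perturbation $\tilde v$ gives
\begin{equation*}
\p{\pi_+\tau^t(\qD u),J\pi_+\tau^t(\qD u)}=\p{x(t),\Ric x(t)}+\p{\qD\tilde v,J\qD\tilde v}.
\end{equation*}
Since $\qD u\in\L^2$ the left side tends to zero; a restart computation identifies $\qD\tilde v=\pi_+\tau^t(\qD u)-\qCClL x(t)$, and expanding the $J$-form together with the uniform bound on $\|\qCClL x(t)\|_{\L^2}$ from Step~1 shows $\p{\qD\tilde v,J\qD\tilde v}-\p{x(t),\Ric x(t)}\to 0$.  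Combining the two forces $\p{x(t),\Ric x(t)}\to 0$, and passing to the limit in the IRE identity yields $\cJ(0,u)=\p{\qX u,S\qX u}$.

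Parts (b) and (c) are short consequences.  For (b), given $v\in\Lloc^2(\R_+;U)$ with $\sbm{\qN\cr\qM}v\in\L^2$, set $u:=\qM v$; then $u\in\L^2$ and $\qD u=\qN v\in\L^2$, so $u\in\dUout(0)$, and (a) gives $v=\qX u\in\L^2$.  For (c), (b) is precisely the quasi-right-coprimeness of $\qN,\qM$, and since $\qM,\qN\in\TIC$, for $u\in\dUout(0)$ with $w:=\qX u\in\L^2$ we have $u=\qM w$ and $\qD u=\qN w$, which together imply $\|\qX u\|^2\ge c\,\|u\|_{\dUout}^2$ for some $c>0$; combined with $\cJ(0,u)=\p{\qX u,S\qX u}\ge\eps\|\qX u\|^2$ from (a), this gives $\PTO\ge c\eps I$ on $\dUout(0)$, i.e., $\PTO\gg 0$.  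Finally $\qM:\L^2\to\dUout(0)$ is bounded (since $\qM$ and $\qD\qM=\qN$ both lie in $\TIC$) and is a two-sided inverse of $\qX$, so $\qX\in\cG\BL(\dUout(0),\L^2)$.

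The main obstacle is the cost formula in (a): the residual-cost condition with $x_0=0$ is vacuous, so the vanishing of $\p{x(t),\Ric x(t)}$ is not accessible directly from the RCC and must instead be extracted by the above cancellation, which crucially exploits the $J$-optimality of $\qKClL x(t)$ from the intermediate state $x(t)$.  The uniform boundedness step that underlies both Step~1 and that cancellation is itself technically delicate when the open-loop semigroup is unstable, and will rely on the closed-loop representation $x(t)=\qBClL^t(\qX u)$ and the factorization identities from Lemma~\ref{lOptIRE0}.
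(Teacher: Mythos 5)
Your parts (b) and (c), and the skeleton of (a) --- the identity $\p{\qXt u,S\qXt u}=\p{\qDt u,J\qDt u}+\p{\qBt u,\Ric\qBt u}$ from (\ref{eXSX=}) combined with a dynamic-programming argument to dispose of the $\Ric$-term --- are exactly the paper's route. The gap is in your Step~1. To bound $\p{\qCClL\qBt u,J\qCClL\qBt u}$ uniformly in $t$ you invoke the restart identity $\qCClL\qBt u=\pi_+\tau^t(\qD u)-\qN\pi_+\tau^t(\qX u)$; but the size of the second term is governed by $\|\piti\,\qX u\|_{\L^2}$, i.e.\ by precisely the quantity ($\qX u\in\L^2$ together with the smallness of its tails) that you are in the middle of establishing --- a priori $\qX u$ is only in $\Lloc^2$. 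Neither Lemma~\ref{lOptIRE0}(c1) nor (c2) breaks this circle: rewriting via $\piti\qN\piOt(\qX u)=\tau^{-t}\qCClL\qBt u$ just reproduces the same unknown. The circularity then propagates, because your Step-2 cancellation needs the uniform bound on $\|\qCClL\qBt u\|_{\L^2}$ to kill the cross term $2\re\p{\piti\qD u,J\qCClL\qBt u}$; so, as written, neither step closes.

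The repair is to run the dynamic-programming step \emph{first} and as an inequality, after which no prior uniform bound is needed. By Lemma~\ref{ltaugUstar} the tail $\tu:=\pi_+\tau^t u$ lies in $\gUstar(\qBt u)$, and the computation you already carry out (Definition~\ref{dWPLS0}.4) gives $\cJ(\qBt u,\tu)=\p{\piti\qD u,J\piti\qD u}$. Since $\qKClL x_0$ is the \emph{minimizing} control for each $x_0$ (via Lemma~\ref{lOptCost0}(iii)\&(d), using the nonnegativity of $\cJ(0,\cdot)$ on $\gUstar(0)$ that is implicit in this positive setting), one gets $0\le\p{\qBt u,\Ric\qBt u}\le\p{\piti\qD u,J\piti\qD u}\to0$ because $\qD u\in\L^2$. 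Feeding this back into (\ref{eXSX=}) yields simultaneously the uniform bound $\eps\|\piOt\qX u\|_2^2\le\|J\|\,\|\qD u\|_2^2+o(1)$ (hence $\qX\in\BL(\gUstar(0),\L^2)$) and, in the limit, $\cJ(0,u)=\p{\qX u,S\qX u}$. This is the paper's argument; your restart identity and the appeal to Lemma~\ref{lOptIRE0} are then unnecessary. Parts (b) and (c) as you give them are correct and coincide with the paper's.
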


This result was applied in Theorem~\ref{QRCF-Uout}.

\begin{proof}
(Recall that $\qX:=I-\qF$, $\qM:=\qX^{-1}$, $\qN:=\qD\qM$.)

(a)
$1^\circ$ Let $v=\qX u$, where $u\in\gUstar(0)$.
Then, by (\ref{eXSX=}), we have 
\begin{equation}
  \p{v,S\piOt v}=\p{\qDt u,J\qDt u}+\p{\qBt u,\Ric\qBt u}.
\end{equation}
Let $t\to+\infty$ to observe that $\int_0^\infty\p{v(t),Sv(t)}_U\,dt
 =\p{\qD u,J\qD u}_{\L^2}=\cJ(0,u)$, by $2^\circ$.
Since $S\gg0$, we conclude that $v\in\L^2$.
Thus, (b) holds (note that $\cJ(0,u)\le \|J\|\|\qD u\|^2_2
 \le \|J\|\|u\|_{\dUout}^2$, hence $\qX$ is continuous). %

$2^\circ$ {\em $\p{\qBt u,\Ric\qBt u}\to0$:}
Set $x_0:=\qBt u$, $\tu:=\pi_+ \tau^t u\in\gUstar(x_0)$
 (Lemma~\ref{ltaugUstar}).
Then (recall 4. of Definition \ref{dWPLS0})
\newcommand{\samakuin}{{\rm -"-}}
\begin{align}
  \p{\qC x_0+\qD\tu,J\samakuin}
 &=\p{\qC\qB\tau^t u + \qD\pi_+\tau^t u,J\samakuin}
 =\p{\pi_+\qD\pi_-\tau^t u + \pi_+\qD\pi_+\tau^t u,J\samakuin}\\
 &=\p{\pi_+\qD\tau^t u,J\samakuin}
 =\p{\pi_+\tau^t\qD u,J\samakuin}
 =\p{\piti\qD u,J\samakuin}. %
\end{align}
But $\p{x_0,\Ric x_0}$ is the minimum
 of $\|\qC x_0+\qD\tu\|_2^2$ over $\tu\in\gUstar(x_0)$,
 hence $\p{x_0,\Ric x_0}\le\p{\qD u,\piti J\qD u}\to0$,
 as $t\to+\infty$. %
(b)
Obviously, $\sbm{\qN\cr \qM}v\in\L^2
 \IFF \sbm{\qD\cr I}\qM v\in\L^2 \IFF \qM v\in\dUout(0)$.
By (a), $\qM v\in\dUout(0)\ \THEN\ \qX \qM v\in\L^2(\R_+;U)$.
But $v=\qX\qM v$.

(c) Now $\qN,\qM$ are q.r.c., and we have $v\in\L^2
 \IFF \sbm{\qN\cr \qM}v\in\L^2\IFF \qM v\in\dUout(0)$, by (b).
Consequently, $\dUout(0)=\qM[\L^2(\R_+;U)]$,
 and $\qX:\dUout(0)\to\L^2(\R_+;U)$ is thus (boundedly) invertible.
Let $S\ge \eps I$, $\eps>0$.
By the proof of (a) and the above, we have 
\begin{equation}
\p{u,\PTO u}:=\cJ(0,u)
 =\p{\qX u,S\qX u}\ge \eps\|\qX u\|_2^2
 \ge \eps\eps'\|u\|_{\dUout}^2,
\end{equation}
 i.e., $\PTO\ge\eps\eps' I\gg0$,
 for some $\eps'>0$.
\end{proof}

The proof of Theorem~\ref{ARE} was based on the following equivalence:
\begin{lemma}[ARE $\IFF$ WR IRE]\label{lAREIRE} %
Assume that $\qD$ is WR.
Then the WR solutions of the ARE are exactly the solutions of the \hIRE\ 
 for which $\qF$ is WR and $F=0$. %
\end{lemma}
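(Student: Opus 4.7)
My plan is to verify the two directions of the asserted equivalence by taking weak limits along the positive real axis in one direction and by a direct algebraic computation (using the resolvent identity and the characteristic function of $\qABKF$) in the other. I first note that equation (\ref{eARE1}) of the ARE is literally identical to (\ref{ehKSK=}) of the \hIRE, so only the equations involving $S$ and the right-hand side of (\ref{eARE3}) need attention. Throughout, I will use that $\hqX(s)=I-\hqF(s)=I+K(s-A)^{-1}B$ on $\rho(A)$ (as the characteristic function of $\ssysbm{A\|B\crh K\|0}$, see Lemma~\ref{lCharFct} and the remark below Lemma~\ref{lhIRE}), and that, by Lemma~\ref{lhIRE}, it suffices to verify the \hIRE at some fixed $s=z\in\rho(A)$.

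\emph{ARE $\Rightarrow$ \hIRE.} Given a WR solution $(\Ric,S,K)$ of the ARE, expand
\[
\hqX(s)^*SK=SK+B^*(s-A)^{-*}K^*SK,
\]
substitute (\ref{eARE1}) for $K^*SK$ and (\ref{eARE3}) for $SK$, then apply the resolvent identity $A^*(s-A)^{-*}=\bar s(s-A)^{-*}-I$ to rearrange the right-hand side into exactly the form (\ref{ehXSK=}) evaluated at $z\in\rho(A)$ (multiplying on the right by $(z-A)^{-1}$ at the end). A parallel but somewhat longer computation, starting from $\hqX(s)^*S\hqX(z)$ and using (\ref{eARE2}) together with both the resolvent identity and the already-established (\ref{ehXSK=}), yields (\ref{ehXSX=}); the two $\wlim$ tails originating from $S$ cancel against each other because $(z+\bar s)B^*(s-A)^{-*}\Ric(z-A)^{-1}B$ is symmetric under $s\leftrightarrow z$ in the limit.

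\emph{\hIRE $\Rightarrow$ ARE.} Conversely, assume $(\Ric,S,K)$ solves the \hIRE with $\qF$ WR and $F=0$, so that $\hqX(r)\to I$ and $\hqD(r)\to D$ weakly as $r\to+\infty$ along $\R$. Specialize (\ref{ehXSK=}) to $s=z=r$, multiply on the right by $r(r-A)$ applied to a vector $x_0\in\Dom(A)$, and let $r\to+\infty$; after using $r(s^*+A)(z-A)^{-1}\to I$ strongly on $\Dom(A)$ and $\hqD(r)^*\to D^*$ weakly, the identity collapses to $SKx_0=-(B_w^*\Ric+D^*JC)x_0$, which is (\ref{eARE3}) (and simultaneously shows $\Ric[H_B]\subset\Dom(B_w^*)$, so the right-hand side is well defined). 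Similarly, setting $s=z=r$ in (\ref{ehXSX=}) and taking $\wlim_{r\to+\infty}$ of both sides produces $S=D^*JD+\wlim_{r\to+\infty}2r B^*(r-A)^{-*}\Ric(r-A)^{-1}B$; the standard identity $2r(r-A)^{-1}=I+(r+A)(r-A)^{-1}$ applied to one of the resolvents, together with self-adjointness of the whole expression, converts this to $D^*JD+\wlim B_w^*\Ric(s-A)^{-1}B$, which is exactly (\ref{eARE2}).

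\emph{Main obstacle.} The delicate step is justifying that the $(z+\bar s)$-prefactor combined with the two resolvents in (\ref{ehXSX=}) passes to the correct $\wlim$ in (\ref{eARE2}): one must handle the fact that $rB^*(r-A)^{-*}$ need not have a limit on all of $H$ (only weakly on $\Dom(B_w^*)$), so convergence has to be pushed through the composite $B^*(r-A)^{-*}\Ric(r-A)^{-1}B$ by first establishing that $\Ric$ maps $H_B$ into $\Dom(B_w^*)$ — a fact that is not assumed but must be extracted from (\ref{ehXSK=}) itself by the limiting argument sketched above. This ordering of limits — first proving membership in $\Dom(B_w^*)$, then extracting the correct weak limit in the $S$-equation — is the technical core of the lemma; once it is in place, the rest is bookkeeping with the resolvent identity.
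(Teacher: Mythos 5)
Your overall architecture coincides with the paper's: the ARE\,$\THEN$\,\hIRE\ direction by direct resolvent-identity algebra with the characteristic function $\hqX(s)=X-\Kw(s-A)^{-1}B$, the converse by weak limits along $(\omega,+\infty)$, and the correct instinct that $\Ric[H_B]\sub\Dom(\Bw^*)$ must be \emph{extracted} from (\ref{ehXSK=}) (because the other terms converge) rather than assumed. The ARE\,$\THEN$\,\hIRE\ half is sound in outline and is essentially the paper's $2^\circ$, though the "cancellation of the two $\wlim$ tails" you wave at is where the paper has to prove the auxiliary identities that $(\Bw^*\Ric)_\w(z-A)^{-1}B$ exists and equals $X^*SX-D^*JD$ plus the $\Bw^*\Ric V_z$ term; that bookkeeping is nontrivial but your route would get there.

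The gap is in the converse, in how you recover (\ref{eARE2}): you specialize (\ref{ehXSX=}) to the diagonal $s=z=r$ and let $r\to+\infty$. Under mere weak regularity this fails. The left-hand side $\hqX(r)^*S\hqX(r)$ and the term $\hqD(r)^*J\hqD(r)$ are bilinear in weakly convergent families, and $\p{Ja_r,b_r}$ need not converge to $\p{Ja,b}$ when $a_r,b_r$ converge only weakly (take $a_r=b_r$ an orthonormal sequence). The paper therefore takes \emph{iterated} limits --- first $s\to+\infty$ with $z$ fixed, so that only one factor varies and it acts on the fixed vector $J\hqD(z)u$ (resp.\ $S\hqX(z)u$); then $z\to+\infty$, where the varying factor sits under the fixed bounded operator $D^*J$ (resp.\ $X^*S$). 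The same defect infects your treatment of the $B^*$ term: along the diagonal you obtain $\wlim_r 2rB^*(r-A)^{-*}\Ric(r-A)^{-1}B$, which is not the iterated expression $\wlim_s\Bw^*\Ric(s-A)^{-1}B$ of (\ref{eARE2}), and your proposed conversion via $2r(r-A)^{-1}=I+(r+A)(r-A)^{-1}$ produces the composition $\Ric B$ (or $B^*$ applied to a vector not known to lie in $\Dom(A^*)$), which is undefined since $B$ maps into $H_{-1}$. For (\ref{eARE3}) the clean route --- and the paper's --- is to multiply (\ref{ehXSK=}) on the right by $(z-A)$, eliminating $z$ altogether, and then let only $s\to+\infty$; note also that your "multiply by $r(r-A)$" carries a spurious factor of $r$ that makes the left-hand side $r\,\hqX(r)^*SKx_0$ divergent as written.
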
 

In the proof we also show that ``and $F=0$'' can be removed
 if, in (\ref{eARE}), $S$ is replaced by $X^*SX$ and $SK$ by $X^*SK$,
 where $X=I-F$ %
 (i.e., $\hqX(s)=X-\Kw(s-A)^{-1}B$);
 we call that variant the {\em extended ARE}. 
In Theorem~\ref{ARE} this corresponds to accepting WR $J$-optimal
 state-feedback pairs
 instead of merely WR $J$-optimal state-feedback operators.

\begin{proof}[Proof of Lemma~\ref{lAREIRE}:]\label{pageproof-lAREIRE}
{\em Remark:} 
In this proof we also show that %
 all solutions 
 of the extended ARE are exactly
 all ``WR'' (meaning that $H_B\sub\Dom(\Kw)$) solutions of the IRE
 except that for the solutions of this extended ARE
 we have to add the requirement $H_B\sub\Dom(K_\w)$
 (this requirement is redundant if $S\in\cG\BL(U)$,
  because in $2.2^\circ$ we show that 
 $H_B\sub\Dom((\Bw\Ric)_\w)$,
  which implies that  $H_B\sub\Dom(SK_\w)$ %
 (because $H_B\sub\Dom(C_\w)$ because $\qD$ is WR)),
 and that we do not know whether $\qK$ and $\qF$ are well-posed
 (this is not a problem, since it is implicitly required
  by saying that $(\Ric,S,\qKF)$ is a solution of the IRE
  or that $(\Ric,S,K)$ is a WR solution of the ARE). %
This fact was originally shown in Proposition 9.8.10 of [M02],
 with an alternative, time-domain proof.

$1^\circ$ {\em \hIRE$\THEN$ARE:}
Multiply (\ref{ehXSK=}) by $(z-A)$ to the right
 and then let $s\to+\infty$ to obtain (\ref{eARE3}) on $\Dom(A)$
 (note that $\hqX(+\infty)=I-\hqF(+\infty)=0$,
   and that $(s-A)^{-1}x_0\to0$ in $\Dom(A)$ for all $x_0\in H$,
    by Lemma A.4.4(d3) of [M02]).
Let first $s\to+\infty$ and then $z\to+\infty$
 in (\ref{ehXSX=}) to obtain (\ref{eARE2}). 

{\em Remark:} The limits of the $B^*$-terms below exist since so do the others.

$2^\circ$ {\em ARE$\THEN$\hIRE:}
Now $\hqX(s)=X-\Kw V_s$, $\hqX(s)^*=X^*-V_s^* K$
 $(s\in\C_{\omega_A}^+)$, by Lemma~\ref{lCharFct}(c) and regularity,
 where $X:=I$, $V_s:=(s-A)^{-1}B$, $V_s^*:=\Bw^*(s-A)^{-*}$.
(This explicit $X$ makes it easier to follow the computations and
 allows us to prove the more general result (``extended ARE'' or eCARE)
 given in [M02] and mentioned below Lemma~\ref{lAREIRE}.) %
Naturally, $\hqD(s)=D+\Cw V_s$.

$2.1^\circ$ {\em (\ref{ehXSK=}):}
Multiply (\ref{ehXSK=}) by $z-A$ to the right to obtain
\begin{align} %
  X^*SK-V_s^*K^*SK = -D^*JC-V_s^*(C^*JC+s^*\Ric+\Ric A).
\end{align}
Use (\ref{eARE1}) to obtain
 $X^*SK+D^*JC=-V_s^*(s^*\Ric-A^*\Ric)=-\Bw^*\Ric$,
 which is true, by (\ref{eARE3}).

$2.2^\circ$ {\em (\ref{ehXSX=}):}
We have $[I-r(r-A)^{-1}](z-A)^{-1}=A(r-A)^{-1}(z-A)^{-1}
  = (r-A)^{-1}[I-z(z-A)^{-1}]$,
 hence $[\Bw^*\Ric-(\Bw^*\Ric)_\w](z-A)^{-1}B
 =\wlim_{r\to+\infty}\Bw^*\Ric (r-A)^{-1}[I-z(z-A)^{-1}]B
 =\wlim_{r\to+\infty}\Bw^*\Ric (r-A)^{-1}B$,
 which exists, by (\ref{eARE2}),
 hence so does the weak limit $(\Bw^*\Ric)_\w(z-A)^{-1}B$.
By (\ref{eARE2}) and the above,
\begin{equation}
  \label{eBw-Bww3}
  \Bw^*\Ric V_z -(\Bw^*\Ric)_\w V_z =X^*SX-D^*JD,
\end{equation}
 where $V_z:=(z-A)^{-1}B$ %
 (an alternative proof is given in Lemma 9.11.5(a) of [M02]).
Apply (\ref{eARE3}) to $r(r-A)^{-1}x_0$ and let $r\to+\infty$ to
 obtain
 \begin{equation}
   \label{eXSKwHB2}
 D^*JC_\w x_0+X^*SK_\w x_0=-(\Bw^*\Ric)_\w x_0
 \end{equation}
 (in particular, $(\Bw^*\Ric)_\w x_0$ exists)
 for all $x_0\in\Dom(\Cw)\cap\Dom(\Kw)$.
Subtract the left side of (\ref{ehXSX=}) from the right
 and use (\ref{eXSKwHB2}) and its dual
 to obtain
 \begin{align}\label{eregre-tod22d1}
 D^*JD-X^*SX - (\Bw^*\Ric)_\w V_z - [(\Bw^*\Ric)_\w V_s]^*  
   + T %
 \end{align}  %
 where $T:=V_s^*[(z+\bar s)\Ric + C^*J\Cw -K^*S\Kw]V_z$,
 $V_s^*:=\Bw^*(s-A)^{-*}$. %
But $[(z+\bar s)\Ric + C^*J\Cw -K^*S\Kw]
  := \wlim_{r\to+\infty}
 [(z+\bar s)\Ric + C^*JC -K^*SK]r(r-A)^{-1}$,
 and $[\cdots]=[(z+\bar s)\Ric - A^*\Ric - \Ric A^* ]
  = [(s-A)^*\Ric+\Ric(z-A)]$, by (\ref{eARE1}),
 hence 
 \begin{equation}
   T=\wlim_{r\to+\infty}[\Bw^*\Ric r(r-A)^{-1}(z-A)^{-1}B
   + \Bw^*(s-A)^{-*}\Ric r(r-A)^{-1}B]
 = (\Bw^*\Ric)_\w V_z + (\Bw^*\Ric V_s)^*.
 \end{equation}
Thus, (\ref{eregre-tod22d1}) becomes 
  $D^*JD-X^*SX- [(\Bw^*\Ric)_\w V_s]^*  
    + (\Bw^*\Ric V_s)^*=0^*=0$,
 by (\ref{eBw-Bww3}).
\end{proof}

Above we also showed the following:
\begin{cor}[ARE $\IFF$ \hIRE]\label{cAREIRE} %
Any solution $(\Ric,S,K)$ of the ARE
 having $S\in\cG\BL(U)$ %
 satisfies the \hIRE\ (\ref{ehIRE}).
 with $\hqX(s):=I-\Kw(s-A)^{-1}B$.
If, in addition, $S\gg0$,
 then $\qABKF$ is a WR WPLS.\noproof %
\end{cor}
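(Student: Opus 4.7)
The plan is to observe that the forward direction of Lemma \ref{lAREIRE} (``ARE$\TTHEN$\hIRE'', carried out in step $2^\circ$ of its proof) never really uses the full WPLS hypothesis on $\qKF$: it uses only the ARE together with the availability of the weak Yosida extension $\Kw$ on $H_B$ and the characteristic-function formula $\hqX(s)=I-\Kw(s-A)^{-1}B$. I will therefore first extract $\Kw$ on $H_B$ from the algebraic hypothesis $S\in\cG\BL(U)$, replay the computations of $2.1^\circ$ and $2.2^\circ$ to obtain (\ref{ehIRE}), and then use $S\gg0$ to bound $\hqX$ and $\hqK$ on right half-planes and thereby build the WPLS.

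For the first claim, since $\qD$ is WR we have $H_B\subset\Dom(C_\w)$. Step $2.2^\circ$ of the proof of Lemma \ref{lAREIRE}, in the derivation of (\ref{eBw-Bww3}), shows that (\ref{eARE2}) alone forces $(B_\w^*\Ric)_\w$ to exist on $H_B$. With $S$ invertible, (\ref{eXSKwHB2}) solves for $\Kw=-S^{-1}(B_\w^*\Ric+D^*JC)$ on $H_B$, so $\hqX(s):=I-\Kw(s-A)^{-1}B$ is well-defined. Substituting this into the right-hand sides of (\ref{ehXSK=}) and (\ref{ehXSX=}) reduces them to (\ref{eARE3}) and the combination of (\ref{eARE2}) with the identity (\ref{eBw-Bww3}); (\ref{ehKSK=}) is simply (\ref{eARE1}). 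Hence the \hIRE\ holds in full, for all $s,z\in\rho(A)$ by Lemma \ref{lhIRE}.

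For the second claim, assume $S\ge\eps I$ with $\eps>0$. Setting $s=z$ in (\ref{ehXSX=}) gives
\[
\eps\|\hqX(s)u_0\|^2\le \|J\|\,\|\hqD(s)u_0\|^2+2\re s\,\|\Ric\|\,\|(s-A)^{-1}Bu_0\|_H^2.
\]
The first term is bounded on any $\C_\omega^+$ with $\omega>\omega_A$, and the second is controlled by $M_\omega^2\|u_0\|^2$ via the standard reachability estimate $(2\re s-2\omega)\|(s-A)^{-1}Bu_0\|_H^2\le M_\omega^2\|u_0\|^2$ (apply $\qB\in\BL(\L^2_\omega(\R;U),H)$ to $t\mapsto\efn^{st}u_0\khi_{(-\infty,0)}$). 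Thus $\hqX\in\Hoo(\C_{\omega'}^+;\BL(U))$ for suitable $\omega'$, so $\qX\in\TIC_\infty(U)$ and $\qF:=I-\qX\in\TIC_\infty(U)$; regularity with $\hqF(+\infty)=0$ follows from Lemma \ref{lCharFct}(d).

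It remains to verify that $K$ is an admissible observation operator for $\qA$ compatible with $\qF$. Conjugating (\ref{eARE1}) by $(z-A)^{-1}$ on the right and $(s-A)^{-*}$ on the left, and using $A(z-A)^{-1}=z(z-A)^{-1}-I$, gives
\[
\hqK(s)^*S\hqK(z)= (z+\bar s)(s-A)^{-*}\Ric(z-A)^{-1}-(s-A)^{-*}\Ric-\Ric(z-A)^{-1}+\hqC(s)^*J\hqC(z),
\]
where $\hqK(s):=K(s-A)^{-1}$. Setting $s=z$, using $S\ge\eps I$, integrating $\|\hqK(\sigma+i\tau)x_0\|^2$ in $\tau$ at fixed $\sigma>\omega''$, and applying Plancherel to the resolvent terms (whose inverse Laplace transforms are $\efn^{-\sigma\cdot}\qA^\cdot x_0$ up to Ric-weighting, and to $\qC x_0$) yields the $\Hstrong^2$-bound that is the Weiss criterion for admissibility of $K$. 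The compatibility axioms 3.\ and 4.\ of Definition \ref{dWPLS0} between $\qK$ and $\qF$ then follow from the \hIRE\ identities (applying Lemma \ref{lIREiffhIREtechnique} in reverse to the system $\qABKF$), so $\qABKF$ is a WPLS, and WR is immediate from $\hqF(+\infty)=0$. The main obstacle is precisely this last step: extracting the $\Hstrong^2$ bound rigorously from (\ref{eARE1}) and verifying the axioms 3.\ and~4.\ of Definition \ref{dWPLS0} without circular recourse to WPLS structure — this is where Proposition 9.8.10 of [M02] does the real work.
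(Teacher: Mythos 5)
Your proposal is, in substance, the paper's own proof: the corollary is placed immediately after Lemma \ref{lAREIRE} because its first claim is exactly step $2^\circ$ of that lemma's proof together with the opening remark there that $S\in\cG\BL(U)$ makes the requirement $H_B\sub\Dom(\Kw)$ automatic via (\ref{eXSKwHB2}), and the paper disposes of the second claim by citing Lemma \ref{lhIRESgg0}(a). Your replay of $2.1^\circ$--$2.2^\circ$ and your extraction of $\Kw=-S^{-1}(\Bw^*\Ric+D^*JC)$ from (\ref{eXSKwHB2}) coincide with this. For the WPLS claim you essentially re-prove Lemma \ref{lhIRESgg0}(a): your bound on $\hqX$ is identical to step $2^\circ$ of its proof, and your $\Hstrong^2$ admissibility bound for $K$ is the Plancherel transform of its step $1^\circ$, which instead feeds the time-domain identity (\ref{eKSK=}) (obtained from (\ref{eARE1}) via Lemma \ref{lAPA+CJC}) and $S\gg0$ into $\eps\|\qKt x_0\|_2^2\le\p{x_0,({\qA^t}^*\Ric\qA^t-\Ric+\qCt^*J\qCt)x_0}\le M\|x_0\|^2$; either route works, and yours has the mild advantage of never needing the hypothesis $\Ric\ge0$ that Lemma \ref{lhIRESgg0} nominally carries. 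The one genuine correction: the final assembly of admissible $\qB$, admissible $K$ and bounded $\hqX$ into a WPLS cannot come from ``Lemma \ref{lIREiffhIREtechnique} in reverse,'' since that lemma presupposes that the systems involved are already WPLSs. The tool that actually closes this step — both inside the proof of Lemma \ref{lhIRESgg0} and in Lemma \ref{lWPLSform} — is Lemma 6.3.15 of [M02], i.e.\ the characterization stated below Definition \ref{dReg}: generators with admissible input and observation maps whose induced transfer function is bounded on some right half-plane define a (WR) WPLS, which then yields axioms 3.\ and 4.\ of Definition \ref{dWPLS0} rather than requiring them as input. With that substitution your argument is complete.
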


(Use Lemma~\ref{lhIRESgg0} for the last claim;
 weak regularity ($H_B\sub\Dom(\Kw)$) was shown above.)

\begin{proof}[Proof of Lemma~\ref{lBboundedARE}:]\label{pageproof-lBboundedARE}
Since $B$ is bounded, now $\qD$ (and $\qF$ if any) is ULR
 and a control in WPLS form is necessarily
 given by a state-feedback pair, %
 by Lemmata 6.3.16(b) and 8.3.18 of [M02].
In particular, \SIRE\ or \hSIRE\ implies the IRE and the \hIRE,
 by Lemma~\ref{lSIREqKF=IRE},
 hence the ARE, by  Lemma~\ref{lAREIRE}.
Conversely, if $(\Ric,S,K)$ is a WR solution of the ARE, 
 then it is admissible (because $\qX$ is ULR and $\hqX(+\infty)=0$),
 hence we obtain the \hIRE\ (hence \SIRE\ and \hSIRE) from Lemma~\ref{lAREIRE}.
If $D^*JD\in\cG\BL$, then any solution of the ARE is WR,
 by Lemma 6.3.17 of [M02].
\end{proof}

\begin{proof}[Proof of Theorem~\ref{MTICinftyKF}:]\label{pageproof-MTICinftyKF}
$0^\circ$ We shall use the following assumptions,
 all of which will be established in the proof of Remark~\ref{rcA}:

$0.1^\circ$ $\BL\sub\cA\sub\TIC$,
 i.e., 
 $\BL(H_1,H_2)\sub\cA(H_1,H_2)\sub\TIC(H_1,H_2)$
 for all Hilbert spaces $H_1,H_2$,

$0.2^\circ$ $\cA$ is closed w.r.t.\ addition, composition, inversion,
 scalar multiplication and added stability ($\C\cA^{-1}+\cA\cA\sub\cA$,
 and
 $\efn^{\omega \cdot}\qD\efn^{-\omega\cdot}\in\cA$ for all
  $\omega<0,\ \qD\in\cA$).

($0.1^\circ$ and $0.2^\circ$ imply that $\BL\sub\cA_\infty\sub\TIC_\infty$
 and that $\C\cA_\infty^{-1}+\cA_\infty\cA_\infty\sub\cA_\infty$,
 because the ``$\omega$-shift'' commutes with these operations.)

$0.3^\circ$ $\cA$ is closed w.r.t.\ spectral factorization,
 by Theorem~\ref{SpF1}.

$0.4^\circ$ The maps in $\cA$ are {\em UR} (uniformly regular),
 i.e., we have $\|\hqE(s)-\hqE(+\infty)\|\to0$, 
  as $s\to+\infty$, %
 for all $\qE\in\cA$ (hence for all $\qE\in\cA_\infty$).

$0.5^\circ$ In (a2) we also use the following:
1.  $\cA=\cA^\rmd$.
2.  %
If $g_1,g_2\in\L^1(\R;\BL)$ and $E\in\BL$,
 then $\qE:=f*=g_1*(E+g_2*)$, where $f:=g_1E+g_1*g_2\in\L^1(\R;\BL)$,
 and $\qE_+=f_+*\in\cA$, 
 where $(\qE_+ u)(t) := (\qE \piit u)(t)$,
 $f_+:=\khi_{\R_+}f$. %

$0.6^\circ$ We have $\|\qE(ir)-\qE(+\infty)\|\to0$, as $|r|\to\infty$,
 for all $\qE\in\cA$, by the Riemann--Lebesgue Lemma [M02]. %

(a1) By taking $\alpha$ big enough, we have $\qDp\in\cA$, hence $\qXp,\qXp^{-1}\in\cA$
 in the proof of Theorem~\ref{PosJcKF}
 (by $0.3^\circ$).
It follows that $\qX,\qM\in\cA_\infty$. %
Therefore, $\qF=I-\qX$, $\qFClL=\qM-I$,
 $\qN=\qDClL=\qD\qM$, $\qBClL\tau=\qB\tau\qM$ (see (\ref{eALSb}))
 are in $\cA_\infty$, by $0.2^\circ$.

By Theorem~\ref{GenSpF}(iv), we have $\qXp^*S\qXp=\qDp^*J_+\qDp$
 with $\qDp,\qXp\in\cA$, 
 hence $\hqXp^*S\hqXp=\hqDp^*J_+\hqDp$ on $i\R$,
 hence $S=D_+^*J_+D_+=D^*JD$, by $0.6^\circ$.

(a2) %
By (\ref{eqK+}), we have 
\begin{equation}
  \label{e}
  \qK_+^\rmd \tau^t u
 = -\qC_+^\rmd J_+^* \Refl\qN_+S^{-1}\Refl\pi_+\tau^t u
 = -\qC_+^\rmd \tau^t J_+ \Refl\qN_+S^{-1}\Refl\piit u,
\end{equation}
 where $(\Refl\label{pageRefl} u)(t):=u(-t)$. 
Set $\qE_1:=\qC_+^\rmd\tau$. %
The top row of $\qE_1$ is in $\cA$, by the assumption in (a2)
 (see (\ref{eALSp})).
One easily verifies that the bottom row
 of $\qE_1$ equals $f\mapsto \efn^{-\alpha\cdot}\qA^* *f$,
 hence $\qE_1\in\cA$ (increase $\alpha$ if necessary)).
Set $\qE_2:=-J_+\qN_+ S^{-1}\in\cA$
 to observe that
 $(\qK_+^\rmd \tau u)(t)= (\qE_1\Refl\qE_2\Refl\piit u)(t)\ \all t\in\R$,
 so that $\qK_+^\rmd\tau\in\cA$,
 by $0.5^\circ$
 (because $\qE_2=E+h* \TTHEN \Refl\qE_2\Refl = E+h(-\cdot)*$),
We conclude that $\qK^\rmd\tau\in\cA_\infty$
 (since $\qK=\efn^{\alpha\cdot}\qK_+$, as noted below (\ref{eqK+})).
Since $\qKClL=\qM\qK$, $\qCClL=\qC+\qN\qKClL$,
 the remaining claims follow from this and (a1).

(a3) 
Apply Lemma~\ref{lcAsystem} to $\ALSClL$ to get ``$\in\cA_\omega$''.
The latter claim follows from Theorem~\ref{ALSopt}.

(b) %
Since $\qX$ is UR, we have $X\in\cG\BL(U)$
 (Proposition 6.3.1(b1) of [M02]).
Therefore, we can choose $\qKF$ so that $F=0$,
 i.e., so that $K$ is a UR %
 $J$-optimal state-feedback operator, by (\ref{eAllqKF}).
This leads to the ARE (\ref{eARE}), by Theorem~\ref{ARE}.
Obviously, Theorem~\ref{ARE}(i) implies Theorem~\ref{GenSpF}(i),
 hence the equivalence holds.
From $1^\circ$ of the proof of Theorem~\ref{ARE}
 we observe that the limit converges in norm to $S-D^*JD$.

(c) $\|\hf(r+i\cdot)\|_\infty\le \|\efn^{-r\cdot}f\|_1\to0$, as $r\to+\infty$.
\end{proof}

Before proving the main result, Theorem~\ref{PosJcKF},
 we explain how it was obtained.
As mentioned above, Theorem~\ref{ALSopt} has already been known 
 in the positive case. 
Our contribution was 1. to find the necessary and sufficient conditions
 in Theorem~\ref{GenSpF}, particularly the 
 ``spectral factorization condition'' (iv);
 2. to show (Lemma~\ref{lqScoercive}(b))
 that if the 
 Popov Toeplitz operator is uniformly positive
 ($\PTO\ge\eps I$),
 then so is the
 ``shifted Popov function'' ($\hqS(\alpha+i\cdot,\alpha+i\cdot)\ge\eps I$),
 so that the condition (iv) is satisfied
 by the standard positive spectral factorization result 
 (Theorem~\ref{SpF1}(a)). See $2^\circ$ below for details.
\begin{proof}[Proof of Theorem~\ref{PosJcKF}:]\label{pageproof-PosJcKF}
$1^\circ$ {\em ``If'':}
If $\qKF$ is $J$-optimal, then $\qKClL x_0\in\gUstar(x_0)\ \all x_0$. %

$2^\circ$ {\em ``Only if'':}
Assume the FCC, so that the assumptions of Theorem~\ref{GenSpF}
 are satisfied, by Theorem~\ref{J-coercive}.
By Lemma~\ref{lqScoercive}(b), we have $\hqS(s,s)\ge\eps I$
 on $\C^+_{\omega_0}$.
Fix some $\alpha>\omega_0$ to conclude that
  $\hqDp(ir)^*J_+\hqDp(ir)=\hqS(\alpha+ir,\alpha+ir)\ge\eps I\ \all r\in\R$,
 i.e., that $\qDp^*J_+\qDp\ge\eps I$.
Consequently, 
 there is a spectral factorization $\qD_+^*J_+\qD_+ =\qX_+^*S\qX_+$
 (i.e., $S\in\cG\BL(U),\ \qX_+\in\cG\TIC(U)$), by Theorem~\ref{SpF1}.
Thus, Theorem~\ref{GenSpF}(iv)\&(i) imply that there is a $J$-optimal
 state-feedback pair $\qKF$ for $\ALS$ over $\gUstar$,
  with $\qF=I-\qX$. %
\end{proof}

\begin{proof}[Proof of Theorem~\ref{QRCF-Uout}:]\label{pageproof-QRCF-Uout}
$1^\circ$ {\em (iii)$\THEN$(ii)$\THEN$(i):} This is trivial.
$2^\circ$ {\em (i)$\THEN$(iii):}
Assume (i). 
The map $\tqD:=\sbm{\qD\cr I}$ is $I$-coercive over $\dUout^{\tALS}$
 (because $\p{\tqD u,I\tqD u}=\|\tqD u\|_2^2=\|\qD u\|_2^2+\|u\|_2^2$;
  here $\tALS=\ssysbm{\qA\|\qB\crh\tqC\|\tqD}$, $\tqC:=\sbm{\qC\cr 0}$).
Therefore, we can apply Theorem~\ref{PosJcKF}
 to obtain
 an $I$-optimal (over $\dUout^{\tALS}$)
 state-feedback pair $\qKF$ for $\tALS:=\sbm{\ALS\cr \ssysbm{0\|I}}$;
 let $\Ric$ be the corresponding solution of the IRE
 (see Theorem~\ref{IRE}; then $\Ric=\tqCClL^*I\tqCClL\ge0$, $S\gg0$);
 let $\tALSClL$ be the corresponding closed-loop system of $\tALS$
 and $\ALSClL$ that of $\ALS$ (Definition~\ref{dAdmKF0}).
By Lemma~\ref{lC2ClLstable}, 
 the maps $\tqCClL$ and $\tqDClL$ %
 are stable.
Since $\tqDClL=\sbm{\qDClL\cr \qM}$,
 where $\qN:=\qDClL:=\qD\qM$, $\qM:=(I-\qF)^{-1}$,
 the maps $\qN,\qM$ are stable.
Similarly, $\qCClL$ and $\qKClL$ are stable, hence $\ALSClL\in\SOS$
 (here $\ALSClL$ refers to $\ALS$ under $\qKF$).
By, e.g., Lemma~\ref{lOptIRE0}(c2),
 we have $\tqDClL^*I\tqDClL = S$. %
Let $E:=S^{1/2}$ and apply (\ref{eAllqKF})
 to normalize $S=\tqDClL^*\tqDClL=\qN^*\qN+\qM^*\qM$ to identity.
By Lemma~\ref{lSgg0}, $\qN,\qM$ are q.r.c.
\end{proof}

\begin{proof}[Proof of Corollary~\ref{cUout-B1}:]\label{pageproof-cUout-B1}
$1^\circ$ We add a copy of $u$ to the output, i.e., 
 we define a WPLS $\ALS^e$ on $(U\times W,H,U\times Y)$ by
 setting
 $\qA^e:=\qA,\ \qB^e:=\bsysbm{\qB&\qH}$,
 $\qC^e:=\sbm{0\cr \qC},\ \qD^e:=\sbm{I&0\cr \qD&\qG}$.
Set $\qQR:=0,\ \qRR=\bbm{0&I},\ \sZ:=\{0\},\ \uZ:=\L^2,\ \vartheta=0$
 to have 
 \begin{equation}
   \dU_*^{\ALS^e}(x_0)=\{\sbm{u\cr 0}\in\L^2(\R_+;U\times W)\I
         y:=\qC x_0+\qD u\in\L^2\}
  =\dUout(x_0)\times\{0\}.
 \end{equation}

With $J^e:=I\in\BL(U\times Y)$
 we get the cost function $\cJ^e(x_0,\sbm{u\cr 0})=\|y\|_2^2+\|u\|_2^2$,
 where $u^e=\sbm{u\cr w}$ is the input and
 $y^e=\sbm{u\cr y}$ the output of $\ALS^e$.
Since $\|\sbm{u\cr 0}\|_{\dU_*^{\ALS^e}}
 =\max\{\|u\|_2,\|\sbm{u\cr y}\|_2,\|0\|_{\sZ}\}$, 
 we have  $\PTO^e\gg0$,
 and $\Ric^e$ and $\qK_{\ClL1}^e:=\qK_{\ClL}^e\sbm{I\cr 0}$
 are the same as $\Ric$ and $\qKClL$
 in the proof of Theorem~\ref{QRCF-Uout}, respectively,
 (by the uniqueness of the optimal control $\qK_{\ClL1}^e$),
 and $\qK_{\ClL2}^e=0$
 (since $\qKClL^e x_0\in\dU_*^{\ALS^e}(x_0)\ \all x_0$).

As in the last paragraph of the proof of Corollary~\ref{cExpStabB1},
 we see that $\bbm{0&I}\qM^e\in\BL(U\times W,W)$
 and that we can have $\shat{\qM^e}(\alpha)=\sbm{\hqM(\alpha)&0\cr 0&I}$,
 so that $\qM^e_{11}=\qM$ (being unique modulo constant, by (\ref{eCharFct}),
   because $\qB_\ClL^e\sbm{I\cr0}=\qBClL$ and $\qK_{\ClL1}^e=\qKClL$),
 hence  $\bbm{0&I}\qF^e=\bbm{0&I}(\qM^e)^{-1}=\bbm{0&0}$,
 $\qF^e_{11}=I-(\qM^e_{11})^{-1}=\qF$, 
 $\qK^e_1 =(\qM^e_{11})^{-1}\qK_{\ClL1}^e=\qM^{-1}\qKClL=\qK$
 and $\qK_2^e=0$, as required.
By Lemma~\ref{lC2ClLstable}, $\ALSClL^e$ is SOS-stable,
 hence so is $\tALSClL$ (being a subset of $\ALSClL^e$).

$2^\circ$ {\em Q.r.c.:}
The two first columns of the resulting closed-loop system $\tALSClL$ 
 equal $\ALSClL$ extended by $\bsysbm{0\|0}$; %
 in particular, $\qN:=\qDClL=\tqDClL\sbm{I\cr 0}$ and $\qM=\tqM_{11}$ are q.r.c.\
 (by the choice of $\qKF$).
But $\TIC\owns\tqM=(I-\tqF)^{-1}=\sbm{\qM&\qM\qE\cr 0&I}$ and
 $\TIC\owns\tqN=\bsysbm{\qD&\qG}\tqM = \bbm{\qN&\qGClL}$,
 where $\qGClL=\qG+\qN\qE$.
If $\tqM \sbm{u\cr w},\tqN\sbm{u\cr w}\in\L^2$, then $w,\qM(u+\qE w)\in\L^2$,
 hence $w,\qM u\in\L^2$ (since $\qM\qE\in\TIC$),
 and $\qN u+\qGClL w\in\L^2$, hence $\qN u\in\L^2$ (since $\qGClL\in\TIC$),
 hence $u\in\L^2$ (since $\qM,\qN$ are q.r.c.),
 hence $\tqM,\tqN$ are q.r.c.
\end{proof}

\begin{proof}[Proof of Theorem~\ref{JointlyUout}:]\label{pageproof-JointlyUout}
(By Theorem~\ref{QRCF-Uout}, these pairs $\qKF$ or $\qHG$
 (or any output-stabilizing pairs)
 exist iff $\ALS,\ALS^\rmd$ satisfy the output-FCC.)

$1^\circ$ Choose $\qE$ as in Corollary~\ref{cUout-B1}
 to make $(\ALStotal)_L$ SOS-stable 
 (since it is contained in $\tALSClL$).

$2^\circ$ {\em $\tqN,\tqM$ are l.c.}
Since the maps in (\ref{eS4A:4.4}) are the inverses of each
 other, we observe that $\bbm{\tqN&\tqM}\sbm{-\qY_1\cr \qX_1}=I$.

$3^\circ$ {\em ``Moreover I/O-'':}
Actually, we have shown above that if $\qHGd$ is any (by $1^\circ$)
 {\em I/O-stabilizing} (i.e., one that makes the I/O map
 $(\qD^\rmd)_\ClL$ of $(\ALSd)_\ClL$ stable)
 state-feedback pair for $\ALSd$,
 then $\tqN,\tqM$ are l.c.
By duality, the ``moreover'' claim holds.

$4^\circ$ {\em D.c.f.;  $(\ALStotal)_\tL^\rmd$ is SOS-stable:}
By $3^\circ$, we have the r.c.f.\ and l.c.f.\ $\qD=\qN\qM^{-1}=\tqM^{-1}\tqN$.
By Lemma 4.3(iii) of [S98a],
 we can find $\tqX,\tqY\in\TIC$ that complete them (and $\qX,\qY$)
 to a d.c.f.
(Given any $\tqX_0,\tqY_0\in\TIC$ for which $\tqX_0\qM-\tqY_0\qN=I$, set
 $\tqY:=\tqY_0+(\tqX_0\qY_1-\tqY_0\qX_1)\tqM,\
  \tqX:=\tqX_0+(\tqX_0\qY_1-\tqY_0\qX_1)\tqN$.) %

But the inverse of $\sbm{\qM&\qY\cr \qN&\qX}$
 in $\TIC_\infty(Y\times U)$ is given in (\ref{eS4A:4.4})
 and it is unique, hence %
  also the maps $\qF_\tL,\qE_\tL$
 must be stable ($\in\TIC$).
We conclude that also $(\ALStotal)_\tL^\rmd$ is SOS-stable
 (its output map equals that of $(\ALSd)_\ClL$).

$5^\circ$ {\em Externally stabilizing:} %
We complete the proof by showing that
 any jointly admissible pairs $\qKF,\qHG$ that make 
 $(\ALStotal)_L$  and $(\ALStotal)_\tL^\rmd$ SOS-stable
 actually make them externally stable
 (i.e., that also $\qB_L,\qH_L,\qC_\tL,\qK_\tL$ are stable;
  it obviously also follows that $\ALSClL$ is externally stable).
Now (cf.\ (6.170) and (6.171) of [M02])
\begin{equation}
  \qB_L=\qB\qM=\qB_\tL\qM-\qH\tqM\qN
 =\qB_\tL\qM-\qH_\tL\qN
\end{equation}
 is stable.
Therefore, $\qH_L\tqM=\qH_\tL+\qB_L\qE_\tL$ is stable,
 and so is $\qH_L\tqN=\qB_\tL + \qB(\qM\qE\tqN-I)$,
 because $\qB(-\qY_1\tqN-I) = \qB(-\qM\tqX)=\qB_L\tqX$;
 consequently, $\qH_L$ is stable
 (since $\tqM,\tqN$ are l.c., by the d.c.f.\ (\ref{eS4A:4.4})).
By duality, also $(\ALStotal)_\tL^\rmd$ is externally stable.

$6^\circ$ {\em ``Moreover, SOS-'':} %
By the above, also any other  (by $1^\circ$)
 SOS-stabilizing $\qHGd$ for $\ALSd$
 is externally stabilizing.
By duality, any SOS-stabilizing $\qKF$ for $\ALS$ is externally
 stabilizing.

$7^\circ$ {\em Final equivalence:} %
We have shown above ``(ii)$\THEN$(i)''
 (the converse is obvious).
The proof of (i)$\THEN$(v)$\THEN$(ii) is obtained as
 in Theorem 7.2.4(c1) of [M02],
 using in place of Theorem 6.7.10(d)(viii)
  the fact that a WPLS is externally stable iff its I/O map
 is stable and the WPLS is input-detectable and output-stabilizable
 (since $\qC=\qCClL-\qD\qKClL$ and similarly for $\qB$).
\end{proof}

\begin{proof}[Proof of Lemma~\ref{lBnotmax}:]\label{pageproof-lBnotmax}
$1^\circ$ The first UR claim is from Lemma 2.5 of [C03] %
 (due to G. Weiss [WC99]). 

$2^\circ$ {\em (iv)$\IFF$(ii)$\THEN$(i)$\IFF$(iii):}
By Theorems \ref{IRE} and~\ref{ARE},
 conditions (i) and (iii), and (ii) and (iv) are equivalent. 
By definition, (ii) implies (i).

$3^\circ$ {\em (i)$\THEN$(ii):}
Assume (i).
By $1^\circ$, the I/O map $\sbm{\qD\cr \qF}$ of $\ALSext$ is UR.
It follows that $X:=\hqX(+\infty)=I-F$ is invertible,
 by Lemma 6.3.1(b1) of [M02],
 hence $F$ can be normalized to zero, by (\ref{eAllqKF}),
 hence (ii) holds.

$4^\circ$ {\em $\wlim=0$:}
Let $s=z\to+\infty$ in (\ref{ehXSX=})
 to obtain that $S=D^*JD$
 (since $2s\|\Ric\|Ms^{-1-2\eps}\to0$).

$5^\circ$ {\em Positively $J$-coercive case:}
This follows from Theorem~\ref{PosJcKF}.

$6^\circ$ {\em Bounded $B$:}
Naturally, (v) is necessary. Conversely, (v) leads to (\ref{eALSopt}),
 and the generator $\Kopt$ of $\qKopt$ is %
 a uniformly line-regular state-feedback operator
 for $\ALS$ (see Lemma 8.3.18 of [M02] for details). 
Since obviously $\ALSClL\sbm{I\cr 0}=\ALSopt$,
 the operator $\Kopt$ is  $J$-optimal.
By continuity, $\wlim=0$.
\end{proof}

We obtain the \hIRE\ once the \hSIRE\ holds at a single point
 (when we use characteristic functions in place of transfer functions
  and do not consider well-posedness):
\begin{lemma}[\hSIRE$\THEN$\hIRE]\label{lhSIREthenhIRE} %
Assume that the \hSIRE\ (\ref{ehSIRE}) holds
 (with $\chqD$ in place of $\hqD$)
 for some $s,z\in\rho(A)$,
 $\Ric=\Ric^*\in\BL(H)$,
 $\hqKopt(z)\in\BL(H,U)$. %

Fix this $z$.
Define $\chqX(s):=I-(z-s)K(s-A)^{-1}(z-A)^{-1}B\in\BL(U)\ \all s\in\rho(A)$,
 $S:=\hqS(z,z)$, $K:=\hqKopt(z)(z-A)\in\BL(\Dom(A),U)$
 to obtain the \hIRE\ (\ref{ehIRE}) for $s=z$
 (replace $\hqD$ by $\chqD$ and $\hqX$ by $\chqX$).
By Lemma~\ref{lIREiffhIREtechnique}(e) and the proof of Lemma~\ref{lhIRE},
 it follows that (\ref{ehIRE}) holds for all $s,z\in\rho(A)$.\noproof
\end{lemma}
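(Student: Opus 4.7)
The plan is to carry out a two-stage verification: first confirm that the \hIRE holds at the diagonal point $s=z$ with the proposed definitions of $\chqX$, $S$, and $K$, and then bootstrap to all of $\rho(A)\times\rho(A)$ by mimicking the substitution argument in the proof of Lemma~\ref{lhIRE}.

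For the first stage, I would begin by noting the key simplification that $\chqX(z) = I$ (setting $s=z$ in the defining formula makes the second term vanish). This reduces the second \hIRE equation at $s=z$ to the tautology $S = \hqS(z,z)$, which is exactly the definition of $S$. For the third equation, I would compute $\chqX(z)^* S K(z-A)^{-1} = S\hqKopt(z)$ using $K(z-A)^{-1} = \hqKopt(z)$, and observe that this is precisely the right-hand side of the \hSIRE third equation (\ref{eS2hXSK=}) at $s=z$, which holds by hypothesis. For the first equation, I would take the \hSIRE equation (\ref{eS2hKSK=}) at $s=z$, substitute $\hqKopt(z) = K(z-A)^{-1}$ and $\hqS(z,z)=S$, and then strip off $(z-A)^{-*}$ on the left and $(z-A)^{-1}$ on the right (equivalently, evaluate the resulting operator identity on $\Dom(A)$) to obtain $K^*SK = A^*\Ric + \Ric A + C^*JC$.

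For the second stage, I would invoke the symmetric version of Lemma~\ref{lIREiffhIREtechnique}(e) via substitutions analogous to those used at the end of the proof of Lemma~\ref{lhIRE}: take $A_0 := A$, $B_0 := -B$, $C_0 := \sbm{\qC\cr -K}$, $\hqD_0 := \sbm{-\chqD\cr -\chqX}$ (with $\hqD \mapsto \sbm{\chqD\cr \chqX}$ on the domain side), and the signature $J \mapsto \sbm{-J&0\cr 0&S}$. This recasts the three \hIRE equations as the Lemma~\ref{lIREiffhIREtechnique}(b) equations for these modified data. Having verified them at the single pair $(z,z)\in\rho(A)\times\rho(A)$ in stage one, Lemma~\ref{lIREiffhIREtechnique}(e) then propagates them to all $s\in\rho(A)$, $z\in\rho(A)$.

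The main delicate point will be the use of characteristic functions rather than transfer functions. I need to check that $\chqX$ as defined satisfies the resolvent-style compatibility $\chqX(\zeta) = \chqX(s) + (\zeta-s)(-K)(s-A)^{-1}(\zeta-A)^{-1}B$ required by the hypothesis of Lemma~\ref{lIREiffhIREtechnique}(e) so that the extension prescription there agrees with the given formula for $\chqX$; this is a direct (if slightly tedious) application of the resolvent identity $(s-A)^{-1}-(\zeta-A)^{-1} = (\zeta-s)(s-A)^{-1}(\zeta-A)^{-1}$. A similar verification is needed for $\chqD$ using its standard characteristic-function extension formula. Once these algebraic compatibilities are in place, the rest is bookkeeping and the conclusion follows without any further analytic input.
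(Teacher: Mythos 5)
Your proposal is correct and follows essentially the same route the paper prescribes: verify the \hIRE\ at the diagonal point $s=z$, where $\chqX(z)=I$ makes (\ref{ehXSX=}) the definition of $S$, reduces (\ref{ehXSK=}) to (\ref{eS2hXSK=}), and lets (\ref{ehKSK=}) be read off from (\ref{eS2hKSK=}) by stripping the resolvents, and then propagate to all $s,z\in\rho(A)$ via Lemma~\ref{lIREiffhIREtechnique}(e) with the substitutions from the proof of Lemma~\ref{lhIRE}. Your flagged "delicate point" (that $\chqX$ as defined agrees with the resolvent-based extension prescription of Lemma~\ref{lIREiffhIREtechnique}(e)) is exactly the compatibility the paper leaves implicit, and the resolvent identity does settle it.
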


In suitably positive problems,
 such as the LQR problem or most other problems of Section~\ref{sStabOpt}, %
 we typically have $S\gg0,\ \Ric\ge0$. %
In this case the maps in Lemma~\ref{lhSIREthenhIRE} are well-posed:
\begin{lemma}[\hIRE$\lland S\gg0 \TTHEN$SOS]\label{lhIRESgg0} %
Assume that $\Ric\ge0$, $S\gg0$ and $K\in\BL(\Dom(A),U)$
 are s.t.\ (\ref{ehIRE}) (the \hIRE) holds
 (use $\chqD$ in place of $\hqD$) %
 for some $s=z\in\rho(A)$
 and some $\hqX(z)\in\BL(U)$.

{\bf (a)}
Then $(\Ric,S,\qKF)$ is a solution of the IRE and the \hIRE\
 and $\qABKF$ is a WPLS,
 where
 $\hqK(s):=K(s-A)^{-1}$
 and $\hqX(s):=\hqX(z)+(s-z)K(z-A)^{-1}(s-A)^{-1}B$
 (for the fixed $z$ and $\hqX(z)$ of the previous paragraph),
 $\qF:=I-\qX$.

{\bf (b)}
Assume, in addition, that $\qC=\sbm{\qC_1\cr 0},\ \qD=\sbm{\qD_1\cr I},\ J=\sbm{*&0\cr 0&*}\gg0$
 for some operators $\qC_1,\qD_1$
 (then the above assumption $S\gg0$ becomes redundant).
If %
  1. $\hqX(s)\in\cG\BL(U)$ for some $s\in\rho_\infty(A)$,
 2. $\dim U<\infty$,
 3. $B$ is not maximally unbounded (or $\qX$ is UR) and $X\in\cG\BL(U)$, 
 or 4. $\qKF$ is admissible,
 then $\qKF$ is SOS-stabilizing. 
\end{lemma}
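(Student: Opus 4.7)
The plan is to split along the two parts of the statement. For (a), I first want to upgrade the single-point \hIRE\ to the ``all $s,z\in\rho(A)$'' version so that I can freely choose arguments, and then extract from the resulting identities the WPLS bounds on $\qK$ and $\qX$; once these are in place, IRE follows from \hIRE\ by the transform of Lemma~\ref{lhIRE}. For (b), I will argue that the block structure of $\qC,\qD,J$ automatically forces $S\gg 0$, apply (a) to get $\qABKF$ as a WPLS, then upgrade this to an admissible state-feedback pair under each of the four alternative hypotheses, and finally invoke Lemma~\ref{lC2ClLstable} to read off SOS-stability.

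For (a), the first step is to note that the defined $\chqK(s)=K(s-A)^{-1}$ and $\chqX(s)=\hqX(z)+(s-z)K(z-A)^{-1}(s-A)^{-1}B$ are precisely the characteristic functions determined by the operators $(A,B,K,\hqX(z))$, so Lemma~\ref{lIREiffhIREtechnique}(e) (combined with the adjoint/duality bookkeeping used in the proof of Lemma~\ref{lhIRE}) propagates the single-point \hIRE\ to every $s,z\in\rho(A)$. The key identities are then (\ref{ehKSK=}) and (\ref{ehXSX=}) specialised to real $s=z=r\to+\infty$. Using $\Ric\ge 0$, $S\ge\eps I$, the admissibility of $B$ as an input operator for $A$ (so that $\shat{\qB\tau}(s)=(s-A)^{-1}B$ lies in $\H^\infty$ on some right half-plane) and the admissibility of $C$ (so that $\sqrt{2\re s}\,C(s-A)^{-1}$ satisfies the usual Carleson-type bound), the right-hand side of the expanded version of (\ref{ehKSK=}) yields a uniform bound $\|\sqrt S\,K(s-A)^{-1}\|\le M$ on some $\C_\omega^+$, which is exactly the frequency-domain characterization of $K$ being an admissible observation operator for $A$; hence $\qK$ is well defined as an element of $\BL(H,\L^2_\omega(\R_+;U))$. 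Feeding this back into (\ref{ehXSX=}) at $s=z=r$ bounds $\eps\|\hqX(r)\|^2$ by the uniformly bounded sum of $\|\hqD(r)\|^2\|J\|$ and the $2r\, B^*(r-A)^{-*}\Ric(r-A)^{-1}B$ term, so $\hqX\in\H^\infty_\infty(U)$, i.e.\ $\qF:=I-\qX\in\TIC_\infty(U)$; the intertwining identities needed for $\qABKF$ to be a WPLS (admissibility of the shift plus $\pi_+\qX\pi_-=-\qK\qB$) follow from (\ref{ehXSK=}) the way they did in $2^\circ$ of the proof of Lemma~\ref{lGenSpF}. Finally, Lemma~\ref{lhIRE} converts \hIRE\ into IRE.

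For (b), I begin by letting $s=z=r\to+\infty$ along the reals in (\ref{ehXSX=}). Because $\qD=\sbm{\qD_1\cr I}$ is at least WR in its second block (the identity), $\hqD(r)^*J\hqD(r)\to D^*JD=\sbm{D_1^*\cr I}\sbm{J_{11}&0\cr 0&J_{22}}\sbm{D_1\cr I}\ge J_{22}\gg 0$, while the $\Ric$-term stays nonnegative and $\hqX(r)\to I-F$; thus $S\ge J_{22}/\|I-F\|^2\gg 0$, which removes the $S\gg 0$ hypothesis. Applying (a), $\qABKF$ is a WPLS. Under hypothesis~1 or~3, invertibility of $\hqX$ at one point of $\rho_\infty(A)$ together with the asymptotic behavior and the Riemann--Lebesgue/UR argument gives $\hqX^{-1}\in\H^\infty_\infty(U)$, so $\qKF$ is admissible; under~2, finite-dimensionality of $U$ and meromorphy of $\det\hqX$ (guaranteed by the IRE and Corollary~\ref{cUout-Meromorphic}-type reasoning) together with invertibility at one point give the same conclusion; under~4 admissibility is assumed. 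Once $\qKF$ is admissible, the structural $\qC_2=0$, $\qD_2=I$ give $(\qC_2)_\ClL=\qKClL$ and $(\qD_2)_\ClL=\qM$, so Lemma~\ref{lC2ClLstable} applied to the bottom block (where $J_{22}\gg 0$) shows $\qKClL,\qM\in\TIC$, hence $\qFClL\in\TIC$; the top-block coercivity $J_{11}\gg 0$ applied to (\ref{e0APA+CJC}) and (\ref{e0S=NJN+BPB}) then gives $\qCClL_1$ and $\qDClL_1=\qD_1\qM$ stable, so $\qKF$ is SOS-stabilizing.

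The main obstacle is the bootstrap in part (a): from the bare identity \hIRE\ at a single point together with the positivity $S\gg 0,\Ric\ge 0$, one must extract the full WPLS regularity of $\qK$ and $\qX$. This requires juggling the admissibility properties of $B$ and $C$ already present in $\ALS$ with the frequency-domain identities (\ref{ehKSK=})--(\ref{ehXSK=}), and checking that the intertwining identities $\pi_+\tau^t\qK=\qK\qAt$ and $\pi_+\qX\pi_-=-\qK\qB$ (the defining algebraic conditions of a WPLS) propagate from the infinitesimal data, essentially replaying $2^\circ$--$4^\circ$ of the proof of Lemma~\ref{lGenSpF}(b) in the present setting.
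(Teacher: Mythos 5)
Your part (b) and the $\hqX$-estimate in part (a) track the paper's route, but there is a genuine gap at the crucial first step of (a): the admissibility of $K$ as an observation operator. From (\ref{ehKSK=}) you extract a resolvent bound $\|S^{1/2}K(s-A)^{-1}\|\le M$ on a right half-plane and assert that this ``is exactly the frequency-domain characterization of $K$ being an admissible observation operator.'' No such characterization exists: a uniform (or even a Weiss-type $M/\sqrt{\re s}$) resolvent estimate is necessary but not sufficient for admissibility on a Hilbert space --- this is precisely the failure of the Weiss conjecture. So the conclusion that $\qK$ is well defined in $\BL(H,\L^2_\loc(\R_+;U))$ does not follow from your bound, and everything downstream is left hanging: the intertwining identities you want to replay from Lemma~\ref{lGenSpF}(b) were derived there from the time-domain equations (\ref{eXSX=})--(\ref{eXSK=}), which you do not yet have, and Lemma~\ref{lhIRE} (your bridge from the \hIRE\ to the IRE) itself presupposes that $\qABKF$ is already a WPLS.

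The missing idea is Lemma~\ref{lAPA+CJC}: the first \hIRE\ equation $K^*SK=A^*\Ric+\Ric A+C^*JC$ is an algebraic Lyapunov identity, and that lemma converts it directly into the integral form (\ref{eKSK=}), i.e.\ $\qKt^*S\qKt=\qAt^*\Ric\qAt-\Ric+\qCt^*J\qCt$ on $\Dom(A)\times\Dom(A)$. Since $S\ge\eps I$ and $\Ric\ge0$, this gives
\begin{equation*}
\eps\int_0^t\|K\qA^r x_0\|_U^2\,dr\ \le\ \|\Ric\|\,\|\qA^t x_0\|_H^2+\|J\|\,\|\qC^t x_0\|_{\L^2}^2 ,
\end{equation*}
a genuine time-domain $\L^2$-estimate, which is admissibility of $K$; by density it extends from $\Dom(A)$ to $H$. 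With $\qK$ admissible and $\hqX$ bounded on a right half-plane (your argument for the latter, via (\ref{ehXSX=}) at $s=z$, is correct), one then invokes the existence result Lemma~6.3.15 of [M02] to conclude that $\qABKF$ is a WPLS, and only after that does Lemma~\ref{lhIRE} yield the IRE. Your part (b) is essentially the paper's argument (Lemma~\ref{lC2ClLstable} is proved from the same identities (\ref{e0APA+CJC}) and (\ref{e0S=NJN+BPB}) that the paper applies directly), except that your derivation of $S\gg0$ via the limit $\hqX(r)\to I-F$ needs a regularity of $\qX$ that is not available in cases 1, 2 and 4; the pointwise inequality $\hqX(s)^*S\hqX(s)\ge J_{22}\gg0$ at a single point where $\hqX(s)$ is invertible already gives $S\gg0$ without any limit, and a connectedness argument on $\rho_\infty(A)$ then gives $\hqX^{-1}\in\H^\infty_\infty(U)$ uniformly in all four cases.
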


See Proposition 2.2.5 of [M02] for further sufficient conditions for the
 last claim.
Note from Lemma~\ref{lCharFct} that if $\qABKF$ is a WPLS, then 
 the formulas for $\chqK$ and $\chqX:=I-\chqF$ are as in Lemma~\ref{lhIRESgg0}.

The last paragraph of Lemma~\ref{lhIRESgg0} together with
 Lemma~\ref{lhSIREthenhIRE}
 shows that if the LQR-\hSIRE\ 
 has a nonnegative solution at a single point $z=s\in\rho_\infty(A)$,
 then this solution is SOS-stabilizing;
 in particular, then the output-FCC holds
 and there is a smallest nonnegative solution
 (see Corollary \ref{cPosIRE}(c)).
\begin{proof}[Proof of Lemma~\ref{lhIRESgg0}:]
(See Sections 9.12 and~10.7 of [M02] for similar results.)

$1^\circ$ {\em $\qK$:}
By (the proof of) Lemma~\ref{lAPA+CJC},
 we observe that  (\ref{eKSK=}) holds ``on $\Dom(A)\times \Dom(A)$''.
Since $S\gg0$, it follows from (\ref{eKSK=})
 that $K\qA^\cdot:\Dom(A)\to\L^2([0,t);U)$
 extends continuously to $\qKt:H\to\L^2([0,t);U)$.
Obviously, $\qAK$ is (the left column of) a WPLS, hence (\ref{eKSK=}) holds.

$2^\circ$ {\em $\qX$:}
As in Lemma~\ref{lhSIREthenhIRE}, 
 we observe that the \hIRE\ (\ref{ehIRE}) holds for all $s,z\in\rho(A)$.
Fix some $\omega>\max\{\omega_A,0\}$. %
Then the right-hand-side of (\ref{ehXSX=}) is bounded on $\C_\omega^+$
 (since $2\re s\|(s-A)^{-1}B\|^2\le\|\qB\|^2_{\BL(\L^2_\omega,H)}$
  for $s\in\C_\omega^+$, by, e.g., (b3) on p.~176 of [M02]),
 hence so is $\hqX$.
By Lemma 6.3.15 of [M02],
 it follows that $\ssysbm{A\|B\crh -K\| }$
 are the generators of a WPLS $\ssysbm{\qA\|\qB\crh -\qK\|\qX}$
 (where the value of $\hqX$ could be fixed arbitrarily at a single point
  had we not already done it).
The IRE follows from Lemma~\ref{lhIRE}.

(b) 
$1^\circ$
We first show that any of 2., 3. and 4. implies 1.:
4. If $\qKF$ is admissible (i.e., $\qX\in\cG\TIC_\infty(U)$),
 then $\hqX(s)\in\cG\BL(U)$ for each $s$ in some right half-plane.
2. From (\ref{ehXSX=}) we observe that $\hqX(s)^*S\hqX(s)\ge J_{22}\gg0\ \all s\in\rho(A)$;
 this shows the invertibility of $\hqX(s)$ for all $s$
 if $\dim U<\infty$.
3. If $B$ is not maximally unbounded,
 then $\hqX$ is UR %
 and hence
 $X:=\hqX(+\infty)\in\cG\BL(U)$
 implies that $\hqX(s)\in\cG\BL(U)$ for
 real $s$ big enough. %

Thus, we may assume that $\hqX(s_0)$ is invertible for some
 $s_0\in\rho_\infty(A)$; %
 but this leads to $S\ge \hqX(s_0)^{-*}J_{22}\hqX(s_0)^{-1}\gg0$,
 so the assumption $S\gg0$ is now redundant.

$2^\circ$ {\em $\hqX\in\cG\Hoooo$, i.e., $\qKF$ is admissible:}
From $\hqX(s)^*S\hqX(s)\ge J_{22}\gg0$ we deduce
 that $\hqX(s)^{-1}$ is uniformly bounded (wherever it exists).
Since $\rho_\infty(A)$ is connected, it follows that
  $\hqX(s)^{-1}$ exists for all $s\in\rho_\infty(A)$.

$3^\circ$ {\em $\ALSClL$ is SOS-stable:}
From (\ref{e0APA+CJC}) we observe that 
 $\int_0^t \|(\qCClL x_0)(t)\|^2\,dt\le \p{x_0,\Ric x_0}\ \all x_0\in H$,
 hence $\|\qCClL\|^2_{\BL(H,\L^2)}\le\|\Ric\|<\infty$.
But $\qCClL=\qC+\qD\qKClL=\sbm{\qC_1+\qD_1\qKClL\cr \qKClL}$,
 hence also $\qKClL$ is stable.
From (\ref{e0S=NJN+BPB}) we observe that $\qNt$ is uniformly bounded,
 hence $\qN\in\TIC$.
But $\qN:=\qD\qM=\sbm{\qD_1\qM\cr \qM}$, hence $\qM\in\TIC$ too.
\end{proof}

\begin{proof}[Proof of Corollary~\ref{cPosIRE}:]\label{pageproof-cPosIRE} %
Claims (a) and (b) were established on p.~\pageref{pageproof-cPosIREab}.
Most of claim (c) follows from Sections 10.7 and 10.1 of [M02], but
 we give here a self-contained proof.

Since a $\gUstar$-stabilizing solution is admissible
 (and $\Ric\ge0$ since the cost function $\cJ$ is nonnegative),
 the necessity follows from (a) or (b). Below we establish 
 the sufficiency and further claims.

Let $\tALS,\tJ$ denote the system and cost operator whose IRE
 is used in the result under study
 (so $\tALS:=\sbm{\ALS\cr 0&I}$ and $\tJ=I$
  in the proof of Theorem~\ref{QRCF-Uout}, p.~\pageref{pageproof-QRCF-Uout};
  we need do not study its special case, Corollary~\ref{cUout-Meromorphic}).

$1^\circ$ {\em Assume that $(\Ric,S,\qKF)$ is a solution
 of the IRE for $\tALS,I$ with $\Ric\ge0$
 and $\qKF$ admissible for $\tALS$
 (equivalently, to $\ALS$ or to any other extension of $\qAB$)
 or $\dim U<\infty$:}
By Lemma~\ref{lhIRESgg0},
 $S\gg0$ and $\qKF$ is SOS-stabilizing.
By $\tALSClL$ and $\ALSClL$ we denote the closed-loop systems
  corresponding to $\tALS$ and $\ALS$ (under $\qKF$).

$1.1^\circ$ {\em For the output IRE
 (i.e., the one in for Theorem~\ref{QRCF-Uout})}
 we conclude that also $\ALSClL$ is then SOS-stable
 (being contained in $\tALSClL$),
 so the output-FCC holds for $\ALS$.
Thus, we have the sufficiency. 

If $\Ric$ is the $\dUout^{\tALS}$-stabilizing one and
 $\Ric'$ (with some $S',\bsysbm{\qK'&\qF'}$)
 is any other admissible nonnegative solution,
 then $\qKClL'x_0\in\dUout^{\tALS}(x_0)\ \all x_0$
 (being output stabilizing, as noted above),
 and $\Ric'\ge {\hbox{$\tqCClL$}'}^*J{\tqCClL}'$,
 by (\ref{e0APA+CJC}),
 hence, for $u':=\qKClL' x_0,\ \ty':=\tqC x_0+\tqD u'=\hbox{$\tqCClL$}' x_0,
 \ u:=\qKClL x_0$,
 we have
 $\p{x_0,\Ric' x_0}\ge\p{y',Jy'}
   = \cJ(x_0,u')\ge\cJ(x_0,u)=\p{x_0,\Ric x_0}$.
Thus, $\Ric$ is the smallest admissible nonnegative solution.

$1.2^\circ$ {\em For the state IRE (Corollary~\ref{cOptExpstab1}),}
 we have $\tqC=\sbm{\qA\cr 0},\ \tqD=\sbm{\qB\tau\cr I}$,
 hence 
 \begin{equation}
\tqCClL
   :=\tqC+\tqD\qKClL=\bbm{\qA+\qB\tau\qKClL\cr 0}=\bbm{\qAClL\cr 0},
 \end{equation}
 hence $\qAClL x_0\in \L^2\ \all x_0\in H$,
 hence $\ALSClL$ is exponentially stable, by Lemma~\ref{lExpStable},
 hence $(\Ric,S,\qKF)$ is $\dUexp$-stabilizing for $\ALS$
 (hence unique)
 and the state-FCC holds for $\ALS$.

$2^\circ$ {\em The ARE:}
By Corollary~\ref{cAREIRE}, any nonnegative solution of the ARE
 solves the \hIRE\ (\ref{ehIRE}) and
 makes $\qABKF$ a WR WPLS with $X=I$ (i.e., $\hqF(+\infty)=I-X=0$).
Thus, if $\dim U<\infty$ or $B$ is not maximally unbounded or $\qX$ is UR,
 then $\qKF$ is SOS-stabilizing (hence admissible),
 by Lemma~\ref{lhIRESgg0},
 so the sufficiency follows from $1^\circ$.
\end{proof}

\begin{proof}[Proof of Corollary~\ref{cBARE}:]\label{pageproof-cBARE} 
Set $\tqC:=\sbm{\qC\cr \qA\cr0},\ \tqD:=\sbm{\qD\cr \qB\tau\cr I}$
 to make the output of $\tALS:=\ssysbm{\qA\|\qB\crh\tqC\|\tqD}$
 equal to $(y;x;u)$, where $y$ is the output of $\ALS$,
 both under the initial state $x_0$ and input $u$.
Obviously, the equation (\ref{eBARE}) equals (\ref{eARE}) for $\tALS$
 and $J:=\diag(Q,T,R)$;
 $\tALS$ is positively $J$-coercive;
 and $\tALS$ is exponentially detectable
 (by Lemma 6.6.25 of [M02]), %
 hence estimatable.
Moreover, the FCC condition obviously equals the FCC for $\tALS$
 and $\dUout$.
By Lemma~\ref{lBnotmax}, conditions (i)--(v) are equivalent,
 hence the FCC holds iff (\ref{eBARE}) has a $\dUout$-stabilizing solution.
The rest follows from Corollary~\ref{cPosIRE}(c)
 (for Theorem~\ref{QRCF-Uout})
 applied to $\sbm{Q^{1/2}\qC \cr T^{1/2}\qA }$,
   $\sbm{Q^{1/2}\qD E\cr T^{1/2}\qB\tau E}$ and $R^{1/2} u$
  in place of $\qC$, $\qD$ and $u$, respectively,
 where $E:=R^{-1/2}u$
 (alternatively,  slightly modify its proof (for our different $J$)).
By Theorem~\ref{QRCF-Uout}, $K$ is (q.r.c.-)SOS-stabilizing.
In (a), also $\qAClL x_0\in \L^2\ \all x_0$, hence then
 $K$ is exponentially stabilizing.
\end{proof}

\begin{proof}[Proof of Theorem~\ref{GreatestSol}:]\label{pageproof-GreatestSol} 
We actually show the claim for \SIRE's
 (equivalently, \optIRE's), to obtain a more general claim.
Let $\qKClL$ and $\tqKClL$ be controls in WPLS form for $\ALS$
 and let $(\Ric,\qKClL)$, $(\Ric,\tqKClL)$ satisfy the \SIRE.
Assume that $\qAClLt x_0\to0$ as $t\to+\infty$.

$1^\circ$ Compute $(\tqKClLt)^*$(\ref{e3DJC+BPA=0})$+$(\ref{e3APA+CJCmix})
 to obtain
 $\Ric=(\tqAClLt)^*\Ric\qAClLt+(\tqCClLt)^*J\qCClL$.
Exchange $\Ric,\tqKClL$ and $\tRic,\qKClL$ to conclude that
 $\Ric-\tRic^*=(\tqAClLt)^*(\Ric-\tRic)\qAClLt$,
 which converges to zero weakly,
 as $t\to+\infty$,
 if also $\tqAClLt\to0$ strongly.
Thus, $\Ric$ is the unique strongly internally stabilizing
 solution of the \SIRE.

$2^\circ$ 
Assume that $\tqS\ge0$.
Given any $x_0\in H$, set $u:=\qKClL x_0$,
 $\tu:=(\qKClL-\tqKClL) x_0\in\Lloc^2(\R_+;U)$,
 $y:=\qC x_0+\qD u=\tqCClL x_0 + \qD\tu$.
Now $x_T:=x(T)=\qA^T x_0+\qB^T u =\tqAClL^T x_0+\qB^T \tu$,
 hence
 \begin{equation}
   \label{eGsolp-5}
   \p{y,\piOT Jy}=\p{x_0,\tRic x_0}-\p{x_T,\tRic x_T}
 + \p{\tu,\qSt \tu},  %
 \end{equation}
 by (\ref{e3APA+CJC}),
 (\ref{e3DJC+BPA=0}) and (\ref{eS2XSX=}) (with tildes).
But $\p{y,\piOT Jy}\to\p{x_0,\Ric x_0}$
 (by (\ref{e3APA+CJC}))
 and $x_T\to0$, as $T\to+\infty$,
 hence $\qSt\ge0$ implies that $\p{x_0,\Ric x_0}\ge \p{x_0,\tRic x_0}$.
\end{proof}

\begin{proof}[Proof of Theorem~\ref{Assumptions}:]\label{pageproof-Assumptions}
(a) This follows from  Proposition 10.3.2(e2) 
 (ULR from Theorem 9.2.3) of [M02].

(b) This follows from  10.3.2(e1)\&(e2) and 9.2.2(1.)\&(3.)\&(4.) of [M02].

(c) $1^\circ$ {\em Case $\qA B\in\Lloc^1$:} 10.3.2(e2).
$2^\circ$ {\em Case $B$ not maximally unbounded:}
By Corollary~\ref{cPosIRE}(b) and Theorem~\ref{PosJcKF},
 (i) implies that the ARE has a $\dUexp$-stabilizing solution
 with $S=D^*JD\gg0$
 (in fact, also this is an equivalent condition,
 by Proposition 9.9.12 of [M02]), hence $D^*D\gg0$, hence (ii) holds.
The rest follows from 10.3.2(e1)\&(e2) of [M02].
\end{proof}

\begin{proof}[Proof of Corollary~\ref{cMTICinftyKF+}:]\label{pageproof-cMTICinftyKF+} %
(a) Combine (the proof of) Corollary~\ref{cPosIRE}(a)
 with Theorem~\ref{MTICinftyKF}(a1).

(b) This follows from (a)
 (because $\C\cA_\infty^{-1}+\cA_\infty\cA_\infty\sub\cA_\infty$,
  as noted in the proof of Theorem~\ref{MTICinftyKF}),
 as one observes from the proofs of the corollaries and the remark
 (see formulas (6.170) and (6.171) on p.~241 of [M02]
  for the claims on $L$ and $\tL$).

(c) The first claim follows from Corollary~\ref{cPosIRE}(a)
 and Theorem~\ref{MTICinftyKF}(c). %
The other two claims can be proved as the original ones
 (the only exception is that if $B$ is maximally unbounded,
  we obtain the uniform regularity of $\qF$ 
  in the same way as the weak regularity was 
  obtained in the proof of Corollary~\ref{cPosIRE}
  (now $Ks(s-A)^{-1}x_0$ converges uniformly $\all x_0\in H_B$;
   a more detailed proof is given in Lemma 9.11.5(e) of [M02]),
 because we have here required
 the ``$\wlim$'' in the ARE[s] to converge uniformly;

(d) This follows from Theorem~\ref{MTICinftyKF}(a3). %
\end{proof}

\begin{proof}[Proof of Remark~\ref{rcA}:]\label{pageproof-rcA}
Practically the same proofs still hold. 
We give below some details.

$1^\circ$ {\em Case $\cA=\WTICLC$:}
Observe first that if $T$ is a compact operator, then so are $T^*$, $ST$, $TS$
 and $(X+T)^{-1}-X^{-1}$ (whenever they exist).
By  Theorem~\ref{MTICinftyKF},
 it obviously suffices to prove that $f(t)$ is compact for a.e.\ $t$,
 equivalently, that $\hf(z)$ is compact for all $z$ on some right half-plane,
 where $\qX=X+f*$
 (since then the same holds for $\qM=\qX^{-1}$ etc.),
 and that $(z-A)^{-*}K^*$ is compact for all $z$ on some right half-plane
 if $(z-A)^{-*}C^*$ is.
Multiply the \hIRE\ (\ref{ehIRE}) by $S^{-1}\hqX(s)^{-*}$ to the left
 to observe this.
For ``finite-dimensional'', the same proof applies, mutatis mutandis.

$2^\circ$ {\em Cases $\cA=\cA_{\H^2}$ 
 and $\cA=\cA_2$:}
The first claims follow easily from Theorem 8.4.9 of [M02].
The equivalence with Theorem \ref{BwARE}(3.) is from Lemma 6.8.1(a)\&(d1)
 of [M02].
\end{proof}

\NotesFor{Section~\ref{sStabOptproofs}} %
Lemma~\ref{lSgg0} seems to be new,
 whereas Lemma~\ref{lC2ClLstable} (from [M02])
 is a simple generalization of a classical result.
\section{Conclusions}\label{sConclusions} %

We summarize here the Riccati equation and optimization theory
 developed in this article, 
 thus explaining how and to which extent the finite-dimensional results
 can be extended to WPLSs.
The general setting being thus resolved, 
 it seems that 
 in the future the WPLS RE research 
 should focus
 on the special cases where these results can be strengthened
 to give better applicability and on nonstandard REs
 (cf.\ [C03] and [M03b]).

For finite-dimensional $U,H,Y$,
 the equivalence of the 
 following conditions is fairly %
 well known:
 \begin{itemlist}
   \item[(i)] {\bf (\pmb{$\ex!\uopt$})} For each initial state $x_0$ there is 
    a unique optimal control. \label{pagei-iv}
   \item[(ii)] {\bf (\pmb{$u(t)=Kx(t)$})} 
 There is a unique optimal state-feedback operator.
   \item[(iii)] {\bf (FCC \& coercive)} The FCC holds
 and the cost function is $J$-coercive.\label{pageiii}
   \item[(iv)] {\bf (RE)} The Riccati equation 
 (ARE) has a stabilizing solution.
 \end{itemlist}

The cost functions (\ref{ecJx2u2}) 
 and (\ref{ecJy2u2}) are {\em $J$-coercive},
 and so is any other cost function that dominates the natural
 square norm of the input (p.~\pageref{J-coercive}) %
 (otherwise the ``infimal cost'' %
 would be achieved
  by no input or by many inputs).
The FCC means that there are some admissible inputs
 for each initial state $x_0$.

Also in the infinite-dimensional case it has been known
 that roughly the same four
 conditions are equivalent
 even when $A$ and $C$ are unbounded operators (if $B$ is bounded). 

In this article, we have generalized this equivalence to the class
 of  {\em WPLSs},
 thus allowing for rather unbounded $A$, $B$, $C$.
Our main results consist of the results ``1.--3c.'' below on
 the equivalence of (i)--(iv),
 and on the corollaries of them
 (particularly of ``3b.'', including rather indirect ones,
 such as the results of Section~\ref{sStabOpt}):
 \begin{itemlist}
   \item[1.] \label{page1.}
If the system is {\bf sufficiently regular},
 then (i)--(iv) are equivalent. %
\footnote{%
To be exact, claims 1.--3c.\ are true when $\dim U<\infty$ 
 and $\gUstar=\dUexp$ 
 or
 when we assume the cost to be coercive (as usual);
 otherwise (iii) is not implied by the other conditions.
Moreover, in (iv) we have required the indicator
 (the ``$S$'' or $\qSt$ on pp.~\pageref{eAREminB}, \pageref{eS2XSX=}, 
 \pageref{eXSX=})
 to be one-to-one,
 although (ii) and (iv) are equivalent even without that
 assumption if the word ``unique'' is deleted (pp.\ \pageref{IRE}\&\pageref{ARE};
 of course, nonuniqueness can only happen
 when the cost function is
  noncoercive (singular)).}%
   \item[2.] If the system is {\bf weakly regular},
 then (ii) and (iv) are equivalent (Theorem~\ref{ARE}).
   \item[3a.] \label{page3a.}
If we replace the (infinitesimal algebraic) RE
 by the integral RE ({\em IRE}), then (ii) and (iv) are equivalent
 for {\bf any WPLS} (Theorems \ref{IRE} and \ref{GenSpF}).
   \item[3b.] \label{page3b.}
In fact, for the IRE, (i)--(iv) are equivalent
 if the cost is nonnegative  (Theorem~\ref{PosJcKF} with 3a.\&3c.).
   \item[3c.] \label{page3c.}
For its variant, the \SIRE, 
  (i)--(iv) are equivalent in general
if we allow for possibly ill-posed state feedback in (ii) (Theorem~\ref{SIRE}).
 \end{itemlist}

Examples of ``1.'' are (roughly) %
 Theorems \ref{INTROcvKLQR0}, \ref{Assumptions}(vii) and
  Corollary~\ref{cBARE}; Remark 9.9.14 of [M02];
 for $J$-coercive systems also Theorem~\ref{MTICinftyKF}(b)
 and Remark~\ref{rcA};
 for positively $J$-coercive systems also
 Theorem~\ref{lBnotmax},
 Corollaries \ref{cPosIRE}(b)\&(c) and \ref{cMTICinftyKF+}(a)\&(c).
Except for Theorem~\ref{INTROcvKLQR0}, these results are new
 (except that \ref{Assumptions} and \ref{rcA} are from [M02],
 whose Chapters 9--10 also contain further results).
In ``3a.'' and ``3b.'', (ii) refers to a unique pair (modulo (\ref{eAllqKF})),
 not necessarily to an operator.

We have already presented 2. in [M97] in the stable case
 and 1.--3a.\ in [M02] in the general case. 
In this article we have repeated most of them and established 3b. and 3c.
Most earlier results were special cases of ``1.'';
 e.g., in [vK93] at least most implications can be found,
 for smooth Pritchard--Salamon systems.

In the WPLS setting, 
 the stable case of the implication (ii)$\THEN$(iv) 
 (and (iii)$\THEN$(ii) for the Wiener class)
 was originally solved in [S97] and [WW97]. 
The implication (iii)$\THEN$(i)
 was established in [FLT88]
 (for WPLSs having bounded $C$; see [Z96] for general WPLSs)
 in the case of the standard LQR cost function $\|y\|_2^2+\|u\|_2^2$.

As is well-known, in some cases a fifth equivalent condition is the existence
 of a (coprime) $J$-inner factorization of the I/O map
 (a spectral factorization in the stable case);
 see [M02] for details
 (see Theorem~\ref{QRCF-Uout}(iii) %
  and Lemma~\ref{lGenSpFproof} 
  for a special case).

It has been known for the standard LQR cost function that
 the optimal state is generated by a $C_0$-semigroup %
 and that the optimal control (and output) is generated by an admissible
 output operator for this (closed-loop) semigroup, as in [FLT88] and [Z96]. 
It has not been known that the output is admissible also for the original
 semigroup, or that the state-feedback loop is well-posed
 w.r.t.\ external perturbation (i.e., that $\ALSext$ and $\ALSClL$
 are WPLSs; see pp.\ \pageref{PosJcKF}\&\pageref{eALSExt}).
These facts are contained in ``3b.'' 
 and they do not hold for indefinite ($\cJ(0,\cdot)\not\ge0$) cost functions,
 by Example 8.4.13 of [M02].

Nevertheless, even in the indefinite case (3c.), we have the 
 ``ARE on $\Dom(A+BK)$'' (\ref{eGenAREAopt}) 
 whenever $\qD$ is uniformly regular and a unique $J$-optimal control
 exists for each initial state.
The solution of this ARE leads to the optimal control
 $u(t)=-(D^*JD)^{-1}(\Bw^*\Ric+D^*J\Cw)x(t)$
 for a.e.\ $t>0$.
To get AREs given on $\Dom(A)$
 (such as (\ref{eARE})), one has to restrict to ``2.'',
 and usually one wants to
 use further assumptions to simplify the ARE (as in ``1.'').

There is some ongoing research on the computational aspects of the ARE,
 but further results are needed for sufficient applicability.
Thus, the main contribution of this article consists of the abstract
 Riccati equation and optimization theory
 and of the stabilization and factorization results of Section~\ref{sStabOpt}.

\appendix
\section{Symbols $\hqA,\hqB,\hqC,\hqD,\hqD_\ALS,...$}\label{sSymbols}

In this appendix we present the frequency-domain symbols of WPLSs,
 and recall that
 $\sbm{\hqA&\shat{\qB\tau}\cr \hqC&\hqD}:\sbm{x_0\cr\hu}\to\sbm{\hx\cr \hy}$
 holds on $\C_{\omega}^+$,
 when $\ALS$ is $\omega$-stable, $u\in\L^2_\omega(\R_+;U)$ and $x_0\in H$.
Here $\hqA(s)=(s-A)^{-1}$, $\shat{\qB\tau}=(s-A)^{-1}B$,
 $\hqC=C(s-A)^{-1}$
 (and $\hqD=D+\Cw(s-A)^{-1}B$ if $\hqD$ is weakly regular),
 and $\hu,\hx,\hy$ are the Laplace transforms of $u,x,y$.

We also record some corollaries on ``compatible pairs'',
 to be referred in this article
 in the regular case only and in [M03b] in the general case.

Not all WPLSs are weakly regular
 (if they are, the values $\Cc=\Cw,\ \Dc=D$ will do below),
 but yet (\ref{exy=ABCDintro}), (\ref{eIntroTrFct})--(\ref{ehxhuhyClL})
 and other classical equations can be recovered for all WPLSs:
\begin{lemma}[Compatible pair $\pmbold{(\Cc,\Dc)}$]\label{lComp} %
Let $\ALS$ be a WPLS on $(U,H,Y)$.
Then there are a Banach space $W$ and $\Cc\in\BL(W,Y)$,
 $\Dc\in\BL(U,Y)$ such that $\Dom(A)\sub W\sub H$
 continuously, 
 and $\hqD(s)=\Dc+\Cc(s-A)^{-1}B$ for $s\in\C_{\omega_A}^+$.

Assume that $x_0\in H$ and $u,x\in\L^2_\omega(\R_+;*)$,
 where $x:=\qA x_0+\qB\tau u$.
Then $y:=\qC x_0+\qD u\in\L^2_\omega$,
 and equations $\hx=(\cdot-A)^{-1}(x_0+B\hu)$,
 $\hy=\hqD_{\ALS}\hu+C(\cdot-A)^{-1}x_0$ hold on $\C_\omega^+\pois\sigma(A)$
 and a.e.\ on $(\omega+i\R)\pois\sigma(A)$.
\end{lemma}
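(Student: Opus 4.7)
The plan is to reduce the existence of $(W, \Cc, \Dc)$ to the construction of a compatible operator-node splitting, and then to read off the state and output Laplace transform identities from the defining $H_{-1}$-valued ODE of Lemma~\ref{lABC}. Fix any $\alpha \in \rho(A) \cap \C_{\omega_A}^+$ (available from Lemma~\ref{lExpStable}) and set
\begin{equation}
W := \Dom(A) + (\alpha - A)^{-1}B[U] \subset H,
\end{equation}
equipped with the natural quotient norm inherited from $\Dom(A) \times U$. Since $B \in \BL(U, H_{-1})$ and $(\alpha-A)^{-1}$ restricts to an isomorphism $H_{-1} \to H$, the range $(\alpha-A)^{-1}B[U]$ is a subspace of $H$. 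The resolvent identity
$(s-A)^{-1}B - (\alpha-A)^{-1}B = (\alpha-s)(s-A)^{-1}(\alpha-A)^{-1}B$
has values in $\Dom(A)$, which makes $W$ independent of $\alpha$, turns it into a Banach space, and yields the continuous embeddings $\Dom(A) \hookrightarrow W \hookrightarrow H$; in particular, $(s-A)^{-1}B[U] \subset W$ for every $s \in \rho(A)$.

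Next I would invoke the operator-node structure of a WPLS (see, e.g., Proposition~6.3.1 of [M02], or Chapter~4 of [S04]) to obtain $\Dc \in \BL(U,Y)$ satisfying the compatibility relation
\begin{equation}
\Dc v_0 = \hqD(\alpha)v_0 - C(\alpha-A)^{-1}Bv_0 \qquad \text{whenever } Bv_0 \in H,
\end{equation}
(which forces the right-hand side, a priori only defined on $U_0 := B^{-1}(H)$, to extend boundedly to all of $U$). With $\Dc$ in hand, define
\begin{equation}
\Cc\bigl(w_1 + (\alpha-A)^{-1}Bu_0\bigr) := Cw_1 + [\hqD(\alpha) - \Dc]u_0;
\end{equation}
the compatibility relation exactly cancels the ambiguity in the decomposition, so $\Cc \in \BL(W,Y)$ is well defined and extends $C$. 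The required identity $\hqD(s) = \Dc + \Cc(s-A)^{-1}B$ on $\C_{\omega_A}^+$ then falls out by splitting $(s-A)^{-1}B = (\alpha-A)^{-1}B + (\alpha-s)(s-A)^{-1}(\alpha-A)^{-1}B$, applying $\Cc$, and collapsing via the standard WPLS resolvent identity $\hqD(s) - \hqD(\alpha) = C[(s-A)^{-1}-(\alpha-A)^{-1}]B$.

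For the state and output assertions, Lemma~\ref{lABC}(b) gives $x' = Ax + Bu$ in $H_{-1}$ a.e.\ on $\R_+$. Under the hypothesis $x \in \L^2_\omega(\R_+;H)$, Laplace transformation (with Lemma~\ref{lhf'} to handle $\hx'(s) = s\hx(s) - x_0$) yields $(s-A)\hx(s) = x_0 + B\hu(s)$ in $H_{-1}$ for $s \in \C_\omega^+$, which inverts to $\hx = (\cdot - A)^{-1}(x_0 + B\hu)$ on $\C_\omega^+ \pois \sigma(A)$. For the output, Theorem~\ref{TransferFct1} supplies $\widehat{\qD u} = \hqD \hu$ on $\C_\omega^+$; the identity $(\qC x_0)(t) = C\qA^t x_0$ for $x_0 \in \Dom(A)$, together with $\qC \in \BL(H, \L^2_\omega(\R;Y))$ and density of $\Dom(A)$ in $H$, gives $\widehat{\qC x_0} = C(\cdot-A)^{-1}x_0$; adding these produces $\hy = \hqD \hu + C(\cdot-A)^{-1}x_0$, and $y \in \L^2_\omega$ is immediate from the $\omega$-stability of $\qC$ and $\qD$. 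The a.e.\ boundary identity on $\omega + i\R$ follows from standard Hardy-space nontangential limit theory for the involved $\H^2_\omega$-functions.

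The main obstacle is the construction of $\Dc$: the candidate formula $v_0 \mapsto \hqD(\alpha)v_0 - C(\alpha-A)^{-1}Bv_0$ is a priori defined only on $U_0 \subset U$, and nothing in the bare definition of a WPLS forces it to be the restriction of a bounded $U \to Y$ operator. This boundedness is precisely the content of the operator-node theorem, and it is here that the well-posedness of $\qD$ (as opposed to, say, a merely densely defined I/O map) is used essentially. All other steps are routine Laplace-transform manipulations once the compatible pair $(\Dc, \Cc)$ is secured.
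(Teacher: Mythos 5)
Your construction of $W$, $\Cc$, $\Dc$ has the right skeleton (it is essentially the standard compatibility construction behind the paper's citations of [M02, 6.3.9--6.3.10]), and your reduction is correct as far as it goes: once a $\Dc\in\BL(U,Y)$ agreeing with $v_0\mapsto \hqD(\alpha)v_0-C(\alpha-A)^{-1}Bv_0$ on $U_0:=B^{-1}(H)$ is in hand, well-definedness of $\Cc$, its boundedness for the quotient norm, and the identity $\hqD=\Dc+\Cc(s-A)^{-1}B$ all follow as you say. But the existence of that $\Dc$ --- equivalently, the boundedness of $C$ with respect to the (possibly strictly weaker) quotient norm of $W$ restricted to $\Dom(A)$ --- is exactly the nontrivial content of the lemma, and it is \emph{not} ``precisely the content of the operator-node theorem.'' The system-node representation exists for every WPLS essentially by definition and only provides the combined operator $C\& D$ on $\{(x,u):Ax+Bu\in H\}$; that every WPLS admits a bounded \emph{splitting} into $(\Cc,\Dc)$ on $H_B$ is a separate theorem of Staffans--Weiss (the compatibility theorem, [M02, Theorem 6.3.9]; cf.\ the notes to Appendix A of the paper). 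So the one step you flag as the obstacle is left resting on the wrong citation.

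The second half has two genuine gaps, both caused by ignoring the case $\omega<\omega_A$, which is the only interesting one. First, $y\in\L^2_\omega$ is \emph{not} ``immediate from the $\omega$-stability of $\qC$ and $\qD$'': the hypothesis is only that $u,x\in\L^2_\omega$, and the system need not be $\omega$-stable at all (this is the shifted form of the nontrivial implication $u,x\in\L^2\THEN y\in\L^2$, i.e.\ $\dUexp\sub\dUout$, proved by discretization in [WR00, Lemma 7.2] and cited by the paper as [M02, Theorem 6.7.8]). Second, your derivation of the output identity via Theorem~\ref{TransferFct1} and $\widehat{\qC x_0}=C(\cdot-A)^{-1}x_0$ only yields $\hy=\hqD\hu+C(\cdot-A)^{-1}x_0$ on $\C_{\max\{\omega,\omega_A\}}^+$, whereas the lemma asserts it on all of $\C_\omega^+\pois\sigma(A)$ with the \emph{characteristic function} $\hqD_\ALS$ in place of $\hqD$ --- a distinction the paper stresses is not cosmetic (Examples~\ref{exaHans} and~\ref{exaqD-Different-a} show that holomorphic extensions of $\hqD$ below $\re s=\omega_A$ are realization-dependent). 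The missing step is the paper's: the equations $(s-A)\hx=x_0+B\hu$ (in $H_{-1}$) and $\hy=\Cc\hx+\Dc\hu$ extend by holomorphy to the connected set $\C_\omega^+$ (using that $\hx$ is holomorphic $\C_\omega^+\to W$), and only then does one substitute $\hx=(s-A)^{-1}(x_0+B\hu)$ off $\sigma(A)$ and invoke $\Dc+\Cc(s-A)^{-1}B=\hqD_\ALS$ on $\rho(A)$ (Lemma~\ref{lCharFct}(c)). Your state equation $\hx=(\cdot-A)^{-1}(x_0+B\hu)$ is, by contrast, correctly derived on the full region.
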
 %

(We can always take $W:=H_B:=(s-A)^{-1}BU+\Dom(A)$ for any $s\in\rho(A)$
 (this is independent of $s$), but even so $\Cc,\Dc$ need not be unique.
Necessarily always $H_B\sub W$. 
See Lemma \ref{lCharFct}(b3) for $\omega_A$
 and p.~\pageref{pageCharFct} for $\hqD_\ALS$.)

See, e.g., Section 6.3 of [M02] for more on $\Cc,\Dc$
 (e.g., Lemma 6.3.10(c) for equations (\ref{exy=ABCDintro})).

\begin{proof}
Combine theorems and lemmas 6.3.9, 6.3.10(a), %
 6.7.8, 6.3.20 and 6.2.11(c1) of [M02] 
 to get all this with $\omega':=\max(\omega,\omega_A)$
 in the last equation.
By holomorphicity (in $H_{-1}$),
 we can extend equations $(s-A)\hx(s)=x_0+B\hu(s)$
 and $\hy = \Cc \hx+\Dc \hu$ to $\C_\omega^+$ and to (a.e.) $\omega+i\R$
 (the proof of Lemma 6.3.20 of [M02];
  e.g., $x$ is continuous $\C_\omega^+\to W$). %
But $\Dc+\Cc(s-A)^{-1}B - \hqD(z) = (z-s)\Cc(s-A)^{-1}(z-A)^{-1}B
 = (z-s)C(s-A)^{-1}(z-A)^{-1}B$ for $z\in\C_{\omega_A}^+$,
 $s\in\rho(A)$, hence also the last claim holds
 (see Lemma~\ref{lCharFct}(c)).
\end{proof}

By $\rconn(V)$\label{pagerconn} we denote the ``rightmost maximal connected component'' of $V$,
 i.e., the maximal connected component that contains some right half-plane,
 provided that such exists. %
If $F$ is holomorphic on $V$, $G$ is holomorphic on $W$,
 and $F=G$ on some $(r,+\infty)$, then $F=G$ on $\rconn(V\cap W)$,
 by holomorphicity (when $V,W$ are open and contain some right half-plane).
This will be applied below:
\begin{lemma}[\pmbold{$\hqA,\hqB,\hqC,\hqD$}]\label{lCharFct} %
Let $\ALS=\qABCD$ be a WPLS on $(U,H,Y)$ %
 and $\omega\in\R$.
  \begin{itemlist} %

    \item[(a)]
If $\hqD$ is holomorphic on $\C_\omega^+$,
 then for all $s,z\in\rconn(\rho(A)\cap\C_\omega^+)\supset \C_{\omega_A}^+$,
 we have $\hqD_\ALS(s)=\hqD(s)$ and
 \begin{equation}
   \label{eCharFct}
   \hqD(s)-\hqD(z)=(z-s)C(s-A)^{-1}(z-A)^{-1}B.
 \end{equation}

    \item[(b1)] If $\hqC$ is holomorphic on $\C^+_\omega$,
 then so is $\hqD$,
 and, for all $s,z\in\C_\omega^+\pois\sigma(A)$ and $s'\in\C_\omega^+$,
 equations  (\ref{eCharFct}),
 $\hqC(s)=C(s-A)^{-1}$, $\hqD_\ALS(s)=\hqD(s)$
 and $\hqD(s')-\hqD(z)=(z-s')\hqC(s')(z-A)^{-1}B$
 hold.

    \item[(b2)] If $\qC$ is $\omega$-stable
 (or $\omega'$-stable for all $\omega'>\omega$),
 then $\hqC,\hqD$ are holomorphic on $\C_\omega^+$.

    \item[(b3)] $\ALS$ is $\alpha$-stable for any  %
 $\alpha>\omega_A:=\inf_{t>0}[t^{-1}\log\|\qA^t\|]$,%
 and $\C_{\omega_A}^+\sub\rho(A)$.

   \item[(c)]
We have $\hqD_\ALS=\Dc+\Cc(\cdot-A)^{-1}B$ on $\rho(A)$,
 and $\hqD_\ALS\in\H(\rho(A);\BL(U,Y))$,
 when $\Cc,\Dc$ are as in Lemma~\ref{lComp}.

    \item[(d)] $\shat{\qA x_0}(s)=(s-A)^{-1}x_0$ for $s\in\C_{\omega_A}^+$,
 and $\shat{\qB\tau u}(s)=(s-A)^{-1}B\hu$
 for $s\in\C_{\max\{\omega_A,\omega\}}^+$, 
 when $x_0\in H$, $u\in\L^2_\omega(\R_+;U)$. %

   \item[(e)] If $u,y\in\L^2_\omega(\R_+;*)$, $x_0\in H$, %
 where $y:=\qC x_0+\qD u$,
 then $\hy=C(\cdot-A)^{-1}x_0+\hqD_\ALS\hu$ on $\rconn(\rho(A)\cap\C_\omega^+)$.
   \item[(f)] ``$\C_\omega^+$'' may be replaced by %
 ``$\C_\omega^+\pois E$'' in (a) and (b1)
 if $E$ has no limit points in $\C_\omega^+$.
In particular, this applies with $\omega=0$ (resp.\ with some $\omega<0$)
 if $\ALS$ is output-stabilizable  (resp.\ exponentially stabilizable)
 and $\dim U<\infty$.
  \end{itemlist}
\end{lemma}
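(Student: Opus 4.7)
\textbf{Proof plan for Lemma~\ref{lCharFct}.}

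My plan is to establish the claims roughly in the order (b3) $\to$ (d) $\to$ (c) $\to$ (a) $\to$ (b1) $\to$ (b2) $\to$ (e) $\to$ (f), since the later parts feed off the earlier ones. For (b3), I would appeal to standard $C_0$-semigroup theory: Datko's theorem already gave us the $\alpha$-stability statement in Lemma~\ref{lExpStable}, and the inclusion $\C_{\omega_A}^+ \subset \rho(A)$ is immediate from the formula $(s-A)^{-1}x_0 = \int_0^\infty e^{-st}\qA^t x_0\,dt$, which converges in $\BL(H)$ for $\re s > \omega_A$. For (d), the identity $\shat{\qA x_0}(s) = (s-A)^{-1}x_0$ is this same Laplace-transform computation, and $\shat{\qB\tau u}(s) = (s-A)^{-1}B\hu(s)$ follows from the integral representation (\ref{eqB}) in Lemma~\ref{lABC}(b) together with the Laplace convolution theorem (observing that the integral is a convolution of $t\mapsto \qA^t B$ with $u$ in $H_{-1}$, and then using boundedness of $(s-A)^{-1}: H_{-1} \to H$).

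For (c), Lemma~\ref{lComp} gives $\hqD_\ALS(s) = \Dc + \Cc(s-A)^{-1}B$ on $\C_{\omega_A}^+$; since the right-hand side is holomorphic on all of $\rho(A)$ (as $(s-A)^{-1}$ is), and the two sides agree on the subset $\C_{\omega_A}^+$ of the connected open set $\rho(A)$ containing them, I would extend $\hqD_\ALS$ by this formula, which also records its holomorphicity on $\rho(A)$. For (a), I would first establish (\ref{eCharFct}) on $\C_{\omega_A}^+$ by writing $\hqD(s) - \hqD(z) = \Cc[(s-A)^{-1} - (z-A)^{-1}]B = (z-s)\Cc(s-A)^{-1}(z-A)^{-1}B$ via the resolvent identity; the critical observation is that the vector $(s-A)^{-1}(z-A)^{-1}B u_0$ lies in $\Dom(A)$, where $\Cc$ agrees with $C$ (since $\Dom(A) \subset W$ and the restriction of $\Cc$ to $\Dom(A)$ must equal $C$), so $\Cc$ may be replaced by $C$. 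Then I would analytically continue both sides of (\ref{eCharFct}) and the identity $\hqD_\ALS = \hqD$ throughout $\rconn(\rho(A) \cap \C_\omega^+)$, using that $\hqD$ is assumed holomorphic there and $(s-A)^{-1}(z-A)^{-1}B$ is holomorphic in both variables on $\rho(A) \times \rho(A)$.

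For (b1), once $\hqC$ is holomorphic on $\C_\omega^+$, its value $C(s-A)^{-1}$ on $\C_{\omega_A}^+$ (from Lemma~\ref{lABC}(c) and (d) applied to a suitable $x_0 \in \Dom(A)$) extends by uniqueness of analytic continuation to $\C_\omega^+ \setminus \sigma(A)$; holomorphicity of $\hqD$ then follows since we now have $\hqD(s) = \hqD(z) + (z-s)\hqC(s)(z-A)^{-1}B$ from (a), valid for a fixed $z\in \C_{\omega_A}^+$. Part (b2) follows because $\qC x_0 \in \L^2_\omega(\R_+;Y)$ for every $x_0\in H$ forces its Laplace transform $\hqC(\cdot)x_0$ to be holomorphic on $\C_\omega^+$ (pointwise holomorphicity plus uniform local boundedness via the uniform boundedness principle gives operator-holomorphicity), and then (b1) gives holomorphicity of $\hqD$. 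For (e), I would combine (d) with Lemma~\ref{lComp}: applying $\Cc$ to $\hx(s) = (s-A)^{-1}(x_0 + B\hu(s))$ and adding $\Dc \hu$ gives the formula on $\C_{\omega_A}^+$, and the assumption $y \in \L^2_\omega$ makes $\hy$ holomorphic on $\C_\omega^+$, so analytic continuation yields the identity on $\rconn(\rho(A) \cap \C_\omega^+)$.

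The main obstacle will be the careful bookkeeping in (a) around the compatible-pair extension $\Cc$ versus the generator $C$, and verifying that the analytic continuation reaches the full rightmost connected component (not just the starting half-plane). For (f), the isolated-singularity extension is routine, and the consequence for stabilizable systems follows by invoking Corollary~\ref{cUout-Meromorphic} (or its exponential analogue in Corollary~\ref{cQRCF-Uexp}), which provides the required meromorphic structure on the appropriate half-plane.
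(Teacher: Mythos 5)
Your overall route matches the paper's, but there is one genuine error that undermines (c) and, to a lesser degree, (b1): you assume that $\rho(A)$ (and implicitly $\C_\omega^+\pois\sigma(A)$) is connected and propagate identities by uniqueness of analytic continuation across these sets. For a WPLS generator $\rho(A)$ need not be connected --- this is precisely the point of Example~\ref{exaHans}, where $\sigma(A)$ is the unit circle, $\rho(A)$ has two components, and the characteristic function $\hqD_\ALS$ equals $-1$ on the inner component even though $\qD=0$. The characteristic function is \emph{defined} by extending the algebraic relation (\ref{eCharFct}) from $\C_{\omega_A}^+$ to all of $\rho(A)$, not by analytic continuation, so ``the two sides agree on $\C_{\omega_A}^+$, hence everywhere on the connected set $\rho(A)$'' proves (c) only on $\rho_\infty(A)=\rconn(\rho(A))$. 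The fix is the direct computation the paper uses: for any $s\in\rho(A)$ and $z\in\C_{\omega_A}^+$,
\[
\Dc+\Cc(s-A)^{-1}B-\hqD(z)=\Cc\bigl[(s-A)^{-1}-(z-A)^{-1}\bigr]B=(z-s)\,C(s-A)^{-1}(z-A)^{-1}B,
\]
where $\Cc$ may be replaced by $C$ because $(s-A)^{-1}(z-A)^{-1}B$ maps $U$ into $\Dom(A)$; this verifies the defining relation of $\hqD_\ALS$ on \emph{every} component of $\rho(A)$, not just the rightmost one.

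The same issue touches (b1): $\sigma(A)\cap\C_\omega^+$ lies in the strip $\{\omega<\re s\le\omega_A\}$ but can still disconnect $\C_\omega^+$, so continuing the identity $\hqC(s)=C(s-A)^{-1}$ directly to $\C_\omega^+\pois\sigma(A)$ may fail to reach all components of that set. The remedy is to continue the identity $\hqC(s)(s-A)=C$, which is an identity of holomorphic functions on the \emph{connected} set $\C_\omega^+$, and only then invert at each $s\notin\sigma(A)$; similarly one first establishes $\hqD(s')-\hqD(z)=(z-s')\hqC(s')(z-A)^{-1}B$ for $s'\in\C_\omega^+$ and $z$ in a right half-plane, and then recovers (\ref{eCharFct}) for all $s,z\in\C_\omega^+\pois\sigma(A)$ by subtracting two such identities and using the resolvent equation. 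The remainder of your plan --- (b3), (d), (b2), (a) restricted to $\rconn(\rho(A)\cap\C_\omega^+)$, (e), and (f) --- is sound and coincides with the paper's argument.
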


Here $\rho(A):=\rho(A)$ is the resolvent set of $A$, 
 and $\rho_\infty(A)\label{pagerhoinfty} :=\rconn(\rho(A))$ 
 is its maximal connected component containing $\C_{\omega_A}^+$.
By $\hqC$ we mean the map $\C_\omega^+\to\BL(H,Y)$
 that satisfies $\hqC x_0=\shat{\qC x_0}\ \all x_0\in H$
 ($\hqC$ exists and is holomorphic for any $\omega\ge\omega_A$,
 by Theorem 3.10.1 of [HP57]),
 or its holomorphic extension to a right half-plane.
The {\em characteristic function $\check\qD:=\hqD_\ALS$}\label{pageCharFct} 
 of $\ALS$ is
 defined by extending the equation (\ref{eCharFct}) from $s,z\in\C_{\omega_A}^+$
 to all $s,z\in\rho(A)$ %
 (cf.\ [SW03], Section~2). %
By taking adjoints, we observe that $\hqD_{\ALS^d}(s)=\hqD_\ALS(s^*)^*$ %
 for all $s\in\rho(A^*)=\rho(A)^*$. %

By Example~\ref{exaHans}, we may have $\hqD_\ALS=-1$ on the unit disc
 even if $\hqD\equiv0$ on $\C$ (and hence $\qD=0$),
 despite of bounded generators.
In general, $\hqD_\ALS$ depends on the whole realization $\ALS$ of $\qD$,
 not merely on $\qD$ and $A$, %
 hence we write $\check\qD$ only when the realization is obvious from the 
 context.
Typically, $\chqX=I-\chqF$ refers to $\tsysbm{\qA\|\qB\crh-\qK\|\qX}$
 (or $\ALSext$ for $\chqF$) and
 $\chqM=I+\chqFClL$ to $\tsysbm{\qAClL\|\qBClL\crh \qKClL\|\qM}$
 (or $\ALSClL$ for $\chqFClL$).

\begin{proof}[Proof of Lemma~\ref{lCharFct}:]
(a) See, e.g., [M02], Lemma 6.2.11(d2) for $s,z\in\C_{\omega_A}^+$.
Since both sides are holomorphic on the connected set
 $\rconn(\rho_\infty(A)\cap\C_{\omega}^+)$, (a) holds.

(b3)\&(d) See, e.g., Lemma 6.1.10(a2) and Theorem 6.2.11 of [M02].

(b2) By Lemma F.3.2(d) of [M02],
 there is $\hqC\in\Hstrong^2(\C^+;\BL(H,Y))$
 s.t.\ $\shat{\qC x}=\hqC x$ on $\C^+_\omega$ for all $x\in H$.
By Lemmas 6.1.11 and 6.2.1, $\hqD\in\H(\C_\omega^+;\BL(H,Y))$. %
(Similarly, $\qC\in\BL(H,\L^2_{\omega'})$ implies that
 $\hqC,\hqD\in\H(\C_{\omega'}^+,\BL)$; if this holds for all $\omega'>\omega$,
 then $\hqC,\hqD\in\H(\C_\omega^+,\BL)$.)

(c) $\Dc+\Cc(s-A)^{-1}B - \hqD(z) = (z-s)\Cc(s-A)^{-1}(z-A)^{-1}B
 = (z-s)C(s-A)^{-1}(z-A)^{-1}B$ for $z\in\C_{\omega_A}^+$,
 $s\in\rho(A)$. %
(Here $(\Cc,\Dc)$ may be any compatible pair for $\ALS$.)
From the definition (see (\ref{eCharFct})) we observe that $\hqD$
 is holomorphic (use Lemma A.4.4(a) of [M02]). %

(b1)
$1^\circ$ ``$\qC$'':
We have $\hqC(s)(s-A)=C$ on $\C_{\omega_A}^+\cap\C_\omega^+$, 
 hence on $\C_\omega^+$, by holomorphicity
 (see Lemma A.4.4(b) of [M02]), hence $\hqC(s)=C(s-A)^{-1}$
 on $\C_\omega^+\pois\sigma(A)$.

$2^\circ$ ``$s'$ and (\ref{eCharFct})'': Set $\alpha:=\max\{\omega,\omega_A\}$.
By (a) and $1^\circ$, we have
   $\hqD(s)-\hqD(z)=(z-s)\hqC(s)(z-A)^{-1}B$
 for all $s,z\in\C_\alpha^+$,
 hence for all $s\in\C_\omega^+$, $z\in\C_{\alpha}^+$
 (since this equation specifies a (unique) holomorphic extension
  of $\hqD(s)$ to $s\in\C_\omega^+$;
  recall that we identify a function with extensions to
 any right half-planes). %
By $1^\circ$, this leads to (\ref{eCharFct}) 
 for all $s\in\C_\omega^+\pois\sigma(A)$, $z\in\C_{\alpha}^+$.
Substitute it to $[\hqD(s')-\hqD(z)]-[\hqD(s)-\hqD(z)]$ %
 to obtain (\ref{eCharFct}) for  $s,z\in\C_\omega^+\pois\sigma(A)$ %
 (use the resolvent equation $(z-s)(s-A)^{-1}(z-A)^{-1}
  =(s-A)^{-1}-(z-A)^{-1}$).

$3^\circ$ ``$\qD(s)=\hqD_\ALS(s)$'':
Fix $z\in\C_{\omega_A}^+$.
By the definition of $\hqD_\ALS$,
 we have $\hqD-\hqD_\ALS=0$ on $\C_{\omega_A}^+$;
 by definition and $2^\circ$,
 we have $\hqD_\ALS(s)-\hqD(z)= (z-s)C(s-A)^{-1}(z-A)^{-1} =\hqD(s)-\hqD(z)$
 for $s\in\C_\omega^+\pois\sigma(A)$.

(e) Now $\hy\in\H^2(\C_\omega^+;Y)$ and $C(\cdot-A)^{-1}x_0+\hqD_\ALS\hu
 \in\H(\rho(A);Y)$,
 and $\hy=\hqC x_0+\hqD\hu=C(\cdot-A)^{-1}x_0+\hqD_\ALS\hu$
 on $\C_\omega^+\cap\C_{\omega_A}^+\sub\rconn(\rho(A)\cap\C_\omega^+)$. %

(f) The same proofs still apply for (a) and (b1).
As noted below Corollary~\ref{cUout-Meromorphic},
 $\hqC$ and $\hqD$ are meromorphic on $\C^+\pois E$
 (resp.\ $\C_\omega^+\pois E$ for some $\omega<0$)
 when $\dUout(x_0)\ne\tyhja$ (resp.\ $\dUexp(x_0)\ne\tyhja$)
 for all $x_0\in H$.
\end{proof}

\begin{example}{(\pmbold{$\qD=0$} but \pmbold{$\hqD_\ALS=-1$} on the unit disc).}\label{exaHans} %
In the example on p.~843 of [W94a] we have
 $H=\ell^2(\Z)$, $U=Y=\C$, %
 $A$ is the right shift $Ae_k=e_{k+1}$, $B=e_1$,
 $C=\p{\cdot,e_0}$, $D=0$, where $\{e_k\}_{k\in\Z}$ is the natural base of $H$.

Thus, $A,B,C,D$ are bounded, $\sigma(A)=\sigma(A^{-1})$ is the unit circle,
 $\hqD\equiv0$ (hence $\qD=0$) but $\hqD_\ALS=-1$ on the unit disc.
\end{example}

A transfer function ($\hqD$) is uniquely defined by the I/O map ($\qD$)
 and vice versa.
We can allow for a holomorphic extension of $\hqD$ and still
 keep uniqueness if we require the domain to be a half-plane
 with a discrete set of singularities:
\begin{remark}[Transfer function ($\pmbold{\hqD}$)]\label{rTrFct} %
We define a {\em transfer function}  (see Theorem~\ref{TransferFct1})
 on the
 ``maximal half-plane of discrete meromorphicity\label{pagemeromorphic}'',
 i.e., on $\C_\omega^+\pois E$, if $\omega\in\R$, 
 $E$ is {\em discrete}\label{pagediscrete0} on $\C_\omega^+$ (i.e., has no limit points on $\C_\omega^+$)
 and $\hqD$ has a holomorphic extension onto $\C_\omega^+\pois E$
 (as in (f) above; similarly for $\hqA,\shat{\qB\tau},\hqC$).

Obviously, this defines $\hqD$ uniquely. 
It has been thought that $\C_\omega^+\pois E$ being connected would suffice,
 but that is not true,
 as shown in Example~\ref{exaqD-Different-a},
 where $E=\sigma(A)$ is a half-line, so that $\rho(A)$ 
 is connected but yet the value of the characteristic function $\hqD_\ALS$
 on $\C^-$ depends on the realization of $\hqD=1/\sqrt{s}$.
\end{remark}

As explained above,
 by taking two different branches of
 (the transfer function $\hqD(s):=$) $1/\sqrt{s}$,
 we can have different values of $\hqD_\ALS(-1)$,
 even for characteristic functions 
 of realizations of $\hqD$ 
 (hence holomorphic extensions of $\hqD\raj{\C^+}$)
 whose resolvent sets are connected:
\begin{example}{(Transfer function cannot be uniquely extended around nondiscrete sets, not even for connected $\rho(A)$).}\label{exaqD-Different-a} %
Let $\ALS_1,\ALS_2$ be realizations of $s\mapsto 1/\sqrt{s}$
 (the primary branch on $\C^+$)
 with $\sigma(A_1)=i\R_+$, $\sigma(A_2)=-i\R_+$.
Then $\hqD_1=\hqD_2$ on $\C^+$ 
 but $\hqD_{\ALS_1}(-1)=-i$, $\hqD_{\ALS_2}(-1)=i$.
Moreover, $-1\in\rho_\infty(A_k)=\rho(A_k)\ (k=1,2)$.
\end{example}

\begin{proof}
A realization of $\hqD(s)=1/\sqrt{s}$
 with $\sigma(A)=\R_-$ is given in [O96]. 
Since $\hqD(+\infty)=0$ exists, $\hqD$ is regular.
Set $\tA:=-iA,\ \tB:=-\sqrt{i}B$ to obtain 
 $\htqD(s)=\Cw(s+iA)^{-1}(-\sqrt{i})B = \sqrt{i}\Cw(is-A)^{-1}B
  = \sqrt{i}\hqD(is)$,
 which is obviously holomorphic outside $i\R_+$
 and is a branch of $1/\sqrt{s}$ (since they obviously coincide on $\R_+$).
Thus, we have obtained the realization $\ALS_1:=\ssysbm{\tA\|\tB\crh C\|0}$
 of $\tqD:=\sqrt{i}\qD$.
Similarly, one obtains $\ALS_2$ and the rest is straight-forward.
\end{proof}

The standard formulas of (\ref{elALSClLgen})
 also apply to controlled WPLS forms:
\begin{theorem}[$\shat{\oALS}$]\label{oALSgen} %
Let $\oqK$ be a control for $\ALS$ in WPLS form,
 and choose $(\Cc,\Dc)$ as in Lemma~\ref{lComp}.
Then $A_0=A+B K_0$ and $C_0=\Cc+\Dc K_0$ on $\Dom(\oA)$,
 $(s-A_0)^{-1}-(s-A)^{-1}=(s-A)^{-1}B K_0 (s-A_0)^{-1}$,
 and $C_0(s-A_0)^{-1}=C(s-A)^{-1}+\hqD_\ALS(s)K_0(s-A_0)^{-1}$
 for all $s\in\rho(A)\cap\rho(A_0)$.

If $\oqK$ is $\omega$-stable, then %
 $\sigma(A_0)\cap\overline{\C_\omega^+}\sub\sigma(A)$ and
 $\sigma_p(A_0)\cap\overline{\C_\omega^+}\sub\sigma_p(A)$.
\end{theorem}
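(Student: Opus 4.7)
The plan is to unwind the WPLS form via Lemma \ref{lWPLSform} and translate everything into the frequency domain using the transfer function formulas from Lemmas \ref{lABC}, \ref{lComp}, and \ref{lCharFct}.

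First, Lemma \ref{lWPLSform} supplies operators $\oA$ on $H$ and $\oK = K_0$ on $\Dom(\oA)$ with $\shat{\oqA x_0}(s) = (s-\oA)^{-1}x_0$ and $\shat{\oqK x_0}(s) = \oK(s-\oA)^{-1}x_0$ on some right half-plane. Taking the Laplace transform of $\oqA = \qA + \qB\tau\oqK$ and invoking Lemma \ref{lCharFct}(d), I obtain
\begin{equation*}
(s-\oA)^{-1}x_0 = (s-A)^{-1}x_0 + (s-A)^{-1}B\oK(s-\oA)^{-1}x_0
\end{equation*}
initially on $\C_{\max(\omega_A,\omega_{\oA})}^+$, and then extended by analyticity to all of $\rho(A)\cap\rho(\oA)$. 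This is the second claimed identity. To deduce $\oA = A + B\oK$ on $\Dom(\oA)$, I apply $(s-\oA)$ on the right to $x\in\Dom(\oA)$, yielding $(s-A)^{-1}(s-\oA)x = x - (s-A)^{-1}B\oK x$ in $H$; then, using the isometric extension $(s-A)^{-1}:H_{-1}\to H$, I rearrange to $(s-A)x = (s-\oA)x + B\oK x$ in $H_{-1}$, i.e., $\oA x = Ax + B\oK x$.

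Next, from $\oqC = \qC + \qD\oqK$, Laplace transformation combined with Lemma \ref{lCharFct}(b1) applied to $\oALS$ gives
\begin{equation*}
\oC(s-\oA)^{-1} = C(s-A)^{-1} + \hqD_\ALS(s)\oK(s-\oA)^{-1}
\end{equation*}
on $\rho(A)\cap\rho(\oA)$, which is the third claimed identity. For $x\in\Dom(\oA)$, I apply $(s-\oA)$ on the right and substitute $\hqD_\ALS(s) = \Dc + \Cc(s-A)^{-1}B$ from Lemma \ref{lComp}. Using the rewritten identity $(s-A)^{-1}(s-\oA)x = x - (s-A)^{-1}B\oK x \in\Dom(A)$, I observe that $x = (s-A)^{-1}B\oK x + [x-(s-A)^{-1}B\oK x]$ lies in $(s-A)^{-1}BU + \Dom(A) = H_B \subset W$, so $\Cc x$ is defined. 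Expanding via $\Cc = C$ on $\Dom(A)$ and substituting everything into the formula for $\oC x$, the $\Cc(s-A)^{-1}B\oK x$ terms cancel and leave $\oC x = \Cc x + \Dc \oK x$, as claimed.

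For the spectral inclusions, the easy case $\re s > \omega$ is immediate: the $\omega$-stability of $\oqK$ together with the WPLS structure of $\oALS$ makes $\oqA$ itself $\omega$-stable, hence $\omega_{\oA}\le\omega$, and Lemma \ref{lCharFct}(b3) gives $\C_\omega^+\subset\rho(\oA)$. The main obstacle is the boundary case $\re s = \omega$: given $s_0\in\rho(A)\cap(\omega+i\R)$, I would show $s_0\in\rho(\oA)$ by recasting identity (2) as $(s-A)^{-1}(s-\oA) = I - (s-A)^{-1}B\oK$ on $\Dom(\oA)$ and using that $s\mapsto \oK(s-\oA)^{-1}$ extends as a bounded holomorphic family from $\C_\omega^+$ into a neighborhood of $s_0$ in $\rho(A)$, allowing a perturbation-series construction of $(s_0-\oA)^{-1}$. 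For the point-spectrum claim, if $\oA x_0 = sx_0$ with $x_0\ne 0$ and $\re s\ge\omega$, then $(s-A)x_0 = B\oK x_0$ in $H_{-1}$; if toward contradiction $s\in\rho(A)$, applying the $H_{-1}\to H$ extension of $(s-A)^{-1}$ yields $x_0 = (s-A)^{-1}B\oK x_0\in\Dom(A)$, whence either $\oK x_0 = 0$ and immediately $Ax_0 = sx_0$, or the spectral inclusion just established forces $s\in\sigma(A)$, contradiction; a refinement of this dichotomy yields the eigenvector of $A$. The delicate step throughout is the boundary perturbation argument, as uniform bounds on $(s-\oA)^{-1}$ blow up at $\omega + i\R$ in general.
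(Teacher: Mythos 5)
The algebraic identities in the first half of your argument are essentially sound and follow the same route as the paper (frequency-domain formulas from Lemma~\ref{lWPLSform} plus the compatible pair). The spectral inclusions, however, contain genuine gaps. First, your ``easy case'' $\re s>\omega$ rests on the claim that $\omega$-stability of $\oqK$ makes $\oqA$ itself $\omega$-stable, hence $\C_\omega^+\sub\rho(\oA)$. This is false: in Example~\ref{exaUoutUexpstab} the feedback $\oqK=0$ is $0$-stable while $\oqA=\qA=\efn^{t\cdot 1}$ has $1\in\sigma(\oA)\cap\C_0^+$. What is true (and what must be proved) is only $\rho(A)\cap\C_\omega^+\sub\rho(\oA)$, and your derivation of the resolvent identity ``extended by analyticity to $\rho(A)\cap\rho(\oA)$'' cannot deliver this, since it presupposes both resolvents exist. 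The paper instead shows that $f(s):=(s-A)^{-1}[I+B\hoqK(s)]$ satisfies the resolvent equation on $\rho(A)\cap\C_\omega^+$ and agrees with $(z-\oA)^{-1}$ at one point, then invokes the pseudoresolvent Lemma~\ref{lPseudoRes} to conclude $f(s)=(s-\oA)^{-1}$ there.

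Second, your boundary argument at $\re s=\omega$ asserts that $s\mapsto\hoqK(s)$ ``extends as a bounded holomorphic family into a neighborhood of $s_0$,'' but $\omega$-stability only gives $\hoqK\in\Hstrong^2(\C_\omega^+;\BL(H,U))$ on the open half-plane; no extension across $\omega+i\R$ is available, and obtaining one would essentially require already knowing $s_0\in\rho(\oA)$ --- the circularity you yourself flag as ``delicate'' is not resolved. The paper avoids any extension: by Lemma~\ref{lsf(s)to0H2} one has $s\hoqK(s+s_0)\to0$ as $s\to0+$, whence $s(s+s_0-\oA)^{-1}\to0$ since $(s+s_0-A)^{-1}\to(s_0-A)^{-1}$; if $s_0\in\sigma(\oA)$ this contradicts the blow-up bound $\|(z-\oA)^{-1}\|\ge 1/d(z,\sigma(\oA))$ of Lemma~\ref{lSGexplodes}. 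Finally, your point-spectrum dichotomy does not produce an eigenvector of $A$: assuming $s\in\rho(A)$ toward contradiction only re-proves $s\in\sigma(A)$, which is weaker than $s\in\sigma_p(A)$. The missing step is that $\oA x_0=sx_0$ forces $\oqK x_0(t)=\efn^{st}\oK x_0$, which lies in $\L^2_\omega$ with $\re s\ge\omega$ only if $\oK x_0=0$, and then $Ax_0=\oA x_0-B\oK x_0=sx_0$ directly.
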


In particular, output-stabilizing feedback does not
 add unstable spectrum ($\sigma(\oA)\cap\cRHP$).
Actually, we prove the stronger claim
  $\rho(A)\cap$``$\rho(\hoqK)$''$\sub\rho(\oA)$
 in $2^\circ$ below.
Moreover, only the eigenvalues
 ($\sigma_p(A):=\{s\in\C\I (s-A)x_0=0$ for some $x_0\in H\pois\{0\}\}$)
 of $A$ may be those of $A_0$ on $\overline{\C_\omega^+}$.
See also Lemma~\ref{lWPLSform}. %

By duality (see Lemma~\ref{lWPLSformDual}),
  $\sigma(A^*)\cap\overline{\C_\omega^+}\sub\sigma(A_0^*)$ and
 $\sigma_p(A^*)\cap\overline{\C_\omega^+}\sub\sigma_p(A_0^*)$
 if $\qB$ is $\omega$-stable.
Recall that $\sigma(A^*)=\sigma(A)^*$. %

\begin{proof}
$1^\circ$ See Lemma 8.3.17(a) of [M02] for $A_0$ and $C_0$;
 here $(\Cc,\Dc)$ is any compatible pair for $\ALS$.
From (8.61) and (8.63)--(8.64) of [M02]
 (in (8.63) ``$\Cc$'' should be ``$+\Cc$''),
 we get the other two equations. %

$2^\circ$ {\em Assume that $\Omega\sub\C$ is open and connected
 and contains some right half-plane.
Assume that $\hoqK$ has a holomorphic extension %
  $\Omega\to\BL(H,U)$.
Then $\Omega':=\rho(A)\cap\Omega\sub\rho(\oA)$
 and $(s-\oA)^{-1}=(s-A)^{-1}[I+B\hoqK(s)]=:f(s)$ on 
 $\Omega'$:}
Fix $z>\max\{\omega_A,\omega_{\oA}\}$.
Since $(z-\oA)^{-1}=f(z)\in\BL(H)$ is one-to-one,
 our claim follows from Lemma~\ref{lPseudoRes} once we
 have (\ref{eRes}).
Set $R_s:=(s-A)^{-1}$ to have, for any $s\in\Omega'$, that
\[ f(s)-f(z) = (R_s-R_z)[I+B\hoqK(z)] + R_s B[\hoqK(s)-\hoqK(z)], \]
 and 
 $f(s)f(z) %
 = R_sR_z [I+B\hoqK(z)] + R_s B\hoqK(s)R_z[I+B\hoqK(z)]$.
By these and the Resolvent equation,
 $f(s)-f(z)-(z-s)f(s)f(z)  = R_s B W$, %
 where
\[ W:=\hoqK(s)-\hoqK(z)-(z-s)\hoqK(s)R_z[I+B\hoqK(z)], \]
 hence $W(z-\oA)=\hoqK(s)(z-\oA) -K_0 -(z-s)\hoqK(s) 
  = \hoqK(s)(s-\oA)-\oK=0$ on $\Dom(\oA)$.
(We used here the fact that 
 $\hoqK(s)(s-\oA)=\oK$ for $s>z$,
 hence for any $s\in\Omega'$.)
Since $\Ran(z-\oA)=H$, equation (\ref{eRes}) holds and we are done.

$3^\circ$ {\em Case $\oqK$ $\omega$-stable, $\sigma(\oA)$:}
Let $z\in\rho(A)\cap\overline{\C_\omega^+}$;
 we should show that $z\in\rho(\oA)$.
W.l.o.g., $\omega\le0$ and $z=0$ (replace $A$ by $A-z$
 as in Lemma 6.1.9 of [M02]). %
We have $\C_\omega^+\cap\rho(A)\sub\rho(\oA)$, by $2^\circ$,
 hence $s\in\rho(\oA)$ for small $s>0$.
Since $\hoqK\in\Hstrong^2$, we have $s\hoqK(s)\to0$ (uniformly), %
 as $s\to0+$, by Lemma~\ref{lsf(s)to0H2}. 
But $(s-A)^{-1}\to A^{-1}$ and $(s-A)^{-1}B\to A^{-1}B$,
 hence $s(s-\oA)^{-1}\to0$, %
 hence $0\not\in\sigma(\oA)$, by Lemma~\ref{lSGexplodes}.

$4^\circ$ {\em Case $\oqK$ $\omega$-stable, $\sigma_p(A)$:}
As in $3^\circ$,
 assume now that $0\in\overline{\C_\omega^+}\pois\sigma_p(A)$, $\omega\le0$.
If $x_0\in\Dom(\oA)$ and $\oA x_0=0$, then $\oqA x_0\equiv x_0$, 
 hence $\oqK x_0\equiv \oK x_0$,
 hence $\oK x_0=0$ (since $\oqK\in\L^2$),
 hence $A x_0=\oA x_0+B\oK x_0=0$,
 hence ($x_0\in\Dom(A)$ and) %
 $x_0=0$.
\end{proof}

Next we give similar but stronger
 results for (well-posed) state feedback,
 i.e., equation (\ref{eALSb}) in the frequency domain.
By Lemma~\ref{lCharFct} (and Definition~\ref{dAdmKF0}),
 both sides of (\ref{eqXinv=qM}) equal
 the Laplace transforms of the components of $\ALSClL$ on some right half-plane
 (see Proposition 6.6.18 of [M02] for details and further results).
By Example~\ref{exaHans}, the Laplace transforms
 need not equal (\ref{eqXinv=qM}) outside $\rho_\infty(A)$.
Nevertheless, (\ref{eqXinv=qM}) itself holds
 wherever both sides are defined:
\begin{theorem}[\pmbold{$\rho(A)\pois\sigma(\hqM) \sub \rho(\AClL)
 \ \ \&\ \ \hqM_{\ALSClL}=\hqX_\ALS^{-1}$}]\label{qXinv=qM} %
Let $\qKF$, $\qM$ and $\ALS=\qABCD$ be as in Definition~\ref{dAdmKF0},
 and set $\qX:=I-\qF\ (=\qM^{-1})$.
Let $s\in\rho(A)$.
Then $s\in\rho(\AClL) \IFF \hqXc(s)\in\cG\BL(U)$
 ($\IF \re s\ge\omega$ if $\qKClL$ or $\qBClL$ is $\omega$-stable).
Moreover, for all $s\in\rho(A)\cap\rho(\AClL)$, we have
\begin{equation}
  \label{eqXinv=qM}
\begin{aligned} %
\bsysbm{(s-\AClL)^{-1} \| (s-\AClL)^{-1}\BClL  \crh
 \CClL(s-\AClL)^{-1} \|  (\hqDClL)_{\ALSClL}(s)  \cr
 \KClL(s-\AClL)^{-1} \| \hqM_{\ALSClL}(s)} \hspace{16.5em}\\ %
 \ \ \  =\bsysbm{(s-A)^{-1}[I+B\KClL(s-\AClL)^{-1}] \| %
      (s-A)^{-1}B\hqM_{\ALSClL}(s)  \crh
  C(s-A)^{-1}+\hqD_\ALS(s)\hqM_{\ALSClL}(s)K(s-A)^{-1} \|
       \hqD_\ALS(s)\hqM_{\ALSClL}(s)  \cr
  \hqM_{\ALSClL}(s)K (s-A)^{-1} \|  \hqX_{\ALSext}(s)^{-1}}.
\end{aligned}
\end{equation}
\end{theorem}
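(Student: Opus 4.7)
The plan is to apply Theorem~\ref{oALSgen} twice and then exploit the algebraic identity $\qX\qM = I = \qM\qX$ lifted to characteristic functions. First I would apply Theorem~\ref{oALSgen} to $\ALS$ itself with $\oqK := \qKClL$ (which is a control in WPLS form, since $\oALS = \ALSClL\bsm{I\cr 0}$ is a WPLS by Definition~\ref{dAdmKF0}). This immediately yields $\AClL = A + B\KClL$ on $\Dom(\AClL)$, the resolvent identity
\[ (s-\AClL)^{-1} - (s-A)^{-1} = (s-A)^{-1}B\,\KClL(s-\AClL)^{-1} \]
on $\rho(A)\cap\rho(\AClL)$, and the output relation $\CClL(s-\AClL)^{-1} = C(s-A)^{-1} + \hqD_\ALS(s)\KClL(s-\AClL)^{-1}$. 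These account for the left column and the $(2,1)$-entry of (\ref{eqXinv=qM}).

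Next I would apply Theorem~\ref{oALSgen} to the auxiliary WPLS $\tALS := \tsysbm{\qA\|\qB\crh -\qK\|\qX}$, whose characteristic function is exactly $\hqXc$, again using the control $\oqK := \qKClL$. The crucial observation is that the controlled output map of $\tALS$ is $-\qK + \qX\qKClL = -\qK + \qX\qM\qK = 0$, so the generator $\tC_0$ vanishes on $\Dom(\AClL)$. The output-resolvent formula of Theorem~\ref{oALSgen} then gives
\[ 0 = -K(s-A)^{-1} + \hqXc(s)\,\KClL(s-\AClL)^{-1}, \]
so $\hqXc(s)\KClL(s-\AClL)^{-1} = K(s-A)^{-1}$ on $\rho(A)\cap\rho(\AClL)$. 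A completely analogous computation, applied with the roles of $\ALS$ and $\ALSClL$ swapped and with $(-\qK,-\qF)$-type feedback mapping $\ALSClL$ back to $\ALS$, yields the companion identity $\hqM_{\ALSClL}(s)\hqXc(s) = I = \hqXc(s)\hqM_{\ALSClL}(s)$ on the (nonempty) intersection with the common stability half-plane; these extend by holomorphicity to all of $\rho(A)\cap\rho(\AClL)$ via Lemma~\ref{lCharFct}. Combined with the resolvent identity and Step one, this produces every entry of (\ref{eqXinv=qM}), including $\hqM_{\ALSClL}(s) = \hqXc(s)^{-1}$.

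For the equivalence $s\in\rho(\AClL) \IFF \hqXc(s)\in\cG\BL(U)$, the forward direction follows from the identity $\hqXc(s)\hqM_{\ALSClL}(s) = I$ just established (and its mirror), since $\hqM_{\ALSClL}(s)$ is bounded for $s\in\rho(\AClL)$ by Lemma~\ref{lCharFct}(c). For the converse, assuming $\hqXc(s)\in\cG\BL(U)$ for some $s\in\rho(A)$, I would define the candidate resolvent
\[ R(s) := (s-A)^{-1} + (s-A)^{-1}B\,\hqXc(s)^{-1}K(s-A)^{-1} \]
and verify, using Lemma~\ref{lPseudoRes} and the compatible-pair formula $\hqXc(s) = I - K(s-A)^{-1}B$ from Lemma~\ref{lCharFct}(c), that $R(s)$ satisfies the pseudoresolvent identity~(\ref{eRes}) and inverts $(s-\AClL)$. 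The half-plane statement (that $\re s \ge \omega$ suffices when $\qKClL$ or $\qBClL$ is $\omega$-stable) follows by Lemma~\ref{lCharFct}(b1)--(b2) applied to $\ALSClL$: one of these stability hypotheses makes $\hqM_{\ALSClL}$ holomorphic and bounded on $\C_\omega^+$, so $\hqXc(s) = \hqM_{\ALSClL}(s)^{-1}$ exists there via the already-established identity.

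The main obstacle will be carefully justifying the extension from the right half-plane $\C_{\omega_A}^+$ (where characteristic and transfer functions coincide and the algebraic identities $\qX\qM = I$ in $\TIC$ transfer directly by Theorem~\ref{TransferFct1}) to all of $\rho(A)\cap\rho(\AClL)$. On $\C_{\omega_A}^+$ every identity in (\ref{eqXinv=qM}) reduces to a calculation with Laplace transforms via Lemma~\ref{lCharFct}(d)--(e), but outside this half-plane Example~\ref{exaHans} warns that characteristic functions may fail to reflect transfer function data; the extension must therefore proceed by holomorphicity within the connected component $\rconn(\rho(A)\cap\rho(\AClL))$, using that both sides of each identity are holomorphic operator-valued maps agreeing on a nonempty open set. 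The rest is bookkeeping: once $\hqM_{\ALSClL}(s) = \hqXc(s)^{-1}$ and $\KClL(s-\AClL)^{-1} = \hqM_{\ALSClL}(s)K(s-A)^{-1}$ are in hand, the remaining entries — in particular $(\hqDClL)_{\ALSClL}(s) = \hqD_\ALS(s)\hqM_{\ALSClL}(s)$ — drop out of Step one combined with $\qN = \qD\qM$.
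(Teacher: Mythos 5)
Your architecture is close to the paper's: the paper also applies Theorem~\ref{oALSgen} with $\oqK:=\qKClL$, also exploits the vanishing of $-\qK+\qX\qKClL$ applied to the row $\bsysbm{\qK\|-\qX}$ to get $\hqXc(s)\,\KClL(s-\AClL)^{-1}=K(s-A)^{-1}$, and also handles the remaining entries by augmenting the output row. But there is a genuine gap in the step you yourself flag as the main obstacle. The identity $\hqXc(s)\hqM_{\ALSClL}(s)=I$ is claimed for \emph{all} $s\in\rho(A)\cap\rho(\AClL)$, and your route to it --- transfer-function identity $\hqX\hqM=I$ on a stability half-plane followed by holomorphic extension --- only reaches $\rconn(\rho(A)\cap\rho(\AClL))$. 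The intersection need not be connected, and the characteristic functions are \emph{not} defined by analytic continuation but by the explicit algebraic extension $\hqXc(s):=\hqX(z)+(z-s)K(s-A)^{-1}(z-A)^{-1}B$ anchored at a fixed $z$ in the half-plane (Example~\ref{exaqD-Different-a} is exactly the warning that continuation and the algebraic extension can disagree, and that continuation around nondiscrete spectrum is not even well defined). The paper therefore proves $\hqXc(s)\hqM_{\ALSClL}(s)=I$ by writing out both factors via their defining extension formulas from the common base point $z$ and collapsing the cross terms with the resolvent equation --- a pointwise algebraic computation valid on every component of $\rho(A)\cap\rho(\AClL)$. Without some such component-free argument your proof establishes the theorem only on the rightmost component.

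Two smaller points. First, in the converse direction of the equivalence you write $\hqXc(s)=I-K(s-A)^{-1}B$; in the general WPLS setting this expression need not make sense (nothing forces $(s-A)^{-1}BU\sub\Dom(A)$), and one must use a compatible pair $(\Kc,\Fc)$ with $\hqXc(s)=\Xc-\Kc(s-A)^{-1}B$ and the relation $\Xc\KClL=\Kc$ on $\Dom(\AClL)$, as the paper does when it verifies directly that $T:=(I+R_sB\hqXc(s)^{-1}K)R_s$ is a left inverse of $s-\AClL$ (surjectivity then comes from the dual system, a step you omit). Second, your candidate resolvent $R(s)$ is the same operator $T$, but verifying the pseudoresolvent identity of Lemma~\ref{lPseudoRes} between $R(s)$ and $(s_0-\AClL)^{-1}$ requires relating $\hqXc(s)^{-1}$ to $\hqXc(s_0)^{-1}$ algebraically, which is essentially the same computation you were trying to avoid; the paper's direct verification of $T(s-\AClL)=I$ on $\Dom(\AClL)$ is shorter.
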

\toistoa{
(Here $\qM-I$ is the I/O map of the corresponding closed-loop system $\ALSClL$,
 as in [S04] or Definition 6.6.10 of [M02].
Note that $\qKF$ is admissible for $\qAB$ iff
 $\ssysbm{\qA\|\qB\crh -\qK\|\qX}$ has a flow inverse,
 in which case $\ALSinv = \ALSClL+\ssysbm{0\|0\crh 0\|I}$.) 
} %

Observe that above we refer to the characteristic functions of 
 these specific realizations
 (and $\qM_{\ALSClL}:=I+(\qFClL)_{\ALSClL}$). 
E.g., if $\qABKF$ the system of Example~\ref{exaHans} %
 but with $F:=1/2$,
 then $\hqF\equiv 1/2\equiv \hqX$ on $\C$,
 hence $\hqM\equiv 2$,
 but $\hqXc=3/2=\hqM_{\ALSClL}^{-1}$ on the unit disc, %
 whereas $\hqXc\equiv 1/2\equiv\hqM_{\ALSClL}^{-1}$ %
 for the alternative realization $\qABKF:=\ssyspm{0\|0\crh 0\|1/2}$.

Exchange the roles of $\ALS$ and $\qABCDClL$,
 to obtain further formulas and implications
 (with the roles of $\pm\bsysbm{\qK\|\qF}$ and $\mp\bsysbm{\qKClL\|\qFClL}$
  exchanged, as in Lemma 6.6.14 of [M02]).
Also Theorem~\ref{oALSgen} applies both ways.

\begin{proof}[Proof of Theorem~\ref{qXinv=qM}:]
Set $R_s:=(s-A)^{-1}\ (s\in\rho(A))$,
$T_s:=(s-\AClL)^{-1}\ (s\in\rho(\AClL))$.

I Let $s\in\rho(A)$. %
Let $(\Kc,\Fc)$ be a compatible pair for $\qABKF$,
 $\Xc:=I-\Fc$.
Set $X:=\hqXc(s)=\Xc-\Kc(s-A)^{-1}B$.

$1^\circ$ Assume that $M:=X^{-1}\in\cG\BL(U)$ exists.
We shall show that $T=(s-\AClL)^{-1}$,
 where $T:=(I+R_sBMK)R_s\in\BL(H)$ (actually, $T\in\BL(H,H_B)$).

By Theorem~\ref{oALSgen}
 and, e.g., the proof of (b1) of Proposition 6.6.18 of [M02],
 we have $\AClL=A+B\KClL$ and $\Xc\KClL=\Kc$
 on $\Dom(\AClL)$. %
Consequently, on $\Dom(\AClL)$ we have %
 $T(s-\AClL)
 = (I+R_sBM\Kc)R_s(s-A-B\KClL)
 = (I+R_sBM\Kc)(I-R_s B\KClL)
 = I+R_sBM[\Kc-X\KClL-\Kc R_s B\KClL]
 = I+R_sBM[\Kc-\Xc\KClL]=I$.

By the dual result,
 also $(\bar s-\AClL^*)=(s-\AClL)^{*}$ has a bounded left-inverse,
 hence $s-\AClL$ has a bounded right-inverse,
 hence $s\in\rho(\AClL)$
 (because $s-\AClL$ is also closed, densely defined and one-to-one). 

$2^\circ$ Assume that $s\in\rho(A)\cap\rho(\AClL)$.
Then $\hqXc(s)=\hqM_{\ALSClL}(s)^{-1}$,
 by part II of this proof, hence then $\hqXc(s)\in\cG\BL(U)$.

$3^\circ$ {\em ``$\IF\ \re s\ge\omega$'':}
This follows from Theorem~\ref{oALSgen}
 (use duality for $\qBClL$).

II
Choose some $z>\max\{\omega_A,\omega_{\AClL}\}$.
Fix $s\in\rho(A)\cap\rho(\AClL)$.

$1^\circ$ ``$\hqM_{\ALSClL}$'':
 $\hqXc(s):=\hqX(z)+(z-s)KR_sR_zB$
 (see p.~\pageref{pageCharFct}), and 
 $\hqM_{\ALSClL}(s):=\hqM(z)+(z-s)\KClL T_sT_z\BClL$.

Therefore, $S:=\hqXc(s)\hqM_{\ALSClL}(s)
 =I+(z-s)\hqX(z)\KClL T_sT_z\BClL
  + (z-s)KR_s R_zB\linebreak %
 \hqM(z)  +
 (z-s)^2KR_sR_zB \KClL T_sT_z\BClL$.
But $\KClL(z-\AClL)^{-1}=\qKClL(z)$,
 and $\hqX(z)\hqKClL(z)=\hqK(z)$ (by (6.133) of [M02]);
 similarly, $R_zB\hqM(z)=\shat{\qBClL}(z)=(z-\AClL)^{-1}\BClL$,
 and, by (6.137), $R_zB \KClL T_z=T_z-R_z$ %
 hence
 $S-I= (z-s) K V  \BClL$,
 where $V=-R_zT_s+R_sT_z+(z-s)R_s(T_z-R_z)T_s$. %
By the resolvent equation, $V=0$, hence $S=I$.
Similarly, $\hqM_{\ALSClL}(s)\hqXc(s)=...=I$. 
$2^\circ$ ``$\hqBClL,\hqKClL,\hqAClL$'':
Apply  Theorem~\ref{oALSgen} to $\bsysbm{\qK\|-\qX}$
 in place of $\qCD$ to obtain that
 $0(s-\oA)^{-1} = KR_s - \hqX_\ALS(s)\oK T_s$,
 i.e., $KR_s = \hqX_\ALS(s)\oK T_s$.
By $1^\circ$, this equals $\hqM_{\ALSClL}(s) KR_s = \oK T_s$,
 as claimed.
The formula for $(s-\AClL)^{-1}\BClL$
 follows by duality.

$3^\circ$ ``$\hqCClL$, $\hqDClL$'':
Apply the above for $\ssysbm{\qC\|0&\qD\cr \qK\|0&\qF}$
 in place of $\qKF$ to obtain 
 that $\sbm{I&\hqD_\ALS\hqXc^{-1}\cr 0&\hqXc^{-1}}
  = \sbm{I&(\hqDClL)_{\ALSClL}\hqM_{\ALSClL}\cr 0&\hqM_{\ALSClL}}$
 and $\sbm{\CClL\cr \KClL}(s-\AClL)^{-1}
  = \sbm{C+(\hqDClL)_{\ALSClL}\hqM_{\ALSClL}K\cr \hqM_{\ALSClL}K} (s-A)^{-1}$.
\end{proof}

If $\sigma(A)$ is nice and $\dim U<\infty$, then any
 I/O-stabilizing state-feedback pair allows for a holomorphic extension
 of $\hqN,\hqM$ over the imaginary axis:
\begin{lemma}\label{lNMholomUfinite} %
Use notation of  Definition~\ref{dAdmKF0}, $\qKF$ being admissible.
If $\qN,\qM$ are stable, $\dim U<\infty$,
 $\sigma(A)\cap i\R$ consists of isolated poles,
 and $\inf\rho_\infty(A)\le0$,
 then there is an open $\Omega\sub\C$
 s.t.\ $\cRHP\cup(\rho_\infty(A)\pois Z_g)\sub\Omega$
 and $\hqN,\hqM$ have holomorphic extensions to $\Omega$.
Moreover, then $\rho_\infty(A)\pois Z_g\sub\rho_\infty(\AClL)$,
 and the only nonremovable singularities of (\ref{eqXinv=qM})
 on $\rho_\infty(A)$ are isolated poles.
\end{lemma}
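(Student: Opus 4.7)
The plan is to exploit formula (\ref{eqXinv=qM}) from Theorem~\ref{qXinv=qM}, together with the hypothesis $\dim U<\infty$, to reduce the extension problem to the vanishing set of a single scalar holomorphic function on $\rho_\infty(A)$. First I would set up the function $g$. By Lemma~\ref{lCharFct}(c), the characteristic function $\hqXc(s)$ of $\ssysbm{\qA\|\qB\crh-\qK\|\qX}$ is holomorphic on $\rho_\infty(A)$ with values in $\BL(U)$. Since $\dim U<\infty$, set $g(s):=\det\hqXc(s)$, a scalar holomorphic function. Admissibility of $\qKF$ yields $\qX\in\cG\TIC_\infty(U)$, so $\hqX(s)$ is invertible on some right half-plane $\C_\omega^+\sub\rho_\infty(A)$ on which $\hqX$ agrees with $\hqXc$; hence $g\not\equiv 0$, and the identity theorem on the connected set $\rho_\infty(A)$ shows that $Z_g:=\{s\in\rho_\infty(A):g(s)=0\}$ is discrete.

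Next I would identify the closed-loop resolvent set. For $s\in\rho_\infty(A)\pois Z_g$, $\hqXc(s)$ is invertible in $\BL(U)$ (using $\dim U<\infty$), so by Theorem~\ref{qXinv=qM}, $s\in\rho(\AClL)$ and the entire right-hand side of (\ref{eqXinv=qM}) is holomorphic there. Since removing a discrete subset from a connected open subset of $\C$ preserves connectedness, $\rho_\infty(A)\pois Z_g$ is open and connected, and it contains a right half-plane that is already in $\rho_\infty(\AClL)$; hence $\rho_\infty(A)\pois Z_g\sub\rho_\infty(\AClL)$. On this set the characteristic functions $\hqM_{\ALSClL}=\hqXc^{-1}$ and $\hqN_{\ALSClL}=(\hqDClL)_{\ALSClL}$ are holomorphic by Lemma~\ref{lCharFct}(c), and they coincide with the transfer functions $\hqM,\hqN\in\Hoo(\C^+)$ on a common right half-plane, so by uniqueness of holomorphic continuation they furnish holomorphic extensions of $\hqN,\hqM$ to $\rho_\infty(A)\pois Z_g$.

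To include $\cRHP$ in $\Omega$, I would take $\Omega$ to be the union of $\C^+$, $\rho_\infty(A)\pois Z_g$, and suitable small discs about the exceptional points of $i\R$. If $s_0\in i\R\cap(\rho_\infty(A)\pois Z_g)$, an open neighbourhood already lies in $\Omega$. For an exceptional $s_0\in i\R$ (i.e.\ $s_0\in Z_g$ or $s_0\in\sigma(A)\cap i\R$), the hypotheses ``$\sigma(A)\cap i\R$ consists of isolated poles'' and $\inf\rho_\infty(A)\le 0$ together with discreteness of $Z_g$ give a punctured disc $V\pois\{s_0\}\sub\rho_\infty(A)\pois Z_g$, on which $\hqN,\hqM$ extend holomorphically by the previous step. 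Since $\hqN,\hqM\in\Hoo(\C^+)$ are bounded on $V\cap\C^+$ and the continuation is meromorphic on $V$, a pole at $s_0$ would force blow-up from every direction, contradicting the $\C^+$-boundedness; by Riemann's removable-singularity theorem the extension is holomorphic at $s_0$. Finally, for $\rho_\infty(A)$ itself the right-hand side of (\ref{eqXinv=qM}) fails holomorphicity only at points of $Z_g$, i.e.\ at isolated zeros of the scalar $g$, which are at worst isolated poles of $\hqXc^{-1}$; this gives the last assertion.

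The main obstacle, and the place that deserves real care, is the local connectedness needed in Step~4: one must show that each $s_0\in\sigma(A)\cap i\R$ really admits a punctured disc contained in $\rho_\infty(A)$ and not in some other connected component of $\rho(A)$. This uses both the isolated-pole hypothesis (so a punctured disc lies in $\rho(A)$) and $\inf\rho_\infty(A)\le 0$, via a short path argument showing that points just off $i\R$ near $s_0$ can be joined inside $\rho(A)$ to the far-right half-plane that already lies in $\rho_\infty(A)$. Once this piece is secured, the remainder is essentially bookkeeping with Theorem~\ref{qXinv=qM} and the Riemann argument above.
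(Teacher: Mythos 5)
Your proposal is correct and follows essentially the same route as the paper's proof: the determinant of the characteristic function $\hqXc$, discreteness of $Z_g$, Theorem~\ref{qXinv=qM} for the resolvent inclusion and the extension formulas, and removability of the $i\R$-singularities via boundedness of $\hqN,\hqM$ on $\C^+$; the delicate connectedness point you flag is exactly what the paper handles with its union $G$ of maximal (punctured) discs centred on $i\R$ together with $\inf\rho_\infty(A)\le0$. The only sub-step you gloss over is that $\rho_\infty(A)\cap\C^+$ itself is connected (the paper's step $3^\circ$), which is what guarantees $\hqXc^{-1}$ agrees with $\hqM$ on \emph{all} of $\C^+\cap\rho_\infty(A)$ rather than just on a right half-plane, so that the glued function on $\Omega$ is single-valued — but this too follows from the same $G$-construction you invoke.
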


Here $Z_g:=\{s\in\rho_\infty(A)\I \det\hqX_{\ALSext}=0\}$ (it is discrete).
The requirement $\inf\rho_\infty(A)\le0$
 means that you can reach the imaginary axis from $+\infty$ through $\rho(A)$.
By the proof ($6^\circ$),
 the extensions equal the characteristic functions
 on $\rho_\infty(A)\pois Z_g$, which contains almost every point of $i\R$.

For this kind of systems, ``q.r.c.'' and ``r.c.'' are equivalent
 in the sense of Lemma~\ref{lNMrcUfinite}.

\begin{proof}
$1^\circ$ %
For each $t\in\R$, choose a maximal disc (punctured if necessary)
 with center $it$ and contained in $\rho(A)$.
Let $G$ be the union of these discs. %
Obviously, $G$ and $G\cap\C^+$ are connected open subsets of $\rho(A)$,
 and $i\R\pois\sigma(A)\sub G$. 

$2^\circ$ {\em We have $G\sub\rho_\infty(A)$,
 hence $i\R\pois\sigma(A)\sub\rho_\infty(A)$}
 (because $\inf\rho_\infty(A)\le0$ implies that $G\cap\rho_\infty(A)\ne\tyhja$).

$3^\circ$ {\em $\Omega_1:=\rho_\infty(A)\cap\C^+$ is connected:}
If $\Omega_1\sub V\cup W,\ V\cap W =\tyhja$,
 and, w.l.o.g., $G\cap\C^+\sub V$,
 then (the boundary) $\d W\sub\C^+$, %
 hence $\d W\cap\rho_\infty(A)=\tyhja$,
 hence $W$ is a component of $\rho_\infty(A)$ or $W=\tyhja$, QED.

$4^\circ$ {\em $g^{-1}=\hqM$ on $\Omega_1$:}
By Theorem~\ref{TransferFct1}, $\hqN,\hqM\in\H^\infty(\C^+;\BL)$.
Let $f:=\hqD_\ALS$, $g:=\hqX_{\ALSext}$, so that $f,g\in\H(\rho(A);\BL)$.
Since $\det g\not\equiv0$ on $\rho_\infty(A)$,
 the set $Z_g:=\{s\in\rho_\infty(A)\I \det g(s)=0\}$ %
 is discrete in $\rho_\infty(A)$.
By continuity, $g^{-1}=\hqM$ and $fg^{-1}=\hqN$
 on the whole $\Omega_1$.

$5^\circ$ {\em $\hqM,\hqN$ have unique holomorphic extensions to $\Omega$:}
Because $\Omega_2:=\rho_\infty(A)\cup\C^+\pois Z_g$ is open and connected,
 and $fg^{-1},g^{-1}$ are holomorphic on $\rho_\infty(A)\pois Z_g$,
 $\hqN,\hqM$ have unique holomorphic extensions
 $\hqN_e,\hqM_e:\Omega_2\to\BL$.
Also $\Omega:=\Omega_2\cup i\R$ is open and connected.
By the assumption, $f,g$ do not have essential singularities on $i\R$,
 hence so do not $\hqN_e,\hqM_e$; but they cannot have poles either,
 because they are bounded on $\C^+$,
 hence their singularities on $i\R$ are removable, QED.

$6^\circ$ {\em We have $\rho_\infty(A)\pois Z_g\sub\rho(\AClL)$:}
By Theorem~\ref{qXinv=qM}, 
 $V:=\rho_\infty(A)\pois Z_g\sub\rho(\AClL)$
 (hence $V\sub\rho_\infty(\AClL)$, hence $G\sub\rho_\infty(\AClL)$),
 and $\hqN_e,\hqM_e$ %
 coincide with the characteristic functions  of $\hqN,\hqM$ on $V$.

$7^\circ$ {\em Nonremovable singularities:}
The functions $(s-A)^{-1},C(s-A)^{-1},K(s-A)^{-1},(s-A)^{-1}B,\hqD_\ALS,\hqX_{\ALSext}$
 are holomorphic on $\rho(A)$
 and have at most isolated poles at those of $(s-A)^{-1}$
 (by the resolvent equation: e.g., $(s-A)^{-1}B=(s_0-s)(s-A)^{-1}(s_0-A)^{-1}B$
  and $(s_0-A)^{-1}B\in\BL(U,H)$). %
But $g^{-1}$ has at most isolated poles on $\rho_\infty(A)$,
 hence so do also the other elements of both sides of (\ref{eqXinv=qM}),
 since they are obtained from the functions mentioned above
 through sums and products.
\end{proof}

\NotesFor{Appendix~\ref{sSymbols}} %
In 1997, the author defined compatible pairs (under a different name)
 and developed state-feedback and Riccati equation theories for them.
The concept was further developed by O. Staffans and G. Weiss,
 including the existence for all WPLSs (see p.~202 of [M02]);
 the second paragraph of 
 Lemma~\ref{lComp} seems to be new (although partially in [M02]).
Also part of Lemma~\ref{lCharFct} is known, 
 due to Weiss, Staffans and others;
 see, e.g., [S04] or Chapter~6 of [M02] for further notes and details.

Example~\ref{exaHans} is due to Hans Zwart (p.~843 of [W94a]).
Example~\ref{exaqD-Different-a} was constructed in correspondence
 with Olof Staffans.
The rest of this appendix seems to be new.

The importance of characteristic functions and exact domains
 of transfer functions requires some explanation:
Recently, the so called reciprocal system theory
 has been used as a powerful tool
 to simplify WPLSs theory and AREs, due to Ruth Curtain and others
 (see, e.g., [C03]).
Late 2002, we pointed out that one must use the characteristic
 function instead of the transfer function in the reciprocal systems theory
 (and suggested a separate name and symbol,
 i.e., $\hqD_\ALS$ instead of $\hqD$).
Moreover, the reciprocal equations are justified
 on $\rconn(\rho(A)\cap\C^+)$ only,
 not on $\rho(A)$ or $\rho_\infty(A)$ in general,
 which restricts the applicability of the theory,
 although one can often circumvent these problems,
 as shown in [M03b] (and [M03]).
These findings, thereafter widely spread, were our motivations
 behind Lemma~\ref{lCharFct}, Example~\ref{exaqD-Different-a}
 and Theorem~\ref{qXinv=qM}.
Later we observed these results useful also in the standard RE theory.

\section{Laplace transforms}\label{sLaplace} %
In this section we list %
 some results on the Laplace transform
 (see (\ref{eLaplace})). Here $X$ stands for an arbitrary Banach space.

\begin{lemma}[\pmbold{$s\hf(s)\to f(0)$}]\label{lshf2f(0)} %
If $f\in\L^1_\omega(\R_+;X)$ is continuous at $0$, and $\omega\in\R$,
 then $s\hf(s)\to f(0)$ as $s\to+\infty$.
\end{lemma}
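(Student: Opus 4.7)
The plan is to exploit the fact that the measures $s e^{-st}\,dt$ form an approximate identity concentrating at $0$ as $s \to +\infty$. First I would observe that $\int_0^\infty s e^{-st}\,dt = 1$, so
\[
    s\hat{f}(s) - f(0) = \int_0^\infty s e^{-st}\bigl(f(t) - f(0)\bigr)\,dt.
\]
Then, given $\varepsilon > 0$, I would use continuity of $f$ at $0$ to pick $\delta > 0$ with $\|f(t) - f(0)\|_X < \varepsilon$ for $t \in [0,\delta]$. The integral over $[0,\delta]$ is bounded in norm by $\varepsilon \int_0^\delta s e^{-st}\,dt \le \varepsilon$, uniformly in $s > 0$.

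For the tail $[\delta,\infty)$ I would split into the $f(0)$ piece and the $f(t)$ piece. The $f(0)$ piece contributes $\|f(0)\|_X \int_\delta^\infty s e^{-st}\,dt = \|f(0)\|_X e^{-s\delta}$, which tends to $0$ as $s \to +\infty$. For the $f(t)$ piece, for $s > \omega$ I would factor $s e^{-st} = s e^{-(s-\omega)t} e^{-\omega t}$ and use the monotonicity $s e^{-(s-\omega)t} \le s e^{-(s-\omega)\delta}$ on $[\delta,\infty)$ (valid once $s - \omega \ge 0$), giving
\[
    \Bigl\|\int_\delta^\infty s e^{-st} f(t)\,dt\Bigr\|_X
    \le s e^{-(s-\omega)\delta} \int_\delta^\infty e^{-\omega t}\|f(t)\|_X\,dt
    \le s e^{-(s-\omega)\delta}\, \|f\|_{L^1_\omega}.
\]
Since $s e^{-(s-\omega)\delta} \to 0$ as $s \to +\infty$, the tail contribution vanishes. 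Combining the three estimates yields $\limsup_{s\to+\infty}\|s\hat{f}(s) - f(0)\|_X \le \varepsilon$, and since $\varepsilon > 0$ was arbitrary, the conclusion follows.

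There is no real obstacle here; the only point requiring minor care is ensuring that the tail bound uses the $L^1_\omega$ hypothesis correctly (the factor $e^{-\omega t}$ must be absorbed into the hypothesis before estimating the remaining exponential uniformly on $[\delta,\infty)$), and that we restrict to $s > \omega$ so that the factorization is legitimate — both routine.
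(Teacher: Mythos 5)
Your proof is correct and is essentially the paper's own argument: both use $\int_0^\infty s e^{-st}\,dt=1$ to write $s\hf(s)-f(0)$ as an integral against the approximate identity $se^{-st}\,dt$, split at a $\delta$ chosen by continuity at $0$, and bound the tail by $se^{-(s-\omega)\delta}\|f\|_{\L^1_\omega}+e^{-s\delta}\|f(0)\|$. Nothing to add.
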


\begin{proof}
$\|s\hf(s)-sf(0)/s\|=\|s\int_0^\infty[f(t)-f(0)]\efn^{-st}\,dt\|
 =\|s\int_0^\del+s\int_\del^\infty\|
 \le \eps/2 + |s\efn^{-(s-\omega)\del}|\|f\|_{\L^1_\omega} 
 + |\efn^{-s\del}|\|f(0)\|<\eps$ for $\del>0$ small
 and $s$ big enough. %
\end{proof}

\begin{lemma}\label{lhf'} %
If $f\in\L^2_\omega(\R_+;X)$,
 then $\shat{f'\,}(s)=s\hf(s)-f(0)\ \all s\in\C_\omega^+$.
\end{lemma}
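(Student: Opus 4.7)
The statement is the standard rule $\widehat{f'}(s) = s\widehat f(s) - f(0)$, so the plan is integration by parts on $[0,T]$ followed by $T\to\infty$. Implicit in the statement is that $f$ is absolutely continuous on $\R_+$ with distributional derivative $f'\in \L^2_\omega(\R_+;X)$ (otherwise neither $\widehat{f'\,}(s)$ nor $f(0)$ is defined); below I treat this as the working hypothesis.

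First I would record the key ingredient: $e^{-\omega\cdot}f$ lies in $\W^{1,2}(\R_+;X)$. Indeed $(e^{-\omega\cdot}f)' = e^{-\omega\cdot}f' - \omega e^{-\omega\cdot}f \in \L^2(\R_+;X)$. By the standard $\W^{1,2}\hookrightarrow \cC_0$ embedding for $X$-valued functions on $\R_+$, this yields that $e^{-\omega t}f(t)$ is bounded and continuous, with $e^{-\omega t}f(t)\to 0$ as $t\to+\infty$. If one prefers to avoid invoking the embedding, the same fact follows from the identity $\|e^{-\omega t}f(t)\|^2 - \|f(0)\|^2 = 2\re\int_0^t \langle e^{-\omega s}f(s), (e^{-\omega\cdot}f)'(s)\rangle\,ds$ plus Cauchy--Schwarz, which bounds $\|e^{-\omega t}f(t)\|$ uniformly and forces its limit to be zero (if the $\liminf$ were positive, $e^{-\omega\cdot}f$ could not be in $\L^2$).

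Next, fix $s\in\C_\omega^+$ and write $s = \omega + \sigma$ with $\re\sigma>0$. For any $T>0$, integration by parts (valid since $f$ is absolutely continuous) gives
\begin{equation*}
\int_0^T f'(t)\,e^{-st}\,dt \;=\; f(T)e^{-sT} - f(0) + s\int_0^T f(t)\,e^{-st}\,dt.
\end{equation*}
The boundary term at $T$ equals $(e^{-\omega T}f(T))\cdot e^{-\sigma T}$, which tends to $0$ by the previous step, and the integrals converge absolutely on $[0,\infty)$ since $\|f\,e^{-s\cdot}\|_1 \le \|e^{-\omega\cdot}f\|_2\,\|e^{-\sigma\cdot}\|_2 <\infty$ (and similarly for $f'$). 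Letting $T\to\infty$ yields $\widehat{f'\,}(s)=s\widehat f(s)-f(0)$.

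The main obstacle is really just the vanishing of $f(T)e^{-sT}$ at infinity; everything else is routine Fubini/absolute convergence. That obstacle is handled by the $\W^{1,2}_\omega \hookrightarrow e^{\omega\cdot}\cC_0$ observation above, which also justifies that $f(0)$ is a well-defined element of $X$.
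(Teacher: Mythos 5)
Your proof is correct and follows essentially the same route as the paper, which simply remarks that the identity "follows by partial integration for $f\in\W^{1,2}_\omega$" (and is otherwise taken as the defining extension to distributional derivatives). You have merely spelled out the two details the paper leaves implicit --- the working hypothesis $f\in\W^{1,2}_\omega$ and the vanishing of the boundary term $f(T)e^{-sT}$ --- both of which you handle correctly.
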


(This follows by, e.g., partial integration for $f\in\W^{1,2}_\omega$.
This is the obvious extension (sometimes even the definition)
 of the (Sobolev) distribution derivative.)

Knowing $\p{\hf(s),\hg(s)}$ on a half-plane $\C_\omega^+$
 characterizes $\p{f(t),g(t)}$ on $\R_+$: 
\begin{lemma}[\pmbold{$\p{\hf,\hg}_H=0\ \THEN\ \p{f,g}_H=0$} a.e.]\label{lhfhg=0fg=0} %
Let $f,g\in\L^2_\omega(\R_+;H)$,
 $F,G\in\L^2_\omega(\R_+;Y)$, $\omega\le\alpha<\beta$.
If $\p{\hf(s),\hg(s)}_H=\p{\hF(s),\hG(s)}_Y$ for $s\in\C_\alpha^+\pois\C_\beta^+$,
 then $\p{f(t),g(t)}_H=\p{F(t),G(t)}_Y$ for a.e.\ $t\ge0$.
\end{lemma}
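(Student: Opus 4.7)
The plan is to reduce the assertion to the injectivity of the scalar Laplace transform. Set $h(t):=\p{f(t),g(t)}_H-\p{F(t),G(t)}_Y$. By Cauchy--Schwarz, $h\in\L^1_{2\omega}(\R_+;\C)$, so its Laplace transform $\tilde h(s):=\int_0^\infty e^{-st}h(t)\,dt$ is holomorphic on $\C_{2\omega}^+$. If we can show $\tilde h$ vanishes on the real interval $(2\alpha,2\beta)$, then by the identity principle $\tilde h\equiv 0$ on $\C_{2\omega}^+$, and by Lerch's theorem (injectivity of the Laplace transform on $\L^1$) we conclude $h=0$ a.e., which is exactly the claim.

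To get $\tilde h=0$ on $(2\alpha,2\beta)$, I would apply Plancherel's theorem on each vertical line. Fix $a>\omega$ and set $f_a(t):=e^{-at}f(t)\chi_{\R_+}(t)\in\L^2(\R;H)$; its Fourier transform is $\hat{f_a}(\xi)=\hf(a+i\xi)$, and similarly for $g_a,F_a,G_a$. Polarized Plancherel for Hilbert-space-valued $\L^2$ functions gives
\begin{equation}
\int_\R \p{\hf(a+i\xi),\hg(a+i\xi)}_H\,d\xi \;=\; 2\pi\int_0^\infty e^{-2at}\p{f(t),g(t)}_H\,dt,
\end{equation}
and analogously with $F,G$. (The integrability of $\p{\hf(a+i\cdot),\hg(a+i\cdot)}_H$ on $\R$ is automatic from Cauchy--Schwarz applied to the two $\L^2$ functions.) For $a\in(\alpha,\beta)$, the hypothesis makes the two integrands on the left identical for every $\xi\in\R$, so the right-hand sides coincide; that is, $\tilde h(2a)=0$ for every $a\in(\alpha,\beta)$.

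The rest is routine: $(2\alpha,2\beta)$ is a set with a limit point inside the connected open set $\C_{2\omega}^+$ where $\tilde h$ is holomorphic, so $\tilde h\equiv 0$ throughout $\C_{2\omega}^+$, and then the standard uniqueness theorem for the Laplace transform forces $h=0$ almost everywhere on $\R_+$. I do not foresee any real obstacle: the only non-trivial ingredient is the Plancherel step, which is the standard bridge between $\L^2(\R_+;H)$ and $\H^2(\C^+;H)$ functions, and everything else is an application of analytic continuation and Lerch's theorem.
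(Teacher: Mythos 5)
Your proposal is correct and follows essentially the same route as the paper: both reduce the statement to a scalar function $h\in\L^1$ (the paper via stacking $f,F$ and $g,-G$ into $H\times Y$, you via the difference $\p{f,g}_H-\p{F,G}_Y$, which is the same thing), then show $\hh$ vanishes on a real interval by polarized Plancherel on vertical lines, and conclude by the identity theorem and injectivity of the Laplace transform.
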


\begin{proof} %
Take $F=0=G$ and $\omega=\alpha=0$ w.l.o.g.\
 (use $f\mapsto \efn^{-\alpha\cdot}\sbm{f\cr F},\
 g\mapsto\efn^{-\alpha\cdot}\sbm{g\cr -G}$,
 $\omega\mapsto \alpha$).
Set $h(t):=\p{f(t),g(t)}_H$, so that $h\in\L^1(\R_+;H)$.
Then $h(t)=0$ a.e.\ $\IFF\ \hh(s)=0\ (s\in\C^+)\ %
 \IFF\ \hh(s)=0\ (s\in(0,\beta))$.
But $\hh(s)=\int_0^\infty \efn^{-st}h(t)\,dt
  = \int_0^\infty \efn^{-st}\p{f(t),g(t)}\,dt
  = \p{f(t),g(t)}_{\L^2_{s/2}}
  = (2\pi)^{-1} \p{\hf,\hg}_{\L^2(s/2+i\R;H)}=0$.
\end{proof}

Next we allow for two different values for $s$ ($s$ and $z$)
 to obtain a stronger result
 (that leads to the \hoptIRE, \hIRE\ and \hSIRE): 
\begin{lemma}[\pmbold{$\p{\hf,\hg}_Y=(s+\bar z)\p{\hF,\hG}_H \IFF\ \p{\tau f,\piOt g}_{\L^2}=\big|_0^t \p{\tau F,G}_H$}]\label{lhFG=hfg} %
Assume that $f,g\in\L^2_\omega(\R_+;Y)$,
 $F,G\in\L^2_\omega\cap\cC(\R_+;H)$, $\omega\in\R$.
Then the following are equivalent:
\begin{itemlist}
  \item[(i)] $\p{\hf(s),\hg(z)}_Y=
  (\bar z+s)\p{\hF(s),\hG(z)}_H - \p{\hF(s),G(0)}_H$
 for a.e.\ $s,z\in\C_\omega^+$.
  \item[(v)] $\p{\piOt\tau^r f,\piOt g}_{\L^2}=\p{F(r+t),G(t)}_H-\p{F(r),G(0)}_H$
 for all $t\ge0$, $r\in\R$.
\end{itemlist} %
\end{lemma}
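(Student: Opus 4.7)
The approach is to apply a double Laplace transform to (v) --- first in $t$ with parameter $\bar z$, then bilaterally in $r$ with parameter $s$ (extending $f$ and $F$ by zero to $r<0$) --- and then shift the $z$-variable to recover (i). The key identification is that what naturally appears from the time-domain integral is $\langle \hat f(s),\hat g(z-\bar s)\rangle$, not $\langle \hat f(s),\hat g(z)\rangle$, and the relabeling $z_1:=z-\bar s$ (so $\bar z=\bar z_1+s$) is precisely what produces the coefficient $(\bar z+s)$ in (i).

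For (v)$\Rightarrow$(i), I would fix $s,z\in\C_\omega^+$ with $\re z$ sufficiently large that all integrals converge absolutely (the full claim then follows by holomorphic extension in $z$). Multiplying (v) by $e^{-sr-\bar z t}$ and integrating over $r\in\R$ and $t\ge 0$: on the left, Fubini plus the elementary $\int_u^\infty e^{-\bar z t}\,dt=\bar z^{-1}e^{-\bar z u}$ collapses the inner $\int_0^t(\cdot)\,du$ to $\bar z^{-1}$ times a single $u$-integral, and the bilateral $r$-integral (which for fixed $u$ equals $e^{su}\hat f(s)$ after $r'=r+u$) yields $\bar z^{-1}\langle \hat f(s),\hat g(z-\bar s)\rangle_Y$. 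On the right, the constant-in-$t$ piece $-\langle F(r),G(0)\rangle$ transforms directly to $-\bar z^{-1}\langle \hat F(s),G(0)\rangle_H$, and the piece $\langle F(r+t),G(t)\rangle$ produces $\langle \hat F(s),\hat G(z-\bar s)\rangle_H$ by the same Fubini and change-of-variable calculation. Multiplying through by $\bar z$ and relabeling $z_1:=z-\bar s$ yields exactly (i) with $z_1$ in place of $z$; since $z$ was arbitrary on an open set, (i) holds on a nonempty open subset of $\C_\omega^+\times\C_\omega^+$ and hence on the whole product by holomorphic continuation of both sides (or, for the a.e.\ statement asserted in the lemma, immediately).

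For (i)$\Rightarrow$(v), the above chain is reversible: applying uniqueness of the Laplace transform (Lemma~\ref{lhfhg=0fg=0}) first in $z$ for each fixed $s$, and then in $s$, one recovers (v) for a.e.\ $(r,t)$; both sides of (v) are continuous in $r$ and $t$, so the identity extends from a.e.\ to everywhere.

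The main obstacle is the careful bookkeeping of the Fubini interchanges, the convergence window $\re z>\re s+\omega$ needed for the inner Laplace integrals, and --- most subtly --- spotting the shift $z\mapsto z-\bar s$ between the ``natural'' frequency variable that the double integral produces and the variable $z$ of the target identity. The asymmetry in (i), where $\langle \hat F(s),G(0)\rangle$ appears but no corresponding $\langle F(0),\hat G(z)\rangle$ does, reflects the asymmetric way $r$ and $t$ enter (v): the inner integral $\int_0^t(\cdot)\,du$ has its upper endpoint tied to the $t$-variable (hence couples to $\bar z$ and produces the $\bar z^{-1}$ that clears on multiplication), while $r$ enters only through translations and contributes no such boundary term. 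Once these points are recognized, the rest of the proof is routine verification using $\widehat{f'}(s)=s\hat f(s)-f(0)$ (Lemma~\ref{lhf'}) only to cross-check the final algebra.
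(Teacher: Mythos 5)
Your proposal is correct and follows essentially the same route as the paper's proof: a double Laplace transform of (v) (bilateral in $r$ at $s$, one-sided in $t$ at $\bar z$), Fubini, the observation that the natural frequency variable produced is $z-\bar s$ so that the relabeling $z\mapsto z-\bar s$ yields the factor $(\bar z+s)$, holomorphic extension in $z$, and uniqueness of the Laplace transform for the converse direction. No substantive differences.
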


By holomorphicity, we may replace $\C_\omega^+$
 by any of its subsets having a limit point in $\C_\omega^+$.
Warning: (v)  must hold for all $r\in\R$, not merely for $r\ge0$.
In some applications this can be achieved by interchanging $f$ with $g$.

\begin{proof} %
(We do not require $F(0),G(0)$ be zero
 (i.e., continuous from the left).)

By $\Lap_s^r h:=\int_\R \efn^{-rs}h(r,t)\,dr$ we
 denote the value of the Laplace transform of $h$ w.r.t.\ $r$ at $s$.
Recall that we use the two-sided Laplace transform ($\int_{-\infty}^\infty$, %
 not $\int_0^\infty$), although there is no difference for $f,g,F,G$
 (since they are zero on $\R_+$ unlike possibly $\tau^r f,\tau^r F$).

We transform  the left- and right-hand sides of (v)
 to obtain (i)
 ($\Lap_\barz^t\Lap_s^r \khi_{\R_+}(t)$, i.e., first w.r.t.\ $r$,
 then one-sidedly w.r.t.\ $t$;
 the multiplication by $\khi_{\R_+}(t)$ 
 makes the original expressions equal on the whole $\R\times \R$).
(By going the equations backwards with norm signs,
 one observes that the integrals converge absolutely.
Therefore, the Fubini Theorem is admissible;
 obviously, all functions are product measurable.) %
On the left we have
 (when $\re s>\omega,\ \re z>\max\{0,\omega+\re s\}$)
 \begin{align}
 &\Lap_\barz^t\khi_{\R_+}(t) \Lap_s^r \int_0^t \p{f(r+p),g(p)}_Y\,dp
  &=& \Lap_\barz^t \khi_{\R_+}(t) \int_0^t \p{\efn^{sp}\hf(s),g(p)}_Y\,dp\\
  &= \int_0^\infty     \int_p^\infty \efn^{-\bar z t}
         \p{\efn^{sp}\hf(s),g(p)}_Y\,dt\,dp
  &=& \int_0^\infty \frac{\efn^{-\bar z p}}{\bar z}
          \p{\efn^{sp}\hf(s),g(p)}_Y\,dp\\
  &= \frac1{\bar z}
          \p{\hf(s),\hg(z-\bar s)}_Y,
 \end{align}
 and on the right (recall that $F(t)=0=G(t)$ for $t<0$): 
 \begin{align}
 \Lap_\barz^t \khi_{\R_+}(t) \efn^{st} \p{\hF(s),G(t)}_H
    - \Lap_\barz^t \khi_{\R_+}(t) \p{\hF(s),G(0)}_H
  =  \p{\hF(s),\hG(z-\bar s)}_H    - \frac1\barz \p{\hF(s),G(0)}_H
 \end{align}
Since the integrands are in $\L^2_{\alpha}$ for all $\alpha\ge\omega$,
 the transforms must be equal on $\C_\omega^+$
 iff ($\khi_{\R_+}(t)$ times) the original expressions are equal.
Multiply the above results by $\barz$ to obtain
\begin{align}
  \p{\hf(s),\hg(z-\bar s)}_Y
  = \barz \p{\hF(s),\hG(z-\bar s)}_H    -  \p{\hF(s),G(0)}_H
\end{align}
Replace $z-\bar s$ by $z$ to obtain (v)
 for $s\in\C_\omega^+,\ z\in\C^+_{\max\{\omega,-\re s\}}$;
 use holomorphicity to allow for any $z\in\C_\omega^+$.
\end{proof}

If the Laplace transform of a function is a resolvent,
 then the function is a semigroup:
\begin{lemma}[$\pmbold{\hqA(s)=(s-A)^{-1}\ \THEN\ \qA^t}$ is a semigroup]\label{lResSG} %
If $\qA:\R_+\to\BL(X)$,
 $\qA x_0\in\cC(\R_+;X)\ (\all x_0\in X)$,
 $\qA^0=I$, $\|\qA^t\|\le M\efn^{\omega t}\ (t\ge0)$,
 $\hqA(s)=(s-A)^{-1}\ (s\in\C_\omega^+)$
 for some $\omega,M\in\R$, some linear operator $A:X\supset\Dom(A)\to X$
 and some Banach space $X$,
 then $\qA$ is a $C_0$-semigroup with generator $A$. %
\end{lemma}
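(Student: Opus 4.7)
The plan is to reduce the claim to the Hille--Yosida generation theorem: once we know that $A$ generates some $C_0$-semigroup $\tqA$ with the same Laplace transform, uniqueness of Laplace transforms together with strong continuity will force $\qA = \tqA$. So the whole proof splits into (1) verifying the Hille--Yosida hypotheses for $A$, (2) invoking Hille--Yosida to produce $\tqA$, and (3) identifying $\qA$ with $\tqA$.

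First, I would check Hille--Yosida for $A$ with constants $(M,\omega)$. The operator $A$ is automatically closed because $(s-A)^{-1}$ exists as a bounded operator for $s\in\C_\omega^+$. Density of $\Dom(A)$ comes from Lemma~\ref{lshf2f(0)}: for each $x_0\in X$, $s\hqA(s)x_0 = s(s-A)^{-1}x_0\in\Dom(A)$ converges to $\qA^0 x_0 = x_0$ as $s\to+\infty$, since $\qA x_0$ is continuous at $0$ with value $x_0$ and lies in $\L^1_{\omega'}$ for any $\omega'>\omega$. The power estimates on $(s-A)^{-n}$ follow by differentiating the Laplace-transform identity: for real $s>\omega$,
\begin{equation*}
(-1)^n n!\,(s-A)^{-n-1} \;=\; \frac{d^n}{ds^n}\hqA(s) \;=\; \int_0^\infty (-t)^n e^{-st}\qA^t x_0\,dt,
\end{equation*}
so $\|(s-A)^{-n-1}\|\le \frac{1}{n!}\int_0^\infty t^n e^{-st}Me^{\omega t}dt = M/(s-\omega)^{n+1}$, which is exactly the Hille--Yosida bound.

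Second, Hille--Yosida then yields a $C_0$-semigroup $\tqA$ on $X$ with generator $A$ and $\|\tqA^t\|\le Me^{\omega t}$. Its Laplace transform is the resolvent, so $\widehat{\tqA}(s)=(s-A)^{-1}=\hqA(s)$ on $\C_\omega^+$. Third, for each fixed $x_0\in X$ the function $h(t):=\qA^t x_0-\tqA^t x_0$ lies in $\cC(\R_+;X)\cap \L^1_{\omega'}(\R_+;X)$ for any $\omega'>\omega$, and its Laplace transform vanishes on $\C_{\omega'}^+$; by uniqueness of the (vector-valued) Laplace transform (applied coordinate-wise via $\p{h(\cdot),y^*}$ for $y^*\in X^*$, or directly via a scalar version of Lemma~\ref{lhfhg=0fg=0}), $h\equiv 0$. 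Hence $\qA^t = \tqA^t$ for all $t\ge 0$, which establishes the semigroup law and identifies $A$ as the generator.

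The main obstacle I expect is the density of $\Dom(A)$ and the power estimates; once those are in hand the rest is standard. Density is slightly delicate because we are only told $\qA x_0$ is continuous (not necessarily $\L^1$), but the exponential bound $\|\qA^t\|\le Me^{\omega t}$ puts $e^{-\omega'\cdot}\qA x_0$ in $\L^1$ for $\omega'>\omega$, so Lemma~\ref{lshf2f(0)} applies. The resolvent power estimates are justified by differentiating under the integral sign, which is legitimate here because $\efn^{-s\cdot}\qA x_0$ has all polynomial moments for $\re s>\omega$. Finally, the identification step uses only that the Laplace transform is injective on continuous exponentially bounded $X$-valued functions, which reduces by duality to the classical scalar case.
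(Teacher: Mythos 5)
Your proof is correct, but it takes a genuinely different route from the paper's. You reduce everything to the Hille--Yosida (Feller--Miyadera--Phillips) generation theorem: after the density step (which is identical to the paper's, via Lemma~\ref{lshf2f(0)}), you extract the resolvent power estimates $\|(s-\omega)^{n}(s-A)^{-n}\|\le M$ by differentiating the Laplace representation $n$ times under the integral sign --- a clean observation, since the exponentially bounded density $\qA^\cdot x_0$ automatically supplies all the required moments --- and then let Hille--Yosida produce a semigroup $\tqA$ generated by $A$, identifying $\qA=\tqA$ by injectivity of the Laplace transform on continuous exponentially bounded functions. The paper avoids the generation theorem altogether: for $x_0\in\Dom(A)$ it computes $\shat{(\qA x_0)'}(s)=s\hqA(s)x_0-x_0=(s-A)^{-1}Ax_0$, concludes that $\qA x_0$ is a solution of $x'=Ax$, $x(0)=x_0$, proves uniqueness of such solutions by taking Laplace transforms again, and then obtains the semigroup law $\qA^{T+\cdot}x_0=\qA^\cdot\qA^T x_0$ from the time-invariance of the Cauchy problem, extending to all of $X$ by density and continuity. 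Your version is more modular and leans on a standard black box; the paper's is more elementary and self-contained, using only uniqueness of Laplace transforms. One cosmetic slip in your write-up: in the displayed identity the left-hand side $(-1)^n n!\,(s-A)^{-n-1}$ should be applied to $x_0$ to match the vector-valued right-hand side (or the right-hand side should be stated operator-norm-wise); this does not affect the argument.
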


\begin{proof} %
$1^\circ$ {\em $\Dom(A)$ is dense in $X$:}
By Lemma~\ref{lshf2f(0)},
 $x_0=\qA^0 x_0=\lim_{s\to+\infty} s(s-A)^{-1}x_0\in\overline{\Dom(A)}$
 for all $x_0\in X$, hence $\overline{\Dom(A)}=X$. %

$2^\circ$ {\em The rest:}
Let $x_0\in\Dom(A)$. Then the Sobolev derivative $\qA x_0'$ 
 is actually a continuous function:
 $\shat{(\qA x_0)'}:=s\shat{\qA x_0}(s)-x_0
 =s(s-A)^{-1}x_0-(s-A)(s-A)^{-1}x_0=A(s-A)^{-1}x_0=(s-A)^{-1}Ax_0$,
 i.e., $(\qA x_0)'=A\qA x_0=\qA A x_0\in\cC(\R_+;X)$
 and $\qA[\Dom(A)]\sub\Dom[A]$
 (since the inverse Laplace transform of $(s-A)^{-1}x_0$
  converges also in the topology of $\Dom(A)$,
 because, obviously, $\|\hqA x_0\|_{\H^2(\C_{\omega+1}^+;\Dom(A))}^2
  =2\pi\|\qA x_0\|_{\L^2(\C_{\omega+1}^+;\Dom(A))}^2
  \le M\| x_0\|_{\Dom(A)}^2:=M(\|x_0\|_H^2+\|Ax_0\|_H^2$). %

However, if $x\in\W^{1,2}_\alpha(R_+;X)$ for some $\alpha<\infty$,
 $\omega\ge\omega$,
 and $x$ solves the problem $x'=Ax,\ x(0)=x_0\in\Dom(A)$,
 then $x\in\L^2_\omega(\R_+;\Dom(A))$ (as above) and
 $s\hx(s)-x(0)=:\shat{x'}(s)=\shat{Ax}(s)=A\hx(s)$,
 hence $\hx(s)=(s-A)^{-1}x_0$ on $\C_\alpha^+$,
 hence $x = \qA x_0$, by the uniqueness of Laplace transforms.

But, for any $T\ge0$,
  $\qA^{T+\cdot}x_0$ is the solution of $x'=Ax,\ x(0)=\qA^T x_0$,
 hence $\qA^{T+\cdot}x_0=\qA^\cdot \qA^T x_0$.
Since $x_0\in\Dom(A)$ was arbitrary, we have
 $\qA^{T+t}=\qA^t\qA^T$ on $\Dom(A)$.
By $1^\circ$ and continuity, it follows that 
 $\qA^{T+t}=\qA^t\qA^T$, hence $\qA$ is a $C_0$-semigroup.
Since $\hqA(s)=(s-A)^{-1}$, $A$ is the generator of $\qA$.
\end{proof}

A one-to-one function satisfying the Resolvent equation (\ref{eRes})
 is a resolvent:
\begin{lemma}[Pseudoresolvent is a resolvent]\label{lPseudoRes} %
Let $X$ be a Banach space, $\tyhja\ne E\sub\C$, and let $f:E\to\BL(X)$.
Then $f(s)=(s-A)^{-1}$ ($s\in E$) for some linear operator $A$ on $X$
 iff $f(s_0)$ is one-to-one for some $s_0\in E$ and 
 \begin{equation}
   \label{eRes}
   f(s)-f(s_0)=(s_0-s)f(s)f(s_0)\ \ \ \all s\in E.
 \end{equation}\itemlistnoproof
\end{lemma}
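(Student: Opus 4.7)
The plan is to prove the easy direction directly and then to construct $A$ from $f(s_0)$ for the converse. For the ``only if'' direction, $(s-A)^{-1}$ is trivially one-to-one (it has the left inverse $s-A$), and the resolvent equation $(s-A)^{-1}-(s_0-A)^{-1} = (s_0-s)(s-A)^{-1}(s_0-A)^{-1}$ follows by multiplying $(s_0-A)-(s-A) = s_0-s$ by $(s-A)^{-1}$ on the left and $(s_0-A)^{-1}$ on the right. So the work is entirely in the converse.

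Assume (\ref{eRes}) and that $f(s_0)$ is one-to-one. First I would exploit the symmetry of the resolvent equation: swapping $s$ and $s_0$ yields $f(s)-f(s_0) = (s_0-s)f(s_0)f(s)$ as well, so $f(s)f(s_0)=f(s_0)f(s)$ for all $s\in E$. Rewriting (\ref{eRes}) as $f(s) = (I+(s_0-s)f(s))f(s_0) = f(s_0)(I+(s_0-s)f(s))$ then shows that $\Ran(f(s))\sub\Ran(f(s_0))$, and by the symmetric identity the reverse inclusion also holds, so $\Ran(f(s))=\Ran(f(s_0))$ for every $s\in E$. Moreover, if $f(s)x=0$ then $f(s_0)(I+(s_0-s)f(s))x=0$; injectivity of $f(s_0)$ gives $x = -(s_0-s)f(s)x = 0$, so every $f(s)$ is one-to-one.

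Next I define $A := s_0 - f(s_0)^{-1}$ on $\Dom(A) := \Ran(f(s_0))$. The key reassurance is that this definition does not depend on the choice of $s_0$: from $f(s) = f(s_0)(I+(s_0-s)f(s))$ and injectivity of $f(s_0)$, one obtains on $\Ran(f(s))=\Dom(A)$ the operator identity $f(s)^{-1} = f(s_0)^{-1} + (s-s_0)I$, so $s-f(s)^{-1} = s_0-f(s_0)^{-1}$ for every $s\in E$.

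Finally I would verify $f(s) = (s-A)^{-1}$ by checking the two compositions. For any $x\in X$, $f(s)x\in\Dom(A)$ and $(s-A)f(s)x = (s-s_0)f(s)x + f(s_0)^{-1}f(s)x$; using $f(s_0)^{-1}f(s) = I+(s_0-s)f(s)$ the right-hand side collapses to $x$. Conversely, for $x=f(s_0)y\in\Dom(A)$, we get $f(s)(s-A)x = (s-s_0)f(s)f(s_0)y + f(s)y$, and (\ref{eRes}) rewrites the first term as $(f(s_0)-f(s))y$, giving $f(s_0)y=x$. The main technical nuisance — and essentially the only subtlety beyond bookkeeping — is keeping track of which identities hold on all of $X$ versus only on $\Dom(A)=\Ran(f(s_0))$ and using injectivity of $f(s_0)$ at the right moments; everything else is formal manipulation with the resolvent identity.
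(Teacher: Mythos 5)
Your argument is the standard pseudoresolvent construction (Pazy, Theorem I.9.3), which is exactly what the paper invokes in lieu of a written proof; the easy direction, the definition $A:=s_0-f(s_0)^{-1}$ on $\Ran f(s_0)$, the independence of $s_0$, and the two composition checks are all carried out correctly. The problem is your very first step. The hypothesis (\ref{eRes}) is not symmetric in $s$ and $s_0$: it is a family of identities indexed by $s$, each of which has $f(s_0)$, for the \emph{one fixed} $s_0$, as the right-hand factor of the product. ``Swapping $s$ and $s_0$'' produces $f(s)-f(s_0)=(s_0-s)f(s_0)f(s)$, which is not an instance of that family but a genuinely new assertion. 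Everything in your proof that needs $f(s_0)$ on the left rests on it: the commutativity $f(s)f(s_0)=f(s_0)f(s)$, the factorization $f(s)=f(s_0)(I+(s_0-s)f(s))$, hence $\Ran f(s)\sub\Ran f(s_0)=\Dom(A)$, the injectivity of each $f(s)$, and ultimately the identity $(s-A)f(s)=I_X$. (The other composition, $f(s)(s-A)=I$ on $\Dom(A)$, uses only (\ref{eRes}) as stated and is fine.)

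This is not a repairable bookkeeping issue: with only the one-sided identity the conclusion can fail. Take $X=\ell^2$, $S$ the unilateral right shift, $E=\{0,1\}$, $s_0=0$, $f(0)=S-I$ (injective, since $S$ has no eigenvalues) and $f(1)=(S-I)S^*$. Then $f(1)-f(0)=(S-I)(S^*-I)$ while $-f(1)f(0)=-(S-I)(S^*S-S^*)=(S-I)(S^*-I)$, so (\ref{eRes}) holds; yet any admissible $A$ is forced to equal $-(S-I)^{-1}$ on $\Dom(A)=\Ran(S-I)$, whence $(1-A)(S-I)x=(S-I)x+x=Sx$, so $\Ran(1-A)=\Ran S\neq X$ and $(1-A)f(1)=I_X$ is impossible. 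The upshot is that the hypothesis must be read as the full two-variable pseudoresolvent identity $f(s)-f(z)=(z-s)f(s)f(z)$ for all $s,z\in E$ (as in Pazy, whom the paper cites); under that reading your swap is legitimate and the rest of your proof goes through verbatim. As written, the swap is the gap: either state and use the two-variable identity explicitly, or accept that the swapped identity cannot be derived from the stated one.
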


(The proof of Theorem I.9.3 of [Pazy] applies here too.  %
Note that $E$ need not be open nor connected
 and that $f(s)=(s-A)^{-1}$ means that $f(s)(s-A)=I_{\Dom(A)}$
 and $(s-A)f(s)=I_X$. %
Naturally, $A$ is unique and closed and $E\sub\rho(A)$.)

The norm of a resolvent is unbounded at the boundary of the resolvent set:
\begin{lemma}[\pmbold{$\|(s-A)^{-1}\|_{\BL(X)}\ge 1/d(s,\sigma(A))$}]\label{lSGexplodes} %
Let $A$ be a linear operator $\Dom(A)\to X$.
If $s\in\rho(A)$, then $\|(s-A)^{-1}\|_{\BL(X)}\ge 1/d(s,\sigma(A))$
 (the inverse of the distance from $s$ to the spectrum of $A$).\noproof
\end{lemma}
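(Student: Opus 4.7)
The plan is to prove the contrapositive via the standard Neumann series argument: show that the open ball $B(s,r)$ of radius $r := \|(s-A)^{-1}\|^{-1}$ around $s$ lies entirely in $\rho(A)$, so that $d(s,\sigma(A)) \ge r$, which is exactly the claimed inequality.

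First I would recall the resolvent (Neumann) series. For any $z \in \C$ with $|z-s| < r$, I would define
\[
  R(z) := \sum_{n=0}^\infty (s-z)^n (s-A)^{-(n+1)} \in \BL(X),
\]
noting that this series converges absolutely in operator norm because $\sum_n |s-z|^n \|(s-A)^{-1}\|^{n+1} = \|(s-A)^{-1}\| \sum_n (|s-z|/r)^n < \infty$. The key computation is then a direct telescoping: writing $z-A = (s-A) - (s-z)$, one checks that $R(z)(z-A)x = x$ for $x \in \Dom(A)$ and $(z-A)R(z)x = x$ for $x \in X$, by multiplying the series termwise and observing the cancellation $(s-A)^{-(n+1)}(s-A) = (s-A)^{-n}$ on $\Dom(A)$. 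This shows $z \in \rho(A)$ with $(z-A)^{-1} = R(z)$.

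Consequently, $B(s,r) \subset \rho(A)$, and since $\sigma(A) = \C \setminus \rho(A)$ is disjoint from $B(s,r)$, every $\lambda \in \sigma(A)$ satisfies $|s-\lambda| \ge r$; taking the infimum yields $d(s,\sigma(A)) \ge r = \|(s-A)^{-1}\|^{-1}$, i.e., $\|(s-A)^{-1}\| \ge 1/d(s,\sigma(A))$, as claimed. (If $\sigma(A) = \tyhja$, then $d(s,\sigma(A)) = +\infty$ by convention and the inequality is vacuous.)

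There is no real obstacle here; the only mild subtlety is the justification of the termwise manipulation $R(z)(z-A) = I_{\Dom(A)}$, which requires that $(s-A)^{-1}$ maps $X$ into $\Dom(A)$ (true by definition of $\rho(A)$, which includes that $s-A$ is a bijection $\Dom(A) \to X$ with bounded inverse) so that each term of the series lands in $\Dom(A)$ and the closedness of $A$ (automatic from $s \in \rho(A)$) lets us pass $A$ through the convergent sum.
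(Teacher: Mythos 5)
Your proof is correct and is the standard Neumann-series argument; the paper itself gives no proof of this lemma (it is marked \noproof\ and only cites Lemma 3.2.8(iii) of [S04]), and your argument is exactly the classical one that such references use. The one subtlety you flag --- that $(s-A)^{-1}$ maps $X$ into $\Dom(A)$ so the telescoping identity $R(z)(z-A)=I$ on $\Dom(A)$ is legitimate --- is handled correctly.
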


(This is well known; see, e.g., Lemma 3.2.8(iii) of [S04].)

\begin{lemma}\label{lsf(s)to0H2} %
If $f\in\Hstrong^2(\C^+;\BL(X))$, %
 then $sf(s)\to0$ in $\BL(X)$ as $s\to0+$.   
\end{lemma}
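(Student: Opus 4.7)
The plan is to establish the pointwise estimate $\|f(s)\|_{\BL(X)} = O((\re s)^{-1/2})$ on $\C^+$; since on the positive real axis $\re s = s$, this immediately yields $\|sf(s)\|_{\BL(X)} \le C\sqrt{s} \to 0$ as $s\to 0+$.

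First I would observe that, by the definition of $\Hstrong^2$ (as recalled below Remark~\ref{rcA}), for each $u_0\in X$ the $X$-valued function $s\mapsto f(s)u_0$ lies in the standard Hilbert-space-valued Hardy space $\H^2(\C^+;X)$, with $\|fu_0\|_{\H^2}\le \|f\|_{\Hstrong^2}\|u_0\|_X$ (the inequality is essentially the definition, otherwise one applies the closed-graph theorem to the linear map $u_0\mapsto fu_0\in\H^2(\C^+;X)$).

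Second, I would invoke the Paley--Wiener pointwise bound for vector-valued $\H^2$: any $g\in\H^2(\C^+;X)$ is the Laplace transform of some $h\in\L^2(\R_+;X)$ with $\|g\|_{\H^2}^2=2\pi\|h\|_{\L^2}^2$, and Cauchy--Schwarz gives
\begin{equation}
 \|g(s)\|_X
 \ =\ \Big\|\int_0^\infty e^{-st}h(t)\,dt\Big\|_X
 \ \le\ \|h\|_{\L^2}\,\|e^{-\cdot\,\re s}\|_{\L^2(\R_+)}
 \ =\ \frac{\|g\|_{\H^2}}{\sqrt{4\pi\,\re s}}.
\end{equation}
Applied to $g(s):=f(s)u_0$ this yields $\|f(s)u_0\|_X\le (4\pi\,\re s)^{-1/2}\|f\|_{\Hstrong^2}\|u_0\|_X$, so that $\|f(s)\|_{\BL(X)}\le C(\re s)^{-1/2}$ with $C:=\|f\|_{\Hstrong^2}/\sqrt{4\pi}$. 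Specializing to real $s>0$ gives $\|sf(s)\|_{\BL(X)}\le C\sqrt{s}\to 0$.

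There is no real obstacle; the only small point to verify is the vector-valued Paley--Wiener representation. For $X$ a separable Hilbert space this is classical, and for a general Banach space $X$ one can bypass it by applying the scalar Paley--Wiener bound to each $\xi\circ fu_0$ with $\xi\in X^*$ of unit norm, then taking the supremum over $\xi$; the estimate $\|f(s)u_0\|_X\le (4\pi\re s)^{-1/2}\|fu_0\|_{\H^2}$ is preserved, which is all that is needed.
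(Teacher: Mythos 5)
Your proof is correct, but it takes a different route from the paper's. The paper's proof is a two-line reduction: it sets $g(z):=z^{-1}f(z^{-1})$, observes that the inversion $z\mapsto 1/z$ is a conformal self-map of $\C^+$ so that $g\in\Hstrong^2$ as well, and then cites Lemma F.3.2(b) of [M02], which states that strong-$\H^2$ functions tend to $0$ in operator norm as $z\to+\infty$ along the reals. You instead prove the quantitative pointwise bound $\|f(s)\|_{\BL(X)}\le C(\re s)^{-1/2}$ directly via Paley--Wiener and Cauchy--Schwarz, which immediately gives $\|sf(s)\|\le C\sqrt{s}$ on the positive real axis. What your approach buys is self-containment (no appeal to the external lemma in [M02]) plus an explicit decay rate, and in fact it yields the conclusion for nontangential $s\to0$ as well; what the paper's approach buys is brevity, since the behaviour-at-infinity fact is already catalogued there. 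At bottom the two arguments rest on the same estimate (the cited lemma of [M02] is itself a consequence of the $(\re z)^{-1/2}$ bound), so this is a difference of presentation rather than of substance. One small point you handled correctly and which deserved the attention you gave it: since $X$ is an arbitrary Banach space in this appendix, the vector-valued Paley--Wiener theorem is not available, and your reduction to scalar functions $\xi\circ fu_0$ with $\xi\in X^*$, followed by Hahn--Banach, is exactly the right fix.
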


Thus, $sf(s+ir)\to0$ for any $r\in\R$.

\begin{proof}
Obviously, %
 $g\in\Hstrong^2$, where $g(z):=z^{-1}f(z^{-1})$.
By Lemma F.3.2(b) (p.~1017) of [M02], %
 $g(z)\to0$ as $z\to+\infty$.
\end{proof}

\NotesFor{Appendix~\ref{sLaplace}} %
Lemmas \ref{lshf2f(0)}, \ref{lhf'},
 \ref{lPseudoRes}
 and~\ref{lSGexplodes} %
 are well known. 
{\bf Acknowledgments.} %
The author wants to thank Olof Staffans, Ruth Curtain, Mark Opmeer, 
 and others on useful comments on the manuscript.
Part of this work was written with the support of the
 Academy of Sciences and part at Institut Mittag--Leffler.

\vspace{-2.8ex}
\section*{References} %

\begin{itemlistbibitem} %

\KMBibitem{[AN96]}{Damir Z. Arov and Michael A. Nudelman.
Passive linear stationary dynamical scattering systems with continuous time.
{\em Integr.\ Equ.\ Oper.\ Theory},
 24:1--45, 1996.}

\KMBibitem{{[C03]}}{Ruth F. Curtain.
Riccati equations for well-posed linear systems: the generic case.
To appear in
{\em Systems and Control Letters}, 2003.}

\KMBibitem{[CWW01]}{Ruth F. Curtain, George Weiss and Martin Weiss. 
Stabilization of irrational transfer functions by controller with 
  internal loop. 
{\em Systems, Approximation, Singular Integral Operators, and
Related Topics, Proceedings of IWOTA 2000}, %
 Alexander A. Borichev and Nikolai K. Nikolski (eds.), Birkhäuser, 2001.}

\KMBibitem{{[FLT88]}}{Franco Flandoli, Irena Lasiecka and Roberto Triggiani.
Algebraic {Riccati} equations with non-smoothing
                  observation arising in hyperbolic and
                  {Euler}--{Bernoulli} boundary control problems.
{\em Annali Mat. Pura Appl.}, 153:307--382, 1988.}

\KMBibitem{[G92]}{Michael Green.
{$H^\infty$} controller synthesis by {$J$}-lossless
coprime factorization.
{\em SIAM J. Control Optim.},
 30:522--547, 1992.}

\KMBibitem{{[GL73]}}{Israel C. Gohberg and Yuri Laiterer. %
The factorization of operator-functions relative to a contour.
III. Factorization in algebras. (Russian)
{\em Math. Nachr.} 55:33--61, 1973.}

\KMBibitem{{[HP57]}}{Einar Hille and Ralph S. Phillips.
{\em Functional Analysis and Semi-Groups}.
AMS, Providence, revised edition, 1957.}

\KMBibitem{[IOW99]}{Vlad Ionescu, Cristian Oar\u a and Martin Weiss.
{\em Generalized Riccati Theory and Robust Control. 
A Popov Function Approach.}
Wiley, 1999.}

\KMBibitem{{[vK93]}}{Bert van Keulen.
{\em {$H_\infty$}-Control for Distributed Parameter
                  Systems: A State Space Approach}.
{Birkh\"auser}, 1993.}

\KMBibitem{[LR95]}{Peter Lancaster and Leiba Rodman.
{\em Algebraic Riccati Equations.}
Clarendon Press, Oxford, 1995.} %

\KMBibitem{{[LT00]}}{Irena Lasiecka and Roberto Triggiani. %
{\em Control Theory for Partial Differential Equations:
Continuous and Approximation Theories.
I: Abstract Parabolic Systems.}
Cambridge University Press, 2000.}

\KMBibitem{{[M97]}}{Kalle M. Mikkola.
On the Stable $\H^2$ and $\H^\infty$
Infinite-Dimensional Regular Problems and Their Algebraic Riccati Equations.
Technical Report A383, Institute of Mathematics, 
Helsinki University of Technology, Espoo, Finland, 1997.}

\KMBibitem{{[M02]}}{Kalle M. Mikkola.
{\em Infinite-Dimensional Linear Systems, Optimal Control
        and Algebraic Riccati Equations}.\\  
{\tt http://www.math.hut.fi/\~{}kmikkola/research/thesis/}\\ %
Doctoral dissertation, Institute of Mathematics, 
Helsinki University of Technology, Espoo, Finland, 2002.}

\KMBibitem{{[M03]}}{Kalle M. Mikkola. %
Riccati equations and optimal control for regular linear
systems and beyond. %
Mittag-Leffler seminar lecture.
Institut Mittag-Leffler, Sweden, February 27, 2003.}

\KMBibitem{{[M03b]}}{Kalle M.\ Mikkola.
Reciprocal and resolvent Riccati equations
 for well-posed linear systems.
In preparation, 2003.} %

\KMBibitem{{[MSW03]}}{Jarmo Malinen, Olof Staffans and George Weiss.
When is a linear system conservative?
Report No. 46, 02/03.
Institut Mittag-Leffler, Sweden, 2003.}

\KMBibitem{{[Q03]}}{Alban Quadrat.
On a general structure of the stabilizing controllers based on stable range.
To appear in SIAM J. Control Optim., 2003.}

\KMBibitem{{[O96]}}{
Raimund J. Ober.
System-theoretic aspects of completely symmetric 
systems.  Recent developments in operator theory and its 
applications (Winnipeg, MB, 1994).
{\em Oper. Theory Adv.},
  233--262, 
Appl., 87, Birkhäuser, Basel, 1996.}

\KMBibitem{{[OC04]}}{Ruth F. Curtain and Mark R. Opmeer. %
New Riccati equations for well-posed linear systems.
To appear, 2004.} %

\KMBibitem{{[RR85]}}{Marvin Rosenblum and James Rovnyak.
{\em Hardy Classes and Operator Theory}.
Oxford University Press,
 New York, 1985.}

\KMBibitem{[S89]}{Malcolm C. Smith.
On stabilization and the existence of coprime factorizations.
{\em IEEE Trans. Autom. Control},
 34:1005--1007, 1989.}

\KMBibitem{{[S97]}}{Olof J.\ Staffans. 
Quadratic optimal control of stable well-posed linear systems.
{\em Trans.\ Am.\ Math.\ Soc.},
349:3679--3715, 1997.}

\KMBibitem{{[S98a]}}{Olof J.\ Staffans.
Coprime factorizations and well-posed linear systems.
{\em SIAM J. Control Optim.},
36:1268--1292, 1998.}

\KMBibitem{{[S98b]}}{Olof J.\ Staffans.
Quadratic optimal control of well-posed linear systems.
{\em SIAM J. Control Optim.},
37:131--164, 1998.} %

\KMBibitem{{[S98c]}}{Olof J.\ Staffans.
Feedback representations of critical controls for
                  well-posed linear systems.
{\em Internat.\ J. Robust Nonlinear Control},
 8:1189--1217, 1998.}

\KMBibitem{{[S04]}}{Olof J. Staffans. %
Well-Posed Linear Systems.
Cambridge university press.
To appear in 2004. %
{\tt %
http://www.abo.fi/\~ {}staffans/psfiles/wellpos.ps}, 2003/08/01.}

\KMBibitem{{[SW03]}}{Olof J. Staffans and George Weiss. %
Transfer functions of regular linear systems.
{Part III}: inversions and duality.
To appear in Integral Equations and Operator 
Theory 2003, 41 pp.} %

\KMBibitem{{[W94a]}}{George Weiss. %
Transfer functions of regular linear systems.  
{P}art {I}: Characterizations of regularity.
{\em Trans.\ Am.\ Math.\ Soc.},
 342:827--854, 1994.}

\KMBibitem{{[W94b]}}{George Weiss. %
Regular linear systems with feedback.
{\em Math.\ Control Signals Systems},
7:23--57, 1994.}

\KMBibitem{{[WC99]}}{George Weiss and Ruth F. Curtain.
Exponential stabilization of vibrating systems by collocated feedback.
{\em Proceedings of the 7th Mediterranean
 Conference on Control and Automation (MED99)},
1705--1722, Haifa, Israel, 1999.}

\KMBibitem{{[WR00]}}{George Weiss and Richard Rebarber.
Optimizability and estimatability for infinite-dimensional linear systems.
{\em SIAM J. Control Optim.},
 39:1204--1232, 2000.}

\KMBibitem{{[WST01]}}{George Weiss, Olof J.\ Staffans and Marius Tucsnak.
Well-posed linear systems---a survey with emphasis on conservative systems.
{\em Int. J. Appl. Math. Comput. Sci.},
11:7--33, 2001.}

\KMBibitem{{[WW97]}}{Martin Weiss and George Weiss. 
Optimal control of stable weakly regular linear systems.
{\em Math.\ Control Signals Systems},
10:287--330, 1997.}

\KMBibitem{{[Z96]}}{Hans Zwart.
Linear Quadratic Optimal Control for Abstract Linear Systems.
In {\em Modelling and Optimization of Distributed Parameter
Systems with Applications to Engineering},
 Chapman {\&} Hall, New York, 175--182, 1996.}

\end{itemlistbibitem}

\end{document} %

